\newcommand\da{\!\downarrow\!}
\newcommand\la{\leftarrow}
\newcommand\id{\mathrm{id}}
\newcommand\ten{\otimes}
\newcommand\hten{\hat{\otimes}}
\newcommand\vareps{\varepsilon}
\newcommand\eps{\epsilon}
\renewcommand\H{\mathrm{H}}
\newcommand\z{\mathrm{Z}}
\renewcommand\b{\mathrm{B}}
\newcommand\N{\mathbb{N}}
\newcommand\Z{\mathbb{Z}}
\newcommand\Q{\mathbb{Q}}
\newcommand\Ql{\mathbb{Q}_{\ell}}
\newcommand\Zl{\mathbb{Z}_{\ell}}
\newcommand\R{\mathbb{R}}
\newcommand\Cx{\mathbb{C}}
\newcommand\vv{\mathbb{V}}
\newcommand\bA{\mathbb{A}}
\newcommand\bD{\mathbb{D}}
\newcommand\bG{\mathbb{G}}
\newcommand\bI{\mathbb{I}}
\newcommand\bK{\mathbb{K}}
\newcommand\bL{\mathbb{L}}
\newcommand\bP{\mathbb{P}}
\newcommand\bT{\mathbb{T}}
\newcommand\C{\mathcal{C}}
\newcommand\cA{\mathcal{A}}
\newcommand\cC{\mathcal{C}}
\newcommand\cD{\mathcal{D}}
\newcommand\cE{\mathcal{E}}
\newcommand\cF{\mathcal{F}}
\newcommand\cH{\mathcal{H}}
\newcommand\cN{\mathcal{N}}
\newcommand\cO{\mathcal{O}}
\newcommand\cP{\mathcal{P}}
\newcommand\cU{\mathcal{U}}
\newcommand\sA{\mathscr{A}}
\newcommand\sB{\mathscr{B}}
\newcommand\sC{\mathscr{C}}
\newcommand\sE{\mathscr{E}}
\newcommand\sF{\mathscr{F}}
\newcommand\sM{\mathscr{M}}
\newcommand\sO{\mathscr{O}}
\newcommand\sT{\mathscr{T}}
\newcommand\fY{\mathfrak{Y}}
\newcommand\sHom{\mathscr{H}\!\mathit{om}}
\newcommand\cHom{\mathcal{H}\!\mathit{om}}
\newcommand\cHHom{\underline{\mathcal{H}\!\mathit{om}}}
\newcommand\Ho{\mathrm{Ho}}
\newcommand\Alg{\mathrm{Alg}}
\newcommand\CAlg{\mathrm{CAlg}}
\newcommand\comm{\mathrm{com}}
\newcommand\Mod{\mathrm{Mod}}
\newcommand\Hom{\mathrm{Hom}}
\newcommand\Map{\mathrm{Map}}
\newcommand\map{\mathrm{map}}
\newcommand\cyc{\mathrm{cyc}}
\newcommand\Tw{\mathrm{Tw}}
\newcommand\EFC{\mathrm{EFC}}
\newcommand\FEFC{\mathrm{FEFC}}
\newcommand\HHom{\underline{\mathrm{Hom}}}
\newcommand\EEnd{\underline{\mathrm{End}}}
\newcommand\cone{\mathrm{cone}}
\newcommand\dg{\mathrm{dg}}
\newcommand\per{\mathrm{per}}
\newcommand{\llb}{\llbracket}
\newcommand{\rrb}{\rrbracket}
\newcommand\Gal{\mathrm{Gal}}
\newcommand\coker{\mathrm{coker\,}}
\newcommand\Ab{\mathrm{Ab}}
\newcommand\Shf{\mathrm{Shf}}
\newcommand\loc{\mathrm{loc}}
\newcommand\Spec{\mathrm{Spec}\,}
\newcommand\Set{\mathrm{Set}}
\newcommand\Affd{\mathrm{Affd}}
\newcommand\Sp{\mathrm{Sp}}
\newcommand\PreBiSp{\mathrm{PreBiSp}}
\newcommand\BiSp{\mathrm{BiSp}}
\newcommand\BiLag{\mathrm{BiLag}}
\newcommand\Pol{\mathrm{Pol}}
\newcommand\nondeg{\mathrm{nondeg}}
\newcommand\ad{\mathrm{ad}}
\newcommand\<{\langle}
\renewcommand\>{\rangle}
\newcommand\Lim{\varprojlim}
\newcommand\LLim{\varinjlim}
\DeclareMathOperator*{\holim}{holim}
\newcommand\ho{\mathrm{ho}\!}
\newcommand\into{\hookrightarrow}
\newcommand\onto{\twoheadrightarrow}
\newcommand\abuts{\implies}
\newcommand\xra{\xrightarrow}
\newcommand\xla{\xleftarrow}
\newcommand\pr{\mathrm{pr}}
\newcommand\alg{\mathrm{alg}}
\newcommand\bt{\bullet}
\newcommand\by{\times}
\newcommand\mc{\mathrm{MC}}
\newcommand\mmc{\underline{\mathrm{MC}}}
\newcommand\Perf{\mathrm{Perf}}
\newcommand\Ban{\mathrm{Ban}}
\newcommand\SO{\mathrm{SO}}
\newcommand\Mat{\mathrm{Mat}}
\newcommand\et{\acute{\mathrm{e}}\mathrm{t}}
\newcommand\Et{\acute{\mathrm{E}}\mathrm{t}}
\newcommand\an{\mathrm{an}}
\newcommand\hol{\mathrm{hol}}
\newcommand\Tot{\mathrm{Tot}\,}
\newcommand\tr{\mathrm{tr}}
\newcommand\ind{\mathrm{ind}}
\newcommand\pro{\mathrm{pro}}
\newcommand\pd{\partial}
\newcommand\dc{d^{\mathrm{c}}}
\newcommand\half{\frac{1}{2}}
\newcommand\rig{\mathrm{rig}}
\newcommand\gr{\mathrm{gr}}
\newcommand\cts{\mathrm{cts}}
\newcommand\Zar{\mathrm{Zar}}
\newcommand\Fil{\mathrm{Fil}}
\renewcommand\alg{\mathrm{alg}}
\newcommand\Dol{\mathrm{Dol}}
\newcommand\dR{\mathrm{dR}}
\newcommand\DR{\mathrm{DR}}
\newcommand\Hod{\mathrm{Hod}}
\newcommand\op{\mathrm{opp}}
\newcommand\co{\colon\thinspace}
\newcommand\oB{\mathbf{B}}
\newcommand\oR{\mathbf{R}}
\newcommand\oL{\mathbf{L}}
\newcommand\uleft\underleftarrow
\newcommand\uline\underline
\newcommand\uright\underrightarrow
\newcommand{\tps}{\texorpdfstring}
\DeclareMathOperator{\hatHHom}{\uline{\mathrm{H}\widehat{\mathrm{o}}\mathrm{m}}}
\newtheorem{theorem}{Theorem}[section]
\newtheorem{proposition}[theorem]{Proposition}
\newtheorem{corollary}[theorem]{Corollary}
\newtheorem{lemma}[theorem]{Lemma}
\newtheorem*{theorem*}{Theorem}
\newtheorem*{proposition*}{Proposition}
\newtheorem*{corollary*}{Corollary}
\newtheorem*{lemma*}{Lemma}
\newtheorem*{conjecture*}{Conjecture}
\theoremstyle{definition}
\newtheorem{definition}[theorem]{Definition}
\newtheorem*{definition*}{Definition}
\newtheorem*{notation*}{Notation}
\theoremstyle{remark}
\newtheorem{example}[theorem]{Example}
\newtheorem{examples}[theorem]{Examples}
\newtheorem{remark}[theorem]{Remark}
\newtheorem{remarks}[theorem]{Remarks}
\newtheorem*{example*}{Example}
\newtheorem*{examples*}{Examples}
\newtheorem*{remark*}{Remark}
\newtheorem*{remarks*}{Remarks}
\newtheorem*{exercise*}{Exercise}
\newtheorem*{property*}{Property}
\newtheorem*{properties*}{Properties}
\newcommand\Tc{\mathcal{T}c}
\begin{document}

\begin{abstract}
We develop a formulation for non-commutative derived analytic geometry built from differential graded (dg) algebras equipped with free entire functional calculus (FEFC), relating them to simplicial FEFC algebras and to locally multiplicatively convex  complete topological dg  algebras. The theory is optimally suited for accommodating analytic morphisms between functors of algebraic origin, and we establish forms of Riemann--Hilbert equivalence in this setting. 
We also investigate  classes of topological dg algebras for which  moduli functors of analytic origin tend to behave well, and relate their homotopy theory to that of FEFC algebras. Applications include the construction of derived non-commutative analytic moduli stacks of pro-\'etale local systems and non-commutative derived twistor moduli functors, both equipped with shifted analytic bisymplectic structures, and hence shifted analytic double Poisson structures.

\end{abstract}

\title{Non-commutative derived analytic moduli functors} 

\author{J. P. Pridham}

\maketitle
\section*{Introduction}

There are many natural moduli functors which exist only analytically rather than algebraically, such as twistor functors over $\Cx$ or $\R$  and moduli of \'etale local systems over non-Archimedean fields. One of the simplest ways to enhance these to derived objects is to work with dg algebras with entire functional calculus (EFC), since their homotopy theory is easily controlled. However, most constructions moduli functors rely on algebras carrying a well-behaved topology rather than just EFC, so \cite{PTLag} established dg dagger affinoid algebras as a convenient class of differential graded topological algebras whose homotopy theory is determined solely by the EFC structure, providing a setting which captures the advantages of both approaches.

On the other hand, the natural class of test algebras for moduli functors of linear objects should be non-commutative algebras. Infinitesimally, this manifests itself through the governing Lie algebras of such functors  underlying associative algebras of derived endomorphisms. A global theory built on these lines for algebraic moduli functors was developed in \cite{NCstacks}, with many features of moduli functors of linear objects arising as shadows of the non-commutative moduli functor.

In this paper, we set about developing a similar generalisation of derived analytic geometry  incorporating non-commutativity. To develop the   homotopical theory, this means replacing EFC with the free entire functional calculus (FEFC) of \cite{taylorFnsNC}. In practice, moduli functors are naturally defined on categories of topological, rather than FEFC, algebras. While the rarity of Noetherianity and the faster growth of non-commutative polynomials form an apparent obstacle to forming a class of non-commutative topological algebras satisfying both  the categorical compactness and cocompactness properties of dg dagger affinoid algebras, we identify various subcategories of finitely generated objects which suffice for most purposes (Corollary \ref{afpTcFEFCcor}). Examples of nuclear Fr\'echet algebras whose FEFC structure fully captures the topology include the algebra of free entire functions, finite-dimensional algebras, finitely embeddable Stein algebras, and the algebra of functions on quantum affine space.

Our first main results are Corollaries \ref{keyBanEFCcor} and \ref{keyBanFEFCcor}, which show, in the commutative and non-commutative settings, that to forget the topology on 
pro-Banach
dg algebras and just retain the underlying (free) entire functional calculus  is equivalent to formally inverting abstract homology isomorphisms. Thus any functor on such topological algebras which preserves abstract quasi-isomorphisms will give rise to a functor on (F)EFC dg algebras.

Preserving abstract quasi-isomorphisms   is a strong constraint unless the functor is an analytification of an algebraic functor (i.e. does not involve the topology). However, it is a very common property for functors on dg Fr\'echet algebras, and particularly on dg nuclear Fr\'echet algebras. 
Many such functors can be constructed by first taking the complete tensor product with a fixed (nuclear) Fr\'echet algebra $\C$, then applying an algebraic functor. Our main motivating examples arise as homotopy limits of such constructions, with derived moduli of hyperconnections arising from the nuclear Fr\'echet dg algebra of differential forms, and derived moduli of pro-\'etale local systems arising from the Banach algebra of continuous $\Ql$-valued functions (Examples \ref{CXex} and \ref{CXex2}).     

We study these constructions in detail for  derived moduli functors parametrising various flavours of  hyperconnections. A generalisation of the Poincar\'e lemma to pro-Banach dg algebras gives equivalences between the analytification of the  Betti derived moduli functor parametrising $\infty$-local systems on a manifold on one hand, and the de Rham  derived moduli functors parametrising hyperconnections on either smooth or holomorphic perfect complexes (Corollary \ref{poincarecor}). This can be thought of as a Riemann--Hilbert correspondence, but there is also an algebraic de Rham functor for smooth complex varieties whose analytification has a natural \'etale map to the analytic de Rham functor, giving an \'etale analytic Riemann--Hilbert map from the algebraic de Rham functor to the Betti functor (Corollary \ref{RHcoralg}).

Considering  hyperconnections with respect to more general derivations gives corresponding notions of Dolbeault, Hodge and twistor functors, with both algebraic and analytic variants (Theorem \ref{twistorthm}). The analytic twistor functor is relatively simple to define using smooth forms, but only preserves quasi-isomorphisms when restricted to the subcategory of dg Fr\'echet algebras. This has an open subfunctor, the algebro-analytic twistor functor,  defined by Deligne gluing of analytifications of algebraic functors via the Riemann--Hilbert map; it preserves all quasi-isomorphisms so is naturally defined as a functor on dg FEFC algebras. These twistor  constructions are $\Cx^*$-equivariant and have real forms, living as families over the analytification of $[\bP^1_{\R}/\SO_2]$, i.e. the complex-analytic stack $[\bP^1(\Cx)/\Cx^*]$ equipped with its complex conjugation $z \mapsto \bar{z}^{-1}$.    

We also introduce analytic analogues of the shifted bisymplectic and double Poisson structures from \cite{NCpoisson}. When restricted to commutative inputs, these yield analytic shifted symplectic and Poisson structures, but they also for instance give such structures on functors of representations by taking matrix rings as inputs, together with Lagrangian correspondences for Hall algebras (Remark \ref{bispcommrmk2}). Thanks to the integration property of Example \ref{IntCYex}, shifted analytic double Poisson structures are often simpler to describe on derived NC prestacks than on their commutative counterparts, In \S \ref{bispexsn} we give several examples of derived moduli functors carrying such structures. Examples include the $(2-2m)$-shifted analytic bisymplectic structures on the derived analytic non-commutative moduli functor of constructible $\Ql$-sheaves on an $m$-dimensional variety, and on the non-degenerate locus of our analytic twistor functor for an $m$-dimensional complex manifold.

\medskip
The structure of the paper is as follows.

In Section \ref{EFCsn} we recall the notions of algebras and differential graded algebras with entire functional calculus. The $\infty$-equivalence between EFC-DGAs and pro-Banach CDGAs localised at abstract quasi-isomorphisms appears in \S \ref{commcompsn}. We also establish key properties of nuclear Fr\'echet algebras and spaces (\S \ref{commnuclearFrechetsn}), many of which are used later, and recall how EFC-DGAs are used to form derived dagger analytic geometry (\S \ref{derivedanalyticstacksn}).

Section \ref{FEFCsn} then recalls and develops the first results we need for the non-commutative analogue, algebras with free entire functional calculus. The main new results characterise Beck modules and derivations for this theory. Whereas a module over an EFC-algebra is simply a module over the underlying commutative ring, an FEFC-module over an FEFC-algebra $A$ is a module over a form of completed tensor product $A\hten A^{\op}$. In \S \ref{dgFEFCsn}, we extend this theory to derived algebras. Although an EFC-DGA only has entire functional calculus in degree $0$, the correct notion of an FEFC-DGA has functional calculus involving all degrees, necessitated by non-commutativity, and this notion has the same homotopy theory as simplicial FEFC algebras (Proposition \ref{simpFEFCmodelprop}).

In Section \ref{DGLMDCsn}, we relate the homotopy theory of FEFC-DGAs to that of various classes of topological dg algebras. The most striking result is the $\infty$-equivalence between FEFC-DGAs and pro-Banach DGAAs localised at abstract quasi-isomorphisms, in \S \ref{FEFCcompsn}. We also identify various conditions on subcategories in \S \ref{subcatfunsn} which, though not necessarily $\infty$-subcategories after simplicial localisation, still satisfy various universality properties which suffice for many purposes. 
 Two such subcategories are given by Fr\'echet and nuclear Fr\'echet dg algebras, and the latter in particular include many whose FEFC cotangent complexes can be calculated topologically (Proposition \ref{resnprop} and subsequent examples). 
The exactness properties of completed topological tensor products on nuclear Fr\'echet spaces then give rise to many examples of well-behaved non-commutative analytic derived moduli functors, notably those parametrising arithmetic (pro-\'etale) local systems and hyperconnections (Examples \ref{ctbleex}). 

Hyperconnections are an example of a moduli functor associated  via denormalisation to a stacky DGAA (topological or FEFC), i.e. a bidifferential bigraded associative algebra. We develop the relevant theory for stacky DGAAs in Section \ref{stackyFEFCsn}, with Corollary \ref{stackykeyBanFEFCcor} providing a bigraded analogue of our earlier results showing that homotopically, giving an FEFC structure is equivalent to giving a pro-Banach topology but inverting levelwise abstract quasi-isomorphisms. Our key motivation for introducing stacky DGAAs lies in their ability to give formally \'etale atlases for well-behaved moduli functors, allowing us to define structures  purely in terms of formally \'etale data  (Corollary \ref{etsitecor}). In Section \ref{RHsn}, we then study derived moduli of hyperconnections and related structures in detail, as described above.

Section \ref{poissonsn} provides the main motivation for introducing stacky DGAAs, since they allow us to develop a notion of shifted analytic double Poisson structures for non-commutative derived analytic moduli functors. We also introduce shifted analytic bisymplectic structures, which being linear data are easier to define, and show in Proposition \ref{equivpropprestack} that these correspond to non-degenerate shifted analytic double Poisson structures via a homotopical Legendre transformation. 

\subsection*{Notation}

For a chain (resp. cochain) complex $M$, we write $M_{[i]}$ (resp. $M^{[j]}$) for the complex $(M_{[i]})_m= M_{i+m}$ (resp. $(M^{[j]})^m = M^{j+m}$). We often work with double complexes, in the form of cochain chain complexes, in which case $M_{[i]}^{[j]}$ is the double complex $(M_{[i]}^{[j]})^n_m= M_{i+m}^{j+n}$. When we have a single grading and need to compare chain and cochain complexes, we silently  make use of the equivalence  $u$ from chain complexes to cochain complexes given by $(uV)^i := V_{-i}$, and refer to this as rewriting the chain complex as a cochain complex (or vice versa). On suspensions, this has the effect that $u(V_{[n]}) = (uV)^{[-n]}$. We also occasionally write $M[i]:=M^{[i]} =M_{[-i]}$ when there is only one grading. 

For chain complexes, by default we denote differentials by $\delta$. When we work with cochain chain complexes, the cochain differential is usually denoted by $\pd$. We use the subscripts and superscripts $\bt$ to emphasise that chain and cochain complexes incorporate differentials, with $\#$ used instead when we are working  with the underlying graded objects.

Given $A$-modules $M,N$ in chain complexes, we write $\HHom_A(M,N)$ for the cochain complex given by
\[
 \HHom_A(M,N)^i= \Hom_{A_{\#}}(M_{\#},N_{\#[-i]}),
\]
with differential $\delta f= \delta_N \circ f \pm f \circ \delta_M$,
where $V_{\#}$ denotes the graded vector space underlying a chain complex $V$.

We write $s\Set$ for the category of simplicial sets, and $\oR\map$ for derived mapping spaces, i.e. the right-derived functor of simplicial $\Hom$. (For dg categories, $\oR\map$ corresponds via the Dold--Kan correspondence to the truncation of $\oR\HHom$.)

\tableofcontents

\section{Algebras with entire functional calculus}\label{EFCsn}

\subsection{EFC-algebras}

\subsection{Complex algebras}

\begin{definition}\label{EFCdef}
Recall (cf. \cite{pirkovskiiHFG}) that a $\Cx$-algebra $A$ with entire functional calculus (or EFC $\Cx$-algebra for short)  is given by a product-preserving set-valued  functor $\Cx^n \mapsto A^n$  on the category with objects $\{\Cx^n\}_{n \ge 0}$ and morphisms consisting of complex-analytic maps. 

Explicitly, the set $A$ is equipped, for every complex-analytic function $f \in \sO(\Cx^n)$, with an operation $\Phi_f \co A^n \to A$. These operations are required to be compatible in the sense that given functions $g_i \in \sO(\Cx^{m_i})$, we must have 
\[
 \Phi_{f \circ (g_1, \ldots, g_n)}= \Phi_f \circ (\Phi_{g_1}, \ldots, \Phi_{g_n}) \co A^{\sum_{i=1}^n m_i}\to A.  
\]
\end{definition}
Thus an EFC $\Cx$-algebra is a $\Cx$-algebra $A$ with a systematic and consistent way of evaluating expressions of the form $\sum_{m_1, \ldots, m_n=0}^{\infty} \lambda_{m_1, \ldots, m_n} a_1^{m_1}\cdots a_n^{m_n}$ in $A$ whenever the coefficients $\lambda_{m_1, \ldots, m_n} \in \Cx$ satisfy $\lim_{\sum m_i \to \infty } |\lambda_{m_1, \ldots, m_n}|^{1/\sum m_i}= 0$.
In particular, every EFC $\Cx$-algebra is a commutative unital $\Cx$-algebra, with  addition and multiplication coming from the functions $(x,y) \mapsto x+y$ and
  $(x,y) \mapsto xy$ in $\sO(\Cx^2)$.

\begin{examples}\label{EFCexamples}
 For every complex-analytic space $X$, the set $\sO(X)$ of global holomorphic functions on $X$ naturally has the structure of an EFC $\Cx$-algebra, as does any quotient of $\sO(X)$ by an ideal (see e.g.  \cite[Examples \ref{DStein-Fermatexamples}]{DStein}). 
In particular, Artinian rings with residue field $\Cx$ (for instance  the dual numbers $\Cx[x]/x^2$) can naturally be regarded as EFC $\Cx$-algebras.

\end{examples}

From a categorical perspective, the forgetful functor from EFC $\Cx$-algebras to sets has a left adjoint, 
 which sends a set $S$ to the EFC $\Cx$-algebra
\[
\sO(\Cx^S):=  \LLim_{\substack{T \subset S}\\ \text{finite}} \sO(\Cx^T),
\]
and  EFC $\Cx$-algebras are algebras for the resulting monad structure on the functor $S \mapsto \sO(\Cx^S)$.

\subsection{Other base rings} 
We now fix a complete valued $\Q$-algebra $R$, leading to  the following generalisation:
\begin{definition}\label{EFCnonArchdef}
Define a  $R$-algebra $A$ with entire functional calculus (or EFC $R$-algebra for short)  to be a product-preserving set-valued  functor $\bA_{R}^n \mapsto A^n$  on the full subcategory of rigid analytic varieties with objects the affine spaces $\{\bA_{R}^n\}_{n \ge 0}$. 
\end{definition}
Explicitly, for the rings $ \sO^{\an}(\bA^n_{R})$ of  $R$-analytic functions, i.e.
\[
\{\sum_{m_1, \ldots, m_n=0}^{\infty} \lambda_{m_1, \ldots, m_n} z_1^{m_1}\cdots z_n^{m_n}  ~:~ \lim_{\sum m_i \to \infty } |\lambda_{m_1, \ldots, m_n}|^{1/\sum m_i}= 0\} \subset R\llb z_1, \ldots,z_n\rrb,
\]
the set of analytic morphisms $\bA_{R}^n \to \bA_{R}^m$ is isomorphic to $\sO^{\an}(\bA^n_{R})^m$.
Thus  an EFC $R$-algebra $A$  is a $R$-algebra equipped with a systematic and consistent way of evaluating expressions of the form $\sum_{m_1, \ldots, m_n=0}^{\infty} \lambda_{m_1, \ldots, m_n} a_1^{m_1}\cdots a_n^{m_n}$ in $A$ whenever the coefficients $\lambda_{m_1, \ldots, m_n} \in R$ satisfy $\lim_{\sum m_i \to \infty } |\lambda_{m_1, \ldots, m_n}|^{1/\sum m_i}= 0$. 

\smallskip
For a more familiar interpretation of real  EFC algebras,  consider the natural action of the Galois group $\Gal(\Cx/\R)$ on $\sO(\Cx^S)$ given by letting it act naturally on the coefficients of Taylor series; the complex conjugate $\bar{f}$ of a holomorphic function $f$ is then given by the formula
\[
 \bar{f}(z_1, \ldots, x_n):=\overline{f(\bar{z}_1, \ldots, \bar{z}_n)}. 
\]
We then have  $\sO^{\an}(\bA^n_{\R})\cong  \sO(\Cx^n)^{\Gal(\Cx/\R)}$,  the ring  of holomorphic functions $f$ on $\Cx^S$ satisfying $f=\bar{f}$, or equivalently $f(\bar{z}_1, \ldots, \bar{z}_n)=\overline{f(z_1, \ldots,z_n)}$.

Also note that every EFC $\Cx$-algebra is naturally an EFC $\R$-algebra, and that this forgetful functor has a left adjoint simply given by sending an EFC $\R$-algebra $A$ to the tensor product $A\ten_{\R}\Cx$; this adjunction is in fact comonadic.

\begin{examples}\label{REFCexamples}
For every complex-analytic space $X$ equipped with an anti-holomorphic involution $\sigma$ (e.g. the space of complex points of a real algebraic variety), the set $\sO(X)^{\Gal(\Cx/\R)}$ of global holomorphic functions $f$ on $X$ with $f(\sigma(x))=\overline{f(x)}$  naturally has the structure of an EFC $\R$-algebra, as does any quotient of $\sO(X)^{\Gal(\Cx/\R)}$ by an ideal.
In particular, Artinian rings with residue field $\R$  can naturally be regarded as EFC $\R$-algebras. 
\end{examples}

The category of EFC $R$-algebras has all small limits and colimits.
In particular, there is a coproduct, which we denote by $\odot$, with the property that $\sO(M \by N) \cong \sO(M)\odot\sO(N)$  for  analytic spaces $M,N$ . We also denote pushouts by $A\odot_BC$. Because all quotient rings of an  EFC $R$-algebra are  EFC $R$-algebras   as in \cite{DubucKock} (\cite[Corollary 2.8]{CarchediRoytenberg}), it follows that if we have surjective EFC $\Cx$-algebra homomorphisms $\sO(\Cx^S) \onto A$, $\sO(\Cx^T) \onto C$, then the pushout of $A \la B \to C$ can be expressed as
\[
 A\odot_BC:= \sO(\Cx^{S \sqcup T})\ten_{(\sO(\Cx^S)\ten\sO(\Cx^T))}(A\ten_BC),
\]
with a similar expression for EFC $R$-algebras.

The following terminology is adapted from \cite[Definition \ref{DStein-EFCDGAdef}]{DStein}, restricting to non-negative degrees; see \cite[Definition 4.14]{CarchediRoytenbergHomological} and  \cite[Example 2.16]{CarchediRoytenberg} for earlier instances of the same concept.

\begin{definition}\label{EFCDGAdef}
Define an EFC-differential graded $R$-algebra (EFC-DGA for short) to be a chain complex $A_{\bt}=(A_{\#}, \delta)$ of $R$-vector spaces in non-negative homological degrees ($\ldots \xra{\delta} A_1 \xra{\delta} A_0$), equipped with:
\begin{itemize}
 \item an associative graded $R$-linear multiplication, graded-commutative in the sense that $ab= (-1)^{\deg a\deg b}ba$ for all $a,b \in A$, and 
\item an enhancement of the $R$-algebra structure on $A_0$ to an EFC-algebra structure,
\end{itemize}
such that $\delta$ is a graded derivation in the sense that $\delta(ab)= \delta(a)b + (-1)^{\deg a}a\delta(b)$ for all $a,b \in A$.

We denote the category of such objects by $dg_+\EFC\Alg_{R}$ --- morphisms are ${R}$-CDGA morphisms $A_{\bt} \to B_{\bt}$ for which the map $A_0 \to B_0$ preserves the entire functional calculus.
\end{definition}

As in \cite[Proposition \ref{DStein-stdmodelprop}]{DStein}, results from \cite{CarchediRoytenbergHomological,nuitenThesis} imply the following:
\begin{proposition}\label{stdmodelprop}
There is a cofibrantly generated model structure (which we call the standard model structure) on the category $dg_+\EFC\Alg_{R}$  in which  a morphism $A_{\bt} \to B_{\bt}$ is
\begin{enumerate}
 \item  a weak equivalence if it is a quasi-isomorphism;
\item a fibration if it is surjective in strictly positive chain degrees.
\end{enumerate}

There is also a model structure  for simplicial EFC $R$-algebras in which weak equivalences are $\pi_*$-isomorphisms and fibrations are Kan fibrations, and Dold--Kan normalisation combines with the Eilenberg--Zilber shuffle product \cite[Definitions 8.3.6 and 8.5.4]{W} to give a right Quillen equivalence $N$ from simplicial EFC $R$-algebras to our standard model structure. 
\end{proposition}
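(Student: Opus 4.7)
The plan is to obtain both model structures by transfer along free/forgetful adjunctions, and then to upgrade Dold--Kan normalisation (equipped with the Eilenberg--Zilber shuffle product) to a right Quillen equivalence. Since EFC $R$-algebras are algebras for a Fermat-style algebraic theory in the sense of Dubuc--Kock, the category $dg_+\EFC\Alg_{R}$ is monadic over non-negative chain complexes of $R$-modules, with left adjoint combining the free EFC-algebra functor $S \mapsto \sO^{\an}(\bA_{R}^S)$ in degree zero with the free graded-commutative algebra in higher degrees.

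For the standard model structure on $dg_+\EFC\Alg_{R}$, I would take as generating (trivial) cofibrations the images under this free functor of the generating (trivial) cofibrations of the projective model structure on non-negative chain complexes of $R$-modules, then apply Quillen's small-object/path-object transfer theorem. The critical input — that every relative cell complex built from generating trivial cofibrations is a quasi-isomorphism — is supplied by the general results on Fermat theories in \cite{nuitenThesis}, modelled on the $C^{\infty}$-case of \cite{CarchediRoytenbergHomological}. The only properties of EFC used in those arguments are closure under quotients by finitely generated ideals and exactness of the coproduct $\odot$ along analytic maps (both recorded before Definition \ref{EFCDGAdef}), so the proof runs verbatim and mirrors \cite[Proposition \ref{DStein-stdmodelprop}]{DStein}.

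For simplicial EFC $R$-algebras, the same monadic formalism yields a transferred model structure from simplicial sets, with generating (trivial) cofibrations given by the images of $\partial\Delta^n \to \Delta^n$ (resp.\ $\Lambda^n_k \to \Delta^n$) under the free EFC-algebra functor; fibrations and weak equivalences are then automatically detected on underlying simplicial sets. To see that $N$ lifts to a right Quillen functor between the algebra categories, observe that Dold--Kan normalisation with the Eilenberg--Zilber shuffle is lax symmetric monoidal from simplicial $R$-modules to non-negative chain complexes. Applied to a simplicial EFC-algebra $A$, the resulting complex $NA$ inherits an EFC-DGA structure since $(NA)_0=A_0$ retains its EFC structure and the shuffle product extends this compatibly in higher degrees; $N$ is then right Quillen because weak equivalences and fibrations are detected on underlying complexes. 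Promoting this to a Quillen equivalence reduces, via the forgetful functors, to the classical Dold--Kan equivalence on simplicial $R$-modules, with monoidality of $N$ ensuring that the left adjoint preserves the EFC structure up to weak equivalence.

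The principal obstacle is the path-object construction underlying the transfer: the free EFC-DGA on a single degree-zero generator is $\sO^{\an}(\bA_{R}^1)$ rather than a polynomial algebra, so the standard cdga cylinder must be replaced by an analytic avatar built from EFC operations applied to $(0,1)\in\Cx^2$ and their homotopies. This is precisely the technical heart of \cite{nuitenThesis} for Fermat theories; once it is in hand, the remaining verifications — the transfer to the simplicial side and the Dold--Kan Quillen equivalence — follow by essentially formal arguments.
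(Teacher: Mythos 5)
Your proposal is correct and matches the paper's approach, which simply cites \cite[Proposition \ref{DStein-stdmodelprop}]{DStein} and the general results on Fermat theories from \cite{CarchediRoytenbergHomological,nuitenThesis}. You have essentially unpacked what those references supply: the transferred model structure via the monad on chain complexes, the analytic path-object needed for the transfer, the transferred simplicial model structure, and the Dold--Kan comparison via the lax monoidal shuffle structure on $N$ --- though your final sentence on the Quillen equivalence is a touch compressed compared to the careful derived-unit argument those references spell out.
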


\subsection{Complete topological dg  algebras}

\subsubsection{Basic comparisons}\label{commcompsn}

\begin{definition}
Write $\Ban\CAlg_{R}$ for the category of unital commutative Banach algebras over ${R}$, with bounded  morphisms, and $\Ban\Alg_{R}$ for the category of unital associative Banach algebras over ${R}$, with bounded  morphisms. Here, the units are required to have norm $1$ and the norm is submultiplicative. 
\end{definition}

\begin{definition}
 Define a unital associative  differential graded Banach ${R}$-algebra $A_{\bt}$ to be a chain complex of Banach ${R}$-vector spaces, 
 with $\|\delta a\|\le \|a\|$,
 together with a unit $1 \in \z_0A$ of norm $1$ and an 
 ${R}$-bilinear associative multiplication $A_i \by A_j \to A_{i+j}$ 
 with $\|ab\|\le \|a\|\cdot \|b\|$ and with respect to which $\delta$ is a derivation.
 
 We say that $A_{\bt}$ is commutative if the multiplication is graded-commutative. 
\end{definition}

\begin{definition}
Write $dg_+\Ban\CAlg_{R}$ (resp.  $dg_+\Ban\Alg_{R}$) for the category of  unital commutative (resp. associative) differential graded Banach ${R}$-algebras concentrated  in non-negative homological degrees, in which morphisms are ${R}$-CDGA morphisms $A_{\bt} \to B_{\bt}$ which are bounded in each degree.
\end{definition}

\begin{definition} 
  As in \cite{NachbinCh3}, say that a topological $R$-algebra $A$ is locally  multiplicatively convex (LMC) if its topology is given by a family of seminorms over $R$ which are submultiplicative in the sense that 
 \[
  \|ab\|\le \|a\|.\|b\|
 \]
for all $a,b \in A$ and all seminorms $\|-\|$ in the family. We say that $A$ is moreover complete if it is isomorphic to the limit of the Banach $R$-algebras given by completion with respect to these seminorms.
\end{definition}

Equivalently, this amounts to saying that $A$ is a filtered inverse limit of Banach $R$-algebras for which all the morphisms from $A$ are dense.

\begin{remark}
 Whenever we refer to objects as complete, this is understood to mean that they are also  Hausdorff.
\end{remark}

\begin{definition}\label{Tcalgdef} 
Write 
$\hat{\Tc}\CAlg_{R}$
for the category of unital commutative complete  LMC $R$-algebras, with continuous  morphisms, and $\hat{\Tc}\Alg_{R}$ for the category of unital associative complete  LMC   algebras over ${R}$, with continuous  morphisms.
\end{definition}

Note that the system of seminorms on each object, together with standard properties of locally convex topological vector spaces as in e.g. \cite[\S 1]{prosmans} mean that we have full and faithful functors
\[
 \hat{\Tc}\CAlg_{R} \to \pro(\Ban\CAlg_{R}) \quad \quad \hat{\Tc}\Alg_{R} \to \pro(\Ban\Alg_{R})
\]
given by sending a topological algebra to the filtered system of completions with respect to continuous submultiplicative seminorms. The essential image consists of filtered systems in which all the transition morphisms are dense, together with any pro-objects isomorphic to such.

\begin{definition}
 Define a unital associative  differential graded  complete  LDMC  ${R}$-algebra $A_{\bt}$ to be a chain complex of complete locally convex topological ${R}$-vector spaces, with the differentials $\delta\co A_n \to A_{n-1}$ being continuous operators, together with a unit $1 \in \z_0A$ and a continuous ${R}$-bilinear multiplication $A_i \by A_j \to A_{i+j}$, such that the topology is given by a system of submultiplicative  seminorms $\|-\|$ with $\|1\|=1$ and $\|\delta a\|\le \|a\|$. We require that the multiplication be
 associative and that $\delta$ be a derivation with respect to it.  We say that $A_{\bt}$ is commutative if the multiplication is graded-commutative. 
\end{definition}

\begin{definition}
Write $dg_+\hat{\Tc}\CAlg_{R}$ (resp.  $dg_+\hat{\Tc}\Alg_{R}$) for the category of  unital commutative (resp. associative) differential graded complete  LDMC ${R}$-algebras concentrated  in non-negative homological degrees, in which morphisms are dg ${R}$-algebra morphisms $A_{\bt} \to B_{\bt}$ which are continuous in each degree.

Given $A \in dg_+\hat{\Tc}\CAlg_{R}$, we write $dg_+\hat{\Tc}\CAlg_{A}$ for the slice category $A \da dg_+\hat{\Tc}\CAlg_{R}$ of objects over $A$ (i.e. complete  LDMC commutative dg $A$-algebras), and $dg_+\hat{\Tc}\Alg_{A}$ for the category of pairs $(B,f)$ for $B \in dg_+\hat{\Tc}\Alg_{R}$ and $f \co A \to B$ a morphism such $f(A)$ lies in the centre of $B$ (i.e. $B$ is a complete  LDMC associative dg $A$-algebra).
\end{definition}

By completing with respect to each submultiplicative seminorm, we can identify $dg_+\hat{\Tc}\CAlg_{R}$ and  $dg_+\hat{\Tc}\Alg_{R}$ with the full subcategories of the pro-categories $\pro(dg_+\Ban\CAlg_{R}) $ and $\pro(dg_+\Ban\Alg_{R})$ of pro-Banach dg algebras consisting of those filtered systems in which all transition morphisms are dense.

\begin{lemma}\label{EFCforgetlemma}
 There is a natural forgetful functor $U \co dg_+\hat{\Tc}\CAlg_{R} \to  dg_+\EFC\Alg_{R}$, sending a topological dg  algebra $A_{\bt}$ to a dg EFC algebra with the same underlying dg algebra. 
\end{lemma}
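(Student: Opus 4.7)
The plan is to construct, for each $A_{\bt} \in dg_+\hat{\Tc}\CAlg_{R}$, the EFC structure on $A_0$ and then check functoriality. The higher-degree structure is unchanged, so the content lies entirely in degree $0$.

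First I would use the identification of $\hat{\Tc}\CAlg_{R}$ with the full subcategory of $\pro(\Ban\CAlg_{R})$ consisting of dense filtered systems, writing $A_0 = \varprojlim_{\alpha} A_0^{\alpha}$ as a filtered limit of commutative Banach $R$-algebras with submultiplicative norms $\|-\|_{\alpha}$. In each commutative Banach $R$-algebra $A_0^{\alpha}$, given $a_1, \ldots, a_n \in A_0^{\alpha}$ with $\|a_i\|_{\alpha} \le C$ and  $f = \sum \lambda_{m_1, \ldots, m_n} z_1^{m_1}\cdots z_n^{m_n} \in \sO^{\an}(\bA^n_R)$, the growth condition $\lim |\lambda_{m_1, \ldots, m_n}|^{1/\sum m_i} = 0$ forces absolute convergence of $\sum \lambda_{m_1, \ldots, m_n} a_1^{m_1}\cdots a_n^{m_n}$ in $A_0^{\alpha}$ (the norms $\|\lambda_{m_1,\ldots,m_n} a_1^{m_1}\cdots a_n^{m_n}\|_{\alpha}$ are dominated by a convergent series of positive reals by the root test applied to $|\lambda|C^{\sum m_i}$). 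This defines $\Phi_f^{\alpha} \co (A_0^{\alpha})^n \to A_0^{\alpha}$, and compatibility with the transition morphisms $A_0^{\beta} \to A_0^{\alpha}$ is immediate since these are continuous $R$-algebra homomorphisms, so commute with the finite partial sums and preserve limits. Passing to the inverse limit yields a well-defined operation $\Phi_f \co A_0^n \to A_0$.

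Next I would verify the functoriality axiom $\Phi_{f \circ (g_1, \ldots, g_n)} = \Phi_f \circ (\Phi_{g_1}, \ldots, \Phi_{g_n})$ of Definition \ref{EFCdef}. Reducing to each Banach quotient $A_0^{\alpha}$, this becomes the statement that in a commutative Banach $R$-algebra, substitution of absolutely convergent power series is compatible with the substitution of entire functions; this is a standard application of Fubini's theorem for absolutely convergent sums, using submultiplicativity of the norm to rearrange products. Product-preservation on the category $\{\bA^n_R\}_{n \ge 0}$ is automatic once one checks that the coordinate projections $\bA^n_R \to \bA_R$ act as the set-theoretic projections $A_0^n \to A_0$, which follows directly from the power series formula.

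Finally, for functoriality in $A_{\bt}$, a continuous $R$-algebra morphism $f \co A_0 \to B_0$ in $\hat{\Tc}\CAlg_{R}$ induces continuous maps on any pair of Banach models. Since $f$ preserves sums, products, and limits of convergent sequences, it commutes with each $\Phi_g$ for $g \in \sO^{\an}(\bA^n_R)$, so is a morphism of EFC $R$-algebras. The main (mild) obstacle is verifying the composition axiom cleanly; everything else is formal manipulation of convergent series, and the lemma follows.
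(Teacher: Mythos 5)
Your proof is correct and takes essentially the same approach as the paper, which simply asserts (as well-known) that completeness and local multiplicative convexity of $A_0$ guarantee convergence of the series defining $\Phi_f$, leaving the root-test estimate, the Fubini-style verification of the composition axiom, and functoriality as implicit details that you have spelled out.
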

\begin{proof}
 Given  $A_{\bt} \in dg_+\hat{\Tc}\CAlg_{R}$, we can consider the underlying CDGA and then it suffices to show that $A_0$ is naturally an EFC algebra. Since $A_0$ is complete and LMC, this is well-known: we can simply evaluate expressions of the form $\sum_{m_1, \ldots, m_n=0}^{\infty} \lambda_{m_1, \ldots, m_n} a_1^{m_1}\cdots a_n^{m_n}$ in $A_0$ by taking limits with respect to the family of submultiplicative seminorms,  with convergence guaranteed whenever  $\lim_{\sum m_i \to \infty } |\lambda_{m_1, \ldots, m_n}|^{1/\sum m_i}= 0$.
\end{proof}

We then have:

\begin{proposition}\label{keyBanEFCprop}
 The forgetful functor $U \co \pro(dg_+\hat{\Tc}\CAlg_{R}) \to  dg_+\EFC\Alg_{R}$ has a left adjoint, and the unit of the adjunction gives isomorphisms on cofibrant objects in the category $dg_+\EFC\Alg_{R}$. 
\end{proposition}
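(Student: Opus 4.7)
The plan is to construct $F$ explicitly, then verify the unit $\eta_A \co A \to UFA$ is an isomorphism for cofibrant $A$ by transfinite induction on cell-attachments using the standard model structure of Proposition \ref{stdmodelprop}.

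For existence, extend $U$ to the pro-category by $U\{B_\alpha\} := \Lim_\alpha U(B_\alpha)$, which lands in $dg_+\EFC\Alg_R$ since EFC-algebras are closed under small limits, and which preserves all limits. A standard adjoint functor theorem argument (with solution set coming from cardinality bounds on free generators) then yields $F$. The universal property forces $F$ on the degree-$0$ free generator $\sO(\bA^n_R)$ to be the pro-Banach CDGA $\{\sO^{\an}(\bar{\bA}^n_{R,r})\}_{r\to\infty}$ of Tate algebras on closed polydiscs of increasing polyradii, since for any Banach CDGA $B$,
\[
\Hom_{\EFC}(\sO(\bA^n_R), U(B)) = U(B)^n = \LLim_r \Hom_{\Ban}(\sO^{\an}(\bar{\bA}^n_{R,r}), B).
\]
Free degree-$0$ generators on infinite sets arise as filtered colimits of finite-rank ones, preserved by $F$ as a left adjoint. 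In positive degree the generating cofibrations are free polynomial attachments and acyclic cells, on which $F$ acts trivially since these carry no analytic data.

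For the unit on degree-$0$ generators, the computation is direct:
\[
UF(\sO(\bA^n_R)) = \Lim_r U(\sO^{\an}(\bar{\bA}^n_{R,r})) = \Lim_r \sO^{\an}(\bar{\bA}^n_{R,r}) = \sO^{\an}(\bA^n_R) = \sO(\bA^n_R),
\]
the last equality being the standard exhaustion of entire functions by Tate algebras. In positive degree $\eta$ is manifestly the identity. For the inductive step, a pushout $A \to A \odot_P Q$ in $dg_+\EFC\Alg_R$ along a generating cofibration $P \to Q$ satisfies $F(A \odot_P Q) \cong FA \htenpi_{FP} FQ$ since $F$ preserves colimits, and the unit is an isomorphism provided $U$ of this topological pushout recovers $A \odot_P Q$. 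On analytic cells this boils down to the projective-tensor identity $\sO^{\an}(\bar{\bA}^n_{R,r}) \htenpi_R \sO^{\an}(\bar{\bA}^m_{R,s}) \cong \sO^{\an}(\bar{\bA}^{n+m}_{R,(r,s)})$ of Tate algebras, together with its compatibility with inverse limits in the polyradii and the corresponding identity $\sO^{\an}(\bA^{n+m}_R) = \sO(\bA^n_R) \odot \sO(\bA^m_R)$ of EFC algebras.

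The main obstacle is precisely this commutation of $U$ with the topological pushouts appearing in cell attachments: as a right adjoint $U$ does not preserve colimits in general, and establishing this commutation requires exactness of the projective tensor product on the appropriate nuclear Fréchet (or Banach) pieces and care with the indexing categories for infinite-rank degree-$0$ generators and transfinite compositions. Once verified on generators and propagated through cell attachments, the property is preserved under the retracts appearing in cofibrant objects, completing the proof.
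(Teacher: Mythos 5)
Your construction of $F$ is in the right spirit — the paper also realises $F$ on (quasi-)free objects via completions against a system of (sub)multiplicative seminorms on the free generators, and your pro-Tate-algebra description is pro-isomorphic to the paper's $\ell^1$-completions $\Lim_f\hat{C}_f$. The abstract existence argument via the adjoint functor theorem is also defensible, though the paper prefers the fully explicit description.

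The gap is in your argument that the unit $\eta_C\colon C\to UFC$ is an isomorphism on cofibrant objects, which you propose to run by transfinite induction over cell attachments. You correctly flag the obstacle yourself: this requires $U$ to commute with the pushouts (and transfinite compositions, and filtered colimits over finite generating sets) appearing in cofibrant approximation. But that commutation \emph{is} essentially the content of the proposition, not a side condition, and as a right adjoint $U$ has no reason to preserve any of these colimits a priori; in fact $U$ applied to the pro-Banach colimit $\LLim_T\sO^{\an}(\bA^T_R)$ over finite $T\subset S$ recovering the free EFC-algebra on an infinite set $S$ is precisely the nontrivial thing to be shown. Verifying it "on generators and then propagating" begs the question, and the tensor-product identity for Tate algebras only handles the finitely-generated polynomial case, not the infinite and transfinite cases, nor the case where the dg ideal is nontrivial.

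The paper avoids the cell-by-cell induction entirely. It reduces to a \emph{single} quasi-free $C$ (cofibrant objects being retracts of such), constructs $FC=\Lim_f\hat{C}_f$ globally, and then runs a Prosmans-type argument: if an element $\sum_{\alpha}\lambda_{\alpha}$ of $UFC$ involved infinitely many variables of $S$, one could inductively build a weight function $f$ with $f(\alpha_n)=|\lambda_{\alpha_n}|^{-1}$ making $\|\sum_n\lambda_{\alpha_n}\alpha_n\|_f$ diverge. This shows every element of $UFC$ is a power series in finitely many variables, identifying $UFC_0$ with the free EFC-algebra on $S_0$ and then $UFC$ with the free graded algebra over that on $S_{>0}$. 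That finiteness argument is the key idea your proposal is missing, and without it the inductive scaffolding cannot close.
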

\begin{proof}
The left adjoint functor $F$ is most easily described on objects $C$ which are quasi-free in the sense that the  underlying graded EFC-algebra is freely generated, say by generators $S_n$ in degree $n$. Then any function $f \co S \to \R_{\ge 0}$ gives rise to a submultiplicative seminorm $\|-\|_f$ on $C$ by setting
\[
 \|\sum_{m_1, \ldots, m_r} \lambda_{m_1, \ldots, m_r} s_1^{m_1}\cdots s_r^{m_r}\|_f:= \sum_{m_1, \ldots, m_r}| \lambda_{m_1, \ldots, m_r}| f(s_1)^{m_1}\cdots f(s_r)^{m_r}
\]
for $s_1, \ldots, s_r \in S$, where $m_i \in \N_0$ when $s_i$ has even degree and $m_i \in \{0,1\}$ when $s_i$ has odd degree. 
We can  form a dg Banach algebra $\hat{C}_f$ given by completing $C$ with respect to the seminorm $\|-\|_f$, and then $FC$ is given by the filtered limit $\Lim_f\hat{C}_f$ of dg Banach algebras. 

To see this, observe that if we forget the differentials, then for any $B \in dg_+\Ban\CAlg_{R}$, the set  of graded EFC-algebra morphisms $C_{\#} \to UB_{\#}$ is isomorphic to $\prod_n B_n^{S_n}$, as is the the set of graded continuous algebra morphisms $(\Lim_f\hat{C}_f)_{\#} \to B_{\#}$, by the
universal property of  $\ell^1$ norms. The intertwining relations on the respective differentials then match up to give
 $ 
 \Hom_{dg_+\hat{\Tc}\CAlg_{R}}(FC, B) \cong \Hom_{ dg_+\EFC\Alg_{R}}(C,UB).
$ 


Now, given an arbitrary dg EFC algebra $A$, we can write it as a quotient $C/I$ of a quasi-free dg EFC algebra $C$  by a dg ideal $I$.  For a  submultiplicative seminorm $\|-\|_f$ on $C$ as above, we then consider the induced seminorm on $A=C/I$, and let $\hat{A}_f$ be given by completing  with respect to it.  Then $FA$ is the filtered limit $\Lim_f\hat{A}_f$ of dg Banach algebras, satisfying
\[
  \Hom_{dg_+\hat{\Tc}\CAlg_{R}}(FA, B) \cong \Hom_{ dg_+\EFC\Alg_{R}}(A,UB)
\]
for all dg Banach algebras $B$, and hence for all $B \in dg_+\hat{\Tc}\CAlg_{R}$, since $U$ preserves limits and every object of $dg_+\hat{\Tc}\CAlg_{R}$ is a filtered limit of dg Banach algebras. Equivalently, $FA$ is the quotient of $FC$ by the closure of $I$.

It remains to show that the unit $C \to UFC$ of the adjunction is an isomorphism on cofibrant objects. Every cofibrant object is a retract of a quasi-free object, so it suffices to show this when $C$ is quasi-free, with generating set $S$, say.

Arguing as in  \cite[Proposition 1.1.4]{prosmans}, it follows that any element $\sum_{\alpha} \lambda_{\alpha}$ of $UFC = \Lim_f \hat{C}_f$ involves only finitely many variables in $S$. Explicitly,  we could otherwise choose a sequence $\{\alpha_n\}_n$ of words with $\lambda_{\alpha_n} \ne 0$ such that $\alpha_n$ involves at least one variable not featuring in $\{\alpha_1, \ldots, \alpha_{n-1}\}$, and we can then construct $f$ inductively so that $f(\alpha_n)= |\lambda_{\alpha_n}|^{-1}$, so $\|\sum_{n}\lambda_{\alpha_n}\alpha_n\|_f=\infty$.
In particular, we have
\begin{align*}
 UFC_0 ~=~ \LLim_{\substack{T\subset S_0 \\ \text{finite}}} (
\Lim_{r>0}\ell^1( \prod_{t \in T} (t/r)^{m_t} ~:~ m \in \N_0^T))
 ~\cong~ C_0,
\end{align*}
the free EFC algebra generated by $S_0$. Moreover, only finitely many  multiplicities of variables in $S$ of positive degree can feature in an element of $UFC$. Thus $UFC$ is the free graded $(UFC)_0$-algebra generated by $S_{>0}$, 
so the map $C \to UFC$ is indeed an isomorphism.  
\end{proof}

\begin{remarks}
 The monad of Proposition \ref{keyBanEFCprop} is certainly not the identity on arbitrary EFC $R$-algebras, since for instance any ideal in an EFC $R$-algebra is an EFC ideal, but few are closed.

Our reason for working with EFC (and later FEFC) algebras instead of Fr\'echet algebras is that the former are essentially algebraic in nature from a categorical perspective, so are much more amenable to homotopical algebra.
\end{remarks}

\begin{lemma}\label{inftyequivlemma} 
Assume we have categories $\C, \cD$, a functor $U \co \cD \to \C$ with left adjoint $F$,  and a model structure on $\C$ such that the unit $\id \to UF$ of the adjunction is a weak equivalence when evaluated on any object in the image of a functorial cofibrant replacement functor $Q$. Then the functor $U$ induces an equivalence of the $\infty$-categories given by simplicial localisation at weak equivalences in $\C$ and  $U$-weak equivalences in $\cD$. The same is true if we replace $\cD$ with any full subcategory $\cD'$ containing the image of $FQ$.
\end{lemma}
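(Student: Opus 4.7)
The plan is to show that $U$ admits $FQ$ as a quasi-inverse at the level of simplicial localisations, by exhibiting natural zig-zags of (resp.\ $U$-)weak equivalences between the two composites and the identity functors.

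First I would verify that $FQ$ sends weak equivalences in $\C$ to $U$-weak equivalences in $\cD$, so that it descends to the localisations. For any weak equivalence $f \co X \to Y$ in $\C$, functoriality of $Q$ gives a weak equivalence $Q(f)$ between cofibrant objects. Combining naturality of the unit $\eta$ with the hypothesis that $\eta_{Q(X)}$ and $\eta_{Q(Y)}$ are weak equivalences, two-out-of-three in the resulting commutative square shows that $UFQ(f)$ is a weak equivalence. The natural zig-zag
\[
X \xla{\xi_X} Q(X) \xra{\eta_{Q(X)}} UFQ(X),
\]
where $\xi$ denotes cofibrant replacement and both maps are weak equivalences (the second by hypothesis), then supplies a natural equivalence $\id_{\C} \simeq UFQ$ after localisation.

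For the other composite, I claim the natural morphism $\epsilon_Y \circ F(\xi_{U(Y)}) \co FQU(Y) \to Y$ (with $\epsilon$ the counit) is a $U$-weak equivalence, giving a natural equivalence $FQU \simeq \id_{\cD}$ after localisation at $U$-weak equivalences. The key computation, combining naturality of $\eta$ with the triangle identity $U(\epsilon_Y) \circ \eta_{U(Y)} = \id_{U(Y)}$, gives
\[
U(\epsilon_Y) \circ UF(\xi_{U(Y)}) \circ \eta_{QU(Y)} \;=\; U(\epsilon_Y) \circ \eta_{U(Y)} \circ \xi_{U(Y)} \;=\; \xi_{U(Y)}.
\]
Since $\eta_{QU(Y)}$ and $\xi_{U(Y)}$ are both weak equivalences ($QU(Y)$ being cofibrant), two-out-of-three shows that $U$ applied to $\epsilon_Y \circ F(\xi_{U(Y)})$ is a weak equivalence, as required.

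Combining these two natural equivalences gives the sought equivalence of $\infty$-categorical localisations. For the extension to any full subcategory $\cD' \subset \cD$ containing the image of $FQ$, the quasi-inverse $FQ$ already factors through $\cD'$, and for every $Y \in \cD'$ the natural $U$-weak equivalence $FQU(Y) \to Y$ constructed above has source in $\cD'$, so the whole argument restricts verbatim. The only real subtlety lies in the second composite, where naturality of $\eta$ and the triangle identity must be chained correctly in order to reduce the $U$-weak equivalence statement to the hypothesis on units over cofibrant objects.
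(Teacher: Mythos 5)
Your argument is correct and follows the paper's proof almost step for step: you establish $FQ$ as the $\infty$-quasi-inverse via the zigzag $X \xla{} QX \xra{\eta_{QX}} UFQX$, and you show $U(\epsilon_Y \circ F\xi_{UY})$ is a weak equivalence by exactly the same chain of naturality-of-$\eta$ and the triangle identity that the paper uses (its map $\gamma_B = \vareps_B \circ F\rho_{UB}$ is your $\epsilon_Y \circ F\xi_{UY}$). The only difference is that you spell out a couple of steps (weak-equivalence preservation of $FQ$, the triangle-identity cancellation) that the paper leaves implicit.
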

\begin{proof}

It suffices to prove that the functor $FQ \co \C \to \cD'$ gives an $\infty$-quasi-inverse to  $U \co \cD' \to \C$ on simplicial localisation. 
By hypothesis, for any $A \in \C$ 
we have a natural zigzag
of weak equivalences
\[
 A \xla{\rho_A} QA \xra{ \eta_{QA}}  UFQA,
\]
so $U \circ (FQ) \simeq \id$. In particular, this means that $FQ$ preserves weak equivalences and is homotopy right inverse to $U$.

For $B \in \cD'$, the co-unit of the adjunction induces a natural map  $\gamma_B$ as the composite
\[
 FQUB \xra{F\rho_{UB}} FUB \xra{\vareps_B} B 
\]
which it then suffices to show is a $U$-weak equivalence (i.e. that $U\gamma_B$ is a weak equivalence). Now by naturality of the unit, we have $\eta_{UB} \circ \rho_{UB} = UF(\rho_{UB}) \circ \eta_{QUB} \co QUB \to UFUB$. Thus 
\[
 U(\gamma_B)\circ \eta_{QUB} = U\vareps_B \circ \eta_{UB} \circ \rho_{UB}= \rho_{UB},
\]
so $U(\gamma_B)$ is a weak equivalence since $\rho_{UB}$ and $\eta_{QUB}$ are so by hypothesis. 
\end{proof}

\begin{corollary}\label{keyBanEFCcor}
The  forgetful functor $U \co dg_+\hat{\Tc}\CAlg_{R} \to  dg_+\EFC\Alg_{R}$ induces an equivalence of the $\infty$-categories given by simplicial localisation at quasi-isomorphisms, i.e.  morphisms $A \to B$ for which the maps  $\H_*A \to \H_*B$ are isomorphisms of abstract vector spaces. The same is true if we replace $dg_+\hat{\Tc}\CAlg_{R}$ with any full subcategory containing all objects of the form $(\sO^{\an}(\bA^{S_0}_{R})[S_+],\delta)$ for all graded (possibly infinite) sets $S$,
where
\[
 \sO^{\an}(\bA^{S_0}_{R})=\LLim_{\substack{T\subset S \\ \text{finite}}} \sO^{\an}(\bA^{T}_{R}).
\]
\end{corollary}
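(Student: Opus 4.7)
The plan is to apply Lemma \ref{inftyequivlemma} directly to the adjunction $F \dashv U$ produced by Proposition \ref{keyBanEFCprop}, taking $\C = dg_+\EFC\Alg_R$ equipped with the standard model structure of Proposition \ref{stdmodelprop} and $\cD = dg_+\hat{\Tc}\CAlg_R$. That lemma asks for a functorial cofibrant replacement $Q$ on $\C$ such that the unit $A \to UFQA$ is a weak equivalence for every $A$. Proposition \ref{keyBanEFCprop} supplies something stronger, namely that $C \to UFC$ is an isomorphism whenever $C$ is cofibrant, and cofibrant generation of the standard model structure supplies such a functorial $Q$. The first assertion of the corollary then follows directly from Lemma \ref{inftyequivlemma}, noting that weak equivalences in $\C$ are precisely quasi-isomorphisms by Proposition \ref{stdmodelprop}, and that $U$-weak equivalences in $\cD$ unwind to abstract quasi-isomorphisms by definition of $U$.

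For the refinement to a full subcategory $\cD' \subset dg_+\hat{\Tc}\CAlg_R$, the only additional hypothesis of Lemma \ref{inftyequivlemma} is that the image of $FQ$ lie in $\cD'$. The plan is to choose $Q$ to be the small-object-argument replacement built from generating cofibrations that attach free EFC generators in specified homological degrees, starting from the initial object $R = \sO^{\an}(\bA^0_R)$. The output $QA$ is then a genuinely quasi-free EFC-DGA rather than merely a retract of one: its underlying graded EFC-algebra is free on some graded set $S$, hence isomorphic to $\sO^{\an}(\bA^{S_0}_R)[S_+]$. Reading off the explicit description of the left adjoint from the proof of Proposition \ref{keyBanEFCprop}, $FQA$ is then the topological dg algebra whose underlying graded object is $\sO^{\an}(\bA^{S_0}_R)[S_+]$, equipped with the natural projective-limit topology coming from the Banach completions $\hat{C}_f$ indexed by submultiplicative seminorms. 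This places $FQA$ inside $\cD'$ by hypothesis, so Lemma \ref{inftyequivlemma} yields the second assertion.

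The substantive content has already been extracted in Proposition \ref{keyBanEFCprop}: the diagonal argument there, ruling out any element of $\Lim_f \hat{C}_f$ involving infinitely many generators from $S$, is what forces $UFC \cong C$ on quasi-free $C$. Given that, the corollary is a formal consequence of Lemma \ref{inftyequivlemma}. The one point requiring a little attention, and the only place the proof plan could go wrong, is that we need $FQA$ on the nose to have the stated shape $(\sO^{\an}(\bA^{S_0}_R)[S_+],\delta)$ rather than be a retract of such an object, so it is essential that $Q$ arise from a small-object argument whose generating cells are themselves quasi-free, rather than from a generic functorial factorisation that might introduce retracts.
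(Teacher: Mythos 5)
Your proposal is correct and takes essentially the same route as the paper: both combine Lemma \ref{inftyequivlemma} with Propositions \ref{stdmodelprop} and \ref{keyBanEFCprop}, and both reduce the subcategory refinement to producing a \emph{functorial} cofibrant replacement by genuinely quasi-free objects (you via the small object argument with quasi-free generating cells, the paper via the explicit recursive choice $S_n := A_n\by_{\delta,\z_{n-1}A}\z_{n-1}QA$). Your flagged concern — that $FQA$ must literally be of the form $(\sO^{\an}(\bA^{S_0}_R)[S_+],\delta)$ and not merely a retract — is exactly the right point, and both constructions handle it.
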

\begin{proof}
The first statement is given  by combining  Proposition \ref{stdmodelprop} with Lemma \ref{inftyequivlemma}, since quasi-isomorphisms are weak equivalences in the model structure of the former. For the second statement, it suffices to show that we have functorial cofibrant replacement by quasi-free objects. This can be constructed in the standard way, choosing generators $S$ for $QA$ recursively by setting $S_n:= A_n\by_{\delta, \z_{n-1}A}\z_{n-1}QA$.
\end{proof}

\begin{remarks}
 From the perspective of moduli theory, one significant consequence of Corollary \ref{keyBanEFCcor} is that the derived analytic stacks of \cite{DStein} are determined by their associated simplicial functors on $dg_+\hat{\Tc}\CAlg_{R}$; 
 note that   \cite[Proposition \ref{DStein-DHomprop2}]{DStein} ensures that these derived analytic stacks agree with those of \cite{lurieDAG9}, Lurie's additional data being redundant. 
 
The corollary also facilitates comparison with the approach towards derived analytic geometry proposed in papers such as \cite{BambozziBenBassatKremnizer}. From this perspective, the striking aspect of Corollary \ref{keyBanEFCcor} is the invariance under abstract quasi-isomorphisms; informally, this says that the derived analytic stacks of \cite{DStein} are not inaccessibly far from being algebraic. It does mean that our setup is very well suited to describing analytic morphisms between algebraic objects, and also gluing algebraic objects in an analytic way. 

The more usual notion of equivalence for complexes of topological vector spaces is given by the notion of strictly exact complexes as in \cite[Corollary 1.1.8]{prosmans}. However, by \cite[Corollary 3.4.3]{prosmans} a morphism of Fr\'echet spaces is strict if it has closed image, 
so any complex of Fr\'echet spaces whose abstract vector space homology is zero must also be strictly acyclic. We will exploit this phenomenon in \S\ref{commnuclearFrechetsn} to produce natural subcategories of Corollary \ref{keyBanEFCcor} on which abstract quasi-isomorphism invariance is often more easily satisfied.

Finally, note that we can apply  \cite[Theorem 11.3.2]{Hirschhorn} to induce a model structure on $ dg_+\hat{\Tc}\CAlg_{R}$ in which morphisms are weak equivalences or fibrations whenever their images under $U$ are so. This follows because $U$ preserves filtered colimits and for any  generating trivial cofibration $u$ in $dg_+\EFC\Alg_{R}$, pushouts of $Fu$ are $U$-weak equivalences.
\end{remarks}

\subsubsection{Nuclear Fr\'echet algebras}\label{commnuclearFrechetsn} 

Defining quasi-isomorphism-preserving functors on  $dg_+\hat{\Tc}\CAlg_{R}$ tends to be difficult unless they forget the topological structure. However, there are many such functors on the subcategory of nuclear Fr\'echet algebras.
We now investigate a smaller subcategory 
whose homotopy theory is governed by the FEFC structure.

%


The forgetful functor from EFC-DGAs to non-negatively graded sets has left adjoint $S \mapsto (\cO^{\an}(\bA^{S_0 \sqcup dS_1})[S_{\ge 1}, dS_{\ge 2}],\delta)$, where $dS_i$ is an isomorphic copy of $S_i$ a degree lower, and the differential $\delta$ is determined by the property $\delta s = ds \in dS$ for $s\in S$.
Since the resulting monad 
preserves filtered colimits, we immediately have: 
\begin{lemma}\label{indFPdgEFC}
 The category of EFC-DGAs over $R$ is equivalent to the category of ind-objects of the category of finitely presented EFC-DGAs, i.e. those of the form $(\cO^{\an}(\bA^{S_0}[S_+]), \delta)/I$ for finite graded sets $S$ and finitely generated dg ideals $I$.
 \end{lemma}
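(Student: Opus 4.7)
The plan is to recognise this as a standard consequence of the fact that EFC-DGAs form the category of algebras for a finitary monad on (non-negatively) graded sets. The monad $T$ sending a graded set $S$ to $(\cO^{\an}(\bA^{S_0 \sqcup dS_1})[S_{\ge 1}, dS_{\ge 2}],\delta)$ is left adjoint to the forgetful functor, and the hypothesis immediately preceding the statement is that $T$ preserves filtered colimits. Since the forgetful functor to graded sets automatically preserves and reflects filtered colimits (and all limits), the category of $T$-algebras --- which is equivalent to $dg_+\EFC\Alg_R$ --- is locally finitely presentable.

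First I would identify the finitely presentable objects explicitly. In any category monadic over $\Set$ (or graded sets) via a finitary monad, the finitely presentable objects are exactly the coequalisers of pairs of maps between free objects on finite sets, i.e. quotients of free $T$-algebras on finite graded sets $S$ by finitely generated congruences. Because $dg_+\EFC\Alg_R$ has a zero object-free module/ideal description of congruences (a congruence on an EFC-DGA corresponds bijectively with a dg EFC ideal), this reduces to quotients by finitely generated dg ideals, matching the description $(\cO^{\an}(\bA^{S_0}[S_+]),\delta)/I$ in the statement.

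Next, I would invoke the standard theorem (see e.g.\ Ad\'amek--Rosick\'y, \emph{Locally Presentable and Accessible Categories}, Thm.\ 1.46 / Cor.\ 2.47) that a locally finitely presentable category is equivalent to $\Ind$ of its full subcategory of finitely presentable objects. This gives the required equivalence $dg_+\EFC\Alg_R \simeq \Ind(dg_+\EFC\Alg_R^{\fp})$.

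The only potentially non-routine step is the identification of finitely presentable objects with quotients by \emph{finitely generated} dg ideals: one must check that, given finite graded $S$, any finitely generated congruence on the free EFC-DGA on $S$ is generated as a dg ideal by finitely many elements, which follows because the free EFC-DGA is commutative so congruences are ideals, and a congruence finitely generated by pairs $(a_i,b_i)$ is the dg ideal generated by the differences $a_i-b_i$. Everything else is a direct application of the general machinery once the hypothesis on the monad preserving filtered colimits is in place.
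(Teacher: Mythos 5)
Your proposal is correct and follows the same route as the paper, which simply declares the lemma to be an immediate consequence of the fact that the monad $S\mapsto(\cO^{\an}(\bA^{S_0\sqcup dS_1})[S_{\ge 1},dS_{\ge 2}],\delta)$ preserves filtered colimits. You have merely filled in the standard category-theoretic details (finitary monad $\Rightarrow$ locally finitely presentable $\Rightarrow$ $\Ind$ of finitely presentable objects, plus the identification of finitely generated congruences with finitely generated dg ideals via subtraction) that the paper leaves implicit.
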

 
 
\begin{lemma}\label{nucfrechetcomm} 
Taking a base field $\bK$ to be either  $\R,\Cx$ or a complete valued non-Archimedean field, every levelwise finitely presented EFC-DGA $A$ over $\bK$ has a canonically associated object of $dg_+\hat{\Tc}\CAlg_{\bK}$ giving rise to the EFC structure. The underlying chain complex is a complex of nuclear Fr\'echet spaces whose differentials have closed images.
\end{lemma}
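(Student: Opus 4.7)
By Lemma \ref{indFPdgEFC}, I would first present $A$ in the form $(\cO^{\an}(\bA^{S_0}[S_+]),\delta)/I$ for a finite graded set $S$ and a finitely generated dg ideal $I$; the level-wise finite presentation hypothesis allows us (after possibly enlarging $S$ and $I$) to arrange that in each chain degree $n$, $A_n$ is a finitely presented $A_0$-module. The strategy is to equip the algebra $\cO^{\an}(\bA^{S_0})$ with its tautological nuclear Fr\'echet topology (uniform convergence on compacts over $\R$ or $\Cx$; the inverse limit of Tate algebras $\Lim_r T_{|S_0|}(r)$ over a non-Archimedean $\bK$), extend it to the free graded algebra $\cO^{\an}(\bA^{S_0})[S_+]$ via the finitely many exterior/polynomial generators, and then descend through the presentation to $A$.

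The engine that makes this work is the closedness of finitely generated submodules. First I would invoke the classical results (Cartan's Theorem B / Forster's theorem in the Archimedean setting, and Noetherianity of the Tate algebras $T_n(r)$ combined with a pro-Banach argument in the non-Archimedean one) to conclude that every finitely generated ideal of $\cO^{\an}(\bA^{S_0})$ is closed, so $A_0$ inherits a nuclear Fr\'echet algebra structure as a quotient by a closed ideal. Applying the same coherence-style statement to the nuclear Fr\'echet algebra $A_0$ yields that every finitely generated submodule of a finitely generated free $A_0$-module is closed; taking quotients then equips each $A_n$ with a canonical nuclear Fr\'echet topology compatible with the $A_0$-module structure, and one verifies that a suitable family of submultiplicative seminorms (built from those on $\cO^{\an}(\bA^{S_0})$ together with an $\ell^1$-type weight on the finitely many generators of $S_+$) places the result in $dg_+\hat{\Tc}\CAlg_{\bK}$.

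For the differentials, continuity is automatic: $\delta\co A_n \to A_{n-1}$ is an $A_0$-linear map between finitely generated $A_0$-modules determined by its values on finitely many module generators, so it lifts to a continuous map between the covering free modules and descends continuously. Its image is a finitely generated $A_0$-submodule of $A_{n-1}$, hence closed by the very theorem used above, giving the required closed-image statement. Multiplication is handled in the same way, and the EFC structure on $A_0$ recovered from the topology (via Lemma \ref{EFCforgetlemma}) coincides with the original one because both agree on the generating algebra $\cO^{\an}(\bA^{S_0})$ and are compatible with the quotient.

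For canonicity, the point is that any EFC-algebra morphism between algebras of the form $\cO^{\an}(\bA^m)$ is tautologically continuous, hence any two finite presentations of $A_0$ can be compared by maps through their coproduct $\cO^{\an}(\bA^{m+m'})$; mutual continuity of the induced surjections onto $A_0$ forces the two quotient topologies to agree, and the higher $A_n$ are handled similarly. The main technical obstacle throughout is the coherence/closedness input: once one has that every finitely generated submodule of a finitely generated free module over our base algebras is closed, everything else is bookkeeping, but securing that statement uniformly in the Archimedean and non-Archimedean cases (and checking it passes to the quotient algebras $A_0$ that appear) is the step requiring genuine analytic content.
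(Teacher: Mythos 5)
Your proposal takes essentially the same approach as the paper: present $A$ as a quotient of a levelwise finite free EFC-DGA $(\cO^{\an}(\bA^{S_0})[S_+],\delta)$, topologise the free algebra as a nuclear Fr\'echet (K\"othe sequence) space, and invoke the closure theorem for finitely generated ideals/submodules (Grauert--Remmert V.6 Cor.~2 for $\Cx$, descending to $\R$, and BGR \S 5.2.7 in the non-Archimedean case) to show both that the presentation ideal is closed and that the images $\delta(A_n)$ are closed; your references (Cartan's Theorem~B/Forster, Tate-algebra Noetherianity) carry the same content. The one meaningful difference is that the paper obtains canonicity for free by identifying the topology with the output of the left adjoint $F$ from Proposition~\ref{keyBanEFCprop} and observing $\bar I=I$, whereas you construct the quotient topology directly and argue canonicity by comparing presentations through a coproduct --- both valid, with the paper's framing being the more economical.
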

\begin{proof}
The functor is given by the left adjoint $F$ of Proposition \ref{keyBanEFCprop}, which sends an EFC-DGA $A$  of the form $(\cO^{\an}(\bA^S), \delta)/I$ to $(\cO^{\an}(\bA^S), \delta)/\bar{I}$ when $S$ is a levelwise finite graded set, where $\bar{I}$ denotes the closure of $I$. Elements of $\cO^{\an}(\bA^S)$ are generated by words involving only finitely many variables in $S$ (those of the same degree and lower), so  $\cO^{\an}(\bA^S)$ is defined in each degree as a K\"othe sequence space, making it nuclear Fr\'echet.

When the dg  ideal  $I$ is levelwise finitely generated, each module $I_n$ is finitely generated as an $ \cO^{\an}(\bA^{S_0})$-module (by  products of generators of $I$ and monomials in  $S_+$ with the correct total degree). When $\bK=\Cx$,  \cite[V.6, Corollary 2]{GrauertRemmertStein} implies that any finitely generated ideal in $\cO^{\an}(\bA^n)$ is closed, which immediately implies the same result for $\bK=\R$; the same is true for non-Archimedean fields by similar arguments, substituting \cite[\S 5.2.7]{BoschGuentzerRemmertNonArchanalysis} in the proof. Likewise, the submodules $\delta(A_n) \subset A_{n-1}$ are finitely generated $A_0$-submodules, so are also closed.
\end{proof}

The following lemma is key to the good behaviour of many functors on such algebras.


\begin{lemma}\label{nucexactlemma1} 
 Let $\bK$ be a complete valued field, $U$ a Fr\'echet space over  $\bK$, and $V_{\bt}$ a chain complex of  Fr\'echet $\bK$-spaces whose differentials $\delta$ have closed images $\b_nV \subset V_n$. Assume that either $U$ is nuclear of that the spaces $V_n$ are all nuclear. Then the completed (injective or equivalently projective) tensor product satisfies  $ \H_*( U\hten_K V_{\bt}) \cong U\hten \H_*(V)$. 
\end{lemma}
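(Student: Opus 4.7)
The plan is to reduce the statement to exactness of the completed tensor product on short exact sequences of Fr\'echet spaces, which holds under the nuclearity hypothesis. Since the differentials have closed images $\b_n V$, the cycles $\z_n V := \ker(\delta_n \co V_n \to V_{n-1})$ are closed subspaces of $V_n$, so both $\z_n V$ and $\b_n V$ inherit Fr\'echet topologies. We therefore obtain short exact sequences of Fr\'echet spaces
\[
 0 \to \z_n V \to V_n \xra{\delta} \b_{n-1} V \to 0, \qquad 0 \to \b_n V \to \z_n V \to \H_n(V_\bt) \to 0,
\]
with the open mapping theorem ensuring the quotient topologies on $\b_{n-1}V$ and $\H_n(V_\bt)$ coincide with their natural Fr\'echet structures.

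Next I would invoke the classical fact (due to Grothendieck) that for a short exact sequence $0 \to A \to B \to C \to 0$ of Fr\'echet spaces and a further Fr\'echet space $U$, if either $U$ is nuclear or each of $A,B,C$ is nuclear, then
\[
 0 \to U \hten A \to U \hten B \to U \hten C \to 0
\]
remains exact, and the projective and injective completed tensor products coincide in this setting. Observe that in the case where every $V_n$ is nuclear, the closed subspaces $\z_n V$, $\b_n V$ and the Hausdorff quotient $\H_n(V_\bt)$ inherit nuclearity. Applying the exactness input to both of our short exact sequences then produces corresponding exact sequences after tensoring with $U$.

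From the first of these tensored sequences, the space of $n$-cycles in $U \hten V_\bt$ is identified with $U \hten \z_n V$, while $U \hten \b_n V$ is realised as the image of the differential $U \hten V_{n+1} \to U \hten V_n$. Combining with the second tensored short exact sequence then yields $\H_n(U \hten V_\bt) \cong U \hten \H_n(V_\bt)$, as required.

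The main obstacle is verifying the exactness input itself, since the projective completed tensor product on Fr\'echet spaces is only right exact in general and the injective version is only left exact. The point is that under the nuclearity hypothesis the two tensor products agree, giving the full exactness on short exact sequences; subtleties about which completion is chosen dissolve precisely because of nuclearity. With this classical input in hand, the rest of the argument is a direct bookkeeping exercise with cycles, boundaries and quotients.
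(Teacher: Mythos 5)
Your proposal is correct and follows essentially the same route as the paper: both break $V_\bt$ into the two short exact sequences for cycles, boundaries, and homology, use the closed-image hypothesis together with the open mapping (Banach--Schauder) theorem to ensure these are strict sequences of Fr\'echet spaces, and then invoke exactness of $U\hten -$ under the nuclearity hypothesis before reassembling. The paper cites a specific reference for the exactness input where you appeal to the classical Grothendieck result directly, but the content is the same.
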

\begin{proof}
 Since the subspaces $\b_nV \subset V_n$ are closed, the exact sequences 
 \begin{align*}
   0 \to \z_{n+1}V \to V_{n+1} \xra{\delta} \b_nV \to 0\\
   0 \to \b_nV \to \z_nV \to \H_nV \to 0
 \end{align*}
consist of maps which are strict (also known as admissible or homomorphisms) in the sense that the bijections from their coimages to their images are homeomorphisms; for the injections this is immediate, while for the surjections it follows from the Banach--Schnauder theorem. 
When spaces $V_n$ are nuclear, all these objects are  nuclear Fr\'echet spaces since they are constructed as closed subspaces and as quotients by such. As for instance in \cite[Lemma 2.4]{bertelson},
applying $U\hten$ preserves exactness for such sequences, so 
\[
 \H_n(U\hten V) = \frac{\z_n(U\hten V)}{\b_n(U\hten V)} \cong \frac{U\hten \z_nV}{U\hten \b_nV}\cong U\hten \H_nV.
\]
\end{proof}

\begin{remark}\label{ultrametricrmk} 
 In the non-Archimedean setting, a similar statement holds for ultrametric Banach spaces and the completed ultrametric tensor product, 
 by \cite[Theorem 1]{grusonThFredholmPadic}.  Every nuclear Fr\'echet space can be written as an inverse limit of $\ell^{\infty}$ spaces, which are ultrametric in non-Archimedean settings. Since countable  inverse limits of bounded dense Banach space morphisms   are exact by 
 \cite[Proposition 1.2.9]{prosmans}
 that gives an alternative proof of Lemma \ref{nucexactlemma1} in that setting, and allows us to relax the nuclearity condition to a requirement that the topology be defined by a system of ultrametric seminorms.
\end{remark}

\begin{example}\label{dgsteinex} 
As in \cite[V.6]{GrauertRemmertStein}, the vector space $\Gamma(X,\sF)$  of global sections of any coherent sheaf $\sF$ on  a complex  Stein space $X$ is naturally endowed  with a Fr\'echet space structure, and since $\cO^{\an}(\Cx^n)$ is a nuclear space, it follows by construction that $\Gamma(X,\sF)$ is also so. Lemma \ref{nucexactlemma1} thus implies that if $U$ is a nuclear Fr\'echet space and $\sF_{\bt}$ a complex of coherent sheaves on $X$, then we have $\H_*\Gamma(X, U\hten \sF_{\bt}) \cong U\hten \H_*\Gamma(X, \sF_{\bt})$.

Moreover, if $U \Subset X$ (in the sense that for any holomorphic map $f \co X \to \bA^n$, the image of $U$ is contained in some polydisc), then $\Gamma(U, \sF)$ is a finitely generated $\Gamma(U, \sO_X)$-module; this follows because there must be a compact Stein space \cite[Proposition 11.9.2]{TaylorCV} or dagger affinoid $K$ with $U \subset K \subset X$, and then the ring  $\Gamma(K, \sO_X)$ of overconvergent functions is Noetherian. Arguing as in \cite[Remark \ref{DStein-embeddingrmk}]{DStein}, it follows that for any dg Stein space $(X^0, \sO_X)$ (i.e. $X^0$ a Stein space, $\sO_X= \sO_{X, \ge 0}$ a CDGA of coherent $\sO_{X^0}$-modules with $\sO_{X,0}=\sO_{X^0}$) and $U \Subset X^0$, 
the topological CDGA $\Gamma(U, \sO_X)$ corresponds to a levelwise finitely presented EFC-DGA as in Lemma \ref{nucfrechetcomm}; in particular, $X$ admits an open cover by such objects.
\end{example}

The following is an immediate consequence of Lemmas \ref{nucfrechetcomm} and \ref{nucexactlemma1}.
\begin{corollary}\label{exactEFCtencor}
Under the hypotheses of Lemma \ref{nucfrechetcomm} and for any Fr\'echet space $U$, the completed tensor product $U\hten - $ defines a functor on the  category of levelwise finitely presented EFC-DGAs over $\bK$,  satisfying $\H_i(U\hten A) \cong U \hten \H_iA$.
 \end{corollary}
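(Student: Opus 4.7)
The plan is to deduce the corollary by directly combining the two preceding lemmas, verifying that the hypotheses of Lemma \ref{nucexactlemma1} are met for any levelwise finitely presented EFC-DGA.

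First, given a levelwise finitely presented EFC-DGA $A$ over $\bK$, I would invoke Lemma \ref{nucfrechetcomm} to produce the associated object of $dg_+\hat{\Tc}\CAlg_{\bK}$ whose underlying chain complex has nuclear Fr\'echet space components $A_n$ and differentials with closed images $\delta(A_n) \subset A_{n-1}$. Since the left-adjoint construction of Proposition \ref{keyBanEFCprop} is functorial and any morphism of EFC-DGAs lifts (by its universal property on quasi-free replacements and closure of ideals) to a continuous morphism of the associated topological dg algebras, the assignment $A \mapsto U \hten A$ is functorial into the category of chain complexes of Fr\'echet $\bK$-spaces.

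Next, for the homology computation, I would apply Lemma \ref{nucexactlemma1} with $V_{\bt}=A_{\bt}$: the spaces $V_n = A_n$ are all nuclear and the differentials have closed images, so the nuclearity hypothesis is satisfied (no assumption on $U$ beyond being Fr\'echet is needed), and the lemma immediately yields
\[
 \H_i(U \hten A) \;\cong\; U \hten \H_i A
\]
for every $i$.

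The only potential subtlety is whether the topological enhancement of Lemma \ref{nucfrechetcomm} is compatible enough with EFC-DGA morphisms to give a functor, but since that enhancement is the value of a left adjoint on finitely presented objects, naturality is automatic; thus there is no real obstacle, and the corollary is indeed a direct synthesis of the two lemmas.
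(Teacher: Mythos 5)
Your proof is correct and follows exactly the paper's intended route: the paper explicitly describes the corollary as an immediate consequence of Lemmas \ref{nucfrechetcomm} and \ref{nucexactlemma1}, which you combine in the same way (nuclear Fr\'echet components with closed differential images from the first, exactness of $U\hten -$ from the second). Your extra remark on functoriality of the topological enhancement via the left adjoint $F$ is a reasonable clarification of a point the paper leaves implicit.
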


Lemma \ref{nucexactlemma1} leads to the  following lemma without strictness hypotheses, which  is a source of many weak equivalence preserving functors on categories of  dg FEFC algebras of \S \ref{dgFEFCsn}. 

\begin{lemma}\label{nucexactlemma} 
 If $\bK$ is a complete valued field and $U$ a Fr\'echet space over  $\bK$, then the completed (injective or equivalently projective) tensor product functor $V_{\bt} \mapsto U\hten_{\bK} V_{\bt}$ on the category of chain complexes of nuclear Fr\'echet $\bK$-spaces preserves quasi-isomorphisms. When $U$ is also nuclear, the same is true on the category of chain complexes of Fr\'echet $\bK$-spaces.
\end{lemma}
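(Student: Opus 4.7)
The plan is to reduce to the acyclic case and then invoke Lemma \ref{nucexactlemma1}. A morphism $f\co V_{\bt}\to W_{\bt}$ is a quasi-isomorphism if and only if its mapping cone $C(f)_{\bt}$ is acyclic (has vanishing homology). Since the completed tensor product $U\hten -$ is additive and commutes with finite direct sums, tensoring with $U$ commutes with the cone construction, so $U\hten f$ is a quasi-isomorphism iff $U\hten C(f)$ is acyclic. Thus it suffices to show: if $V_{\bt}$ is an acyclic complex of nuclear Fr\'echet spaces (respectively, of Fr\'echet spaces, when $U$ itself is nuclear), then $U\hten V_{\bt}$ is acyclic.

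The crucial observation is that the strictness hypothesis of Lemma \ref{nucexactlemma1} is automatic for acyclic complexes. Indeed, if $\H_n V_{\bt}=0$ for all $n$, then $\b_n V = \z_n V$, and $\z_n V=\ker(\delta\co V_n\to V_{n-1})$ is closed in $V_n$ because $\delta$ is continuous. Therefore each differential has closed image, and Lemma \ref{nucexactlemma1} applies to give
\[
\H_n(U\hten V_{\bt}) \cong U\hten \H_n V_{\bt} = 0,
\]
as required. Assembling the two steps yields the lemma in both stated settings.

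The only step that requires care is verifying that the cone behaves well under $U\hten -$; this is routine since $C(f)_n = W_n\oplus V_{n-1}$ is a finite direct sum in each degree, and the completed projective (equivalently, injective) tensor product on Fr\'echet spaces is additive in this sense. The genuinely hard input, namely the exactness of $U\hten -$ on strict short exact sequences of (nuclear) Fr\'echet spaces, has already been packaged into Lemma \ref{nucexactlemma1} via the Banach--Schnauder theorem and the preservation of exactness under completed tensor products with nuclear spaces, so no further analytic work is needed here.
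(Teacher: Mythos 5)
Your argument is correct and follows essentially the same route as the paper's proof: reduce to showing acyclicity is preserved via the mapping cone, observe that for an acyclic complex $\b_nV=\z_nV$ is automatically closed (being the kernel of the continuous map $\delta$), and then invoke Lemma \ref{nucexactlemma1}. The extra sentence you add about $U\hten-$ commuting with finite direct sums and hence with the cone is left implicit in the paper, but it is a correct and harmless amplification.
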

\begin{proof}
 A morphism $f \co V_{\bt} \to V_{\bt}'$ is a quasi-isomorphism if and only if its cone has trivial homology, so it suffices to prove that the functor preserves the latter property. A complex $V_{\bt}$ has $\H_*V_{\bt} \cong 0$ if and only if the maps $\delta \co V_{n+1} \to \z_nV$ are all surjective. Since $\b_nV = \z_nV \subset V_n$ is  a closed subspace,   the result then follows from Lemma \ref{nucexactlemma1}.
\end{proof}

\begin{remark}
Beware that Lemma \ref{nucexactlemma} does not claim that $\H_*(U\hten V)$ can be deduced from the spaces $U$ and $\H_*V$, which might not be separated in general. Statements of that form only hold for strict chain complexes as in  Lemma \ref{nucexactlemma1}.
\end{remark}

 
\subsection{Relation to derived dagger analytic stacks}\label{derivedanalyticstacksn}\label{affinoidsn}

\subsubsection{Quasi-dagger affinoids}\label{qdaggersn}


We briefly recall some theory from \cite{PTLag}. The following 
slightly generalises the dagger algebras of \cite[\S 1]{GrosseKloenne}.

\begin{definition}
 Define an  quasi-dagger algebra to be a quotient algebra $A=K\<\frac{x_1}{r_1}, \ldots ,\frac{x_n}{r_n}\>^{\dagger}/I$ for some $r_i \ge 0$, where $K\<\frac{x_1}{r_1}, \ldots ,\frac{x_n}{r_n}\>^{\dagger}$ consists of power series $\sum_{\nu} a_{\nu}x^{\nu}$ for $a_{\nu} \in K$, such that 
\[
 |a_{\nu}|\rho_1^{\nu_1}\ldots \rho_n^{\nu_n} \xra{|\nu| \to \infty} 0
\]
for some $\rho_i>r_i$.

 Define the associated affinoid quasi-dagger space $\Sp(A)$ to be the set of maximal ideals of $A$, equipped with the obvious structure sheaf on open localised affinoid subdomains. 
 
 The category of  quasi-dagger algebras is then defined by letting morphisms be all $K$-algebra homomorphisms between   quasi-dagger algebras, and the category of affinoid quasi-dagger spaces is it opposite.
 \end{definition}
 In other words, affinoid quasi-dagger spaces $(X,\sO_X)$ are ringed spaces of  the form $(\bar{X},i^{-1}\sO_Y)$ for closed immersions $i \co \bar{X} \to Y$  of affinoid dagger spaces.
 
 Note that we are allowing the numbers $r_i$ to be $0$. In particular, we allow the algebra $K\<x_1, \ldots , x_m, \frac{x_{m+1}}{0}, \ldots ,\frac{x_n}{0}\>^{\dagger}$, which can be thought of as $\Gamma(\bD^m, i^{-1}\sO_{\bD^n})$ for the inclusion $i \co \bD^m \into \bD^n$ of polydiscs, so we are looking at germs of overconvergent functions on  the dagger space $\bD^n$ restricted to $\bD^m$.

\begin{definition} \cite[Definition \ref{PTLag-dgqdaggerdef}]{PTLag}.
 Define an affinoid quasi-dagger dg space $X$ over $K$ to consist of an affinoid quasi-dagger space $X^0$ over $K$  together with an $\sO_{X^0}$-CDGA $\sO_{X,\ge 0}$ in coherent sheaves on $X^0$, with $\sO_{X,0}=\sO_{X^0}$. We then define an  quasi-dagger dg algebra $A$ over $K$ to be a dg $K$-algebra of the form $\Gamma(X^0,\sO_X)$ for an affinoid quasi-dagger dg space $X$; this is equivalent to saying that $A_0$ is a quasi-dagger algebra and the $A_0$-modules $A_m$ are all finite.
 
 We  say that an affinoid quasi-dagger dg space $X$ is a localised affinoid dagger dg space if  
 the vanishing locus $\pi^0X$  of $\delta$ is dagger affinoid and 
  the closed immersion $i \co \pi^0X \to X^0$ 
  gives an isomorphism on the underlying sets of points. We then define a localised  dagger dg algebra $A$ over $K$ to be a dg $K$-algebra of the form $\Gamma(X^0,\sO_X)$ for a localised affinoid dagger dg space $X$.
 
A morphism $f \co X \to Y$ of affinoid quasi-dagger dg spaces 
consists of a morphism $f^0 \co X^0 \to Y^0$ of affinoid quasi-dagger spaces, together with a morphism $f^{\sharp} \co (f^0)^*\sO_Y\to \sO_X$ of CDGAs in coherent sheaves on $X^0$. Equivalently a morphism $A \to B$ of  quasi-dagger dg algebras 
is just a homomorphism of dg $K$-algebras.

We denote the category of localised dagger dg algebras by $ dg_+\Affd\Alg^{\loc,\dagger}_K$.
 \end{definition}

\begin{proposition}\cite[Proposition \ref{PTLag-affdsubEFCprop}]{PTLag}
 The functor from localised  dagger dg algebras to EFC-DGAs given by the natural direct limit topology induces a fully faithful functor on simplicial categories after simplicial localisation at quasi-isomorphisms, as does its restriction to quasi-free localised  dagger dg algebras.
\end{proposition}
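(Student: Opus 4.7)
The plan is to prove fully faithfulness by computing derived mapping spaces in both categories with compatible resolutions. Writing $F$ for the functor in question, the goal is to show that
\[
 \oR\map_{dg_+\Affd\Alg^{\loc,\dagger}_K}(A,B) \to \oR\map_{dg_+\EFC\Alg_K}(FA, FB)
\]
is a weak equivalence of simplicial sets for all $A, B \in dg_+\Affd\Alg^{\loc,\dagger}_K$; this map is well-defined because $F$ preserves quasi-isomorphisms.

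First, I would construct a cofibrant resolution $P \xra{\sim} A$ in the dagger setting with $P$ quasi-free, by iterated cell attachment: one inductively adds free cells $K\<x/r\>^{\dagger}$ and their acyclic partners $K\<x/r, dx/r\>^{\dagger}$, choosing radii $r$ large enough at each stage to accommodate the elements of $A$ being lifted or killed. Existence of such resolutions follows from the finite-generation properties of localised dagger dg algebras recorded in Example \ref{dgsteinex}. Equipping $B$ with the cosimplicial framing $B^{\Delta^\bullet} := B \ten_K \Omega^\bullet(\Delta^\bullet)$ using polynomial de Rham forms on simplices (which lies in both categories, since $\Omega^\bullet(\Delta^n)$ is a finite-dimensional $K$-CDGA in each degree, and $F$ commutes with the framing), one then identifies
\[
 \oR\map_{dg_+\Affd\Alg^{\loc,\dagger}_K}(A, B) \simeq \Hom_{dg_+\Affd\Alg^{\loc,\dagger}_K}(P, B^{\Delta^\bullet}),
\]
and analogously for the right-hand side, after taking a further EFC-cofibrant replacement $QFP \xra{\sim} FP$, which exists by Proposition \ref{stdmodelprop}.

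The key technical step, and the expected main obstacle, is to show that the natural map
\[
 \Hom_{dg_+\Affd\Alg^{\loc,\dagger}_K}(P, C) \to \Hom_{dg_+\EFC\Alg_K}(QFP, FC)
\]
is a bijection for every $C \in dg_+\Affd\Alg^{\loc,\dagger}_K$, specialised ultimately to $C = B^{\Delta^n}$. The approach here is to construct $QFP$ explicitly as a homotopy colimit of free EFC-DGA resolutions of the Banach approximants $K\<x/\rho\>$ (with $\rho > r$) exhausting $FP$, and then to use the direct-limit description of the EFC structure on localised dagger algebras to identify EFC-homomorphisms out of this resolution with compatible families of morphisms into Banach subalgebras of $FC$. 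Such a compatible family reassembles precisely into a dagger homomorphism $P \to C$, because the images of the generators of $P$ automatically sit in Banach subalgebras of $C$ where overconvergent power series converge. Once this bijection is established, the fully faithfulness claim follows, and its restriction to quasi-free localised dagger dg algebras is inherited automatically, since these form a full subcategory stable under the cofibrant resolution procedure.
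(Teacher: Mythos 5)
This proposition is cited from \cite{PTLag}; the present paper does not reprove it, so there is no local proof to compare against directly. Assessing your argument on its own terms, there are two substantive gaps.

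First, the framing you propose does not live in the source category. For a localised dagger dg algebra $B$, the object $B \ten_K \Omega^{\bt}(\Delta^n)$ has degree-zero part $B_0 \ten_K \Q[t_0,\dots,t_n]/(\sum t_i - 1)$, which is a polynomial (not power-series or overconvergent) extension of $B_0$ and therefore is not a quasi-dagger algebra; moreover $\delta t_i$ sits in negative homological degree, outside $dg_+$. So the object $B^{\Delta^n}$ is not in $dg_+\Affd\Alg^{\loc,\dagger}_K$, and the identification of $\oR\map$ in the dagger setting with $\Hom(P, B^{\Delta^{\bt}})$ breaks down. Separately, $dg_+\Affd\Alg^{\loc,\dagger}_K$ is not a model category (it lacks colimits and factorisations), so the phrase ``cofibrant resolution in the dagger setting'' needs a different justification than the model-categorical one you implicitly invoke; one can attach cells by hand, but then the claim that the resulting $\Hom$-sets compute the simplicial localisation requires its own argument rather than being automatic.

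Second, and more seriously, your ``key technical step'' is asserted at exactly the point where it must be proved, and the assertion is too strong. Since $F$ is fully faithful on underlying $1$-categories, we do have $\Hom_{dagger}(P,C) = \Hom_{\EFC}(FP, FC)$ on the nose, but $FP$ is not cofibrant in the EFC model structure: for $P = K\<x/r\>^{\dagger}$, the underlying EFC algebra is the ring of overconvergent functions on the disc of radius $r$, which is not freely generated by $x$ (the free EFC algebra on one generator is $\cO^{\an}(\bA^1)$, the entire functions). Hence $QFP \to FP$ is a genuine cofibrant replacement, and a general EFC morphism $QFP \to FC$ need not factor through $FP$. Your claim that $\Hom_{dagger}(P,C) \to \Hom_{\EFC}(QFP, FC)$ is a \emph{bijection} for all $C$ is therefore not what one should expect or try to prove; what is needed is a weak equivalence of simplicial sets after incorporating the cosimplicial structure, and the ``reassembly'' step (that compatible families of maps into Banach subalgebras glue to dagger homomorphisms, because images of generators sit inside subalgebras where overconvergent series converge) is precisely the non-trivial analytic content of the result. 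You name it as the obstacle but give no argument for it; the proposal would need an explicit comparison of the EFC cotangent complex of $FP$ with the dagger cotangent module of $P$, or an argument that the map of cosimplicial resolutions is a levelwise quasi-isomorphism, to carry the claim.
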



The following is a source of many functors on that localised $\infty$-category. \cite[Corollary \ref{PTLag-preservehtpycor}]{PTLag} can be interpreted as the case $\Hom_{\cts}(S,K)$ for $S$ pro-finite, giving derived moduli of pro-\'etale local systems by the method of Example \ref{CXex} below.

\begin{lemma}\label{daggertencolimlemma}
 Given a Fr\'echet space $V$ and a localised dg dagger algebra $A$, we have
 \[
  \H^*(A\hten_{\pi}V)\cong \H^*(A)\hten{\pi}V.
 \]
\end{lemma}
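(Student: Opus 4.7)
The strategy is to reduce to Lemma \ref{nucexactlemma1} (or its ultrametric analogue in Remark \ref{ultrametricrmk}) by writing $A$ as a well-controlled filtered colimit of Banach dg algebras whose differentials have closed images. Explicitly, I would write $A_0 = K\langle x_1/r_1,\dots,x_n/r_n\rangle^\dagger / I$ and form, for each vector $\rho=(\rho_i)$ with $\rho_i>r_i$, the affinoid Tate quotient $A_0^{(\rho)}:=K\langle x_1/\rho_1,\dots,x_n/\rho_n\rangle/\overline{I^{(\rho)}}$. Since $A$ is localised, each $A_m$ for $m>0$ is a finitely generated $A_0$-module, and taking the completion of each $A_m$ with respect to the $\rho$-Gauss norm produces a Banach dg $K$-algebra $A^{(\rho)}$ satisfying $A=\varinjlim_{\rho\to r}A^{(\rho)}$, with injective and compact transition maps of dense image.

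Next, I would show that the relevant homology and tensor product both commute with this filtered colimit. Homology commutes with filtered colimits by general nonsense. For the projective tensor product with the Fréchet space $V$, compactness of the transition maps and the fact that dagger algebras are nuclear bornological (in the complex case) or ultrametric (in the non-Archimedean case, by the argument of Remark \ref{ultrametricrmk}) ensure
\[
A\hten_\pi V \;\cong\; \varinjlim_{\rho\to r}(A^{(\rho)}\hten_\pi V),
\]
an isomorphism which already appears in similar form in the proof of \cite[Corollary \ref{PTLag-preservehtpycor}]{PTLag}.

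It then suffices to establish $\H^*(A^{(\rho)}\hten_\pi V)\cong\H^*(A^{(\rho)})\hten_\pi V$ for each $\rho$. Because $A_0^{(\rho)}$ is Noetherian (by \cite[\S 5.2.7]{BoschGuentzerRemmertNonArchanalysis} in the non-Archimedean case and by the Stein/Tate analogue in the complex case) and each $A_m^{(\rho)}$ is finitely generated over $A_0^{(\rho)}$, every submodule $\delta(A_{m+1}^{(\rho)})\subset A_m^{(\rho)}$ is finitely generated and therefore closed, just as in the proof of Lemma \ref{nucfrechetcomm}. The nuclearity of $A^{(\rho)}$ (in the complex case, from its identification with sections of coherent sheaves over a compact Stein neighbourhood, cf. Example \ref{dgsteinex}) or its ultrametric character (in the non-Archimedean case) then makes Lemma \ref{nucexactlemma1} (respectively Remark \ref{ultrametricrmk}) directly applicable, yielding the desired identification at each finite stage. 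Passing to the colimit in $\rho$ and combining with $\H^*(A)\cong\varinjlim_\rho \H^*(A^{(\rho)})$ completes the argument.

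The main obstacle is verifying that $\hten_\pi V$ genuinely commutes with the bornological colimit $\varinjlim_\rho A^{(\rho)}$, since the completed projective tensor product does not in general commute with filtered colimits. The rescue comes from the specific nature of dagger thickenings: the transition maps $A^{(\rho)}\to A^{(\rho')}$ for $\rho>\rho'$ are compact (or completely continuous), which together with nuclearity/ultrametricity of the pieces lets one identify the bornological colimit tensor product with the levelwise colimit of projective tensor products. Once this point is granted, the rest is essentially bookkeeping around Lemma \ref{nucexactlemma1}.
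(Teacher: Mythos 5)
Your overall strategy---writing $A$ as a filtered colimit, applying Lemma~\ref{nucexactlemma1} (or its ultrametric variant) at each stage, and then commuting $\hten_{\pi} V$ with the colimit---is the same as the paper's, but your choice of pieces introduces a genuine gap in the Archimedean case. You complete $A$ in the $\rho$-Gauss norms to get \emph{Banach} dg algebras $A^{(\rho)}$, and Lemma~\ref{nucexactlemma1} then requires either the space being tensored in (your $V$, which carries no nuclearity hypothesis) or the terms $A^{(\rho)}_m$ to be nuclear. Over $\R$ or $\Cx$ an infinite-dimensional Banach space is never nuclear, so this hypothesis fails for your Banach pieces. The appeal to Example~\ref{dgsteinex} does not rescue this: that example yields nuclear \emph{Fr\'echet} algebras of sections over open Stein subsets, or ind-Banach algebras of germs near compact Steins, neither of which is your Banach affinoid $A^{(\rho)}$. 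Your version does go through in the non-Archimedean setting, where Remark~\ref{ultrametricrmk} replaces nuclearity by ultrametricity and Tate algebras qualify.

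The paper avoids this by taking the filtered system to consist of dense dg \emph{Stein} subalgebras $A(\alpha)$---for instance, for the dagger algebra of overconvergent functions on a closed polydisc of radius $r$, the nuclear Fr\'echet algebras of functions holomorphic on open polydiscs of radius $\rho>r$. These have closed differential images and are nuclear Fr\'echet in all cases, so Lemma~\ref{nucexactlemma1} applies directly to each stage. The commutation of $\hten_{\pi}V$ with the colimit is then argued concretely by exhibiting the topology of $A$ as the one induced by the norm system $\{\sum_\alpha \lambda_\alpha|-|_\alpha\}$ on $\bigoplus_\alpha A(\alpha)$, rather than via the more abstract compact-type argument you invoke (which should also work). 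To repair your proof uniformly in the complex and non-Archimedean cases, replace the Banach affinoid stages by nuclear Fr\'echet Stein stages, which is exactly what the paper does.
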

\begin{proof}
 We can write $A$ as a filtered colimit $\LLim_{\alpha} A(\alpha)$ of dense  dg Stein subalgebras, and Lemma \ref{nucexactlemma1} then gives $\H^*( A(\alpha)\hten_{\pi}V)\cong \H^*(A(\alpha))\hten_{\pi}V$. Now, $A$ is  complete with respect to the system of norms induced by the norms
 $\{\sum_{\alpha} \lambda_{\alpha}|-|_{\alpha}\}_{\uline{\lambda}\ge 0}$ on $\bigoplus_{\alpha} A(\alpha)$, so $\hten_{\pi}$ commutes with all filtered colimits here.
\end{proof}

\subsubsection{Analytification}\label{analytificnsn} 

In order to induce functors on EFC-DGAs from functors $F$ on CDGAs,  we can simply follow \cite[\S 4.4]{DStein} and define $F^{\EFC}$ to be $F$ composed with  the forgetful functor from EFC-DGAs to CDGAs. When $F$ is a derived Artin stack and we  restrict to localised dg dagger affinoid algebras $B$, the space $F^{\EFC}(B)$ is just the space of maps from the derived dagger affinoid space $\oR\Spec^{\an}B$ to the analytification $F^{\an}$ as defined in approaches such as 
\cite{HolsteinPortaDerivedAnMapping}.

However, if $B$ is a dg Stein algebra, our space $F^{\EFC}(B)$ only agrees with  the space of maps from the derived Stein space $\oR\Spec^{\an}B$ to $F^{\an}$ when $F$ is strongly quasi-compact. In general, if we write $F$ as a filtered colimit $F=\LLim_i F(i)$ of strongly quasi-compact  derived Artin stacks, then
\[
 F^{\EFC}(B) \simeq\LLim_i \oR\map(\oR\Spec^{\an}B, F(i)^{\an}). 
\]
In other words, this means that a dg Stein algebra in the category of EFC-DGAs behaves as a form of compactified Stein space as far as our functor $F^{\EFC}$ is concerned.

Of course, we could recover the usual space $\oR\map(\oR\Spec^{\an}B, F^{\an})$ by writing $B$ as an inverse limit  of dg dagger affinoid spaces $B(n)$ (given by pulling back $B$ to dagger affinoid subspaces of $\oR\Spec^{\an}B_0$), in which case
\[
 \oR\map(\oR\Spec^{\an}B, F^{\an}) \simeq \ho\Lim_n F^{\EFC}(B(n)).
 \]

In other words, in our setting the object associated to a dg Stein space $\oR\Spec^{\an}B $ is not the EFC-DGA underlying the dg Stein algebra $B$, but the pro-object $\{B(n)\}_n$, which better reflects non-compactness of Stein spaces (we might think of $B$ itself as the ring of functions on a compactification of $\oR\Spec^{\an}B $). That pro-object construction does not extend to
all of $\hat{\Tc}\CAlg_{R}$ in a manner which  respects quasi-isomorphisms, so cannot be used to produce an alternative analytification functor on EFC-DGAs for which Stein algebras correspond to Stein spaces. 

The analytification  functor $F^{\an}$ of \cite{HolsteinPortaDerivedAnMapping}  can thus be recovered from the restriction of $F^{\EFC}$ to dg dagger affinoid algebras. By the same reasoning it can be recovered from the subcategories of dg affinoid algebras  and of dg Banach algebras, since the inverse system $\{B(n)\}_n$ of dg dagger affinoids  is isomorphic in the pro-category to
 an inverse system of dg Banach algebras (given by completion with respect to multiplicative seminorms on $B_0$, corresponding to  compact Stein spaces in $\Spec^{\an}B_0$, whereas dagger affinoids correspond to overconvergent functions on such) and to 
an inverse system of dg Stein algebras  (corresponding to Stein subdomains  $U \Subset \Spec^{\an}B_0$  as in Example \ref{dgsteinex}).
In particular,
 \[
 \oR\map(\oR\Spec^{\an}B, F^{\an}) \simeq \holim_{\substack{\longleftarrow \\ U \Subset \oR\Spec^{\an}B_0}} F^{\an}(B\ten_{B_0}\cO^{\an}(U)). 
 \]

\section{Algebras with free entire functional calculus}\label{FEFCsn}

\subsection{FEFC-algebras}\label{FEFCalgsn}

When $R=\Cx$, the following definition is from \cite{taylorFnsNC}:
\begin{definition}\label{FSdef}
 Given a commutative Banach algebra $R$ and a finite set $S$, define the  algebra $\cF_S(R)$ of free entire functions in terms of the set $W(S)$ of words in  variables $S$ to be the subspace of $R^{ W(S)}$ given by
 \[
  \cF_S(R) = \{\sum_{\alpha \in W(S)} \lambda_{\alpha}x^{\alpha}~:~  \sum_{\alpha}\|\lambda_{\alpha}\|\rho^{|\alpha|} <\infty ~\forall \rho \in \R_{>0}\},
 \]
where $|\alpha|$ denotes the length of a word $\alpha$. We write $\cF_n(R):= \cF_{\{1,\ldots,n\}}(R)$.

Multiplication is defined by concatenation of words.

Given a  commutative  LMC Fr\'echet algebra $R$, i.e. a countable inverse limit $R=\Lim_i R_i$ of Banach algebras, set $\cF_n(R)=\Lim_i \cF_n(R_i)$.
\end{definition}

It thus follows that $\cF_n(R)$ has the structure of an LMC Fr\'echet $R$-algebra, defined by the system $\|-\|_{\rho}$ of seminorms $ \|\sum_{\alpha}\lambda_{\alpha}x^{\alpha}\|_{\rho}:= \sum_{\alpha}\|\lambda_{\alpha}\|\rho^{|\alpha|}$. It is the Arens--Michael envelope of the free associative algebra on $n$ variables.

\begin{definition}
Recall (cf. \cite{pirkovskiiHFG} when $R=\Cx$) that an $R$-algebra $B$ with free entire functional calculus (or FEFC $R$-algebra for short)  is a set $B$ is equipped, for every  function $f \in \cF_n(R)$, with an operation $\Phi_f \co B^n \to B$. These operations are required to be compatible in the sense that given functions $g_i \in \cF_{m_i}(R)$, we must have 
\[
 \Phi_{f \circ (g_1, \ldots, g_n)}= \Phi_f \circ (\Phi_{g_1}, \ldots, \Phi_{g_n}) \co B^{\sum_{i=1}^n m_i}\to B.  
\]
\end{definition}
Thus an FEFC $R$-algebra is an $R$-algebra $B$ with a systematic and consistent way of evaluating expressions of the form $\sum_{\alpha \in W(S) } \lambda_{\alpha} b^{\alpha}$ in $B$ whenever the coefficients $ \lambda_{\alpha}\in R$ satisfy $\lim_{|\alpha| \to \infty } |\lambda_{\alpha}|^{1/|\alpha|}= 0$. 

\begin{remarks}\label{FEFCcatrmks}
 Another perspective is to look at the full subcategory of Fr\'echet algebras with objects $\cF_n(R)$,  so $\Hom( \cF_m(R),\cF_n(R))\cong \cF_n(R)^m$, and then 
an FEFC $R$-algebra $B$ is a product-preserving functor $ \cF_n(R) \mapsto B^n$ from the opposite category to sets, since $\cF_{m+n}(R)$ is the coproduct of $\cF_m(R)$ and $\cF_n(R)$ in this category.

In particular, every FEFC $R$-algebra is an associative $R$-algebra, with  addition and multiplication coming from the functions $(x,y) \mapsto x+y$ and
  $(x,y) \mapsto xy$ in $\cF_2$, and the map $R \to B$ given by $\lambda \mapsto \Phi_{\lambda}(*)$ via the isomorphism $\cF_0 \cong R$.

From a categorical perspective, the forgetful functor from FEFC $R$-algebras to sets has a left adjoint, 
 which sends a set $S$ to the FEFC $R$-algebra
\[
\cF_S(R):=  \LLim_{\substack{T \subset S}\\ \text{finite}} \cF_T(R),
\]
and  FEFC $R$-algebras are algebras for the resulting monad structure on the functor $S \mapsto \cF_S(R)$. 

There is also a forgetful functor from 
LMC topological  $R$-algebras to FEFC $R$-algebras, sending $B$ to its underlying set 
with the free entire functional calculus determined by convergence of the infinite sums with respect to the multiplicative norms. This functor has a left adjoint, determined by the property that for finite sets $S$, the FEFC $R$-algebra $\cF_S(R)$ maps to itself endowed with its natural 
topological
$R$-algebra structure. Explicitly, an FEFC  $R$-algebra $B$ is sent to the system of completions of $B$ with respect to seminorms $\nu$ compatible with the FEFC structure in the sense that $\nu(\Phi_f(b_1, \ldots,b_n) \le \sum {\alpha}\|\lambda_{\alpha}\|\nu(b_{\alpha_1})\cdots \nu(b_{\alpha_m})$ when $f= \sum {\alpha}\lambda_{\alpha}x^{\alpha}$, where $\alpha= \alpha_1\cdots \alpha_m$. 

Since $\cF_n(R)$ is the pro-Banach algebra freely generated by $n$ elements, another way to interpret these adjunctions is to say that 
FEFC is the 
Lawvere theory giving the 
closest algebraic approximation to Banach $R$-algebras.
\end{remarks}

The category of FEFC $R$-algebras has all small limits and colimits. In particular, there is a coproduct $\coprod$ with the property that $\cF_{S \sqcup T} \cong \cF_S\coprod\cF_T$.  

%

\begin{definition}
We say that an FEFC $R$-algebra $B$ is finitely presented if it arises as a coequaliser  of the form  $\cF_m(R) \implies \cF_n(R) \to B$.
\end{definition}

Since the monad $S \mapsto \cF_S$ preserves filtered colimits, we immediately have: 
\begin{lemma}\label{indFP}
 The category of FEFC $R$-algebras is equivalent to the category of ind-objects of the category of finitely presented FEFC $R$-algebras. In other words, every FEFC $R$-algebra can be written as a filtered colimit $\LLim_{\alpha}A(\alpha)$ of finitely presented FEFC $R$-algebras, and 
 \[
  \Hom(\LLim_{\beta}B(\beta), \LLim_{\alpha}A(\alpha)) \cong \Lim_{\beta} \LLim_{\alpha}\Hom(B(\beta), A(\alpha))
 \]
for filtered systems $\{A(\alpha)\}_{\alpha}, \{B(\beta)\}_{\beta}$ of finitely presented FEFC $R$-algebras. 
\end{lemma}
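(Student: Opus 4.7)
The plan is to reduce the statement to the general theory of locally finitely presentable categories, applied to the free-forgetful monad on $\mathrm{Set}$ whose algebras are the FEFC $R$-algebras (cf. Remarks \ref{FEFCcatrmks}). The crux is that this monad is finitary: the formula $\cF_S(R) = \LLim_{T \subset S \text{ finite}} \cF_T(R)$ displays the free functor $S \mapsto U\cF_S(R)$ as a filtered colimit of its finite-arity restrictions, and every set is itself the filtered colimit of its finite subsets, so the monad preserves filtered colimits.

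From finitariness I would derive two facts. First, the forgetful functor $U$ from FEFC $R$-algebras to $\mathrm{Set}$ preserves filtered colimits: a $T$-algebra structure amounts to operations $\Phi_f \co X^n \to X$ for $f \in \cF_n(R)$ satisfying compatibility relations, each of which involves only finitely many arguments at a time, so such data glues canonically along filtered colimits of sets. Second, each $\cF_n(R)$ for finite $n$ is categorically compact, since $\Hom(\cF_n(R), -) \cong U(-)^n$ by adjunction and finite products commute with filtered colimits in $\mathrm{Set}$. Finally, every finitely presented algebra $B$, presented by a coequaliser $\cF_m(R) \rightrightarrows \cF_n(R) \to B$, inherits compactness from $\cF_n(R)$ and $\cF_m(R)$, because finite limits commute with filtered colimits in $\mathrm{Set}$ and
\[
\Hom(B, A) \;=\; \mathrm{eq}\bigl(U(A)^n \rightrightarrows U(A)^m\bigr).
\]

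To complete the proof, I would observe that every FEFC $R$-algebra $A$ is canonically a filtered colimit of finitely presented ones: the comma category $(\mathrm{FEFC}_{\mathrm{fp}} \da A)$ is filtered, being closed under finite coproducts and coequalisers of finitely presented algebras, and its colimit equals $A$ (bijectively on underlying sets, since $U$ preserves filtered colimits, and any element of $UA$ factors through some $\cF_1(R) \to A$, while any relation among finitely many elements factors through a finitely generated coequaliser). The Hom formula then follows formally:
\[
\Hom(\LLim_\beta B(\beta), \LLim_\alpha A(\alpha)) \cong \Lim_\beta \Hom(B(\beta), \LLim_\alpha A(\alpha)) \cong \Lim_\beta \LLim_\alpha \Hom(B(\beta), A(\alpha)),
\]
using the universal property of the colimit $\LLim_\beta B(\beta)$ in the first step and the compactness of each $B(\beta)$ in the second. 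The equivalence $\mathrm{FEFC}\Alg_R \simeq \mathrm{Ind}(\mathrm{FEFC}_{\mathrm{fp}}\Alg_R)$ is then the standard characterisation of locally finitely presentable categories (cf. Ad\'amek--Rosick\'y).

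The only step requiring any genuine verification is the preservation of filtered colimits by $U$; everything else is standard monadic bookkeeping, which is why the author deems the conclusion to follow immediately.
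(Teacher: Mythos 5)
Your proposal is correct and follows exactly the paper's route: the paper's proof is the single observation that the monad $S \mapsto \cF_S$ preserves filtered colimits (hence is finitary), from which local finite presentability of the category of algebras follows by standard theory; you have simply unpacked the standard bookkeeping the paper deems immediate.
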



The following is an almost tautological consequence of the definition of $\cF_n$ as a completion with respect to a system of $\ell^1$ norms:
\begin{lemma}\label{extendmaplemma1}
 An $R$-linear map $\theta$ from the free associative algebra $R\<x_1, \ldots , x_n\>$ to a Fr\'echet $R$-module $M$ extends to an $R$-linear map on $\cF_n$ if and only if for every continuous seminorm $\|-\|$ on $M$, there exist $C, r \in \R_{>0}$ such that 
 \[
  \|\theta(x_{i_1} \cdots x_{i_m})\| \le C r^m 
 \]
for every non-commutative monomial $x_{i_1} \cdots x_{i_m}$ in $R\<x_1, \ldots , x_n\>$.
 \end{lemma}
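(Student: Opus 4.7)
The key is to pin down what ``extends'' means: since $\cF_n$ is the Fr\'echet inverse limit of Banach completions of $R\<x_1,\ldots,x_n\>$ with respect to the seminorms $\|\sum_\alpha \lambda_\alpha x^\alpha\|_\rho = \sum_\alpha|\lambda_\alpha|\rho^{|\alpha|}$, and polynomials are dense, a continuous $R$-linear extension $\tilde\theta\co \cF_n \to M$ is unique if it exists, and necessarily satisfies $\tilde\theta(\sum_\alpha \lambda_\alpha x^\alpha) = \sum_\alpha \lambda_\alpha \theta(x^\alpha)$ for every series convergent in $\cF_n$. Both directions then reduce to comparing the $\ell^1$-type norms on $\cF_n$ with continuous seminorms on $M$.

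For sufficiency, assume $\|\theta(x^\alpha)\| \le C r^{|\alpha|}$ for each continuous seminorm $\|-\|$ on $M$, with $C, r$ depending on $\|-\|$. Given $f = \sum_\alpha \lambda_\alpha x^\alpha \in \cF_n$, the defining convergence condition of $\cF_n$ yields $\|f\|_r < \infty$, whence
\[
 \sum_\alpha |\lambda_\alpha|\,\|\theta(x^\alpha)\| \le C\|f\|_r,
\]
so $\tilde\theta(f) := \sum_\alpha \lambda_\alpha \theta(x^\alpha)$ converges absolutely in $\|-\|$. Applying this to every continuous seminorm and invoking completeness of $M$ yields a well-defined element $\tilde\theta(f) \in M$; linearity is immediate, and the estimate also supplies continuity.

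The main obstacle lies in the necessity direction, which requires us to manufacture, from the failure of the growth bound, a single element of $\cF_n$ on which the putative extension diverges. Assume that for some continuous seminorm $\|-\|$ no pair $(C, r)$ works. Applying this with $(C, r) = (j, j)$ for each $j \in \N$, and using that only finitely many monomials lie below any prescribed length bound (so that the offending monomial must eventually have length exceeding any previous choice), we extract iteratively a sequence of monomials $x^{\beta_j}$ of strictly increasing lengths $m_j \to \infty$ with $\|\theta(x^{\beta_j})\| > j^{m_j}$. Set $\lambda_{\beta_j} := j^{-m_j/2}$ and $\lambda_\alpha := 0$ otherwise; since $m_j \ge j$, for any $\rho > 0$ the sum
\[
 \sum_j |\lambda_{\beta_j}|\rho^{m_j} = \sum_j (\rho/\sqrt{j})^{m_j}
\]
converges (terms are eventually bounded by $2^{-j}$), so $f = \sum_j \lambda_{\beta_j} x^{\beta_j} \in \cF_n$. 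However $\|\lambda_{\beta_j}\theta(x^{\beta_j})\| \ge j^{m_j/2} \to \infty$, so the terms of $\sum_j \lambda_{\beta_j}\theta(x^{\beta_j})$ do not vanish in $\|-\|$, forbidding convergence in $M$ and contradicting the existence of the extension.

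The delicate aspect is enforcing strictly increasing lengths when choosing the $\beta_j$: without this refinement there could be up to $n^\ell$ selected monomials of any given length $\ell$, and summability of $\sum_j|\lambda_{\beta_j}|\rho^{m_j}$ would require a more intricate choice of weights. Once the lengths are strictly increasing, both directions collapse to straightforward absolute-convergence bookkeeping against the seminorms $\|-\|_\rho$ defining $\cF_n$.
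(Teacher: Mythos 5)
Your proof is correct. The paper states this lemma without proof, calling it an ``almost tautological consequence of the definition of $\cF_n$ as a completion with respect to a system of $\ell^1$ norms,'' and your sufficiency direction makes that tautology explicit: absolute summability of $\sum_\alpha|\lambda_\alpha|\,\|\theta(x^\alpha)\|$ against the $\ell^1$-type norm $\|-\|_r$, plus completeness of $M$, produces the continuous extension. Where you diverge from the intended ``tautological'' route is in necessity. Since a continuous extension $\tilde\theta\colon\cF_n\to M$ is, for each continuous seminorm $\|-\|$ on $M$, bounded by some $\|-\|_\rho$ (because $\cF_n$ is the inverse limit of the Banach $\ell^1$-completions), one gets $\|\theta(x_{i_1}\cdots x_{i_m})\|=\|\tilde\theta(x_{i_1}\cdots x_{i_m})\|\le C\,\|x_{i_1}\cdots x_{i_m}\|_\rho=C\rho^m$ in one line. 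Your contradiction argument --- building an explicit $f\in\cF_n$ on which the natural term-by-term sum diverges --- is valid but considerably more work, essentially re-deriving the uniform boundedness implicit in continuity. One small imprecision: taking $(C,r)=(j,j)$ does not by itself force the offending monomial's length to exceed all previous choices at each step $j$, since short monomials may keep offending for many consecutive $j$. The fix is routine: because each finite set $\{\alpha:|\alpha|\le L\}$ eventually satisfies the bound as $j\to\infty$, the offending lengths tend to $\infty$, and one passes to a subsequence $j_1<j_2<\cdots$ along which the lengths $m_k$ are strictly increasing; your estimates survive since $j_k\ge k$ and $m_k\ge k$, so this is a presentational gap rather than a substantive one.
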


 \subsection{FEFC ideals, bimodules and derivations}
 
As observed in \cite{beckThesis} and \cite[\S 2]{Q},  for any algebraic theory there is an associated notion of modules  over an algebra $A$, given by abelian group objects in the slice category over $A$, and known as \emph{Beck modules}. For commutative rings (and indeed commutative EFC rings) $A$, these  correspond to $A$-modules $M$ via the functor $M \mapsto A \oplus M\eps$, with $\eps^2=0$. For associative rings $A$, they correspond to bimodules $M$, again via the functor  $M \mapsto A \oplus M\eps$. Beck and Quillen observed that the forgetful functor from that category of group objects to the slice category has a left adjoint $\Ab$, and that $\Ab(A)$ corresponds to the cotangent module of $A$ in the commutative and associative cases.

We now need to carry out this procedure for FEFC 
$R$-algebras, where the natural category of modules is somewhat harder to characterise. Any abelian group object over an FEFC-algebra $A$ has a section from $A$ corresponding to the unit, so must   take the form $A \oplus M$, with the group structure $(A\oplus M)\by_A(A\oplus M) \to A \oplus M$ given by $(a,x,y) \mapsto (a,x+y)$, for $a \in A$, $x,y \in M$, and we need to find the constraints on the FEFC operations on $A \oplus M$ which permit this group operation to be an FEFC-homomorphism. Considering the multiplication alone, we deduce that $M$ has an $A$-bimodule structure, but (in contrast to the case of commutative EFC algebras), this alone is not enough.

 \subsubsection{FEFC ideals}

 \begin{definition}
  Say that a subspace $I \subset A$ of an FEFC algebra $A$ is an FEFC ideal if the FEFC operations all descend to $A/I$.
 \end{definition}
 
 \begin{definition}\label{iotadef}
Given $c \in \cF_n(R)$, define $\iota_c$ to be the continuous $R$-linear map
 \begin{align*}
\iota_c \co  \cF_n(R)\hten_{R,\pi} \cF_n(R) &\to \cF_n(R)\\ 
a \ten b &\mapsto acb,
\end{align*}
where $\hten_{R,\pi}$ denotes the $R$-linear completed projective tensor product of Fr\'echet spaces. 
 \end{definition}

 The following can be thought of as  a non-commutative analogue of Hadamard's lemma.
 \begin{lemma}\label{NChadamardlemma}
  For any $f \in \cF_{n+1}(R)$, there exists $\psi(f) \in  \cF_{n+2}(R)\hten_{R,\pi} \cF_{n+2}(R)$ such that 
  \[
   f(x, z_1, \ldots, z_n) - f(y, z_1, \ldots, z_n)= \iota_{(x-y)}\psi(f)(x,y,z_1, \ldots,z_n). 
  \]
 \end{lemma}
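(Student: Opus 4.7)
The plan is to first construct $\psi$ on monomials by a telescoping argument mimicking the commutative Hadamard identity $x^{k}-y^{k}=\sum_{i=0}^{k-1}x^{i}(x-y)y^{k-1-i}$, and then extend by continuity using the $\ell^{1}$-type norms defining $\cF_{n+2}(R)\hten_{R,\pi}\cF_{n+2}(R)$.

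First I would treat the case of a monomial $w(x,z_1,\ldots,z_n)= u_1 u_2\cdots u_m$, where each $u_\ell$ lies in $\{x,z_1,\ldots,z_n\}$. Let $i_1<\cdots<i_k$ enumerate the positions of $x$, and for $0\le j\le k$ let $w_j$ denote the word obtained from $w$ by replacing the first $j$ occurrences of $x$ by $y$. Then $w_0=w(x,z)$, $w_k=w(y,z)$, and $w_{j}-w_{j+1}$ differs only at position $i_{j+1}$, so one can write
\[
w_j - w_{j+1}=L_j(y,x,z)\cdot (x-y)\cdot R_j(y,x,z),
\]
where $L_j$ is the prefix of $w_{j+1}$ up to position $i_{j+1}-1$ and $R_j$ the suffix of $w_j$ from position $i_{j+1}+1$. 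Setting $\psi(w):=\sum_{j=0}^{k-1}L_j\ten R_j\in R\langle x,y,z_1,\ldots,z_n\rangle\ten_R R\langle x,y,z_1,\ldots,z_n\rangle$, the telescoping sum gives the identity $w(x,z)-w(y,z)=\iota_{(x-y)}\psi(w)$.

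Next I would extend $\psi$ $R$-linearly to $R\langle x,z_1,\ldots,z_n\rangle$ and control its growth. For each $\rho>0$, equip $\cF_{n+2}(R)\hten_{R,\pi}\cF_{n+2}(R)$ with the projective tensor product seminorm induced by $\|-\|_\rho\ten\|-\|_\rho$, which is submultiplicative since $\hten_\pi$ takes products of seminorms to their tensor product. Each elementary tensor $L_j\ten R_j$ has $\|L_j\|_\rho\|R_j\|_\rho=\rho^{m-1}$, hence
\[
\|\psi(w)\|_{\rho\ten\rho}\le k\,\rho^{m-1}\le m\,\rho^{m-1}.
\]
For an arbitrary $f=\sum_\alpha \lambda_\alpha x^{\alpha}\in\cF_{n+1}(R)$ (with $x^\alpha$ a word of length $|\alpha|$ in $\{x,z_1,\ldots,z_n\}$), pick any $\rho'>\rho$ and write $m\rho^{m-1}\le C_{\rho,\rho'}(\rho')^m$; the defining condition $\sum_\alpha\|\lambda_\alpha\|(\rho')^{|\alpha|}<\infty$ for every $\rho'$ then forces
\[
\sum_\alpha \|\lambda_\alpha\|\cdot\|\psi(x^\alpha)\|_{\rho\ten\rho}\le C_{\rho,\rho'}\sum_\alpha \|\lambda_\alpha\|(\rho')^{|\alpha|}<\infty
\]
for every $\rho$. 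By Lemma \ref{extendmaplemma1} (applied in the ambient space $\cF_{n+2}(R)\hten_{R,\pi}\cF_{n+2}(R)$), the $R$-linear map $\psi$ therefore extends continuously to $\cF_{n+1}(R)$, and the identity of the statement extends by continuity from the dense subspace of polynomial $f$ to all of $\cF_{n+1}(R)$, since both sides are continuous in $f$.

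The only real obstacle is the convergence estimate of the projective tensor product seminorm, where care is needed because the non-commutative monomials $L_j$ and $R_j$ can each have length up to $m-1$ and there are $k\le m$ of them; the factor $m$ is harmless after absorbing it into a slightly larger radius $\rho'$ using the defining condition of $\cF_{n+1}(R)$. The rest is bookkeeping with the telescoping sum.
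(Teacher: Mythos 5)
Your proof is correct and follows essentially the same route as the paper: a letter-by-letter telescoping decomposition of monomials yielding $\psi$ on the dense polynomial subalgebra, followed by the norm bound $\|\psi(w)\|\le |w|\,\rho^{|w|-1}$ and absorption of the linear factor into a larger radius to extend by continuity. The only cosmetic difference is that the paper groups the letters between successive occurrences of $x$ into blocks $a_0,\dots,a_n\in\cF_n(R)$ and writes a single formula per block, whereas you track individual letter positions, but the decomposition and estimates are the same.
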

\begin{proof}
For $a_i=a_i(z_1, \ldots, z_n) \in \cF_n(R)$ we have
\[
  a_0 x a_1 x\cdots xa_n -a_0ya_1y\cdots ya_n =  \sum_{i=1}^n a_0ya_1y \cdots ya_{i-1}(x-y)a_ixa_{i+1}x\cdots xa_n, 
 \]
so setting $\psi(a_0xa_1x\cdots xa_n):= \sum_{i=1}^n a_0ya_1y \cdots ya_{i-1}\ten a_ixa_{i+1}x\cdots xa_n$, we have $\iota_{(x-y)}\psi(f)(x,y, \uline{z})= f(x, \uline{z}) - f(y, \uline{z})$ for all monomials $f(x, z_1, \ldots, z_n)$. Now, $\|\psi( (x, \uline{z})^{\alpha})\|_r \le |\alpha|r^{|\alpha|-1}$, so for $f(t_0, \ldots, t_n):= \sum_{\alpha} \lambda_{\alpha}  \uline{t}^{\alpha}$ and extending $\psi$ linearly, we have (for any $\eps >0$)
\[
\|\psi(f)\|_r \le \sum_{\alpha} |\lambda_{\alpha}|\cdot |\alpha|r^{|\alpha|-1} \le  \eps^{-1}\sum_{\alpha}|\lambda_{\alpha}|\cdot |\alpha|(r+\eps)^{|\alpha|} = \eps^{-1}\|f\|_{r+\eps} < \infty. \qedhere
\]
\end{proof}

In the commutative setting, all ideals $I$ of an EFC ring $B$ are EFC ideals in the sense that $B/I$ is EFC, but the situation for FEFC rings is less straightforward: 

\begin{lemma}\label{ideallemma}
Every FEFC ideal is a $2$-sided ideal with respect to the associative algebra structure, and in $\cF_n(R)$ a $2$-sided ideal $I$ is an FEFC ideal 
 if and only if for all $c \in I$ the map 
\begin{align*}
\iota_c \co  \cF_n(R)\hten_{R,\pi} \cF_n(R) &\to \cF_n(R) 
\end{align*}
has image contained in $I$. 
\end{lemma}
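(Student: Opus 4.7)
My plan proceeds as follows. For the first assertion, note that the associative $R$-algebra structure on any FEFC algebra arises from the FEFC operations corresponding to addition and multiplication in $\cF_2(R)$ (Remarks \ref{FEFCcatrmks}); hence any FEFC-homomorphism is in particular an $R$-algebra homomorphism. Thus the quotient map $\pi\co A \to A/I$ associated to an FEFC ideal is an algebra homomorphism, forcing $I = \ker(\pi)$ to be a two-sided ideal.

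For the $(\Leftarrow)$ direction of the equivalence, I would apply Lemma \ref{NChadamardlemma}. To check that $\cF_n(R)/I$ inherits the FEFC operations from $\cF_n(R)$, it suffices, by altering one variable at a time, to verify that for each $f \in \cF_m(R)$, all $a_1, \ldots, a_m \in \cF_n(R)$, and all $c \in I$, the difference $f(a_1, a_2, \ldots, a_m) - f(a_1 - c, a_2, \ldots, a_m)$ lies in $I$. Hadamard expresses this as $\iota_{x-y}(\psi(f))$ in $\cF_{m+1}(R)$; pushing through the tensor square of the substitution $\cF_{m+1}(R) \to \cF_n(R)$ sending $x \mapsto a_1$, $y \mapsto a_1 - c$, $z_i \mapsto a_{i+1}$, and using $x - y \mapsto c$, the difference is realised as $\iota_c$ applied to an element of $\cF_n(R) \hten \cF_n(R)$; it therefore lies in $I$ by hypothesis.

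For $(\Rightarrow)$, the key idea is to realise $\iota_c(\phi)$ as the value at $t=c$ of a canonically associated element of $\cF_{n+1}(R)$. Explicitly, the bilinear map $(a,b) \mapsto a\,t\,b$ extends to a continuous $R$-linear map $H \co \cF_n(R) \hten \cF_n(R) \to \cF_{n+1}(R)$, since on elementary tensors $\|a\,t\,b\|_\rho \le \rho\,\|a\|_\rho\,\|b\|_\rho$ and this dominates the projective tensor seminorm. Then $H(\phi) \in \cF_{n+1}(R)$ satisfies $H(\phi)(c, x_1, \ldots, x_n) = \iota_c(\phi)$, while $H(\phi)(0, x_1, \ldots, x_n) = 0$ because every elementary tensor in $\phi$ produces a monomial containing $t$, so the FEFC-homomorphism $\cF_{n+1}(R) \to \cF_n(R)$ sending $t \mapsto 0$ annihilates $H(\phi)$. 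Applying the FEFC-homomorphism $\pi \co \cF_n(R) \to \cF_n(R)/I$, which sends $c$ to $0$, then yields
$$\pi(\iota_c(\phi)) \;=\; \pi(H(\phi)(c, x_1, \ldots, x_n)) \;=\; H(\phi)(0, \pi(x_1), \ldots, \pi(x_n)) \;=\; 0,$$
so $\iota_c(\phi) \in I$, as required.

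The main obstacle I anticipate is simply the bookkeeping around variables and the compatibility of substitution with $\iota$ and with the completed tensor product; once that is organised, the analytic input reduces to the elementary seminorm estimate used to produce the continuous map $H$, together with the universal property of the projective tensor completion.
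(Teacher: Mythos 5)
Your proof is correct and follows essentially the same route as the paper's: for the necessary direction you both factor $\iota_c$ through the map $a\ten b\mapsto a\,t\,b$ into $\cF_{n+1}(R)$ (your $H$ is the paper's $\iota_z$) and then exploit that the quotient map $\pi$ is an FEFC homomorphism killing $c$, while for the sufficient direction you both reduce to perturbing one argument at a time and invoke the non-commutative Hadamard Lemma~\ref{NChadamardlemma} to express the difference as $\iota_c$ of something. The only mild divergences are cosmetic: you fill in the seminorm estimate justifying that $H$ extends to the completed projective tensor product, and you perturb by $a_1\mapsto a_1-c$ rather than $u_1\mapsto u_1+c$, neither of which changes the substance.
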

\begin{proof}
Since associative multiplication is one of the FEFC operations, every FEFC ideal must be a $2$-sided ideal.

To see that the converse condition is necessary, observe that the map 
\begin{align*}
\iota_z \co  \cF_{\{x_1, \ldots,x_n\}}(R)\hten_{R,\pi}\cF_{\{x_1, \ldots,x_n \}}(R) &\into \cF_{\{x_1, \ldots,x_n,z \}}(R)
\end{align*}
is injective. 
For any $f \in \cF_{\{x_1, \ldots,x_n,z \}}(R)$, the  map $\Phi_f\co \cF_n^{n+1} \to \cF_n(R)$ must then satisfy $\Phi_f(x_1, \ldots, x_n, c)- \Phi_f(x_1, \ldots, x_n, 0 )\in I$, so letting $f$ range over the image of $\iota_z$ gives $\iota_c(\cF_n(R)\hten_{R,\pi} \cF_n(R)) \subset I$.

To see that the condition is sufficient, we need to show that for all $f \in \cF_m(R)$, we have $\Phi_f(u_1+c_1, \ldots,u_m+c_m)-\Phi_f(u_1, \ldots, u_m) \in I$ whenever $u_i \in \cF_n(R)$ and $c_i \in I$. By altering variables separately and permuting inputs, this reduces  to showing that $\Phi_f(u_1+c,u_2, \ldots,u_m)-\Phi_f(u_1, \ldots, u_m) \in I$ for all $f$. By Lemma \ref{NChadamardlemma}, we have
\[
 \Phi_f(u_1+c,u_2, \ldots,u_m)-\Phi_f(u_1, \ldots, u_m)= \iota_c\Phi_{\psi(f)}(u_1+c,u_1, u_2, \ldots, u_m) \in I.\qedhere
\]
\end{proof}
 
\begin{example}\label{idealctrex}
 Consider the complete topological algebra $A$ given by the quotient of $\cF_{n+1}$ by the closure of the $2$-sided ideal generated by any term featuring the variable $x_{n+1}$ more than twice. Thus
 \[
  A \cong \cF_n \oplus (\cF_n\hten \cF_n) \oplus (\cF_n \hten \cF_n\hten \cF_n),
 \]
with the surjective homomorphism from $\cF_{n+1}$ induced by the map from $R\<x_1, \ldots, x_{n+1}\>$ which replaces any instance of $x_{n+1}$ with $\ten$.  
 
For $\cF_n$ acting on the left and $\cF_n^{\op}$ on the right, any $\cF_n\hten \cF_n^{\op}$-submodule $I$ of $ \cF_n\hten\cF_n\hten \cF_n$ 
is then an FEFC ideal in $A$, with no topological restriction. Each such submodule gives rise to a square-zero extension $A/I$ of $\cF_n \oplus (\cF_n\hten \cF_n) $ in the category of FEFC algebras, but only the closed submodules give rise to natural extensions in the category of Fr\'echet algebras.

In general, for any $\cF_n\hten \cF_n^{\op}$-module $M$ equipped with a morphism $f$ from $\cF_n \hten \cF_n\hten \cF_n $, we have an FEFC algebra structure on  $\cF_n \oplus (\cF_n\hten \cF_n) \oplus M$ as a square-zero extension of $\cF_n \oplus (\cF_n\hten \cF_n)$, in which the product of elements $a\ten b$ and $a'\ten b'$ in $\cF_n\hten \cF_n$ is given by $f(a\ten ba' \ten b') \in M$.
\end{example}

\subsubsection{Bimodules}\label{bimodsn}

\begin{definition}
 We define the involution $A \mapsto A^{\op}$ of the category of  FEFC $R$-algebras as follows. There is a continuous homomorphism $(-)^* \co \cF_S^{\op} \to \cF_S$ given by reversing the order of letters in each word. We then define $A^{\op}$ to have the same elements as $A$, but with operations $\Phi_f(a_1^{\op}, \ldots, a_n^{\op}):= \Phi_{f^*}(a_1, \ldots,a_n)^{\op}$. 
\end{definition}

\begin{definition}\label{anAedef}
 Given  FEFC $R$-algebras $A,B$, we define the  dg FEFC $R$-algebra $A\ten^{\FEFC}B$ to represent the functor
 \[
  C \mapsto \{(\phi, \psi) \in \Hom(A,C)\by \Hom(B,C) ~:~ [\phi(A),\psi(B)]=0\}.
 \]

Explicitly, for finite $S,T$ we have $\cF_S\ten^{\FEFC}\cF_T= \cF_S\hten_{\pi,R} \cF_T$ (the completed projective tensor product), since this is the quotient of $\cF_{S \sqcup T}$ by the FEFC ideal generated by commutators $\{[s,t]~:~ s \in S,~t \in T\}$. We extend this to infinite sets $S,T$ by passing to filtered colimits. In general, for FEFC ideals $I \subset \cF_S$ and $J \subset \cF_T$, we set $(\cF_S/I)\ten^{\FEFC}(\cF_T/J)$ to be the quotient of $\cF_S\ten^{\FEFC}\cF_T$ by the FEFC ideal generated by $I\ten 1$ and $1\ten J$; explicitly, this is given by the quotient of the morphism
\begin{align*}
 \left(I\ten_R(\cF_S\hten_{\pi,R}\cF_S\hten_{\pi,R}\cF_T)\right) \oplus\left( (\cF_S\hten_{\pi,R}\cF_T\hten_{\pi,R}\cF_T)\ten J \right)&\to \cF_S\hten_{\pi,R} \cF_T\\
(x\ten a_1\ten a_1'\ten b_1, a_2\ten b_2\ten b_2'\ten y) &\mapsto a_1xa_1'b_1 + a_2b_2yb_2'. 
  \end{align*}

  We then define $A^{e,\FEFC}:= A\ten^{\FEFC}A^{\op}$.
  \end{definition}

\begin{lemma}\label{bimodlemma}
 For any FEFC $R$-algebra $A$, there is an equivalence of categories between $A^{e,\FEFC}$-modules and Beck modules of $A$. 
\end{lemma}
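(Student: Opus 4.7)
The plan is to construct mutually inverse functors in each direction, following the standard Beck--Quillen pattern, with the noncommutative Hadamard lemma (Lemma \ref{NChadamardlemma}) supplying the glue between the FEFC structure and the bimodule action.

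Starting from an $A^{e,\FEFC}$-module $M$, I would define an FEFC $R$-algebra structure on the square-zero extension $A \oplus M$ via
\[
\Phi_f(a_1 + m_1, \ldots, a_n + m_n) := \Phi_f(a_1, \ldots, a_n) + \sum_{i=1}^n \iota_{m_i}\bigl(\Phi_{\psi_i(f)}(a_1,\ldots,a_n;\, a_1,\ldots,a_n)\bigr),
\]
where $\psi_i(f)$ is the Hadamard derivative in the $i$-th slot provided by Lemma \ref{NChadamardlemma}, viewed as an element of $A \hten^{\FEFC} A$ after evaluation on the diagonal, and $\iota_{m_i}$ denotes the induced action on $M$ coming from the $A^{e,\FEFC} = A \ten^{\FEFC} A^{\op}$-structure. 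Squareness $M^2=0$ forces this formula as the only consistent extension, and convergence is guaranteed by the seminorm estimate in the proof of Lemma \ref{NChadamardlemma}. The projection $A \oplus M \to A$ and its section are tautologically FEFC morphisms.

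Conversely, given a Beck module $p \co A \oplus M \to A$, multiplication on $A \oplus M$ restricts to an algebraic $A$-bimodule structure on $M$ since $M^2 = 0$. To promote this to an $A^{e,\FEFC}$-module structure in the generating case $A = \cF_n(R)$, for any $h = \sum_{\alpha,\beta} \lambda_{\alpha,\beta}\, y^\alpha z w^\beta \in \cF_{2n+1}(R)$ I would apply $\Phi_h$ in $A \oplus M$ to the tuple $(x_1,\ldots,x_n,\, m,\, x_1,\ldots,x_n)$ and project to the $M$-summand; the result is precisely the action of $\sum \lambda_{\alpha,\beta}\, y^\alpha \ten w^\beta \in A \hten^{\FEFC} A^{\op}$ on $m$ that would be demanded by an $A^{e,\FEFC}$-module structure. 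The case of general $A$ follows by writing $A$ as a filtered colimit of finitely presented FEFC algebras (Lemma \ref{indFP}) and passing FEFC ideals through Definition \ref{anAedef} and Lemma \ref{ideallemma}. Checking that the two constructions are mutually inverse is then routine: the Hadamard identity directly matches the extracted action with the prescribed one.

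The main obstacle will be verifying the FEFC functoriality axiom $\Phi_{f \circ (g_1,\ldots,g_m)} = \Phi_f \circ (\Phi_{g_1}, \ldots, \Phi_{g_m})$ for the forward construction. Modulo $M^2$ this reduces to a chain-rule identity for the Hadamard derivatives, of the schematic form
\[
\psi_j(f \circ (g_1,\ldots,g_m)) \;=\; \sum_{i=1}^m \psi_i(f)(\uline{g};\uline{g}) \cdot \psi_j(g_i),
\]
with the product interpreted inside $A\hten^{\FEFC} A$ via the bimodule action on $M$. I would verify this first on noncommutative monomials (where it is the ordinary Leibniz rule applied one letter at a time, as in the proof of Lemma \ref{NChadamardlemma}) and then extend by the continuous $R$-linearity of $\psi$ built into its construction via the $\ell^1$-type bound $\|\psi(f)\|_r \le \eps^{-1}\|f\|_{r+\eps}$.
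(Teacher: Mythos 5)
Your proposal takes a genuinely different route from the paper's. The paper never invokes the Hadamard lemma at all: it observes that the Beck--module additivity forces $\theta(f)$, for $f\in\cF_{S\sqcup T}(R)$, to depend only on the coefficients of words with at most one letter from $T$, reads off the free Beck module over $\cF_S$ on a generating set $T$ as $\bigoplus_{t\in T}\cF_S^{e,\FEFC}$, and then applies Beck's monadicity theorem. The chain-rule content you flag as the crux of your argument is absorbed automatically by the monadicity machinery and is never written down. You instead build the two inverse functors explicitly, which is correct in spirit but leaves two things implicit that you should pin down. First, the Hadamard factor $\psi_i(f)$ produced by Lemma \ref{NChadamardlemma} is not unique, so for your forward construction to be well-defined you must check that its diagonal image $\psi_i(f)(\uline{a};\uline{a})\in A^{e,\FEFC}$ does not depend on the choice; this follows from the injectivity of $\iota_z$ used in the proof of Lemma \ref{ideallemma}, but it is not free. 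Second, the product in your chain rule should be stated as the ring product in $A\ten^{\FEFC}A^{\op}$, i.e.\ $(\sum_k p_k\ten q_k)\cdot(\sum_l r_l\ten s_l)=\sum_{k,l}(p_kr_l)\ten(s_lq_k)$; the phrase ``via the bimodule action on $M$'' describes what the chain rule is \emph{for}, not what it \emph{is}, and a chain rule phrased only through its action on $M$ would not suffice to verify the FEFC composition axiom. Once those are fixed, the $\ell^1$ estimate $\|\psi(f)\|_r\le\eps^{-1}\|f\|_{r+\eps}$ does carry the monomial verification to all of $\cF_n(R)$, and in the backward direction associativity and unitality of the extracted action then follow directly from the FEFC composition axiom with no further estimate. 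So your route succeeds, trading one invocation of Beck monadicity for a concrete noncommutative chain-rule lemma; it is longer, but it has the compensating virtue of making the square-zero FEFC structure on $A\oplus M$ explicit, which is what Remark \ref{altcharrmk} actually uses downstream.
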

\begin{proof}
As discussed at the start of the subsection,
a Beck module is a module  of the form $A \oplus M$ equipped with an FEFC  structure for which the operations $\Phi_f$ have the property that $\Psi_f(\uline{a}, \uline{x}):=\Phi_f(a_1+x_1, a_n+x_n)- \Phi_f(a_1, \ldots, a_n)\in M$ and is additive in $\uline{x}$.  There is a forgetful functor from such pairs $(A, A \oplus M)$ to pairs $(A,M)$ of sets, which has a  left adjoint as follows.

Given finite sets $S,T$ with maps $S \to A$ and $T \to M$, we know that we have an  FEFC $R$-algebra homomorphism $\theta \co \cF_{S \sqcup T}(R) \to A \oplus M$ satisfying
$\theta(x_s) \in A$ and $\theta(x_t) \in M$ for all $s \in S$, $t \in T$. The additivity property above then implies that for an element $f =\sum_{\alpha \in W(S\sqcup T)} \lambda_{\alpha}x^{\alpha}$ of  $\cF_{S \sqcup T}(R)$, its image $\theta(f)$ depends only on the coefficients $\lambda_{\alpha}$ for words $\alpha$ containing at most one letter from $T$. In other words, $\theta$ factors uniquely through the quotient of $\cF_{S \sqcup T}(R)$ spanned by $\lambda_{\alpha}x^{\alpha}$ for $\alpha \in W(S) \sqcup W(S)TW(S)$.  This quotient is precisely the Fr\'echet algebra  $\cF_{S}(R) \oplus (\bigoplus_{t \in T} \cF_{S}(R)\hten_{R,\pi}\cF_{S}(R))$, where the right-hand term is a square-zero ideal. Passing to filtered colimits, we see that the left adjoint sends a pair $(S,T)$ to $(\cF_S, \bigoplus_{t \in T}\cF_S^{e,\FEFC})$. 

This adjunction satisfies the conditions of Beck's Monadicity Theorem, and it follows immediately from the resulting coequaliser descriptions that an algebra for the monad consists of pairs $(A,M)$ where $A$ is an FEFC algebra and $M$ an $A^{e,\FEFC}$-module.
\end{proof}

\subsubsection{Derivations}

\begin{definition}
Given an  FEFC $R$-algebra  $A$ and an $A^{e,\FEFC}$-module $M$, say that a map $f \co A \to M$ is an FEFC derivation if  $(\id,f) \co A \to A \oplus M$ 
is an FEFC algebra homomorphism, for the FEFC structure given by the proof of Lemma \ref{bimodlemma}. 
\end{definition}

Following the approach of \cite[\S 2]{Q}, we then say:
\begin{definition}\label{anOmegadef}
 Given an FEFC $R$-algebra  $A$, define  $\Omega^1_A$ to be the $A^{e,\FEFC}$-module  representing  the functor $M \mapsto \Hom_{\FEFC(R) }(A, A \oplus M)\by_{\Hom_{\FEFC(R)}(A,A)}\{\id\}$ of closed FEFC derivations from $A$ to $M$ of degree $0$. 
\end{definition}

 \begin{lemma}
For any FEFC algebra $A$, 
there is a natural surjective morphism $A^{e,\FEFC} \to A$ of $A^{e,\FEFC}$-modules whose kernel is isomorphic to $\Omega^1_A$.
 \end{lemma}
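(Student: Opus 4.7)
The plan is to construct the multiplication map $\mu$, define the universal derivation $d$, and verify that its target $I$ satisfies the universal property of $\Omega^1_A$.

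First, by Lemma \ref{bimodlemma}, $A$ carries a canonical $A^{e,\FEFC}$-module structure via the bimodule action $(a\ten b) \cdot x = axb$. The map $\mu \co A^{e,\FEFC} \to A$, $a\ten b \mapsto ab$, is then the unique $A^{e,\FEFC}$-module morphism sending $1 \ten 1 \mapsto 1$, and it is surjective since $\mu(a \ten 1) = a$. Setting $I := \ker \mu$ yields a short exact sequence $0 \to I \to A^{e,\FEFC} \xra{\mu} A \to 0$ of $A^{e,\FEFC}$-modules.

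Next, I would define $d \co A \to I$ by $d(a) := 1 \ten a - a \ten 1$ and verify the Leibniz identity $d(ab) = a \cdot d(b) + d(a) \cdot b$ by direct computation with the bimodule actions. To show that $d$ is an FEFC derivation --- equivalently, that $(\id, d) \co A \to A \oplus I$ is an FEFC morphism for the Beck algebra structure of Lemma \ref{bimodlemma} --- I would first treat the free case $A = \cF_S(R)$: the universal property of $\cF_S$ produces a unique FEFC morphism sending each generator $s$ to $(s, d(s))$, which must coincide with $(\id, d)$ by agreement on generators. For a general FEFC algebra $A = \cF_S/J$, the construction descends since $d(J) \subseteq (1 \ten J) + (J \ten 1)$, which vanishes in $A^{e,\FEFC} = \cF_S^{e,\FEFC}/(1 \ten J + J \ten 1)$.

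Third, to identify $I$ with $\Omega^1_A$, I would prove that composition with $d$ yields a bijection between $A^{e,\FEFC}$-module morphisms $\phi \co I \to M$ and FEFC derivations $A \to M$. For injectivity, the identity $a \ten b = ab \ten 1 + a \cdot d(b)$ shows that any $\xi = \sum c_{\alpha,\beta}\, s^\alpha \ten s^\beta \in I$ can be decomposed as $\xi = \sum_\beta \gamma_\beta \cdot d(s^\beta)$ with $\gamma_\beta \in A$, since the correction term $\mu(\xi) \ten 1$ vanishes; hence $I$ is generated as an $A^{e,\FEFC}$-module by $d(A)$. For surjectivity, given an FEFC derivation $D$, define $\phi_D$ by the analogous formula and verify well-definedness using the FEFC derivation property.

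The main obstacle is the rigorous treatment of completions: for free $A = \cF_n(R)$, the algebra $A^{e,\FEFC}$ is a projective tensor product, so elements of $I$ are convergent series and the na\"ive formula for $\phi_D$ must be interpreted correctly in $M$, which carries no topology a priori. The resolution invokes the non-commutative Hadamard lemma (Lemma \ref{NChadamardlemma}) to express elements of $I$ as finite combinations of the form $\sum_i \iota_{d(s_i)} \psi_i$ with $\psi_i \in A^{e,\FEFC} \hten A^{e,\FEFC}$, and then uses the $A^{e,\FEFC}$-module structure on $M$ together with the FEFC Leibniz rule satisfied by $D$ to give a consistent interpretation in $M$.
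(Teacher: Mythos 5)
Your route differs fundamentally from the paper's. You set out to verify directly that $I:=\ker(\mu)$ satisfies the universal property of $\Omega^1_A$, i.e.\ that precomposition with $d$ gives a bijection from $A^{e,\FEFC}$-module morphisms $I\to M$ to FEFC derivations $A\to M$. The paper instead defines $\alpha\co\Omega^1_A\to A^{e,\FEFC}$ via the derivation $a\mapsto 1\ten a - a\ten 1$ and $u(a)=a\ten 1$, and for $A=\cF_S$ finite free constructs a completed \emph{$A$-linear} (not $A^{e,\FEFC}$-linear) map $\rho(a\ten b)=a\,db$ satisfying $\rho\alpha=\id$, $\rho u=0$, and $\alpha\rho+u\mu=\id$; the last relation forces $\ker\mu\subseteq\im\alpha$ while $\rho\alpha=\id$ makes $\alpha$ split injective, giving exactness without ever invoking module generation. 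Colimits then handle general $A$.

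There is a real gap in your injectivity step. The decomposition $\xi=\sum_\beta\gamma_\beta\cdot d(s^\beta)$ is an infinite series over multi-indices $\beta$. An $A^{e,\FEFC}$-module is a module over the underlying abstract associative ring, so ``$I$ is generated by $d(A)$'' must mean finite $A^{e,\FEFC}$-linear combinations, whereas your infinite sum only places $\xi$ in a topological closure of that submodule. Finite generation of $I$ by $d(A)$ does happen to be true, but only as a \emph{consequence} of the isomorphism $I\cong\Omega^1_A$---any representing object for a derivations functor is generated by the image of the universal derivation---so it cannot be an ingredient in the proof. The same issue recurs in your surjectivity step: the Hadamard lemma expresses $\xi$ via $\iota_{d(s)}\psi$ with $\psi$ a convergent series in a completed tensor product, but $M$ carries no topology, so substituting $D(s)$ for $d(s)$ does not produce a well-defined element of $M$ without further work, which you flag but do not supply. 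The paper's $\rho$ absorbs precisely this analytic content: it is a continuous map into $\Omega^1_{\cF_S}$ (realised as a free topological $\cF_S\hten\cF_S^{\op}$-module), defined by completing a norm estimate, and the split-exactness identity is then a purely formal check. If you wanted to repair your proof rather than switch approaches, you would in effect need to build an analogue of $\rho$, at which point you may as well argue as the paper does.
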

\begin{proof}
We have an FEFC derivation $A \to A^{e,\FEFC}$ given by $a \mapsto 1\ten a - a\ten 1$, and hence an  $A^{e,\FEFC}$-module homomorphism $\alpha \co \Omega^1_A \to A^{e,\FEFC}$. We also have a left $A$-module homomorphism $u \co A \to A^{e,\FEFC}$ given by $a \mapsto a\ten 1$.

 When $A= \cF_S$ for $S$ finite, the $A^{e,\FEFC}$-module structure on $A$ is just the $\cF_S\hten \cF_S^{\op}$-module structure induced by completing the bimodule structure, and the morphism $\mu \co \cF_S\hten \cF_S\to \cF_S$ is just induced by multiplication; this satisfies  $\rho \circ u=\id$  and $\rho \circ \alpha =0$. 
 For the universal derivation $d \co A \to \Omega^1_A$, we also have a left $ \cF_S$-module homomorphism $ \rho \co \cF_S\hten \cF_S  \to \Omega^1_{\cF_S}$ given by completing the map $a\ten b \mapsto adb$, satisfying $\rho \circ \alpha =\id$ and $\rho \circ u =0$. Finally, note that $(\alpha \circ \rho)(a\ten b) = a\ten b - ab \ten 1 = (\id - u \circ \mu)(a \ten b)$. For $A= \cF_S$, we thus have a split exact sequence
 \[
  \xymatrix@1{ 0 \ar[r] & \Omega^1_A \ar@<.5ex>[r]^-{\alpha} & A^{e,\FEFC}  \ar@<.5ex>[r]^-{\mu} \ar@<.5ex>[l]^-{\rho} & A \ar[r] \ar@<.5ex>[l]^-{u} & 0.}
 \]

Since these constructions are all functorial with respect to FEFC homomorphisms, passing to colimits induces such a split exact sequence for all FEFC algebras $A$. 
\end{proof}

\subsection{FEFC-Differential graded algebras}\label{dgFEFCsn}


\subsubsection{Graded FEFC algebras}

\begin{definition}
 Given a finite graded set $S$ (i.e. a finite set $S$ equipped with a degree function $\deg \co S \to \Z$), extend the degree function to the set $W(S)$ of words in $S$ by letting the degree of a word be the sum of the degrees of its elements. Write the resulting decompositions by degree as $S= \coprod_{i \in \Z} S_i$ and $W(S)= \coprod_{i \in \Z} W(S)_i$. 
\end{definition}

\begin{definition}\label{freegradedandef}
 Given a commutative Banach algebra $R$ and a finite graded set $S$, define the  graded algebra $\cF_S(R)$ of free entire functions  by letting $\cF_S(R)_i $  be the subspace of $R^{W(S)_i}$ given by
 \[
  \cF_S(R)_i = \{\sum_{\alpha \in W(S)_i} \lambda_{\alpha}x^{\alpha}~:~  \sum_{\alpha}\|\lambda_{\alpha}\|\rho^{|\alpha|} <\infty ~\forall \rho \in \R_{>0}\},
 \]
where $|\alpha|$ denotes the length of a word $\alpha$. Multiplication $\cF_S(R)_i \by \cF_S(R)_j \to \cF_S(R)_{i+j} $   is then  defined by concatenation of words. When the base ring $R$ is clear, we simply write $\cF_S$ for $\cF_S(R)$.
\end{definition}
For instance, this means that if $S$ has $n$ elements of degree $0$ and one of degree $1$, then $\cF_S(R)_i \cong  \underbrace{\cF_n(R)\hten_{R,\pi} \cF_n(R) \hten_{R,\pi}\ldots \hten_{R,\pi} \cF_n(R) }_{i+1} $. This stands in contrast to the commutative situation,  where the contribution of non-zero generators is purely algebraic if generators are all in non-negative degrees,  with the associated commutative EFC algebras given by $\cF_S(R)^{\comm}\cong  \cF_{S_0}(R)^{\comm}[S_+]$, but see Remark \ref{altcharrmk} below.

Observe that each $\cF_S(R)_i $ is naturally a pro-Banach space, making $\cF_S(R)$ an inverse limit of  graded Banach algebras, where we adopt the convention that a graded Banach algebra $B$ consists of Banach spaces $B_i$ with  a continuous associative multiplication $B_i \by B_j \to B_{i+j}$ and unit $1 \in B_0$ --- beware that $\bigoplus_i B_i$ will not itself usually be a Banach space.
Continuous graded algebra homomorphisms  from $\cF_T(R)$ to $\cF_S(R)$ are then just given by $\Hom(\cF_T(R),\cF_S(R))\cong \prod_{t \in T} \cF_S(R)_{\deg t}$, determined by the value on generators $T$.

\begin{definition}\label{gradedFEFCdef}
Define  a graded $R$-algebra $B$ with free entire functional calculus (or graded FEFC $R$-algebra for short)  to consist of  sets $\{B_i\}_{i \in \Z}$  equipped, for every  function $f \in \cF_S(R)_j$, with an operation $\Phi_f \co \prod_{s \in S} B_{\deg s} \to B_j$. These operations are required to be compatible in the sense that given graded sets $T(s)$, and elements $g_s \in \cF_{T(s)}(R)_{\deg s}$ for each $s \in S$, we must have 
\[
 \Phi_{f \circ (g_s)_{s \in S}}= \Phi_f \circ (\Phi_{g_s})_{s \in S} \co \prod_{s \in S} (B_{\deg s})^{T(s)} \to B_j.  
\]
\end{definition}
Thus a graded FEFC $R$-algebra is a graded  $R$-algebra $B$ with a systematic and consistent way of evaluating expressions of the form $\sum_{\alpha \in W(S)_j } \lambda_{\alpha} b^{\alpha}$ in $B_j$ whenever the coefficients $ \lambda_{\alpha}\in R$ satisfy $\lim_{|\alpha| \to \infty } |\lambda_{\alpha}|^{1/|\alpha|}= 0$. 

Another perspective is to take the full subcategory of graded Fr\'echet algebras on objects $\cF_S(R)$, and to  say that
a graded FEFC $R$-algebra $B$ is a product-preserving functor from the opposite category  to sets, given by $ \cF_S(R) \mapsto \prod_{s \in S} B_{\deg s}$.

The following is an almost tautological consequence of the definition of $\cF_S$ as a graded completion with respect to a system of $\ell^1$ norms:
\begin{lemma}\label{extendmaplemma2}
Given a finite graded set $S$, 
 an $R$-linear map $\theta$ from the free graded associative algebra $R\<S\>$ to a graded Fr\'echet $R$-module $M$ extends to an $R$-linear map on $\cF_S$ if and only if for every $n \in \Z$ and every continuous seminorm $\|-\|$ in a system jointly defining the topology on $M$, there exist $C, r \in \R_{>0}$ such that 
 \[
  \|\theta(x_{i_1} \cdots x_{i_m})\| \le C r^m 
 \]
for every non-commutative monomial $x_{i_1} \cdots x_{i_m}$ in $R\<S\>_n$.
 \end{lemma}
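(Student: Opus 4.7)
The plan is to mimic the proof of Lemma \ref{extendmaplemma1} by reducing to each graded piece separately, using the fact that Definition \ref{freegradedandef} presents $\cF_S(R)$ as a graded Fr\'echet completion of the free graded associative algebra $R\<S\>$. For each fixed total degree $n$, the component $\cF_S(R)_n$ is by definition the Hausdorff completion of $R\<S\>_n$ with respect to the family of $\ell^1$-type seminorms
\[
\|\sum_{\alpha \in W(S)_n} \lambda_\alpha x^\alpha\|_\rho := \sum_{\alpha}|\lambda_\alpha|\rho^{|\alpha|}, \qquad \rho > 0,
\]
a cofinal countable subfamily of which (say $\rho \in \N_{>0}$) makes $\cF_S(R)_n$ into a Fr\'echet space containing $R\<S\>_n$ as a dense subspace.

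Giving an extension $\tilde\theta \co \cF_S(R) \to M$ amounts to giving a continuous $R$-linear extension $\tilde\theta_n \co \cF_S(R)_n \to M_n$ for each $n \in \Z$. By the universal property of Hausdorff completion into a Fr\'echet target, such an extension exists and is unique if and only if, for every continuous seminorm $\nu$ in a defining system on $M_n$, there are constants $C_\nu, r_\nu > 0$ with
\[
\nu(\theta(f)) \le C_\nu \|f\|_{r_\nu} \quad \text{for every } f \in R\<S\>_n.
\]

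The remaining task is to translate this seminorm bound into the monomial bound stated in the lemma. For sufficiency, if the monomial inequality $\nu(\theta(x_{i_1}\cdots x_{i_m})) \le Cr^m$ holds on each monomial in $R\<S\>_n$, then the triangle inequality applied to any finite sum $f = \sum \lambda_\alpha x^\alpha \in R\<S\>_n$ gives
\[
\nu(\theta(f)) \le \sum |\lambda_\alpha|\nu(\theta(x^\alpha)) \le C \sum |\lambda_\alpha|r^{|\alpha|} = C\|f\|_r,
\]
which is precisely the required bound. Conversely, if $\tilde\theta$ exists, then pulling back any continuous seminorm $\nu$ on $M_n$ produces a continuous seminorm on the Fr\'echet space $\cF_S(R)_n$, hence one dominated by $C\|-\|_r$ for some $C, r > 0$; evaluating this on a monomial $x^\alpha$, for which $\|x^\alpha\|_r = r^{|\alpha|}$, recovers the monomial inequality.

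I do not expect a genuine obstacle: as the statement itself notes, the result is essentially tautological once one unwinds the universal property of the $\ell^1$-completions defining $\cF_S(R)$ in each degree. The only point requiring any care is confirming that the graded Fr\'echet structure on $M$ really does make each $M_n$ a Fr\'echet space with a defining system of seminorms, so that continuity of $\tilde\theta$ can be checked one seminorm and one degree at a time; this is built into the conventions of \S\ref{commcompsn}.
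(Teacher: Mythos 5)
Your argument is correct and matches the paper's intent exactly: the paper itself offers no proof, flagging the lemma as ``an almost tautological consequence of the definition of $\cF_S$ as a graded completion with respect to a system of $\ell^1$ norms,'' and your degree-by-degree reduction to the universal property of the Fr\'echet completion $\cF_S(R)_n$ of $R\<S\>_n$, followed by the triangle-inequality/monomial-evaluation translation of the boundedness condition, is precisely the intended unwinding. (One tiny notational slip: since $R$ is allowed to be a Banach or LMC Fr\'echet algebra rather than just a valued field, the $|\lambda_\alpha|$ in your displayed estimate should really be $\|\lambda_\alpha\|$, matching Definition~\ref{freegradedandef}, and one is implicitly using that the defining seminorms on a Fr\'echet $R$-module satisfy $\nu(\lambda m)\le \|\lambda\|\,\nu(m)$.)
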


\subsubsection{dg FEFC algebras}
 
Now, we need to extend the definition to give differential graded (dg) FEFC $R$-algebras.
Given a graded set $S$, let the graded set $S \sqcup dS$ be a disjoint union of two copies of $S$, with grading $\deg_{S \sqcup dS}(s)=\deg_S(s)$ and $\deg_{S \sqcup dS}(ds)=\deg_S(s)-1$. We then have:

\begin{lemma}
The graded algebra $\cF_{S\sqcup  dS}(R)$ naturally has the structure of a dg LDMC Fr\'echet algebra, with derivation $\delta$ determined by $\delta x_s:= x_{ds}$ and $\delta x_{ds}=0$.
\end{lemma}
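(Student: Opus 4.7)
The plan is to first construct $\delta$ algebraically on the free graded associative $R$-algebra $R\langle S\sqcup dS\rangle$, then extend by continuity to $\cF_{S\sqcup dS}(R)$ using Lemma \ref{extendmaplemma2}, and finally verify that the resulting map is a graded derivation squaring to zero, after adjusting the defining seminorms to satisfy the condition $\|\delta a\|\le \|a\|$.

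First I would define $\delta$ on $R\langle S\sqcup dS\rangle$ as the unique $R$-linear graded derivation of degree $-1$ with $\delta x_s = x_{ds}$ and $\delta x_{ds}=0$. For a monomial $x^{\alpha}=x_{i_1}\cdots x_{i_m}$ of length $m$, applying the Leibniz rule gives
\[
\delta(x_{i_1}\cdots x_{i_m}) = \sum_{k:\, i_k\in S} (-1)^{\deg(x_{i_1}\cdots x_{i_{k-1}})}\, x_{i_1}\cdots x_{i_{k-1}}x_{di_k}x_{i_{k+1}}\cdots x_{i_m},
\]
a sum of at most $m$ monomials of length $m$. Since the submultiplicative seminorm $\|-\|_\rho$ on $\cF_{S\sqcup dS}(R)$ satisfies $\|x^\beta\|_\rho = \rho^{|\beta|}$ on monomials, we obtain the pointwise bound $\|\delta x^\alpha\|_\rho \le |\alpha|\rho^{|\alpha|}$. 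Using the elementary inequality
\[
m\rho^m \;\le\; \tfrac{\rho}{\varepsilon}\,(\rho+\varepsilon)^m \qquad (m\ge 1,\ \varepsilon>0),
\]
which follows by comparing the first binomial term in the expansion of $(\rho+\varepsilon)^m$, I get for any polynomial $f=\sum_\alpha\lambda_\alpha x^\alpha$ the estimate
\[
\|\delta f\|_\rho \;\le\; \sum_\alpha|\lambda_\alpha|\cdot|\alpha|\rho^{|\alpha|} \;\le\; \tfrac{\rho}{\varepsilon}\,\|f\|_{\rho+\varepsilon}.
\]

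This is precisely the hypothesis of Lemma \ref{extendmaplemma2}, so $\delta$ extends uniquely to a continuous $R$-linear map $\delta\co\cF_{S\sqcup dS}(R)_n \to \cF_{S\sqcup dS}(R)_{n-1}$ in each degree. The Leibniz rule and the identity $\delta^2=0$ hold on the dense subalgebra $R\langle S\sqcup dS\rangle$ by construction (for $\delta^2$, note that $\delta^2$ vanishes on generators, and the graded commutator of $\delta$ with itself is a derivation, hence zero on the free algebra); both identities then propagate to $\cF_{S\sqcup dS}(R)$ by continuity of $\delta$ and of the (jointly continuous) multiplication on each pair of graded components.

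Finally, to fit the definition of a dg complete LMC Fr\'echet algebra, which requires $\|\delta a\|\le\|a\|$, I would replace the original system $\{\|-\|_\rho\}$ by the equivalent system
\[
|\!|\!|a|\!|\!|_\rho \;:=\; \|a\|_\rho + \|\delta a\|_\rho.
\]
Submultiplicativity of $|\!|\!|-|\!|\!|_\rho$ follows from that of $\|-\|_\rho$ together with the graded Leibniz rule, and the bound $|\!|\!|\delta a|\!|\!|_\rho = \|\delta a\|_\rho + \|\delta^2 a\|_\rho = \|\delta a\|_\rho \le |\!|\!|a|\!|\!|_\rho$ is immediate from $\delta^2=0$. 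Equivalence of topologies follows because $\|a\|_\rho\le |\!|\!|a|\!|\!|_\rho\le (1+\rho/\varepsilon)\|a\|_{\rho+\varepsilon}$. The main obstacle I anticipate is bookkeeping: nothing is deep here, but one must be careful that the extension of $\delta$ to infinite (rather than just finite) graded sets $S$ is handled by passing to filtered colimits, and that all identities (Leibniz, $\delta^2=0$) are genuinely continuous, so that the unique extension really is a derivation rather than merely an $R$-linear map.
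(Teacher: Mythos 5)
Your proof is correct and follows essentially the same strategy as the paper's: introduce the modified seminorms $\|a\|_{\rho}+\|\delta a\|_{\rho}$, verify submultiplicativity and $\|\delta a\|\le\|a\|$ using the Leibniz rule and $\delta^2=0$, and show the two seminorm systems are equivalent via the bound $\|\delta x^{\alpha}\|_{\rho}\le|\alpha|\rho^{|\alpha|}$ together with the fact that $m$ grows slower than any exponential. The paper uses the inequality $n+1\le 2^n$ (yielding $\|-\|_{\rho}^{\sim}\le\|-\|_{2\rho}$) in place of your $m\rho^m\le(\rho/\varepsilon)(\rho+\varepsilon)^m$, and leaves the appeal to Lemma~\ref{extendmaplemma2} and the continuity propagation of the Leibniz and $\delta^2=0$ identities implicit, but the mathematics is identical.
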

\begin{proof}
Although the submultiplicative norms $\|-\|_{\rho}$ define the topology,  $\delta$   is unbounded in these norms. We thus introduce an alternative system $\|-\|_{\rho}^{\sim}$ of $\ell^1$ norms, given by
 \[
 \|a\|_{\rho}^{\sim}:= \|a\|_{\rho} + \|\delta a \|_{\rho}.
 \]
 Then  $\|\delta a\|_{\rho}^{\sim} \le\|a\|_{\rho}^{\sim} $ since $\delta^2=0$, while 
 \[
 \|ab\|_{\rho}^{\sim}\le  \|a\|_{\rho} \| b\|_{\rho} + \|\delta a\|_{\rho}  \|b \|_{\rho} +   \|a\|_{\rho} \| \delta b\|_{\rho} \le \|a\|_{\rho}^{\sim}\|b\|_{\rho}^{\sim}
\]
and $\|1\|_{\rho}^{\sim}= \|1\|_{\rho}=1$.

It remains only to show that the systems $\{\|-\|_{\rho}\}_{\rho}$ and $\{\|-\|_{\rho}\}_{\rho}^{\sim}$ define the same topology, which in particular implies that $\delta$ converges on all elements in  $\cF_{S\sqcup  dS}(R)$. Since $\delta$ 
sends a word of length $n$ to a sum (with Koszul signs) of up to $n$ words of the same length, we have $\|\delta \alpha\|_{\rho} \le |\alpha|\rho^{|\alpha|}$. Because $2^n \ge n+1$ for all non-negative integers $n$, we thus have 
$
 \|-\|_{\rho}^{\sim} \le  \|-\|_{2\rho}
$.
This gives the required equivalence, since clearly $\|-\|_{\rho}^{\sim} \ge \|-\|_{\rho}$ automatically.
\end{proof}

As dg Fr\'echet algebras, we then have $\Hom(\cF_{S\sqcup  dS}(R),\cF_{T\sqcup  dT}(R) )\cong \prod_{s \in S} \cF_{T\sqcup  dT}(R)_{\deg s}$.

\begin{definition}\label{dgFEFCdef}
Define  a differential graded $R$-algebra $B$ with free entire functional calculus (or dg FEFC $R$-algebra for short)  to be a product-preserving contravariant functor $ \cF_{S \sqcup dS} (R) \mapsto \prod_{s \in S} B_{\deg s}$ on the category of dg Fr\'echet algebras  $\cF_{S\sqcup  dS}(R)$ for finite sets $S$.
\end{definition}
Thus a dg FEFC $R$-algebra is a graded FEFC  $R$-algebra $B$ equipped with a square-zero derivation $\delta \co B_i \to B_{i-1}$ satisfying $\delta \Phi_f(b_1, \ldots, b_n)= \Phi_{df}(b_1,\ldots, b_n, \delta b_1, \ldots, \delta b_n)$, for $f \in \cF_S(R)$ and hence $df \in \cF_{S \sqcup dS} (R)$.

From a categorical perspective, the forgetful functor $B \mapsto \coprod_i B_i$ from dg FEFC $R$-algebras to graded  sets has a left adjoint, 
 which sends a graded set $S$ to the dg  FEFC $R$-algebra
\[
\cF_{S \sqcup dS}(R):=  \LLim_{\substack{T \subset S}\\ \text{finite}} \cF_{T \sqcup dT}(R),
\]
and  dg FEFC $R$-algebras are algebras for the resulting monad structure on the functor $S \mapsto \cF_{S\sqcup dS}(R)$.

\begin{lemma}\label{FEFCmodel}
There is a cofibrantly generated model structure on the category $dg_+\FEFC(R)$ of non-negatively graded dg FEFC $R$-algebras  in which  a morphism $A_{\bt} \to B_{\bt}$ is
\begin{enumerate}
 \item  a weak equivalence if it is a quasi-isomorphism on the underlying chain complexes;
\item a fibration if it is surjective in strictly positive chain degrees.
\end{enumerate}
\end{lemma}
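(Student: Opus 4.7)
The plan is to obtain this model structure by transfer along the free--forgetful adjunction between $dg_+\FEFC(R)$ and the category $\Ch_{\ge 0}(R)$ of non-negatively graded chain complexes of $R$-modules, equipped with its standard projective model structure (weak equivalences are quasi-isomorphisms, fibrations are surjections in strictly positive degrees). The forgetful functor $U$ sends a dg FEFC algebra $A$ to its underlying chain complex; its left adjoint $F$ sends a graded module $V$ to the free dg FEFC algebra on the graded set obtained by forgetting the $R$-linear structure, then quotienting so as to make $V \to FV$ an $R$-linear map. Since both the standard model structure on $\Ch_{\ge 0}(R)$ is cofibrantly generated, we intend to apply Kan's transfer theorem (as in \cite{Hirschhorn} Theorem 11.3.2), defining weak equivalences and fibrations in $dg_+\FEFC(R)$ by the requirement that their images under $U$ lie in the corresponding classes for $\Ch_{\ge 0}(R)$.

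First I would verify the set-theoretic smallness hypothesis: the forgetful functor $U$ preserves filtered colimits, since these can be computed levelwise in sets (a filtered colimit of FEFC algebras inherits its FEFC operations from its components because the monad $S \mapsto \cF_{S\sqcup dS}$ preserves filtered colimits, a consequence of each $\cF_T$ for $T$ finite being a finitely presented object in the ind-category picture of Lemma \ref{indFP}). This ensures that the small object argument runs, so the putative generating (trivial) cofibrations $FI$ and $FJ$ have the required smallness.

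The main obstacle is then the acyclicity condition: one must show that every relative $FJ$-cell complex is a weak equivalence, i.e.\ that pushouts of free maps on trivial generating cofibrations are quasi-isomorphisms of underlying chain complexes. The standard route is Quillen's path object argument: exhibit a functorial path object $A \to A^I \to A\by A$ in $dg_+\FEFC(R)$ whose first map is a weak equivalence and second a fibration. For this I would take $A^I$ to be given by the dg FEFC algebra of polynomial differential forms on the interval with coefficients in $A$, modelled here by adjoining a free generator $t$ in degree $0$ and its differential $dt$ in degree $-1$ (truncated to non-negative degrees, or shifted), exploiting that entire functional calculus is preserved under adjoining polynomial variables. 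Equivalently, since the simplicial normalisation functor $N$ gives a right Quillen equivalence from simplicial FEFC algebras by the analogue of Proposition \ref{stdmodelprop}, one may simply transport the path object from the simplicial side, where it is the cotensor with $\Delta^1$. Granting such a path object, the acyclicity of pushouts of generating trivial cofibrations follows by the standard argument (\cite{Hirschhorn} Theorem 11.3.2, or the form used in \cite{CarchediRoytenbergHomological,nuitenThesis}).

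Finally, I would observe that the comparison with the simplicial side, parallel to the second half of Proposition \ref{stdmodelprop}, gives a right Quillen equivalence between the simplicial model category of simplicial FEFC $R$-algebras (transferred from simplicial sets) and $dg_+\FEFC(R)$; this also provides an alternative derivation of the model structure by direct transfer from simplicial algebras along Dold--Kan normalisation with the Eilenberg--Zilber shuffle product, bypassing the need to construct a path object by hand. The proof is essentially identical to that of Proposition \ref{stdmodelprop}, the only non-formal input being the compatibility of the FEFC structure with the simplicial polynomial forms, which reduces to Lemma \ref{extendmaplemma2}.
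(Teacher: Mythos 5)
You adopt the same overall strategy as the paper, transferring the projective model structure from non-negatively graded chain complexes along the free--forgetful adjunction, so the framing and the smallness verification are fine. The gap is in the acyclicity step, which you propose to verify via a path object, but the construction you sketch does not go through. Adjoining a free generator $t$ in degree $0$ with $dt$ in degree $-1$ is ill-formed in a non-negatively graded chain category; truncating away the degree-$(-1)$ part makes $t$ a cycle, so the constant map $A\to A^I$ would fail to be a quasi-isomorphism. Moreover, in the non-commutative FEFC setting ``adjoining a free generator'' is the coproduct $A\coprod\cF_t$, in which $t$ does not commute with $A$, and this is not a path object; there is no naive tensor-with-polynomial-forms analogue. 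A workable cotensor $A^{\Delta^1}$ does exist (Definition \ref{boxKdef}), but it is constructed downstream of this lemma and itself requires convergence estimates for the Alexander--Whitney/shuffle maps. Your alternative of transferring directly from simplicial FEFC algebras via normalisation is similarly premature: Proposition \ref{simpFEFCmodelprop} establishes that Quillen equivalence using the present model structure, and its proof relies on the very convergence estimates (via Lemmas \ref{extendmaplemma1} and \ref{extendmaplemma2}) that you are trying to sidestep.

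The paper's acyclicity proof is instead direct and explicit. It reduces by filtered colimits to a single pushout $A\to A\coprod\cF_{x,y}$ with $A$ finitely presented, introduces a derivation $\gamma$ on $B=A\coprod\cF_{x,y}$ of homological degree $1$ with $\gamma(A)=0$, $\gamma(x)=0$, $\gamma(y)=x$, and normalises $\gamma$ by total degree in $x,y$ to obtain an $R$-linear homotopy $h$ on the dense subalgebra $A\langle x,y\rangle$. The crucial analytic point --- that $h$ extends to the FEFC completion via the estimate $\|h(b)\|_r\le\|\gamma(b)\|_r$, and descends through FEFC ideals for general finitely presented $A$ by a compatibility with the $\iota_c$ maps --- is exactly where the free entire functional calculus interacts with the chain-level homotopy, and it is the step your proposal leaves open. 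Any correct version of your path object argument would have to supply an equivalent estimate, so the work is not actually avoided.
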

\begin{proof}
 The  forgetful functor from $dg_+\FEFC(R)$ to   non-negatively graded sets factors through non-negatively graded  chain complexes, as does its left adjoint.  Since the resulting monad preserves filtered colimits, we may apply \cite[Theorem 11.3.2]{Hirschhorn} to transfer the projective model structure from chain complexes to $dg_+\FEFC(R)$. 
 
 The only condition to check is that the left adjoint  to the forgetful functor sends transfinite compositions of pushouts of generating trivial cofibrations to weak equivalences. Since the class of quasi-isomorphisms is closed under filtered colimits, it suffices just to check this for pushouts, and we can assume that the domain $A$ is finitely presented. 
 
 For $\delta x =y$, this means that we wish to show that $A \to A \coprod \cF_{x,y}=:B$ is a quasi-isomorphism. We can define a graded derivation $\gamma$ of homological  degree $1$ on $B$ by $\gamma(A)=0$, $\gamma(x)=0$ and $\gamma(y)=x$, and then the commutator $[\delta,\gamma]$ is a derivation killing $A$ and fixing $x$ and $y$, so on monomials in $A\<x,y\>$ it acts as multiplication by the total degree in $x$ and $y$. Dividing $\gamma$ by that degree termwise gives us an $R$-linear map $h \co A\<x,y\> \to B_{[1]}$ with $[\delta, h]$ equal to the identity on $A$ and zero on $(x,y)$. 
 
 It thus  suffices to show that $h$ extends from $A\<x,y\>$ to a map on the whole of $B$ with the same property. When $A=\cF_S$ for a finite set $S$, we have $B=\cF_{S\sqcup \{x, y\}}$, and $h$ extends to the completion because $\|h(b)\|_r \le \|\gamma(b)\|_r$ for all $b \in A\<x,y\>$ and $r>0$ (in fact $\|h(b)\|_r \le \|b\|_r$). In general, $A$ is of the form $\cF_S/\sum_{j=1}^m\iota_{c_j}(\cF_S\hten \cF_S)$, and then $B \cong   \cF_{S\sqcup \{x, y\}}/\sum_{j=1}^m\iota_{c_j}(\cF_{S\sqcup \{x, y\}}\hten \cF_{S\sqcup \{x, y\}})$. Now, $\gamma$ gives rise to a derivation $\gamma \ten \id +\id\ten \gamma$ on $\cF_{S\sqcup \{x, y\}}\hten \cF_{S\sqcup \{x, y\}}$, and again dividing termwise by total degree in $x$ and $y$ gives a linear endomorphism $H$. Since $\gamma \circ \iota_c = \iota_c\circ (\gamma \ten \id + \id \ten \gamma) $ for all $c \in \cF_S$, we also have  $h \circ \iota_c = \iota_c \circ H$, so $h$ preserves FEFC ideals and hence descends to $A$. 
\end{proof}

\begin{remark}\label{bimodrmk}
 The constructions and results of \S \ref{bimodsn} all extend verbatim to dg FEFC algebras, and also to the stacky dg FEFC algebras we will encounter in \S \ref{stackyFEFCsn}, simply by incorporating gradings  throughout. We will use the same notation for the corresponding dg constructions without further comment.
\end{remark}

\begin{remark}[Alternative characterisation of dg FEFC algebras]\label{altcharrmk}
 If we fix an FEFC algebra $A$, then there is a fairly algebraic characterisation of objects $B \in dg_+\FEFC(R)$ with $B_0=A$. Explicitly, $B$ is a unital associative dg algebra (with $B_0=A$) in the multicategory $\C$ whose objects are  $A^{e,\FEFC}$-modules and whose operations are
 \[
  \C(M_1, \ldots, M_j;N):= \Hom_{A^{e,\FEFC}}((M_1\ten \ldots \ten M_j)\ten_{(A^{e,\FEFC})^{\ten j}}A^{\ten^{FEFC}( j+1)}, N),
 \]
where the $(A^{e,\FEFC})^{\ten j}$-module structure on $A^{\ten^{FEFC}( j+1)}$ is universally determined by the property that  $(a_1\ten b_1) \ten \ldots \ten (a_j\ten b_j)$ sends  $c_1\ten \ldots \ten c_{j+1}$ to $(c_1a_1)\ten (b_1c_2a_2) \ten \ldots \ten (b_{j-1}c_ja_j)\ten (b_jc_{j+1})$, and the $A^{e,\FEFC}$-module structure on the tensor product is given by the outer multiplications on $A^{\ten^{FEFC}( j+1)}$.
\end{remark}

\subsubsection{Comparison with simplicial FEFC algebras}

\begin{proposition}\label{simpFEFCmodelprop}

There is a model structure  for simplicial FEFC $R$-algebras in which weak equivalences are $\pi_*$-isomorphisms and fibrations are Kan fibrations.  Dold--Kan normalisation combined with a completion of the Eilenberg--Zilber shuffle product 
gives a right Quillen equivalence $N$ from simplicial FEFC $R$-algebras to dg FEFC algebras concentrated in non-negative chain degrees. 
\end{proposition}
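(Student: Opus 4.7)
The plan is to mirror the commutative analogue (the latter part of Proposition \ref{stdmodelprop}) in the non-commutative setting, first transferring the Kan model structure to simplicial FEFC $R$-algebras and then establishing the Quillen equivalence with dg FEFC algebras via Dold--Kan normalisation.

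For the model structure, I would follow the strategy of Lemma \ref{FEFCmodel} and transfer the standard Kan model structure on simplicial sets along the adjunction whose left adjoint sends a simplicial set $K$ to the simplicial FEFC $R$-algebra $[n] \mapsto \cF_{K_n}(R)$. Since the corresponding monad preserves filtered colimits, Hirschhorn's transfer theorem \cite[Theorem 11.3.2]{Hirschhorn} applies, reducing the proof to checking that pushouts of free images of horn inclusions are weak equivalences on the underlying simplicial sets. This can be shown by a path object construction using the internal hom from $\Delta^1$ applied degreewise, together with a contracting homotopy analogous to the algebraic divide-by-total-degree argument used in Lemma \ref{FEFCmodel}.

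For the construction of $N$, apply Dold--Kan normalisation to the underlying simplicial $R$-module of $A$ to obtain a chain complex $NA$ concentrated in non-negative degrees. Equip $NA$ with an associative multiplication via the composite
\[
 NA_p \otimes_R NA_q \xra{\nabla} N(A \otimes A)_{p+q} \xra{N\mu} NA_{p+q},
\]
where $\nabla$ is the Eilenberg--Zilber shuffle map and $\mu$ is the levelwise multiplication on $A$; strict associativity of $\nabla$ makes $NA$ into a dg associative algebra (without imposing any commutativity, as would be forced on a CDGA from a simplicial commutative input). The FEFC structure on $(NA)_0 = A_0$ is inherited directly. For the full graded FEFC structure of Definition \ref{dgFEFCdef}, note that each $\cF_{S \sqcup dS}(R)_n$ in positive chain degree consists of $\ell^1$-convergent sums of monomials with a bounded total positive degree, so the relevant higher operations reduce to finite multilinear combinations of degree-$0$ FEFC operations with the shuffle multiplication; the ``completion of the shuffle product'' referred to in the statement is precisely the $\ell^1$-completion of these multilinear combinations, converging termwise by continuity of multiplication in each Fr\'echet seminorm.

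For the Quillen equivalence, $N$ preserves fibrations (Kan fibrations go to surjections in positive degrees) and $\pi_*$-isomorphisms (standard Dold--Kan), so is right Quillen. Its left adjoint $N^*$ is obtained by left Kan extension from the subcategory of finitely presented dg FEFC algebras (appealing to Lemma \ref{indFP}), where it admits an explicit denormalization description. The unit and counit are then checked to be weak equivalences by cellular induction on the generating cofibrations, reducing to the classical Dold--Kan equivalence between simplicial $R$-modules and non-negative chain complexes. The main obstacle will be constructing and analysing $N^*$ in a way that genuinely preserves the FEFC structure: unlike the commutative EFC case where \cite{CarchediRoytenbergHomological, nuitenThesis} can be invoked, one must track directly how the FEFC completions interact with unnormalization. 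As in the proof of Proposition \ref{keyBanEFCprop}, this reduces to the finite-generation property that any FEFC expression involves only finitely many generators at once, so that denormalization introduces no convergence issues beyond those already encoded in each $\cF_S(R)$.
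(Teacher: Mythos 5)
Your high-level plan — normalise the simplicial module, endow the result with the shuffle multiplication, show $N$ is right Quillen, then verify the unit on quasi-free objects — is the same as the paper's. But there are two substantive problems in the details.

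First, your description of the graded FEFC structure in positive chain degrees is incorrect, and this matters for the construction of $N$. You claim that ``the relevant higher operations reduce to finite multilinear combinations of degree-$0$ FEFC operations with the shuffle multiplication.'' That is not what Definition~\ref{freegradedandef} says: for a graded set $S$ with $n$ elements of degree $0$ and one of degree $1$, $\cF_S(R)_i \cong \cF_n(R)^{\hten_{R,\pi}(i+1)}$, a genuine completed tensor product, not a finitely generated module over degree-$0$ operations. A monomial of chain degree $i$ involves only $i$ positive-degree letters, but infinitely many degree-$0$ letters may be interspersed, and the FEFC sums in positive degree range over infinitely many such monomials. So to show $N$ really lands in dg FEFC algebras one cannot deduce convergence from ``finite multilinear combinations''; one must verify directly that the iterated Eilenberg--Zilber map $\nabla \co T(NV) \to NT(V)$ is bounded on each defining seminorm. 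The paper does this explicitly: in chain degree $n$, $\nabla$ on an arity-$k$ term is a signed sum over $(n_1,\ldots,n_k)$-shuffles, of which there are at most $k^n$, giving $\|\nabla(v_1\ten\cdots\ten v_k)\|_\rho \le k^n\|v_1\ten\cdots\ten v_k\|_\rho$; since polynomials in $k$ grow sub-exponentially this satisfies the criterion of Lemma~\ref{extendmaplemma2}. Your proof does not supply this estimate.

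Second, and more seriously, your argument for the Quillen \emph{equivalence} is too weak. You propose ``cellular induction on the generating cofibrations, reducing to the classical Dold--Kan equivalence.'' But for associative algebras (unlike the commutative case) the comparison map $\cF(NV) \to N\cF(V)$ is not an isomorphism, only a quasi-isomorphism, and here the homotopy inverse — iterated Alexander--Whitney — and the Eilenberg--Mac Lane chain homotopy $\Phi \co \nabla\circ\cup \simeq \id$ must themselves be shown to extend continuously to the FEFC completions. The paper does this by bounding the AW map by $\binom{k+n-1}{k-1}\le (n+1)^{k-1}$, and then bounding $\|\Phi^k\|$ by an explicit recursion $f(k,n)\le a(n)f(k-1,n)+b(n)$ obtained from the formula in \cite[Theorem~2.1a]{EilenbergMacLaneHpin2}, yielding at most exponential growth in $k$ at fixed simplicial degree $n$, so that Lemma~\ref{extendmaplemma1} applies. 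Without establishing that $\cup$ and $\Phi$ converge, there is no reason the classical dg argument should survive completion; this is precisely the content that distinguishes the FEFC statement from the purely algebraic one and is missing from your proposal.

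A smaller point: for the model structure you transfer from simplicial sets via Hirschhorn; the paper simply cites \cite[Theorem~II.4]{QHA}, which applies directly to simplicial objects in any category of algebras over a finitary algebraic theory. Your route is viable but would require carrying out the path-object check you only sketch; the paper's citation avoids that work.
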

\begin{proof}
 
 The model structure on simplicial algebras is given by \cite[Theorem II.4]{QHA}.
 
In order to compare the two categories, we can consider the corresponding monads on the category of simplicial $R$-modules. If we  write $\cF$ for the left adjoints to both the forgetful functor from non-negatively graded dg FEFC algebras  to non-negatively graded chain complexes of $R$-modules and from simplicial FEFC algebras to simplicial $R$-modules, and  $N^{-1}$ denotes denormalisation, then our desired functor is given by a natural transformation $N^{-1}\cF(NV) \to \cF(V)$ of monads, or equivalently $\cF(NV) \to N\cF(V)$. The corresponding construction for associative algebras is given by the iterated Eilenberg--Zilber shuffle product $\nabla \co T(NV) \to NT(V)$, where $T(U) = \bigoplus_{n \ge 0} U^{\ten n}$, with the tensor product taken according to the usual graded convention on the left, and levelwise for the simplicial vector spaces on the right. 

Since the functors $\cF$ are both defined as completions of the  functors $T$, existence of our functor $N$ from simplicial FEFC $R$-algebras to dg FEFC algebras just amounts to showing that the shuffle map in each chain degree is bounded with respect to the defining systems of norms. In chain degree $n$, with $v_i \in N_{n_i}V$, the element $\nabla(v_1\ten \ldots \ten v_k)$ is an alternating sum over $(n_1, \ldots, n_k)$-shuffles of elements in $NT(V)_n$ given by applying iterated degeneracies to the $v_i$. Since there are $k^n$  such permutations, it follows that $\| \nabla(v_1\ten \ldots v_k)\|_{\rho}\le k^n \|(v_1\ten \ldots v_k\|_{\rho}$ in chain degree $n$, for all of our defining norms $\rho$ on finite-dimensional subspaces of $NV$, extended to $V$ by setting all degeneracies to have norm $1$. 

Since we can always choose an exponential function (in $k$) to outgrow the polynomial $k^n$, it follows by Lemma \ref{extendmaplemma2} 
that $\nabla$ defines a natural transformation $\cF N \to N\cF$, and hence $N^{-1}\cF N \to \cF$. This map is automatically a natural transformation of monads, since $N^{-1}T N \to T$ is so, forcing the morphisms to agree on a dense subset. 

It remains to show that our functor $N$ is a right Quillen equivalence. It is clearly right Quillen and reflects weak equivalences, so we need only show that for cofibrant dg FEFC algebras $A$, the unit $A \to NN^*A$ of the adjunction is a quasi-isomorphism. Since cofibrant objects are retracts of quasi-free  algebras, we reduce first to the quasi-free case and then to the free case with $\delta =0$ via the exhaustive increasing filtration  by quasi-free subalgebras on generators of bounded degrees. Since $N^*\cF(U)=\cF(N^{-1}U)$, it thus suffices to show that the completed shuffle map $ \cF(U) \to N\cF(N^{-1}U)$ is a quasi-isomorphism. Because both functors commute with filtered colimits, we can reduce to the case where $U$ is finite-dimensional. 

For associative, rather than FEFC, algebras, the homotopy inverse is given by the iterated  Alexander--Whitney cup product $\cup$, which is left inverse to $\nabla$. The cup product in degree $n$ is simply given by iterated face maps, with a term for each partition $n_1+\ldots + n_k=n$, 
so $\|v_1 \cup \ldots \cup v_k\|_{\rho} \le \binom{k+n-1}{k-1}\|v_1\ten \ldots \ten v_k\|_{\rho}$ in degree $k$. Since $\binom{k+n-1}{k-1} 
\le (n+1)^{k-1}$, Lemma \ref{extendmaplemma1}
  ensures this extends to give a  map $\cF(N^{-1}U)_k \to N^{-1}\cF(U)_k$, and these maps combine to give a simplicial map since they are compatible on the dense subspace $T(N^{-1}U)$.
  
It remains to show that the homotopy $\Phi \co \nabla \circ \cup \simeq \id $ on $T(N^{-1}U)$ satisfies the conditions of  Lemma \ref{extendmaplemma1}. Writing $\nabla^k, \cup^k$ for the components of $\nabla, \cup$ in arity $k$, we can express them as the composites
\begin{align*}
(NV)^{\ten k} \xra{ \id_{NV} \ten \nabla^{k-1}}  NV \ten N(V^{\ten k-1}) \xra{\nabla_{V,V^{\ten k-1}}} N(V^{\ten k})\\
N(V^{\ten k}) \xra{\cup_{V,V^{\ten k-1}}} NV\ten N(V^{\ten k-1}) \xra{\id \ten \cup^{k-1}} (NV)^{\ten  k}.   
\end{align*}
Thus $\Phi^k = \nabla_{V,V^{\ten k-1}} \circ (\id \ten \Phi^{k-1}) \circ \cup_{V,V^{\ten k-1}} + \Phi_{V,V^{\ten k-1}}$. The proof of \cite[Theorem 2.1a]{EilenbergMacLaneHpin2} gives an explicit recursive formula for $\Phi_{V,W}$ in terms of simplicial operations, independent of inputs. Thus 
if $f(k,n)$ denotes the maximum value of  $\|\Phi^k(\alpha)  \|_{\rho}/ \|\alpha)  \|_{\rho}$ in simplicial degrees $\le n$, we  have an inequality of the form $f(k,n)\le a(n)f(k-1,n) + b(n)$ (independent of $\rho$), with $f(1,n)=0$, 
so $f(k,n)\le \frac{(a(n)^{k-1}-1)b(n)}{a(n)-1}$ and hence grows at most exponentially in $k$ when $n$ is fixed,  as required.  
\end{proof}

\subsubsection{Simplicial model structures}

The model  category of simplicial FEFC algebras from Proposition \ref{simpFEFCmodelprop} has an obvious simplicial model category structure. Given a simplicial set $K$ and a simplicial FEFC algebra $A$, we can define $A \boxempty K$ and $A^K$ by
\[
 (A \boxempty K)_n := \coprod_{x \in K_n} A_n, \quad \quad (A^K)_n:= \Hom_{s\Set}(K \by \Delta^n,A),
\]
where $\coprod$ is coproduct in the category of FEFC algebras. 

Moreover the model category of non-negatively graded dg FEFC algebras from Lemma \ref{FEFCmodel} has a natural simplicial model structure, which we now describe. This is in contrast to the EFC setting, where the analogous construction fails from a lack of commutativity.

\begin{lemma}
For $K$ a simplicial set with normalised chain complex $\bar{C}(K)$, and $\cF$  the left adjoint to the forgetful functor from non-negatively graded dg FEFC algebras to chain complexes, there is a natural continuous chain map
\[
\uline{\Delta}\co \cF(V) \ten \bar{C}(K) \to \cF(V\ten \bar{C}(K))
\]
given by combining the Alexander--Whitney maps
\[
V^{\ten n} \ten \bar{C}(K) \xra{\id \ten \Delta_n} V^{\ten n} \ten \bar{C}(K)^{\ten n}. 
\]
\end{lemma}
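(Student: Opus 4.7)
The plan is to construct $\uline\Delta$ first on the dense subspace $T(V) \ten \bar C(K) \subset \cF(V) \ten \bar C(K)$, where it is a purely algebraic construction, and then extend to the completion by continuity via Lemma \ref{extendmaplemma2}. On each summand $V^{\ten n} \ten \bar C(K)$ of $T(V) \ten \bar C(K)$, I would define $\uline\Delta$ as the composite
\[
V^{\ten n} \ten \bar C(K) \xra{\id \ten \Delta_n} V^{\ten n} \ten \bar C(K)^{\ten n} \xra{\sim} (V \ten \bar C(K))^{\ten n},
\]
where $\Delta_n$ is the $n$-fold iterated Alexander--Whitney coproduct on $\bar C(K)$ and the final isomorphism shuffles the tensor factors with the appropriate Koszul signs. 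Since $\Delta_n$ is a chain map and the construction is functorial in $V$, this gives a chain map $T(V) \ten \bar C(K) \to T(V \ten \bar C(K))$ natural in both $V$ and $K$.

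For the continuity step, since $\bar C(K)$ is discrete (a direct sum of copies of $R$ indexed by non-degenerate simplices), the tensor product $\cF(V) \ten \bar C(K)$ decomposes as $\bigoplus_{\sigma} \cF(V)$, so it suffices to extend, for each non-degenerate $m$-simplex $\sigma$, the map $T(V) \ni a \mapsto \uline\Delta(a \ten \sigma)$ to $\cF(V)$, with bounds uniform over simplices of each fixed degree. Fix a defining seminorm $\|-\|_\rho$ on $V$ in each degree and take the corresponding $\ell^1$ seminorms as in Definition \ref{freegradedandef} on $\cF(V)$, and the analogous seminorms on $\cF(V \ten \bar C(K))$ obtained by declaring $\|v \ten \sigma\|_\rho = \|v\|_\rho$ for each basis simplex $\sigma$. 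The iterated Alexander--Whitney formula then expresses $\Delta_n(\sigma)$ as a signed sum of at most $\binom{m+n-1}{n-1}$ tensor products of faces of $\sigma$, each of which is a single basis element (possibly zero, in the normalised complex), so the shuffle being an isometry on pure tensors gives
\[
\|\uline\Delta(v_1 \ten \cdots \ten v_n \ten \sigma)\|_\rho \le \binom{m+n-1}{n-1}\,\|v_1\|_\rho \cdots \|v_n\|_\rho.
\]

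Since $\binom{m+n-1}{n-1} \le (n+1)^m$ grows only polynomially in $n$ for fixed $m$, it is dominated by $C_{m,\epsilon}(1+\epsilon)^n$ for any $\epsilon > 0$, and Lemma \ref{extendmaplemma2} then yields the continuous extension $a \mapsto \uline\Delta(a \ten \sigma)$ from $\cF(V)$ to $\cF(V \ten \bar C(K))$, uniformly across $m$-simplices $\sigma$. Summing over $\sigma$ produces the required continuous $R$-linear map on $\cF(V) \ten \bar C(K)$, and the chain-map property transfers from $T(V)$ by density together with joint continuity of both differentials; naturality is inherited from the algebraic construction. The main obstacle is really just packaging the combinatorial estimate correctly: one must choose compatible systems of $\ell^1$-type seminorms on $\cF(V)$ and $\cF(V \ten \bar C(K))$ so that the count $\binom{m+n-1}{n-1}$ of Alexander--Whitney summands is the only factor that grows with the tensor length $n$, and this polynomial growth is exactly what Lemma \ref{extendmaplemma2} accommodates, in the same spirit as the shuffle estimate in the proof of Proposition \ref{simpFEFCmodelprop}.
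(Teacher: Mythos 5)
Your proof is correct and takes essentially the same route as the paper's: both construct $\uline{\Delta}$ on the dense subalgebra $T(V)$, count the $\binom{d+m-1}{d}$ Alexander--Whitney summands for a $d$-simplex and tensor length $m$, observe that this is polynomial (hence sub-exponential) in $m$ for fixed $d$, and invoke Lemma \ref{extendmaplemma2} to obtain the continuous extension. Your explicit decomposition of $\cF(V)\ten\bar{C}(K)$ as a direct sum over non-degenerate simplices is just a more concrete presentation of the paper's reduction to the case of finite $K$ via filtered colimits.
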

\begin{proof}
 The maps given automatically define a chain map on the tensor algebra $T(V)$ and commute with filtered colimits in $V$ and $K$. Reducing to the case where $K$ is finite and $V$ finite-dimensional,  it  suffices  to check the conditions of Lemma \ref{extendmaplemma2}. If we set elements of $K$ to have norm $1$, and have a multiplicative norm $\|-\|$ on $T(V)$, then for $v_i \in V$ and $x \in K_d$ we have
\[
 \|\uline{\Delta}(v_1 \ten \ldots \ten v_m\ten x)\|= \|\sum_{d_1+\ldots+ d_m=d} \pm (v_1 \ten (\pd_{d_1+1})^{d-d_1}x)\ten \ldots \ten (v_m \ten (\pd_0)^{d-d_m}x)\|. 
\]
 Since there are $\binom{d+m-1}{d}$ such partitions, this gives $\|\uline{\Delta}(v_1 \ten \ldots \ten v_m\ten x)\|\le \binom{d+m-1}{d}\|v_1 \ten \ldots \ten v_m\ten x\| $. Because $\binom{d+m-1}{d}$ is a polynomial (of degree $d$) in $m$, it grows sub-exponentially so is asymptotically bounded by any larger multiplicative norm. 
 \end{proof}

 \begin{definition}\label{boxKdef}
  Given a simplicial set $K$, define  functors $- \boxempty K$ and $(-)^K$  on the category of dg FEFC algebras as follows. 
  
  Given a dg FEFC algebra of the form $\cF(V)$, we set $\cF(V)\boxempty K:= \cF(V\ten \bar{C}(K))$. Giving a dg FEFC morphism $\cF(U) \to \cF(V) $ is equivalent to giving a chain map  $f \co U \to \cF(V)$, and we then define $f \boxempty K \co \cF(U) \boxempty K \to \cF(V) \boxempty K$ to be the dg FEFC morphism corresponding to the chain map given by the composite
  \[
   U \ten \bar{C}(K) \xra{f\ten \id} \cF(V) \ten \bar{C}(K) \xra{\uline{\Delta}} \cF(V \ten \bar{C}(K)).
  \]

  This defines a functor  on the full subcategory of dg FEFC algebras in the image of $\cF$, which we extend to all dg FEFC algebras by Kan extension. The functor $(-)^K$ is its right adjoint, given by setting
 \[
  \Hom_{dg_+\FEFC}(\cF_{S \sqcup dS}, A^K):= \Hom_{dg_+\FEFC}(\cF(R.(S \sqcup dS)\ten\bar{C}(K)),A).
 \]
   \end{definition}

   In particular, note that the chain complex underlying $A^K$ is the good truncation of the dg $\Hom$-complex $\HHom_R(\bar{C}(K),A)$. 
  
 \begin{lemma}\label{simpmodellemma}
  The functors of Definition \ref{boxKdef} make the model structure on  $dg_+\FEFC(R)$ from Lemma \ref{FEFCmodel} a simplicial model structure.
   \end{lemma}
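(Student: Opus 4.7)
The plan is to transfer the simplicial model structure from non-negatively graded chain complexes of $R$-modules via the free dg FEFC algebra functor $\cF$. The adjunction $(-\boxempty K) \dashv (-)^K$ is immediate from Definition \ref{boxKdef} through the Kan extension construction, using that on free objects $\cF(V) \boxempty K := \cF(V \ten \bar{C}(K))$.

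To set up the simplicial enrichment, I would define $\uline{\map}(A,B)_n := \Hom_{dg_+\FEFC}(A \boxempty \Delta^n, B)$, with simplicial structure inherited from $\Delta^\bullet$. Composition and its associativity come from the strict coassociativity of the Alexander--Whitney diagonal, transported through $\cF$ via the continuous chain map $\uline{\Delta}$ constructed in the lemma preceding Definition \ref{boxKdef}, combined with the standard simplicial enrichment of chain complexes.

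The main content is the pushout-product axiom (SM7). It suffices to verify this on generating (trivial) cofibrations in both arguments, as closure under pushouts, retracts, and transfinite compositions is formal. By Lemma \ref{FEFCmodel} the generating (trivial) cofibrations of $dg_+\FEFC(R)$ are of the form $\cF(j)$ for $j$ a standard generating (trivial) cofibration $S^{n-1} \into D^n$ of non-negatively graded chain complexes. For a cofibration $i \co K \into L$ of simplicial sets, the fact that both $\cF$ and $-\boxempty K$ are left adjoints (so preserve pushouts), together with the identity $\cF(V) \boxempty K = \cF(V \ten \bar{C}(K))$, shows that the pushout-product of $\cF(j)$ with $i$ equals $\cF$ applied to the pushout-product in chain complexes of $j$ with $\bar{C}(i) \co \bar{C}(K) \to \bar{C}(L)$. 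The latter is a (trivial) cofibration by the standard simplicial model structure on chain complexes, and $\cF$ preserves (trivial) cofibrations since it is left Quillen by Lemma \ref{FEFCmodel}. Combining these yields SM7.

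The principal delicacy will lie in the simplicial enrichment rather than in SM7 itself: the associativity isomorphism $A \boxempty (K \by L) \cong (A \boxempty K) \boxempty L$ requires a comparison between $\bar{C}(K \by L)$ and $\bar{C}(K) \ten \bar{C}(L)$ at the chain level, which are only related by the Alexander--Whitney and shuffle maps. I would resolve this by working consistently on free generators and using the same coherent system of comparison maps throughout, paralleling the treatment in the standard simplicial model structure on non-negative chain complexes, where the same issue arises and is handled via the Eilenberg--Zilber coalgebra structure.
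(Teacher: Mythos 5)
Your proposal is correct but takes the dual formulation of SM7, whereas the paper's actual proof is shorter and more direct. You verify the pushout-product axiom in the cofibration/tensor form, reducing to generating (trivial) cofibrations $\cF(j)$ and using the identity $\cF(j)\boxempty i = \cF\bigl(j\,\square\,\bar{C}(i)\bigr)$ together with the fact that $\cF$ is left Quillen for the transferred model structure; this reduction is valid because the class of cofibrations satisfying SM7 against a fixed $i$ is saturated. The paper instead works with the cotensor/fibration form: since (trivial) fibrations in $dg_+\FEFC(R)$ are created by the forgetful functor to chain complexes, and the chain complex underlying $A^K$ is by construction $\tau_{\ge 0}\HHom_R(\bar{C}(K),A)$, the condition that $A^L\to A^K\by_{B^K}B^L$ be a (trivial) fibration reduces immediately to the known simplicial model structure on non-negatively graded chain complexes, with no passage to generators or to the formula for $\boxempty$ required. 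Both approaches are sound; yours buys an explicit description of what the pushout-product looks like on free objects, while the paper's buys brevity by sidestepping generators entirely.

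One small caveat: your closing paragraph on the associativity $A\boxempty(K\by L)\cong(A\boxempty K)\boxempty L$ is correct to flag but stays at the level of a plan. The paper's proof also does not address the enrichment coherence explicitly, treating it as following routinely from the strict coassociativity of the Alexander--Whitney diagonal underlying $\uline{\Delta}$, so this is not a gap specific to your argument.
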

\begin{proof}

It suffices to show that if  $q \co A \to B$ is a fibration in $dg_+\FEFC(R) $ and $i \co K \to L$ a cofibration of simplicial sets, then the map $A^L\to A^K\by_{B^K}B^L$ is a fibration which is trivial whenever either $i$ or $q$ is so. It suffices to check this on the underlying non-negatively graded chain complexes, since (trivial) fibrations are defined via the forgetful functor. Since $A^K= \tau_{\ge 0}\HHom_R(\bar{C}(K),A)$ by construction, the desired properties now follow immediately from the simplicial model structure on non-negatively graded chain complexes. 
%
%
%
\end{proof}

 \subsubsection{Analytification}\label{analytificationsn2}

Given a functor $F$ on $dg_+\Alg(R)$, we can simply define a form of analytification functor $F^{\FEFC}$ on  $dg_+\FEFC(R)$ by composition with the forgetful functor (as in Proposition \ref{analytifyNCprop} below), giving an obvious non-commutative analogue of \S \ref{analytificnsn}.
 
 In practical terms, the dg analytic theory and analytification seem to work best in the commutative setting by reducing to the full $\infty$-subcategory of localised dg dagger affinoid algebras. However, there are no clear analogues of the affinoid theory in the non-commutative setting as discussed in Remark \ref{NCSteinrmk}, so we mainly consider the whole of  $dg_+\FEFC(R)$ (or equivalently $dg_+\hat{\Tc}\Alg_{R}$, via Corollary \ref{keyBanFEFCcor}) for analytifications. 
 
 However, this comes at the expense of having  slightly unusual analytifications when our functors are not strongly quasi-compact, which might be  poorly suited to some non-Archimedean applications.
 The proximity of functional calculus to the algebraic theory does however mean that this approach is very well suited to constructing analytic maps between functors of algebraic origin, such as the Riemann--Hilbert map of  \S \ref{RHsn}. 
 
 For comparisons and other constructions not of algebraic origin, we typically work with categories of dg Fr\'echet or dg nuclear Fr\'echet algebras, as giving the richest source of functors preserving abstract quasi-isomorphisms.

\section{Complete topological dg  algebras}\label{DGLMDCsn}

In practice, functors defined directly on categories of FEFC-algebras tend only to arise from functors on algebras with no topological structure. There are many more natural functors on LDMC topological DGAs, and Corollary \ref{keyBanFEFCcor} will show that these functors  induce functors on FEFC-DGAs whenever they send abstract quasi-isomorphisms to weak equivalences. That is a strong constraint which seems rare, but there are very many natural examples of such functors on the subcategories of Fr\'echet and nuclear Fr\'echet dg algebras (\S \ref{moduliexsn}). Those categories do not apparently become $\infty$-subcategories of the $\infty$-category of FEFC-DGAs on localising at abstract quasi-isomorphisms, so much of this section is concerned with   establishing various partial comparison results and identifying classes of Fr\'echet DGAs whose homotopy theory is largely controlled by the FEFC structure. 

\subsection{Basic comparisons}\label{FEFCcompsn}

The proof of Lemma \ref{EFCforgetlemma} adapts to the free entire functional calculus to give the following, for $dg_+\hat{\Tc}\Alg_{R}$ as in Definition \ref{Tcalgdef}:
\begin{lemma}\label{FEFCforgetlemma}
 There is a natural forgetful functor $U \co dg_+\hat{\Tc}\Alg_{R} \to  dg_+\FEFC_{R}$, sending a topological dg  algebra $A_{\bt}$ to a dg FEFC algebra with the same underlying dg algebra. 
\end{lemma}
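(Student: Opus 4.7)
The plan is to mimic the commutative proof of Lemma \ref{EFCforgetlemma}, but now carrying it out with respect to the graded FEFC structure of Definitions \ref{gradedFEFCdef} and \ref{dgFEFCdef}, which involves elements of all degrees rather than only degree zero. Given $A_\bt \in dg_+\hat{\Tc}\Alg_R$, I need for every finite graded set $S$ and every $f \in \cF_S(R)_j$ an operation $\Phi_f \co \prod_{s \in S} A_{\deg s} \to A_j$, with compatibility under composition of functional calculus and under $\delta$. Equivalently, I must produce a product-preserving contravariant functor on the category of algebras $\cF_{S \sqcup dS}(R)$ whose value on $\cF_{S \sqcup dS}(R)$ is $\prod_{s \in S} A_{\deg s}$.

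First, convergence. Fix elements $a_s \in A_{\deg s}$ and a submultiplicative seminorm $\|-\|$ in the defining system for $A_\bt$, and set $r := \max_s \|a_s\|$. For any word $\alpha = s_1 \cdots s_m \in W(S)_j$, submultiplicativity across the grading yields $\|a_{s_1}\cdots a_{s_m}\| \le r^m$. For $f = \sum_{\alpha \in W(S)_j} \lambda_\alpha x^\alpha \in \cF_S(R)_j$ the defining decay condition $\lim |\lambda_\alpha|^{1/|\alpha|} = 0$ (equivalently, $\|f\|_{2r} < \infty$) gives $\sum_\alpha |\lambda_\alpha| r^{|\alpha|} < \infty$, so the series $\sum_\alpha \lambda_\alpha a_{s_1}\cdots a_{s_m}$ converges absolutely in this seminorm. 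Running over the full defining system and invoking completeness of $A_j$ then produces a well-defined $\Phi_f(\uline a) \in A_j$.

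Next, compatibility of composition. Given further $g_s \in \cF_{T(s)}(R)_{\deg s}$ and elements $b_t \in A_{\deg t}$, the identity $\Phi_{f \circ (g_s)_s}(\uline b) = \Phi_f\bigl((\Phi_{g_s}(\uline b))_s\bigr)$ holds tautologically when $f$ and each $g_s$ are non-commutative polynomials, by associativity and $R$-bilinearity of the multiplication on $A$. Continuity of all the operations involved in each seminorm, combined with the above absolute-convergence estimates and the density of polynomials in each $\cF_{-}(R)$, extends the identity to arbitrary $f$ and $g_s$. Compatibility with the differential, i.e.\ $\delta \Phi_f(\uline a) = \Phi_{df}(\uline a, \delta \uline a)$, is handled in the same way: for polynomial $f$ it is just the graded Leibniz rule for $\delta$, and continuity of $\delta$ (guaranteed by $\|\delta b\| \le \|b\|$) together with density upgrades it to arbitrary $f \in \cF_{S \sqcup dS}(R)_j$.

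The main subtlety lies in the density step: one needs uniform control on partial sums so that limits may be exchanged in the composition and differential identities. This is most cleanly arranged by writing $A_\bt$ as an inverse limit of dg Banach algebras (as in the discussion following Definition \ref{Tcalgdef}) and verifying the identities in each Banach quotient, where all the series in question are absolutely convergent and standard Fubini-type manipulations on double sums apply. Functoriality in $A_\bt$ is then immediate, since a continuous morphism of complete LMC dg algebras commutes with the limits defining each $\Phi_f$, and the underlying graded algebra and differential are manifestly preserved.
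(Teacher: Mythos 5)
Your proposal is correct and follows exactly the route the paper intends: the paper's proof of Lemma \ref{FEFCforgetlemma} is just the remark that the argument of Lemma \ref{EFCforgetlemma} (evaluate the series via the submultiplicative seminorms and completeness) adapts to the free graded setting, and you have carried out that adaptation, correctly noting that the dg FEFC structure now involves all degrees and that the composition and Leibniz identities follow from the polynomial case by absolute convergence in each Banach quotient. No issues.
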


We then have:

\begin{proposition}\label{keyBanFEFCprop}
 The forgetful functor $U \co \pro(dg_+\hat{\Tc}\Alg_{R}) \to  dg_+\FEFC_{R}$ has a left adjoint, and the unit of the adjunction gives isomorphisms on cofibrant objects in the category $dg_+\EFC\Alg_{R}$.  
\end{proposition}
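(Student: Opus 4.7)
The plan is to mirror the proof of Proposition \ref{keyBanEFCprop} almost verbatim, replacing the commutative $\ell^1$-seminorms with the associative $\ell^1$-seminorms of Definition \ref{freegradedandef}. I would first construct $F$ on a quasi-free dg FEFC algebra $C = \cF_{S \sqcup dS}$: for every function $f \co S \sqcup dS \to \R_{\ge 0}$ define a submultiplicative seminorm on $C$ in each chain degree by
$$\Bigl\| \sum_\alpha \lambda_\alpha s_{\alpha_1} \cdots s_{\alpha_n}\Bigr\|_f := \sum_\alpha |\lambda_\alpha|\, f(s_{\alpha_1}) \cdots f(s_{\alpha_n})$$
(summed over words $\alpha$ in $S \sqcup dS$ of the appropriate total degree), and then replace it by $\|-\|_f^{\sim} := \|-\|_f + \|\delta(-)\|_f$, as in the proof of Lemma \ref{FEFCmodel}, to ensure $\delta$ is bounded. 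Completing yields a dg Banach algebra $\hat{C}_f$, and I set $FC := \Lim_f \hat{C}_f \in \pro(dg_+\hat{\Tc}\Alg_R)$.

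To verify the adjunction on dg Banach targets $B$, the universal property of $\ell^1$-norms (Lemma \ref{extendmaplemma2}) identifies graded continuous algebra morphisms $(\Lim_f \hat{C}_f)_\# \to B_\#$ with generator assignments in $\prod_n B_n^{S_n \sqcup dS_n}$, which is also the set of graded FEFC morphisms $C_\# \to UB_\#$; compatibility with $\delta$ on the two sides matches, giving $F \dashv U$ on dg Banach targets. Since $U$ preserves filtered limits, this extends to all of $\pro(dg_+\hat{\Tc}\Alg_R)$. For an arbitrary FEFC-DGA $A$, I would write $A = C/I$ as a quotient of a quasi-free FEFC-DGA by an FEFC dg ideal $I$, and define $FA := FC/\overline{I}$ with closure taken in each $\hat{C}_f$.

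For the unit $C \to UFC$ being an isomorphism on cofibrant objects, every cofibrant object is a retract of a quasi-free one, so I reduce to $C = \cF_{S \sqcup dS}$. A Prosmans-style argument \cite[Proposition 1.1.4]{prosmans} shows that any element of $UFC = \Lim_f \hat{C}_f$ can involve only finitely many generators: if some $\sum_\alpha \lambda_\alpha x^\alpha$ involved infinitely many distinct generators, one could inductively pick a sequence of words $\alpha_n$ with $\lambda_{\alpha_n} \neq 0$ each introducing a fresh generator, and then construct $f$ so that $\|-\|_f$ diverges on that element. Hence $UFC = \LLim_{T \subset S \text{ finite}} UF\cF_{T \sqcup dT}$. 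For finite $T$, the $\|-\|_f$-seminorms are jointly equivalent to the constant-$\rho$ seminorms of Definition \ref{freegradedandef}, so $F\cF_{T \sqcup dT} \cong \cF_{T \sqcup dT}$ as a pro-Banach dg algebra and $UF\cF_{T \sqcup dT} \cong \cF_{T \sqcup dT}$ as an FEFC-DGA; taking the filtered colimit over finite $T$ recovers $C$.

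The main obstacle I anticipate is in the quotient step: unlike the commutative setting where every ideal in an EFC algebra is automatically an EFC ideal, an FEFC ideal must be stable under the operations $\iota_c$ of Definition \ref{iotadef} (Lemma \ref{ideallemma}), and one must check that the closure $\overline{I}$ in each $\hat{C}_f$ remains stable under these operations so that $\hat{C}_f/\overline{I}$ is a genuine pro-Banach dg FEFC-algebra. This follows from continuity of the $\iota_c$ but requires slightly more care than in the commutative case. On the other hand, no separate positive-degree multiplicity argument is needed, since each $\cF_{T \sqcup dT}$ is already topologically complete in every degree, so the finite-variable reduction immediately yields the identification $UFC \cong C$.
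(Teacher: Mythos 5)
Your proposal is correct and follows the same blueprint as the paper's proof: seminorms indexed by functions on the generating set, $\ell^1$ completion, Prosmans-style reduction to finitely many variables, identification with $\cF_{T\sqcup dT}$ for finite $T$, and closure of ideals for the quotient step. The paper's proof is terser and does not spell out the $\|-\|_f^{\sim}$ modification (it relies on the discussion surrounding Lemma \ref{FEFCmodel}), nor the simplification you correctly note---that unlike the EFC case no separate multiplicity argument for positive-degree generators is needed, since $\cF_{T\sqcup dT}$ is complete in every degree.

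One small point of framing: your final concern about $\overline{I}$ being stable under the operations $\iota_c$ is not actually needed. The quotient $\hat{C}_f/\overline{I}$ of a dg Banach algebra by a closed dg ideal is automatically a dg Banach algebra, and hence carries an FEFC structure by $U$ (Lemma \ref{FEFCforgetlemma}); there is no further condition to verify. What does need to be checked is that the unit $C/I \to U(FC/\overline{I})$ is a well-defined FEFC homomorphism and that the adjunction isomorphism still holds for quotients, but both of these follow formally: the first because $C \to UFC \to U(FC/\overline{I})$ kills $I$, and the second because continuity of any morphism $FC \to B$ into a Hausdorff target forces it to kill $\overline{I}$ as soon as it kills $I$. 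So the argument goes through, but for slightly different reasons than you give.
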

\begin{proof}
This follows in much the same way as Proposition \ref{keyBanEFCprop}. The left adjoint $F$ is given by completion with respect to the system of submultiplicative seminorms. On objects $C$ which are quasi-free in the sense that the  underlying graded FEFC-algebra is freely generated, say by graded generators $S$, we then have
\begin{align*}
 (FC)_i \cong\Lim_{f\co S \to \R_{>0}}\ell^1\{  \alpha/f(\alpha) ~:~ \alpha \in W(S)_i\}
 ~\cong~ \LLim_{\substack{T\subset S \\ \text{finite}}} 
 \Lim_{r>0}\ell^1\{  \alpha/r^{|\alpha|} ~:~ \alpha \in W(T)_i\}
 ~\cong~ C_i.
\end{align*}
Here, $f$ is extended multiplicatively from $S$ to $W(S)$,  and the reduction to finite $T$ is by  
arguing as in the proof of Proposition \ref{keyBanEFCprop}. In general, for  quotients $C/I$ of quasi-free objects by FEFC ideals,  $F(C/I)$ is the quotient of $FC$ by the closure of $I$.

Since every cofibrant object is a retract of a quasi-free object, it follows that the unit $C \to UFC$ is an isomorphism whenever $C$ is cofibrant.
\end{proof}

Replacing Propositions \ref{stdmodelprop} and \ref{keyBanEFCprop} with Lemma \ref{FEFCmodel} and Proposition \ref{keyBanFEFCprop} in the proof of Corollary \ref{keyBanEFCcor} then gives the following,
referring to  morphisms $A \to B$ for which the maps  $\H_*A \to \H_*B$ are isomorphisms of abstract vector spaces as abstract quasi-isomorphisms:
\begin{corollary}\label{keyBanFEFCcor}
The  forgetful functor $U \co dg_+\hat{\Tc}\Alg_{R} \to  dg_+\FEFC_{R}$ induces an equivalence of the $\infty$-categories given by simplicial localisation at abstract quasi-isomorphisms. The same is true if we replace $dg_+\hat{\Tc}\Alg_{R}$ with any full subcategory containing all objects of the form $(\cF_S,\delta)$ for non-negatively graded sets $S$. 
\end{corollary}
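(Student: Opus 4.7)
The plan is to mimic directly the proof of Corollary \ref{keyBanEFCcor}, with the ingredients from the commutative case replaced by their non-commutative analogues. The key input is the model-categorical abstract nonsense packaged in Lemma \ref{inftyequivlemma}: given the adjunction $F \dashv U$ between $dg_+\FEFC_R$ and $\pro(dg_+\hat{\Tc}\Alg_R) \supset dg_+\hat{\Tc}\Alg_R$ furnished by Proposition \ref{keyBanFEFCprop}, it suffices to produce a functorial cofibrant replacement $Q \co dg_+\FEFC_R \to dg_+\FEFC_R$ (with respect to the model structure of Lemma \ref{FEFCmodel}, whose weak equivalences are quasi-isomorphisms on the underlying chain complexes, i.e.\ abstract quasi-isomorphisms) for which the image of $FQ$ lands inside the given full subcategory of $dg_+\hat{\Tc}\Alg_R$.

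For the first statement, I would invoke Lemma \ref{inftyequivlemma} directly: Lemma \ref{FEFCmodel} provides a cofibrantly generated model structure on $dg_+\FEFC_R$, and hence a functorial cofibrant replacement $Q$; Proposition \ref{keyBanFEFCprop} guarantees that the unit $A \to UFA$ of the adjunction is an isomorphism on any cofibrant $A$, in particular on every $QA$. Applying Lemma \ref{inftyequivlemma} with $\cD' = dg_+\hat{\Tc}\Alg_R \subset \pro(dg_+\hat{\Tc}\Alg_R)$ yields the desired equivalence of localised $\infty$-categories, once one checks that $FQA$ lies in $dg_+\hat{\Tc}\Alg_R$ rather than just its pro-completion; this is immediate since $FQA$ is a single complete LMC dg algebra, being the completion of the quasi-free FEFC algebra $QA$ with respect to its system of submultiplicative seminorms.

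For the second statement, the point is to realise the functorial cofibrant replacement concretely by quasi-free objects of the form $(\cF_S, \delta)$. I would take the standard small-object construction: set $S_n := A_n \times_{\delta, \z_{n-1}A} \z_{n-1}(QA)$ and let $QA$ be built levelwise as a free FEFC algebra on the resulting graded set, exactly as in the proof of Corollary \ref{keyBanEFCcor}. Then $QA \cong (\cF_S, \delta)$ for the associated graded set $S$, so $FQA \cong F(\cF_S, \delta)$ lies in any full subcategory containing the objects $(\cF_S,\delta)$. Applying Lemma \ref{inftyequivlemma} to that subcategory (in place of $\cD'$) gives the conclusion.

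The only place where there is anything to check is that the free FEFC construction and the seminorm completion interact as needed: concretely, that $F(\cF_S,\delta)$ really is the pro-Banach dg FEFC algebra $(\cF_S, \delta)$ of Definition \ref{freegradedandef} (so that the unit is an isomorphism here) and that the small-object construction may be carried out in $dg_+\FEFC_R$ to give a quasi-free result. Both are guaranteed by Proposition \ref{keyBanFEFCprop} and Lemma \ref{FEFCmodel} respectively, so there is no genuine obstacle beyond verifying that the commutative argument transcribes verbatim, with graded generating sets replacing ordinary ones and associative freeness replacing commutative freeness.
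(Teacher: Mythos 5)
Your proposal matches the paper's own proof exactly: the paper states that Corollary \ref{keyBanFEFCcor} follows by replacing Proposition \ref{stdmodelprop} and Proposition \ref{keyBanEFCprop} with Lemma \ref{FEFCmodel} and Proposition \ref{keyBanFEFCprop} in the proof of Corollary \ref{keyBanEFCcor}, which is precisely the substitution you carry out, including the recursive choice of generators $S_n := A_n\by_{\delta,\z_{n-1}A}\z_{n-1}QA$ for the quasi-free replacement. Your side remark that the image of $FQ$ lands in $dg_+\hat{\Tc}\Alg_R$ rather than just its pro-category is a correct and worthwhile sanity check, implicit in the paper.
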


\subsection{Functors on  subcategories}\label{subcatfunsn}

Defining quasi-isomorphism-preserving functors on  $dg_+\hat{\Tc}\Alg_{R}$ tends to be difficult unless they forget the topological structure. However, there are many such functors on the subcategory of dg nuclear Fr\'echet algebras, and we now introduce subcategories of that subcategory whose homotopy theory is governed by the FEFC structure.

\begin{definition}
Define  $dg_+\hat{\Tc}\Alg_{R}^{\mathrm{afp,cof}} \subset  dg_+\hat{\Tc}\Alg_{R}$ and $dg_+\FEFC_{R}^{\mathrm{afp,cof}} \subset dg_+\FEFC_{R}$ to be the full subcategories consisting of all retracts of quasi-free objects $(\cF_S,\delta) $ which are levelwise finitely generated in the sense that the graded set $S$ has finitely many elements in each degree.

Define $dg_+\hat{\Tc}\Alg_{R}^{\mathrm{fp,cof}}$ and  $dg_+\FEFC_{R}^{\mathrm{fp,cof}}$ similarly, but for finite sets $S$.
\end{definition}

\begin{corollary}\label{boxDeltacor}
 The functors $(-)\boxempty \Delta^n$ of Definition \ref{boxKdef} naturally lift to endofunctors of $dg_+\FEFC_{R}^{\mathrm{afp,cof}}$ and  $dg_+\FEFC_{R}^{\mathrm{fp,cof}}$.
\end{corollary}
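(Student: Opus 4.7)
The plan is to observe that on a quasi-free object $(\cF_S,\delta)$ the functor $-\boxempty\Delta^n$ has the shape $\cF(R.S \ten \bar{C}(\Delta^n))$, so it suffices to track how tensoring with $\bar C(\Delta^n)$ changes the generating graded set, and then to deduce the result for retracts from functoriality.

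The input fact is that $\bar{C}(\Delta^n)$ is the free $R$-module on the non-degenerate simplices of $\Delta^n$, i.e.\ on the strictly increasing maps $[k]\hookrightarrow[n]$; there are $\binom{n+1}{k+1}$ of these in degree $k$, and none in degree $>n$. In particular $\bar C(\Delta^n)$ is a bounded complex of finitely generated free $R$-modules, with a canonical finite graded basis $B = \coprod_{k=0}^{n} B_k$.

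For $A=(\cF_S,\delta)$ with $S$ a levelwise finite (resp.\ finite) non-negatively graded set, Definition \ref{boxKdef} gives
\[
A\boxempty\Delta^n \;=\; \cF\bigl(R.S \ten \bar C(\Delta^n)\bigr) \;\cong\; \bigl(\cF_{S\times B},\tilde\delta\bigr),
\]
where the underlying graded set $S\times B$ decomposes in each degree as $(S\times B)_d=\coprod_{i+j=d} S_i\times B_j$; this is a finite set (resp.\ the total set $S\times B$ is finite because both $S$ and $B$ are), and the differential $\tilde\delta$ on the generators combines $\delta$ on $S$ with the simplicial differential on $B$. Thus $A\boxempty\Delta^n$ lies in $dg_+\FEFC_R^{\mathrm{afp,cof}}$ (resp.\ $dg_+\FEFC_R^{\mathrm{fp,cof}}$).

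Finally, if $A$ is a retract of such a quasi-free object $\cF(V)$, with retraction $A\xrightarrow{s}\cF(V)\xrightarrow{r}A$ and $r\circ s=\id_A$, then functoriality of $-\boxempty\Delta^n$ yields
\[
A\boxempty\Delta^n \xrightarrow{\,s\boxempty\Delta^n\,} \cF(V)\boxempty\Delta^n \xrightarrow{\,r\boxempty\Delta^n\,} A\boxempty\Delta^n
\]
with composite the identity, so $A\boxempty\Delta^n$ is a retract of the quasi-free levelwise finitely generated (resp.\ finitely generated) object $\cF(V)\boxempty\Delta^n$ constructed above, hence lies in the claimed subcategory. The only subtlety to verify is that the Kan-extended definition on retracts is compatible with the direct formula on quasi-free objects; but since $-\boxempty\Delta^n$ is a genuine functor on all of $dg_+\FEFC_R$ it automatically preserves retracts, so no obstacle arises.
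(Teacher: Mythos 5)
Your proof is correct and takes essentially the same approach as the paper: identify $(\cF_S,\delta)\boxempty\Delta^n$ as quasi-free on the graded set $S\times B$ where $B$ is the finite set of non-degenerate simplices of $\Delta^n$, check that $S\times B$ is finite (resp.\ levelwise finite) whenever $S$ is, and dispose of retracts by functoriality. The paper states this more tersely (citing "the product of $S$ with the set of non-degenerate elements of $K$") and additionally invokes Proposition \ref{keyBanFEFCprop} to transport the conclusion to the corresponding subcategories of $dg_+\hat{\Tc}\Alg_{R}$, but that extra step is not required by the statement of the corollary itself.
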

\begin{proof}
By  Proposition \ref{keyBanFEFCprop}, we have equivalences $U \co dg_+\hat{\Tc}\Alg_{R}^{\mathrm{afp,cof}} \to  dg_+\FEFC_{R}^{\mathrm{afp,cof}}$ and $U \co dg_+\hat{\Tc}\Alg_{R}^{\mathrm{fp,cof}} \to  dg_+\FEFC_{R}^{\mathrm{fp,cof}}$ of categories, since all objects concerned are cofibrant.

Given a simplicial set $K$, the functor $(-)\boxempty K$ of Definition \ref{boxKdef} preserves quasi-free objects and their retracts. If $K$ and $S$ are finite (resp. levelwise finite), then $(\cF_S,\delta)\boxempty K $ is also  finitely generated  (resp. levelwise finitely generated), by the product of $S$ with the set of  non-degenerate elements of $K$. 

In particular, this means that the functors $(-)\boxempty \Delta^n$ are endofunctors of $dg_+\FEFC_{R}^{\mathrm{afp,cof}}$ and  $dg_+\FEFC_{R}^{\mathrm{fp,cof}}$, and hence of $dg_+\hat{\Tc}\Alg_{R}^{\mathrm{afp,cof}}$ and $dg_+\hat{\Tc}\Alg_{R}^{\mathrm{fp,cof}}$.
\end{proof}

\begin{proposition}\label{afpTcFEFCprop}
Given an object $A$ of $dg_+\hat{\Tc}\Alg_{R}^{\mathrm{afp,cof}}$ and any full subcategory $\cD$ of $dg_+\hat{\Tc}\Alg_{R} $ containing the objects $A \boxempty \Delta^n$, for all objects $B$ of $\cD$ we have a natural equivalence
\[
 \oR\Map_{\cD}(A,B) \to \oR\Map_{dg_+\FEFC_{R}}(A,B)
\]
of mapping spaces in the respective $\infty$-categories given by simplicial localisation at abstract quasi-isomorphisms.
\end{proposition}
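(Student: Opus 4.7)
The plan is to model both derived mapping spaces using the cosimplicial framing $A \boxempty \Delta^\bullet$, which by hypothesis and Corollary \ref{boxDeltacor} lies entirely in $\cD$. The simplicial model structure on $dg_+\FEFC_R$ from Lemmas \ref{FEFCmodel} and \ref{simpmodellemma} handles the ambient side. The object $A$ is cofibrant there as a retract of a quasi-free object, and every object is fibrant, since the terminal object is the zero ring and surjectivity in positive chain degrees to it is automatic. Standard simplicial model-category theory thus identifies
\[
\oR\Map_{dg_+\FEFC_R}(A, B) \simeq \Hom_{dg_+\FEFC_R}(A \boxempty \Delta^\bullet, B),
\]
the simplicial set with $n$-simplices $\Hom(A \boxempty \Delta^n, B)$. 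By Proposition \ref{keyBanFEFCprop}, for the cofibrant sources $A \boxempty \Delta^n$ these Hom sets coincide with $\Hom_{dg_+\hat{\Tc}\Alg_R}(A \boxempty \Delta^n, B) = \Hom_\cD(A \boxempty \Delta^n, B)$, so this simplicial set is visible internally to $\cD$.

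\textbf{Key comparison.} The task reduces to showing that this same simplicial set also models $\oR\Map_\cD(A, B)$. I would model $\oR\Map_\cD$ by the Dwyer--Kan hammock localization $L^H \cD(A,B)$ and consider the natural commutative diagram
\[
\Hom(A \boxempty \Delta^\bullet, B) \lra L^H \cD(A, B) \lra L^H dg_+\FEFC_R(A, B),
\]
in which the composite is a weak equivalence by the first paragraph. It therefore suffices to show that the middle arrow is a weak equivalence, which I would verify on all $\pi_n$. Any class in $\pi_0 L^H dg_+\FEFC_R(A, B)$ is represented, using the framing to straighten a zigzag, by an honest morphism $A \to B$, which automatically lies in $\cD$; two such morphisms equal in the ambient $\infty$-category are connected by a $1$-simplex $A \boxempty \Delta^1 \to B$, again in $\cD$; and higher homotopies between homotopies are witnessed likewise by simplices in $\Hom(A \boxempty \Delta^\bullet, B)$. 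Naturality in $B$ then yields the stated equivalence.

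\textbf{Main obstacle.} The subtle point is that $\cD$ need not admit a model structure of its own --- it is merely closed under the formation of the specific cosimplicial framing of $A$, not under $(-) \boxempty \Delta^n$ for general objects, and the inclusion $\cD \subset dg_+\hat{\Tc}\Alg_R$ need not interact homotopically with the simplicial structure away from $A$. Consequently one cannot invoke standard simplicial model-category machinery inside $\cD$; instead the argument must work entirely at the level of the hammock localization. The conceptual content is that $A \boxempty \Delta^\bullet$ serves as a Reedy cofibrant cosimplicial resolution of $A$ already in the relative category $(\cD, W \cap \cD)$, and the Dwyer--Kan result that such resolutions compute derived mapping spaces applies directly without the need for a full model structure on $\cD$.
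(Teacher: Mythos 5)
Your proposal takes essentially the same approach as the paper's proof: both compute $\oR\Map_{dg_+\FEFC_{R}}(A,B)$ via the cosimplicial framing $A\boxempty\Delta^\bullet$, transfer this simplicial set into $\cD$ using that the Hom sets from cofibrant sources coincide (by Proposition \ref{keyBanFEFCprop} and fullness of $\cD$ in $dg_+\hat{\Tc}\Alg_R$), and then invoke a general lemma saying that a weak-equivalence-preserving cosimplicial resolution computes derived mapping spaces in a relative category. The paper simply cites an external lemma for this last step, which is exactly the ``Dwyer--Kan result'' you identify at the end of your argument.

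One cautionary note on the middle paragraph: the direct $\pi_n$ verification of the equivalence $L^H\cD(A,B)\to L^H dg_+\FEFC_R(A,B)$ is not quite complete as written. Your injectivity argument on $\pi_0$ handles classes represented by honest morphisms $A\to B$, but a class in $\pi_0 L^H\cD(A,B)$ is a priori a zigzag through $\cD$, and without a model structure on $\cD$ there is no straightening argument available \emph{inside} $\cD$ to reduce to the honest-morphism case. The clean route is indeed the one you state afterwards: show the \emph{first} arrow $\Hom(A\boxempty\Delta^\bullet,B)\to L^H\cD(A,B)$ is an equivalence by the general lemma (using that $A\boxempty\Delta^n\to A$ are weak equivalences in $\cD$ and $\Hom(A\boxempty\Delta^\bullet,-)$ preserves weak equivalences in $B$), then conclude for the second arrow by two-out-of-three against the known composite.
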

\begin{proof}
Since all dg FEFC algebras are fibrant, a model for the simplicial mapping space $\oR\Map_{dg_+\FEFC_{R}}(A,B)$ is given by $n \mapsto \Hom_{dg_+\FEFC_{R}}(A \boxempty \Delta^n, B)$. In particular, this preserves weak equivalences in $B$. By adjunction, this is isomorphic to the simplicial set $ n \mapsto \Hom_{\cD}(A \boxempty \Delta^n, B)$, because $\cD$ is a full subcategory of $dg_+\hat{\Tc}\Alg_{R} $. The morphisms $A \boxempty \Delta^n \to A $ are all quasi-isomorphisms between objects of $\cD$, and this construction preserves quasi-isomorphisms in $B$, so a standard argument (e.g. \cite[Lemma \ref{PTLag-locmaplemma}]{PTLag}) implies that the same simplicial set must also be a model for $\oR\Map_{\cD}(A,B)$, as required.
\end{proof}

In particular, Proposition \ref{afpTcFEFCprop} and Corollary \ref{boxDeltacor} give the following fully faithful $\infty$-functors:   
\begin{corollary}\label{afpTcFEFCcor}
The forgetful functors  $U \co dg_+\hat{\Tc}\Alg_{R}^{\mathrm{afp,cof}} \to  dg_+\FEFC_{R}$ and  $U \co dg_+\hat{\Tc}\Alg_{R}^{\mathrm{fp,cof}} \to  dg_+\FEFC_{R}$ induce  fully faithful $\infty$-functors on the $\infty$-categories given by simplicial localisation at quasi-isomorphisms.

  The same is true if we replace $dg_+\hat{\Tc}\Alg_{R}^{\mathrm{afp,cof}}$ (resp. $dg_+\hat{\Tc}\Alg_{R}^{\mathrm{fp,cof}}$) with any larger subcategory of $dg_+\hat{\Tc}\Alg_{R} $  in which every object admits a quasi-isomorphism from an object of $dg_+\hat{\Tc}\Alg_{R}^{\mathrm{afp,cof}}$ (resp. $dg_+\hat{\Tc}\Alg_{R}^{\mathrm{fp,cof}}$).
\end{corollary}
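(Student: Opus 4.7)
The plan is to obtain both assertions as direct applications of Proposition~\ref{afpTcFEFCprop}, first with $\cD$ being the cofibrant subcategory itself, and then with $\cD$ being the larger subcategory after a resolution argument.

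First I would fix $A, B \in dg_+\hat{\Tc}\Alg_{R}^{\mathrm{afp,cof}}$ and take $\cD := dg_+\hat{\Tc}\Alg_{R}^{\mathrm{afp,cof}}$ in Proposition~\ref{afpTcFEFCprop}. The only hypothesis to verify is that $A \boxempty \Delta^n$ lies in $\cD$ for every $n$, which is precisely the content of Corollary~\ref{boxDeltacor}. Proposition~\ref{afpTcFEFCprop} then delivers a natural equivalence
\[
\oR\Map_{\cD}(A,B) \xra{\sim} \oR\Map_{dg_+\FEFC_{R}}(A,B),
\]
which is exactly the full faithfulness required of the induced $\infty$-functor. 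The identical argument applies with $\mathrm{afp,cof}$ replaced throughout by $\mathrm{fp,cof}$, invoking the second half of Corollary~\ref{boxDeltacor} instead.

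For the extension, I would let $\cD'$ be a full subcategory of $dg_+\hat{\Tc}\Alg_{R}$ that contains $dg_+\hat{\Tc}\Alg_{R}^{\mathrm{afp,cof}}$ and in which every object admits a quasi-isomorphism from the cofibrant subcategory. Given $A', B' \in \cD'$, I choose a quasi-isomorphism $\alpha \co A \to A'$ with $A \in dg_+\hat{\Tc}\Alg_{R}^{\mathrm{afp,cof}}$. Since $\alpha$ is inverted in both simplicial localisations, the induced pullback maps
\[
\oR\Map_{\cD'}(A',B') \xra{\sim} \oR\Map_{\cD'}(A,B'), \qquad \oR\Map_{dg_+\FEFC_{R}}(A',B') \xra{\sim} \oR\Map_{dg_+\FEFC_{R}}(A,B')
\]
are equivalences, reducing the problem to comparing the two mapping spaces out of $A$. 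That comparison is again Proposition~\ref{afpTcFEFCprop} applied to $\cD'$ in place of $\cD$: the containment $A \boxempty \Delta^n \in \cD'$ needed to trigger the proposition holds because $A \boxempty \Delta^n \in dg_+\hat{\Tc}\Alg_{R}^{\mathrm{afp,cof}} \subset \cD'$ by Corollary~\ref{boxDeltacor}.

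There is essentially no substantive obstacle beyond book-keeping the inclusions. The only subtlety worth flagging is that the resolution step tacitly uses the standard fact that mapping spaces in a simplicial localisation are invariant under quasi-isomorphism in either variable, so that the two displayed pullback equivalences assemble into a commutative square with the comparison maps of Proposition~\ref{afpTcFEFCprop}; this is immediate from the universal property of the Dwyer--Kan localisation.
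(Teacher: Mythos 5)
Your proof is correct and follows exactly the route the paper intends: the corollary is stated in the paper as a direct consequence of Proposition~\ref{afpTcFEFCprop} and Corollary~\ref{boxDeltacor} with no separately written proof, and you have filled in the details precisely as one would — applying Proposition~\ref{afpTcFEFCprop} with $\cD$ the cofibrant subcategory itself for the first assertion, and then for the second assertion replacing the source object $A'$ by a quasi-isomorphic $A$ from the cofibrant subcategory and applying the proposition with $\cD$ equal to the larger subcategory (its hypothesis $A\boxempty\Delta^n\in\cD$ being supplied by Corollary~\ref{boxDeltacor} together with the containment of the cofibrant subcategory).
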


\begin{definition}
Say that a functor from any subcategory of  $dg_+\hat{\Tc}\Alg_{R}^{\mathrm{afp,cof}}$ or $dg_+\FEFC_{R}$ to a relative category (in particular any model category) is w.e.-preserving if it maps abstract  quasi-isomorphisms to weak equivalences. 
\end{definition}

\begin{definition}
 Say that a w.e.-preserving  functor $F$ from  $dg_+\FEFC_{R}$  to a relative category $\C$ is l.f.p. if it preserves filtered homotopy colimits, i.e. if $F(\LLim_{\alpha} B_{\alpha}) \simeq \ho\LLim_{\alpha}F(B_{\alpha})$ for all filtered systems $\{B_{\alpha}\}_{\alpha}$.
\end{definition}
In particular, note that the mapping space functor $\oR\Map(A,-)$ is l.f.p. whenever $A$ is cofibrant and finitely generated.

The following applies to any model category $\C$, with our most common example being that of simplicial sets.
\begin{corollary}\label{Alfpcor}
 For a homotopy cocomplete  relative category $\C$, 
the $\infty$-category of l.f.p.  w.e.-preserving  functors  from  $dg_+\FEFC_{R}$  to 
$\C$ 
is equivalent  to   the $\infty$-category of functors $dg_+\hat{\Tc}\Alg_{R}^{\mathrm{fp,cof}} \to \C$ sending $U$-quasi-isomorphisms to weak equivalences, via pre-composition $U_*$ with the forgetful functor $U \co dg_+\hat{\Tc}\Alg_{R}^{\mathrm{fp,cof}} \to dg_+\FEFC_{R} $.

Via this comparison, the functor $U_*$ on the $\infty$-category of all w.e.-preserving  functors  on  $dg_+\FEFC_{R}$ gives a derived right adjoint to the inclusion functor from the full $\infty$-subcategory of l.f.p. functors.  
\end{corollary}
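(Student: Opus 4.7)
The plan is to identify the localization of $dg_+\FEFC_R$ at abstract quasi-isomorphisms with the $\Ind$-completion of the localization of $dg_+\hat{\Tc}\Alg_R^{\mathrm{fp,cof}}$, after which the corollary reduces to the universal property of ind-completions. Abbreviate $\cB := dg_+\hat{\Tc}\Alg_R^{\mathrm{fp,cof}}$. Corollary \ref{afpTcFEFCcor} already provides the fully faithful embedding of localized $\infty$-categories $\cB_\infty \hookrightarrow (dg_+\FEFC_R)_\infty$, so what remains is (i) that every dg FEFC algebra is a filtered homotopy colimit of objects from $\cB$, and (ii) that every object of $\cB$ is compact in $(dg_+\FEFC_R)_\infty$.

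For (i), I would extend Lemma \ref{indFP} to the dg graded setting: the monad $S \mapsto (\cF_{S \sqcup dS},\delta)$ preserves filtered colimits, so any $A \in dg_+\FEFC_R$ is a filtered colimit of finitely presented dg FEFC algebras, and a functorial quasi-free cofibrant replacement (constructed level-by-level as in the proof of Corollary \ref{keyBanEFCcor}, with generators $A_n \times_{\delta,\z_{n-1}A}\z_{n-1}QA$) yields a filtered diagram in $\cB$ whose homotopy colimit is quasi-isomorphic to $A$, using exactness of filtered colimits of chain complexes. For (ii), the simplicial model structure of Lemma \ref{simpmodellemma} lets me compute $\oR\Map(B,A)_n$ as $\Hom_{dg_+\FEFC_R}(B \boxempty \Delta^n, A)$ (using that all objects are fibrant and $B$ is cofibrant), with $B \boxempty \Delta^n$ still in $\cB$ by Corollary \ref{boxDeltacor}. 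Reducing to the case $B = \cF_T$ with $T$ finite graded, this $\Hom$ is the finite product $\prod_{t \in T} A_{\deg t}$, and since filtered colimits in $dg_+\FEFC_R$ are computed degreewise and commute with finite products, compactness follows.

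From (i) and (ii), the universal property of ind-completion yields the first equivalence, with inverse to $U_*$ given by left Kan extension $U_!$ along $U$. For the second statement, the right adjoint to the inclusion $i \co \mathrm{Fun}^{\mathrm{lfp,we}} \into \mathrm{Fun}^{\mathrm{we}}$ is then the composite $U_! U_*$: for $F$ l.f.p.\ and $G$ w.e.-preserving, the adjunction
\[
\oR\Map(iF, G) \simeq \oR\Map(U_*F, U_*G) \simeq \oR\Map(F, U_! U_*G)
\]
follows from $F \simeq U_!U_*F$ (since $F$ is l.f.p.\ and hence lies in the essential image of $U_!$) combined with the adjunction $U_! \dashv U_*$. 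I expect the main technical obstacle to be item (ii): confirming compactness cleanly after simplicial localization requires knowing that the naive simplicial Hom described above genuinely computes the derived mapping space, which uses that the $\cF_T$ are cofibrant and that $\cF_T \boxempty \Delta^n$ remains a finitely generated quasi-free object whose Hom out of it has the expected product description.
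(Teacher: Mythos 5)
Your proposal follows essentially the same route as the paper's proof: both identify the simplicial localisation of $dg_+\FEFC_R$ with the ind-completion of the localisation of $dg_+\hat{\Tc}\Alg_R^{\mathrm{fp,cof}}$, use that $(-)\boxempty\Delta^n$ preserves finitely generated cofibrant objects to establish compactness, and then invoke the universal property of ind-completion (left Kan extension along $U$) for both the equivalence and the adjoint. The only difference is expository: the paper asserts directly that the cofibrant objects form the ind-category of $\cB$, whereas your step (i) route via functorial cofibrant replacement of finitely presented algebras is phrased in a way that risks suggesting $QA_\alpha\in\cB$ (false, since such replacements are not finitely generated); the correct reading, which you implicitly intend, is that $QA$ itself is quasi-free and hence a filtered colimit of its finitely generated quasi-free subalgebras closed under $\delta$.
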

\begin{proof}
 As in Lemma \ref{indFP}, $dg_+\FEFC_{R}$ is the ind-category of its finitely presented objects, and the same is true for the full subcategory of cofibrant objects. Since the functors $(-)\boxempty \Delta^n$ preserve finitely presented cofibrant objects, the simplicial set $n \mapsto \Hom_{dg_+\FEFC_{R}}(A\boxempty \Delta^n,-)$ thus preserves filtered colimits for  any $A \in dg_+\FEFC_{R}^{\mathrm{fp,cof}}$. Applying Corollary \ref{afpTcFEFCcor},  
 the simplicial localisation of  $dg_+\FEFC_{R}$ is therefore equivalent to the ind-category of the simplicial localisation of $dg_+\hat{\Tc}\Alg_{R}^{\mathrm{fp,cof}}$.
 
 The derived left adjoint $\oL U^*$ to $U_*$ is a form of Kan extension, and the previous paragraph implies that its  essential image consists precisely of of l.f.p. functors. Since $U$ is fully faithful, the unit of the adjunction $\oL U^* \dashv U_*$ is a natural equivalence, as can be seen by looking at representable functors then passing to colimits, since if a functor on $dg_+\FEFC_{R}$ is represented by an object $A$, then its image under $\oL U^*$ is represented by $UA$.
%
%
%
\end{proof}

On larger subcategories, we  have the following weaker comparison, which ensures that many categories of interest  are adequate for the purpose of detecting FEFC maps out of l.f.p. functors. Writing $ [\cD,\C]$ for  the $\infty$-category  of w.e.-preserving  functors   $\cD \to \C$, we have: 
\begin{corollary} \label{Dlfpcor}
Take a homotopy cocomplete relative category $\C$ and any full subcategory $\cD$ of $dg_+\hat{\Tc}\Alg_{R} $ containing $dg_+\hat{\Tc}\Alg_{R}^{\mathrm{fp,cof}}$. 

Given an l.f.p w.e.-preserving  functor $F \co dg_+\FEFC_{R} \to \C$ and a w.e.-preserving functor $G \co \cD \to \C$, the natural map
\[
 \oR\Map_{[\cD,\C]}(U_{\cD,*}F,G) \to \oR\Map_{[dg_+\hat{\Tc}\Alg_{R}^{\mathrm{fp,cof}},\C]}(U_*F, i_*G)
\]
is an equivalence, for the inclusion $i \co  dg_+\hat{\Tc}\Alg_{R}^{\mathrm{fp,cof}} \into \cD$,  the forgetful functor $U_{\cD} \co  \cD \to  dg_+\FEFC_{R}$, and $i_*, U_{\cD,*}$ the respective functors given by pre-composition.

Moreover, if  $G$ extends to FEFC-algebras in the sense that  $G\simeq U_{\cD,*}G'$ for some w.e.-preserving  functor $G' \co dg_+\FEFC_{R} \to \C$, then the natural map
\[
 \oR\Map_{[dg_+\FEFC_{R},\C]}(F,G') \to    \oR\Map_{[\cD,\C]}(U_{\cD,*}F,G)  
\]
is also an equivalence. 

 \end{corollary}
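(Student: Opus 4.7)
The plan is to prove the first equivalence by reducing to representable functors using the l.f.p.\ hypothesis on $F$ and then invoking the $\infty$-Yoneda lemma; the second equivalence will then follow by combining the first with the adjunction established in Corollary \ref{Alfpcor}.

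First I would use Corollary \ref{Alfpcor} together with Lemma \ref{indFP} to exhibit $F$ as a filtered homotopy colimit, taken in the functor $\infty$-category, of representables $\oR\Map_{dg_+\FEFC_{R}}(UA_{\alpha},-)$ indexed by objects $A_{\alpha}\in dg_+\hat{\Tc}\Alg_{R}^{\mathrm{fp,cof}}$. Because homotopy colimits of functors are computed pointwise, pre-composition along $U_{\cD}$ or $U$ commutes with them, while mapping into a fixed target converts them into homotopy limits; this lets me reduce the first equivalence to the representable case $F = \oR\Map_{dg_+\FEFC_{R}}(UA,-)$ with a single $A\in dg_+\hat{\Tc}\Alg_{R}^{\mathrm{fp,cof}}$. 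In this case, Proposition \ref{afpTcFEFCprop} will give natural equivalences $U_{\cD,*}F\simeq\oR\Map_{\cD}(A,-)$ on $\cD$ and $U_{*}F\simeq\oR\Map_{dg_+\hat{\Tc}\Alg_{R}^{\mathrm{fp,cof}}}(A,-)$ on the finitely presented subcategory, so the $\infty$-Yoneda lemma applied in the respective simplicial localisations at abstract quasi-isomorphisms will identify both sides of the comparison with $G(A) = (i_{*}G)(A)$, forcing the comparison map to be the induced identity by naturality.

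For the second equivalence, the key observation is that when $G\simeq U_{\cD,*}G'$ one has $i_{*}G \simeq U_{*}G'$, since the composite $U_{\cD}\circ i$ is the canonical forgetful functor $U\co dg_+\hat{\Tc}\Alg_{R}^{\mathrm{fp,cof}}\to dg_+\FEFC_{R}$. Substituting into the first equivalence, the desired statement reduces to $\oR\Map_{[dg_+\FEFC_{R},\C]}(F,G')\simeq\oR\Map_{[dg_+\hat{\Tc}\Alg_{R}^{\mathrm{fp,cof}},\C]}(U_{*}F,U_{*}G')$ with $F$ l.f.p.\ and $G'$ an arbitrary w.e.-preserving functor; this is precisely the adjunction formula from the final clause of Corollary \ref{Alfpcor}, which presents $U_{*}$ as the derived right adjoint to the inclusion of the l.f.p.\ subcategory of functors.

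The main subtlety lies in the bookkeeping for the reduction step: one must verify that the comparison map of the statement is compatible with the pointwise homotopy-colimit presentations on both sides and with the equivalences supplied by Proposition \ref{afpTcFEFCprop}. Once this is done the rest is formal, requiring no further analytic or topological input beyond the results already established.
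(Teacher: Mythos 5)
Your overall strategy matches the paper's proof: reduce to representable $F$, apply Proposition \ref{afpTcFEFCprop} and $\infty$-Yoneda to identify both sides with $G(A)$, then deduce the second equivalence using the adjunction from Corollary \ref{Alfpcor}. There is, however, an imprecision in your first step: an l.f.p.\ w.e.-preserving functor is not in general a \emph{filtered} homotopy colimit of representables, and neither Lemma \ref{indFP} nor Corollary \ref{Alfpcor} asserts this. What Corollary \ref{Alfpcor} actually gives is $F \simeq \oL U^* U_* F$; the functor $\bar{F} := U_* F$ on the small subcategory $dg_+\hat{\Tc}\Alg_{R}^{\mathrm{fp,cof}}$ is then a (non-filtered, weighted) colimit of representables by the co-Yoneda lemma, and applying the cocontinuous $\oL U^*$ transfers this to $F$. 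This is essentially how the paper's proof proceeds: it establishes $\oL i^*\bar{F} \simeq U_{\cD,*}\oL U^*\bar{F}$ for $\bar{F}$ representable in $[dg_+\hat{\Tc}\Alg_{R}^{\mathrm{fp,cof}},\C]$ and extends by cocontinuity. Since your subsequent reduction uses only that the relevant constructions are cocontinuous (not filteredness), the imprecision is harmless. Your deduction of the second equivalence via the adjunction formula $\oR\Map_{[dg_+\FEFC_R,\C]}(F,G') \simeq \oR\Map_{[dg_+\hat{\Tc}\Alg_R^{\mathrm{fp,cof}},\C]}(U_* F, U_* G')$ is correct and amounts to a mild rearrangement of the paper's route via the triangle $\oL U^* U_* F \to \oL U_{\cD}^* U_{\cD,*} F \to F$.
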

\begin{proof}
Write $\cA:= dg_+\hat{\Tc}\Alg_{R}^{\mathrm{fp,cof}}$  and $\cF:=dg_+\FEFC_{R}$ , so $U=U_{\cA}= U_{\cD} \circ i$. Taking objects $A, C \in \cA$ and $B \in \cD$, Proposition \ref{afpTcFEFCprop} gives us natural equivalences 
$\oR\Map_{\cA}(A,C) \to \oR \Map_{\cF}(UA,UC)$ and $\oR\Map_{\cD}(iA,B) \to \oR \Map_{\cF}(UA,U_{\cD}B)$. If $\bar{F} \in [\cA,s\Set]$ is represented by $A$, then $\oL i^*\bar{F}$ and $\oL U^*\bar{F}$ are represented by $iA$ and $UA$ respectively, for the derived left adjoints $\oL i^* \dashv i_I$ and $\oL U^* \dashv U_*$. The equivalences then translate to saying that the maps 
\[
\bar{F} \to  U_*\oL U^*\bar{F}  \quad\text{and} \quad    \oL i^* \bar{F} \to U_{\cD,*}\oL U^* \bar{F}
\]
are natural equivalences for $\bar{F}$ representable. Since these functors all preserve homotopy colimits, the corresponding statements then extend to all $\bar{F} \in [\cA,\C]$.

Now, Corollary \ref{Alfpcor} implies that any   l.f.p w.e.-preserving  functor $F \co dg_+\FEFC_{R} \to \C$ is equivalent to $\oL U^* U_*F$, so setting $\bar{F}:=U_*F$ we obtain an equivalence
\[
  \oL i^*  U_* F \to U_{\cD,*}F,
\]
which is equivalent to the first desired statement. For the second, apply $ \oL U_{\cD}^*  $ to deduce that the first map in the sequence
\[
\oL U^* U_*F  \to \oL U_{\cD}^*U_{\cD,*}F \to F
\]
is an equivalence; since the composite is also an equivalence, the second must be too, which is equivalent to the second desired statement.
\end{proof}


The following lemma can be used to send  a w.e.-preserving functor on $\cD$ to the universal l.f.p. FEFC  functor under it. The construction seems to have limited utility since in general it will not preserve any limit-preserving properties the original functor may enjoy, but it will feature in the comparisons of \S \ref{denormsn}.

\begin{lemma}\label{iotaKanextnlemma}
Take a homotopy cocomplete relative category $\C$, a w.e.-preserving functor $F \co dg_+\hat{\Tc}\Alg_{R}^{\mathrm{fp,cof}} \to \C$, and any full subcategory $\cD$ of $dg_+\hat{\Tc}\Alg_{R} $ containing $dg_+\hat{\Tc}\Alg_{R}^{\mathrm{fp,cof}}$. 

The left Kan extension $\iota_!F$ of $F$ along the $\infty$-localisation at quasi-isomorphisms of the functor $\iota \co dg_+\hat{\Tc}\Alg_{R}^{\mathrm{fp,cof}} \to dg_+\hat{\Tc}\Alg_{R}$ is then  given by
\[
 \iota_!F(B):= \LLim_{\alpha} F(Q_{\alpha}B),
\]
for any cofibrant replacement functor $Q=\{Q_{\alpha}\}_{\alpha}$ from $dg_+\FEFC_{R}$ to $dg_+\FEFC_{R}^{\mathrm{cof}} \simeq \ind(dg_+\hat{\Tc}\Alg_{R}^{\mathrm{fp,cof}})$.
\end{lemma}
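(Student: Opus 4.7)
The plan is to apply the standard $\infty$-categorical formula
\[
\iota_!F(B) \simeq \LLim_{(A,f) \in (L\iota \downarrow B)} F(A)
\]
for a left Kan extension as a colimit over a comma $\infty$-category, and then to show that the filtered system $\{Q_\alpha B \to B\}_\alpha$ is cofinal in that comma category. Here $L\iota$ denotes the induced functor on the $\infty$-localisations at quasi-isomorphisms.

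First I would lift each $Q_\alpha B$ to an object of $dg_+\hat{\Tc}\Alg_R^{\mathrm{fp,cof}}$ via the equivalence of Proposition \ref{keyBanFEFCprop}. The structure maps $Q_\alpha B \to QB$ in $dg_+\FEFC_R$, composed with the quasi-isomorphism $QB \xra{\sim} UB$, then correspond under Corollary \ref{keyBanFEFCcor} to morphisms $\iota Q_\alpha B \to B$ in the $\infty$-localisation of $dg_+\hat{\Tc}\Alg_R$. This gives a canonical filtered diagram $\{(Q_\alpha B, f_\alpha)\}_\alpha$ in $L\iota \downarrow B$, and the colimit of $F$ along it is precisely $\LLim_\alpha F(Q_\alpha B)$, so it remains only to establish cofinality.

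The key step is cofinality. For any $(A, g)$ with $A \in dg_+\hat{\Tc}\Alg_R^{\mathrm{fp,cof}}$ and $g \co \iota A \to B$ a morphism in the localised category, cofinality amounts to showing that the space of factorisations of $g$ through the system $\{Q_\alpha B\}$ is contractible. By Proposition \ref{afpTcFEFCprop} together with Corollary \ref{keyBanFEFCcor}, we have natural equivalences
\[
\oR\Map(\iota A, B) \simeq \oR\Map_{L(dg_+\FEFC_R)}(UA, UB), \qquad \oR\Map(\iota A, \iota Q_\alpha B) \simeq \oR\Map(UA, Q_\alpha B).
\]
Since $UA$ is a finitely presented cofibrant FEFC algebra, so that mapping out of it commutes with filtered homotopy colimits as in the proof of Corollary \ref{Alfpcor}, and since $UB \simeq \LLim_\alpha Q_\alpha B$, the natural map $\LLim_\alpha \oR\Map(UA, Q_\alpha B) \to \oR\Map(UA, UB)$ is an equivalence, giving the required cofinality and hence the desired formula, with independence from the choice of $Q$ automatic. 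The main technical obstacle is upgrading the comparison of mapping spaces from a homotopy-categorical statement to a fully $\infty$-categorical cofinality assertion; this relies on the naturality built into Corollaries \ref{keyBanFEFCcor} and \ref{afpTcFEFCcor} and on the description of $dg_+\FEFC_R^{\mathrm{cof}}$ as the ind-completion of its finitely presented cofibrant objects.
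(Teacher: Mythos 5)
Your proof is correct, but it takes a structurally different route from the paper. The paper observes that both $\iota_!F$ and $B \mapsto \LLim_\alpha F(Q_\alpha B)$ commute with homotopy colimits in $F$, reduces to the case of space-valued representable $F = \oR\map(A,-)$ by co-Yoneda density, and then computes both sides directly for such $F$, using that mapping out of a finitely presented cofibrant object commutes with filtered colimits. You instead work with the pointwise Kan-extension formula $\iota_!F(B) \simeq \LLim_{(A,g) \in L\iota\downarrow B} F(A)$ and establish cofinality of the diagram $\{(Q_\alpha B, f_\alpha)\}_\alpha$ inside the comma $\infty$-category, which ultimately reduces to the same technical fact that $\LLim_\alpha \oR\Map(UA, Q_\alpha B) \to \oR\Map(UA, UB)$ is an equivalence for $A$ finitely presented cofibrant. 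Both arguments share the same key ingredient, so they are equivalent in content; the paper's reduction-to-representables is shorter and sidesteps the cofinality machinery, while your approach is more direct from the definition and would still work if the decomposition of a general $F$ into representables were less transparent. One point worth making explicit in your cofinality step is that the weak homotopy type of the slice $I \times_{\mathcal{J}} \mathcal{J}_{(A,g)/}$ (with $I$ filtered) is the filtered colimit $\LLim_\alpha \Map_{\mathcal{J}}((A,g),(Q_\alpha B, f_\alpha))$, and that since this mapping space is the homotopy fibre of $\Map(UA,Q_\alpha B)\to \Map(UA,UB)$ over $g$, commuting filtered colimits past homotopy fibres reduces contractibility of the slice exactly to the mapping-space equivalence you cite; as written you gesture at this but leave the Grothendieck-construction / filtered-colimit step implicit.
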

\begin{proof}
 Since the description given clearly commutes with arbitrary homotopy colimits in $F$, it suffices to prove this when $F$ is a simplicial set-valued functor represented by some $A \in dg_+\hat{\Tc}\Alg_{R}^{\mathrm{fp,cof}}$, but then
 \begin{align*}
  \LLim_{\alpha} F(Q_{\alpha}B)= &\LLim_{\alpha}\oR\map_{dg_+\FEFC_{R}}(A, Q_{\alpha}B) \simeq  \oR\map_{dg_+\FEFC_{R}}(A, \LLim_{\alpha}Q_{\alpha}B)\\
  &\simeq \oR\map_{dg_+\FEFC_{R}}(A, B)= F(B).\qedhere
 \end{align*}
\end{proof}

\subsection{Nuclear Fr\'echet algebras}\label{nucfrechetsn}

In the following lemma, nuclear modules are taken in the sense of \cite[Definition 1.2]{freitag}. 

\begin{lemma}\label{nucfrechet}
 For a commutative  nuclear Fr\'echet algebra $R$, the chain complex underlying any object of $dg_+\hat{\Tc}\Alg_{R}^{\mathrm{afp,cof}}$ is a complex of nuclear Fr\'echet $R$-modules. 
\end{lemma}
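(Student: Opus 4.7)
The plan is to reduce the statement to the quasi-free generators, then decompose each graded piece of $\cF_S$ as a finite topological direct sum of completed projective tensor powers of $\cF_{S_0}(R)$, and finally invoke stability of nuclear Fréchet modules under these operations. The main technical point will be the seminorm matching in the decomposition step.

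First, since the category of nuclear Fréchet $R$-modules is closed under closed subspaces and under Hausdorff quotients by closed subspaces, it is closed under retracts in the category of locally convex $R$-modules. It therefore suffices to treat the objects $(\cF_S,\delta)$ with $S$ graded and levelwise finite, and since the differential plays no role at the level of the underlying graded Fréchet structure one may forget it.

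Next, fix $i \geq 0$. Any word $\alpha \in W(S)_i$ factors uniquely as $\alpha = w_0 p_1 w_1 \cdots p_k w_k$ where the $p_j \in S_+ := \bigsqcup_{j \geq 1} S_j$ are the positive-degree letters and each $w_j$ lies in $W(S_0)$. Because $\deg p_j \geq 1$ one has $k \leq i$, and only letters from the finite set $\bigsqcup_{j=1}^i S_j$ can occur among the $p_j$, so the set $\Sigma_i$ of positive skeletons $(p_1,\ldots,p_k)$ of total degree $i$ is finite. For each $\sigma \in \Sigma_i$ I would form the continuous $R$-linear map
\[
\Phi_\sigma \co \cF_{S_0}(R)^{\hten_{R,\pi}(k(\sigma)+1)} \lra (\cF_S)_i, \qquad w_0 \ten \cdots \ten w_{k(\sigma)} \longmapsto w_0 p_1 w_1 \cdots p_{k(\sigma)} w_{k(\sigma)},
\]
and verify that $\bigoplus_{\sigma \in \Sigma_i} \Phi_\sigma$ is a topological isomorphism onto $(\cF_S)_i$. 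On the dense subspace $R\langle S \rangle_i$ this is the obvious algebraic decomposition; the main obstacle is checking that, on the image of $\Phi_\sigma$, the defining $\ell^1$-seminorm $\|\cdot\|_\rho$ of $(\cF_S)_i$ equals $\rho^{\sum_j \deg p_j}$ times the natural $\ell^1$-tensor seminorm, so that the two completions coincide.

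Finally, each factor $\cF_{S_0}(R)^{\hten_{R,\pi}(k+1)}$ must be shown to be a nuclear Fréchet $R$-module. Writing $R = \Lim_m R_m$ as a countable inverse limit of Banach algebras with dense transitions and using $\ell^1(I, R_m) \cong R_m \htenpi \ell^1(I)$ together with compatibility of $\htenpi$ with countable Banach projective limits, one obtains $\cF_{S_0}(R) \cong R \hten_{\bK,\pi} \cF_{S_0}(\bK)$. The Fréchet space $\cF_{S_0}(\bK)$ is a weighted $\ell^1$-Köthe sequence space on $W(S_0)$ with geometrically decreasing weights $\rho^{|\alpha|}$, and its transition maps $\cF_{S_0}(\bK)_\rho \to \cF_{S_0}(\bK)_{\rho'}$ are nuclear whenever $\rho' < \rho/|S_0|$ (so that $\sum_\alpha (\rho'/\rho)^{|\alpha|} < \infty$), making $\cF_{S_0}(\bK)$ nuclear Fréchet. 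Since $R$ is also nuclear Fréchet, so is $R \hten_{\bK,\pi} \cF_{S_0}(\bK)$, and stability of nuclear Fréchet $R$-modules under completed projective tensor products and finite direct sums then concludes the argument.
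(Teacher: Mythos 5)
Your proof is correct, and the core mechanism is the same as the paper's: recognize the degree pieces as $\ell^1$-weighted K\"othe sequence spaces, check the Grothendieck--Pietsch nuclearity criterion (transition maps nuclear once $\rho'$ outgrows the alphabet size times $\rho$), and pass nuclearity through retracts. The paper, however, skips your skeleton decomposition entirely: it observes directly that $(\cF_S)_i$ is already the $R$-valued K\"othe sequence space on the countable index set $W(S)_i$ with weights $\rho^{|\alpha|}$, because for fixed degree $i$ the only letters that can occur lie in the finite set $\bigcup_{j\le i}S_j$, and the nuclearity estimate you carry out for $\cF_{S_0}(\bK)$ applies verbatim with that slightly larger alphabet. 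Your route --- splitting by positive skeleton into completed projective tensor powers of $\cF_{S_0}(R)$, identifying $\cF_{S_0}(R)\cong R\hten_{\bK,\pi}\cF_{S_0}(\bK)$, and invoking stability of nuclearity under $\hten_\pi$ and finite sums --- is valid and arguably makes the $R$-module structure more visible, but it re-derives the K\"othe identification at the cost of the seminorm-matching check, associativity of $\hten_{R,\pi}$, and the commutation of $\hten_\pi$ with the countable Banach projective limits; none of these extra steps are needed if one takes the K\"othe description of $(\cF_S)_i$ at face value.
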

\begin{proof}
When the set $S$ has finitely many elements in each degree, elements of $\cF_S$ are generated by words involving only finitely many variables in $S$ (those of the same degree and lower), so  $\cF_S$ is defined in each degree as a K\"othe sequence space, making it nuclear Fr\'echet. Any retract of a nuclear Fr\'echet space  is a subspace, hence also nuclear Fr\'echet.
\end{proof}

Combining Lemmas \ref{nucfrechet} and \ref{nucexactlemma}, we have:
\begin{corollary}\label{exactFEFCtencor}
Under the hypotheses of Lemma \ref{nucfrechet} and for any Fr\'echet space $U$, the completed tensor product $U\hten - $ defines a functor on $dg_+\hat{\Tc}\Alg_{R}^{\mathrm{afp,cof}}$ which preserves quasi-isomorphisms.
 \end{corollary}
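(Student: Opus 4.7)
The plan is to observe that this corollary is essentially an immediate combination of Lemmas \ref{nucfrechet} and \ref{nucexactlemma}, applied chain-complex-wise. The codomain of the functor will be chain complexes of Fr\'echet $\bK$-spaces (where $\bK$ denotes the base complete valued field over which $R$ is a nuclear Fr\'echet algebra), rather than dg algebras; the quasi-isomorphism claim is a purely linear statement and we do not need a product on $U$.

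First I would verify that $U\hten -$ is well-defined as a functor on $dg_+\hat{\Tc}\Alg_R^{\mathrm{afp,cof}}$: given $A$ in that category, Lemma \ref{nucfrechet} tells us that each $A_n$ is a nuclear Fr\'echet $R$-module, hence a nuclear Fr\'echet $\bK$-space after restriction of scalars. The completed projective (equivalently, injective) tensor product $U\hten_\bK A_n$ is thus a Fr\'echet $\bK$-space, and the differentials $\delta\co A_n\to A_{n-1}$ being continuous $\bK$-linear maps, they induce continuous maps $\id\hten\delta$ on $U\hten_\bK A_\bt$; functoriality in $A$ is obtained in the same way from continuity of algebra morphisms in each degree.

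Second, I would invoke Lemma \ref{nucexactlemma} with $V_\bt$ the underlying chain complex of nuclear Fr\'echet $\bK$-spaces of $A$ and with $U$ the given Fr\'echet space. That lemma says precisely that $U\hten_\bK -$ preserves quasi-isomorphisms on such complexes, which is what we need, since any morphism $A\to B$ in $dg_+\hat{\Tc}\Alg_R^{\mathrm{afp,cof}}$ is in particular a continuous $\bK$-linear chain map between chain complexes of nuclear Fr\'echet $\bK$-spaces.

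There is essentially no obstacle here: both ingredients are already in place, and the only minor point worth flagging is keeping track of the base field over which one tensors, so that the nuclearity hypothesis on the target complex in Lemma \ref{nucexactlemma} is genuinely satisfied by Lemma \ref{nucfrechet}. One could optionally remark that if $U$ happens to be a commutative Fr\'echet algebra, the same construction upgrades $U\hten A$ to a dg algebra, and if $U$ is moreover nuclear then $U\hten A$ lands back in $dg_+\hat{\Tc}\Alg_{U\hten R}^{\mathrm{nuc}}$, but neither enhancement is required for the stated corollary.
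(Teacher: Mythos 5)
Your proof is correct and takes essentially the same route as the paper, which presents the corollary immediately after the sentence ``Combining Lemmas~\ref{nucfrechet} and~\ref{nucexactlemma}, we have:'' with no further argument. You rightly note that the statement is linear in nature (no algebra structure on $U$ needed), correctly identify that the codomain is chain complexes of Fr\'echet spaces, and correctly flag that one should restrict scalars to the base field so that the nuclearity established in Lemma~\ref{nucfrechet} feeds into the hypotheses of Lemma~\ref{nucexactlemma}.
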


\begin{definition}
 Given bounded below chain complexes $U_{\bt},V_{\bt}$ of Fr\'echet $\bK$-spaces, 
 we define the complex $U\hten_{\pi} V$ of Fr\'echet spaces to be the total complex of the double complex $(i,j) \mapsto U_i \hten_{\pi} V_j$. When either all the spaces $U_n$  or all the spaces $V_n$ are nuclear, we simply denote this by $U\hten V$.
\end{definition}

\begin{definition}\label{hatedef}
 Given a dg 
 Fr\'echet algebra $A$, write  $A^{\hat{e}}:=A\hten_{\pi} A^{\op}$ and let $\hat{\Omega}(A)$ be the $ A^{\hat{e}}$-module given by the kernel of the multiplication map $A^{\hat{e}}\to A$.
 
 
\end{definition}

The following is an immediate consequence of Lemma \ref{nucexactlemma} and of total complexes preserving bounded below chain isomorphisms \cite[\S 5.6]{W}:

\begin{lemma}\label{OmeganucQIMlemma}
 If $A \to B$ is a morphism of dg nuclear Fr\'echet $\bK$-algebras in $dg_+\hat{\Tc}\Alg_{\bK}$ which is a quasi-isomorphism in the sense that it induces isomorphisms on abstract homology groups, then the morphisms $A^{\hat{e}} \to B^{\hat{e}}$ and $\hat{\Omega}(A) \to \hat{\Omega}(B)$ are also quasi-isomorphisms.
\end{lemma}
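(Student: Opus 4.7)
The plan is to deduce both statements from Lemma \ref{nucexactlemma} by a spectral sequence / acyclic-assembly argument on a bounded-below double complex, and then to handle $\hat{\Omega}$ via a split short exact sequence.

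For the first statement, factor the morphism $A^{\hat{e}}\to B^{\hat{e}}$ as
\[
A\hten_{\pi} A^{\op}\;\longrightarrow\; B\hten_{\pi} A^{\op}\;\longrightarrow\; B\hten_{\pi} B^{\op},
\]
so it suffices to treat the case of tensoring one argument with a fixed dg nuclear Fr\'echet algebra. Recall that by definition $A\hten_{\pi} A^{\op}$ is the total complex of the first-quadrant double complex $(i,j)\mapsto A_i\hten_{\pi} A^{\op}_j$. For each fixed column index $j$, the map
\[
A_{\bt}\hten_{\pi} A^{\op}_j \;\longrightarrow\; B_{\bt}\hten_{\pi} A^{\op}_j
\]
is a quasi-isomorphism of chain complexes of Fr\'echet $\bK$-spaces: this follows by applying Lemma \ref{nucexactlemma} with $U:=A^{\op}_j$, which is nuclear Fr\'echet by Lemma \ref{nucfrechet}, and with $V_{\bt}$ the mapping cone of $A\to B$, a complex of nuclear Fr\'echet spaces. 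Because both double complexes are supported in the first quadrant, the standard acyclic assembly / spectral-sequence argument (Weibel \S 5.6) implies that the induced map on total complexes is a quasi-isomorphism. The second factor $B\hten_{\pi}A^{\op}\to B\hten_{\pi}B^{\op}$ is handled symmetrically, now fixing a row index $i$ and invoking Lemma \ref{nucexactlemma} with $U:=B_i$. Composing gives the claim for $A^{\hat e}\to B^{\hat e}$.

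For the statement about $\hat{\Omega}$, observe that the defining short exact sequence
\[
0\to\hat{\Omega}(A)\to A^{\hat{e}}\xra{\mu}A\to 0
\]
is split as a sequence of abstract chain complexes, with splitting given by the left unit map $a\mapsto a\ten 1$ (which is a chain map because $\delta(1)=0$). Hence the analogous sequence for $B$, together with the morphism induced by $A\to B$, fits into a commutative diagram of short exact sequences of abstract chain complexes. Taking the associated long exact sequences in abstract homology and applying the five lemma — using that the maps $A\to B$ (by hypothesis) and $A^{\hat{e}}\to B^{\hat{e}}$ (by the first part) are quasi-isomorphisms — yields that $\hat{\Omega}(A)\to\hat{\Omega}(B)$ is a quasi-isomorphism.

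The main subtlety is simply bookkeeping: verifying that each intermediate term really is a complex of nuclear Fr\'echet (or at least Fr\'echet) spaces so that Lemma \ref{nucexactlemma} applies, and that the first-quadrant support of the double complex legitimates passing from column-wise quasi-isomorphisms to a quasi-isomorphism of total complexes. Both points are immediate from Lemma \ref{nucfrechet} and the non-negative grading convention in $dg_+\hat{\Tc}\Alg_{\bK}$, so no genuine obstacle arises.
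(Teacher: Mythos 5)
Your proof is correct and fills in exactly the details behind the paper's one-line justification that this is "an immediate consequence of Lemma \ref{nucexactlemma} and of total complexes preserving bounded below chain [quasi-]isomorphisms": the factorisation through $B\hten_{\pi}A^{\op}$, the column/row-wise application of Lemma \ref{nucexactlemma}, and the first-quadrant acyclic assembly are exactly what is meant. The split-exact-sequence argument for $\hat{\Omega}$ is also the intended (and entirely routine) deduction, so no comment is needed beyond confirming the approach matches.
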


We also have the  following relative version of tensor products and Lemma \ref{nucexactlemma}, fixing for now a complete valued field $\bK$.

\begin{definition}\label{htenAdef}
 Take $A \in dg_+\hat{\Tc}\Alg_{\bK}$ with the $\bK$-vector spaces $A_n$ all nuclear Fr\'echet, together with bounded below chain complexes of Fr\'echet spaces $M,N$ with a complete right (resp. left)  $A$-module structure $M\hten A \to M$ (resp. $A\hten N \to N$). Assume moreover that   either all the $M_i$ or all the $N_i$ are nuclear $\bK$-spaces. 
 
 We then define $M\hten^{\oL}_AN$ to be the total complex of the bar construction
 \[
  \Tot( M\hten N \la M\hten A \hten N \la M \hten A \hten A \hten N \la \ldots),
 \]
 with differential given by completing the morphism  
 \[
 x_0\ten x_1\ten \ldots \ten x_i \ten x_{i+1} \mapsto \sum_{j=0}^i (-1)^j x_0 \ten \ldots \ten x_jx_{j+1} \ten \ldots \ten x_{i+1},
 \]
 noting that this complex is well defined because in each iterated tensor product there is only one potentially non-nuclear term.
 \end{definition}

\begin{lemma}\label{nucexactlemmarel} 
For $A,M,N$ as in Definition \ref{htenAdef}, the bifunctor
\[
 (M,N) \mapsto M\hten^{\oL}_AN
\]
preserves abstract  quasi-isomorphisms in $M$ and in $N$.

Moreover, for any $\bK$-Fr\'echet space $V$, with either $V$ or $N$ (resp. $M$) levelwise nuclear, the natural map $(V\hten A)\hten^{\oL}_AN \to V\hten N$ (resp. $M\hten^{\oL}_A(A\hten V) \to M\hten V$) is an abstract  quasi-isomorphism.
\end{lemma}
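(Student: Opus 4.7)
Both parts reduce to spectral sequence arguments via bar filtration. For the \emph{first} part, by symmetry I consider varying $M$. Filter $M \hten^{\oL}_A N$ by bar degree: $F^k$ consists of terms of bar degree $\leq k$. Both the bar differential (lowering bar degree by one) and the internal differentials (preserving it) respect the filtration, and the associated graded at bar degree $p$ is the tensor product chain complex $M \hten A^{\hten p} \hten N$ equipped only with its internal differentials. Since $A$ is levelwise nuclear Fr\'echet, so is $A^{\hten p}$. For a quasi-isomorphism $M \to M'$, Lemma \ref{nucexactlemma} then shows the induced map on each associated graded piece is a quasi-isomorphism: when $N$ is levelwise nuclear, the complex $A^{\hten p} \hten N$ is nuclear Fr\'echet and tensoring with the varying Fr\'echet $M$ preserves quasi-isomorphisms; when instead $M$ is levelwise nuclear, regroup as $M \hten (A^{\hten p} \hten N)$ and apply the lemma with $M$ running through nuclear complexes while fixing the Fr\'echet complex $A^{\hten p} \hten N$. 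Because all complexes are non-negatively graded, the bar filtration is finite in each total degree, so quasi-isomorphism at every associated graded level forces quasi-isomorphism of total complexes; the argument for varying $N$ is entirely symmetric.

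For the \emph{second} part, I prove equivalently that the augmented bar complex, with $V \hten N$ placed at bar degree $-1$ via the natural augmentation from bar degree $0$, is acyclic. Define a continuous unit-insertion homotopy $s$ of bar-internal bidegree $(+1, 0)$ by
\[
s(v \hten a_0 \hten \ldots \hten a_p \hten n) := (v \hten 1_A) \hten a_0 \hten \ldots \hten a_p \hten n, \qquad s(v \hten n) := (v \hten 1_A) \hten n.
\]
A direct sign calculation shows $\partial s + s \partial = \id$ for the bar differential $\partial$ alone; the crucial cancellation is that the $M$-to-$A$ boundary term applied to the enlarged word returns the original word, matching the identity. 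Consequently every row of the augmented bicomplex (at fixed internal degree) is contractible. Filtering the augmented total complex by internal degree gives a spectral sequence whose $E^0_q$ is precisely that row with only the bar differential, so $E^1 = 0$. Since the filtration is exhaustive, bounded below, and bounded in each total degree, the spectral sequence converges strongly and forces the augmented total complex to be acyclic, proving that the augmentation map $(V \hten A) \hten^{\oL}_A N \to V \hten N$ is an abstract quasi-isomorphism. The case $M \hten^{\oL}_A (A \hten V) \to M \hten V$ follows by symmetry.

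\emph{Main obstacle.} The principal delicacy is that Lemma \ref{nucexactlemma} requires one of the two tensor factors to be nuclear, so at every completed tensor product in the argument one must check that either the fixed factor or the varying factor is nuclear. Under the standing hypotheses (nuclearity of each $A_i$, together with levelwise nuclearity of $M$ or $N$; respectively nuclearity of $V$ or levelwise nuclearity of $N$), this holds throughout, and re-associating factors as needed handles both cases uniformly. The sign bookkeeping for the contracting homotopy and the convergence of the spectral sequences are then routine in the non-negatively graded setting.
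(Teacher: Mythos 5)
Your proof is correct and follows essentially the same approach as the paper's, with two minor presentational differences worth noting. For the first statement, the paper asserts directly that the bifunctors $(M,N)\mapsto M\hten A^{\hten i}\hten N$ preserve quasi-isomorphisms by iterating Lemma \ref{nucexactlemma} together with the bounded-below multicomplex argument of \cite[\S 5.6]{W}, and then takes the total complex of the bar double complex; your bar-degree filtration and associated-graded comparison is the same argument phrased in filtration language. Do note, though, that when you say you ``fix the Fr\'echet complex $A^{\hten p}\hten N$'' while letting $M$ vary, Lemma \ref{nucexactlemma} as stated fixes a single space $U$ rather than a complex — so you still need the multicomplex iteration the paper cites, which your closing remark waves at but should be made explicit. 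For the second statement, your unit-insertion map $s$ is exactly the paper's contracting homotopy on the cone of $(V\hten A)\hten^{\oL}_AN\to V\hten N$; the spectral sequence by internal degree is unnecessary, since $s$ commutes (up to Koszul sign) with the internal differentials, so establishing $\partial s+s\partial=\id$ for the bar differential already makes $s$ a contracting homotopy for the full total differential, which is how the paper concludes.
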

\begin{proof}
 By Lemma \ref{nucexactlemma}, the bifunctors $(M,N) \mapsto M\hten A^{\hten i}\hten N$ all preserve quasi-isomorphisms, since total complexes of bounded below chain multicomplexes also do so (iterating \cite[\S 5.6]{W}). Taking the the total complex of the bar double complex  with $M\hten A^{\hten i}\hten N$ in degree $i$ then gives the first statement.
 
For the second statement, the cone of the morphism $(V\hten A)\hten^{\oL}_AN \to V\hten N$ has a contracting homotopy given in degree $i$ by completing the morphism $(v\ten a_0) \ten a_1 \ten \ldots \ten a_i \ten n \mapsto (v\ten 1) \ten a_0 \ten a_1  \ten \ldots \ten a_i \ten n$.
\end{proof}

\begin{lemma}\label{Omeganuclemma}
 Given $F \in dg_+\hat{\Tc}\Alg_{R}^{\mathrm{afp,cof}}$ and a closed $2$-sided dg ideal $I \subset F$ with  quotient $A$, the complex 
 $ \oL\hat{\Omega}(A/F)_{[1]}:= \ker(\hat{\Omega}(F) \hten_{F^{\hat{e}}}A^{\hat{e}} \to  \hat{\Omega}(A))$
 is naturally abstractly quasi-isomorphic to 
 the cone of the multiplication map
 \[
   I\hten^{\oL}_FI \to I.
 \]
 \end{lemma}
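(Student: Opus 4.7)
My plan is to realise both sides of the claimed quasi-isomorphism as incarnations of the derived tensor product $I\hten^{\oL}_F A$, via a chain of natural equivalences.

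First I would apply the functor $- \hten^{\oL}_{F^{\hat{e}}} A^{\hat{e}}$ to the defining short exact sequence
\[
0 \to \hat{\Omega}(F) \to F^{\hat{e}} \to F \to 0
\]
of $F^{\hat{e}}$-modules. Since $F$ is a retract of a quasi-free object, $\hat{\Omega}(F)$ is a retract of a free $F^{\hat{e}}$-module (on the generators of $F$), hence cofibrant, and so the derived base change coincides with the plain completed tensor product $\hat{\Omega}(F) \hten_{F^{\hat{e}}} A^{\hat{e}}$ appearing in the statement. Via the topological bar resolution of $F$ as an $F^{\hat{e}}$-module, the rightmost term is identified with $A \hten^{\oL}_F A$, convergence of the bar bicomplex being guaranteed by the nuclearity of the spaces involved (Lemma~\ref{nucfrechet}) together with Lemma~\ref{nucexactlemmarel}. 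This yields a fibre sequence
\[
\hat{\Omega}(F) \hten_{F^{\hat{e}}} A^{\hat{e}} \to A^{\hat{e}} \to A \hten^{\oL}_F A,
\]
which I would compare with $\hat{\Omega}(A) \to A^{\hat{e}} \to A$ via the natural morphism of fibre sequences whose middle component is the identity on $A^{\hat{e}}$.

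Taking vertical fibres then gives a fibre sequence of the form $\oL\hat{\Omega}(A/F)_{[1]} \to 0 \to \mathrm{fib}(m)$, where $m \colon A\hten^{\oL}_F A \to A$ denotes the multiplication; the octahedral axiom consequently yields a natural quasi-isomorphism $\oL\hat{\Omega}(A/F)_{[1]} \simeq \mathrm{fib}(m)_{[1]}$. Next I would apply $- \hten^{\oL}_F A$ to the short exact sequence $0 \to I \to F \to A \to 0$ of nuclear Fr\'echet $F$-bimodules (which makes sense since $I$ is closed in the nuclear Fr\'echet algebra $F$, so that all three terms are nuclear Fr\'echet) to obtain the fibre sequence
\[
I \hten^{\oL}_F A \to A \xra{s} A \hten^{\oL}_F A.
\]
Since $s(a) = a \ten 1$ satisfies $m \circ s = \id_A$, the map $s$ is a section of $m$; combining the two fibre sequences identifies $I \hten^{\oL}_F A \simeq \mathrm{fib}(m)_{[1]}$, and hence $\oL\hat{\Omega}(A/F)_{[1]} \simeq I \hten^{\oL}_F A$. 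Finally, applying $I \hten^{\oL}_F -$ to the same short exact sequence produces the fibre sequence $I \hten^{\oL}_F I \to I \to I \hten^{\oL}_F A$, whose rightmost term is by construction the cone of the multiplication $I \hten^{\oL}_F I \to I$. Stringing the equivalences together establishes the claim.

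The main technical obstacle is the identification $F \hten^{\oL}_{F^{\hat{e}}} A^{\hat{e}} \simeq A \hten^{\oL}_F A$ through the topological bar double complex, together with verifying that $\hat{\Omega}(F)$ really is cofibrant as a completed $F^{\hat{e}}$-module so that the ordinary and derived completed tensor products agree; both rely crucially on the nuclear Fr\'echet structure afforded by $F \in dg_+\hat{\Tc}\Alg_R^{\mathrm{afp,cof}}$ and the exactness properties of $\hten$ on such modules provided by Lemmas~\ref{nucexactlemma} and~\ref{nucexactlemmarel}.
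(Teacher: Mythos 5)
Your proof is correct and follows essentially the same line of reasoning as the paper's: both arguments derive-base-change the defining short exact sequence $0 \to \hat{\Omega}(F) \to F^{\hat{e}} \to F \to 0$ (the paper phrases the functor $-\hten^{\oL}_{F^{\hat{e}}} A^{\hat{e}}$ as the composite $A\hten_F(-)$ followed by $(-)\hten^{\oL}_F A$), compare the resulting fibre sequence to the short exact sequence $\hat{\Omega}(A)\to A^{\hat{e}}\to A$ to identify $\oL\hat{\Omega}(A/F)$ with the fibre of the multiplication $A\hten^{\oL}_F A \to A$, and then feed $0\to I\to F\to A\to 0$ through $(-)\hten^{\oL}_F A$ and $I\hten^{\oL}_F(-)$ to arrive at $\cone(I\hten^{\oL}_F I\to I)$. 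The only cosmetic difference is that you invoke the section $s\colon A\to A\hten^{\oL}_F A$ and the octahedral axiom to identify $\mathrm{fib}(m)_{[1]}$ with $I\hten^{\oL}_F A$, whereas the paper substitutes $A\simeq\cone(I\to F)$ directly into one tensor factor; these are interchangeable manipulations. Your use of Lemmas~\ref{nucfrechet} and~\ref{nucexactlemmarel} to keep the completed tensor products well-behaved matches the paper's, and your justification that $\hat{\Omega}(F)$ is a retract of a free $F^{\hat{e}}$-module (so its base change needs no deriving) is the same fact the paper expresses by noting that $\hat{\Omega}(F)$ and $F$ are free as right $F$-modules.
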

\begin{proof}

We have a short exact sequence $0 \to \hat{\Omega}(F) \to F\hten F \to F \to 0$, and applying $A\hten_F-$ then $-\hten_F^{\oL}A$ thus gives a quasi-isomorphism from $A\hten_F\hat{\Omega}(F)\hten_FA $ to the cocone of $A\hten A \to A\hten_F^{\oL}A$ 
by Lemma \ref{nucexactlemmarel}, since $F$ and $\hat{\Omega}(F)$ are both free as right $F$-modules.
Thus $\oL \hat{\Omega}(A/F)$ is quasi-isomorphic to the cocone of $ A\hten_F^{\oL}A \to A$ 
since the $A\hten A$ terms cancel.

Now $A$ is quasi-isomorphic to the cone of $I \to F$, so substituting in the first copy of $A$ gives a  quasi-isomorphism  with $(I\hten_F^{\oL}A)_{[-1]}$, which in turn is quasi-isomorphic to 
\[
 \cone(I\hten_F^{\oL}I \to I\hten_F^{\oL}F)_{[-1]} \simeq \cone(I\hten_F^{\oL}I \to I)_{[-1]}.
\]
 \end{proof}

We can now identify a large class of dg nuclear Fr\'echet algebras satisfying the conditions of Corollary \ref{afpTcFEFCcor}.

\begin{proposition}\label{resnprop} 
Take  a complete valued field $\bK$, and assume $A$ is a topological dg $\bK$-algebra satisfying the following conditions:
 \begin{enumerate}
  \item $A$ can be written as a quotient of a levelwise finitely generated quasi-free  algebra $(\cF_S,\delta) \in dg_+\hat{\Tc}\Alg_{R}^{\mathrm{afp,cof}} $ by a  topologically closed $2$-sided  dg ideal $I$, with $\H_0I$  finitely generated as an $\cF_{S_0}\hten\cF_{S_0}$-module, and
  \item $A$ is almost perfect as an $A^{\hat{e}}$-module.
 \end{enumerate}
Then there exists an object $\tilde{A} \in dg_+\hat{\Tc}\Alg_{R}^{\mathrm{afp,cof}}$ and a  quasi-isomorphism $\tilde{A} \to A$ of topological $\bK$-algebras, with  $\tilde{A}^{e,\FEFC} \simeq A^{\hat{e}}$ and the FEFC cotangent module $\Omega^1_{\tilde{A}}$ quasi-isomorphic to  $\hat{\Omega}(A)$.
\end{proposition}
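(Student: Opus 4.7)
The plan is to construct $\tilde{A}$ by an inductive quasi-free resolution of $A$, adjoining only finitely many generators in each chain degree, and then derive the remaining comparisons from Lemma \ref{OmeganucQIMlemma}. Setting $F_0:=(\cF_S,\delta)$, with $f_0\co F_0\onto A$ surjective with kernel $J_0=I$, I inductively build a chain of quasi-free extensions $F_0\subset F_1\subset F_2\subset\ldots$ in $dg_+\hat{\Tc}\Alg_{R}^{\mathrm{afp,cof}}$ equipped with compatible surjections $f_n\co F_n\onto A$ whose kernels $J_n$ satisfy $\H_iJ_n=0$ for $i<n$. The step $F_n\leadsto F_{n+1}$ freely adjoins finitely many FEFC generators $y_\alpha$ in chain degree $n+1$ with $\delta y_\alpha=z_\alpha$, where the cycles $z_\alpha\in (J_n)_n$ are chosen so that their classes generate $\H_nJ_n$ as a bimodule over $\H_0F_n$ (via the natural $(\H_0F_n)\hten (\H_0F_n)^{\op}$-module structure), together with lifts $f_{n+1}(y_\alpha)\in A_{n+1}$ chosen so that $\delta f_{n+1}(y_\alpha)=f_n(z_\alpha)$.

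The crux is to verify that $\H_nJ_n$ is finitely generated in this bimodule sense at every stage. For this, Lemma \ref{Omeganuclemma} identifies $\oL\hat{\Omega}(A/F_n)_{[1]}$ with $\cone(J_n\hten^{\oL}_{F_n}J_n\to J_n)$. Since $J_n$ is $(n-1)$-connected by the induction hypothesis, a bar-complex spectral sequence argument (whose nuclearity inputs are supplied by Lemma \ref{nucexactlemmarel}) shows that $J_n\hten^{\oL}_{F_n}J_n$ is $(2n-1)$-connected for $n\ge 1$, so $\H_nJ_n\simeq \H_{n+1}\oL\hat{\Omega}(A/F_n)$ as $A^{\hat{e}}$-modules. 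Because $F_n$ is quasi-free on a levelwise finite graded set, $\hat{\Omega}(F_n)\hten_{F_n^{\hat{e}}}A^{\hat{e}}$ is levelwise a finite-rank free $A^{\hat{e}}$-module and hence almost perfect; combined with the hypothesis that $A$ is almost perfect over $A^{\hat{e}}$ and the two-out-of-three property applied to the defining sequence $\oL\hat{\Omega}(A/F_n)_{[1]}\to\hat{\Omega}(F_n)\hten_{F_n^{\hat{e}}}A^{\hat{e}}\to\hat{\Omega}(A)$, this forces $\oL\hat{\Omega}(A/F_n)$ to be almost perfect over $A^{\hat{e}}$, delivering the required finite generation of $\H_nJ_n$. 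The base case $n=0$ is handled by the explicit hypothesis on $\H_0I$.

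Setting $\tilde{A}:=\LLim_nF_n=\cF_{\tilde S}$ then produces a quasi-free object of $dg_+\hat{\Tc}\Alg_{R}^{\mathrm{afp,cof}}$ with $\tilde S$ levelwise finite, equipped with a surjective quasi-isomorphism $\tilde{A}\to A$. Since only generators from boundedly many degrees contribute to any given chain degree of $\tilde{A}\hten_\pi\tilde{A}^{\op}$, the filtered colimit over finite subsets in Definition \ref{anAedef} stabilises degreewise, yielding $\tilde{A}^{e,\FEFC}=\tilde{A}\hten_\pi\tilde{A}^{\op}=\tilde{A}^{\hat{e}}$; Lemma \ref{OmeganucQIMlemma} then gives $\tilde{A}^{\hat{e}}\simeq A^{\hat{e}}$ and $\Omega^1_{\tilde{A}}=\ker(\tilde{A}^{e,\FEFC}\to\tilde{A})=\hat{\Omega}(\tilde{A})\simeq \hat{\Omega}(A)$. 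The main obstacle is the second paragraph, where one must both verify the bar-complex connectivity estimate in the nuclear Fr\'echet setting and carefully track the bimodule structure to convert almost perfectness of $\oL\hat{\Omega}(A/F_n)$ over $A^{\hat{e}}$ into finite generation of $\H_nJ_n$ in precisely the form required to adjoin only finitely many new FEFC generators at each step.
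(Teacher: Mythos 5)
Your proposal is correct and follows essentially the same route as the paper: an inductive construction of a levelwise finitely generated quasi-free resolution $F^{(0)} \to F^{(1)} \to \ldots \to A$, with finite generation at each stage extracted from almost perfectness of $\oL\hat\Omega(A/F^{(n)})$ via Lemma~\ref{Omeganuclemma}, and the final comparisons via Lemma~\ref{OmeganucQIMlemma} and the degreewise stabilisation of $\tilde A^{e,\FEFC}$.

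One small presentational difference worth noting, which bears directly on the difficulty you flag at the end. You propose to first establish the isomorphism $\H_nJ_n\simeq \H_{n+1}\oL\hat\Omega(A/F_n)$ of $A^{\hat e}$-modules, then reinterpret it as an $(\H_0 F_n)$-bimodule statement in order to choose finitely many new degree-$(n+1)$ generators, and you correctly observe that this requires care about which ring the module is finitely generated over. The paper sidesteps that bookkeeping: it works purely with $\H_{n+1}\oL\hat\Omega(A/F^{(n)})$ as an $A^{\hat e}$-module, lifts a finite generating set to $\z_nI^{(n)}$ using surjectivity from the exact sequence of Lemma~\ref{Omeganuclemma}, adjoins the corresponding variables, and checks directly that $\H_{n+1}\oL\hat\Omega(A/F^{(n+1)})$ vanishes; the desired vanishing $\H_nI^{(n+1)}=0$ is then read off from the exact sequence rather than verified at the bimodule level. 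This is a minor economy, but it cleanly dodges the $\H_0A^{\hat e}$ versus $(\H_0A)\hten(\H_0A)^{\op}$ comparison you were worried about, so you may want to phrase the inductive step in terms of $\oL\hat\Omega(A/F_n)$ throughout.
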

\begin{proof} 
 We will construct $\tilde{A}$ as the colimit of a system $(\cF_S,\delta) =F^{(0)} \to F^{(1)} \to F^{(2)} \to \ldots $ equipped with maps to $A$, where $F^{(n+1)}$ is constructed from $F^{(n)}$ by adding generators in degree $n+1$. The inductive hypothesis is that for the closed $2$-sided  dg ideal $I^{(n)}:= \ker(F^{(n)} \to A)$, we have $\H_{<n}I^{(n)}=0$. 
 
 Since the map $\tau_{\ge n}I^{(n)} \to I^{(n)}$ from the good truncation is a quasi-isomorphism of complexes of nuclear Fr\'echet spaces, repeated application of Lemma \ref{nucexactlemma} implies that $\H_{<2n}(I^{(n)}\hten I^{(n)})=0$ and $\H_{<n}(I^{(n)}\hten F^{(n)})=0$. Similarly, the quasi-isomorphism $\cone(I^{(n)} \to F^{(n)}) \to A$ gives a quasi-isomorphism  $\cone(I^{(n)} \to F^{(n)})^{\hten 2} \to A\hten A$. In particular, the inductive hypothesis implies that the maps $ F^{(n)} \to A$ and $(F^{(n)})^{\hat{e}} \to A^{\hat{e}}$ give isomorphisms on $\H_{<n}$, 
 so setting $\tilde{A}:= \LLim_n F^{(n)}$ gives an object with the desired properties.
 
We now proceed with the induction. The condition $\H_{<0}I^{(0)}=0$ is vacuous, and since $F^{(0)} \to A$ is surjective, the same is true of $\hat{\Omega}(F^{(0)}) \hten_{(F^{(0)})^{\hat{e}}}A^{\hat{e}} \to  \hat{\Omega}(A)$, so $\H_0\oL\hat{\Omega}(A/F^{(0)})=0$, 
meaning the conditions are satisfied for $n=0$.
 
Since $\H_0I^{(0)}$ is levelwise finitely generated by hypothesis, we may lift a finite set of generators to  $a_1, \ldots, a_m \in  I^{(0)}_0$, and form $F^{(1)}$ by freely adding variables $x_1, \ldots, x_m$ in degree $1$ and setting $\delta x_i =a_i$, then define the homomorphism from $F^{(1)}$ to $A$ by $x_i \mapsto 0$. By construction, we thus have $\H_{0}I^{(1)}=0$.
 
Now assume that we have constructed $F^{(n)} \onto A$ with  $\H_{<n}I^{(n)}=0$, for some $n \ge 1$. That 
condition  automatically implies $\H_{<2n}(I^{(n)}\hten^{\oL}_{F^{(n)}}I^{(n)})=0$, so the description of Lemma \ref{Omeganuclemma} gives  $\H_{\le n}\oL\hat{\Omega}(A/F^{(n)}) =0$. 
Since $A$ is almost perfect, so is  $\hat{\Omega}(A)=\ker(A^{\hat{e}} \to A)$. Since $F^{(n)}$ is levelwise finitely generated and quasi-free, the $A^{\hat{e}}$-module  $\oL\hat{\Omega}(A/F^{(n)})$ 
is also almost perfect; in particular, its lowest homology group $\H_{n+1}\oL\hat{\Omega}(A/F^{(n)})$ 
is finitely generated. 
The exact sequence 
\[
 \to  \H_{n}I^{(n)} \to \H_{n+1}\oL\hat{\Omega}(A/F^{(n)}) \to   \H_{n-1}( I^{(n)}\hten^{\oL}_FI^{(n)})
\]
coming from  Lemma \ref{Omeganuclemma} allows us to lift a finite generating set to a set of elements of $\z_nI^{(n)}$. We then form $F^{(n+1)}$ from $F^{(n)}$ by freely adding a corresponding set of variables in degree $n+1$ by the same procedure as the $n=0$ case. 

By construction, we then have  $\H_{n+1}\oL\hat{\Omega}(A/F^{(n+1)})=0$ and automatically have $\H_{<n-1}I^{(n+1)}=0$, so $\H_{<2n-2}(I^{(n+1)}\hten^{\oL}_{F^{(n+1)}}I^{(n+1)})=0$. The exact sequence
 \[
  \H_{n}( I^{(n+1)}\hten^{\oL}_{F^{(n+1)}}I^{(n+1)}) \to  \H_{n}I^{(n+1)} \to \H_{n+1}\oL\hat{\Omega}(A/F^{(n+1)}) 
 \] 
 from Lemma \ref{Omeganuclemma} then
implies that  $\H_{n}I^{(n+1)}=0$, since $n+1\ge 2$. 
\end{proof}

\begin{example}[Finite-dimensional algebras]\label{fdex}
Every finite-dimensional $\bK$-algebra $A$ satisfies the conditions of Proposition \ref{resnprop}, so admits a levelwise finitely generated FEFC resolution compatible with the respective topologies. 

First note that the $A$-bimodule $A$ admits a projective resolution by the bar construction. Since that is freely generated in degree $n$ by the finite-dimensional vector space $A^{\ten n}$, it follows that $A$ is almost perfect. 

It now suffices to show that $A$ can we written as a quotient of some $\cF_n$ by a closed ideal which is finitely generated as an $\cF_n\hten \cF_n^{\op}$-module. Endow $A$ with a submultiplicative norm and choose a basis $e_0=1, e_1, \ldots, e_n$ of elements of norm $1$. We then have a  surjective continuous homomorphism $\phi \co \cF_n \to A$ given on generators by $\phi(x_i)=e_i$. The kernel $I$ consists of series $\sum_{\alpha} \lambda_{\alpha} x^{\alpha}$ such that $\lambda_i   + \sum_{|\alpha|>1}  \lambda_{\alpha} \phi_i(x^{\alpha})=0$ for all $i>0$ and $\lambda_{\emptyset}   + \sum_{|\alpha|>1}  \lambda_{\alpha} \phi_0(x^{\alpha})=0 $, where $\phi= \sum_{i=0}^n \phi_i e_i$. 

This is automatically a closed ideal, and it remains to show that it is finitely generated as  an $\cF_n\hten \cF_n^{\op}$-module. 
It is even finitely generated as a right $\cF_n$-module, with  generating set is given  by the elements $r_{ij}:= x_ix_j - \phi(x_ix_j)_0 - \sum_{k=1}^n \phi(x_ix_j)_k x_k$. For formal variables $t_{ij}$, we can define a $\bK$-linear map $\sigma$ from generators of $\cF_n$ to $\bigoplus_{i,j=1}^n t_{ij}\cF_n$ inductively on word length  by setting $\sigma(1)=\sigma(x_i)=0$ and
\[
\sigma(x_ix_jx^{\alpha}) :=  t_{ij} x^{\alpha} + \phi(x_ix_j)_0 \sigma(x^{\alpha}) +  \sum_{k=1}^n \phi(x_ix_j)_k \sigma(x_kx^{\alpha}).
\]
If $f(m)_r$ is the maximum value of $\|\sigma(x^{\alpha})\|_r$ for $\|\alpha\|=m$, this gives an expression of the form $f(m+2) \le r^m + \lambda f(m+1) + \mu f(m)$ for non-negative constants $\lambda, \mu$, with $f(0)=f(1)=0$.  For any $s \ge 3  \max\{\lambda, \sqrt{\mu},r\}$, it then follows by induction that $f(m+2) \le s^m$ for all $m$, so $\sigma$ extends to a continuous map on $\cF_n$  by Lemma \ref{extendmaplemma1}. When restricted to $I$, it gives a section of the map $t_{ij} \mapsto r_{ij}$, so the latter generate $I$.   
\end{example}

\begin{example}\label{fdex2}
 More generally, any levelwise finite-dimensional dg  $\bK$-algebra $A$ (concentrated in non-negative chain degrees) satisfies the conditions of Proposition \ref{resnprop}. The bar construction of the $A$-bimodule $A$ is now a double complex, and taking its total complex gives a cofibrant replacement which is finitely generated in each degree, so almost perfect. 
 
 Picking a basis levelwise for $A$, we can write it as a quotient of an quasi-free algebra $(\cF_S,\delta)$ with $S$ levelwise finite, with kernel $I$. From Example \ref{fdex} we know that $I_0$ is finitely generated as an $\cF_{S_0}\hten \cF_{S_0}^{\op}$-module, so $\H_0I$ must also be so. 
\end{example}


\begin{example}[Free EFC and Stein algebras]\label{EFCexNC}
The kernel of the canonical surjective map $\pi \co \cF_n \to \cO^{\an}(\bA^n)$ from the free FEFC algebra to the free EFC algebra is canonically closed, and is generated as an $\cF_n\hten \cF_n^{\op}$-module by the commutators $[x_i,x_j]$. 
To see this, observe that we can apply a form of division algorithm to render non-commutative monomials of degree $m$  in lexicographic order by subtracting the image under $\iota_{[x_i,x_j]} \co \cF_n\hten \cF_n \to \cF_n$ (as in Definition \ref{iotadef})
of a monomial of degree $m-2$ every time we have to pass $x_i$ through $x_j$ for $i>j$. Since there are at most $m(m-1)/2$ such steps, this defines a $\bK$-linear  map $\phi \co \bK\<x_1, \ldots, x_n\> \to (\cF_n\hten \cF_n)^{n(n-1)/2}$ with $\|\phi(x^{\alpha})\|_r \le |\alpha|(|\alpha|-1) r^{|\alpha|}/2$. Because polynomials grow sub-exponentially, Lemma \ref{extendmaplemma1} thus extends $\phi$ to a map on $\cF_n$, where it gives a section of $(\iota_{[x_i,x_j]})_{i,j}$ when restricted to $\ker \pi$, from which  the required finite generation follows. 

The $\cO^{\an}(\bA^n)\hten\cO^{\an}(\bA^n)^{\op}$-module $\cO^{\an}(\bA^n)$ has a projective resolution by the Koszul complex associated to the sequence $(x_i \ten 1 - 1 \ten x_i)_i$, so is perfect. Since $\ker \pi$ is finitely generated as an $\cF_n\hten \cF_n^{\op}$-module, the conditions of Proposition \ref{resnprop} are satisfied, so $\cO^{\an}(\bA^n)$  admits a levelwise finitely generated FEFC resolution compatible with the respective topologies. 

Much the same argument (incorporating $x^2$ terms for variables $x$ of odd degree) applies to levelwise finitely generated quasi-free dg EFC algebras, i.e. dg Stein algebras of the form $ (\cO^{\an}(\bA^n)[M], \delta)$ for levelwise finitely generated graded modules $M$ concentrated in strictly positive degrees. 
\end{example}

\begin{example}[Quantum affine space]\label{qEFCex}
As a non-commutative variant of Example \ref{EFCexNC}, we can consider the  algebra $\cO^{\an}_{\mathbf{q}}(\bA^n)$   of holomorphic functions on quantum affine $n$-space  from  \cite[Example 7.5]{pirkovskiiHFG}, where $\mathbf{q}= \{q_ij\}$ is a set of constants $q_{ij} \in \bK$ with $q_{ii}=1$ and $q_{ji}=q_{ij}^{-1}$, and $\cO^{\an}_{\mathbf{q}}(\bA^n)$ the quotient of $\cF_n$ by the closed ideal generated by the quantum commutators $c_{ij}:=x_ix_j-q_{ij}x_jx_i$. For simplicity, we impose the further restriction that $|q_{ij}|\ge 1$ for $i \le j$.


Since $x_jx_i = q_{ji}x_ix_j -c_{ij}$ for $i<j$,  we can adapt the  division algorithm of Example \ref{EFCexNC} by subtracting the image of a monomial under $\iota_{[x_i,x_j]} \co \cF_n\hten \cF_n \to \cF_n$ for $i<j$ whenever we have to pass $x_i$ through $x_j$. There are the same number of steps as before, but now the norm of the remainder is multiplied by $q_{ji}$ at each step. Since $|q_{ij}|\le 1$,   the required map $\phi_{\mathbf{q}} \co \cF_n \to (\cF_n\hten \cF_n)^{n(n-1)/2}$ converges and gives a section of  $(\iota_{[x_i,x_j]})_{i,j}$ when restricted to $\ker (\cF_n \to   \cO^{\an}_{\mathbf{q}}(\bA^n))$.
 
 The $\cO^{\an}_{\mathbf{q}}(\bA^n)\hten \cO^{\an}_{\mathbf{q}}(\bA^n)^{\op}$-module $\cO^{\an}_{\mathbf{q}}(\bA^n)$ has a free resolution given by modifying the Koszul complex. Explicitly, we have a basis $\{e_{i_1,\ldots,i_r}~:~ 1 \le i_1<i_2< \ldots <i_r\le n\}$  in degree $r$, with 
 \[
 \delta e_{i_1,\ldots,i_r}= \sum_{l=1}^r (-1)^{l-1} \left( (\prod_{m=1}^{l-1}q_{i_m,i_l}) x_{i_l}e_{i_1, \ldots, \hat{i}_l,\ldots, i_r}  -(\prod_{m=l+1}^{r}q_{i_l,i_m})  e_{i_1, \ldots, \hat{i}_l,\ldots, i_r}x_{i_l}\right),
 \]
 where $\hat{i}_l$ indicates deletion of that index.
  Since $\ker (\cF_n \to \cO^{\an}_{\mathbf{q}}(\bA^n))$ is finitely generated as an $\cF_n\hten \cF_n^{\op}$-module, the conditions of Proposition \ref{resnprop} are satisfied, so $\cO^{\an}_{\mathbf{q}}(\bA^n)$  admits a levelwise finitely generated FEFC resolution compatible with the respective topologies. 
\end{example}

\begin{example}[dg Stein algebras] \label{dgsteinexNC}
 Not all Stein spaces admit  global resolutions by levelwise finitely generated quasi-free dg EFC algebras, but Stein manifolds do so by iterating the argument of \cite[Remark \ref{DStein-embeddingrmk}]{DStein}, as do global complete intersections in such by taking Koszul complexes.  

 More generally, we can start by observing that for any dg dagger affinoid $(Z^0, \sO_Z)$, the $\Gamma(Z^0 \by Z^0, \sO_{Z \by Z})$-module $\Gamma(Z^0, \sO_Z)$, given by pushforward along the diagonal, is almost perfect, where $\sO_{Z \by Z} = \pr_1^{-1}\sO_Z\ten_{\pr_1^{-1}\sO_{Z^0}} \sO_{Z^0 \by Z^0}\ten_{\pr_2^{-1}\sO_{Z^0}}\pr_2^{-1}\sO_Z)$. This follows because $\Gamma(Z^0 \by Z^0, \sO_{Z^0 \by Z^0})$ is Noetherian, being dagger affinoid, while the modules $\Gamma(Z^0\by Z^0, \sO_{Z \by Z, n})$ and $\Gamma(Z^0, \sO_{Z, n})$ are coherent, hence finitely generated. A levelwise finitely generated free resolution can thus be constructed inductively, adding generators to give isomorphisms on $\H_{<n}$ and a surjection on $\H_n$. 
 
For any dg Stein space $X=(X^0, \sO_X)$ and any Stein subdomain $U \Subset X^0$ (as in Example \ref{dgsteinex}), the topological dg  algebra $A:= \Gamma(U,\sO_X)$ thus has the property that $A$ is almost perfect as an $A^{\hat{e}}$-module, since $A^{\hat{e}} \cong \Gamma(U\by U, \sO_{X \by X})$ and by hypothesis we have a compact Stein or dagger affinoid $Z$ with $U \subset Z \subset X^0$, so we can apply flat base change along $\Gamma(Z \by Z, \sO_{X^0 \by X^0}) \to \Gamma(U \by U, \sO_{X^0 \by X^0})$  to adapt the results of the previous paragraph.

If in addition $U$ is a globally finitely generated Stein space, then $\cO^{\an}(U)$ and hence $\Gamma(U, \H_0\sO_X)$ are quotients of some $\cO^{\an}(\bA^n)$ by finitely generated ideals, so Example \ref{EFCexNC} implies that they are quotients of $\cF_n$ by closed $2$-sided ideals which are finitely generated as $\cF_n^{\hat{e}}$-modules. Thus $\Gamma(U, \sO_X)$ satisfies the conditions of Proposition \ref{resnprop}, meaning any dg Stein space admits an open cover by objects satisfying those conditions.
\end{example}



\begin{examples}\label{sq0ctrex}
To see the necessity of the finiteness conditions in  Proposition \ref{resnprop}, consider $B:= \bK \oplus V$, for $V$ a Fr\'echet space equipped with zero multiplication. Then $B^{e,\FEFC} \cong B \ten B$, the abstract tensor product, and this is quasi-isomorphic to  $\tilde{B}^{e,\FEFC}$ for any cofibrant replacement, as can be seen by writing the FEFC algebra underlying $B$ as the filtered colimit $\LLim_U(\bK \oplus U)$ of FEFC algebras, where $U$ ranges over all finite-dimensional subspaces of $V$. On the other hand, $B^{\hat{e}} \cong B\hten_{\pi} B$, so the constructions only agree when $V$ is finite-dimensional. 

However, in contrast to the situation for EFC and Stein algebras, if $A$ satisfies the conditions of Proposition \ref{resnprop} and $M$ is a finitely presented $A^{\hat{e}}$-module, then even the trivial square-zero extension $A \oplus M$ of $A$ seldom satisfies the conditions of that proposition. 
 
If $M$ has $n$ generators as an $A^{\hat{e}}$-module and $A$ is a quotient of $\cF_m$, then $A \oplus M$ can be written as a quotient of $\cF_{m+n}$, but the kernel is not usually finitely generated. The difficulty arises because the Fr\'echet algebra freely generated by $A$ and $M$ is  a completion of $\bigoplus_{i \ge 0} M^{\hten_A i}$, so we need to quotient by the closed $2$-sided ideal generated by the  $A^{\hat{e}}$-module $M\hten_AM$, which is seldom finitely generated.

Finite generation of $M\hten_AM$  is a property  not obviously closed under taking direct sums, so seems unlikely to give a well-behaved category of modules. Finiteness of $M$ as a left $A$-module or as a right $A$-module is sufficient to satisfy the condition that $M\hten_AM$ be finitely generated as an $A^{\hat{e}}$-module, which is why we do not encounter this phenomenon in the commutative setting. A much larger compliant  example is given when $A= B\hten B$ by taking $M= B\hten B \hten B$, with left action on the first two factors and right action on the last two, since then $M\hten_AM =  B\hten B \hten B\hten B =A\hten A$. 
\end{examples}


\begin{remark}[Non-commutative Stein spaces and dagger affinoids]\label{NCSteinrmk}
It seems plausible that in many cases the conditions of Proposition \ref{resnprop} should be closed under coproducts in the category of LDMC dg  algebras, the issue being closure of the $\cF_{m+n}\hten \cF_{m+n}^{\op}$-submodule of $\cF_{m+n}$ generated by closed $2$-sided ideals $I \subset \cF_m$ and $J \subset \cF_n$. 

The free non-commutative polydisks $\cF(\bD^n_R)$ of \cite[\S 7.2]{pirkovskiiHFG} are examples of this construction, being defined as  coproducts of the Stein algebras of holomorphic functions on open discs. If they  satisfy the conditions of Proposition \ref{resnprop},  their FEFC cotangent complexes will simply be free modules of rank $n$. Beware that they are not given by the na\"ive definition in terms of the norms $\|-\|_{\rho}$ of \S \ref{FEFCalgsn}  for $\rho >R$,  which do not produce a nuclear space. 

The latter phenomenon leads to apparently intractable issues in trying to define non-commutative dagger algebras meaningfully. We might like submultiplicative seminorms on $\cF_n$ to be analogous to  compact subspaces, as happens for $\cO^{\an}(\bA^n)$. However, the dense  morphism between the completions of $\cF_n$ with respect to norms $\|-\|_{\rho}$ and $\|-\|_{\rho'}$ is only nuclear for $\rho'>n\rho$, because the number of non-commutative monomials of degree $d$ is $n^d$; the corresponding expression in the commutative case is the polynomial $\binom{n+d-1}{d}$ in $d$, which is why we have nuclear morphisms whenever $\rho'>\rho$ in that case. Similar phenomena arise for any submultiplicative norms on $\cF_n$, in contrast to $\cO^{\an}(\bA^n)$ where every submultiplicative norm is a limit of relatively nuclear  submultiplicative norms. Consequently, we have a dearth of nuclear DF completions of $\cF_n$, which is what we would expect  non-commutative dagger algebras to be. Na\"ive constructions such as $\LLim_{R>R_0} \cF(\bD^n_R)$  are only ind-Fr\'echet, not DF or ind-Banach. 

There is the further setback that Noetherianity is a very demanding condition in the non-commutative setting, yet is key to the
dagger affinoid theory and  analytification described in \S \ref{affinoidsn}, ensuring for instance that differentials are strict; it is also key in GAGA results (cf. Remark \ref{esssurjrmk}). We might tender a definition of non-commutative dagger affinoids as  topological Noetherian  algebras arising as  nuclear DF completions of $\cF_n$, i.e. as direct limits  of systems of  completions with respect to submultiplicative seminorms  for which the transition maps are all nuclear.  
Such algebras would have to exhibit growth similar to commutative algebras, and 
it is hard to envisage many such objects which are close enough to being cofibrant to have  projective cotangent complexes, the only obvious examples being  finite-dimensional semisimple algebras and their tensor products  with algebras of functions on one-dimensional smooth dagger affinoid spaces. 
\end{remark}

\subsection{Homogeneity and cotangent complexes}

\begin{definition}\label{dgfrdef}
Given a commutative Fr\'echet algebra $R$, write  $dg_+\cF r\Alg(R) \subset dg_+\hat{\Tc}\Alg_{R}$ and  $dg_+\cF r\CAlg(R) \subset dg_+\hat{\Tc}\CAlg_{R}$  for the full subcategories of objects $A_{\bt}$ for which the spaces $A_n$ are all Fr\'echet. When $R$ is nuclear, write  $dg_+\cN\cF r\Alg(R) \subset dg_+\cF r\Alg(R)$ and $dg_+\cN\cF r\CAlg(R) \subset dg_+\cF r\CAlg(R)$ for the full subcategories of objects $A_{\bt}$ for which the $R$-modules $A_n$ are all nuclear in the sense of \cite[Definition 1.2]{freitag}.

For a commutative algebra $R$, write $dg_+\Alg(R)$ for the category of dg associative $R$-algebras concentrated in non-negative chain degrees, and  $dg_+\CAlg(R)$ for the subcategory of graded-commutative algebras.

\end{definition}
%

\begin{definition}\label{sq0def}
 Say that a morphism $A \to B$ in any of the categories  in Definition \ref{dgfrdef} is a square-zero extension if the maps $A_n \to B_n$ are all surjections and the product of any elements in the kernel is $0$. 
\end{definition}


\begin{definition}\label{hhgsdef}
We say that a 
functor $F$ from any of the categories above 
to a model category 
is homogeneous if  for all square-zero extensions $A \to B$ and all maps $C \to B$, the natural map
$$
F(A\by_BC) \to F(A)\by^h_{F(B)}F(C)
$$
to the homotopy fibre product is a weak  equivalence.
\end{definition}

\begin{lemma}\label{obslemma}
 Given a w.e.-preserving homogeneous functor on any of the categories above and a square-zero extension $e\co A \to B$ with kernel $I$, there is a natural map $o_e \co F(B) \to F(B \oplus I_{[-1]})$ in the homotopy category over $F(B)$, with
 \[
  F(A) \simeq F(B)\by^h_{o_e, F(B \oplus I_{[-1]}),F(i_0)}F(B),
 \]
where $i_0(b):=(b,0) \in B \oplus I_{[-1]}$.
\end{lemma}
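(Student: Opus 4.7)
The plan is to realise $A$ as a strict pullback in our algebra category, by constructing an auxiliary dg algebra $\tilde{A}$ fitting into a diagram
\[
B \xleftarrow{\sim} \tilde{A} \xrightarrow{p} B \oplus I_{[-1]} \xleftarrow{i_0} B
\]
in which $p$ is a square-zero extension, the projection $\tilde{A} \to B$ is a quasi-isomorphism, and $A$ arises strictly as the pullback of $\tilde{A}$ along $i_0$. The conclusion will then follow by applying homogeneity of $F$ and inverting the weak equivalence $F(\tilde A) \xrightarrow{\sim} F(B)$.

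Concretely, I take $\tilde{A} := A \oplus I_{[-1]}$ as a graded object, with $I_{[-1]}$ a square-zero ideal carrying the $A$-bimodule structure induced by $A \to B$ and the natural $B$-bimodule structure on $I_{[-1]}$, and I equip it with the differential
\[
\delta(a,y) := (\delta a + \iota(y),\ -\delta y),
\]
where $\iota \co I_{[-1]} \to A$ is the degree $-1$ map sending $y \in (I_{[-1]})_n = I_{n-1}$ to the corresponding element in $A_{n-1}$. A direct computation using that $\iota$ anti-commutes with differentials and that $I\cdot I = 0$ in $A$ verifies $\delta^2 = 0$ and the graded Leibniz rule under standard Koszul sign conventions. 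The map $\tilde{A} \to B$, $(a,y) \mapsto \bar a$, then has kernel $I \oplus I_{[-1]}$ endowed with the cone differential on $\mathrm{id}_I$, hence is acyclic; so this projection is a weak equivalence. The map $p\co\tilde A \to B\oplus I_{[-1]}$, $(a,y) \mapsto (\bar a, y)$, has kernel the square-zero ideal $I$, so is itself a square-zero extension. Finally the strict pullback $\tilde{A}\by_{B\oplus I_{[-1]}} B$ along $i_0$ consists of the pairs $((a,0),\bar a)$, and is canonically isomorphic to $A$ as a dg algebra.

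Homogeneity of $F$ applied to this pullback along the square-zero extension $p$ then yields $F(A) \simeq F(\tilde A) \by^h_{F(B\oplus I_{[-1]})} F(B)$, and $F(\tilde{A}) \xrightarrow{\sim} F(B)$ by w.e.-preservation of $F$. Defining $o_e$ as the resulting zigzag $F(B) \xleftarrow{\sim} F(\tilde A) \xrightarrow{F(p)} F(B\oplus I_{[-1]})$ in the homotopy category provides the required obstruction map; since $p$ composed with the projection $B\oplus I_{[-1]}\to B$ equals $\tilde A \to B$, the composite of $o_e$ with $F(\mathrm{pr})$ is the identity on $F(B)$, so $o_e$ indeed lives over $F(B)$. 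Naturality in $e$ follows because $\tilde A$ is functorial in $A$.

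The main anticipated obstacle is checking that $\tilde A$ belongs to each admissible category. In the FEFC setting this uses the Beck module correspondence of Lemma \ref{bimodlemma}, realising $\tilde A$ as the square-zero extension of $A$ by the $A^{e,\FEFC}$-module $I_{[-1]}$, together with a direct verification that the perturbation $\iota$ of the differential respects the FEFC operations (immediate since $\iota$ is $A$-bilinear and lands in a square-zero ideal). In the topological, Fr\'echet and nuclear Fr\'echet cases one uses that $I$ is a closed subcomplex of $A$, so $I_{[-1]}$ inherits the required topological or Fr\'echet structure.
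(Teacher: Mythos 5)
Your proposal is correct and takes essentially the same approach as the paper: your $\tilde A = A \oplus I_{[-1]}$ with the twisted differential $\delta(a,y)=(\delta a+\iota(y),-\delta y)$ is precisely the mapping cone $\cone(I\to A)$ that the paper uses (denoted $\tilde B$ there), and the subsequent steps — the quasi-isomorphism $\tilde A\to B$, the square-zero surjection $\tilde A\to B\oplus I_{[-1]}$, the strict pullback identification with $A$, homogeneity, and w.e.-invariance — match the paper's argument line by line. You simply spell out the verifications (differential squares to zero, category membership) that the paper leaves implicit.
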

\begin{proof}
 This is a standard obstruction theory argument. We let $\tilde{B}:=\cone(I \to A)$ equipped with the obvious multiplication,  noting that the natural map $\tilde{B} \to B$ is an abstract quasi-isomorphism. The obstruction map $o_e$ is then induced by the morphism $\tilde{B} \to 
B \oplus I_{[-1]}$. Since that map is surjective with square-zero kernel $I$, homogeneity gives
\[
 F(A) \simeq F(\tilde{B})\by^h_{o_e, F(B \oplus I_{[-1]}),F(i_0)}F(B),
\]
since $A \cong \tilde{B}\by_{B \oplus I_{[-1]}}B$. Because $F$ is w.e.-preserving, we then complete the prof by identifying $F(B)$ with $F(\tilde{B})$.
 \end{proof}

 
 \begin{definition}\label{Tdef}
Given a w.e.-preserving 
homogeneous  functor $F$ from any of the categories in Definition \ref{dgfrdef}
%
to simplicial sets,  an object $A$ of the category   and a point $x \in F(A)$, we define the tangent functor $T_xF$
by
$$
T_xF(M):= F(A\oplus M)\by^h_{F(A)}\{x\}.
$$

Here, $A \oplus M$ is a Beck module, i.e.  an abelian group object in the slice category over $A$. Explicitly, this corresponds to $M$ in each of the cases above being a non-negatively graded chain complex in the relevant category of (topological) vector spaces, equipped with zero multiplication and being either an $A$-module (for  $dg_+\CAlg(\bK)$,  $dg_+\hat{\Tc}\CAlg_{R}$,  or $dg_+\cF r\CAlg(\bK)$) or $A$-bimodule (for  $dg_+\Alg(\bK)$,  $dg_+\hat{\Tc}\Alg_{R}$,  or $dg_+\cF r\Alg(\bK)$).

\end{definition}

 \begin{definition}\label{Fcotdef}
 We say that a w.e.-preserving homogeneous  functor  $F$ from $dg_+\cF r\Alg(\bK)$ or $dg_+\cN\cF r\Alg(\bK)$ (resp. $dg_+\cF r\CAlg(\bK)$ or $dg_+\cN\cF r\CAlg(\bK)$) to simplicial sets has a perfect cotangent complex $\bL^{F,x}$ at $x$ if the functor  $T_x(F)$ is homotopically representable by a perfect $A^{\hat{e}}$-module (resp. $A$-module) $\bL^{F,x}$ in chain complexes, in the sense that the simplicial mapping space 
 \[
N^{-1}\tau_{\ge 0} \oR\HHom_{A^{\hat{e}}}(\bL^{F,x},-) \quad\text{(resp.}\quad N^{-1}\tau_{\ge 0} \oR\HHom_{A}(\bL^{F,x},-) \text{ )}
 \]
is weakly equivalent to $T_xF$, as a functor on non-negatively graded Fr\'echet or nuclear Fr\'echet $A$-bimodules (resp. Fr\'echet or nuclear Fr\'echet  $A$-modules) in chain complexes. Here $\oR\HHom$ is taken in the abstract category of modules in complexes, forgetting the topological structure on the target, and $A^{\hat{e}}= A\hten_{\pi}A^{\op}$ as in Definition \ref{hatedef}.

We then write $\bT_xF(M):=  \oR\HHom_{A^{\hat{e}}}(\bL^{F,x},M)$, the non-truncated tangent complex; this can equivalently be obtained as the spectrum $\{T_xF(M_{[-n]})\}_n$, identifying $ T_xF(M_{[-n]}) $ with the loop space of $ T_xF(M_{[-n-1]}) $ by homogeneity.

We make the same definitions for functors on $dg_+\Alg(\bK)$ and $dg_+\CAlg(\bK)$, except that in the former case the complex is a module over the algebraic tensor product $A^e:= A\ten A^{\op}$ and defined accordingly.

 \end{definition}

 The following lemma can be interpreted as saying that for a functor on dg nuclear Fr\'echet algebras, the points with perfect cotangent complexes form an open subfunctor.
 \begin{lemma}\label{opencotlemma}
 Take a w.e.-preserving homogeneous  functor  $F \co dg_+\cN\cF r\Alg(\bK)\to s\Set$, a square-zero extension $A \to B$ in $dg_+\cN\cF r\Alg(\bK)$ and a point $x \in F(A)$. If $F$ has a perfect cotangent complex at the image $\bar{x} \in F(B)$ of $x$, then it has a perfect cotangent complex at $x$.
 \end{lemma}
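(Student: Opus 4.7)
The plan is to reduce the computation of $T_xF$ on arbitrary $A$-bimodules to that of $T_{\bar x}F$ on $B$-bimodules via the square-zero structure of $A \to B$, and then to assemble a perfect $A^{\hat{e}}$-module representing $T_xF$.

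First, for any $B$-bimodule $N$ in non-negatively graded nuclear Fr\'echet chain complexes, viewed as an $A$-bimodule via the surjection $A \twoheadrightarrow B$, I observe that the square of dg algebras
\[
\begin{array}{ccc}
A \oplus N & \longrightarrow & B \oplus N \\
\big\downarrow & & \big\downarrow \\
A & \longrightarrow & B
\end{array}
\]
is Cartesian: $A \oplus N \cong A \times_B (B \oplus N)$ because the $A$-bimodule structure on $N$ is restricted from its $B$-bimodule structure. The vertical maps are trivial square-zero extensions, and the bottom row is the given square-zero extension, so homogeneity of $F$ yields $F(A \oplus N) \simeq F(A) \times^h_{F(B)} F(B \oplus N)$; taking homotopy fibres over $x$ and $\bar x$ produces a natural equivalence
\[
T_xF(N) \simeq T_{\bar x}F(N) \simeq \oR\HHom_{B^{\hat{e}}}(\bL^{F,\bar x}, N).
\]

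Next, for a general $A$-bimodule $M$, I would use the derived base-change fibre sequence of $A$-bimodules
\[
J \otimes^{\oL}_{A^{\hat{e}}} M \longrightarrow M \longrightarrow \phi^{*}\phi_{!}M,
\]
where $\phi \co A^{\hat{e}} \twoheadrightarrow B^{\hat{e}}$, $\phi^{*}$ is restriction, $\phi_{!}M := B^{\hat{e}} \otimes^{\oL}_{A^{\hat{e}}} M$ is a $B$-bimodule, and $J := \ker \phi$. A direct computation gives $J = I \hten A + A \hten I$, whence $J^{3} = 0$ since $I^{2} = 0$; iterating the fibre sequence produces a finite three-step tower on $M$ whose successive cofibres are restrictions of $B^{\hat{e}}$-modules of the form $\phi_{!}M$, $(J/J^{2}) \otimes^{\oL}_{B^{\hat{e}}} \phi_{!}M$ and $J^{2} \otimes^{\oL}_{B^{\hat{e}}} \phi_{!}M$ (using that $J/J^{2}$ and $J^{2}$ are naturally $B^{\hat{e}}$-modules). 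Homogeneity of $F$, applied to the Cartesian squares $A \oplus L \cong (A \oplus M') \times_{A \oplus N'} A$ coming from short exact sequences $0 \to L \to M' \to N' \to 0$ of $A$-bimodules, ensures that $T_xF$ carries this to a fibre sequence of spaces.

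Combining the two steps presents $T_xF(M)$ as a finite iterated fibre of copies of $\oR\HHom_{B^{\hat{e}}}(\bL^{F,\bar x}, -)$ evaluated on $B^{\hat{e}}$-modules built functorially from $M$, and hence as $\oR\HHom_{A^{\hat{e}}}(\bL^{F,x}, M)$ for an $A^{\hat{e}}$-module $\bL^{F,x}$ assembled as the corresponding finite extension of tensor products of $\bL^{F,\bar x}$ with the graded pieces of $J$; equivalently, one can lift $\bL^{F,\bar x}$ to a finite complex of finitely generated free $A^{\hat{e}}$-modules using the nilpotent surjection $\phi$, with the obstructions to lifting the differentials forced to vanish by virtue of the point $x$ existing as a lift of $\bar x$ (via Lemma \ref{obslemma}). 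The resulting $\bL^{F,x}$ is perfect because it is a retract of a finite complex of finite free $A^{\hat{e}}$-modules. The main technical obstacle is producing the obstruction-vanishing argument precisely: one needs to identify the deformation-theoretic obstruction class for the module with the obstruction from Lemma \ref{obslemma} for the point-lifting $\bar x \leadsto x$ (so that the latter, trivialised by hypothesis, forces the former to vanish), while keeping careful track of nuclearity and boundedness in the completed tensor products of the Fr\'echet setting.
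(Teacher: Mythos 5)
Your overall plan is the same as the paper's, and your Step 1 is exactly the key observation: using the Cartesian square
\[
\begin{CD}
A \oplus N @>>> B \oplus N \\ @VVV @VVV \\ A @>>> B
\end{CD}
\]
together with homogeneity to deduce $T_xF(N) \simeq T_{\bar x}F(N)$ for any $B$-bimodule $N$. The paper invokes precisely this fact, stated as ``$\Fil^1 A^{\hat e}$ acts trivially on $\gr^j_{\Fil}M$, so $\bT_xF(\gr^j_{\Fil}M) \simeq \bT_{\bar x}F(\gr^j_{\Fil}M)$''.

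However, your Step 2 has a genuine gap. The iterated derived base-change tower does \emph{not} terminate after three steps: the fibre of the $k$-th stage is $J^{\otimes^{\oL}_{A^{\hat e}} k}\otimes^{\oL}_{A^{\hat e}} M$, and $J^{\otimes^{\oL}3}$ need not vanish even though $J^3 = 0$, since derived tensor powers of an ideal pick up $\Tor$ terms that the ordinary product $J^3$ does not. Relatedly, the claimed cofibres are wrong: the $k$-th cofibre is $B^{\hat e}\otimes^{\oL}_{A^{\hat e}}(J^{\otimes^{\oL}k}\otimes^{\oL}M)$, and $B^{\hat e}\otimes^{\oL}_{A^{\hat e}}J$ is not $J/J^2$ (it has $\pi_0 = J/J^2$ but nontrivial higher homotopy coming from $\Tor^{A^{\hat e}}_i(B^{\hat e},J)$). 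So the tower neither has three steps nor the layers you claim.

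The paper avoids this by using an \emph{underived, honest} three-step filtration on $M$ by closed $A^{\hat e}$-submodules:
\[
\Fil^1 M = \ker\bigl(M \to B\hten_A M\hten_A B\bigr),\quad
\Fil^2 M = \ker\bigl(M \to (M\hten_A B)\times(B\hten_A M)\bigr),\quad \Fil^3 M = 0,
\]
whose genuine exactness and identification of $\gr_{\Fil}^j A^{\hat e}$ (as $B^{\hat e}$, $(I\hten B)\times(B\hten I)$, $I\hten I$) is supplied by Lemma~\ref{nucexactlemma1} on strictness and completed tensor products of Fr\'echet spaces. One then gets a (finite) spectral sequence comparing $\bT_xF(A^{\hat e})\otimes^{\oL}_{A^{\hat e}}M$ with $\bT_xF(M)$ on the graded pieces, where both sides reduce to $\bT_{\bar x}F$ of $B^{\hat e}$-modules. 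Finally, rather than trying to \emph{lift} $\bL^{F,\bar x}$ along the nilpotent surjection and hoping that an obstruction class vanishes (which you admit you have not carried out), the paper simply \emph{takes} the dual of $\bT_xF(A^{\hat e})$ (which exists unconditionally once homogeneity has been used) as the candidate $\bL^{F,x}$, and verifies via the spectral sequence that it has the required representability and perfection properties, with perfection following by another spectral-sequence comparison of $\bT_xF(A^{\hat e})$ with its double dual. So: right idea, but the derived $J$-adic tower cannot replace the genuine filtration, and the final step needs the double-dual argument rather than an obstruction-vanishing lift.
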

\begin{proof}
It suffices to show that $ \bT_xF(A^{\hat{e}})$ is perfect as an $A^{\hat{e}}$-module, with the natural map $\bT_xF(A^{\hat{e}})\ten^{\oL}_{A^{\hat{e}}}M \to  \bT_xF(M)$ a quasi-isomorphism for all $A^{\hat{e}}$-modules $M$, since then the cotangent complex must be given by the dual of $ \bT_xF(A^{\hat{e}})$.

We have a decreasing filtration on $M$ by closed submodules, with $\Fil^0M=M$, $\Fil^1M= \ker(M \to B\hten_AM\hten_AB)$, $\Fil^2M= \ker(M \to (M\hten_AB) \by (B\hten_AM))$ and $\Fil^3M=0$. Letting $I=\ker(A \to B)$, Lemma \ref{nucexactlemma1} 
 implies that $\Fil^2A^{\hat{e}}\cong I \hten I$, with $\gr_{\Fil}^1 A^{\hat{e}} \cong (I\hten B) \by (B \hten I)$ and $\gr _{\Fil}^0 A^{\hat{e}} \cong B^{\hat{e}}$. 
 
Because $\Fil^1A^{\hat{e}}$ acts trivially on $\gr_{\Fil}^jM$, the latter is naturally a  $B^{\hat{e}}$-module, so $ \bT_xF(\gr_{\Fil}^jM) \simeq \bT_{\bar{x}}F(\gr_{\Fil}^jM)$. Since $F$ has a perfect cotangent complex at $\bar{x}$, we thus have $\bT_xF(\gr_{\Fil}^jM) \simeq  \bT_{\bar{x}}F(B^{\hat{e}})\ten^{\oL}_{B^{\hat{e}}}\gr_{\Fil}^jM$. 

In particular, this implies that  $\bT_xF(\gr_{\Fil}^jA^{\hat{e}}) \simeq  \bT_{\bar{x}}F(B^{\hat{e}})\ten^{\oL}_{B^{\hat{e}}}\gr_{\Fil}^jA^{\hat{e}}$, so 
\[
 (\bT_xF(\gr_{\Fil}^*A^{\hat{e}})\ten^{\oL}_{\gr_{\Fil}^*A^{\hat{e}}}\gr_{\Fil}^*M \simeq \bT_{\bar{x}}F(B^{\hat{e}})\ten^{\oL}_{B^{\hat{e}}}\gr_{\Fil}^*M \simeq \bT_xF(\gr_{\Fil}^*M).
\]

Now, since $F$ is w.e.-preserving, we have  convergent spectral sequences
 \begin{align*}
  \pi_i\bT_xF(\gr_{\Fil}^jM) &\abuts \pi_{i}\bT_xF(M),\\
 \pi_i((\bT_xF(\gr_{\Fil}^*A^{\hat{e}})\ten^{\oL}_{\gr_{\Fil}^*A^{\hat{e}}}\gr_{\Fil}^*M)^j &\abuts \pi_i (\bT_xF(A^{\hat{e}})\ten^{\oL}_{A^{\hat{e}}}M),
 \end{align*}
the latter induced by the Moore spectral sequence. The equivalences above thus induce an equivalence   $\bT_xF(A^{\hat{e}})\ten^{\oL}_{A^{\hat{e}}}M \to  \bT_xF(M)$.

It only remains to show that $ \bT_xF(A^{\hat{e}})$ is perfect as an $A^{\hat{e}}$-module, but this follows by a similar spectral sequence argument applied to the map to its double dual, since $ \gr_{\Fil}^*\bT_xF(A^{\hat{e}}) \simeq \bT_{\bar{x}}F(B^{\hat{e}})\ten^{\oL}_{B^{\hat{e}}}\gr_{\Fil}^*A^{\hat{e}}$ is perfect as a $\gr_{\Fil}^*A^{\hat{e}}$-module.
  \end{proof}

\subsection{Examples of derived moduli functors}\label{moduliexsn} 

%




The following  construction gives  a large class of examples of non-commutative derived analytic moduli functors:
\begin{example}\label{CXex}
 Given a category ${\bI}$,  a presheaf $\sC$ of commutative Fr\'echet $\bK$-algebras  on ${\bI}$ for $\bK$ a complete valued field, and a functor $F$ from the category $dg_+\Alg(\bK)$ (resp. $dg_+\CAlg(\bK)$) to simplicial sets, taking homotopy limits over ${\bI}$ gives us a functor
 \[
A \mapsto \ho\Lim_{\bI^{\op}} F(\sC\hten_{\eps} A)
 \]
from $dg_+\cF r\Alg(\bK)$ (resp. $dg_+\cF r\CAlg(\bK)$), in the notation of Definition \ref{dgfrdef}, to simplicial sets, defined up to natural weak equivalence, where $\hten_{\eps}$ is applied separately in each chain degree. 

Our motivating example of this form is when ${\bI}$ is the  affine pro-\'etale site of a scheme $X$ and $\sC$ is the sheaf $\uline{\bK}_X$ of \cite[Example 4.2.10]{BhattScholzeProEtale}, which sends $U$ to the Banach $\bK$-algebra of continuous $\bK$-valued functions on the pro-finite set  $\pi_0U$  of components of  the quasi-compact quasi-separated scheme $U$. By \cite[Theorem 44.1]{treves}, we then have a natural isomorphism $\uline{\bK}_X \hten_{\eps} A \cong \uline{A}_X$ for $A \in dg_+\cF r\Alg(\bK)$ since the spaces $A_n$ are complete. The resulting functor can be thought of as moduli of $F$-valued sheaves on the pro-\'etale site of $X$, so can be used to study $\ell$-adic local systems and Galois representations along similar lines to the commutative moduli functors of  \cite{PTLag}.
\end{example}
%
%
%
%

\begin{lemma}\label{CXlemma1}
 In the setting of Example \ref{CXex}, when the functor $F$ is w.e.-preserving or homogeneous, the same is true of the functor $\ho\Lim_{\bI^{\op}} F(\sC\hten -)$ on $dg_+\cN\cF r\Alg(\bK)$ (resp. $dg_+\cN\cF r\CAlg(\bK)$). If moreover $\sC$ is nuclear, then the functor  $\ho\Lim_{\bI^{\op}} F(\sC\hten -)$ on $dg_+\cF r\Alg(\bK)$ (resp. $dg_+\cF r\CAlg(\bK)$) is w.e.-preserving or homogeneous respectively.
 \end{lemma}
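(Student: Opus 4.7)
The plan is to verify both properties pointwise for each $U \in \bI$ via the functor $A \mapsto F(\sC(U)\hten A)$, and then pass to the homotopy limit, exploiting that $\ho\Lim_{\bI^{\op}}$ preserves both levelwise weak equivalences and homotopy fibre squares.

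For preservation of abstract quasi-isomorphisms, given a quasi-isomorphism $f\co A \to B$, the assignment $\sC(U)\hten-$ is applied levelwise in the chain direction. In the case $A,B \in dg_+\cN\cF r\Alg(\bK)$ each $A_n$ is nuclear Fr\'echet, and $\sC(U)$ is Fr\'echet, so the nuclear hypothesis of Lemma \ref{nucexactlemma} holds; in the case $A,B \in dg_+\cF r\Alg(\bK)$ with $\sC$ nuclear, the other nuclear hypothesis of the same lemma holds. In either case, Lemma \ref{nucexactlemma} ensures that $\sC(U)\hten A_\bt \to \sC(U)\hten B_\bt$ is an abstract quasi-isomorphism of dg algebras, and the hypothesis on $F$ then gives a weak equivalence $F(\sC(U)\hten A) \to F(\sC(U)\hten B)$ for each $U$, which persists after $\ho\Lim_{\bI^{\op}}$.

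For homogeneity, given a square-zero extension $A \to B$ with kernel $I$ and a morphism $C \to B$, the open mapping theorem for Fr\'echet spaces makes $A \to B$ and hence $A \oplus C \to B$ strict surjections in each chain degree, yielding strict short exact sequences
\[
 0 \to I_n \to A_n \to B_n \to 0, \qquad 0 \to (A\by_B C)_n \to A_n \oplus C_n \to B_n \to 0.
\]
Viewing each as a bounded chain complex with zero homology and closed differential image, the nuclearity hypothesis lets me apply Lemma \ref{nucexactlemma1} to deduce that $\sC(U)\hten -$ preserves exactness in each chain degree. Consequently $\sC(U)\hten(A\by_B C) \cong (\sC(U)\hten A)\by_{\sC(U)\hten B}(\sC(U)\hten C)$ as objects of $dg_+\Alg(\bK)$ (both are the same set-theoretic kernel), and $\sC(U)\hten A \to \sC(U)\hten B$ is a square-zero extension with kernel $\sC(U)\hten I$, since the multiplication $(\sC(U)\hten I)\cdot(\sC(U)\hten I)$ factors through $\sC(U)\hten I^2 = 0$. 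Homogeneity of $F$ then yields
\[
 F(\sC(U)\hten(A\by_B C)) \simeq F(\sC(U)\hten A)\by^h_{F(\sC(U)\hten B)}F(\sC(U)\hten C),
\]
and commuting $\ho\Lim_{\bI^{\op}}$ with homotopy fibre products gives the homogeneity of the composite functor. The commutative case is identical.

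The main obstacle is the compatibility of $\sC(U)\hten-$ with the topological fibre product, which rests on strict exactness preservation; this is exactly what Lemma \ref{nucexactlemma1} provides, with the open mapping theorem supplying the strictness of the maps in a square-zero extension. Everything else is bookkeeping through the homotopy limit.
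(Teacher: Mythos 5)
Your proof is correct and follows essentially the same route as the paper: Lemma \ref{nucexactlemma} for preservation of quasi-isomorphisms, Lemma \ref{nucexactlemma1} for the fibre-product compatibility, and then commuting with $\ho\Lim_{\bI^{\op}}$. The only cosmetic difference is that you establish $\sC(U)\hten(A\by_B C)\cong(\sC(U)\hten A)\by_{\sC(U)\hten B}(\sC(U)\hten C)$ by tensoring the defining short exact sequence $0\to A\by_BC\to A\oplus C\to B\to 0$ directly, whereas the paper identifies both sides as square-zero extensions of $\sC\hten C$ with the same kernel $\sC\hten I$; your explicit appeal to the open mapping theorem is also redundant, since Lemma \ref{nucexactlemma1} only needs closed images, which hold automatically here, and the Banach--Schauder argument is already inside its proof.
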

\begin{proof}
We give the  proof of the non-commutative statements; the commutative arguments are identical.

 If $A \to B$ is a quasi-isomorphism in  $dg_+\cN\cF r\Alg(\bK)$, then  $\sC\hten A \to \sC\hten B$ is also a quasi-isomorphism, by  Lemma \ref{nucexactlemma}, so $F(\sC\hten A) \to F(\sC\hten B)$ is a weak equivalence whenever $F$ is w.e.-preserving. 
 
 If $A \to B$ is a square-zero extension in $dg_+\cN\cF r\Alg(\bK)$ with kernel $I$, then $\sC\hten A \to \sC\hten B$ is also a square-zero extension, with kernel $\sC\hten I$,  by applying  Lemma \ref{nucexactlemma} to the complexes $I_n \to A_n \to B_n$. For any morphism 
$C \to B$ in  $dg_+\cN\cF r\Alg(\bK)$, we then have a small extension $A\by_BC \to C$ in $dg_+\cN\cF r\Alg(\bK)$ with kernel $I$ (noting that products and closed subspaces of nuclear spaces are nuclear), so $\sC\hten (A\by_BC) \to \sC\hten C$ is also a square-zero extension, with kernel $\sC\hten I$. Thus $\sC\hten (A\by_BC) \cong (\sC\hten A)\by_{(\sC\hten B)}(\sC\hten C)$, so $F(\sC\hten (A\by_BC)) \simeq  F(\sC\hten A)\by^h_{F(\sC\hten B)}F(\sC\hten C)$ whenever $F$ is homogeneous.

When $\sC$ is nuclear, the same arguments hold with $dg_+\cF r\Alg(\bK)$ replacing $dg_+\cN\cF r\Alg(\bK)$ throughout.
\end{proof}

\begin{example}\label{CXex2} 
 Given a category ${\bI}$,  a presheaf $\sC^{\bt}$ of dg algebras in Fr\'echet $\bK$-spaces on ${\bI}$, concentrated in non-negative cochain degrees, for $\bK$ a complete valued field, and a functor $F$ from the category $dg_+\Alg(\bK)$ to simplicial sets, 
 applying the construction $D_*$ of Definition \ref{Dlowerdef} below 
 gives us
 a functor
 \[
A \mapsto \ho\Lim_{\bI^{\op}} D_*F(\sC^{\bt}\hten_{\eps} A) = \ho\Lim_{n \in \Delta}\ho\Lim_{\bI^{\op}} F((D^n\sC^{\bt})\hten_{\eps} A)
 \]
from $dg_+\cF r\Alg(\bK)$ to simplicial sets, defined up to natural weak equivalence, where $\hten_{\eps}$ is applied separately in each  degree and $D$ is the cosimplicial denormalisation functor. 

We can regard this as a special case of Example \ref{CXex}, since we have taken the homotopy limit of a diagram indexed by ${\bI}^{\op} \by \Delta$.

Our motivating example of this form is when $X$ is a manifold, ${\bI}$ a countable basis of contractible open submanifolds,  and $\sC^{\bt}_{\bI}$ is the sheaf $\sA^{\bt}_X$ of smooth  differential forms, equipped with its de Rham differential.  The Fréchet algebra $\sA^0_X$ is nuclear, and by  \cite[Theorem 44.1]{treves}, $\sA^0_X \hten B$ is isomorphic to the complex of smooth $B$-valued functions on $X$ for $B \in dg_+\cF r\Alg(\bK)$,  since the spaces $B_n$ are complete. As an immediate consequence,  $\sA^n_X \hten B$ is isomorphic to the space of smooth $B$-valued $n$-forms on $X$. 

When $F$ parametrises perfect complexes, this construction parametrises $\sA^0_X \hten B$-modules with flat $\infty$-connection. Specifically, writing $\cD_{dg}(A)$ for the dg category of all cofibrant right $A$-modules in complexes, the proof of Lemma \ref{doublecomplexlemma} gives an equivalence between $D_*\cD_{dg}(\sA^{\bt}_X \hten B)$ and the dg category of $\sA^{\bt}_X \hten B$-modules $M^{\bt}_{\bt}$ in double complexes which are  homotopy Cartesian in the sense that the maps $M^0_{\bt}\ten_{ \sA^0_X \hten B}(\sA^n_X \hten B) \to M^n_{\bt}$ are weak equivalences for all $n$.

\end{example}

Lemma \ref{CXlemma1} combined with exactness of $D$ then immediately gives:
\begin{lemma}\label{CXlemma2}
 In the setting of Example \ref{CXex2}, when the functor $F$ is w.e.-preserving or homogeneous, the same is true of the functor $\ho\Lim_{\bI^{\op}} D_*F(\sC\hten -)$ on $dg_+\cN\cF r\Alg(\bK)$. If moreover the spaces $\sC^n$ are nuclear, then the functor  $\ho\Lim_{\bI^{\op}}D_* F(\sC\hten -)$ on $dg_+\cF r\Alg(\bK)$ is w.e.-preserving or homogeneous respectively.
 \end{lemma}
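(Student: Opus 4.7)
The plan is to reduce the lemma to a levelwise application of Lemma \ref{CXlemma1}, exploiting three facts: (i) the cosimplicial denormalisation functor $D$ is exact and, applied degreewise, sends the Fr\'echet (resp.\ nuclear Fr\'echet) category to itself; (ii) the completed tensor product commutes with $D$, in the sense that $D^n(\sC^\bt \hten_\eps A) \cong (D^n\sC^\bt)\hten_\eps A$ since $D^n$ is a finite direct sum of shifted copies of the $\sC^i$'s; (iii) homotopy limits in simplicial sets preserve both weak equivalences and homotopy pullbacks, so the outer $\ho\Lim_{\bI^{\op}}$ and the totalisation $\ho\Lim_{n\in\Delta}$ contribute no new obstruction.

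For the w.e.-preserving statement, suppose $A \to B$ is an abstract quasi-isomorphism in $dg_+\cN\cF r\Alg(\bK)$. Since each $\sC^n$ is a Fr\'echet space and the $A_m$ are nuclear Fr\'echet, Lemma \ref{nucexactlemma} yields that $\sC^\bt \hten_\eps A \to \sC^\bt \hten_\eps B$ is a quasi-isomorphism in each cochain-chain bidegree separately. Applying $D^n$ in the cosimplicial direction (which at each cosimplicial level is a finite sum of cochain components, hence manifestly preserves quasi-isomorphisms along the chain direction), and then noting that $D^n\sC^\bt$ is itself Fr\'echet (nuclear Fr\'echet, if the $\sC^n$ are), Lemma \ref{CXlemma1} applied to the fixed Fr\'echet algebra $D^n\sC^\bt$ shows that each $F(D^n(\sC^\bt\hten_\eps A)) \to F(D^n(\sC^\bt\hten_\eps B))$ is a weak equivalence. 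Taking $\ho\Lim_{\bI^{\op}}$ and then $\ho\Lim_{n\in\Delta}$ preserves this property.

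For the homogeneity statement, let $A\to B$ be a square-zero extension with kernel $I$, and let $C\to B$ be any morphism. Exactness of $\hten_\eps$ against nuclear Fr\'echet complexes (Lemma \ref{nucexactlemma}) gives that $\sC^\bt \hten_\eps (A\by_B C)$ is a square-zero extension of $\sC^\bt \hten_\eps C$ by $\sC^\bt \hten_\eps I$, and moreover that the canonical map
\[
\sC^\bt \hten_\eps (A\by_B C) \;\lra\; (\sC^\bt \hten_\eps A)\by_{(\sC^\bt\hten_\eps B)} (\sC^\bt \hten_\eps C)
\]
is an isomorphism. Applying the exact functor $D^n$ in the cosimplicial direction preserves both the square-zero extension structure and the pullback, so applying Lemma \ref{CXlemma1} at each cosimplicial level $n$ to the fixed Fr\'echet algebra $D^n\sC^\bt$ gives the desired homotopy pullback squares
\[
F(D^n(\sC^\bt\hten_\eps (A\by_B C))) \;\simeq\; F(D^n(\sC^\bt\hten_\eps A))\by^h_{F(D^n(\sC^\bt\hten_\eps B))} F(D^n(\sC^\bt\hten_\eps C)).
\]
Since homotopy pullbacks commute with homotopy limits, applying $\ho\Lim_{\bI^{\op}}$ and then totalising over $\Delta$ yields the required equivalence for $\ho\Lim_{\bI^{\op}} D_*F(\sC\hten -)$. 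In the nuclear case for $\sC$, exactly the same argument runs with $dg_+\cF r\Alg(\bK)$ in place of $dg_+\cN\cF r\Alg(\bK)$, invoking the nuclear hypothesis on $\sC$ to apply the second half of Lemma \ref{CXlemma1}.

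The only subtlety worth flagging is the commutation of the \emph{cosimplicial} denormalisation $D$ with completed tensor product; once one spells out that $D^n\sC^\bt$ is, as a Fr\'echet space, a finite direct sum of shifted copies of the $\sC^i$, this is immediate, and everything else is a routine combination of exactness properties of $\hten_\eps$ on nuclear Fr\'echet complexes with the standard stability of w.e.-preservation and homogeneity under homotopy limits. No step presents a genuine obstacle; the main bookkeeping is keeping track of which of the two nuclearity hypotheses (on $A$ versus on $\sC$) is being invoked at each step, matching the dichotomy already established in Lemma \ref{CXlemma1}.
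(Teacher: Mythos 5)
Your proof is correct and takes essentially the same route as the paper: the paper's entire proof is the one-line remark ``Lemma \ref{CXlemma1} combined with exactness of $D$ then immediately gives,'' and you are simply spelling out exactly why that combination works — $D^n\sC^{\bt}$ is a finite direct sum of shifted copies of the $\sC^i$, hence a Fr\'echet (resp.\ nuclear Fr\'echet) algebra commuting with $\hten_\eps$, so Lemma \ref{CXlemma1} applies at each cosimplicial level and the conclusion follows because homotopy limits preserve weak equivalences and homotopy pullbacks. (One minor terminological slip: $D^n\sC^{\bt}$ is a presheaf of Fr\'echet algebras on $\bI$, not a single ``fixed Fr\'echet algebra'' — the argument is unaffected.)
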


 \subsubsection{Existence of perfect tangent complexes}
 
The infinitesimal behaviour of  functors with perfect tangent complexes is algebraic in nature, and we now establish fairly general conditions under which this property holds. These rely on countable exactness properties of completed tensor products which are not shared with algebraic tensor products.
 
 \begin{lemma}\label{RGammalemma}
Take a countable category ${\bI}$, a  $dg_+\cF r\CAlg(\bK)$ (resp. $dg_+\cN\cF r\CAlg(\bK)$)
-valued presheaf $\sC^{\bt}$ on ${\bI}$, concentrated in non-negative cochain degrees,  $A \in dg_+\cN\cF r\Alg(\bK)$
and a locally perfect  
right $\sC\hten A$-module $\sT$ in complexes
  with $\ho\Lim_{\bI^{\op}} \sT$ perfect as a right $A$-module.
  
  Then for any left $A$-module $M$ in $dg_+\cN\cF r_{\bK}$ (resp. $dg_+\cF r_{\bK} $), the morphism
  \[
   (\ho\Lim_{\bI^{\op}} \sT)\ten^{\oL}_AM \to \ho\Lim_{\bI^{\op}} (\sT\ten^{\oL}_{(\sC\hten A)}(\sC\hten M) ) 
  \]
of derived algebraic tensor products is a quasi-isomorphism.
 \end{lemma}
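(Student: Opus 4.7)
The strategy is to rewrite both sides in terms of the derived completed tensor product $\hten^{\oL}_A$ of Definition \ref{htenAdef}, and then commute homotopy limits with completed tensor products using the nuclear exactness results of Lemma \ref{nucexactlemma1} and Lemma \ref{nucexactlemmarel}.

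First I would simplify the right-hand side levelwise on $\bI$. For each $i$, the second clause of Lemma \ref{nucexactlemmarel} yields $(\sC_i \hten A) \hten^{\oL}_A M \simeq \sC_i \hten M$. Using local perfection of $\sT_i$ over $\sC_i \hten A$, I would pick a finite resolution $P^{\bullet} \to \sT_i$ by finitely generated free $(\sC_i \hten A)$-modules; applying $- \ten_{\sC_i \hten A}(\sC_i \hten M)$ computes $\sT_i \ten^{\oL}_{\sC_i \hten A}(\sC_i \hten M)$ as the finite complex $\big((\sC_i \hten M)^{k_j}\big)_j$, while applying $- \hten^{\oL}_A M$ to the same $P^{\bullet}$ gives $\big((\sC_i \hten A)^{k_j} \hten^{\oL}_A M\big)_j \simeq \big((\sC_i \hten M)^{k_j}\big)_j$ by Lemma \ref{nucexactlemmarel} again. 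This yields a natural equivalence $\sT_i \ten^{\oL}_{\sC_i \hten A}(\sC_i \hten M) \simeq \sT_i \hten^{\oL}_A M$, so the right-hand side becomes $\ho\Lim_{\bI^{\op}}(\sT \hten^{\oL}_A M)$. An analogous argument using a finite resolution of $P := \ho\Lim \sT$ by finitely generated projective $A$-modules (afforded by perfection over $A$) gives $P \ten^{\oL}_A M \simeq P \hten^{\oL}_A M$, since both reduce to finite direct sums of copies of $M$ where algebraic and completed tensor products coincide.

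The lemma thus reduces to the identity $(\ho\Lim_{\bI^{\op}} \sT) \hten^{\oL}_A M \simeq \ho\Lim_{\bI^{\op}}(\sT \hten^{\oL}_A M)$. Writing out the bar construction description of Definition \ref{htenAdef}, each simplicial level of $\sT \hten^{\oL}_A M$ takes the form $\sT \hten A^{\hten n} \hten M$, in which all factors are Fr\'echet with the $A$- and $\sT$-factors nuclear. Lemma \ref{nucexactlemma1}, interpreted for the two-term chain complexes describing $\ho\Lim_{\bI^{\op}}$ of a countable system (products and equalisers), implies that the functor $- \hten A^{\hten n} \hten M$ commutes with $\ho\Lim_{\bI^{\op}}$ in this nuclear Fr\'echet setting. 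The homotopy limit therefore passes through each simplicial degree, and since the totalisation is a finite sum in every chain degree by non-negativity, it passes through the totalisation as well, yielding the desired equivalence.

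The main obstacle I anticipate is making the interchange of the two homotopy limits — over $\bI^{\op}$ and over the bar direction — fully rigorous at the level of strict chain representatives, in particular checking that the relevant differentials have closed image so that Lemma \ref{nucexactlemma1} applies. The countability of $\bI$ is essential here, ensuring that the homotopy limit over $\bI^{\op}$ is computed by a Mittag-Leffler-style two-term construction on Fr\'echet spaces as in \cite[Proposition 1.2.9]{prosmans}, and that no higher $\ho\Lim^i$ obstructions arise in each bar degree.
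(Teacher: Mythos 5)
Your reduction of both sides to the comparison $(\ho\Lim_{\bI^{\op}}\sT)\hten^{\oL}_AM \to \ho\Lim_{\bI^{\op}}(\sT\hten^{\oL}_AM)$, using finite free resolutions and Lemma~\ref{nucexactlemmarel} to convert between algebraic and completed tensor products, is sound and matches the paper's endgame. The gap is in your final paragraph.

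For a general countable category $\bI$, the homotopy limit over $\bI^{\op}$ is \emph{not} computed by a two-term Milnor/Mittag--Leffler construction, and \cite[Proposition 1.2.9]{prosmans} only addresses countable cofiltered systems. In the intended applications (Examples~\ref{ctbleex}) $\bI$ is a countable poset or involves $\Delta$, so one needs the product total complex of the Bousfield--Kan semi-cosimplicial diagram $\prod_{i_0}\sT(i_0)\implies\prod_{i_0\to i_1}\sT(i_1)\Rrightarrow\cdots$ indexed by the nerve $B\bI$. Countability is used to keep each $\prod_{v\in B_m\bI}$ a Fr\'echet nuclear space (via \cite[Proposition 50.1]{treves}) so that $\hten$ distributes over it. Relatedly, picking a finite resolution ``for each $i$'' separately omits the coherence you actually need: the paper inductively constructs finitely generated cofibrant models $\tilde{T}(v)$ for each simplex $v$ of $B\bI$, with \emph{compatible face maps}, so that $\Tot^{\Pi}\tilde{T}^{\bt}$ is a genuine Fr\'echet model of the homotopy limit, and then a finitely generated cofibrant $A$-module $T'$ with an automatically continuous quasi-isomorphism $\phi\co T'\to\Tot^{\Pi}\tilde{T}^{\bt}$.

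The obstacle you flag at the end — needing closed images for Lemma~\ref{nucexactlemma1} — is a red herring produced by reaching for the wrong lemma. The paper never checks strictness here: it routes the key step through Lemma~\ref{nucexactlemmarel}, which shows $\hten^{\oL}_A$ preserves \emph{abstract} quasi-isomorphisms with no strictness hypothesis (the closed-image manipulations being already absorbed into the proof of Lemma~\ref{nucexactlemma}). Applied to $\phi$, this directly gives $T'\hten^{\oL}_AM\simeq\Tot^{\Pi}(\tilde{T}^{\bt})\hten^{\oL}_AM$, and the remaining commutation of $\hten^{\oL}_AM$ with $\Tot^{\Pi}$ is a consequence of $\hten$ commuting with countable products together with non-negativity of the grading. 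If you swap Lemma~\ref{nucexactlemma1} for Lemma~\ref{nucexactlemmarel} and replace the Mittag--Leffler description by the explicit product-total-complex model built from compatible $\tilde{T}(v)$, your outline becomes the paper's argument.
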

\begin{proof}
Since ${\bI}$ is countable (i.e. has countably many objects and morphisms), its nerve $B{\bI}$ has countably many elements in each degree, and we may calculate the homotopy limit as the product total complex associated to the semi-cosimplicial chain complex
\[
 \prod_{i_0 \in {\bI}} \sT(i_0) \implies \prod_{i_0 \to i_1} \sT(i_1) \Rrightarrow \ldots \ldots \prod_{i_0 \to i_1 \to \ldots \to i_n} \sT(i_n) \ldots, 
\]
where the products are indexed by elements of  strings of  morphisms in ${\bI}$.

Since each $\sT(i)$ is assumed perfect as an $\sC(i)\hten A$-module, for each string $v \in B_m{\bI}$, with final vertex $\bar{v}$ (so $\bar{v}= i_m$ when $v = (i_0 \to \ldots \to i_m)$) we may inductively choose a quasi-isomorphic finitely generated cofibrant right $ \sC(\bar{v}) \hten A$-module $\tilde{T}(v)$, equipped with compatible face maps $\pd^i \co \tilde{T}(\pd_iv) \to \tilde{T}(v)$. These fit together to give a 
 semi-cosimplicial chain complex
\[
 \tilde{T}^0 \implies \tilde{T}^1  \Rrightarrow \ldots \ldots ~\tilde{T}^n \ldots,
\]
with $\tilde{T}^m:=  \prod_{v \in B_m{\bI} } \tilde{T}(v)$. 

Moreover, since the homotopy limit is assumed perfect as a right $A$-module, we may choose a finitely generated cofibrant replacement $T'$ and a quasi-isomorphism
$
 \phi \co T' \to \Tot^{\Pi} \tilde{T}^{\bt}
$
of right $A$-modules.
The countability and finiteness hypotheses allow us to regard these as complexes of Fr\'echet right $A$-modules, and  since $T'$ is finitely generated, $\phi$ is automatically continuous. 
 
 The morphism $T'\hten^{\oL}_AM  \to \Tot^{\Pi} (\tilde{T}^{\bt})\hten^{\oL}_AM $ on derived completed tensor products induced by $\phi$ is thus a quasi-isomorphism by Lemma \ref{nucexactlemmarel}, noting that arbitrary products of nuclear spaces are nuclear by \cite[Proposition 50.1]{treves}. Since completed tensor products commute with products and our boundedness hypotheses mean we can express the bar construction  $\hten^{\oL}$ as a product total complex, the morphism $ \Tot^{\Pi}(\tilde{T}^{\bt})\hten^{\oL}_AM \to \Tot^{\Pi}(\tilde{T}^{\bt}\hten^{\oL}_AM)$ is an isomorphism. The finiteness and cofibrancy hypotheses ensure that the natural maps $\tilde{T}(v)\hten^{\oL}_AM \to \tilde{T}(v)\hten_AM$ and $T'\hten^{\oL}_AM \to T'\hten_AM $ are quasi-isomorphisms (again by  Lemma \ref{nucexactlemmarel}), so it only remains to observe that $T'\hten_AM\cong T'\ten_AM$ and 
 \[
 \tilde{T}^m\hten_AM \cong  \prod_{v \in B_m{\bI}} (  \tilde{T}(v) \hten_AM) \cong  \prod_{v \in B_m{\bI}} (  \tilde{T}(v) \ten_{(\sC(\bar{v})\hten A)}(\sC(\bar{v})\hten M)   )
\]
by finiteness. 
 \end{proof}

\begin{proposition} \label{RGammacotprop}
Take a countable category ${\bI}$, 
a presheaf $\sC$ of commutative Fr\'echet $\bK$-algebras  on ${\bI}$ for $\bK$ a complete valued field, and a simplicial set-valued w.e.-preserving homogeneous functor  $F$ on the category $dg_+\Alg(\bK)$ (resp. $dg_+\CAlg(\bK)$).

Take $A \in dg_+\cF r\Alg(\bK)$ (resp. $A \in dg_+\cF r\CAlg(\bK)$), with either $A$ or $\sC$ nuclear, and take a point
$\phi \in \ho\Lim_{\bI^{\op}} F(\sC\hten A)$ such that the functor $F$ has a perfect cotangent complex $\bL^{F,\sC(i)\hten A,\phi}(i)$ at $\phi_i \in F(\sC(i)\hten A)$ for  all   $i \in {\bI}$.

 If the complex   $\ho\Lim_{\bI^{\op}} \bT_{\phi}F(\sC\hten A^{\hat{e}})$  
(resp.  $\ho\Lim_{\bI^{\op}} \bT_{\phi}F(\sC\hten A)$)
is perfect as a right $  A^{\hat{e}}$-module (resp. $A$-module), then the functor  $\ho\Lim_{\bI^{\op}} F(\sC\hten-)$ on $A \in dg_+\cN\cF r\Alg(\bK)$ (resp. $A \in dg_+\cN\cF r\CAlg(\bK)$), or on $A \in dg_+\cF r\Alg(\bK)$ (resp. $A \in dg_+\cF r\CAlg(\bK)$) if  $\sC$ is nuclear, has a perfect cotangent complex at $\phi$ in the sense of Definition \ref{Fcotdef}.
 \end{proposition}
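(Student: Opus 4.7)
The plan is to establish that the tangent functor $T_\phi G$ (writing $G := \ho\Lim_{\bI^{\op}} F(\sC\hten-)$) is homotopically represented by a perfect module, with the candidate cotangent complex being the dual of $\bT_\phi G(A^{\hat{e}})$, which by hypothesis is perfect.

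First I would invoke Lemma \ref{CXlemma1} to confirm that $G$ is w.e.-preserving and homogeneous on the relevant category, so that $T_\phi G$ is well-defined and forms a spectrum under looping. Then, since homotopy limits commute with homotopy fibre products, for a Beck module $M$ (an $A^{\hat{e}}$-module in chain complexes, Fréchet or nuclear Fréchet as appropriate), we obtain a natural equivalence
\[
  T_\phi G(M) \;\simeq\; \ho\Lim_{\bI^{\op}} T_{\phi_i} F(\sC(i) \hten A \oplus \sC(i) \hten M) \by^h_{F(\sC(i)\hten A)}\{\phi_i\},
\]
using that $\sC(i)\hten(A\oplus M) \cong (\sC(i)\hten A)\oplus (\sC(i)\hten M)$ as square-zero extensions (applying exactness of $\sC(i)\hten -$ on strict exact sequences of nuclear Fréchet modules, cf. Lemma \ref{nucexactlemma1}). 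Passing to the spectrum and using that each $F$ has a perfect cotangent complex $\bL^{F,\sC(i)\hten A,\phi_i}(i)$ at $\phi_i$, this gives
\[
  \bT_\phi G(M) \;\simeq\; \ho\Lim_{\bI^{\op}} \bT_{\phi_i} F(\sC(i)\hten M).
\]

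The key step is then to apply Lemma \ref{RGammalemma} to the system $\sT(i) := \bT_{\phi_i} F(\sC(i)\hten A^{\hat{e}})$, regarded as a locally perfect right $\sC\hten A^{\hat{e}}$-module in complexes; local perfectness follows from the perfect cotangent hypothesis, while the perfectness of $\ho\Lim_{\bI^{\op}} \sT$ is exactly the main assumption. For a left $A^{\hat{e}}$-module $M$, the perfect cotangent property at each vertex also gives $\bT_{\phi_i}F(\sC(i)\hten M) \simeq \sT(i) \ten^{\oL}_{\sC(i)\hten A^{\hat{e}}} (\sC(i)\hten M)$. Lemma \ref{RGammalemma} then yields
\[
  \bT_\phi G(M) \;\simeq\; \big(\ho\Lim_{\bI^{\op}} \sT\big) \ten^{\oL}_{A^{\hat{e}}} M,
\]
so setting $\bL^{G,\phi} := \bR\HHom_{A^{\hat{e}}}\!\big(\ho\Lim_{\bI^{\op}} \sT,\, A^{\hat{e}}\big)$, perfect as an $A^{\hat{e}}$-module, delivers the required representability of $T_\phi G$ on non-negatively graded modules via Dold--Kan truncation. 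The commutative case proceeds identically, replacing $A^{\hat{e}}$ with $A$.

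The hard step is verifying the hypotheses of Lemma \ref{RGammalemma}: we need the system $\{\sT(i)\}$ to assemble coherently over ${\bI}$, with compatible finitely generated cofibrant models $\tilde{T}(v)$ along strings of morphisms, and we need countability of ${\bI}$ to exchange the product-total homotopy limit with the completed tensor product $\hten_A M$. The countability and finiteness inputs are essential: uncountable ${\bI}$ breaks the exactness argument (cf. Remark \ref{ultrametricrmk}) and infinite-generation of local models would obstruct continuity of the chosen quasi-isomorphism $\phi \co T' \to \Tot^\Pi \tilde T^\bullet$. The nuclearity hypothesis on $A$ or $\sC$ is precisely what forces $\sC \hten -$ to preserve both square-zero extensions and quasi-isomorphisms, ensuring the initial identification of $T_\phi G(M)$ goes through unchanged.
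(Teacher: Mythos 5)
Your proposal is correct and follows essentially the same route as the paper's proof: you identify the candidate cotangent complex as the $A^{\hat{e}}$-linear dual of $\ho\Lim_{\bI^{\op}} \bT_{\phi}F(\sC\hten A^{\hat{e}})$, use local perfectness of each $\bL^{F,\sC(i)\hten A,\phi}(i)$ to reduce representability to the comparison map $(\ho\Lim_{\bI^{\op}} \bT_{\phi}F(\sC\hten A^{\hat{e}}))\ten^{\oL}_{A^{\hat{e}}}M \to \ho\Lim_{\bI^{\op}}\bT_{\phi}F(\sC\hten M)$, and apply Lemma~\ref{RGammalemma} to conclude. Your additional commentary on the role of Lemma~\ref{CXlemma1}, on the split square-zero extensions, and on where countability and local finite generation enter, is accurate elaboration rather than a divergent strategy.
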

\begin{proof}
We will prove this in the non-commutative case. The commutative case is entirely similar, except that $A^{\hat{e}}$ and $(\sC\hten A)^{e}$ should be replaced with $A$ and $\sC\hten A $, respectively.

We seek a perfect left $  A^{\hat{e}}$-module $L$ with the property that
 \[
  \oR\HHom_{A^{\hat{e}}}(L, M) \simeq \ho\Lim_{\bI^{\op}} \bT_{\phi}F(\sC\hten M)   
 \]
for all non-negatively graded nuclear Fr\'echet (or just Fr\'echet if $\sC$ is nuclear) $A^{\hat{e}}$-modules $M$ in chain complexes.
If $L$ exists, it must be the $(A^{\hat{e}})^{\op}$-linear dual of the perfect complex 
$\ho\Lim_{\bI^{\op}} \bT_{\phi}F(\sC\hten A^{\hat{e}})$,
so it suffices to show that the natural morphism
\[
 (\ho\Lim_{\bI^{\op}} \bT_{\phi}F(\sC\hten A^{\hat{e}}))\ten^{\oL}_{A^{\hat{e}}}M \to \ho\Lim_{\bI^{\op}} \bT_{\phi}F(\sC\hten M)
\]
from the derived algebraic tensor product is a quasi-isomorphism.

Since $\bL^{F,\sC\hten A,\phi}$ is assumed to be locally perfect as a $(\sC(i)\hten A)^e$-module, it follows that $\bT_{\phi}F(\sC\hten A^{\hat{e}})  $ is locally perfect as a right $\sC\hten A^{\hat{e}} $-module,  and moreover that  the natural morphism $\bT_{\phi}F(\sC\hten A^{\hat{e}})\ten^{\oL}_{ \sC\hten A^{\hat{e}}} (\sC\hten M) \to \bT_{\phi}F(\sC\hten M) $ is a quasi-isomorphism.

The question thus reduces to showing that the natural morphism 
\[
 (\ho\Lim_{\bI^{\op}} \bT_{\phi}F(\sC\hten A^{\hat{e}}))\ten^{\oL}_{A^{\hat{e}}}M \to \ho\Lim_{\bI^{\op}} (\bT_{\phi}F(\sC\hten A^{\hat{e}} ) \ten^{\oL}_{ \sC\hten A^{\hat{e}}} (\sC\hten M) )
\]
 is a quasi-isomorphism, which it is by Lemma \ref{RGammalemma} since ${\bI}$ is countable.
\end{proof}

 \begin{examples}\label{ctbleex}
 
  \begin{enumerate}
   \item If $X$ is a second-countable topological space with countable base $\cU$, and $\sC_X$ a sheaf of Fr\'echet algebras on $X$,  we can take $\bI$ to be the poset $\cU$. The natural map 
   \[
  \oR\Gamma(X, \bT_{\phi}F(\sC_X\hten M)) \to     \oR\Gamma(\cU, \bT_{\phi}F(\sC_X|_{\cU}\hten M))
   \]
is then a quasi-isomorphism for all  nuclear Fr\'echet $A^{\hat{e}}$-modules $M$ in complexes, where we can drop the nuclear condition if $\sC_X$ is nuclear. Proposition \ref{RGammacotprop} can thus be applied in such situations.
 
 Examples of this form are given by taking $\sC_X$ to be the sheaf of smooth functions on a real manifold  and by the sheaf of holomorphic functions on a complex manifold; in both examples $\sC_X$ is nuclear, and $\sC_X\hten B$ is the sheaf of $B$-valued smooth (resp. holomorphic functions), reasoning as in Example \ref{CXex2}.
   
   \item  As a variant of the previous example, if  $\sC_X^{\bt}$ is a dg algebra of sheaves of Fr\'echet spaces on $X$, concentrated in non-negative cochain degrees, then as in Example \ref{CXex2} we can work with the presheaf $D\sC_X^{\bt}$ defined on the category $\cU \by \Delta^{\op}$. Proposition \ref{RGammacotprop} can thus be applied to the functor  $\oR\Gamma(X, D_*F(\sC_X\hten -))$ in such situations. 
   
   Examples of this form are given by taking $\sC_X^{\bt}$ to be the sheaf of smooth or holomorphic differential forms on a real or complex manifold, with $\sC_X^{\bt}\hten B$ then the sheaf of $B$-valued forms.
   
   \item By \cite[Tag 09A0]{stacks-project}, any quasi-compact quasi-separated scheme $X$ admits a pro-\'etale simplicial hypercover $K$ by weakly contractible affine schemes. Then for any sheaf $\sC_X$ of Fr\'echet algebras  on the pro-\'etale site of $X$, the natural map $\oR\Gamma(X_{\pro\et}, F(\sC_X\hten -))\to \oR\Gamma(\Delta^{\op}, F((K^*\sC_X)\hten - )$ is an equivalence, where $(K^*\sC_X)_n := \Gamma(K_n, \sC_X)$.  Proposition \ref{RGammacotprop} can thus be applied to the functor  $\oR\Gamma(X_{\pro\et}, F(\sC_X\hten -))$ in such situations.
   
   As in Example \ref{CXex}, our motivating example of this form is when $\sC_X$ is the sheaf $\uline{\bK}_X$ of continuous $\bK$-valued functions on the pro-finite sets of components, in which case $\uline{\bK}_X\hten B\cong \uline{B}_X$, the sheaf of $B$-valued functions, when $B$ is nuclear Fr\'echet. When $F$ parametrises vector bundles or perfect complexes, $\oR\Gamma(X_{\pro\et}, F(\uline{\bK}_X\hten -))(B)$ is then the space of locally free or perfect $\uline{B}_X$-modules.
      
  \end{enumerate}

   \end{examples}


\section{Stacky dg FEFC  and topological algebras}\label{stackyFEFCsn} 

When it comes to the construction of shifted double Poisson structures on prestacks, we will need to use stacky DGAs, so we now develop  the relevant theory, some of which is also used for the hyperconnection constructions of  \S \ref{RHsn}. 

\subsection{Stacky dg algebras and their modules}\label{stackysn}

 Following \cite[\S \ref{NCstacks-stackysn}]{NCstacks}, consider the category  $DG^+dg_+\Alg(R)$  consisting of associative $R$-algebras $A^{\bt}_{\bt}$ in  cochain chain complexes (i.e. double complexes) concentrated in non-negative bidegrees. We will refer to objects of this category as stacky $R$-DGAs.
It has a cofibrantly generated model structure in which fibrations are surjective in non-negative chain degrees and weak equivalences are levelwise quasi-isomorphisms in the chain direction.

The following is our main source of functors on stacky DGAs, adapting \cite[Definition \ref{smallet2-Dlowerdef}]{smallet2}: 

\begin{definition}\label{Dlowerdef0}
 Given a  functor $F \co dg_+\Alg_{R} \to \C$ to a model category $\C$, we define  $D_*F$ on  $DG^+dg_+\Alg_{R}$ as the homotopy limit
\[
 D_*F(B):= \ho\Lim_{n \in \Delta} F(D^nB),
\]
for the cosimplicial denormalisation functor $D \co  DG^+dg_+\Alg_{R} \to dg_+\Alg_{R}^{\Delta}$. 
\end{definition}

We now collect some properties relating to modules over a stacky $R$-DGAA. These are largely independent of the rest of the section, but will be needed in \S \ref{RHsn}.

 \subsubsection{Modules}\label{stackymodsn}

 
 As a preliminary to considering perfect complexes with flat hyperconnections, we introduce some dg categories.
 
\begin{definition}
 Given a differential graded associative algebra  (DGAA) $A$, write $\per_{dg}(A)$ for the dg category of cofibrant  perfect complexes of right $A$-modules for the projective model structure; for alternative characterisations, see \cite[\S 4.6]{kellerModelDGCat}.
 
\end{definition}

\begin{definition}
 Given $B \in DG^+dg_+\Alg_R$, define $DG^+dg\Mod(B)$ to be the category of right $B$-modules in cochain chain complexes concentrated in non-negative cochain degrees. 
 
 Given a cosimplicial dg $R$-algebra $C \in dg_+\Alg_R^{\Delta}$, define $dg\Mod^{\Delta}(C)$ to be the category of right $C$-modules in cosimplicial chain complexes.
 
We  equip both of these categories with the projective model structure, in which a morphism $M \to N$ is
 \begin{itemize}
  \item a weak equivalence if the maps $\H_iM^r \to \H_iN^r$ are isomorphisms for all $i,r$, and
  \item a fibration if the maps $M_i^r \to N_i^r$ are surjective for all $i>0$ and all $r$.
 \end{itemize}
\end{definition}

The following arises as an application of strictification as in \cite[Theorem 18.6]{HirschowitzSimpson}:
\begin{lemma}\label{stackystrictnlemma}
For $B \in DG^+dg_+\Alg_R$, the dg category $(D_*\per_{dg})(B)$ is quasi-equivalent to the dg category of  cofibrant objects $M$  of $dg\Mod^{\Delta}(DB)$ which: 
\begin{enumerate}
 \item are homotopy Cartesian in the sense that the morphisms $\pd^i \co M^n\ten_{B^n, \pd^i}B^{n+1} \to M^{n+1}$ are all quasi-isomorphisms, and
 \item have $M^0$  perfect as a $B^0$-module in complexes.
\end{enumerate}
\end{lemma}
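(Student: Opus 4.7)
The plan is to prove this via a standard strictification argument, identifying both sides with an intermediate dg category of cosimplicial modules. Recall that $(D_*\per_{dg})(B)$ is by definition the homotopy limit $\ho\Lim_{n\in\Delta}\per_{dg}(D^nB)$, so the first task is to compute this homotopy limit using a strict model.

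First, I would set up the cosimplicial diagram. The coface and codegeneracy maps of the cosimplicial dg algebra $DB$ induce left Quillen base-change functors $(-)\ten_{D^nB}D^mB$ between the module categories, giving a Quillen presheaf $n\mapsto dg\Mod(D^nB)$ on $\Delta$. Its restriction to perfect objects yields the cosimplicial diagram of dg categories $n\mapsto \per_{dg}(D^nB)$. Applying strictification (e.g. \cite[Theorem 18.6]{HirschowitzSimpson}), the homotopy limit of this diagram over $\Delta$ is quasi-equivalent to the dg category of homotopy coherent Cartesian sections, which can in turn be modelled strictly by the full subcategory of cofibrant objects in $dg\Mod^\Delta(DB)$ whose coface structure maps, adjointly, are quasi-isomorphisms after derived base change.

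Next, I would unpack this strictification for our setting. A cofibrant object $M$ of $dg\Mod^\Delta(DB)$ carries structure maps $\pd^i\co M^n\to M^{n+1}$ compatible with the coface maps of $DB$, and these are adjoint to morphisms $M^n\ten_{B^n,\pd^i}B^{n+1}\to M^{n+1}$. Cofibrancy ensures the underived tensor products compute the derived ones in each bidegree (using projective cofibrancy level by level), so the homotopy Cartesian condition (1) is precisely the condition that $M$ be a Cartesian section of the diagram of module categories. For such $M$, condition (2) combined with (1) inductively forces each $M^n$ to be perfect over $D^nB$, since perfectness is stable under derived base change; conversely, a Cartesian section landing in the perfect subcategories at every level is determined up to quasi-equivalence by its perfect $B^0$-module $M^0$. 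This matches the definition of $(D_*\per_{dg})(B)$ exactly and yields the claimed quasi-equivalence.

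The main obstacle will be verifying that the Hirschowitz--Simpson strictification applies cleanly here: concretely, one must check that $n\mapsto dg\Mod(D^nB)$ with its induced base change defines a genuine Quillen presheaf (in particular that the base-change adjunctions are Quillen adjunctions with respect to the projective model structure), and that the projective model structure on $dg\Mod^\Delta(DB)$ furnishes the correct strict model for the homotopy limit, with fibrant-cofibrant Cartesian sections matching homotopy coherent sections up to quasi-equivalence. Most of this is formal cosimplicial machinery, but care is needed to confirm that the cofibrancy condition in $dg\Mod^\Delta(DB)$ really suffices for the underived pullbacks appearing in (1) to compute the derived ones, so that the homotopy Cartesian condition stated there is the correct strict reformulation.
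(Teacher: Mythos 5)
Your proposal is correct and follows essentially the same route as the paper, which simply cites \cite[Theorem 18.6]{HirschowitzSimpson} and gives no further details; you have unpacked the Quillen presheaf setup and the identification of Cartesian sections that the paper leaves implicit. Nothing in your elaboration deviates from the argument the citation is meant to summarize.
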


We can then reduce to a description intrinsic to the stacky DGA:
\begin{lemma}\label{doublecomplexlemma}
For $B \in DG^+dg_+\Alg_R$, the dg category $(D_*\per_{dg})(B)$ is quasi-equivalent to the dg category of cofibrant objects $M$ of $DG^+dg\Mod(B)$   which:
\begin{enumerate}
 \item are homotopy Cartesian in the sense that the morphisms $ M^0\ten_{B^0}B^{n} \to M^{n}$ are all quasi-isomorphisms, and
 \item have $M^0$  perfect as a $B^0$-module in complexes.
\end{enumerate}
\end{lemma}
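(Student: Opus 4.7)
The plan is to deduce this from Lemma \ref{stackystrictnlemma} via a Dold--Kan type Quillen equivalence between the module categories $dg\Mod^{\Delta}(DB)$ and $DG^+dg\Mod(B)$, and then verify that the homotopy Cartesian and perfection conditions correspond under this equivalence.

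First I would establish the Quillen equivalence. The cosimplicial denormalization functor $D$ on non-negatively graded cochain complexes of chain complexes, together with the Eilenberg--Zilber shuffle product, is lax symmetric monoidal; in particular it sends the stacky DGA $B$ to the cosimplicial dg algebra $DB$, and takes right $B$-modules in cochain chain complexes to right $DB$-modules in cosimplicial chain complexes. Its levelwise chain-complex right adjoint is cosimplicial normalization $N$, and since weak equivalences and fibrations in both projective model structures are detected levelwise on the underlying chain complexes, where Dold--Kan is already an equivalence of categories, the resulting adjunction $D \dashv N$ is a Quillen equivalence on module categories. Since $(DM)^0 = M^0$ and $(DB)^0 = B^0$, perfection of $M^0$ over $B^0$ passes across this equivalence verbatim.

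Next I would match the homotopy Cartesian conditions under $D \dashv N$. For the forward direction, if $M \in DG^+dg\Mod(B)$ is cofibrant and the map $M^0 \ten_{B^0} B^n \to M^n$ is a quasi-isomorphism for every $n$, then $DM$ is built levelwise from $M^0$ by base change against $DB$, so each coface map of $DM$ is a quasi-isomorphism after base change along the corresponding coface $\pd^i$ of $DB$; that is, $DM$ is cosimplicially Cartesian in the sense of Lemma \ref{stackystrictnlemma}. Conversely, given a cofibrant cosimplicially Cartesian $X \in dg\Mod^{\Delta}(DB)$ with $X^0$ perfect over $B^0$, an induction on cochain degree, using the cosimplicial Cartesian condition applied to the cofaces out of successive levels, shows that $(NX)^n$ is quasi-isomorphic to $X^0\ten_{B^0}B^n$, so $M := NX$ satisfies the bigraded Cartesian condition.

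The main obstacle will be making this correspondence of Cartesian conditions rigorous, in particular checking the compatibility of $D$ with the base-change tensor product $\ten_{B^0}$. This ultimately rests on the fact that the shuffle product furnishes $D$ with a lax monoidal structure whose comparison maps are weak equivalences on projectively cofibrant inputs, so that $D(M^0\ten_{B^0}B)$ is levelwise weakly equivalent to $M^0\ten_{B^0}DB$, together with a standard application of the Dold--Kan/Eilenberg--Zilber correspondence for modules over cosimplicial dg algebras built from stacky DGAs.
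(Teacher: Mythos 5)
Your proposal asserts that the adjunction between $DG^+dg\Mod(B)$ and $dg\Mod^{\Delta}(DB)$ is a \emph{Quillen equivalence} with $D$ as the left adjoint and cosimplicial normalisation $N$ as the right adjoint. This is the central gap, for two reasons. First, the direction is the opposite of what the paper uses: denormalisation $D$ on modules is a \emph{right} Quillen functor (it preserves levelwise surjections and levelwise quasi-isomorphisms), and the paper's comparison proceeds via its \emph{left} adjoint $D^*_{\Mod}$, which is defined on free modules by $D^*_{\Mod}(L\ten_R DB)\cong (N_cL)\ten_R B$ and is not the shuffle-equipped normalisation of the underlying cosimplicial object. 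Second, and more importantly, the Quillen adjunction is \emph{not} a Quillen equivalence in general. Your justification — that weak equivalences and fibrations are detected on underlying objects, where Dold--Kan is already an equivalence — is a non-sequitur: the module categories $DG^+dg\Mod(B)$ and $dg\Mod^{\Delta}(DB)$ are not equivalent as categories (the multiplication data $M^p\ten B^q\to M^{p+q}$ and $X^n\ten(DB)^n\to X^n$ encode genuinely different things), cofibrancy in each is a lifting condition against module-level fibrations rather than an underlying condition, and the unit of the adjunction is only a levelwise quasi-isomorphism on the \emph{homotopy Cartesian} cofibrant objects. If the adjunction were a Quillen equivalence with all objects fibrant, the restriction to homotopy Cartesian objects in both this lemma and Lemma~\ref{stackystrictnlemma} would be unnecessary and the proof would be trivial; it is not.

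What the paper actually does, and what your proposal is missing, is a direct obstruction-theoretic check: compute the unit $L\ten_R DB\to DD^*_{\Mod}(L\ten_R DB)$ on free modules whose generating cosimplicial module has acyclic higher normalisation, show both sides are levelwise quasi-isomorphic to $L^0\ten_R D^n B$, extend to all homotopy Cartesian cofibrant $DB$-modules by transfinite composition of pushouts and retracts, deduce full faithfulness of $\oL D^*_{\Mod}$ on the relevant dg subcategories, and then obtain essential surjectivity not by constructing an inverse via $N$ (as in your ``converse'' induction) but by noting that for $N$ homotopy Cartesian, $DN$ is again homotopy Cartesian, the unit $DN\to D\oL D^*_{\Mod}DN$ is a levelwise quasi-isomorphism, and hence the counit $\oL D^*_{\Mod}DN\to N$ must be one since $D$ reflects levelwise quasi-isomorphisms. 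Your forward and converse matching of Cartesian conditions is too vague to substitute for these checks, and even if made precise it would only show that the two subcategories correspond \emph{as subsets}, not that the induced dg functor is a quasi-equivalence. To salvage your approach you would essentially have to reproduce the unit/counit analysis anyway, at which point invoking a Quillen equivalence becomes both unjustified and superfluous.
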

\begin{proof}
Cosimplicial Dold--Kan denormalisation gives  a right Quillen functor $D \co DG^+dg_+\Alg_R \to dg\Mod^{\Delta}(DB)$, with a left adjoint which we denote $D_{\Mod}^*$. For any cosimplicial dg $R$-module $L$, we necessarily have $D_{\Mod}^*(L\ten_RDB) \cong (N_cL)\ten_RB$, where $N_c$ denotes the cosimplicial normalisation functor. 

In particular, if $N_cL^i$ is acyclic for all $i>0$, then $D_{\Mod}^*(L\ten_RDB)$ is   homotopy Cartesian and the unit $L\ten_RDB \to DD_{\Mod}^*(L\ten_RDB)$ is a levelwise quasi-isomorphism since both sides are quasi-isomorphic to $L^0\ten_RD^nB$ in cochain degree $n$. On passing to transfinite pushouts and retracts, it follows that the unit of the adjunction is a levelwise quasi-isomorphism for all homotopy Cartesian cofibrant $DB$ modules, and that  $D_{\Mod}^*$ sends such objects to homotopy Cartesian $B$-modules. 

Thus $\oL D_{\Mod}^*$ gives a full and faithful dg functor on the respective dg subcategories of homotopy Cartesian objects, so it only remains to show essential surjectivity, since
Lemma  \ref{stackystrictnlemma} then completes the comparison. For any homotopy Cartesian $B$-module $N$, we have a quasi-isomorphism $N^0\ten_{B^0}^{\oL}B \to N^{\#}$ on turning off the cochain differential, so the almost cosimplicial module $(DN)^{\#}$ (see e.g. \cite[\S \ref{NCstacks-modDGAAsn}]{NCstacks}) underlying $DN$ admits a levelwise quasi-isomorphism $N^0\ten_{B^0}^{\oL}(DB)^{\#} \to (DN)^{\#}$, and in particular $DN$ is homotopy Cartesian. As above, the unit of the adjunction then gives a  levelwise quasi-isomorphism $DN \to D\oL D_{\Mod}^*DN$, and hence the co-unit $\oL D_{\Mod}^*DN \to N$ must be a levelwise quasi-isomorphism, since $D$ reflects such.
\end{proof}

\begin{definition}
 Given $B \in DG^+dg_+\Alg_R$, define the dg category of  perfect $B^0$-complexes with flat $B$-hyperconnection as follows.
  \begin{enumerate}
  \item Objects are pairs $(E,\nabla)$ for  $E$ a perfect cofibrant $B^0$-module in complexes, and  $\nabla \co E_{\#} \to \prod_{i>0} E\ten_{B^0}B^i_{\#[i-1]}$ a $(B^0,\pd)$-connection (i.e. $\nabla(eb)=\nabla(e)b \pm e\ten \pd b$ for $e \in E$, $b \in B^0$), satisfying the flatness condition $(\delta + \nabla)^2=0 \co E_{\#} \to \prod_{i>0} E\ten_{B^0}B^i_{\#[i-2]}$.
  
  \item The complex of morphisms from $(E,\nabla)$ to $(E',\nabla')$ is given by $\prod_i\HHom_{B^0}(E,E'\ten_{B^0}B^i)_{[i]}$ with differential $f \mapsto (\delta +\nabla')\circ f \mp f \circ (\delta +\nabla)$ and the obvious composition.
  \end{enumerate}
\end{definition}

The following allows us to strictify further and to work with finitely presented models.  
\begin{lemma}\label{hyperconnlemma}
 For $B \in DG^+dg_+\Alg_R$, the following dg categories are quasi-equivalent:
 \begin{enumerate}
  \item\label{hyperconnlemma:first} $(D_*\per_{dg})(B)$,
  \item\label{hyperconnlemma:second} the dg category of  complete exhaustively filtered $(\Tot^{\Pi}B, \Tot^{\Pi}\sigma^{\ge *}B)$-modules $(M,F)$ in complexes, with $\gr_FM$ cofibrant and perfect as a $B^{\#}$-module and the maps $(\gr_F^0M)\ten_{B^0}B^n \to \gr_F^nM$ all being quasi-isomorphisms,
  \item\label{hyperconnlemma:third} the dg category of  perfect $B^0$-complexes with flat $B$-hyperconnection.
   \end{enumerate}

 Moreover, if $(E,\nabla)$ is a perfect $B^0$-complex with flat $B$-hyperconnection and $E'$ a cofibrant $B^0$-module quasi-isomorphic to $E$, there exists a flat $B$-hyperconnection $\nabla'$ on $E'$ such that $(E,\nabla)$ is homotopy equivalent to $(E',\nabla')$ in the dg category.
 \end{lemma}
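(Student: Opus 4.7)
The plan is to establish the three quasi-equivalences sequentially using Lemma \ref{doublecomplexlemma} as a foundation, with (\ref{hyperconnlemma:second}) serving as a filtered intermediate between (\ref{hyperconnlemma:first}) and the explicit hyperconnection description (\ref{hyperconnlemma:third}). The lifting statement will then be proved by an inductive obstruction argument carried out directly in (\ref{hyperconnlemma:third}).

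For (\ref{hyperconnlemma:first}) $\iff$ (\ref{hyperconnlemma:second}), the direct functor sends a homotopy Cartesian cofibrant $M \in DG^+dg\Mod(B)$ to $\Tot^{\Pi}M$ filtered by $F^p := \Tot^{\Pi}(\sigma^{\ge p}M)$. This filtration is exhaustive since $F^0M = \Tot^{\Pi}M$, and complete because in each total degree the quotients $\Tot^{\Pi}M/F^pM$ form a tower whose inverse limit recovers $\Tot^{\Pi}M$ componentwise. The associated graded pieces recover $M^p$ up to shift, so the homotopy Cartesian condition translates exactly into the stated graded condition, and perfection of $M^0$ over $B^0$ becomes perfection of $\gr^0_FN$. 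Full faithfulness follows because the mapping complexes on each side are both computed by the completed Hom-total of a filtered double complex; essential surjectivity is obtained by resolving a cofibrant filtered complex $(N, F)$ by a stacky module whose $p$-th piece is a cofibrant model of $\gr^p_F N$, with cochain structure supplied by splittings of the short exact filtration sequences, available up to homotopy by cofibrancy.

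For (\ref{hyperconnlemma:second}) $\iff$ (\ref{hyperconnlemma:third}), the forward functor sends $(E, \nabla)$ to $N := \prod_{p \ge 0} E\ten_{B^0}B^p$ (with appropriate Koszul shifts) filtered by $F^p := \prod_{i \ge p} E\ten_{B^0}B^i$, with $\Tot^{\Pi}B$-action by right multiplication and total differential $\delta_E + \pd_B + \nabla$; the component $\nabla_i$ shifts filtration by $i$, and the flatness relation $(\delta + \nabla)^2 = 0$ is precisely the chain complex condition. Conversely, starting from $(N, F)$ satisfying the graded quasi-isomorphism condition with cofibrant $E := \gr^0_F N$, one exploits cofibrancy to split the filtration up to quasi-isomorphism, producing $N \simeq \prod_p E\ten_{B^0}B^p$, and then reads off the components $\nabla_i$ from the decomposition of the total differential by filtration degree.

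For the lifting property, given a quasi-isomorphism between cofibrant $B^0$-complexes $E' \to E$, I would build $\nabla'$ on $E'$ by induction on $i$. The first component $\nabla_1'$ lifts $\nabla_1$ using that $E'$ is cofibrant and the map is a quasi-isomorphism. Inductively, once $\nabla_1', \ldots, \nabla_{i-1}'$ are fixed, the obstruction to choosing $\nabla_i'$ compatibly is a cocycle in $\HHom_{B^0}(E', E'\ten_{B^0}B^i)$ of the appropriate degree whose cohomology class agrees, via the quasi-isomorphism, with the analogous obstruction for $(E, \nabla)$; the latter vanishes by flatness, and cofibrancy of $E'$ promotes this vanishing to a genuine lift. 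The homotopy equivalence $(E', \nabla') \simeq (E, \nabla)$ in the dg category is then assembled from the original map together with higher components correcting for non-strict compatibility of the hyperconnections. I expect the principal obstacle to be the inductive bookkeeping: adding each new $\nabla_i'$ may force homotopy adjustments of previously chosen components in order to maintain strict flatness at the next order, but this remains tractable since the filtration is exhaustive and complete, so all corrective sums converge.
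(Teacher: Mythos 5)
Your overall architecture---route (\ref{hyperconnlemma:first})$\to$(\ref{hyperconnlemma:second}) via the total-complex-with-stupid-filtration, then (\ref{hyperconnlemma:second})$\to$(\ref{hyperconnlemma:third}) by decomposing the total differential into filtration-degree-shifting pieces---is the same skeleton the paper uses. But the paper's proof is carried almost entirely by a single technical device you never invoke: the Rees construction $\xi$, which converts the complete filtration on $(M,F)$ into a $\bG_m$-equivariant $t$-adically complete module over $C := \bigoplus_i t^{-i}\Tot^{\Pi}\sigma^{\ge i}B$, and then argues everything through the $t$-adic tower $\xi(M,F) = \Lim_k \xi(M,F)/t^k$. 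Without that device, the three places you gloss over are genuine gaps.

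First, for full faithfulness in (\ref{hyperconnlemma:first})$\leftrightarrow$(\ref{hyperconnlemma:second}), you assert that ``the mapping complexes on each side are both computed by the completed Hom-total of a filtered double complex,'' but this is the very thing requiring proof: you must show the strict dg $\Hom$-complexes in the filtered model with cofibrant $\gr_F$ compute the $\infty$-categorical mapping spaces obtained by localising at filtered quasi-isomorphisms. The paper does this by exhibiting, for each $k$, a short exact sequence
\begin{align*}
0 \to &\HHom_{C/t}^{\bG_m}(\gr_FN,t^k\gr_FM) \\
&\to \HHom_{C}^{\bG_m}(\xi(N,F),\xi(M,F)/t^{k+1}) \to \HHom_{C}^{\bG_m}(\xi(N,F),\xi(M,F)/t^k)\to 0,
\end{align*}
valid precisely because $\gr_F N$ is cofibrant over $C/t \cong \bigoplus_i B^i_{[i]}$, and then passing to the limit over $k$. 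This is what promotes quasi-isomorphism invariance of the graded pieces to quasi-isomorphism invariance of the whole $\Hom$-complex; your sketch has no analogue of this step.

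Second, for the inverse functor in (\ref{hyperconnlemma:second})$\to$(\ref{hyperconnlemma:third}), ``exploits cofibrancy to split the filtration up to quasi-isomorphism'' is not an argument---the filtration does not split, and the genuine content is to show that for any cofibrant graded $B^{\#}$-module $E$ with a quasi-isomorphism $E \to \gr_FM$, there exists a $t$-adically complete flat $C$-module $\tilde E$ with $\tilde E/t \cong E$ and a quasi-isomorphism $\tilde E \to \xi(M,F)$. This is again the same $t$-adic obstruction theory, applied this time to deformations of objects rather than morphisms; the hyperconnection components $\nabla_i$ then appear as the coefficients of the $C$-module differential on $\tilde E \cong E\ten_{B^0}C$. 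Third, your inductive proof of the lifting statement is really a hand-rolled version of exactly this deformation argument, and the passage you flag as the ``principal obstacle''---that adding $\nabla'_i$ may force adjustments to earlier components---is genuinely where the argument lives. The Rees-construction framing resolves it cleanly because the corrections live in successively higher powers of $t$, so the limit is automatic; ``the filtration is exhaustive and complete, so all corrective sums converge'' is too quick to stand on its own, as you still need to organise the correction terms so that they do not cascade backwards. The paper obtains the lifting property as an immediate corollary of the same $\tilde E$-construction, applied to the quasi-isomorphism $E' \to E$ of cofibrant $B^0$-modules, rather than as a separate induction.

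Finally, note that the paper anchors the (\ref{hyperconnlemma:first})$\leftrightarrow$(\ref{hyperconnlemma:second}) equivalence on a cited external result (\cite[Lemma 1.5]{DQLag}) at the $\infty$-category level, and then uses the obstruction theory only to identify the correct dg-category-level model; your proposal attempts to reconstruct the $\infty$-equivalence from scratch, which is a much larger undertaking than the sketch acknowledges.
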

\begin{proof}
By \cite[Lemma 1.5]{DQLag}, the  $\infty$-category  given by $DG^+dg\Mod(B)$ localised at levelwise quasi-isomorphisms is equivalent to the $\infty$-category $\C(B)$ of complete exhaustively filtered $(\Tot^{\Pi}B, \Tot^{\Pi}\sigma^{\ge *}B)$-modules  in chain complexes, localised at filtered quasi-isomorphisms. Via the Rees construction $\xi$, the $\infty$-category of exhaustively filtered $(\Tot^{\Pi}B, \Tot^{\Pi}\sigma^{\ge *}B)$-modules is equivalent to the category of  $\bG_m$-equivariant $C$-modules, for  $C:=\bigoplus_i t^{-i} \Tot^{\Pi}\sigma^{\ge i}B$ with $t$ of $\bG_m$-weight $1$. 

When the filtration $F$ on a module  $M$ is complete, we can  then use an obstruction theory argument as in \cite[\S 14.3.2]{mhs2}, since then $\xi(M,F)$ will be a $\bG_m$-equivariant limit of the modules $\xi(M,F)/t^k$. In particular, $\HHom_{C}(\xi(N,F),\xi(M,F))\cong \Lim_k \HHom_{C}(\xi(N,F),\xi(M,F)/t^k)$, with exact sequences
\begin{align*}
0 \to &\HHom_{C/t}^{\bG_m}(\gr_FN,t^k\gr_FM)    \\
&\to \HHom_{C}^{\bG_m}(\xi(N,F),\xi(M,F)/t^{k+1}) \to \HHom_{C}^{\bG_m}(\xi(N,F),\xi(M,F)/t^k)\to 0
\end{align*}
whenever $\gr_FN$ (i.e. $\xi(N,F)/t$) is cofibrant as a module over $C/t \cong \bigoplus_i B^i_{[i]}$. Thus the complexes $\HHom_{C}$ are quasi-isomorphism invariant for objects with this cofibrancy condition, so calculate $\Hom$-complexes in the $\infty$-category. This gives an equivalence between (\ref{hyperconnlemma:first}) and (\ref{hyperconnlemma:second}) above, via Lemma \ref{doublecomplexlemma}.

Now, note that quasi-isomorphism invariance also applies to deformations of objects (again as in \cite[\S 14.3.2]{mhs2}), and that the quasi-Cartesian condition only restricts $\gr_FM \cong\xi(M,F)/t$. Thus for any $(M,F)$ as in (\ref{hyperconnlemma:second}) and any quasi-isomorphism $E \to  \gr_FM$ of cofibrant graded $B^{\#}$ modules, there exists a $\bG_m$-equivariantly $t$-adically complete flat $C$-module $\tilde{E}$ with $\tilde{E}/t \cong E$  and a quasi-isomorphism $\tilde{E} \to \xi(M,F)$.  Applying this to the case $E= (\gr_F^0M)\ten_{B^0}t^{-\#}B^{\#}$ for $M$ quasi-Cartesian gives $\tilde{E}\cong (E\ten_{B^0}C, \delta + \sum_{i>0}\nabla_i)$, yielding the description of objects  in  (\ref{hyperconnlemma:third}) and establishing the final property.
\end{proof}

\subsection{Stacky dg FEFC algebras}

Consider the category  $DG^+dg_+\FEFC(R)$ of stacky dg FEFC $R$-algebras, generalising \S \ref{dgFEFCsn} to have bigradings and commuting horizontal and vertical  differentials $\delta, \pd$, where the notation $DG^+dg_+$ signifies that we are writing the objects as cochain (superscript) complexes of chain (subscript) complexes, both concentrated in their respective non-negative degrees. These provide an analytic analogue of \cite[\S \ref{NCstacks-stackysn}]{NCstacks}, with every stacky dg FEFC algebra having an underlying double complex. The free objects are given by $ \cF_{S \sqcup \delta S \sqcup \pd S \sqcup \pd \delta S} (R)$ for bigraded sets $S$ (modifying Definition \ref{freegradedandef} by the introduction of a second grading), with $\pd \co A^i_j\to A^{i+1}_j$ and $\delta \co A^i_j \to A^i_{j-1}$.

Combining the arguments of \cite[Lemma \ref{NCstacks-bidgaamodel}]{NCstacks} and Lemma \ref{FEFCmodel} gives: 
\begin{lemma}\label{biFEFCmodel}
There is a cofibrantly generated model structure on $DG^+dg_+\FEFC(R)$ in which  weak equivalences are levelwise quasi-isomorphisms and  fibrations are surjections in strictly positive chain degrees.
\end{lemma}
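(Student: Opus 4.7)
The plan is to apply \cite[Theorem 11.3.2]{Hirschhorn} to transfer a projective levelwise model structure along the forgetful functor from $DG^+dg_+\FEFC(R)$ to non-negatively bigraded cochain chain complexes of $R$-modules, equipped with the model structure in which weak equivalences are levelwise (in the cochain direction) chain quasi-isomorphisms and fibrations are levelwise surjections in strictly positive chain degrees. This mirrors the strategy used in both Lemma \ref{FEFCmodel} and \cite[Lemma \ref{NCstacks-bidgaamodel}]{NCstacks}.

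The forgetful functor has a left adjoint given in the underlying bigraded module by a bigraded completion of the tensor algebra, and the induced monad preserves filtered colimits since $S \mapsto \cF_{S \sqcup \delta S \sqcup \pd S \sqcup \pd \delta S}(R)$ does. Hirschhorn's theorem then reduces the task to verifying that transfinite compositions of pushouts of generating trivial cofibrations map to levelwise quasi-isomorphisms. Since levelwise quasi-isomorphisms are closed under filtered colimits, it suffices to handle a single pushout with finitely presented domain. The generating trivial cofibrations in the source are the inclusions $0 \to D^{p,q}$, where $D^{p,q}$ is the double complex with $R$ in bidegrees $(p,q)$ and $(p,q-1)$ for $q \ge 1$, identity $\delta$, and vanishing $\pd$.

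One therefore needs to show that for $B \in DG^+dg_+\FEFC(R)$, finitely presented without loss of generality, the map $B \to B \coprod \cF(D^{p,q}) =: C$ is a levelwise quasi-isomorphism. Following verbatim the strategy of Lemma \ref{FEFCmodel}, one defines a graded derivation $\gamma$ of chain degree $+1$ on $C$ by $\gamma|_B = 0$, $\gamma(x) = 0$, $\gamma(y) = x$, where $x$ is the added generator in bidegree $(p,q)$ and $y = \delta x$. Since $\pd x = \pd y = 0$ by construction and $\gamma$ kills $B$, this derivation automatically commutes with $\pd$. Dividing $\gamma$ by the total $(x,y)$-count on monomials yields an $R$-linear endomorphism $h$ of bidegree $(0, +1)$ satisfying $[\delta, h] = \id - \pi_B$, where $\pi_B$ is the $R$-linear projection onto $B$ along monomials of positive $(x,y)$-degree.

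The main obstacle, as in the ungraded case, is to verify that $h$ extends from the free bigraded associative algebra to its FEFC completion and descends through any FEFC ideals appearing in a presentation $B = \cF_S / I$. Both points go through exactly as in Lemma \ref{FEFCmodel}: the bound $\|h(b)\|_r \le \|b\|_r$ on defining seminorms is insensitive to the cochain grading since $h$ preserves $\pd$-degree, so Lemma \ref{extendmaplemma2} extends $h$ bidegree-by-bidegree; and the compatibility $h \circ \iota_c = \iota_c \circ H$ for the induced operator on $\cF_S \hten \cF_S$ persists because $\gamma$ is a derivation killing $\cF_S$ and commuting with $\pd$, so $h$ preserves any FEFC ideal of the form $\sum_j \iota_{c_j}(\cF_S \hten \cF_S)$ with $c_j \in \cF_S$. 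Consequently $h$ furnishes a chain contraction of $C/B$ in every cochain level, establishing the required levelwise quasi-isomorphism and completing the verification of the transfer theorem's hypothesis.
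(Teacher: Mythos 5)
The overall strategy---transfer along the forgetful functor via \cite[Theorem 11.3.2]{Hirschhorn}, verifying the hypothesis by a rescaled derivation giving a contracting homotopy---is the paper's, combining Lemma \ref{FEFCmodel} with its bigraded algebraic counterpart. However, you have misidentified the generating trivial cofibrations of the source category of double complexes, and as a result the verification is carried out on the wrong pushouts.

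The object you call $D^{p,q}$, with $R$ in bidegrees $(p,q)$ and $(p,q-1)$, $\delta = \id$, and $\pd = 0$, is not a cofibration in the described projective model structure. A morphism $D^{p,q} \to Y$ of double complexes singles out an element $y \in Y^p_q$ satisfying $\pd y = 0$, so lifting against a fibration $X \to Y$ requires surjectivity of $\ker(\pd\colon X^p_q \to X^{p+1}_q) \to \ker(\pd\colon Y^p_q \to Y^{p+1}_q)$, which is not implied by levelwise surjectivity in positive chain degrees. Concretely: take $X$ with $X^0 = X^1$ both equal to the acyclic complex $R \xra{\id} R$ in chain degrees $q, q-1$, with $\pd\colon X^0 \to X^1$ the identity and $X^{\ge 2}=0$, and let $Y$ be the same acyclic complex placed in cochain degree $0$ only; the projection $X \to Y$ is a trivial fibration, yet the $\pd$-cycle $1 \in Y^0_q$ has no $\pd$-closed lift. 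The correct generating trivial cofibrations are $0 \to E^{p,q}$ ($p \ge 0$, $q \ge 1$) where $E^{p,q}$ is freely generated as a double complex by a single element $a$ in bidegree $(p,q)$, i.e.\ has $R$ in bidegrees $(p,q),(p,q-1),(p+1,q),(p+1,q-1)$ spanned by $a, \delta a, \pd a, \pd\delta a$; then $\Hom(E^{p,q}, Y) \cong Y^p_q$ with no kernel condition, so $\{0 \to E^{p,q}\}$-injectives are exactly the fibrations.

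Consequently the pushout you must analyse is $B \to B \coprod \cF(E^{p,q})$, which adjoins \emph{four} free generators, and your derivation $\gamma$ is only specified on two of them. The claim that ``$\pd x = \pd y = 0$ by construction'' is false in the correct setting: $\pd x$ and $\pd \delta x$ are new free generators. To repair the argument you must extend $\gamma$ by $\gamma(\pd x) = 0$ and $\gamma(\pd\delta x) = \pm\, \pd x$ (sign fixed by your convention for commuting $\pd$ past a chain-degree-$1$ derivation), check that $[\gamma, \pd]$ vanishes on all four generators, and observe that $[\delta, \gamma]$ acts on monomials as multiplication by the total word-length in all four new generators. After these corrections the boundedness estimate and the ideal-preservation step carry over from Lemma \ref{FEFCmodel} as you indicate, so the gap is localized but genuine.
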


Since the monad $S \mapsto \cF_{\delta S \sqcup \pd S \sqcup \pd \delta S}(R)$ preserves filtered colimits, we immediately have: 
\begin{lemma}\label{stackyindFPdg}
 The category of stacky dg FEFC $R$-algebras is equivalent to the category of ind-objects of the category of finitely presented stacky dg FEFC $R$-algebras.
\end{lemma}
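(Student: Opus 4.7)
The plan is to mimic the arguments for Lemma \ref{indFPdgEFC} and Lemma \ref{indFP}, adapting them to the bigraded setting. The forgetful functor $U$ from $DG^+dg_+\FEFC(R)$ to bigraded sets (sending a stacky dg FEFC algebra to $\coprod_{i,j} A^i_j$) factors through the category of non-negatively bigraded double complexes of $R$-modules, and admits the left adjoint $S \mapsto \cF_{S \sqcup \delta S \sqcup \pd S \sqcup \pd \delta S}(R)$ described just before the lemma. So $DG^+dg_+\FEFC(R)$ is the category of algebras for the resulting monad $T$ on bigraded sets.

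First I would verify that $T$ preserves filtered colimits. For finite bigraded $S$, the object $TS = \cF_{S \sqcup \delta S \sqcup \pd S \sqcup \pd \delta S}(R)$ is bidegreewise a K\"othe sequence space on a finite generating set in each bidegree, so representing $TS$ on a filtered colimit $\LLim_\alpha X_\alpha$ reduces to finite subsets which land in some $X_\alpha$. For arbitrary $S$, write $S = \LLim_{T \subset S} T$ over finite bigraded subsets $T$; since $T$ is defined via such a colimit and filtered colimits commute with filtered colimits, $T$ preserves filtered colimits.

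Next I would invoke the standard fact (see e.g. the Adjoint Functor Theorem for accessible categories, or directly transport the argument of Lemma \ref{indFP}) that a category of algebras for a filtered-colimit-preserving monad on a locally finitely presentable category is locally finitely presentable, with the compact objects being precisely the finitely presented algebras --- i.e. coequalisers of the form $\cF_{S' \sqcup \delta S' \sqcup \pd S' \sqcup \pd \delta S'}(R) \rightrightarrows \cF_{S \sqcup \delta S \sqcup \pd S \sqcup \pd \delta S}(R) \to B$ for finite bigraded sets $S, S'$. This provides an equivalence $DG^+dg_+\FEFC(R) \simeq \ind(DG^+dg_+\FEFC(R)^{\mathrm{fp}})$ together with the expected formula
\[
\Hom(\LLim_\beta B(\beta), \LLim_\alpha A(\alpha)) \cong \Lim_\beta \LLim_\alpha \Hom(B(\beta), A(\alpha))
\]
for filtered systems of finitely presented objects.

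There is no real obstacle here; the only subtlety is keeping track of the doubly-graded indexing and ensuring finite presentation includes the $\pd$-differential data (i.e.\ the generating set $S$ genuinely controls all of $S, \delta S, \pd S, \pd \delta S$ through the monad $T$). Since this is handled uniformly by the free-algebra construction, the argument is a formal transcription of Lemma \ref{indFP} with $\cF_S$ replaced by $\cF_{S \sqcup \delta S \sqcup \pd S \sqcup \pd \delta S}$ throughout.
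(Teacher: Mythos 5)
Your proof is correct and follows essentially the same approach as the paper, which simply notes that the monad $S \mapsto \cF_{S \sqcup \delta S \sqcup \pd S \sqcup \pd \delta S}(R)$ preserves filtered colimits and concludes immediately (by the same formal argument underlying Lemmas \ref{indFPdgEFC} and \ref{indFP}). The additional details you supply about factoring through bigraded double complexes and the reduction to finite generating sets are exactly the right checks, even if the paper leaves them implicit.
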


\subsection{Stacky topological dg algebras}


\begin{definition}
 Define a unital associative  stacky differential graded  complete  LDMC  ${R}$-algebra $A_{\bt}^{\bt}$ to be a cochain chain complex of complete locally convex topological ${R}$-vector spaces, concentrated in non-negative bidegrees, with the differentials $\delta\co A_n^i \to A_{n-1}^i$ and $\pd \co A_n^i \to A_n^{i+1}$ being continuous operators, together with a unit $1 \in \z_0\z^0A$ and a continuous ${R}$-bilinear multiplication $A_m^i \by A_n^j \to A_{m+n}^{i+j}$, such that the topology is given by a system of submultiplicative  seminorms $\|-\|$ with $\|1\|=1$,  $\|\delta a\|\le \|a\|$ and $\|\pd a \| \le \|a\|$. We require that the multiplication be
 associative and that $\delta$ and $\pd$ be  derivations with respect to it.  
\end{definition}

\begin{definition}
Write $DG^+dg_+\hat{\Tc}\Alg_{R}$ for the category of  unital associative stacky differential graded complete  LDMC ${R}$-algebras concentrated  in non-negative homological degrees, in which morphisms are bidifferential bigraded ${R}$-algebra morphisms $A_{\bt}^{\bt} \to B_{\bt}^{\bt}$ which are continuous in each degree.

 Write   $dg_+\cF r\Alg(R)$ (resp. $dg_+\cN\cF r\Alg(R)$) for the full subcategories of  $DG^+dg_+\hat{\Tc}\Alg_{R}$ consisting of objects for which the $R$-modules $A^i_n$ are all Fr\'echet (resp. nuclear Fr\'echet).
\end{definition}

By completing with respect to each submultiplicative seminorm, we can identify $DG^+dg_+\hat{\Tc}\Alg_{R}$ with the full subcategories of the pro-category $\pro(DG^+dg_+\Ban\Alg_{R}) $ consisting of filtered systems in which all transition morphisms are dense.

We have the following immediate analogue of Lemma \ref{FEFCforgetlemma}:
\begin{lemma}\label{stackyFEFCforgetlemma}
 There is a natural forgetful functor $U \co DG^+dg_+\hat{\Tc}\Alg_{R} \to  DG^+dg_+\FEFC\Alg_{R}$, sending a topological stacky dg  algebra $A_{\bt}^{\bt}$ to a stacky dg FEFC algebra with the same underlying bidifferential bigraded algebra. 
\end{lemma}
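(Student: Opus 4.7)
The plan is to mimic the proof of Lemma \ref{FEFCforgetlemma} exactly, incorporating the cochain bigrading and the second differential $\pd$ throughout. Given $A \in DG^+dg_+\hat{\Tc}\Alg_{R}$, the underlying bidifferential bigraded associative algebra already carries every structural feature we need apart from the FEFC operations, so the task reduces to constructing those operations in each bigraded component and verifying they are intertwined by both differentials.

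Concretely, I would first fix a finite bigraded set $S$ and an element $f \in \cF_{S \sqcup \delta S \sqcup \pd S \sqcup \pd\delta S}(R)$ of bidegree $(j,k)$. Given inputs $a_s$ lying in the bigraded components of $A$ corresponding to the bidegrees of $s$, $\delta s$, $\pd s$, $\pd\delta s$, the defining expansion $f = \sum_\alpha \lambda_\alpha x^\alpha$ with $\lim_{|\alpha|\to\infty} |\lambda_\alpha|^{1/|\alpha|} = 0$ converges in the bigraded component $A^j_k$ with respect to every submultiplicative seminorm, exactly as in the proof of Lemma \ref{EFCforgetlemma}. Completeness of $A$ in each bidegree then pins down the operation $\Phi_f$ as an element of $A^j_k$.

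Next I would verify the compatibility axioms. Associativity of composition of operations reduces to continuity of multiplication, while the Koszul-signed intertwining relations $\delta \Phi_f = \Phi_{\delta f}$ and $\pd \Phi_f = \Phi_{\pd f}$ hold tautologically on the dense subspace of noncommutative polynomials in the generators and their $\pd$, $\delta$, $\pd\delta$ images, and extend to the completions by continuity of both differentials. This installs the stacky dg FEFC structure on $A$, and functoriality of $U$ on morphisms is then immediate since continuous bigraded algebra maps commute with the convergent sums defining $\Phi_f$.

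No genuine obstacle arises here: the argument is essentially bookkeeping on bigradings, since all the substantive analytic work (convergence in submultiplicative seminorms, continuity of the differentials on convergent series) was already carried out in the proofs of Lemmas \ref{EFCforgetlemma} and \ref{FEFCforgetlemma}. The only mildly new ingredient is tracking the cochain degree separately from the chain degree, which is a purely notational enhancement of the existing proof.
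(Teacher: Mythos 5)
Your proposal is correct and takes essentially the same approach as the paper, which itself gives no proof beyond calling the statement "the immediate analogue of Lemma \ref{FEFCforgetlemma}" (which in turn defers to the argument of Lemma \ref{EFCforgetlemma}). Your spelling out of the details — convergence of the power series in each bigraded component with respect to submultiplicative seminorms, intertwining with $\delta$ and $\pd$ on the dense polynomial subalgebra extended by continuity, and functoriality via continuity of bigraded algebra maps — is exactly the intended content, and correctly captures the one genuinely new point relative to the commutative case, namely that the free entire functional calculus must be supplied (not merely polynomial structure) in every bidegree, not just bidegree $(0,0)$.
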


Adapting Lemma \ref{FEFCmodel}, we have a model structure on $DG^+dg_+\FEFC_{R}$ in which fibrations are maps which are surjective in strictly positive chain degrees, and weak equivalences are levelwise quasi-isomorphisms. The unit of the adjunction in Lemma \ref{stackyFEFCforgetlemma} is an isomorphism on objects whose underlying graded FEFC algebra is freely generated, and hence on all cofibrant objects. The proof of Corollary \ref{keyBanFEFCcor} then adapts to give the following.

\begin{corollary}\label{stackykeyBanFEFCcor}
The  forgetful functor $U \co DG^+dg_+\hat{\Tc}\Alg_{R} \to  DG^+dg_+\FEFC_{R}$ induces an equivalence of the $\infty$-categories given by simplicial localisation at levelwise abstract quasi-isomorphisms, i.e.  morphisms $A \to B$ for which the maps  $\H_*A^i \to \H_*B^i$ are isomorphisms of abstract vector spaces for all $i$. The same is true if we replace $DG^+dg_+\hat{\Tc}\Alg_{R}$ with any full subcategory containing all objects of the form $(\cF_S,\delta,\pd)$ for non-negatively bigraded sets $S$. 
\end{corollary}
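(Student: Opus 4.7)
The plan is to mimic the proof of Corollary \ref{keyBanFEFCcor} in the bigraded setting. First I would construct the left adjoint $F$ to the forgetful functor $U\co \pro(DG^+dg_+\hat{\Tc}\Alg_{R}) \to DG^+dg_+\FEFC_{R}$ exactly as in Proposition \ref{keyBanFEFCprop}: on a quasi-free object $C=(\cF_S,\delta,\pd)$ with bigraded generating set $S$, the construction sends any weighting $f\co S \to \R_{>0}$ to the $\ell^1$ seminorm $\|-\|_f$ extended multiplicatively to words in $W(S)$, and one defines $FC:=\Lim_f \hat{C}_f$ inside $\pro(DG^+dg_+\Ban\Alg_{R})$. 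The only subtlety beyond the chain-only case is to verify that the horizontal differential $\pd$ descends to bounded operators on the Banach completions $\hat{C}_f$; this is handled by the same device as in the dg case, replacing the family $\|-\|_f$ by the equivalent family $\|a\|_f^\sim := \|a\|_f+\|\pd a\|_f+\|\delta a\|_f+\|\pd\delta a\|_f$, under which both differentials become contractive and multiplication remains submultiplicative. Quotients by FEFC ideals are handled as in Proposition \ref{keyBanFEFCprop} by taking closures.

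Next I would verify that the unit $C \to UFC$ of the adjunction is an isomorphism whenever $C=(\cF_S,\delta,\pd)$ is quasi-free. The key point, exactly as in the proof of Proposition \ref{keyBanEFCprop}, is that any element of $UFC$ can involve only finitely many generators from $S$: otherwise, an argument as in \cite[Proposition 1.1.4]{prosmans} would produce a weighting $f$ making the norm infinite. Since $S$ is bigraded and each bidegree is finitely generated by selection of finitely many variables, $(UFC)^i_n$ reduces to a free object on finitely many generators, making the unit an isomorphism. Consequently the unit is an isomorphism on every cofibrant object, which is a retract of a quasi-free one.

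Combining this with the model structure of Lemma \ref{biFEFCmodel} and Lemma \ref{inftyequivlemma} gives the first statement, since the hypothesis of Lemma \ref{inftyequivlemma} asks only that the unit be a weak equivalence after cofibrant replacement. For the refined statement about full subcategories containing all $(\cF_S,\delta,\pd)$, I would invoke the second clause of Lemma \ref{inftyequivlemma}, so it only remains to build a functorial cofibrant replacement $Q$ in $DG^+dg_+\FEFC_{R}$ whose values are of the form $(\cF_S,\delta,\pd)$. This is the standard bigraded cell-attachment: given a stacky dg FEFC algebra $A$, choose generators $S$ recursively by bidegree along both differentials (adding $\pd$- and $\delta$-cells to surject onto cocycles and to kill unwanted homology), so that the iterated pushout of free bigraded cells produces a quasi-free resolution $QA \to A$ whose underlying object lies in the prescribed subcategory.

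I expect the only mildly tricky point to be the bookkeeping that guarantees $\pd$ remains bounded after completion once one also has to contend with $\delta$; everything else is a routine bigraded upgrade of the arguments already given. In particular, the closure of FEFC ideals is again a bidifferential bigraded FEFC ideal because both $\delta$ and $\pd$ are continuous in the $\|-\|_f^\sim$ family, so the quotient description of $F$ on non-quasi-free inputs transfers without change.
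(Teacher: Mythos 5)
Your proposal is correct and follows essentially the same route as the paper, which invokes the stacky model structure of Lemma \ref{biFEFCmodel}, checks that the unit of the adjunction is an isomorphism on quasi-free (hence all cofibrant) objects, and then applies Lemma \ref{inftyequivlemma} via the argument of Corollary \ref{keyBanEFCcor}; the paper is merely more terse, leaving the $\|-\|_f^{\sim}$ contraction trick for $\pd$ and $\delta$ together, the finitely-many-generators argument, and the quasi-free cofibrant replacement implicit where you spell them out.
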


There is also an analogue of Corollary \ref{Dlfpcor} for stacky DGAs, with exactly the same proof, but its applicability for us is limited because our functors of interest on stacky DGAs are seldom l.f.p. Instead, we will make use of the following variant.

\begin{definition}
 Say that a w.e.-preserving functor  $F$ from  $DG^+dg_+\FEFC_{R}$  to a relative category $\C$ is d.f.p. (degreewise finitely presented) if it preserves filtered homotopy colimits of objects bounded in the cochain direction, i.e. if $F(\LLim_{\alpha} B_{\alpha}) \simeq \ho\LLim_{\alpha}F(B_{\alpha})$ for all filtered systems $\{B_{\alpha}\}_{\alpha}$ for which there exists an integer $n$  with $ B_{\alpha}^i=0$ for all $i>n$ and all $\alpha$.
\end{definition}
In particular, note that the mapping space functor $\oR\Map(A,-)$ is d.f.p. whenever $A$ is cofibrant with finitely many generators in each cochain degree.

\begin{proposition} \label{stackyDlfpprop}
Take a homotopy complete and cocomplete relative category $\C$ and any full subcategories $\cD' \subset \cD$ of $DG^+dg_+\hat{\Tc}\Alg_{R} $ containing all  finitely presented 
cofibrant objects  and closed under
 brutal  truncations $A \mapsto A^{\le n}$ in the cochain direction. 

Given a d.f.p w.e.-preserving  functor $F \co DG^+dg_+\FEFC_{R} \to \C$ and a w.e.-preserving functor $G \co \cD \to \C$ with the property that $G(B) \simeq \ho\Lim_n G(B^{\le n})$ for all $B \in \cD$,
the natural map
\[
 \oR\Map_{[\cD,\C]}(U_{\cD,*}F,G) \to \oR\Map_{[\cD',\C]}(U_{\cD',*}F, \iota_*G)
\]
is an equivalence, for the inclusion $\iota \co  \cD' \into \cD$,  the forgetful functor $U_{\cD} \co  \cD \to  DG^+dg_+\FEFC_{R}$, and $\iota_*, U_{\cD,*}$ the respective functors given by pre-composition.

Moreover, if  $G$ extends to stacky  FEFC-DGAs in the sense that  $G\simeq U_{\cD,*}G'$ for some w.e.-preserving  functor $G' \co DG^+dg_+\FEFC_{R} \to \C$ with $G'(B) \simeq \ho\Lim_n G'(B^{\le n})$ for all $B$,
then the natural map
\[
 \oR\Map_{[DG^+dg_+\FEFC_{R},\C]}(F,G') \to    \oR\Map_{[\cD,\C]}(U_{\cD,*}F,G)  
\]
is also an equivalence. 

 \end{proposition}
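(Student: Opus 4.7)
My plan is to model the proof on that of Corollary \ref{Dlfpcor}, with the limit hypothesis on $G$ (and on $G'$) playing the role that the ind-category description played in the earlier proof. Write $\cA' := DG^+dg_+\hat{\Tc}\Alg_R^{\mathrm{fp,cof}}$, whose objects are cochain-bounded; by Corollary \ref{stackykeyBanFEFCcor} the forgetful functor $U$ embeds $\cA'$ fully faithfully in each of $\cD'$, $\cD$ and $\cF := DG^+dg_+\FEFC_R$ after localization at levelwise quasi-isomorphisms.

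For the first equivalence, I would construct the inverse to restriction directly. Given a natural transformation $\beta \co U_{\cD',*}F \to \iota_*G$ on $\cD'$, extend it to $\alpha \co U_{\cD,*}F \to G$ on $\cD$ by setting
\[
\alpha_B := \ho\Lim_n \bigl(\beta_{B^{\le n}} \circ F(U(B \to B^{\le n}))\bigr) \co F(UB) \to \ho\Lim_n G(B^{\le n}) \simeq G(B),
\]
which makes sense since $B^{\le n}\in \cD'$ by closure under brutal truncations. Naturality of $\alpha$ and the identity $\alpha|_{\cD'} = \beta$ follow from naturality of $\beta$ applied to iterated truncation maps together with the limit hypothesis on $G$; higher homotopies are handled by the commutation of $\oR\Map(H,-)$ with $\ho\Lim$ in the target argument.

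For the second equivalence, I would apply the same argument inside $\cF$, taking $\cF' \subset \cF$ to be the full subcategory of cochain-bounded stacky dg FEFC algebras. The limit hypothesis on $G'$ gives $\oR\Map_{[\cF, \C]}(F, G') \simeq \oR\Map_{[\cF', \C]}(F|_{\cF'}, G'|_{\cF'})$, and one then identifies both
\[
\oR\Map_{[\cF', \C]}(F|_{\cF'}, G'|_{\cF'}) \quad \text{and} \quad \oR\Map_{[\cD', \C]}(U_{\cD',*}F, \iota_*G)
\]
with $\oR\Map_{[\cA', \C]}(F|_{\cA'}, G'|_{\cA'})$ via left-Kan-extension adjunctions along the inclusions $\cA' \hookrightarrow \cF'$ and $\cA' \hookrightarrow \cD'$. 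The key role of the d.f.p. hypothesis on $F$ here is to ensure that $F|_{\cF'}$ is the left Kan extension of $F|_{\cA'}$: by Lemma \ref{stackyindFPdg}, every cochain-bounded FEFC algebra is a filtered colimit of finitely presented cofibrant ones sharing the same cochain bound, and d.f.p. then implies the colimit is preserved by $F$.

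The main obstacle will be verifying that the left-Kan-extension identification of $F|_{\cF'}$ from $F|_{\cA'}$ is compatible with the cochain-truncation reduction of the first part. Concretely, one needs finitely presented approximations of any cochain-bounded FEFC algebra that respect the cochain bound, and one must check that filtered colimits of such approximations commute with the truncation maps $B \to B^{\le n}$ that appear in the first part. These compatibilities should follow from the exactness of brutal truncation in each cochain degree and the preservation of filtered colimits by the forgetful and truncation functors.
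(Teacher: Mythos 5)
Your overall strategy — reduce along cochain truncations, exploit the hypothesis $G(B)\simeq\ho\Lim_n G(B^{\le n})$, and identify everything with mapping spaces over $\cA' = DG^+dg_+\hat{\Tc}\Alg_R^{\mathrm{fp,cof}}$ — is indeed the shape of the paper's argument. But there are two places where your execution as written does not quite go through, and they are precisely where the paper introduces extra machinery that you omit.

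First, for the opening equivalence, constructing a pointwise inverse $\alpha_B := \ho\Lim_n(\beta_{B^{\le n}}\circ F(U(B\to B^{\le n})))$ and then waving at ``higher homotopies'' is not by itself a proof that the restriction map of derived mapping spaces is an equivalence. You need, for each fixed $n$, an identification of $\oR\Map_{[\cD^{[0,n]},\C]}$ with $\oR\Map_{[\cA'^{[0,n]},\C]}$ — i.e.\ a truncated version of Proposition \ref{afpTcFEFCprop} and Corollary \ref{Dlfpcor} — and then to present both sides as a $\ho\Lim_n$ of the same thing. The paper sets up genuine model structures on $DG^{[0,n]}dg_+\hat{\Tc}\Alg_R$ and $DG^{[0,n]}dg_+\FEFC_R$, with $\sigma^{\le n}$ as a left Quillen functor, so that $j_n^!\dashv j_{n*}$ makes sense and the hypothesis on $G$ becomes $G\simeq\ho\Lim_n j_n^! j_{n*}G$; then $\oR\Map(U_*F,G)\simeq\ho\Lim_n\oR\Map_{[\cD^{[0,n]}]}(j_{n*}U_*F, j_{n*}G)$, and the latter depends only on data visible from $\cA'^{[0,n]}$. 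Without that machinery, you have constructed a candidate inverse, not verified that it inverts the map of spaces.

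Second, and more seriously, the obstacle you flag at the end is real and your route around it is wrong. Your second half passes through $\cF' = $ all cochain-bounded stacky dg FEFC algebras, and requires $F|_{\cF'}$ to be a left Kan extension of $F|_{\cA'}$ using Lemma \ref{stackyindFPdg}. That lemma gives an ind-description of $DG^+dg_+\FEFC_R$ by finitely presented objects, but gives no control on cochain support: the finitely presented approximations of a $B$ concentrated in cochain degrees $[0,n]$ need not themselves be concentrated in degrees $[0,n]$, and applying brutal truncation $\sigma^{\le n}$ to a cofibrant fp object does not yield a cofibrant object of $\cF$ (cofibrancy there asks for free generators in all cochain degrees). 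So the d.f.p.\ hypothesis does not directly apply to the filtered system you would get, and $\cF'$ itself carries no model structure you can work with. The paper avoids this by never working in $\cF'$: it works degree-by-degree inside the truncated model categories $DG^{[0,n]}dg_+\FEFC_R$, where $\sigma^{\le n}$ is left Quillen so its image of cofibrant fp objects is cofibrant, d.f.p.\ implies $j_{n*}F$ is l.f.p., and the Kan extension adjunctions $\oL i^*\dashv i_*$, $\oL U^*\dashv U_*$ are available. You would need to restructure your second part along these lines to close the gap you correctly identified.
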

\begin{proof}
 We have model structures on the full subcategories $ DG^{[0,n]}dg_+\hat{\Tc}\Alg_{R} \subset DG^+dg_+\hat{\Tc}\Alg_{R}$ and  $DG^{[0,n]}dg_+\FEFC_{R} \subset DG^+dg_+\FEFC_{R}$ on objects concentrated in cochain degrees $[0,n]$, in which morphisms are fibrations or weak equivalences whenever they are so in the larger category. The brutal cotruncation functor $\sigma^{\le n}$ is left adjoint to inclusion, forming a Quillen adjunction. If  $A$ is a  cofibrant object of one of the larger categories, with finitely many generators in each cochain degree, then $\sigma^{\le n} A$ is a cofibrant object of the smaller category, with finitely many generators. 

Writing $\cD^{[0,n]}$ for the full subcategory of $\cD$ on objects concentrated in cochain degrees $[0,n]$, the proofs of  Proposition \ref{afpTcFEFCprop} and Corollary \ref{Dlfpcor} adapt to give analogues for these categories, so that for $F_n \in [DG^{[0,n]}dg_+\FEFC_{R},\C]$ l.f.p. and $G_n \in [\cD^{[0,n]},\C]$, we have equivalences
\begin{align*}
 \oR\Map_{[\cD^{[0,n]},\C]}(U_{\cD^{[0,n]},*}F_n,G_n) &\xra{\sim} \oR\Map_{[DG^{[0,n]}dg_+\hat{\Tc}\Alg_{R}^{\mathrm{fp,cof}}  ,\C]}(F_n, i_*G_n),\\
 \oR\Map_{[DG^{[0,n]}dg_+\FEFC_{R},\C]}(F_n,G'_n)  &\xra{\sim}   \oR\Map_{[\cD^{[0,n]},\C]}(U_{\cD^{[0,n]},*}F_n,G_n), 
\end{align*}
for $i \co DG^{[0,n]}dg_+\hat{\Tc}\Alg_{R}^{\mathrm{fp,cof}} \to \cD^{[0,n]}$ and $G'_n$ a lift of $G_n$.
 
Now, since $F$ is d.f.p., it follows that $j_{n*}F$ is l.f.p.  for the inclusion $j_n \co  DG^{[0,n]}dg_+\FEFC_{R} \to DG^+dg_+\FEFC_{R}$. The condition $G'(B) \simeq \ho\Lim_n G'(B^{\le n})$ can be rephrased as saying $G'\simeq \ho\Lim_n j_n^!j_{n*}G'$ for $j_n^!$ right adjoint to $j_{n*}$, with a similar property for $G$ in terms of the inclusion $j_n \co \cD^{[0,n]} \to \cD$.

We thus have
\begin{align*}
 \oR\Map_{[\cD,\C]}(U_{\cD,*}F,G)&\simeq  \ho\Lim_n \oR\Map_{[\cD^{[0,n]},\C]}(j_{n*}U_{\cD,*}F,j_{n*}G)\\ 
&\simeq  \ho\Lim_n \oR\Map_{[DG^{[0,n]}dg_+\hat{\Tc}\Alg_{R}^{\mathrm{fp,cof}},\C]}(U_*F,i_*j_{n*}G),
 \end{align*}
 where the morphisms  $[DG^{[0,n+1]}dg_+\hat{\Tc}\Alg_{R}^{\mathrm{fp,cof}},\C] \to [DG^{[0,n]}dg_+\hat{\Tc}\Alg_{R}^{\mathrm{fp,cof}},\C]$ in the second inverse system are given by the left adjoints $(\sigma^{\le n})^*$ to the functors $(\sigma^{\le n})_*$ induced by brutal truncation  $\sigma^{\le n} \co DG^{[0,n+1]}dg_+\hat{\Tc}\Alg_{R}^{\mathrm{fp,cof}} \to DG^{[0,n]}dg_+\hat{\Tc}\Alg_{R}^{\mathrm{fp,cof}}$.
 Since this depends only on $F$ and $i_*j_{n*}G$, replacing $\cD$ with $\cD'$ gives an equivalent expression, establishing the first equivalence.
 
Likewise, we have
\begin{align*}
 \oR\Map_{[DG^+dg_+\FEFC_{R},\C]}(F,G') &\simeq  \ho\Lim_n \oR\Map_{[DG^{[0,n]}dg_+\FEFC_{R},\C]}(j_{n*}F,j_{n*}G')\\   
 &\simeq  \ho\Lim_n\oR\Map_{[\cD^{[0,n]},\C]}(U_{\cD^{[0,n]},*}j_{n*}F,j_{n*}G)  \\
  &\simeq  \ho\Lim_n\oR\Map_{[\cD^{[0,n]},\C]}(j_{n*}U_{\cD,*}F,j_{n*}G)  \\
  &\simeq  \oR\Map_{[\cD,\C]}(U_{\cD,*}F,\ho\Lim_n j_n^!j_{n*}G)\\
   &\simeq  \oR\Map_{[\cD,\C]}(U_{\cD,*}F,G). \qedhere
\end{align*}

\end{proof}

\subsection{Denormalisation}\label{denormsn}

The following lemma allows us to apply Proposition \ref{stackyDlfpprop} to functors of the form $D_*F$ from Definition \ref{Dlowerdef0}. 
\begin{lemma}\label{DstarFdfpprop}
 For $F \co dg_+\FEFC_{R} \to s\Set$, we have $D_*F(B) \simeq \ho\Lim_m D_*F(B^{\le m})$. 

 Moreover, if $F$ is w.e.-preserving, l.f.p. and homogeneous in the sense of Definition \ref{hhgsdef}, then $D_*F$ is d.f.p. 
 \end{lemma}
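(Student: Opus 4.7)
For the first claim I would exploit that cosimplicial denormalisation is local in the cochain direction: writing $(DB)^n \cong \bigoplus_{[n]\twoheadrightarrow[k]}B^k$ as $R$-modules, with the dg FEFC algebra structure on $(DB)^n$ induced from the cochain algebra structure on $B$, we see that $D^n$ depends only on $B^0,\ldots,B^n$. Hence the canonical map $D^n B^{\le m}\to D^n B$ is an isomorphism of dg FEFC algebras whenever $n\le m$, so for each fixed $n$ the inverse system $\{F(D^n B^{\le m})\}_m$ stabilises at $F(D^n B)$ for $m\ge n$. Exchanging the order of the homotopy limits over $m$ and $n\in\Delta$ then yields
\[
\ho\Lim_m D_*F(B^{\le m}) \simeq \ho\Lim_{n\in\Delta}\ho\Lim_m F(D^n B^{\le m}) \simeq \ho\Lim_{n\in\Delta} F(D^n B) = D_*F(B).
\]

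For d.f.p., I would take a filtered system $\{B_\alpha\}$ in $DG^+dg_+\FEFC_R$ with $B_\alpha^i=0$ for $i>N$ uniformly, so $B:=\LLim_\alpha B_\alpha$ also satisfies $B^i=0$ for $i>N$. Since each $D^n$ is built as a finite direct sum of $B^k$'s with an algebra structure formed from finitely many operations, it preserves filtered colimits, giving $D^n B\cong \LLim_\alpha D^n B_\alpha$; the l.f.p.\ hypothesis on $F$ then produces level-wise equivalences $F(D^n B)\simeq \ho\LLim_\alpha F(D^n B_\alpha)$. The remaining task is to commute the cosimplicial homotopy limit $\ho\Lim_{n\in\Delta}$ with the filtered homotopy colimit.

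The plan is to reduce to a finite homotopy limit by showing that $F(DB)$ is $N$-coskeletal as a cosimplicial simplicial set. The Dold--Kan splitting $(DB)^n\cong M^n(DB)\oplus B^n$ means that, since $B^n=0$ for $n>N$, the matching map $(DB)^n\to M^n(DB)$ is an isomorphism of dg FEFC algebras in those degrees, so $DB$ is $N$-coskeletal as a cosimplicial dg FEFC algebra. Applying the w.e.-preserving $F$ yields $F((DB)^n)\xra{\sim}F(M^n DB)$ for $n>N$. To upgrade this to the statement that the canonical map $F((DB)^n)\to M^n F(DB)$ is a weak equivalence, I would decompose the matching object $M^n DB$ as an iterated homotopy pullback of the $DB^k$ ($k<n$) along the split surjections coming from codegeneracies; the kernels of the successive stages sit in Moore summands and a cochain-degree count shows they become square-zero at the appropriate cosimplicial degree. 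Homogeneity of $F$ then preserves each of these pullbacks, producing the comparison $F(M^n DB)\simeq M^n F(DB)$.

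Once $N$-coskeletality of $F(DB)$ is in hand, we have $D_*F(B)\simeq \ho\Lim_{\Delta^{\le N}}F(DB)$, a finite homotopy limit, which commutes with filtered homotopy colimits in $s\Set$. Combining with the level-wise equivalences from the setup yields
\[
D_*F(B)\simeq \ho\Lim_{\Delta^{\le N}}\ho\LLim_\alpha F(DB_\alpha)\simeq \ho\LLim_\alpha \ho\Lim_{\Delta^{\le N}} F(DB_\alpha)\simeq \ho\LLim_\alpha D_*F(B_\alpha).
\]
The hard part will be the matching-object analysis: making the tower decomposition of $M^n DB$ explicit enough that the successive square-zero structure that homogeneity exploits becomes visible.
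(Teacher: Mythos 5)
Your first part is correct and is essentially the paper's argument: $D^n B$ depends only on $B^{\le n}$, so the inverse system $\{F(D^n B^{\le m})\}_m$ stabilises, and you can exchange the homotopy limits.

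For d.f.p., your coskeletality strategy diverges from the paper, and the gap you flag at the end ("making the tower decomposition of $M^n DB$ explicit enough that the successive square-zero structure becomes visible") is a genuine one, not just a technical loose end. The difficulty is that while the total matching map $D^n B \to M^n(DB)$ has kernel exactly the normalised summand $B^n$ (which vanishes for $n > N$), the individual codegeneracies $s^j \co D^k B \to D^{k-1}B$ for $k$ in the range $1,\ldots,N$ do \emph{not} have square-zero kernels in general. For example, if $B^1\cdot B^1 \to B^2$ is nonzero, the kernel of $s^0 \co D^2 B \to D^1 B$ contains a copy of $B^1$ (from the one nontrivial surjection $[2]\twoheadrightarrow[1]$ not factoring through $s^0$) together with $B^2$, and the product of two elements of that $B^1$ lands nontrivially in $B^2$, so the kernel is a two-sided ideal that is not square-zero. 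Since $M^n(DB)$ for $n > N$ is a limit over the whole codegeneracy diagram on $D^k B$ for $k < n$ -- including $k \le N$ where these non-square-zero codegeneracies live -- homogeneity of $F$ gives you no handle to commute $F$ past that limit, and I don't see how the "cochain-degree count" you mention can rescue this for $N \ge 2$.

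The paper's proof sidesteps the matching-object diagram entirely by inducting on the truncation degree $m$ in the cochain direction rather than on the cosimplicial degree. The key observation is that $B^{\le m+1} \to B^{\le m}$ \emph{is} a square-zero extension with kernel $(B^{m+1})^{[-m-1]}$, and since $D^n$ is additive and preserves the ideal structure, $D^n(B^{\le m+1}) \to D^n(B^{\le m})$ is also square-zero. The homogeneity hypothesis then applies directly to the cone $\widetilde{B^m}$ and the resulting pullback square, reducing the inductive step to an explicit identification of $D_*F(B^0 \oplus (B^{m+1})^{[-m-1]}_{[-1]})$ as an $(m{+}1)$-fold loop of a bundle over $F(B^0)$ -- a construction that visibly commutes with filtered colimits given the l.f.p.\ hypothesis on $F$. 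If you want to salvage your approach you would need an inductive decomposition of $M^n(DB)$ into pullbacks whose edge maps have square-zero kernel, and the natural candidates (codegeneracies) fail that property; switching the filtration to the cochain truncation, as the paper does, is exactly the fix.
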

\begin{proof}
 The expression for $D^nB$ depends only on $B^{\le n}$, so $D^nB \cong D^n(B^{\le m})$ for all $m\ge n$, and hence $\ho\Lim_m F(D^n(B^{\le m})) \simeq F(D^nB)$, giving the first statement. 
 
 For the second statement, we wish to show that $D_*F$ preserves filtered colimits of objects concentrated in cochain degrees $[0,m]$, which we prove by induction on $m$. For $m=0$, the statement is an immediate consequence of $F$ being l.f.p., because $D^n(B^0)=B^0$ for all $n$, so $D_*F(B^0) \simeq F(B^0)$.  
 
 For the inductive step, observe that the maps $B^{\le m+1} \to B^{\le m}$ are square-zero extensions with kernels $(B^{m+1})^{[-m-1]}$, so the maps $D^n(B^{\le m+1}) \to  D^n(B^{\le m})$ are also square-zero extensions with kernels $D^n((B^{m+1})^{[-m-1]})$. We can now apply a standard obstruction argument, observing that $B^{\le m}$ is levelwise quasi-isomorphic to the stacky FEFC-DGA $\widetilde{B^m}:= \cone((B^{m+1})^{[-m-1]}\to B^{\le m+1})$, where the cone is taken in the chain direction. We then have 
 \[
D^n( B^{\le m+1}) \cong D^n\widetilde{B^m}\by_{B^0 \oplus D^n(B^{m+1})^{[-m-1]}_{[-1]})}B^0,
 \]
 and since the map $D^n\widetilde{B^m}\to B^0 \oplus D^n(B^{m+1})^{[-m-1]}_{[-1]})$ is a square-zero extension, homogeneity gives
 \[
  D_*F(B^{\le m+1}) \simeq D_*F(B^{\le m})\by^h_{D_*F(B^0 \oplus(B^{m+1})^{[-m-1]}_{[-1]}) }F(B^0) 
 \]
Since the maps $B^0 \oplus M \to B^0$ are also all square-zero extensions, we can identify $F(B^0\oplus D^n M)$ with the $n$th term of the Dold--Kan denormalisation of the cochain complex $F(B^0 \oplus M^0) \xra{\pd} F(B^0 \oplus M^1) \xra{\pd} \ldots$ of bundles over $F(B^0)$, and substitution then identifies 
 $D_*F(B^0 \oplus(B^{m+1})^{[-m-1]}_{[-1]}) $ with the $m+1$-fold loop object of the bundle $F(B^0 \oplus B^{m+1}_{[-1]}) \to F(B^0)$. Thus everything  in sight commutes with filtered colimits, as required.
 %
 %
 \end{proof}


We now extend Definition \ref{Dlowerdef0} to our other settings. 

\begin{definition}\label{Dlowerdef}
 Assume we have
 full  subcategories $\cD \subset dg_+\hat{\Tc}\Alg_{R}$ and $\cE \subset 
DG^+dg_+\FEFC_{R}$ such that $D^nA \in \cD $ for all $n \in \N_0$ and $A \in \cE$, for the cosimplicial denormalisation functor $D \co  DG^+dg_+\hat{\Tc}\Alg_{R}   \to dg_+\hat{\Tc}\Alg_{R}^{\Delta}$.
 
Given  a  functor $F \co \cD \to \C$ to a model category $\C$, we then define  $D_*F$ on  $\cE$ as the homotopy limit
\[
 D_*F(B):= \ho\Lim_{n \in \Delta} F(D^nB).
\]
\end{definition}

%

  Substituting Lemma \ref{DstarFdfpprop} in Proposition \ref{stackyDlfpprop} gives:
\begin{corollary}\label{Dstarcor}
Given $G \co dg_+\FEFC_{R} \to s\Set$ and  a d.f.p w.e.-preserving  functor $F \co DG^+dg_+\FEFC_{R} \to \C$, we have a weak equivalence
\[
 \oR\Map_{[DG^+dg_+\FEFC_{R},s\Set]}(F,D_*G) \xra{\sim}    \oR\Map_{[\cC,s\Set]}(U_{\cC,*}F, U_{\cC,*}D_*G)  
\]
for all full subcategories $\cC \subset DG^+dg_+\hat{\Tc}\Alg_{R} $ containing all  finitely presented cofibrant objects and closed under
 brutal  truncation in the cochain direction. 
 
 In particular, this applies if $F\simeq D_*F'$ for a  w.e.-preserving, l.f.p., homogeneous functor $F'\co dg_+\FEFC_{R} \to s\Set$.
\end{corollary}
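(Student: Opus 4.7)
The plan is to deduce this as a direct consequence of the second clause of Proposition \ref{stackyDlfpprop}: we take its $G'$ to be $D_*G$ and set $\cD = \cC$; then the induced $G = U_{\cD,*}G'$ becomes $U_{\cC,*}D_*G$, and the proposition's conclusion reproduces the asserted equivalence. The conditions on $\cC$ hold by assumption and $F$ is d.f.p.\ and w.e.-preserving by hypothesis, so the substantive task is to verify that $D_*G$ satisfies the two properties the proposition requires, namely that it preserves weak equivalences on $DG^+dg_+\FEFC_R$ and that $D_*G(B) \simeq \ho\Lim_n D_*G(B^{\le n})$ for all $B$.

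First I would verify $D_*G(B) \simeq \ho\Lim_n D_*G(B^{\le n})$: this is supplied by the first assertion of Lemma \ref{DstarFdfpprop} applied to our $G$. The proof there relied only on the cosimplicial identity $D^p B \cong D^p(B^{\le n})$ for $p \le n$ and does not use any property of $G$ beyond functoriality. For the w.e.-preservation, I would argue that cosimplicial denormalisation $D$ sends a levelwise quasi-isomorphism $B \to B'$ in $DG^+dg_+\FEFC_R$ to a levelwise quasi-isomorphism of cosimplicial dg FEFC $R$-algebras, because $D^n B$ is assembled (on the underlying chain complex) from the spaces $B^p$ for $p \le n$ via direct sums indexed by surjections in $\Delta$. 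Composing with $G$ (which preserves weak equivalences, as is implicit in the homotopical setting) and passing to the homotopy limit over $\Delta$ then preserves weak equivalences, yielding the required property.

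The ``in particular'' clause is then automatic: when $F \simeq D_*F'$ with $F'$ w.e.-preserving, l.f.p., and homogeneous, the second assertion of Lemma \ref{DstarFdfpprop} shows $F$ is d.f.p., while $F$ is w.e.-preserving by the same denormalisation argument applied to $F'$. The only subtle point in the whole argument is the exactness of $D$ on levelwise quasi-isomorphisms, which reduces via the explicit Dold--Kan formula for cosimplicial denormalisation to a routine combinatorial check involving the direct sum decomposition indexed by surjections of $[n]$, so I do not foresee a significant obstacle.
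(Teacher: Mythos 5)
Your proposal is correct and follows exactly the route the paper intends: the text's one-line derivation (``Substituting Lemma~\ref{DstarFdfpprop} in Proposition~\ref{stackyDlfpprop} gives:'') is precisely the two-step argument you spell out, namely verifying that $D_*G$ satisfies the hypotheses on $G'$ in the second clause of Proposition~\ref{stackyDlfpprop} (with $\cD = \cC$) using the first assertion of Lemma~\ref{DstarFdfpprop} for the $\ho\Lim_n$ condition, and then invoking that clause. Your supplementary checks — that $D$ preserves levelwise quasi-isomorphisms via the finite-direct-sum Dold--Kan formula (so that $D_*G$ is w.e.-preserving once $G$ is, which is implicit in writing $\oR\Map_{[DG^+dg_+\FEFC_R,s\Set]}(F,D_*G)$ at all), and that the ``in particular'' clause follows from the second assertion of Lemma~\ref{DstarFdfpprop} — are the right details to fill in, and match what a reader of the paper would reconstruct.
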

 
 \begin{definition}\label{inftygeomdef2}
  Define a derived  FEFC prestack $F\co  dg_+\FEFC(R) \to s\Set$ to be $n$-geometric  or $\infty$-geometric   Artin (resp.  Deligne--Mumford) if it arises as homotopy colimit of the form described in \cite[\S \ref{NCstacks-derivedgeomprestacksn}]{NCstacks}, substituting  $dg_+\FEFC(R)$ for $dg_+\Alg(R)$ throughout, with FEFC cotangent complexes in place of NC algebraic cotangent complexes when defining submersive, \'etale and open morphisms.
 \end{definition}

The following is then an immediate consequence of the definitions:
\begin{proposition}\label{analytifyNCprop}
 Given a derived $n$-geometric or $\infty$-geometric   Artin (resp. Deligne--Mumford) NC prestack $F \co dg_+\Alg(R) \to s\Set$, the induced functor $F^{\FEFC} \co  dg_+\FEFC(R) \to s\Set$, given by composition with the forgetful functor $dg_+\FEFC(R) \to dg_+\Alg(R)$, is a  derived $n$-geometric  or $\infty$-geometric    Artin (resp. Deligne--Mumford) FEFC prestack.
\end{proposition}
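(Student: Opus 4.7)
The strategy is induction on the geometric level $n$, with the two main ingredients being (i) a base-case identification of representables and (ii) a compatibility between the algebraic and FEFC cotangent complexes which ensures that the local conditions (submersive, \'etale, open) are preserved under pre-composition with $U$.

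For the base case ($n=-1$, i.e.\ representable), note that the forgetful functor $U \co dg_+\FEFC(R) \to dg_+\Alg(R)$ has a left adjoint $L$ sending an associative $R$-algebra $A$ to the FEFC algebra it freely generates (this is the analogue of Proposition \ref{keyBanFEFCprop} in the purely algebraic direction, given by the obvious ind-construction on $R\langle S\rangle \mapsto \cF_S$). Consequently, if $F = \oR\Map_{dg_+\Alg(R)}(A,-)$ is representable by $A$, then $F^{\FEFC} = \oR\Map_{dg_+\FEFC(R)}(LA,-)$ is representable by $LA$, and hence is $0$-geometric. Disjoint unions are evidently preserved.

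For the inductive step, suppose $F$ is $n$-geometric with an $n$-atlas $p \co U = \coprod_\alpha U_\alpha \to F$ of $(n-1)$-geometric NC prestacks along an $(n-1)$-submersive (resp.\ \'etale, open) morphism, so that $F$ is presented as an appropriate homotopy colimit of the \v{C}ech nerve of $p$. Since $F^{\FEFC} := F \circ U$ is defined by pre-composition, this homotopy colimit presentation transfers directly to give the analogous presentation of $F^{\FEFC}$ by $p^{\FEFC} \co U^{\FEFC} \to F^{\FEFC}$, with $U^{\FEFC}$ being $(n-1)$-geometric by the induction hypothesis. Thus it only remains to verify that $p^{\FEFC}$ is $(n-1)$-submersive (resp.\ \'etale, open) in the FEFC sense.

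The key compatibility is that for any $B \in dg_+\FEFC(R)$ and point $x \in F^{\FEFC}(B) = F(UB)$, the tangent functor satisfies
\[
T_x F^{\FEFC}(M) = F(UB \oplus UM) \times^h_{F(UB)} \{x\} = T_{\bar{x}} F(UM)
\]
for any Beck $B$-module $M$ (i.e.\ any $B^{e,\FEFC}$-module), where the $(UB)^e$-structure on $UM$ is obtained by restriction along the canonical map $(UB)^e \to B^{e,\FEFC}$. Consequently, if $F$ admits a perfect NC algebraic cotangent complex $\bL^{F,\bar{x}}$ as a $(UB)^e$-module, then $F^{\FEFC}$ admits
\[
\bL^{F^{\FEFC},x} \;\simeq\; \bL^{F,\bar{x}} \otimes^{\oL}_{(UB)^e} B^{e,\FEFC}
\]
as a perfect FEFC cotangent complex in the sense of Definition \ref{Fcotdef}. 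Since derived tensor product preserves the vanishing, projectivity, and connectivity conditions that characterise \'etale, submersive and open morphisms respectively (and in the Deligne--Mumford case also preserves the monomorphism-on-$\pi_0$ condition via the fact that $U$ is conservative on $\pi_0$), the property of $p$ passes to $p^{\FEFC}$, completing the induction.

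The main obstacle is establishing the cotangent complex compatibility cleanly, because the NC algebraic and FEFC notions of Beck module differ (the former uses $A^e$-modules, the latter the completed $A^{e,\FEFC}$ of Definition \ref{anAedef}). The resolution is that the derivation-by-derivation comparison is forced by the definitional identity $T_x F^{\FEFC}(M) = T_{\bar{x}} F(UM)$, so no independent calculation is required; everything reduces to naturality of cotangent complexes under the restriction of scalars along $(UB)^e \to B^{e,\FEFC}$.
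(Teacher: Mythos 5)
Your argument is correct and reproduces exactly the reasoning the paper has in mind. The paper dismisses this as ``an immediate consequence of the definitions'' (of Definition \ref{inftygeomdef2}), and your sketch unpacks why: representable NC prestacks transfer to representable FEFC prestacks via the Quillen adjunction $L \dashv U$ (both model structures have fibrations and weak equivalences detected on underlying chain complexes, so $U$ is right Quillen); homotopy colimit presentations pass through precomposition since colimits of simplicial presheaves are computed pointwise; and the cotangent complex transforms by base change, $\bL^{F^{\FEFC},x} \simeq \bL^{F,\bar{x}} \otimes^{\oL}_{(UB)^e} B^{e,\FEFC}$, which preserves perfection and the vanishing/projectivity conditions characterising \'etale, submersive and open. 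This last formula is precisely the one the paper records in \S\ref{analytificnsn3} in the form $\bL_{F^{\an},x} \simeq \bL_{F,x}\ten^{\oL}_{B^e}B^{e,\an}$, so your identification of it as the key compatibility is on the mark. One very minor wording issue: in the Deligne--Mumford case the monomorphism-on-$\pi_0$ condition is preserved simply because $\pi_0 G^{\FEFC}(B) \to \pi_0 F^{\FEFC}(B)$ is literally the map $\pi_0 G(UB) \to \pi_0 F(UB)$, which is already known to be a monomorphism for all inputs; no appeal to ``$U$ being conservative on $\pi_0$'' is needed or quite makes sense here. The rest is clean.
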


By \cite[\S \ref{NCstacks-dmodperfsn}]{NCstacks}, examples of $\infty$-geometric   Artin  NC prestacks are given by derived moduli of perfect complexes over finite flat $R$-algebras.
 
The proof of \cite[Proposition \ref{NCstacks-replaceprop}]{NCstacks} adapts to give the following in our context:
 \begin{proposition}\label{replaceprop}
 Given a  derived $\infty$-geometric    Artin  FEFC prestack $F$ and any w.e.-preserving functor $G \co dg_+\FEFC(R) \to s\Set$, we have a natural weak equivalence
 \[
  \map_{[dg_+\FEFC(R)^{\wedge},s\Set]}(F,G) \to \map_{[DG^+dg^+\FEFC(R),s\Set]}(D_*F,D_*G).
 \]
\end{proposition}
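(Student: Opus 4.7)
The proof adapts \cite[Proposition \ref{NCstacks-replaceprop}]{NCstacks}, and the plan is as follows. Both functors $F \mapsto \oR\map(F,G)$ and $F \mapsto \oR\map(D_*F, D_*G)$ send homotopy colimits in $F$ to homotopy limits. By Definition \ref{inftygeomdef2}, the $\infty$-geometric Artin structure expresses $F$ as an iterated homotopy colimit (via geometric groupoid objects) of representable functors $\oR\map(A_\alpha, -)$ for cofibrant $A_\alpha \in dg_+\FEFC(R)$. This reduces the statement to the case $F = \oR\map(A,-)$ with $A$ cofibrant. In that case the LHS equals $G(A)$ by the derived Yoneda lemma for the $\infty$-category $dg_+\FEFC(R)^{\wedge}$.

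For the RHS, I would exploit the tautological observation that for $A \in dg_+\FEFC(R)$ viewed as a stacky dg FEFC algebra concentrated in cochain degree $0$, the cosimplicial denormalisation $DA$ is the constant cosimplicial object on $A$, so $D^nA = A$ for every $n$. Consequently
\[
 D_*G(A) \;=\; \ho\Lim_{n\in\Delta} G(D^nA) \;=\; \ho\Lim_{n\in\Delta} G(A) \;\simeq\; G(A),
\]
since the homotopy limit of a constant cosimplicial diagram is trivial. The non-trivial ingredient is showing that $D_*\oR\map(A,-)$ is itself corepresentable in $DG^+dg_+\FEFC(R)$ by $A$ (viewed in cochain degree $0$). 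This should follow from the Quillen adjunction between $DG^+dg_+\FEFC(R)$ and $(dg_+\FEFC(R))^{\Delta}$, analogous to the one used in \cite[\S \ref{NCstacks-stackysn}]{NCstacks}, so that for $B \in DG^+dg_+\FEFC(R)$,
\[
 D_*\oR\map(A,-)(B) \;=\; \ho\Lim_{n}\oR\map_{dg_+\FEFC}(A, D^nB) \;\simeq\; \oR\map_{(dg_+\FEFC)^{\Delta}}(A^{\mathrm{const}}, DB) \;\simeq\; \oR\map_{DG^+dg_+\FEFC}(A,B),
\]
where the last equivalence uses that the left adjoint to $D$ sends the constant cosimplicial object on $A$ to $A$ concentrated in cochain degree $0$ (the standard Dold--Kan-type computation, valid here because the free FEFC monad admits the shuffle-based comparison of Proposition \ref{simpFEFCmodelprop}). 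One more application of Yoneda then gives $\oR\map(D_*\oR\map(A,-), D_*G) \simeq D_*G(A) \simeq G(A)$, matching the LHS compatibly with the comparison map.

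The main obstacle is the verification of representability of $D_*\oR\map(A,-)$ in the stacky $\infty$-category, i.e. establishing the required adjunction between cosimplicial objects in $dg_+\FEFC(R)$ and $DG^+dg_+\FEFC(R)$. Because FEFC algebras are non-additive and non-commutative, the Dold--Kan--style comparison must be imported from the associative/monadic framework of Proposition \ref{simpFEFCmodelprop} rather than applied directly, and one must check cofibrancy conditions so that the adjunction descends to derived mapping spaces. Given these ingredients, the reduction to the representable case and the two Yoneda applications on either side deliver the claimed equivalence.
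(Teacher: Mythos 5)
Your argument breaks down at the corepresentability step. You assert that $D_*\oR\map(A,-)$, with $A$ cofibrant in $dg_+\FEFC(R)$ viewed as a stacky algebra concentrated in cochain degree $0$, is corepresentable by $A$ in $DG^+dg_+\FEFC(R)$, i.e.\ that
\[
\ho\Lim_{n\in\Delta}\oR\map_{dg_+\FEFC}(A, D^nB) \;\simeq\; \oR\map_{DG^+dg_+\FEFC}(A,B).
\]
This is false. On the right, a morphism of stacky FEFC-DGAs out of $A$ must preserve cochain degree and intertwine $\pd$; since $\pd$ vanishes on $A$ (as $A^1=0$), any such morphism factors through $\ker(\pd\colon B^0\to B^1)$, so $\oR\map_{DG^+dg_+\FEFC}(A,B)\simeq \oR\map_{dg_+\FEFC}(A,\ker(\pd|_{B^0}))$ depends only on the cocycles in cochain degree $0$. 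The left-hand side, by contrast, sees the whole cochain chain complex: for $A=\cF_1$ one computes $\ho\Lim_n\oR\map(\cF_1,D^nB)\simeq N^{-1}\tau_{\ge 0}\Tot^{\Pi}B$. Taking $B$ with $B^0=R$, $B^1=R$ in chain degree $1$, all differentials zero, gives $\oR\map_{DG^+dg_+\FEFC}(\cF_1,B)\simeq R$ discrete, whereas $\ho\Lim_n\oR\map(\cF_1,D^nB)$ has $\pi_0\cong R\oplus R$, since the $B^1_1$-term contributes to $H_0$ of the total complex. So the two functors disagree, and the second Yoneda application in your chain is unavailable.

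The root of the error is that the underived identity $D^*(\mathrm{const}\,A)\cong A$ does not persist after left-deriving: $\mathrm{const}$ is \emph{right}, not left, Quillen for the projective model structure on $dg_+\FEFC(R)^{\Delta}$, so $\mathrm{const}\,A$ is not projectively cofibrant and $\oL D^*(\mathrm{const}\,A)\not\simeq A$. Indeed, $D_*$ of a representable functor is not corepresented by a single stacky algebra; the paper's Proposition \ref{cosimplicialresnprop} shows it is represented by a cosimplicial diagram of stacky FEFC-DGAs built from $\hat{\Omega}^{\bt}_A$, which is exactly the extra data your corepresentability claim would collapse away. There is also a secondary gap at the outset: the reduction to representables requires $F\mapsto\map(D_*F,D_*G)$ to convert homotopy colimits in $F$ into homotopy limits, hence requires $D_*$ to preserve homotopy colimits. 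But $D_*F=\ho\Lim_{n\in\Delta}F(D^n-)$ is itself a homotopy limit, and exchanging $\ho\Lim$ and $\ho\LLim$ is not automatic for the geometric-realisation colimits arising in an Artin hypergroupoid resolution; the paper's Lemma \ref{DstarFdfpprop} only controls filtered colimits of cochain-bounded objects, which is weaker. Both steps need a genuinely different argument, presumably along the cosimplicial-resolution lines of Proposition \ref{cosimplicialresnprop} and Corollary \ref{etsitecor}, as in the cited result of \cite{NCstacks}.
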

 
 Consequently, Proposition \ref{replaceprop} and Corollary \ref{Dstarcor} show that to study  structures parametrised by $G$  on a derived $\infty$-geometric    Artin  FEFC prestack $F$, it suffices to restrict $F$ to a suitable subcategory. In cases where we only know how to define $G$ on such a subcategory, Proposition \ref{stackyDlfpprop} can also be seen as giving a consistency result. We now give further, but somewhat weaker, consistency results for more general classes of functors $F$ satisfying conditions which are much easier to verify.
 
 \begin{definition}\label{hetdef}
 A morphism  $A \to B$ in $DG^+dg_+
\hat{\Tc}\Alg_{R}^{\mathrm{dfp,cof}}$
 is said to be  homotopy \'etale when the maps 
\[
  (\hat{\Omega}_{A/R}^1\ten_{A^{\hat{e}}}(B^{\hat{e}})^0)^i \to (\hat{\Omega}_{B/R}^1\ten_{B^{\hat{e}}}(B^{\hat{e}})^0)^i
\]
 are quasi-isomorphisms for all $i \gg 0$, and 
\[
\Tot \sigma^{\le q} (\hat{\Omega}_{A}^1\ten_{A^{\hat{e}}}(B^{\hat{e}})^0) \to \Tot \sigma^{\le q}(\hat{\Omega}_{B}^1\ten_{B^{\hat{e}}}(B^{\hat{e}})^0)
\]
is a quasi-isomorphism for all $q \gg 0$, where $\sigma^{\le q}$ denotes the brutal cotruncation
\[
 (\sigma^{\le q}M)^i := \begin{cases} 
                         M^i & i \le q, \\ 0 & i>q.
                        \end{cases}
\]

Define $DG^+dg_+
\hat{\Tc}\Alg_{R}^{\mathrm{dfp,cof},\et} \subset DG^+dg_+
\hat{\Tc}\Alg_{R}^{\mathrm{dfp,cof}}$ to be the subcategory with the same objects but only  homotopy  \'etale morphisms.
\end{definition}
 
 Note that the cofibrancy condition on $A$ and $B$ means that we do not need to derive the tensor products in Definition \ref{hetdef}, because $\hat{\Omega}^1$ is cofibrant. Also note that the finite presentation conditions mean that replacing algebraic tensor products with completed tensor products would not affect the expression.
 
 \begin{proposition}\label{cosimplicialresnprop} 
 If a w.e.-preserving homogeneous  functor $F\co dg_+\cF r\Alg(R) \to s\Set$ has a perfect cotangent complex at a point  $x \in F(A)$ for  $A\in dg_+\hat{\Tc}\Alg_{R}^{\mathrm{fp,cof}}$, 
 then the functor 
 \[
\hat{F}_x \co   B \mapsto D_*F(B)\by_{F(B^0),x^*}^h\map_{dg_+\cF r\Alg(R)}(A,B^0)
 \]
on the category $DG^+dg_+\cF r\Alg(R) $ 
is representable by a cosimplicial homotopy  \'etale diagram  
$C(-) \co \Delta \to DG^+dg_+
\hat{\Tc}\Alg_{R}^{\mathrm{dfp,cof},\et}$. Specifically,
\[
 \hat{F}_x(B) \simeq \ho\LLim_{j \in \Delta^{\op}} \oR\map_{DG^+dg_+\cF r\Alg(R)}(C(j),B),
\]
naturally in $B$.

The same statements hold if we replace $\cF r$ with $\cN\cF r$ throughout.
\end{proposition}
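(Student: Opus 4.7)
The plan is to build the cosimplicial diagram $C(-)$ inductively, combining the obstruction-theoretic construction of Proposition \ref{resnprop} with the cosimplicial replacement strategy of Proposition \ref{replaceprop}. The induction will proceed in the cochain degree $q$, adding free generators at each step to kill obstructions; the cosimplicial structure will be handled via a Reedy-type argument using latching objects of $\Delta$.

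Start the induction with $C^{(0)} \co \Delta \to DG^+dg_+\hat{\Tc}\Alg_R^{\mathrm{dfp,cof}}$ the constant cosimplicial object at $A$, viewed as a stacky DGA in cochain degree $0$. Since $\|\Delta^{\bullet}\|$ is contractible, $\ho\LLim_{j \in \Delta^{op}} \oR\map(C^{(0)}(j), B) \simeq \oR\map(A, B^0)$, agreeing with the factor of $\hat{F}_x(B)$ indexing lifts of $x$. For the inductive step, assume a cosimplicial diagram $C^{(q)} \co \Delta \to DG^{[0,q]}dg_+\hat{\Tc}\Alg_R^{\mathrm{dfp,cof},\et}$ together with a natural transformation $\theta^{(q)} \co \ho\LLim_{j \in \Delta^{op}} \oR\map(C^{(q)}(j), -) \to \hat{F}_x(-)$ on $DG^{[0,q]}dg_+\cF r\Alg(R)$ that is a weak equivalence. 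Lemma \ref{obslemma} applied to the square-zero extension $B^{\le q+1} \to B^{\le q}$, with kernel $(B^{q+1})^{[-q-1]}$, expresses the obstruction to extending $\theta^{(q)}$ in terms of the tangent complex at the given point. By the perfect cotangent complex hypothesis, this obstruction is controlled by finitely many classes, which we kill by attaching finitely many free FEFC generators in cochain degree $q+1$. The Reedy induction then proceeds simplicial-level by level: at each stage $j$ we attach the new generators freely over the latching object of the cosimplicial diagram, which ensures compatibility with all coface and codegeneracy maps automatically.

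Homotopy étaleness of the newly introduced transition maps is preserved because every added generator sits in cochain degree $q+1$; the brutal truncation $\Tot\sigma^{\le q}$ appearing in Definition \ref{hetdef} therefore sees no change up to quasi-isomorphism, so the truncated cotangent complex conditions remain satisfied. Setting $C(-) := \LLim_q C^{(q)}(-)$ yields the desired cosimplicial diagram, and the d.f.p. property of $D_*F$ from Lemma \ref{DstarFdfpprop} together with the commutation of $\ho\LLim_{j \in \Delta^{op}}$ with filtered colimits ensures that the representability statement passes to the limit over $q$. The nuclear Fréchet case is handled by an identical argument: finite generation at each inductive stage keeps us in $dg_+\cN\cF r\Alg(R)$, and the exactness of completed tensor products with nuclear Fréchet spaces (Lemma \ref{nucexactlemma} and Corollary \ref{exactFEFCtencor}) ensures the tangent complexes have the expected form at every step.

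The main obstacle is the simultaneous management of the two inductive variables — the cochain degree $q$ and the simplicial degree $j$ — so that adding generators in cochain degree $q+1$ at simplicial level $j$ does not disturb the homotopy étale property of previously constructed coface maps. This is ultimately handled by the Reedy-latching construction, checking that the relative cotangent complexes of the attached generators vanish in the relevant low cochain degrees, but writing the induction out carefully requires bookkeeping that was not needed in the single-algebra resolution of Proposition \ref{resnprop}. A secondary subtlety is verifying that the cotangent obstruction classes themselves remain governed by finite data throughout the induction; this uses perfectness together with Lemma \ref{opencotlemma}, which ensures that the perfect cotangent complex property is inherited along the square-zero extensions arising at each step.
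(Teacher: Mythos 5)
The base of your induction is already off. Viewing $A$ as a stacky DGA concentrated in cochain degree $0$ (call it $A^0$, so $\pd=0$) and taking $C^{(0)}$ constant at $A^0$ does not represent the right functor. A morphism of stacky DGAs $A^0 \to B$ must intertwine $\pd$, forcing the image of $A$ to lie in $\ker(\pd\co B^0 \to B^1)$; so $\oR\map_{DG^+dg_+\cF r\Alg}(A^0,B)$ is governed by $\Hom_{dg_+}(A,\ker\pd\subset B^0)$, not by $\map_{dg_+\cF r\Alg}(A,B^0)$, and the claimed equivalence $\ho\LLim_{j\in\Delta^{\op}}\oR\map(C^{(0)}(j),B)\simeq\oR\map(A,B^0)$ fails. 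The correct constant starting diagram is the cosimplicial object constant at $\hat{\Omega}^{\bt}_A$ (the de Rham stacky algebra), whose universal property (Remark \ref{anDRequiv}) gives $\Hom_{DG^+dg_+}(\hat{\Omega}^{\bt}_A,B)\cong\Hom_{dg_+}(A,B^0)$; this is both cofibrant and represents precisely $\hat{F}_x^{(0)}$. Note also that $A^0$ and $\hat{\Omega}^{\bt}_A$ are not levelwise quasi-isomorphic, so the two choices genuinely represent different functors and not the same one up to weak equivalence.

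The second gap is the handling of the cosimplicial direction. You invoke ``Reedy-latching'' to attach free generators compatibly at each simplicial level, but you never say what the latching object is, why the resulting coface and codegeneracy maps remain in $DG^+dg_+\hat{\Tc}\Alg_R^{\mathrm{dfp,cof},\et}$, or why the entire diagram stays levelwise cofibrant. The actual proof sidesteps Reedy bookkeeping entirely: it builds the cosimplicial structure directly into the attachment data by tensoring the finite cofibrant model $L$ for the cotangent complex with simplicial chains $C_{\bt}(\Delta^j,\Z)$, then forming the explicit pushout $C^{(n-1)}(j)\la \Omega^{\bt}_{A\<\sigma_{\ge 0}(L_{[1-n]}\ten\Delta^j)^{[-n]}\>}\to\Omega^{\bt}_{A\<\cone\,\sigma_{\ge 0}(L_{[1-n]}\ten\Delta^j)^{[-n]}\>}$. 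This formula produces a strictly functorial diagram in $j$, keeps the added generators confined to cochain degree $n$ (ensuring the homotopy \'etale condition via the brutal-truncation criterion), and makes the $\mathrm{dfp}$ finiteness automatic from finiteness of $L$. Your proposed argument, as written, does not address how to verify homotopy \'etaleness of the \emph{coface maps} (not just the inclusions $C^{(n-1)}(j)\to C^{(n)}(j)$), nor how to keep the diagram Reedy-cofibrant, and these are exactly the pressure points the $L\ten\Delta^j$ trick is designed to resolve.
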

\begin{proof}
Since the cotangent complex of $F$ at $x$ exists and is perfect and homologically bounded below, we may choose a model $L\in dg\Mod(A^{\hat{e}})$ for it which is cofibrant and finitely generated. In particular, $L$ is strictly bounded below, so there exists some $m\ge 0$ such that $L_i=0$ for all $i<-m$. 

We can now adapt the proof of \cite[Proposition \ref{NCstacks-cosimplicialresnprop}]{NCstacks}, constructing 
$C(-)$ by an inductive process as a filtered colimit $\LLim_i C^{(i)}(-)$, with $C^{(i)}(-)$ representing the functor $\hat{F}_x^{(i)}\co B \mapsto D_*F(B^{\le i})\by_{F(B^0),x^*}^h\map(A,B^0)$ and $C^{(i)}(-)^n= C(-)^n$ for all $n \le i$. 

We begin by setting $C^{(0)}(-)$ to be the constant cosimplicial diagram consisting of the stacky DGA $\hat{\Omega}^{\bt}_A$, so 
\[
C^{(0)}(j)^n := \underbrace{\hat{\Omega}^1_A\ten_A \hat{\Omega}^1_A\ten_A \ldots \ten_A\hat{\Omega}^1_A}_n
\]
for all $j$, with cochain differential given by the de Rham differential.  (The hypotheses imply that $\hat{\Omega}^1_A$ is a finitely generated cofibrant $A$-module, so  $\ten_A$ is equivalent to $\hten_A$ here.)

Now assume we have constructed $C^{(n-1)}(j)$ and a universal element $u \in \ho\Lim_{j \in \Delta}\hat{F}_x^{(n-1)}(C^{(n-1)}(j))$. Applying $D_*F$ to the weak equivalence $\cone((B^n)^{[-n]} \to B^{\le n}) \to B^{<n}$ (the cone being taken in the chain direction)
and the map $\cone((B^n)^{[-n]} \to B^{\le n}) \to B^0 \oplus (B^n)^{[-n]}_{[-1]}$ gives us a homotopy fibre sequence
\[
 \hat{F}_x^{(n)}(B) \to  \hat{F}_x^{(n-1)}(B) \to \oR\map_A(L, (B^n)_{[n-1]})
\]
by homogeneity. Applying this to $B=C^{(n-1)}(j)$ and the universal element $u$ gives 
us an element
\[
 o_e(u) \in \ho\Lim_{j \in \Delta}\map_{dg\Mod(A)}(L,C^{(n-1)}(j)^n_{[n-1]}).
\]

Writing $L\ten \Delta^j:= L\ten_{\Z}C_{\bt}(\Delta^j,\Z)$ as the tensor product with simplicial chains, we 
then set $C^{(n)}(j)$ to be the pushout of the diagram
\[
  C^{(n-1)}(j) \la \Omega^{\bt}_{A\<\sigma_{\ge 0}(L_{[1-n]}\ten \Delta^j )^{[-n]}\> } \to  \Omega^{\bt}_{A\<\cone\ \sigma_{\ge 0}( L_{[1-n]}\ten \Delta^j )^{[-n]}\>}.\qedhere
\]
\end{proof}


The following is adapted from \cite[Definition \ref{NCstacks-Frigdef}]{NCstacks}:
\begin{definition}\label{Frigdef}
 Given   $B \in DG^+dg_+\hat{\Tc}\Alg_{R}^{\mathrm{dfp,cof}}$  for which the chain complexes
$
( \Omega^1_B\ten_{B^{\hat{e}}}(B^0)^{\hat{e}})^i
$
are acyclic for all $i > q$, and  a   w.e.-preserving homogeneous functor $F\co dg_+\cN\cF r(R) \to s\Set$   with a cotangent complex $L_F(B^0,x)$ at a point $x \in F(B^0)$, we say that a point $y \in D_*F(B)$ lifting $x \in F(B^0)$ is \emph{rigid} if the induced morphism
 \[
  L_F(B^0,x)\to \Tot \sigma^{\le q} \Omega^1_B\ten_{B^{\hat{e}}}(B^0)^{\hat{e}}
 \]
is a quasi-isomorphism of $B^0$-bimodules. 

We denote by $(D_*F)_{\rig}(B) \subset D_*F(B)$ the space of rigid points (a union of path components). Note that this defines a w.e.-preserving functor on $DG^+dg_+\hat{\Tc}\Alg_{R}^{\mathrm{dfp,cof},\et}$.
\end{definition}

 \begin{definition}
Given $F \co dg_+\cF r\Alg(R)\to s\Set$, define $QD_*F \co DG^+dg_+ \cF r\Alg_{R} \to s\Set$ by 
\[
 QD_*F(B):= D_*F(B)\by^h_{F(B^0)} (\iota_!F|)(B^0),
\]
for $\iota \co  dg_+\hat{\Tc}\Alg_{R}^{\mathrm{fp,cof}} \to dg_+\cF r\Alg_{R}$, 
with $F|$ the restriction of $F$ to $dg_+\hat{\Tc}\Alg_{R}^{\mathrm{fp,cof}}$ and  $\iota_!$ as in Lemma \ref{iotaKanextnlemma}.
 
 \end{definition}

\begin{corollary}\label{etsitecor}
  If a w.e.-preserving homogeneous functor $F\co dg_+\cF r\Alg(R)\to s\Set$   has perfect cotangent complexes at all points  $x \in F(A)$ for all $A\in dg_+\hat{\Tc}\Alg_{R}^{\mathrm{fp,cof}}$, 
 then 
  for any w.e.-preserving functor $G \co DG^+dg_+ \cF r\Alg(R) \to s\Set$, we have  natural weak equivalences
\begin{align*}
 \map_{[DG^+dg_+ \cF r\Alg_{R},s\Set] }(QD_*F,G) &\simeq \map_{[DG^+dg_+ \hat{\Tc}\Alg_{R}^{\mathrm{dfp,cof}},s\Set]}(D_*F|,G|)\\
 \simeq \map_{[DG^+dg_+ \hat{\Tc}\Alg_{R}^{\mathrm{dfp,cof,\et}},s\Set]}((D_*F)_{\rig}, G|) 
  \end{align*}
  where $G|$ denotes the restriction of $G$ to the relevant subcategory.

  The same statements hold if we replace $\cF r$ with $\cN\cF r$ throughout. 
   \end{corollary}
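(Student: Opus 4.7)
The plan is to prove each equivalence by identifying $QD_*F$ as a left Kan extension and then applying Proposition \ref{cosimplicialresnprop} to transfer the second question to the étale subcategory. First I would check that $QD_*F|_{\mathrm{dfp,cof}} \simeq D_*F|_{\mathrm{dfp,cof}}$: for $B \in DG^+dg_+\hat{\Tc}\Alg_R^{\mathrm{dfp,cof}}$, the algebra $B^0$ lies in $dg_+\hat{\Tc}\Alg_R^{\mathrm{fp,cof}}$, so the explicit description in Lemma \ref{iotaKanextnlemma} gives $\iota_!F|(B^0) \simeq F(B^0)$, and the homotopy pullback defining $QD_*F$ collapses. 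The defining formula $QD_*F(B) = D_*F(B) \times^h_{F(B^0)} \iota_!F|(B^0)$ is then precisely the homotopy left Kan extension of $D_*F|_{\mathrm{dfp,cof}}$ along the inclusion into $DG^+dg_+\cF r\Alg_R$: writing $B^0 \simeq \ho\LLim_\alpha Q_\alpha B^0$ with $Q_\alpha B^0 \in \mathrm{fp,cof}$ (via the cofibrant replacement system of Lemma \ref{iotaKanextnlemma}) exhibits $QD_*F(B)$ as a filtered homotopy colimit of values at objects of $\mathrm{dfp,cof}$, and the usual Kan extension adjunction $\oR\map(\iota_!H, G) \simeq \oR\map(H, \iota^* G)$ delivers the first equivalence.

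For the second equivalence, I would apply Proposition \ref{cosimplicialresnprop} at each point $x \in F(A)$ with $A \in \mathrm{fp,cof}$, producing a cosimplicial homotopy étale diagram $C_x(-)\co \Delta \to DG^+dg_+\hat{\Tc}\Alg_R^{\mathrm{dfp,cof},\et}$ with $\hat{F}_x(B) \simeq \ho\LLim_{j \in \Delta^{\op}} \oR\map(C_x(j), B)$, presenting $\hat{F}_x$ as a homotopy colimit of representables in $\mathrm{dfp,cof},\et$. The inductive construction in the proof of that proposition attaches cells modelled on $L_F(A,x)$ in each cochain degree in a manner consistent with the brutal truncation description of Definition \ref{Frigdef}, so that the universal element $C_x(j) \to C_x(j)$ lies in $(D_*F)_{\rig}(C_x(j))$. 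Decomposing $D_*F(B) \simeq \coprod_{[x] \in \pi_0 F(B^0)} \hat{F}_x(B)$ functorially in $B$ and applying the Yoneda lemma objectwise, both mapping spaces then compute as
\[
\prod_{(A,x)} \ho\Lim_{j \in \Delta} G(C_x(j)),
\]
giving the second equivalence.

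The main obstacle will be establishing the rigidity of the universal element precisely in the sense of Definition \ref{Frigdef} — this requires matching the tensor product $\Omega^1_{C_x(j)} \otimes_{C_x(j)^{\hat{e}}} (C_x(j)^0)^{\hat{e}}$ with $L_F(C_x(j)^0, x)$ under brutal truncation in each cochain range — and in controlling the homotopy coherence of the decomposition $D_*F(B) \simeq \coprod_x \hat{F}_x(B)$ as $x$ varies. A related subtlety is to verify that restricting the class of morphisms to the homotopy étale ones in $\mathrm{dfp,cof},\et$ does not lose information for natural transformations out of $(D_*F)_{\rig}$: since the cosimplicial resolution only involves homotopy étale morphisms between the $C_x(j)$, the colimit presentation is entirely in $\mathrm{dfp,cof},\et$, and so the non-étale morphisms in $\mathrm{dfp,cof}$ contribute nothing to the mapping space on the rigid subfunctor.
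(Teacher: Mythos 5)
Your proposal identifies the right ingredients — the collapse $QD_*F|_{\mathrm{dfp,cof}} \simeq D_*F|_{\mathrm{dfp,cof}}$ and the use of Proposition~\ref{cosimplicialresnprop} — but two of the key identifications you rely on are not actually established by the argument you sketch.

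First, you assert that $QD_*F$ ``is then precisely the homotopy left Kan extension of $D_*F|_{\mathrm{dfp,cof}}$.'' The supporting remark — that writing $B^0 \simeq \ho\LLim_\alpha Q_\alpha B^0$ ``exhibits $QD_*F(B)$ as a filtered homotopy colimit of values at objects of $\mathrm{dfp,cof}$'' — does not hold up: it shows $\iota_!F|(B^0)$ is such a colimit, but the other factor $D_*F(B)$ in the homotopy pullback is still evaluated at $B$, which generally does not lie in $\mathrm{dfp,cof}$. Agreeing with the restriction of $D_*F$ on $\mathrm{dfp,cof}$ is a necessary condition for $QD_*F$ to be the left Kan extension, not a sufficient one. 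The paper closes this gap by exhibiting $QD_*F$ itself as a homotopy colimit of corepresentables $\oR\Spec C(j)$ with $C(j)\in\mathrm{dfp,cof}$: it shows $QD_*F\simeq \ho\LLim_{(x,A)} \hat F_x$ over the comma category $(dg_+\hat\Tc\Alg_R^{\mathrm{fp,cof}})^{\op}\da F$, each $\hat F_x$ being a homotopy colimit of corepresentables via Proposition~\ref{cosimplicialresnprop}, and then uses that the derived left adjoint $\rho_!$ commutes with homotopy colimits. Without presenting $QD_*F$ as a colimit of corepresentables, the Kan extension adjunction you invoke is not available.

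Second, the decomposition ``$D_*F(B)\simeq\coprod_{[x]\in\pi_0 F(B^0)}\hat F_x(B)$'' is not correct as stated: the functors $\hat F_x$ are indexed by pairs $(A,x)$ with $A\in dg_+\hat\Tc\Alg_R^{\mathrm{fp,cof}}$ and $x\in F(A)$, not by connected components of $F(B^0)$, and different pairs $(A,x)$ contribute overlapping data. The correct statement is the filtered homotopy colimit over the comma category used in the paper, which when restricted to $B$ with $B^0\in\mathrm{fp,cof}$ recovers $D_*F(B)$, precisely because $(\iota_!F)(B^0)\simeq F(B^0)$ there. Using a coproduct instead of this comma-category colimit would double-count and break the identification of mapping spaces. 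Your final paragraph correctly flags the rigidity of the universal element as the remaining subtlety, but the resolution in the paper is via the second derived left adjoint $\theta_!$ (for the inclusion of the homotopy \'etale subcategory), which sends $(\oR\Spec C)_{\rig}$ to $\oR\Spec C$, giving $\theta_!(D_*F)_{\rig}\simeq D_*F|$ by the same colimit-commutation argument; no pointwise check of non-\'etale morphisms is needed once that adjunction is set up.
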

\begin{proof}
We adapt the proof of \cite[Corollary \ref{NCstacks-etsitecor}]{NCstacks}.
 The resolution of Proposition \ref{cosimplicialresnprop} lies in $DG^+dg_+\hat{\Tc}\Alg_{R}^{\mathrm{dfp,cof},\et}$,
   and the canonical elements $u_j \in D_*F(C(j))$ are all rigid. That proposition gives us an expression $ \hat{F}_x \simeq \ho\LLim_{j \in \Delta} \oR \Spec C(j)$, and similarly setting 
\[
\hat{F}_{x,\rig}(B):= (D_*F)_{\rig}(B)\by^h_{F(B^0),x^*}\map(A,B^0)  
\]
gives $
 \hat{F}_{x,\rig} \simeq \ho\LLim_{j \in \Delta^{\op}} (\oR \Spec C(j))_{\rig}$,
 for  $(\oR \Spec C(j))_{\rig}  \co DG^+dg_+\hat{\Tc}\Alg_{R}^{\mathrm{dfp,cof},\et}  \to s\Set$ the right-derived  functor of $\Hom_{DG^+dg_+\hat{\Tc}\Alg_{R}^{\mathrm{dfp,cof},\et}}(C(j),-)$, which can be calculated explicitly via a cosimplicial frame since levelwise quasi-isomorphisms are homotopy \'etale.

 Now, for the inclusion functor $\theta \co DG^+dg_+\hat{\Tc}\Alg_{R}^{\mathrm{dfp,cof},\et} \to DG^+dg_+\hat{\Tc}\Alg_{R}^{\mathrm{dfp,cof}}$, the restriction $\theta^* \co [DG^+dg_+\hat{\Tc}\Alg_{R}^{\mathrm{dfp,cof}},s\Set] \to [DG^+dg_+\hat{\Tc}\Alg_{R}^{\mathrm{dfp,cof},\et},s\Set]$ has a derived left adjoint $\theta_!$ which sends $(\oR \Spec C)_{\rig}$ to $\oR\Spec C$, so 
 \[
 \theta_!\hat{F}_{x,\rig} \simeq \hat{F}_x|.
 \]
 Similarly, for the inclusion functor $\rho \co  DG^+dg_+ \hat{\Tc}\Alg_{R}^{\mathrm{dfp,cof}} \to DG^+dg_+ \cF r\Alg_{R}$, we have
 \[
  \rho_!\hat{F}_x| \simeq \hat{F}_x.
  \]
 
 Since homotopy colimits in $s\Set$ are universal and derived left adjoints commute with homotopy colimits, we have
  \begin{align*}
  \holim_{\substack{\longrightarrow \\ (x,A)}} \hat{F}_x(B) 
   &\simeq D_*F(B)\by_{F(B^0)}^h\holim_{\substack{\longrightarrow \\ (x,A)}}\map_{dg_+\cF r\Alg_R}(A,B^0)\\
 &\simeq D_*F(B)\by_{F(B^0)}^h(\iota_!F)(B^0)\simeq QD_*F(B),
 \end{align*}
 for all $B  \in DG^+dg_+\cF r \Alg_R$, where the homotopy colimit is taken over $(x,A) \in (dg_+\hat{\Tc}\Alg_{R}^{\mathrm{fp,cof}})^{\op}\da F
 $. 
 
For $B \in DG^+dg_+\hat{\Tc}\Alg_{R}^{\mathrm{dfp,cof}}$, we have $QD_*F(B) \simeq D_*F(B)$ because $B^0 \in dg_+\hat{\Tc}\Alg_{R}^{\mathrm{fp,cof}}$. Since $\rho_!$ commutes with homotopy colimits, we thus have
\[
 QD_*F \simeq \holim_{\substack{\longrightarrow \\ (x,A)}} \hat{F}_x \simeq \rho_!\holim_{\substack{\longrightarrow \\ (x,A)}} \hat{F}_x| \simeq \rho_!D_*F|, 
\]
which is equivalent to the first desired equivalence.

  Similarly,
for  $B \in DG^+dg_+\hat{\Tc}\Alg_{R}^{\mathrm{dfp,cof}}$ we have
 $ \ho\LLim_{(x,A)} \hat{F}_{x,\rig}(B)\simeq (D_*F)_{\rig}(B)$ so
 \[
  D_*F| \simeq \holim_{\substack{\longrightarrow \\ (x,A)}} \hat{F}_x|  \simeq  \holim_{\substack{\longrightarrow \\ (x,A)}}\theta_!\hat{F}_{x,\rig}  \simeq  \theta_!(D_*F)_{\rig}, 
\]
 which is equivalent to the second desired equivalence. 
\end{proof}

%

 
 
 
 \section{Derived moduli of hyperconnections and the Riemann--Hilbert map} \label{RHsn} 

 \subsection{Non-commutative algebraic Betti, de Rham and Dolbeault functors}
 
 The following definitions slightly generalise Betti and de Rham moduli stacks by allowing perfect complexes and non-commutative inputs.

  \begin{definition}\label{Bettidef}
 For  $B  \in dg_+\Alg_{\Z}$ and any topological space $X$, define  the $B$-linear Betti  dg category of $X$ to be
  \[
    \per_{dg}^{M,\oB}(B):=\oR\Gamma(M,  \per_{dg}(B)),
  \]
the dg category  of locally constant $B$-linear perfect complexes on $X$.
\end{definition}

 \begin{definition}\label{dRalgperdef}
 For $B  \in dg_+\Alg_R$ and   any smooth $R$-scheme $Y$,   define the algebraic $B$-linear de Rham  dg category of $Y$  to be 
\[
 \per_{dg}^{Y,\dR,\alg}(B):=\oR\Gamma(Y,  D_*\per_{dg}(\Omega^{\bt,\alg}_Y\ten_RB)),
\]
which as in Lemma \ref{hyperconnlemma} parametrises locally perfect $\sO_Y\ten_RB$-modules on $Y$ with flat hyperconnections.
\end{definition}

Replacing the Zariski site with the \'etale site, the same definition immediately generalises to smooth Deligne--Mumford stacks.

\begin{definition}
 For $B  \in dg_+\Alg_R$ and   any smooth $R$-scheme $Y$, define the $B$-linear algebraic Dolbeault dg category of $Y$  to be 
\[
 \per_{dg}^{Y,\Dol,\alg}(B):=\oR\Gamma(Y,  D_*\per_{dg}(\Omega^{\#,\alg}_Y\ten_RB)),
\]
which as in Lemma \ref{hyperconnlemma} parametrises locally perfect $\sO_Y\ten_RB$-modules on $Y$ with generalised Higgs forms.
    \end{definition}

 \subsection{Non-commutative analytic de Rham and Dolbeault functors}


%

\begin{definition}\label{OmegaBandef}
 Given a complex Banach space $V$ and a complex  manifold $Y$, we let $\Omega^{\bt}_{Y}(V)$ be the complex of sheaves of  holomorphic $V$-valued differential forms on $Y$, equipped with the de Rham differential $\pd \co \Omega^p_{Y}(V)\to \Omega^{p+1}_{Y}(V)$. We write  $\Omega^p(Y,V):= \Gamma(Y,\Omega^p_{Y}(V))$.
 
 Similarly, given a real Banach space $V$ and a real manifold $M$, we let $\sA^{\bt}_{M}(V)$ be the complex of sheaves of infinitely differentiable  $V$-valued differential forms on  $M$, equipped with the de Rham differential $d \co \sA^p_{M}(V)\to \sA^{p+1}_{M}(V)$. We write $A^{\bt}(M,V):=\Gamma(M,\sA^{\bt}_{M}(V))$.
  \end{definition}

\begin{lemma}\label{BaninjtenlemmaCx}
 For the sheaves $\Omega^p_{Y}:=\Omega^p_{Y}(\Cx)$ of  Fr\'echet spaces, we have  natural isomorphisms
 \[
  \Omega^p_{Y}(V)\cong \Omega^p_{Y}\hten_{\eps}V
 \]
of sheaves on $Y$ for any complex Banach space $V$, where $\hten_{\eps}$ is the completed injective tensor product
\end{lemma}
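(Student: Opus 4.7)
The plan is to prove the isomorphism of sheaves by checking it on a suitable basis of open sets (holomorphically convex, or simply polydisks), where both sides can be described explicitly, and then invoke general results on topological tensor products with nuclear Fréchet spaces.

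First I would reduce to the case $p=0$. On any chart $U \subset Y$ with holomorphic coordinates, the sheaf $\Omega^p_Y|_U$ is free of finite rank over $\sO_Y|_U$, with basis the $p$-fold wedges of coordinate differentials, so $\Omega^p_Y(V)|_U \cong \bigoplus_I \sO_Y(V)|_U$ and similarly $(\Omega^p_Y \hten_{\eps} V)|_U \cong \bigoplus_I (\sO_Y \hten_{\eps} V)|_U$ (the injective tensor product commutes with finite direct sums). Because the sheaf property reduces to behaviour on a basis, it suffices to prove that the natural map $\sO_Y(U) \hten_{\eps} V \to \sO_Y(V)(U)$ is an isomorphism for $U$ ranging over a basis of polydiscs (say).

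Next, for such a $U$, the space $\sO_Y(U)$ of holomorphic functions is a nuclear Fréchet space (its topology is defined by sup norms on compact subsets, giving it a Köthe-sequence-space structure via Taylor expansion). For nuclear Fréchet $E$ and any complete locally convex $V$, the injective and projective completed tensor products coincide, and the Grothendieck--Schwartz identification (as used in the paper already via Treves, Theorem 44.1) yields a canonical isomorphism $\sO_Y(U) \hten_{\eps} V = \sO_Y(U) \hten_{\pi} V \cong \sO(U,V)$, the space of $V$-valued holomorphic functions on $U$. Concretely, one writes out the Taylor expansion $f = \sum_{\alpha} v_{\alpha} z^{\alpha}$ with $v_{\alpha} \in V$ and verifies convergence in both descriptions by the usual Cauchy estimates, noting that $V$-valued holomorphy (in the sense of complex differentiability) coincides with weak holomorphy for $V$ a Banach space by Dunford's theorem.

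Finally I would check naturality in $U$: the comparison map is compatible with restriction since both the algebraic bilinear pairing $\sO_Y(U) \times V \to \sO_Y(V)(U)$ (sending $(f,v)$ to $fv$) and the universal property of $\hten_\eps$ are natural in $U$, so the isomorphisms on a basis assemble into the desired isomorphism of sheaves. The main technical point — and the only nontrivial step — is the tensor-product identification $\sO_Y(U)\hten_{\eps} V \cong \sO(U,V)$; this is where nuclearity of $\sO_Y(U)$ is essential, since for a general Fréchet space the injective and projective tensor products with $V$ would differ, and the identification with $V$-valued holomorphic functions would fail.
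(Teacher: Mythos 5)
Your proof is correct and follows essentially the same route as the paper: both reduce to polydiscs, reduce $p$ to $0$ by the local freeness of $\Omega^p_Y$ over $\sO_Y$, and identify $\sO(U)\hten_\eps V$ with $V$-valued holomorphic functions via a power-series description with Cauchy estimates (the paper cites Barletta--Dragomir for the analyticity of $V$-valued holomorphic maps where you invoke Dunford's theorem, but the content is the same). Your observation that nuclearity of $\sO(U)$ makes $\hten_\eps = \hten_\pi$ is a harmless extra; the statement only involves $\hten_\eps$, and the relevant consequence of nuclearity is really the agreement of $\hten_\eps$ with the Schwartz $\eps$-product, which underlies the function-space identification.
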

\begin{proof}
Since the question is local, it suffices to prove it on an open polydisc  $D^n:=\{\uline{z}~:~ \|z_i\|<1 \forall i\} \subset  \Cx^n$. 
 By  \cite[Theorem 5]{BarlettaDragomirVectorValdHoloFns}, $V$-valued holomorphic functions on $D^n$    are analytic and uniformly convergent, giving 
 \begin{align*}
  \Omega^0(D^n,V) &\cong \Lim_{r<1} \{\sum \lambda_Iz^I \in V\llb z_1, \ldots z_n\rrb ~:~ \|\lambda_I\|r^{|I|} \text{ bounded}\}\\
  &\cong \Omega^0(D^n,\Cx)\hten_{\eps}V,
 \end{align*}
and since $\Omega^p(D^n,V) \cong \bigoplus_{i_1<\ldots<i_p} \Omega^0(D^n,V)dx_{i_1}\wedge \ldots \wedge dx_{i_p}$, it immediately follows that $ \Omega^p(D^n,V) \cong\Omega^p(D^n,\Cx)\hten_{\eps}V$, as required.
  \end{proof}
 
 \begin{lemma}\label{BaninjtenlemmaR}
 For the sheaves $\sA^p_{M}:=\sA^p_{M}(\R)$ of  Fr\'echet spaces, we have  natural isomorphisms
 \[
  \sA^p_{M}(V)\cong \sA^p_{M}\hten_{\eps}V
 \]
of sheaves on $M$ for any real complete 
locally convex topological vector space
space $V$, where $\hten_{\eps}$ is the completed injective tensor product
\end{lemma}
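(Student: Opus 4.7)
The plan is to mimic the proof of Lemma \ref{BaninjtenlemmaCx}, with the key difference being that smooth functions do not admit convergent power series, so the argument by explicit Taylor expansion must be replaced by an appeal to nuclearity. Since the statement is local on $M$, I would reduce to the case $M = \Omega$ an open subset of $\R^n$, and furthermore to degree $p=0$, since once the $0$-form case is known, the decomposition $\sA^p_{\Omega}(V)\cong \bigoplus_{i_1<\ldots<i_p}\sA^0_{\Omega}(V)\,dx_{i_1}\wedge\ldots\wedge dx_{i_p}$, together with the fact that $\hten_{\eps}$ commutes with finite direct sums, yields the general case at once.

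For the $p=0$ case, the crucial input is that the Fr\'echet space $\sA^0_{\Omega}(\R)=C^{\infty}(\Omega,\R)$, topologised by uniform convergence of all derivatives on compact subsets, is a nuclear Fr\'echet space (see e.g.\ \cite[Ch.~51]{treves}). Nuclearity then has two consequences I would exploit: first, the completed injective and projective tensor products with $V$ coincide, so $\sA^0_{\Omega}\hten_{\eps}V \cong \sA^0_{\Omega}\hten_{\pi}V$; second, by Grothendieck's theorem on vector-valued smooth functions (this is the content of \cite[Theorem 44.1]{treves}, already invoked in Examples \ref{CXex} and \ref{CXex2}), the natural map
\[
\sA^0_{\Omega}\hten_{\pi}V \lra C^{\infty}(\Omega,V)=\sA^0_{\Omega}(V)
\]
is an isomorphism for every complete Hausdorff locally convex space $V$. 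This gives the isomorphism $\sA^0_{\Omega}(V)\cong \sA^0_{\Omega}\hten_{\eps}V$ in the required generality.

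To assemble the local isomorphisms into the sheaf statement, I would observe that both sides are sheaves on $M$: the left-hand side by construction, and the right-hand side because $\sA^p_M$ is an acyclic sheaf of nuclear Fr\'echet spaces, so tensoring with $V$ preserves restriction maps and locality (completed tensor products commute with countable Fr\'echet limits for nuclear factors). The local isomorphism is natural in the open set because Grothendieck's identification is natural in $\Omega$. Passing to sheaves then yields the claimed $\sA^p_M(V)\cong \sA^p_M\hten_{\eps}V$.

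The only delicate point, compared to the complex analytic case, is the generality of $V$: allowing any complete LCTVS rather than merely a Banach space is precisely what forces us to route the proof through nuclearity of $\sA^0_M$ instead of a direct power-series argument. Everything else is routine, so I do not anticipate a serious obstacle once Treves' theorem is invoked.
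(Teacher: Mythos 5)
Your proof is correct and takes essentially the same approach as the paper, whose entire proof is a single citation of \cite[Theorem 44.1]{treves}. The reductions you spell out (to $p=0$, to a chart, and back to sheaves) are sound but are exactly the routine bookkeeping the paper leaves implicit in that citation.
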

\begin{proof}
 This is \cite[Theorem 44.1]{treves}.
\end{proof}

Consequently, we make the following definition:

\begin{definition}
 Given a  complex manifold $Y$ and $B \in dg_+\hat{\Tc}\Alg(\Cx)$, write $\Omega^{\bt}_Y(B):= \Omega^{\bt}_Y\hten_{\Cx}B_{\bt}$, the sheaf (in double complexes) of $B$-valued holomorphic forms equipped with differential $\pd$.

 Given a differentiable real manifold $X$ and $B \in dg_+\hat{\Tc}\Alg(\R)$, with $\sA^{\bt}_M$  the complex of real $\C^{\infty}$-differential forms, write $\sA^{\bt}_M(B):=\sA^{\bt}_{M}\hten_{\R}B_{\bt}$, the sheaf (in double complexes) of $B$-valued smooth forms equipped with differential $d$.
\end{definition}
Note that because the modules of holomorphic and smooth differentials are nuclear Fr\'echet, the completed tensor product $\hten$ is unambiguous in both cases.

  \begin{definition}
For $B  \in dg_+\hat{\Tc}\Alg_{\R}$ and  any real $\C^{\infty}$ manifold $M$, the $B$-linear de Rham dg category of $M$  is given by 
\[
 \per_{dg}^{M,\dR,\C^{\infty}}(B):=\oR\Gamma(M,  D_*\per_{dg}(\sA^{\bt}_M(B))).
\]
\end{definition}
Note that via Lemma \ref{hyperconnlemma}, this can be identified with the dg category of perfect $\C^{\infty}(M,B)$-modules with flat $d$-hyperconnections.

\begin{definition}\label{dRanperdef}
 For $B  \in dg_+\hat{\Tc}\Alg_{\Cx}$ and   any complex manifold $Y$, the $B$-linear analytic de Rham dg category of $Y$  is given by 
\[
 \per_{dg}^{Y,\dR,\an}(B):=\oR\Gamma(Y,  D_*\per_{dg}(\Omega^{\bt,\hol}_Y(B))).
\]
    \end{definition}
Note that via Lemma \ref{hyperconnlemma}, this can be identified with the dg category of perfect $\sO_Y^{\hol}(B)$-modules with flat $\pd$-hyperconnections.

\begin{definition}
 For $B  \in dg_+\hat{\Tc}\Alg_{\Cx}$ and   any complex manifold $Y$, the $B$-linear analytic Dolbeault dg category of $Y$  is given by 
\[
 \per_{dg}^{Y,\Dol,\an}(B):=\oR\Gamma(Y,  D_*\per_{dg}(\Omega^{\#,\hol}_Y(B))).
\]

Similarly, the $B$-linear $\C^{\infty}$ Dolbeault dg category of $Y$  is given by 
\[
 \per_{dg}^{Y,\Dol,\C^{\infty}}(B):=\oR\Gamma(Y,  D_*\per_{dg}(\sA^{\#,\bt}_Y(B))).
\]
where $\sA^{\#,\bt}_Y(B)$ is the graded sheaf $\bigoplus_n \sA^n_Y(B)[-n]=\bigoplus_{p,q} \sA^{p,q}_Y(B)[-p-q] $ equipped with the differential $\bar{\pd} \co \sA^{p,q}_Y(B) \to \sA^{p,q+1}_Y(B)$.
    \end{definition}
Note that via Lemma \ref{hyperconnlemma}, the analytic Dolbeault dg  category can be identified with the dg category of perfect $\sO_Y^{\hol}(B)$-modules with generalised Higgs forms. The $\C^{\infty}$ Dolbeault dg category likewise consists of perfect $\C^{\infty}(M,B)$-modules with flat $\bar{\pd}$-hyperconnection.

  \subsection{The Poincar\'e Lemma and Riemann--Hilbert map}
  
\begin{definition}
 Given an associative algebra $(B,\delta)$ in cochain complexes, recall that the Maurer--Cartan set  is given by
 \[
  \mc(B):=\{ b \in B^1~:~ \delta b +b\cdot b =0 \in B^2\}.
 \]
This admits a gauge action by the group $(B^0)^{\by}$ of invertible elements $g$ of $B^0$, given by 
\[
g\star b := gbg^{-1} - (\delta g)g^{-1}.
 \]
\end{definition}

We now give a form of Poincar\'e Lemma in this setting: 
\begin{lemma}\label{poincarelemma}
 If $B$ is a unital associative complete multiplicatively convex differential graded  $\Cx$-algebra and $\bD^n \subset \Cx^n$ the open unit polydisc, then every Maurer-Cartan element $\omega \in \mc(\Tot(\Omega^{\bt}(\bD^n,B)))$ is gauge equivalent to an element of the subset $\mc(B)$, via an invertible element of $\Tot\Omega^{<n}(\bD^n,B)^0$. 
 
 Similarly, if $B$ is a unital associative complete multiplicatively convex differential graded $\R$-algebra and $J^n \subset \R^n$ the open unit hypercube, then every Maurer-Cartan element $\omega \in \mc(\Tot(A^{\bt}(J^n,B)))$ is gauge equivalent to an element of the subset $\mc(B)$, via an invertible element of $\Tot A^{<n}(J^n,B)$.
\end{lemma}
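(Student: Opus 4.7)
My plan is to prove both cases by induction on $n$, reducing in each step to a one-dimensional ODE problem. For $n = 0$ both statements are vacuous, so I fix $n \ge 1$. The key initial observation is that $\Tot(\Omega^{\bt}(\bD^n,B))$ is naturally isomorphic as a complete LMC dga to $\Tot(\Omega^{\bt}(\bD,B'))$ where $B' := \Tot(\Omega^{\bt}(\bD^{n-1},B))$ is itself a complete LMC dga in its Fr\'echet topology (this uses the injective tensor product description from Lemma \ref{BaninjtenlemmaCx}); an analogous identification holds in the real case via Lemma \ref{BaninjtenlemmaR}. It therefore suffices to treat the case $n=1$ over an arbitrary complete LMC dga $B'$ and iterate, composing the resulting gauges to obtain the required element of $\Tot\Omega^{<n}(\bD^n,B)^0$.

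For the one-variable case I would write $\omega = \alpha + \beta\,dz$ with $\alpha$ of total degree $1$ in $B'$ and $\beta \in \Omega^0(\bD,(B')^0)$. The plan is to construct an invertible $g \in \Omega^0(\bD,(B')^0)$ satisfying $g(0)=1$ and the linear ODE $\pd_z g = g\beta$. Since $B'$ is complete LMC, it is an inverse limit of Banach algebras $B'_{\nu}$; on each Banach completion the induced $B'_\nu$-valued linear ODE has a unique global holomorphic solution by Picard iteration together with a Gr\"onwall bound. These solutions are compatible under transition maps and assemble into $g$ in the LMC limit, invertibility being witnessed by the companion ODE $\pd_z g^{-1} = -\beta\,g^{-1}$. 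After this gauge, $g\star\omega = g\alpha g^{-1} - (\delta_{B'}g)g^{-1}$ has no $dz$-component; extracting the $dz$-coefficient of its Maurer--Cartan equation then forces $\pd_z(g\star\omega) = 0$, so $g \star \omega$ is constant in $z$ and defines an element of $\mc(B')$.

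Applying the inductive hypothesis to this element of $\mc(B')$ yields an invertible gauge $h \in \Tot\Omega^{<n-1}(\bD^{n-1},B)^0 \subseteq (B')^0$ carrying it into $\mc(B)$. The composite $hg$ then lies in the product $\Omega^0(\bD^n,B^0) \cdot \Tot\Omega^{<n-1}(\bD^{n-1},B)^0 \subseteq \Tot\Omega^{<n}(\bD^n,B)^0$, where the inclusion uses that the $\Omega^0$ factor contributes no extra de Rham degree, and this furnishes the gauge sought by the lemma. The real case proceeds identically with $x_n$ in place of $z_n$ and with standard parametric smooth ODE theory on $J^n$, using Lemma \ref{BaninjtenlemmaR} for the tensor identifications.

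The main technical obstacle I anticipate is showing that the Banach-valued ODE solutions actually assemble into a genuinely holomorphic (resp.\ smooth) function on all of $\bD$ (resp.\ $J$) valued in the LMC completion, rather than merely in a formal completion. Submultiplicativity of the defining seminorms on $B'$ reduces this to uniform Gr\"onwall estimates on each Banach quotient and each closed sub-polydisc $|z| \le \rho < 1$; combined with holomorphy of $\beta$ these give the required convergence, but some bookkeeping is needed because $B'$ itself carries infinitely many seminorms inherited from the iterated form spaces, and one must ensure continuity of the solution in all of them simultaneously so that the resulting $g$ is a valid element of the Fr\'echet completion at each stage of the induction.
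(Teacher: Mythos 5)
Your proposal is correct and follows essentially the same strategy as the paper: reduce to the one-variable case by viewing $\Tot\Omega^{\bt}(\bD^n,B)$ as one-variable forms with coefficients in $\Tot\Omega^{\bt}(\bD^{n-1},B)$, solve the linear ODE $\pd_z g = g\beta$ to kill the $dz$-component of $\omega$, and deduce from the Maurer--Cartan equation that the gauged element is constant. The only cosmetic difference is that the paper writes down the solution $g$ explicitly as Chen's iterated integral $\sum_{k\ge 0}(-1)^k\int_{0\le t_k\le\cdots\le t_1\le t}\eta(t_1)\wedge\cdots\wedge\eta(t_k)$ and bounds it directly by $\exp(\|\eta\|t)$ against each submultiplicative seminorm, which sidesteps the ``assembly'' concern you raise at the end --- the iterated-integral formula is precisely the limit of the Picard iterates and is manifestly compatible with the transition maps of the inverse system, so no separate compatibility check is needed.
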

\begin{proof}
We begin by proving the first statement.

 If $n=1$, then we can write $\omega = (\phi, \eta) \in  \Omega^0(\bD^1,B^1) \by \Omega^1(\bD^1, B^0)$, satisfying $\phi \in \mc(\Omega^0(\bD^1,B))$ and $d\omega + \delta \eta + [\omega,\eta]=0 \in \Omega^1(\bD^1,B^1)$. 
 
 We can then let $g \in \Omega^0(\bD^1,B^0)$ be the 
 iterated integral 
 \[
  g(t):=\sum_{n\ge 0} (-1)^n \int_{t_1=0}^t \eta(t_1) \wedge \int_{t_2=0}^{t_1}\eta(t_2) \wedge \ldots  \int_{t_n=0}^{t_{n-1}} \eta(t_{n-1}).
  \]
 This converges because with respect to the sup norm $\|-\|$ associated to any submultiplicative seminorm of $B$  on each closed disc within $\bD^1$, we have $|\eta(t)|\le 
 \|\eta\|$, and thus $\|g(t)\|\le \exp(\|\eta\|t)$. 

This element satisfies  $dg = -\eta \wedge g$ and $g(0)=1$; reversing the order of multiplication and the sign of $\eta$ similarly gives us an element $f \in \Omega^0(\bD^1,B^0)$ with $df= f \wedge \eta$ and $f(0)=1$. Thus $d(fg)=d(gf)=0$, so $fg$ and $gf$ are constant, hence both $1$, implying $g$ is indeed invertible, with $f=g^{-1}$. 
 
Now, writing $g^{-1}\star_{\delta} \phi:= g^{-1}\phi g +g^{-1}\delta g$, we have
\begin{align*}
 g^{-1}\star (\phi, \eta) &= g^{-1}(\phi, \eta)g + g^{-1}(\delta+d)g\\ 
 &=(g^{-1}\phi g +g^{-1}\delta g, g^{-1}\eta g + g^{-1}dg )
 = (g^{-1}\star_{\delta} \phi, 0),
\end{align*}
and $\phi_0 := g^{-1}\star \phi$ is a Maurer--Cartan element, so $d\phi_0=0$ and hence $\phi_0$ is constant, implying $\phi_0 \in \mc(B)$.

To prove the case for general $n$, we can apply the step above inductively to the inclusions
\[
 \Tot\Omega^{\bt}(\bD^{n-m-1},B) \into \Tot\Omega^{\bt}(\bD^1,\Tot\Omega^{\bt}(\bD^{n-m-1},B)) \cong  \Tot\Omega^{\bt}(\bD^{n-m},B)
\]
of pro-dg Banach algebras, giving the required result.

\smallskip
The  second statement now proceeds almost identically. The only difference is that for the existence of $g$, we need to check convergence of all the derivatives $D^{(n)}(g)$ (and not just $g$ itself) on compact subspaces. Expressions for bounds on $|D^{(n)}g(x)|$, in terms of $|g(x)|$ and $\|D^{(i)}\eta\|$ for $i<n$,  follow inductively from the formula $dg = -\eta \wedge g$.
\end{proof}

\begin{proposition}\label{poincareprop}
For $B  \in dg_+\hat{\Tc}\Alg_{\Cx}$ and 
$\bD^n \subset \Cx^n$ the open unit polydisc,
 the canonical dg functor
 \begin{align*}
\eta \co \per_{dg}(B) &\to (D_*\per_{dg})(\Omega^{\bt}(\bD^n,B)) 
\end{align*}
is a quasi-equivalence.
 
 For $B \in dg_+\hat{\Tc}\Alg_{\R}$ and $J^n \subset \R^n$ the open  hypercube $(-1,1)^n$, the canonical dg functor 
 \begin{align*}
\eta \co  \per_{dg}(B) &\to (D_*\per_{dg})(A^{\bt}(J^n,B)) 
 \end{align*}
is a quasi-equivalence.
  \end{proposition}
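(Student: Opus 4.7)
The plan is to verify that $\eta$ is both fully faithful and essentially surjective at the level of homotopy categories, with the two halves reducing respectively to the classical constant-coefficient Poincar\'e lemma and to Lemma \ref{poincarelemma}. First I would invoke Lemma \ref{hyperconnlemma}(\ref{hyperconnlemma:third}) to identify the target dg category with pairs $(E,\nabla)$, where $E$ is a perfect cofibrant $\Omega^0(\bD^n,B)$-complex and $\nabla$ is a flat $\Omega^\bt(\bD^n,B)$-hyperconnection. In this model, $\eta$ sends a perfect $B$-complex $P$ to $(P\hten_B\Omega^0(\bD^n,B),\,1\otimes d)$, i.e.\ the extension of scalars equipped with the trivial hyperconnection coming from the de Rham differential.

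For fully faithfulness, since the hyperconnections on both $\eta(P_1)$ and $\eta(P_2)$ are trivial, the explicit Hom-complex of Lemma \ref{hyperconnlemma}(\ref{hyperconnlemma:third}) simplifies directly to
\[
\Tot\bigl(\HHom^\bt_B(P_1,P_2)\hten_\Cx\Omega^\bt(\bD^n,\Cx)\bigr),
\]
with total differential $\delta+\pd$, where $\pd$ is the de Rham differential on $\Omega^\bt(\bD^n,\Cx)$. The classical constant-coefficient Poincar\'e lemma exhibits $\Omega^\bt(\bD^n,\Cx)\xrightarrow{\sim}\Cx$ as a quasi-isomorphism between complexes of nuclear Fr\'echet spaces whose differentials have closed images, and Lemma \ref{nucexactlemma} then gives a quasi-isomorphism onto $\HHom^\bt_B(P_1,P_2)$, which is precisely the mapping complex in $\per_{dg}(B)$ (after replacing $P_1,P_2$ by levelwise finitely generated cofibrant models to ensure the Fr\'echet hypothesis).

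For essential surjectivity, given $(E,\nabla)$ in the target, I would first use the final statement of Lemma \ref{hyperconnlemma} to replace $E$ by a quasi-isomorphic cofibrant $\Omega^0(\bD^n,B)$-complex whose underlying graded module is of the form $V\otimes_\Cx\Omega^0(\bD^n,B)$ for a bounded finite-dimensional graded $\Cx$-vector space $V$. The flat hyperconnection then corresponds to a Maurer--Cartan element of the complete LMC dg $\Cx$-algebra $\Omega^\bt\bigl(\bD^n,\End^\bt_\Cx(V)\hten B\bigr)$. Applying Lemma \ref{poincarelemma} with coefficient dg algebra $\End^\bt_\Cx(V)\hten B$, this MC element is gauge-equivalent, through an invertible element of $\Omega^{<n}(\bD^n,\End^\bt_\Cx(V)\hten B)^0$, to a Maurer--Cartan element of $\End^\bt_\Cx(V)\hten B$. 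Such an element is precisely a $B$-linear dg structure on $V\otimes_\Cx B$, producing a perfect $B$-complex $P$ together with a homotopy equivalence $(E,\nabla)\simeq\eta(P)$ in the dg category of flat hyperconnections (the gauge serving as the isomorphism). The real case proceeds verbatim, substituting $J^n$ for $\bD^n$, smooth forms (via Lemma \ref{BaninjtenlemmaR}) for holomorphic ones, and invoking the second statement of Lemma \ref{poincarelemma}.

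The main obstacle I foresee is the initial trivialisation step in essential surjectivity: representing a cofibrant perfect $\Omega^0(\bD^n,B)$-complex with a free underlying graded module. For commutative $B$ this follows from Oka--Grauert-type triviality of coherent sheaves on contractible Stein spaces, but for arbitrary non-commutative topological $B$ one likely has to combine the cellular filtration of a cofibrant perfect complex with an inductive application of Lemma \ref{poincarelemma} at each attaching stage, or alternatively reformulate $E$ and $\nabla$ simultaneously as a single Maurer--Cartan problem in a larger dg algebra built from $\Omega^\bt(\bD^n,\Cx)$ together with the endomorphisms of such a cellular filtration, bypassing a global trivialisation of the module altogether.
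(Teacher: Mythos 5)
Your overall strategy matches the paper's: reduce to flat hyperconnections via Lemma \ref{hyperconnlemma}, prove full faithfulness by the constant-coefficient Poincar\'e lemma, and essential surjectivity by recasting the hyperconnection as a Maurer--Cartan element and applying Lemma \ref{poincarelemma}. For full faithfulness your route differs cosmetically from the paper's --- you tensor $\HHom_B(P_1,P_2)$ with the contractible complex $\Omega^\bt(\bD^n,\Cx)$ and invoke Lemma \ref{nucexactlemma}, whereas the paper exhibits the integration operators directly as a contracting homotopy for $B\to\Tot\Omega^\bt(\bD^n,B)$, then applies $\HHom_B(M,-\ten_B N)$ --- but both close this half.

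The gap is in essential surjectivity. You insist on replacing $E$ by a free graded module $V\otimes_\Cx\Omega^0(\bD^n,B)$ with $V$ \emph{bounded and finite-dimensional}, and your last paragraph worries about Oka--Grauert triviality as the obstacle. Neither is quite the right concern. Perfect complexes over a dg ring are retracts of finite cell modules, and a retract of a finitely generated free graded module is projective but need not be free, so the existence of a bounded finitely generated free cofibrant model is not automatic. The paper makes the weaker (and correct) claim: one can find a quasi-isomorphic cofibrant $E'$ freely generated by a \emph{levelwise finite} graded set $S$ (so $V=S\cdot\bK$ is levelwise finite-dimensional but possibly unbounded). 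The resulting $\EEnd_\bK(V)$ is then pro-finite-dimensional rather than finite-dimensional, but this is still a pro-Banach dg algebra, so $\EEnd_\bK(V)\hten_\bK B$ remains a complete multiplicatively convex dg algebra to which Lemma \ref{poincarelemma} applies. Once you relax boundedness in this way, the rest of your argument --- transporting the hyperconnection via the final clause of Lemma \ref{hyperconnlemma}, packaging $(\delta,\nabla)$ as a Maurer--Cartan element, and using the gauge from Lemma \ref{poincarelemma} as the required homotopy equivalence --- goes through exactly as you describe, and the Oka--Grauert-type argument and the alternative ``single MC problem'' reformulation you float at the end are both unnecessary.
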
 
\begin{proof}
Lemma \ref{hyperconnlemma} allows us to replace $D_*\per_{dg}$ with the relevant category of flat hyperconnections.

For cofibrant $B$-modules $M,N$, these dg functors can thus be realised on morphisms as  the natural inclusion maps 
\begin{align*}
 \HHom_{B}(M,N)&\to \HHom_{B}(M, \Tot^{\Pi}(\Omega^{\bt}(\bD^n,B)\ten_{B}N))\\
 \HHom_{B}(M,N)&\to \HHom_{B}(M, \Tot^{\Pi} (A^{\bt}(J^n,B)\ten_{B}N)),
 \end{align*}
so full faithfulness is equivalent to showing that the inclusions $B \to \Tot \Omega^{\bt}(\bD^n,B)$ and $B \to  \Tot A^{\bt}(J^n,B)$ are quasi-isomorphisms (boundedness in the cochain direction makes $\Tot^{\Pi}$ and $\Tot$ equivalent here).

To establish this, it suffices to show that the inclusions $\Tot \Omega^{\bt}(\bD^{n-1},B) \to \Tot \Omega^{\bt}(\bD^n,B) $ and $\Tot A^{\bt}(J^{n-1},B) \to \Tot A^{\bt}(J^n,B)$, coming from the projection maps $\bD^n \to \bD^{n-1}$ and $J^n \to J^{n-1}$,  are quasi-isomorphisms. This follows because the integration operators $\int_{z_n=0}^{z}$ give a contracting homotopy; these are well-defined because they define  morphisms $\Omega^p(\bD^n,\Cx) \to \Omega^{p-1}(\bD^n,\Cx) $ and $A^p(J^n,\R)\to A^{p-1}(J^n,\R) $ of Fr\'echet spaces, so extend to the completed injective tensor product with $B$.

It remains to show that the dg functors are essentially surjective. Writing $C^{\bt}$ for $\Omega^{\bt}(\bD^n,B)$ (resp. $A^{\bt}(J^n,B)$), we wish to show that every  flat $C$-hyperconnection $(E,\nabla)$, with $E$ perfect as a $C^0$-module, lies in the essential image of the dg functor. Since $E$ is perfect, there exists a quasi-isomorphic $C^0$-module $E'$ which is freely generated as a graded $C^0_{\#}$-module by a levelwise finite graded generating set $S$. 
By the final part of Lemma \ref{hyperconnlemma}, there then exists a flat $C$-connection $\nabla'$ on $E'$ such that $(E',\nabla')$ is homotopy equivalent to $(E,\nabla)$.


If we fix the graded set $S$ of generators, then $(E',\nabla')$ such  is determined by first specifying the differential $\delta$ making the module $E'_{\#}$  into a chain complex, then giving the hyperconnection $\nabla \co E_{\#} \to \Tot^{\Pi} (E_{\#}\ten_{C^0}C^{>0}_{\#})$  satisfying $(\delta \pm \nabla)^2=0$.

Writing $V:= S.\bK$ with trivial differential and noting that $\EEnd_{\bK}(V) $ is pro-finite-dimensional,
the set of such structures is isomorphic to
\[
 \mc(\HHom_{\bK}(V, \Tot^{\Pi}(V\ten_{\bK}C )) \cong \mc(\EEnd_{\bK}(V)\hten_{\bK}C), 
\]
since Maurer--Cartan elements $\omega= \sum_{i=0}^n \omega^i$ correspond to the data of a closed differential $\id_V \ten \delta_{C^0} +\omega^0$ on $V\ten_{\bK}C^0$ (giving the complex $E'$) together with a flat $C$-hyperconnection  $\nabla' = (d+\omega^1)+\omega^2+\dots +\omega^n$ on $E'$.

We can now appeal to Lemma \ref{poincarelemma}, applied to the pro-dg Banach algebra $ B \hten_{\bK} \EEnd_{\bK}(V) \cong \HHom(V,V\ten_{\bK}B)$. This provides a gauge equivalence $g$ from $\omega$ to a Maurer-Cartan element $\phi_0 \in \mc(B)$. 
In other words, the gauge equivalence provides an isomorphism
\begin{align*}
g \co (E',\nabla') \to \eta(V\ten_{\bK}B, \id\ten \delta_B + \phi_0)
\end{align*}
in the category of flat $C$-hyperconnections.
  \end{proof}

The following $\C^{\infty}$ and analytic Riemann--Hilbert correspondences with LMC dg algebra coefficients now follow immediately from Proposition \ref{poincareprop} by taking derived global sections.
\begin{corollary}\label{poincarecor}
For  $M$ a real $\C^{\infty}$ manifold, there are quasi-equivalences
\[
\per_{dg}^{M,\oB}(B)\to \per_{dg}^{M,\dR,\C^{\infty}}(B)
\]
of dg categories, natural in $B \in dg_+\hat{\Tc}\Alg_{\R}$, from
the Betti moduli functor  to the $\C^{\infty}$ de Rham moduli functor.

Similarly, for $Y$ a complex manifold, there are weak equivalences
\[
\per_{dg}^{Y,\oB}(B)\to \per_{dg}^{Y,\dR,\an}(B) \to \per_{dg}^{Y,\dR,\C^{\infty}}(B)
\]
of dg categories, natural in $B \in dg_+\hat{\Tc}\Alg_{\Cx}$, from
the Betti moduli functor  to both analytic and $\C^{\infty}$ de Rham moduli functors.
\end{corollary}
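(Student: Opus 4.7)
The plan is to deduce both Riemann--Hilbert equivalences from the local Poincar\'e lemma (Proposition \ref{poincareprop}) by a descent argument on sheaves of dg categories. Each arrow in the statement is induced, open-set-wise, by the canonical dg functor $\eta$ of Proposition \ref{poincareprop}: for each open $U$, the inclusion $B \to \Omega^{\bt}(U,B)$ (resp.\ $B \to A^{\bt}(U,B)$) of the constant summand yields a map from the Betti presheaf $U \mapsto \per_{dg}(B)$ into $U \mapsto D_*\per_{dg}(\Omega^{\bt}_U(B))$ (resp.\ $D_*\per_{dg}(\sA^{\bt}_U(B))$), and on the complex side the inclusion $\Omega^{\bt}_Y \hookrightarrow \sA^{\bt}_Y$ induces the intermediate map. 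Evaluating $\oR\Gamma$ then gives the three natural dg functors of the corollary.

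First, I would fix a countable basis $\{U_\alpha\}$ of $M$ (resp.\ $Y$) consisting of geodesically convex open subsets diffeomorphic to the open cube $J^n$ (resp.\ biholomorphic to the polydisc $\bD^n$), and closed under finite intersections --- a standard good cover in the real case, and on a complex manifold by taking sufficiently small holomorphic polydisc charts, shrinking if necessary. Next, since $\oR\Gamma(M,-)$ and $\oR\Gamma(Y,-)$ can be computed as the homotopy limit of the \v{C}ech cosimplicial diagram associated to this base, it suffices to prove that on every object of the base the presheaf map is a quasi-equivalence of dg categories. This is precisely Proposition \ref{poincareprop} applied to $B$: both the inclusion $\per_{dg}(B) \to D_*\per_{dg}(A^{\bt}(J^n,B))$ in the real case and both arrows $\per_{dg}(B) \to D_*\per_{dg}(\Omega^{\bt}(\bD^n,B)) \to D_*\per_{dg}(\sA^{\bt}(\bD^n,B))$ in the complex case factor through the Poincar\'e equivalences, and since the compositions are equivalences by that proposition, the second map is too. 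Taking homotopy limits over the \v{C}ech diagram then preserves quasi-equivalences, yielding the global statement.

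The main obstacle is ensuring that the presheaves $U \mapsto D_*\per_{dg}(\Omega^{\bt}_U(B))$ and $U \mapsto D_*\per_{dg}(\sA^{\bt}_U(B))$ genuinely satisfy hyperdescent, so that $\oR\Gamma$ of the sheafification agrees with the homotopy limit over the \v{C}ech nerve of our good cover. This follows because $\sA^{\bt}_M$ is a complex of fine sheaves and $\Omega^{\bt}_Y$ resolves the constant sheaf $\uline{\Cx}_Y$ via soft quotients on Stein opens, so by Lemma \ref{hyperconnlemma} the dg category of hyperconnections can be identified with a dg category of filtered modules whose descent is inherited from that of $\per_{dg}$. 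The only point requiring care is the interplay of $D_*$ with \v{C}ech limits, but since $D_*$ is itself a homotopy limit over $\Delta$ it commutes with the \v{C}ech limit, reducing descent to descent for $\per_{dg}$ on each cosimplicial level, which is classical.

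Finally, naturality in $B \in dg_+\hat{\Tc}\Alg_R$ is automatic from the functoriality of $\Omega^{\bt}(U,B) = \Omega^{\bt}(U) \hten B$ and $\sA^{\bt}(U,B) = \sA^{\bt}(U) \hten B$ in $B$ (using Lemmas \ref{BaninjtenlemmaCx} and \ref{BaninjtenlemmaR} to identify these completed tensor products with the na\"ive $B$-valued form sheaves), so no further argument is needed once the equivalences are established for a given $B$.
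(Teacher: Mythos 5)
The proposal is correct and takes essentially the same approach as the paper, which also deduces the corollary by applying $\oR\Gamma$ to the local quasi-equivalences of Proposition \ref{poincareprop}, propagating the local statement through (hyper)sheafification. One minor slip: holomorphic polydiscs in a complex manifold are \emph{not} closed under finite intersections (unlike geodesically convex opens in the real case), but this does not affect the argument, since a morphism of presheaves that is an equivalence on a basis of opens already induces an equivalence on hypersheafifications, without needing the basis to be closed under intersections.
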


\begin{corollary}\label{dRcotperfcor}
 For  $M$ a compact oriented $d$-dimensional real $\C^{\infty}$ manifold,  the simplicial set-valued functor $\Perf^{M,\dR,\C^{\infty}}$ given by the nerve of the core of  $\per_{dg}^{M,\dR,\C^{\infty}}$  has perfect cotangent complexes at all points.
\end{corollary}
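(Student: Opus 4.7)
My plan is to apply Proposition \ref{RGammacotprop}, combined with the Riemann--Hilbert equivalence of Corollary \ref{poincarecor}, to reduce the statement to a finiteness property governed by the compactness of $M$. The functor $\Perf^{M,\dR,\C^{\infty}}$ fits the framework of Example \ref{ctbleex}(2), since it is defined as $\oR\Gamma(M,D_*\Perf(\sA^{\bt}_M(-)))$ with $\sA^{\bt}_M$ a sheaf of nuclear Fr\'echet algebras on a countable basis of open subsets of $M$. So for a point $x\in\Perf^{M,\dR,\C^{\infty}}(A)$, Proposition \ref{RGammacotprop} reduces perfection of the cotangent complex to showing that $\oR\Gamma(M,\bT_xD_*\Perf(\sA^{\bt}_M(A^{\hat e})))$ is perfect as an $A^{\hat e}$-module.

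The next step is to identify this complex. The Poincar\'e Lemma of Proposition \ref{poincareprop} and Corollary \ref{poincarecor} allow us to  replace $D_*\per_{dg}(\sA^{\bt}_M(A^{\hat e}))$ locally on $M$ by the dg category $\per_{dg}(A^{\hat e})$ of perfect $A^{\hat e}$-modules, transporting the point $x$ to a locally constant perfect $A$-linear complex $\cE$ on $M$ (in the sense of Definition \ref{Bettidef}). The functor $\Perf$ on $dg_+\Alg(\bK)$ has a perfect cotangent complex at a perfect complex $E$, given by the $A^{\hat e}$-module $\oR\End_A(E)_{[1]}$; this is perfect as an $A^{\hat e}$-module because $E$ is a perfect $A$-module, so its endomorphism complex is locally built from terms of the form $P\ten_A P^\vee$ with $P$ finitely generated projective. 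Running this pointwise over $M$, the local-system sheaf of $A^{\hat e}$-modules $\sEnd_A(\cE)_{[1]}$ has perfect $A^{\hat e}$-module stalks, and the required tangent complex $\oR\Gamma(M,\bT_xD_*\Perf(\sA^{\bt}_M(A^{\hat e})))$ is equivalent to $\oR\Gamma(M,\sEnd_A(\cE))_{[1]}$.

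It then remains to verify that $\oR\Gamma(M,\sEnd_A(\cE))$ is perfect as an $A^{\hat e}$-module. Since $M$ is a compact manifold, it admits the homotopy type of a finite CW complex, or equivalently a finite good open cover $\{U_i\}$ with all non-empty finite intersections contractible; the \v{C}ech complex of $\sEnd_A(\cE)$ with respect to this cover is then a bounded complex of finite direct sums of perfect $A^{\hat e}$-modules of the form $\sEnd_A(\cE)_{x}\simeq \oR\End_A(\cE_x)$, and hence is itself perfect. This computes $\oR\Gamma(M,\sEnd_A(\cE))$, establishing perfection, and dualising gives the cotangent complex. Lemma \ref{opencotlemma} then extends perfection from the chosen representative to arbitrary points of $\Perf^{M,\dR,\C^{\infty}}(A)$.

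The main obstacle is keeping the bimodule/$A^{\hat e}$-module bookkeeping straight: in the non-commutative setting the statement that $\oR\End_A(E)$ is perfect as an $A^{\hat e}$-module relies on $E$ being perfect as a one-sided $A$-module, which is automatic here but needs explicit care under the Poincar\'e/Riemann--Hilbert transport, and the identification of $\bT_xD_*\Perf(\sA^{\bt}_M\hten A^{\hat e}))$ with the derived endomorphism sheaf of the transported local system requires invoking homogeneity of $\Perf$ and w.e.-preservation (Lemma \ref{CXlemma2}) together with the equivalence of Corollary \ref{poincarecor} applied to the bimodule coefficient $A^{\hat e}$ rather than just to $A$ itself. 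Orientation and dimension $d$ of $M$ play no role in the proof of perfection; they would enter only if one wanted to refine the statement via Poincar\'e duality to a self-duality of the tangent complex.
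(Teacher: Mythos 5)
Your proof reaches the right conclusion, but it takes a genuinely different route from the paper's. The paper does not pass through Proposition \ref{RGammacotprop} at all: it uses Corollary \ref{poincarecor} to identify $\per_{dg}^{M,\dR,\C^{\infty}}$ with the analytification of the algebraic Betti functor $\per_{dg}^{M,\oB}$, then computes the \emph{algebraic} cotangent complex directly using Poincar\'e duality (giving $\oR\Gamma(M,\vv\ten_{\R}\vv^*)[d-2]$), and finally base-changes along $B^e\to B^{\hat e}$ as in \S\ref{analytificnsn3}. Your route instead invokes the general machinery of Proposition \ref{RGammacotprop} (which carries countability and nuclearity hypotheses that are satisfied but have to be checked), and then establishes perfection of $\oR\Gamma(M,\sEnd_A(\cE))$ by a finite good open cover of $M$ rather than by Poincar\'e duality. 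Both work, and your observation that orientation and $d$ are not actually needed for perfection --- only for the self-duality that feeds into bisymplectic structures later in \S\ref{bispexsn} --- is correct and worth noting; the paper's hypotheses here are stated to match the later applications rather than because the proof requires them. The paper's route is arguably cleaner precisely because the de Rham functor is of algebraic origin via Riemann--Hilbert, so cotangent representability is inherited automatically from the algebraic Betti functor and one only needs perfection, whereas Proposition \ref{RGammacotprop} is designed for functors that do \emph{not} admit such an algebraic presentation (such as the Dolbeault and Hodge functors in Corollaries \ref{Dolperfectcotcor} and \ref{twistorcotperfcor}).

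Two small points. First, the cotangent complex of $\Perf$ on $dg_+\Alg(\bK)$ at a perfect $E$ is the $A^e$-linear dual of $\oR\End_A(E)[1]$, not $\oR\End_A(E)_{[1]}$ itself (you are conflating tangent and cotangent), and over $dg_+\Alg$ it is an $A^e$-module rather than an $A^{\hat e}$-module; the passage to $A^{\hat e}$ happens only after analytification. Second, your closing appeal to Lemma \ref{opencotlemma} is superfluous: the finite good cover argument already applies at every point directly, and there is no square-zero extension in sight to which Lemma \ref{opencotlemma} would be applied.
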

\begin{proof}
 By Corollary \ref{poincarecor}, we can replace $\per_{dg}^{M,\dR,\C^{\infty}}$ with the analytification of $\per_{dg}^{M,\oB}$. A point of $ \per_{dg}^{M,\oB}(B)$ corresponds to an $\infty$-local system $\vv$ of perfect $B$-complexes on $M$. For any $B^e$-module $N$, endomorphisms of $\vv\ten_B^{\oL}(B \oplus N)$ in the fibre of $ \per_{dg}^{M,\oB}(B \oplus N) \to  \per_{dg}^{M,\oB}(B)$ over $\vv$ are given by 
 \[
  \oR\Gamma(M, \oR\sHom_B(\vv, \vv\ten_B^{\oL}N)) \simeq \oR\Gamma(M, \vv\ten_R\oR\sHom_B(\vv,B))\ten_{B^e}N.      
\]

Writing $\vv^*:=\oR\sHom_B(\vv,B))$,  Poincar\'e duality gives a quasi-isomorphism between this and 
\[
\oR\HHom_{B^e}(   \oR\Gamma(M, \vv\ten_{\R}\vv^*)[d], N).
 \]
Since the  functor $ \Perf^{M,\oB}$ is homogeneous, its tangent functor is determined by the complexes of morphisms, so this implies its (algebraic) cotangent complex at $\vv$ is quasi-isomorphic to $\oR\Gamma(M, \vv\ten_{\R}\vv^*)[d-2]$, which is perfect. At the corresponding point $(\sE, \nabla)$ of $\Perf^{M,\dR,\C^{\infty}}$, the (analytic) cotangent complex is thus the shift by $[d-2]$ of the perfect $B^{\hat{e}}$-complex
\[
 \oR\Gamma(M, \vv\ten_{\R}\vv^*)\ten^{\oL}_{B^e} B^{\hat{e}} \simeq  \oR\Gamma(Y, (\oR\sHom_{ \sA^0_M(B)}(\sE, \sE\hten_{\sA^0_M}\sA^{\bt}_M(B)), \nabla^{\ad})). \qedhere 
\]
\end{proof}

 \begin{remark}\label{cfPortarmk}
  The analytic de Rham comparison statement in Corollary \ref{poincarecor}, at least when restricted to commutative DGAs, is very similar in content to the results of \cite{portaDerivedRH}. The latter is a statement about a functor on derived Stein spaces, and could be interpreted in our terms as coming from weak equivalences
  \[
  \per_{dg}(B) \to D_*\per_{dg}( \Omega^{\bt}_Y \odot B)
\]
for $B \in \dg_+\EFC\Alg_{\Cx}$, where $\odot$ is the coproduct for EFC algebras. 

However, the corresponding statement for the $\C^{\infty}$ de Rham functor seems unlikely, since $A^0(M)$ is far from being a finitely generated free EFC algebra.  

Another reason for our use of topological algebras instead of  (F)EFC base change to set up the moduli functors is for their behaviour on  non-commutative algebras,  since \emph{a priori} (F)EFC base change has poor exactness properties. 
 \end{remark}

 \begin{remark}\label{deRhamEFCrmk}
 An immediate consequence of Corollary \ref{poincarecor} is that the de Rham moduli functors send abstract quasi-isomorphisms of LMC dg algebras to quasi-equivalences, so descend to functors on FEFC-DGAs by  Proposition \ref{keyBanFEFCprop}, or if restricted to commutative algebras,  to EFC-DGAs by   Proposition \ref{keyBanEFCprop}. However, those propositions are overkill in this case because we already have an equivalent (Betti) functor on abstract DGAs.
 \end{remark}

\begin{definition}
 For $B  \in dg_+\Alg_R$ and   any smooth $R$-scheme $Y$, the $B$-linear algebraic de Rham dg category of $Y$  is given by 
\[
 \per_{dg}^{Y,\dR,\alg}(B):=\oR\Gamma(Y,  D_*\per_{dg}(\Omega^{\bt,\alg}_Y\ten_RB)).
\]
    \end{definition}
 
 The following shows that a Riemann--Hilbert map exists in this generality as an analytic (FEFC) morphism from the algebraic de Rham functor to the Betti functor. Here, we are following the notation of \S \ref{analytificationsn2} in writing $F^{\FEFC}$ for the composition of $F$ with the forgetful functor from FEFC-DGAs to DGAs. 
 \begin{corollary}\label{RHcoralg}
  For any smooth complex variety $Y$, we have a natural map
  \[
   (\per_{dg}^{Y,\dR,\alg})^{\FEFC} \to (\per_{dg}^{Y(\Cx)_{\an},\oB})^{\FEFC}
  \]
of functors  on  $dg_+\FEFC(\Cx)$ taking values in the $\infty$-category of dg categories. 
   \end{corollary}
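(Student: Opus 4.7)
The Riemann--Hilbert map will be constructed as the composite of analytification and the inverse of the Poincar\'e equivalence from Corollary \ref{poincarecor}. I plan to build it first at the level of topological DGAs $B \in dg_+\hat{\Tc}\Alg(\Cx)$, and then descend to $dg_+\FEFC(\Cx)$ using Corollary \ref{keyBanFEFCcor}. The key point is that both $(\per_{dg}^{Y,\dR,\alg})^{\FEFC}$ and $(\per_{dg}^{Y(\Cx)_{\an},\oB})^{\FEFC}$ factor through the forgetful functor to $dg_+\Alg(\Cx)$, so they are w.e.-preserving.

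I would construct the analytification map $\mathrm{an}_B \co \per_{dg}^{Y,\dR,\alg}(B) \to \per_{dg}^{Y(\Cx)_{\an},\dR,\an}(B)$ using the morphism of ringed sites $\alpha \co Y(\Cx)_{\an} \to Y_{\mathrm{Zar}}$ together with the canonical map $\alpha^{-1}\Omega^{\bt,\alg}_Y \to \Omega^{\bt,\hol}_{Y(\Cx)_{\an}}$ of sheaves of CDGAs. Tensoring with $B_{\bt}$ yields a morphism $\alpha^{-1}(\Omega^{\bt,\alg}_Y \ten_{\Cx} B) \to \Omega^{\bt,\hol}_{Y(\Cx)_{\an}}(B)$ of sheaves of stacky DGAs, which by Lemma \ref{hyperconnlemma} induces a functor on perfect complexes equipped with flat hyperconnection. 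Composing with the base-change transformation $\oR\Gamma(Y,-) \to \oR\Gamma(Y(\Cx)_{\an}, \alpha^{-1}(-))$ produces $\mathrm{an}_B$. Next, by Corollary \ref{poincarecor} the canonical map $\per_{dg}^{Y(\Cx)_{\an},\oB}(B) \to \per_{dg}^{Y(\Cx)_{\an},\dR,\an}(B)$ is a quasi-equivalence, so inverting it in the $\infty$-category of dg categories gives $\mathrm{RH}_B \co \per_{dg}^{Y(\Cx)_{\an},\dR,\an}(B) \xra{\sim} \per_{dg}^{Y(\Cx)_{\an},\oB}(B)$. The composite $\mathrm{RH}_B \circ \mathrm{an}_B$ is the desired natural transformation on $dg_+\hat{\Tc}\Alg(\Cx)$, natural in $B$.

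To descend to $dg_+\FEFC(\Cx)$, I would invoke Corollary \ref{keyBanFEFCcor}: since both the source and target preserve abstract quasi-isomorphisms, they factor through the $\infty$-localization of $dg_+\hat{\Tc}\Alg(\Cx)$ at such, which by that corollary is equivalent to the $\infty$-localization of $dg_+\FEFC(\Cx)$. The natural transformation therefore descends, uniquely up to contractible choice, to the claimed map of functors on $dg_+\FEFC(\Cx)$. The main obstacle will be the compatibility between algebraic and completed tensor products in the analytification step, together with ensuring that the resulting map respects the stacky DGA structures and the Dold--Kan denormalization implicit in $D_*\per_{dg}$; a subsidiary subtlety is that the inverse Poincar\'e equivalence $\mathrm{RH}_B$ exists at the strict level only as a zigzag of dg quasi-equivalences, so the construction must be interpreted $\infty$-categorically throughout.
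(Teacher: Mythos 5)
Your proposal is correct and follows essentially the same route as the paper: construct the analytification map from the comparison morphism of sites and of de Rham complexes, invert the Poincar\'e equivalence of Corollary \ref{poincarecor} in the $\infty$-category of dg categories, and descend to $dg_+\FEFC(\Cx)$ via Corollary \ref{keyBanFEFCcor} using that both functors preserve abstract quasi-isomorphisms. The subtleties you flag (completed versus algebraic tensor products, and the zigzag nature of the inverse equivalence) are handled implicitly in the paper in exactly the way you describe.
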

\begin{proof}
For the morphism $h \co Y_{\an} \to Y_{\Zar}$ from the analytic to the Zariski  site, we have a  dg functor
 \[
  h^{-1} D_*\per_{dg}(\Omega^{\bt,\alg}_{Y}\ten_{\Cx}B)) \to D_*\per_{dg}(\Omega^{\bt,\hol}_{Y^{\an}}(B))
\]
of hypersheaves on $Y_{\an}$,
natural in $B \in  dg_+\hat{\Tc}\Alg_{\Cx}$. On taking derived global sections, this gives
\[
 \per_{dg}^{Y,\dR,\alg}(B) \to \per_{dg}^{Y,\dR,\an}(B),
\]
and hence $\per_{dg}^{Y,\dR,\alg}(B) \to \per_{dg}^{Y(\Cx)_{\an},\oB}(B)$ 
 by inverting the second equivalence of Corollary \ref{poincarecor}.

 Applying Corollary \ref{keyBanFEFCcor} then gives rise to a natural map of functors  on  $dg_+\FEFC(\Cx)$.

 \end{proof}

\begin{remark}\label{genmodulirmk}
 In choosing to work with perfect complexes, we have taken almost the largest well-behaved stacks available. 
 It is straightforward to deduce statements about moduli of local systems, or of complexes with  amplitude $[a,b]$, and to introduce constraints in terms of ranks or Euler characteristics. 
 
 To see that equivalences such as Proposition \ref{poincareprop} restrict to equivalences of these examples, note that everything is functorial in the complex manifold $\Cx$ or real manifold $M$, and this includes inclusions of points. Since we can characterise a local system (resp. vector bundle) as a constructible complex (resp. perfect complex) whose pullback to any point is concentrated in degree $0$, Proposition \ref{poincareprop} restricts to give a correspondence between local systems and vector bundles with connection. There are similar statements for any constraint which can be defined pointwise.
 
 Also note that when we restrict to  commutative algebras, tensor products give us a symmetric monoidal structure on our dg categories, and the quasi-equivalences of dg categories are  monoidal. This means that the equivalences of Corollary \ref{poincarecor}  automatically extend to moduli of any objects defined in terms of the monoidal structure. One example is moduli of $\cP$-algebras in these dg categories, for dg operads $\cP$. Another is  moduli of $G$-representations in perfect complexes, for an affine  algebraic group $G$,  since  we can regard $G$-representations as $O(G)$-comodules in perfect complexes, for the Hopf algebra $O(G)$ of algebraic functions on $G$. 
 \end{remark}

 \subsection{The Dolbeault--Grothendieck Lemma}

We write $\bD^n \subset \Cx^n$ for the open unit polydisc, and more generally $\bD^n_r \subset \Cx^n $ for the open polydisc of radius $r$.
We now have the following adaptation of the Dolbeault--Grothendieck (or 
$\bar{\pd}$-Poincar\'e) lemma:
\begin{lemma}\label{dolbeaultlemma1}
For any complete locally convex complex topological vector  space $V$, any $p \ge 0$ and any $r<1$, the canonical commutative diagram
\[
 \begin{CD}
  \Omega^{p}(\bD^1,V)@>{\rho_{\Omega}}>> \Omega^{p}(\bD^1_r,V) \\
     @V{\iota_1}VV @VV{\iota_r}V \\
   A^{p,\bt}(\bD^1,V)  @>{\rho_A}>>  A^{p,\bt}(\bD^1_r,V),
   \end{CD}
\]
for inclusion maps $\iota$ and restriction maps $\rho$,
admits a morphism $\sigma \co A^{p,\bt}(\bD^1,V) \to \Omega^{p}(\bD^1_r,V)$ with $\sigma  \circ \iota_1= \rho_{\Omega}$, together with a homotopy $h$  between $\iota_r \circ \sigma $ and $\rho_A$.
\end{lemma}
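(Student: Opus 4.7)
The plan is to reduce to the classical scalar-valued Dolbeault--Grothendieck lemma, then transfer to the $V$-valued setting through the injective tensor product using Lemmas \ref{BaninjtenlemmaCx} and \ref{BaninjtenlemmaR}.

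First I would fix a cutoff $\chi \in \C^\infty_c(\bD^1)$ with $\chi\equiv 1$ on some open neighbourhood of $\bar\bD^1_r$ (possible since $r<1$), and introduce the Cauchy--Pompeiu operator
\[
T \co A^{p,1}(\bD^1,\Cx) \to A^{p,0}(\bD^1_r,\Cx), \qquad T\bigl(\phi\,d\zeta\wedge d\bar z\bigr)(z) = \frac{d\zeta}{2\pi i}\int_{\Cx}\frac{\chi(w)\phi(w,\bar w)}{w-z}\,dw\wedge d\bar w,
\]
where $d\zeta$ is the holomorphic $p$-form factor. Classical layer-potential theory gives $\bar\pd(Tf) = \chi f|_{\bD^1_r} = f|_{\bD^1_r}$ on $\bD^1_r$. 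I would then set $h := T$ in cochain degree $1$ and $h := 0$ in degree $0$ (noting $A^{p,q}=0$ for $q\ge 2$ in complex dimension $1$), and define
\[
\sigma(\omega) \;:=\; \bigl(\omega - h(\bar\pd\omega)\bigr)\big|_{\bD^1_r} \quad \text{for } \omega\in A^{p,0}(\bD^1,\Cx),
\]
extended by zero in higher degree. A direct computation gives $\bar\pd\sigma(\omega) = \bar\pd\omega - \bar\pd h(\bar\pd\omega) = 0$ on $\bD^1_r$, so $\sigma$ lands in $\Omega^p(\bD^1_r,\Cx)$.

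The identities then follow: if $\omega \in \Omega^p(\bD^1,\Cx)$ then $\bar\pd\omega=0$, so $\sigma(\iota_1\omega) = \omega|_{\bD^1_r} = \rho_\Omega(\omega)$; and on $A^{p,0}$ one has $(\rho_A-\iota_r\sigma)(\omega) = h(\bar\pd\omega) = (\bar\pd h + h\bar\pd)(\omega)$ (using $h\omega=0$), while on $A^{p,1}$ one has $\bar\pd\omega=0$ and $\sigma\omega=0$, so $(\rho_A - \iota_r\sigma)\omega = \omega|_{\bD^1_r} = \bar\pd h(\omega)$ by the Cauchy--Pompeiu identity above.

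To pass to $V$-valued forms, I would invoke the isomorphisms
\[
\Omega^p(U,V)\cong \Omega^p(U,\Cx)\hten_{\eps}V, \qquad A^{p,q}(U,V)\cong A^{p,q}(U,\Cx)\hten_{\eps}V
\]
from Lemmas \ref{BaninjtenlemmaCx} and \ref{BaninjtenlemmaR}, and tensor the scalar operators $\sigma$, $h$ with $\id_V$. The main obstacle is the continuity of $T$ as a map of Fr\'echet spaces (rather than merely as a linear map on the algebraic tensor product), which is needed to extend along $\hten_{\eps}V$. This comes down to a standard differentiation-under-the-integral-sign argument: for $z\in\bD^1_r$, the kernel $\chi(w)(w-z)^{-1}$ is supported in a fixed compact subset of $\bD^1$ and is locally $L^1$ in $w$, so differentiating $T(\phi)(z)$ in $z$ any number of times produces similar absolutely convergent integrals, yielding bounds on each Fréchet seminorm of $T\phi$ on $\bD^1_r$ by finitely many seminorms of $\phi$ on $\bD^1$. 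Once this continuity is in place, tensoring with $V$ preserves the identities above, completing the construction of $\sigma$ and $h$ in the stated generality.
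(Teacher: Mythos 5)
Your proof is correct and follows essentially the same route as the paper's: reduce to the scalar case, build the Cauchy--Pompeiu homotopy operator $h$, set $\sigma=\rho_A-(h\bar\pd+\bar\pd h)$ (your $\sigma=\omega-h(\bar\pd\omega)$ is the same operator once $h$ is extended by zero on $A^{p,0}$), and transfer to $V$-valued forms through $\hten_\eps V$ via Lemmas \ref{BaninjtenlemmaCx} and \ref{BaninjtenlemmaR}. The only cosmetic differences are that you insert a smooth cutoff $\chi$ into the kernel while the paper integrates over the sharp closed disc $|w|\le r$ (which is why it introduces the auxiliary radius $s<r$), and you sketch a differentiation-under-the-integral continuity argument while the paper extracts explicit seminorm bounds by rearranging around the identity $h(d\bar z)=\bar z$.
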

\begin{proof}
It suffices to prove this for $B=\Cx$ with $\sigma $ and $h$ as morphisms of nuclear Fr\'echet spaces, since the general statement follows by taking the completed  tensor product with $V$.

If $n=1$, for $s<r<1$  we define $h \co A^{0,1}(\bD^1,\Cx) \to A^{0,0}(\bD^1_s,\Cx)$ by 
\[
  h(\alpha)(z):= \frac{1}{2\pi i}\int_{|w|\le  r}\frac{\alpha(w)\wedge dw}{z-w};
\]
as in \cite[p.5]{GriffithsHarris},
this lies in $A^{0,0}(\bD^1_s,\Cx)$  and satisfies $\bar{\pd}h(\alpha)=\alpha$. Cauchy's Integral formula \cite[p.2]{GriffithsHarris} applied to $\bar{z}$ gives 
\[
 2\pi i(h(d\bar{z})- \bar{z}) = \int_{|w|=r}\frac{\bar{w}}{w-z}dw = \int_{|w|=r}\frac{r^2}{w(w-z)}dw = 0
\]
for $z <r$, since the residues cancel. 

Note that for $\alpha = fd\bar{z}$, we can rearrange $h(\alpha)$ as 
\[
 h(\alpha) = \frac{1}{2\pi i}\int_{|w|\le  r}\frac{f(w)-f(z)}{z - w} dw d\bar{w} +f(z)h(d\bar{z}). 
\]
Since $h(d\bar{z})= \bar{z}$, we can thus bound $| h(\alpha)(z)|$ by 
\[
\frac{1}{2\pi}\pi r^2\|D(f)\|_r + |f(z)\bar{z}| \le \frac{r^2}{2}\|D(f)\|_r + r\|f\|_r
\]
where $\|-\|_r$ denotes the sup norm on $|z|\le r$. 
Similar considerations, differentiating the Cauchy integral formula, give bounds on the norms $\| \frac{\pd^n}{\pd z^n}h(fd\bar{z}) \|_r$ in terms of $\|D^{(i)}f\|_r$ for $i \le n$,  and the other derivatives are bounded since $ \frac{\pd^n}{\pd z^n} \frac{\pd^j}{\pd \bar{z}^j}h(fd\bar{z}) = \frac{\pd^n}{\pd z^n} \frac{\pd^{j-1}}{\pd \bar{z}^{j-1}}f$ for $j>0$,   so $h$ is a continuous map of Fr\'echet spaces.

Now, we can extend $h$ to a map $A^{1,1}(\bD^1,\Cx) \to A^{1,0}(\bD^1_r,\Cx)$ by setting $h(\alpha dz)=h(\alpha)dz$, and then the operator
\[
\sigma := \rho_A -  (h\circ\bar{\pd}+\bar{\pd} \circ h) 
\]
vanishes on $A^{p,1}$ and sends $A^{p,0}$ to $ \Omega^p$, so the required conditions are all satisfied.

For $n>1$, we have a homotopy as above for each variable $z_1, \ldots, z_n$, and we let $h$ be the sum of all these. 
\end{proof}

We now have the following non-abelian analogue of the Dolbeault--Grothendieck  lemma:
\begin{lemma}\label{dolbeaultlemma}
 If $B$ is a unital associative differential graded complete multiplicatively convex $\Cx$-algebra and $r<1$, then the image in $\mc(\Tot(A^{\#,\bt}(\bD^n_r,B))) $ of any  Maurer-Cartan element $\omega \in \mc(\Tot(A^{\#,\bt}(\bD^n,B)))$ is gauge equivalent to an element of the subset $\mc(\Tot(\Omega^{\#}(\bD^n_r,B)))$,  
  via an invertible element of $\Tot A^{<2n}(\bD^n_r,B)^0$). 
\end{lemma}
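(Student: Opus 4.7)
The plan is to imitate the structure of Lemma \ref{poincarelemma}, with the operator $h$ of Lemma \ref{dolbeaultlemma1} replacing iterated integration. First I would reduce to the case $n=1$ via the same induction as in Lemma \ref{poincarelemma}, using the identification
\[
 \Tot(A^{\#,\bt}(\bD^n, B)) \cong \Tot(A^{\#,\bt}(\bD^1, \Tot(A^{\#,\bt}(\bD^{n-1}, B))))
\]
to promote the inner complex to a pro-dg Banach coefficient algebra, then apply the $n=1$ case one $\bar{z}_i$ direction at a time, distributing a uniform shrinkage of the polydisc across the $n$ iterations so as to land on $\bD^n_r$.

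For $n=1$, decompose the MC element $\omega$ by Dolbeault bidegree $(p,q)$; the anti-holomorphic parts are the $(\ast,1)$-components. I would construct an invertible gauge $g \in \Tot(A^{<2}(\bD^1_r, B))^0$ as the solution of the fixed-point equation
\[
 g = 1 - h_B(\omega \cdot g),
\]
built by the iteration $g_0 = 1$, $g_{k+1} = 1 - h_B(\omega \cdot g_k)$, where $h_B := h \hten \id_B$ is the $B$-linear extension of $h$. Because $h_B$ vanishes on $(\ast,0)$-bidegree and lowers $q$-degree by one, $g$ automatically lies in the appropriate total-degree-$0$ piece of $A^{\ast,0}(\bD^1_r, B)$. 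Using $\bar{\pd}h_B(\alpha) = \alpha$ for $\alpha$ of bidegree $(\ast,1)$ (from Lemma \ref{dolbeaultlemma1}), a direct calculation gives $\bar{\pd}g = -\omega^{(\ast,1)} g$; the inverse is defined by the mirrored iteration $f = 1 - h_B(f \cdot \omega)$. Then $g^{-1}\star\omega = g^{-1}\omega g + g^{-1}(\bar\pd + \delta_B)g$ has vanishing $(\ast,1)$-component, and the $(\ast,1)$-pieces of the MC equation for $g^{-1}\star\omega$ force the remaining $(\ast,q)$-components with $q \ge 1$ to vanish, landing us in $\mc(\Tot(\Omega^{\#}(\bD^1_r, B)))$. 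This is the direct Dolbeault analogue of the iterated-integral formula $g(t) = \sum_k (-1)^k \int \eta \int \eta \cdots \int \eta$ from Lemma \ref{poincarelemma}, with $h$ replacing $\int$.

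The principal technical obstacle will be convergence of the iteration. Since $B$ is complete LMC, its topology is defined by submultiplicative seminorms $\|{-}\|_B$, and for each such seminorm and each $r' \in (r, 1)$, the associated sup seminorm $\|{-}\|_{r'}$ makes $A^{\#,\bt}(\bD^1_{r'}, B)$ a Banach space in each bidegree. The explicit Cauchy-kernel bounds in the proof of Lemma \ref{dolbeaultlemma1} give boundedness of $h_B$ between these seminorms, with operator norm controlled by the geometry of the annulus $\bD^1_{r'} \setminus \overline{\bD^1_r}$. I expect to obtain factorially-decreasing bounds $\|g_{k+1} - g_k\|_{r} \le (C\|\omega\|_{r'})^k/k!$ via a volume estimate on the iterated Cauchy kernel (analogous to the exponential bound $\|g(t)\| \le \exp(\|\eta\| t)$ in Lemma \ref{poincarelemma}), yielding absolute convergence of $g = \lim_k g_k$ to an invertible element of the desired form. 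Once the $n=1$ case is established, the step for general $n$ is straightforward, the only subtlety being uniform control of the seminorm systems as $\Tot(A^{\#,\bt}(\bD^{n-1}, B))$ is promoted to a new pro-dg Banach coefficient algebra at each iteration.
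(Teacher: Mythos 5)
Your proposal takes a genuinely different route from the paper, and unfortunately it has a real gap. The paper's proof factors the gauge transformation into two steps: first it invokes the iterated-integral construction from the proof of Lemma~\ref{poincarelemma} to produce an invertible $g \in \Tot A^{<2}(\bD^1,B)^0$ solving the \emph{real} de Rham equation $dg = -\eta\wedge g$ (no polydisc shrinkage, genuinely convergent by the $t^k/k!$ simplex volumes); a Hodge-type comparison then shows $\pd g \in A^{1,1}$, so the residual after gauging by $g^{-1}$ is $\eta' = -g^{-1}\pd g \in A^{1,1}(\bD^1,B^{-1})$ \emph{only}. The Cauchy operator $h$ is then applied \emph{once}, and the gauge element $1 + h(\eta')$ kills $\eta'$ exactly because $\eta'\wedge h(\eta')$ and $h(\eta')\wedge\eta'$ land in $A^{1,2}(\bD^1_r,B)=0$; this nilpotency is what makes the second step a finite unipotent transformation with no convergence to worry about.

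Your fixed-point iteration $g_{k+1} = 1 - h_B(\omega\cdot g_k)$ bypasses the first step entirely and tries to close the $\bar\pd$-equation by iterating $h$, but the claimed ``factorially-decreasing bounds\ldots via a volume estimate on the iterated Cauchy kernel'' do not exist. The iterated integral $\int_0^t\int_0^{t_1}\cdots$ gains a factor $t^k/k!$ from the volume of the time-ordered simplex; the Cauchy transform has no such ordering, and the $k$-fold convolution of the two-dimensional Riesz kernel $1/|z|$ with itself \emph{grows} (the double convolution is already logarithmic) rather than decays. The operator norm of $h$ on sup-norms is of order $r$, so the iteration would give geometric terms $\sim (Cr\|\omega\|)^k$, which only converges when $r\|\omega\|$ is small --- but $\omega$ is arbitrary and $r<1$ is arbitrary. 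On top of this, each application of $h$ maps $A^{p,1}(\bD^1_{r'})$ to $A^{p,0}(\bD^1_r)$ with $r<r'$ \emph{strictly}, so the iteration cannot be run on any fixed disc: after one application of $h$ you can no longer evaluate $\omega\cdot g_1$ on a neighbourhood of $\overline{\bD^1_r}$ and hence cannot form $h_B(\omega\cdot g_1)$. Distributing ``a uniform shrinkage across the $n$ iterations'' addresses the passage from dimension $n-1$ to $n$, but not the infinitely many internal iterations in the $n=1$ step. The paper's two-stage factorisation is precisely what avoids both problems: the convergent part (iterated integrals) works on the full disc, and the shrinking operator $h$ is applied only once, with nilpotency replacing convergence.
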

\begin{proof}
We adapt the proof of Lemma \ref{poincarelemma}.
 
If $n=1$, then we can write $\omega = \phi+\eta$, for $\phi \in  A^0(\bD^1,B^1)\oplus A^{1,0}(\bD^1,B^0)$ and $\eta \in  A^{0,1}(\bD^1,B^0) \oplus A^2(\bD^1,B^{-1})$, satisfying $\phi \in \mc(A^{\#,0}(\bD^1,B))$ and $\bar{\pd}\omega + \delta \eta + [\omega,\eta]=0 \in A^{0,1}(\bD^1,B^1)\oplus A^{1,1}(\bD^1,B^0)$. 
 
Now,  the proof of  Lemma \ref{poincarelemma} gives us an invertible element $g \in \Tot A^{<2}(\bD^1,B)^0$ with $dg = -\eta \wedge g$, so $\pd g = -\eta \wedge g -\bar{\pd} g$. Comparing decompositions by Hodge type, this implies that  $\pd g \in A^{1,1}(\bD^1,B)$.
Writing $g^{-1}\star_{\delta} \phi:= g^{-1}\phi g +g^{-1}\delta g$, we thus have
 \begin{align*}
 g^{-1}\star (\phi + \eta) &= g^{-1}(\phi + \eta)g + g^{-1}(\delta+\bar{\pd})g\\
 &= g^{-1}\star_{\delta} \phi - g^{-1}\pd g 
\end{align*}
so we have replaced $(\phi,\eta)$ with a gauge-equivalent pair $(\phi',\eta')$ with 
$\phi':= g^{-1}\star_{\delta} \phi \in  A^0(\bD^1,B^1)\oplus A^{1,0}(\bD^1,B^0)$ and $\eta' :=- g^{-1}\pd g \in A^{1,1}(\bD^1,B^{-1})$ (a more restrictive condition than $\eta$ satisfied).

 Now, for our element $\eta' \in A^{1,1}(\bD^1,B^{-1})$, the homotopy $h$ from the proof of Lemma \ref{dolbeaultlemma1} gives $h(\eta') \in A^{1,0}(\bD^1,B^{-1})$ with $\bar{\pd}h(\eta')= \eta'$, so both  $\eta'\wedge h(\eta')$ and $h(\eta')\wedge \eta'$ lie in    $A^{1,2}(\bD^1_r,B)=0$. 
Applying the gauge transformation by $1+h(\eta')$ thus sends $\phi'+\eta'$ to a Maurer--Cartan element $\phi''=  (1+h(\eta'))\star_{\delta} \phi'$ in $A^0(\bD^1_r,B^1)\oplus A^{1,0}(\bD^1_r,B^0) $, since
\[
(1+h(\eta'))\eta'(1-h(\eta')) - (1+h(\eta'))\bar{\pd} h(\eta') =  \eta' - \eta' =0.
\]

The proof now proceeds  exactly as in Lemma \ref{poincarelemma}, applying 
 the step above inductively to the inclusions
\begin{align*}
 \Tot\Omega^{\#}(\bD^{m+1},  \Tot A^{\#,\bt}(\bD^{n-m-1},B)\cong \Tot\Omega^{\#}(\bD^1,  \Tot\Omega^{\#}(\bD^{m}, \Tot A^{\#,\bt}(\bD^{n-m-1},B))\\
  \into \Tot A^{\#,\bt}(\bD^1,  \Tot\Omega^{\#}(\bD^{m}, \Tot A^{\#,\bt}(\bD^{n-m-1},B)) \cong  \Tot\Omega^{\#}(\bD^{m},  \Tot A^{\#,\bt}(\bD^{n-m}, B))
 \end{align*}
of multiplicatively convex dg  algebras, on a sequence $r=r_1<r_2\ldots < r_n <1$ of radii,   giving the required result.
\end{proof}

\begin{proposition}\label{dolbeaultprop}
For $B \in dg_+\hat{\Tc}\Alg_{\Cx}$ and 
$\bD^n_r \subset \Cx^n$ the open  polydisc of radius $r$, 
 the canonical dg functor
 \begin{align*}
\LLim_{r>0}D_*\per_{dg}(\Omega^{\#}(\bD^n_r,B)) &\to\LLim_{r>0} D_*\per_{dg}(A^{\#,\bt}(\bD^n_r,B))
\end{align*}
is a quasi-equivalence. 
   \end{proposition}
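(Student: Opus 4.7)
The plan is to adapt the proof of Proposition~\ref{poincareprop} almost verbatim, substituting the non-abelian Dolbeault--Grothendieck lemma (Lemma~\ref{dolbeaultlemma}) for the non-abelian Poincar\'e lemma (Lemma~\ref{poincarelemma}), and Lemma~\ref{dolbeaultlemma1} for the elementary homotopy used to establish full faithfulness there. First, I would invoke Lemma~\ref{hyperconnlemma} on both sides to replace $D_*\per_{dg}$ with the strictified dg category of perfect complexes equipped with flat $\Omega^{\#}$- (resp. $A^{\#,\bt}$-) hyperconnections. The filtered colimit $\LLim_{r>0}$ commutes with this replacement because $\Omega^{\#}(\bD^n_r,B)$ and $A^{\#,\bt}(\bD^n_r,B)$ are each levelwise Fr\'echet and the inclusions are dense, so restriction preserves the finiteness and cofibrancy conditions required.

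For full faithfulness, the dg functor on morphism complexes between cofibrant modules $M,N$ is the inclusion
\[
\HHom_{\Omega^{\#}(\bD^n_r,B)}(M,N) \to \HHom_{A^{\#,\bt}(\bD^n_r,B)}(M,\Tot^{\Pi}(A^{\#,\bt}(\bD^n_r,B)\ten_{\Omega^{\#}} N));
\]
after passing to the colimit in $r$, it suffices to show that the inclusion $\LLim_{r>0}\Tot\Omega^{\#}(\bD^n_r,B)\to \LLim_{r>0}\Tot A^{\#,\bt}(\bD^n_r,B)$ is a quasi-isomorphism. By induction on $n$ this reduces to the one-variable case (since the integration and $h$-operators commute with completed tensor products against nuclear Fr\'echet algebras), where Lemma~\ref{dolbeaultlemma1} supplies a section $\sigma$ together with the required homotopy $h$ between $\iota_r\circ\sigma$ and restriction; these transport to $B$-coefficients via $\hten_{\eps}$ by Lemmas~\ref{BaninjtenlemmaCx} and \ref{BaninjtenlemmaR}, and become genuine deformation retracts after taking the colimit over $r$.

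For essential surjectivity, I follow the strategy in Proposition~\ref{poincareprop}. Given a flat $A^{\#,\bt}(\bD^n,B)$-hyperconnection $(E,\nabla)$ with $E$ perfect over $A^{0,0}(\bD^n,B)$, the final clause of Lemma~\ref{hyperconnlemma} lets me replace $E$ by a module $E'$ freely generated as a graded $A^{0,0}(\bD^n,B)$-module by a levelwise finite graded set $S$, carrying a flat $A^{\#,\bt}$-hyperconnection $\nabla'$ homotopy equivalent to $(E,\nabla)$. Setting $V:=S.\Cx$, the pair $(\delta,\nabla')$ is encoded as a Maurer--Cartan element of $\EEnd_{\Cx}(V)\hten A^{\#,\bt}(\bD^n,B)$, which is a complete multiplicatively convex dg $\Cx$-algebra. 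Lemma~\ref{dolbeaultlemma} then yields, for any $r<1$, an invertible gauge that carries this element into $\mc(\EEnd_{\Cx}(V)\hten\Omega^{\#}(\bD^n_r,B))$, producing an isomorphism from (the restriction of) $(E',\nabla')$ to a hyperconnection strictly in the image of the inclusion; passing to the colimit in $r$ gives the preimage.

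The main obstacle is essential surjectivity, specifically verifying that the gauge transformation produced by Lemma~\ref{dolbeaultlemma} really lands in the pro-Banach dg algebra $\EEnd_{\Cx}(V)\hten A^{\#,\bt}(\bD^n,B)$ rather than merely an algebraic tensor product; this uses pro-finite-dimensionality of $\EEnd_{\Cx}(V)$ for levelwise finite $V$ together with the nuclearity of $A^{\#,\bt}$, so that the iterated integrals of bounded Fr\'echet-valued forms appearing in the proof of Lemma~\ref{dolbeaultlemma} converge. Shrinking the radius from $1$ to any $r<1$ is precisely what makes the gauge invertible, which is why the statement is formulated in terms of the colimit $\LLim_{r>0}$ rather than being pointwise in $r$.
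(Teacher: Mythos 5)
Your essential-surjectivity argument is sound and matches what the paper does: via Lemma~\ref{hyperconnlemma} you replace $E$ with a graded-free $E'$, encode the flat hyperconnection as a Maurer--Cartan element of $\EEnd_{\Cx}(V)\hten A^{\#,\bt}(\bD^n,B)$, and apply Lemma~\ref{dolbeaultlemma} to gauge-transform it into $\mc(\EEnd_{\Cx}(V)\hten\Omega^{\#}(\bD^n_r,B))$; taking the colimit over $r$ then produces the preimage. You also correctly diagnose that the radius shrinkage built into Lemma~\ref{dolbeaultlemma} is exactly what forces the $\LLim_{r>0}$ in the statement.

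The full-faithfulness step, however, has a genuine gap. You write the morphism-complex map as
$\HHom_{\Omega^{\#}}(M,N)\to\HHom_{A^{\#,\bt}}(M,\Tot^{\Pi}(A^{\#,\bt}\ten_{\Omega^{\#}}N))$
and claim it reduces to showing the inclusion $\LLim_{r>0}\Tot\Omega^{\#}(\bD^n_r,B)\to\LLim_{r>0}\Tot A^{\#,\bt}(\bD^n_r,B)$ is a quasi-isomorphism. But the morphism complexes in $D_*\per_{dg}$ are not naive $\HHom$-complexes over the totalized coefficient ring: by Lemma~\ref{hyperconnlemma}, a morphism complex from $(M,\theta)$ to $(N,\phi)$ is
$\Tot^{\Pi}(\HHom_{\sO\hten B}(M,N)\ten_{\sO}\Omega^{\#}(\bD^n_s),\,\delta\pm\theta_*\mp\phi^*)$,
with the generalised Higgs forms $\theta_*,\phi^*$ twisting the differential; on the right-hand side the differential additionally acquires $\bar\pd$. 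Since $\theta_*$ and $\phi^*$ raise Hodge degree and therefore mix the cochain gradings, the morphism complex does not split as a tensor of $\HHom_{\sO\hten B}(M,N)$ with the coefficient ring, and your reduction to the coefficient-ring comparison is not automatic. The paper's proof closes this gap by filtering both morphism complexes by Hodge degree $F^p := \Omega^{\ge p}$ (resp.\ $A^{\ge p,\#}$): the Higgs terms vanish on the associated graded pieces, leaving exactly the maps $\HHom_{\sO\hten B}(M,\LLim_s\Omega^{p}(\bD^n_s,B)\ten N)\to\HHom_{\sO\hten B}(M,\LLim_s A^{p,\bt}(\bD^n_s,B)\ten N)$, and only then does one reduce (via perfectness of $M,N$, by shifts and extensions) to $M=N=\sO\hten B$ and invoke Lemma~\ref{dolbeaultlemma1}. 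Without that filtration step the Higgs-twisted differential has not been accounted for, so your inductive contracting-homotopy argument establishes the right auxiliary fact but not the statement you need.
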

\begin{proof}
Lemma \ref{hyperconnlemma} allows us to replace $D_*\per_{dg}$ with the relevant category of flat hyperconnections. On the left, these are $\sO_{\bD^n_r}\hten B$-modules $M$ in complexes equipped with  generalised Higgs forms, i.e. chain maps $\theta \co M \to \Tot^{\Pi} \Omega^{>0}(\bD^n_r)\ten_{\sO_{\bD^n_r}}M[1]$ satisfying $\theta \wedge \theta =0$.

For cofibrant $\sO_{\bD^n_r}\hten B$-modules $M,N$ equipped with generalised Higgs forms $\theta, \phi$ respectively,
this dg functor is given on morphisms by  the natural inclusion map 
\begin{align*}
&\LLim_{0<s<r}\Tot^{\Pi} (\HHom_{\sO_{\bD^n_r}\hten B}(M,  N)\ten_{\sO_{\bD^n_r}}\Omega^{\#}(\bD^n_s), \delta \pm \theta_* \mp \phi^*)\\
&\to
\LLim_{0<s<r}\Tot^{\Pi} (\HHom_{\sO_{\bD^n_r}\hten B}(M,  N)\ten_{\sO_{\bD^n_r}}A^{\#,\bt}(\bD^n_s,B), \delta \pm \bar{\pd} \pm \theta_* \mp \phi^*),
 \end{align*}
 and we wish to show that this is a quasi-isomorphism.
 
 We can filter both sides by Hodge filtrations, setting $F^pV:= \Omega^{\ge p}V$ on the left, and $F^pV:= A^{\ge p, \#}V$ on the right. These induce  filtrations on these complexes, with associated graded piece $\gr^p$ given by
\begin{align*}
& \HHom_{\sO_{\bD^n_r}\hten B}(M,\LLim_{0<s<r}\Omega^{p}(\bD^n_s,B)\ten_{ \sO_{\bD^n_r}\hten B }N)\\
&\to  \HHom_{\sO_{\bD^n_r}\hten B}(M,\LLim_{0<s<r}A^{p,\bt}(\bD^n_s,B)\ten_{ \sO_{\bD^n_r}\hten B }N).
\end{align*}
Since  $M$ and $N$ are both perfect, by taking shifts and extensions, we may reduce to the case 
 $M=N= \sO_{\bD^n_r}\hten B$, and the  map is then a quasi-isomorphism by Lemma \ref{dolbeaultlemma1}.

It remains to show that the dg functors are essentially surjective, but this follows by exactly the same argument as in Proposition \ref{poincareprop}, substituting Lemma \ref{dolbeaultlemma} for Lemma \ref{poincarelemma}.
\end{proof}

The following Dolbeault--Grothendieck correspondence with LMC dg algebra coefficients now follows immediately from Proposition \ref{dolbeaultprop} by taking derived sections.
\begin{corollary}\label{dolbeaultcor}
For  $Y$ a complex  manifold, there are quasi-equivalences
\[
\per_{dg}^{Y,\Dol,\an}(B) \to \per_{dg}^{Y,\Dol,\C^{\infty}}(B)
\]
of dg categories, natural in $B \in dg_+\hat{\Tc}\Alg_{\Cx}$, from
the analytic Dolbeault moduli functor  to the $\C^{\infty}$ Dolbeault moduli functor.
\end{corollary}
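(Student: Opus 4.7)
The plan is to promote the germ-level statement of Proposition \ref{dolbeaultprop} to a global statement via the hyperdescent properties of the dg categorical sheaves $D_*\per_{dg}(\Omega^{\#,\hol}_Y(B))$ and $D_*\per_{dg}(\sA^{\#,\bt}_Y(B))$ on $Y$. The natural inclusion of sheaves of stacky dg algebras $\Omega^{\#,\hol}_Y(B) \hookrightarrow \sA^{\#,\bt}_Y(B)$ (with $\pd$ on the left extending to $\pd+\bar\pd$ on the right, which in degree decomposition becomes $\bar\pd$ after identifying $\pd$ with a horizontal differential) induces a morphism of presheaves of dg categories, and hence, after taking derived global sections, the candidate dg functor $\per_{dg}^{Y,\Dol,\an}(B) \to \per_{dg}^{Y,\Dol,\C^{\infty}}(B)$ whose quasi-equivalence we wish to check.

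First I would fix a point $y \in Y$ and a holomorphic chart identifying a neighborhood of $y$ with the open unit polydisc $\bD^n \subset \Cx^n$, with $y$ corresponding to the origin. The cofinal system of open polydiscs $\bD^n_r \subset \bD^n$ for $r \to 0$ then computes the stalks at $y$ of both presheaves, since holomorphic forms and smooth forms are both computed by their values on such systems. Thus by Proposition \ref{dolbeaultprop}, the morphism of presheaves
\[
 D_*\per_{dg}(\Omega^{\#,\hol}_Y(B)) \to D_*\per_{dg}(\sA^{\#,\bt}_Y(B))
\]
restricts on stalks at $y$ to the quasi-equivalence
\[
  \LLim_{r>0} D_*\per_{dg}(\Omega^{\#}(\bD^n_r,B)) \xra{\sim} \LLim_{r>0} D_*\per_{dg}(\sA^{\#,\bt}(\bD^n_r,B)),
\]
so that the comparison map is a stalkwise quasi-equivalence of presheaves of dg categories on $Y$.

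Next, passing to $\oR\Gamma(Y,-)$, I would invoke the fact that a morphism of (pre)sheaves of dg categories (or equivalently, of cosimplicial stacky dg algebras via the inverse of Lemma \ref{hyperconnlemma}) which is a quasi-equivalence on stalks induces a quasi-equivalence on hypercohomology, since derived global sections can be computed via a hypercover by open polydiscs; the explicit formula $\per_{dg}^{Y,?}(B) = \oR\Gamma(Y, D_*\per_{dg}(-))$ is already presented in the derived form, and the result is invariant under stalkwise equivalences of hypersheaves. Naturality in $B$ is immediate from the naturality of the comparison map at the level of sheaves of dg algebras.

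The main obstacle is justifying that the stalkwise quasi-equivalence of the two presheaves of dg categories indeed produces a quasi-equivalence after $\oR\Gamma$. This requires either an argument that both constructions are already hypersheaves of dg categories (which follows from Lemma \ref{hyperconnlemma} together with hyperdescent for perfect complexes on the stacky dg algebra $\Omega^{\#}(-, B)$ or $\sA^{\#,\bt}(-, B)$), or equivalently, that we can replace the presheaves by their hypersheafifications without changing $\oR\Gamma$. The rest of the argument is a direct transposition of the $\C^\infty$ de Rham comparison in Corollary \ref{poincarecor}, with Proposition \ref{dolbeaultprop} replacing Proposition \ref{poincareprop}.
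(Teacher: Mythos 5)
Your proof is correct and follows the same route as the paper: the paper's proof is the one-liner "follows immediately from Proposition \ref{dolbeaultprop} by taking derived sections," and your argument spells out exactly the mechanics implicit in that sentence (Proposition \ref{dolbeaultprop} supplies the stalkwise quasi-equivalence on cofinal polydisc systems, and applying $\oR\Gamma$ automatically incorporates hypersheafification so that stalkwise equivalences of presheaves descend to global sections). Your worry in the final paragraph is slightly over-cautious: since $\oR\Gamma$ already denotes derived (hypersheaf) sections, it is invariant under stalkwise equivalences of presheaves regardless of whether either side is a priori a hypersheaf.
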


At this stage, beware that we have no analogue of Remark \ref{deRhamEFCrmk}
 --- it is not true in general that the dg categories $\per_{dg}^{Y,\Dol,\an}(B)$ and  $\per_{dg}^{Y,\Dol,\C^{\infty}}(B) $ depend only on the dg algebra structure underlying $B$, or even the dg FEFC algebra structure. 

We have already seen that the de Rham functors are representable by the analytification of a derived NC prestack,
via Corollary \ref{poincarecor}. The corresponding statement does not hold for the Dolbeault functors, but the following proposition means that they have good infinitesimal behaviour in the form of tangent and obstruction theory. The statement involves a strict epimorphism $A \to B$ of complete multiplicatively convex dg algebras, which is equivalent to saying that $B\cong A/I$ for a \emph{closed} dg ideal $I$ in $A$.

 Since the spaces $\Omega^p_Y$ are all  nuclear Fr\'echet, we have the following immediate consequence of Lemma \ref{CXlemma2}:
 \begin{proposition}\label{Dolhgsprop}
  For any complex manifold $Y$, the functor $\per_{dg}^{Y,\Dol,\an}$ restricted to $dg_+\cF r\Alg(\Cx)$ is w.e.-preserving and homogeneous.
 \end{proposition}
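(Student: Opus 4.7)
The plan is to read the definition
\[
 \per_{dg}^{Y,\Dol,\an}(B) \;=\; \oR\Gamma(Y,\, D_*\per_{dg}(\Omega^{\#,\hol}_Y(B)))
\]
as an instance of the construction of Example \ref{CXex2}, and then invoke Lemma \ref{CXlemma2}. First I would fix a countable basis $\bI$ of Stein open polydiscs on $Y$, chosen so that derived global sections of a sheaf of dg categories on $Y$ can be computed as $\ho\Lim_{\bI^{\op}}$. The sheaf $\Omega^{\#,\hol}_Y$ is viewed as a presheaf $\sC^{\bt}$ of dg algebras in Fr\'echet $\Cx$-spaces on $\bI$ equipped with \emph{zero} cochain differential (the de Rham differential $\pd$ plays no role in the Dolbeault construction), and by Lemma \ref{BaninjtenlemmaCx} we have $\Omega^{\#,\hol}_Y(B)(U)=\Omega^{\#,\hol}_Y(U)\hten_{\eps} B$ termwise. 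This presents the Dolbeault functor in the exact format of $\ho\Lim_{\bI^{\op}}D_* F(\sC^{\bt}\hten_{\eps}-)$ considered in Lemma \ref{CXlemma2}.

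Next I would check the two hypotheses of Lemma \ref{CXlemma2}. On the coefficient side, each $\Omega^p_Y(U)$ for $U\in\bI$ is a nuclear Fr\'echet space: $\sO_Y(U)$ is nuclear Fr\'echet by the discussion in Example \ref{dgsteinex}, and $\Omega^p_Y$ restricts to a finite free $\sO_Y$-module on a coordinate polydisc. Hence the nuclearity hypothesis on $\sC^n$ is met, which lets the conclusion extend from $dg_+\cN\cF r\Alg(\Cx)$ to all of $dg_+\cF r\Alg(\Cx)$. On the input-functor side, I need that $\per_{dg}\co dg_+\Alg(\Cx)\to \mathrm{dgCat}$ is w.e.-preserving and homogeneous in the sense of Definitions preceding \ref{hhgsdef}; the first is the standard fact that a quasi-isomorphism of DGAs induces a quasi-equivalence between the dg categories of cofibrant perfect modules, while the second is the equally standard statement that perfect complexes form a derived Artin $\infty$-stack — in particular gluing perfect modules along a square-zero extension $A\twoheadrightarrow B$ and an arbitrary map $C\to B$ is homotopy-Cartesian.

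With those ingredients in place, Lemma \ref{CXlemma2} applied with $F=\per_{dg}$, $\sC^{\bt}=\Omega^{\#,\hol}_Y$, and $\bI$ the chosen basis of polydiscs yields directly that $\per_{dg}^{Y,\Dol,\an}$ is w.e.-preserving and homogeneous on $dg_+\cF r\Alg(\Cx)$. The main (minor) obstacle is bookkeeping: Lemma \ref{CXlemma2} and Definition \ref{hhgsdef} are stated for simplicial-set-valued functors, whereas $\per_{dg}$ is dg-category-valued. Either one observes that the proof of Lemma \ref{CXlemma2} uses only exactness properties of $\hten$ on nuclear Fr\'echet spaces (so applies verbatim in any target model category, and in particular to dg categories with quasi-equivalences), or one first composes with the nerve-of-the-core functor, which preserves both quasi-equivalences and homotopy Cartesian squares.
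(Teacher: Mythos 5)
Your proposal is correct and takes essentially the same approach as the paper, which simply states before the proposition that ``Since the spaces $\Omega^p_Y$ are all nuclear Fr\'echet, we have the following immediate consequence of Lemma~\ref{CXlemma2}.'' Your elaboration — identifying the countable Stein basis $\bI$, recognising $\Omega^{\#,\hol}_Y$ as $\sC^{\bt}$ with zero cochain differential so that Example~\ref{CXex2} applies, checking nuclearity via Example~\ref{dgsteinex}, and noting that Definition~\ref{hhgsdef} and Lemma~\ref{CXlemma1} are really stated for model-category-valued targets (so no nerve-of-core reduction is even needed) — is exactly the filling-in the paper leaves implicit.
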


 In particular, this gives rise to obstruction theory and tangent complexes as in Lemma \ref{obslemma} and Definition \ref{Tdef}.
 
 \begin{corollary}\label{Dolperfectcotcor}
   For  $Y$ a smooth proper complex manifold,  the simplicial set-valued functor $\Perf^{Y,\Dol,\an}$ on  $dg_+\cF r\Alg(\Cx)$ given by the nerve of the core of  $\per_{dg}^{Y,\Dol,\an}$  has perfect cotangent complex at any $B$-valued point $[(\sE,\theta)]$ where the complex
   \[
    \oR\Gamma(Y,  (\oR\sHom_{\sO_Y(B)} (\sE, \sE\hten_{\sO_Y}\Omega^{\#}_Y(B), \delta \pm [\theta,-]))
   \]
is perfect as a $B^{\hat{e}}$-module.
\end{corollary}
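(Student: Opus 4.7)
The plan is to identify the tangent complex of $\Perf^{Y,\Dol,\an}$ at $(\sE,\theta)$ explicitly as the shift of the given endomorphism complex, establish a base-change formula over $B^{\hat{e}}$ by the methods of \S \ref{nucfrechetsn}, and then construct the cotangent complex by duality from the stated perfectness hypothesis.

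First, by Proposition \ref{Dolhgsprop}, $\Perf^{Y,\Dol,\an}$ is w.e.-preserving and homogeneous on $dg_+\cF r\Alg(\Cx)$, so Definition \ref{Tdef} applies and Lemma \ref{obslemma} provides obstruction theory. For a Fr\'echet $B^{\hat{e}}$-module $M$ in chain complexes, deformations of $(\sE,\theta)$ to a point of $\Perf^{Y,\Dol,\an}(B \oplus M)$ lying over $(\sE,\theta)$ correspond, after strictification via Lemma \ref{hyperconnlemma}, to first-order perturbations of the Dolbeault-type operator $\delta +\theta$ on $\sE \oplus (\sE\hten_B M)$. These perturbations are controlled, modulo gauge and after passage to spectra as in Definition \ref{Fcotdef}, by
\[
\bT_{(\sE,\theta)}\Perf^{Y,\Dol,\an}(M) \simeq \oR\Gamma(Y, (\oR\sHom_{\sO_Y(B)}(\sE, \sE\hten_{\sO_Y}\Omega^{\#}_Y\hten_B M), \delta \pm [\theta,-]))[1].
\]

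Next, since $Y$ is compact, choose a finite good cover $\cU$ by open subsets biholomorphic to polydiscs, on which $\sO_Y$ and each $\Omega^p_Y$ restrict to nuclear Fr\'echet algebras and modules respectively. Then $\oR\sHom_{\sO_Y(B)}(\sE, \sE\hten_{\sO_Y}\Omega^{\#}_Y(B^{\hat{e}}))$ is locally perfect as a right $(\sO_Y\hten B^{\hat{e}})$-module on $\cU$, since $\sE$ is perfect over $\sO_Y(B)$. Running the argument of Lemma \ref{RGammalemma} and Proposition \ref{RGammacotprop} on the $\cU$-hypercover gives the base-change identity
\[
 \bT_{(\sE,\theta)}\Perf^{Y,\Dol,\an}(M) \simeq \bT_{(\sE,\theta)}\Perf^{Y,\Dol,\an}(B^{\hat{e}})\ten^{\oL}_{B^{\hat{e}}} M
\]
for all Fr\'echet $B^{\hat{e}}$-modules $M$ in chain complexes. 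Given the stated perfectness of $\bT_{(\sE,\theta)}\Perf^{Y,\Dol,\an}(B^{\hat{e}})[-1]$ as a $B^{\hat{e}}$-module, its $B^{\hat{e}}$-linear dual provides a perfect cotangent complex representing $T_{(\sE,\theta)}\Perf^{Y,\Dol,\an}$ in the sense of Definition \ref{Fcotdef}.

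The main obstacle will be the base-change step: justifying commutation of $\oR\Gamma(Y,-)$ with $\ten^{\oL}_{B^{\hat{e}}} M$ when $B$ and $M$ are only assumed Fr\'echet (not nuclear). This requires carefully leveraging the nuclearity of $\sO_Y$ and $\Omega^{\#}_Y$ (via Lemmas \ref{nucexactlemma1} and \ref{nucexactlemmarel}) together with the fact that the good cover can be taken finite, so that $\oR\Gamma(Y,-)$ is computed by a bounded semi-cosimplicial complex indexed by finitely many intersections, and countable/bounded products of nuclear Fr\'echet spaces remain amenable to the exactness results underpinning Lemma \ref{RGammalemma}.
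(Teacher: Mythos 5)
Your proof is correct and follows essentially the same route as the paper's: identify the tangent complex from the morphism-space description of Lemma \ref{hyperconnlemma}, then invoke the base-change and perfectness machinery of Lemma \ref{RGammalemma} and Proposition \ref{RGammacotprop}, using the nuclearity of $\Omega^{\#}_Y$ to handle merely-Fr\'echet $B$. The paper compresses all of this into a one-line citation of Proposition \ref{RGammacotprop} plus the identification of $\bT_x(\Perf^{Y,\Dol,\an}, B^{\hat{e}})$, so you have simply unfolded the same argument in more detail; the only cosmetic difference is that the paper derives base change directly from Proposition \ref{RGammacotprop} rather than re-running the hypercover computation, and is slightly more explicit about how the $B^{\hat{e}}$-module structure on the target complex arises (right action of $B$ on the second copy of $\sE$, left multiplication of $B^{\op}$ on $\Omega^{\#}_Y(B)$).
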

\begin{proof}
 Since we are in the setting of Proposition \ref{RGammacotprop}, this follows from the quasi-isomorphism
 \[
  \bT_x(\Perf^{Y,\Dol,\an}, B^{\hat{e}}) \simeq  \oR\Gamma(Y,  (\oR\sHom_{\sO_Y(B)} (\sE, \sE\hten_{\sO_Y}\Omega^{\#,\hol}_Y(B), \delta \pm [\theta,-]))[1]
 \]
of $B^{\hat{e}}$-modules, which is a consequence of  the description of morphism spaces in Lemma \ref{hyperconnlemma}. Here, in the target complex $B$  acts on the second copy of $\sE$, while $B^{\op}$ acts on $ \Omega^{\#}_Y(B)$ by left multiplication.
  \end{proof}

 \subsection{Comparisons with analytified algebraic functors}\label{analytificncomp}

We now adapt some results from \cite{GAGA} to the generality of complete multiplicatively convex algebra coefficients, although in some cases we have to use quite different proofs, as Cartan's Theorems A and B do not easily generalise.

\begin{lemma}\label{HodgeDollemma}
 If $\sE$ is a holomorphic  vector bundle on a compact K\"ahler  manifold  $Y$, 
with $\sE$ admitting a $\C^{\infty}$ hermitian metric, then for any complete locally convex complex topological vector  space $V$ the natural map
 \[
  \oR\Gamma(Y,\sE)\ten_{\Cx}UV \to \oR\Gamma(Y, \sE\hten V)
 \]
 is a quasi-isomorphism, where $UV$ denotes the abstract vector space underlying $V$.
 \end{lemma}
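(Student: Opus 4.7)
The plan is to use Hodge theory applied to the Dolbeault resolution. Since $Y$ is compact Kähler and $\sE$ admits a Hermitian metric, the $L^2$ inner product together with the Laplacian $\Box = \bar{\pd}\bar{\pd}^* + \bar{\pd}^*\bar{\pd}$ yields a Hodge decomposition
\[
A^{0,q}(Y,\sE) = \cH^{0,q}(Y,\sE) \oplus \bar{\pd}(A^{0,q-1}(Y,\sE)) \oplus \bar{\pd}^*(A^{0,q+1}(Y,\sE))
\]
of nuclear Fréchet spaces, with $\cH^{0,q}(Y,\sE)$ finite-dimensional and isomorphic to $\H^q(Y,\sE)$. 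Letting $C^{\bt}$ denote the sub-complex of $A^{0,\bt}(Y,\sE)$ complementary to the harmonic forms, Green's operator $G$ gives a continuous operator $\bar{\pd}^*G$ on $C^{\bt}$ satisfying $\bar{\pd}\bar{\pd}^*G + \bar{\pd}^*G\bar{\pd} = \id_{C^{\bt}}$, so $A^{0,\bt}(Y,\sE) \cong \cH^{0,\bt}(Y,\sE) \oplus C^{\bt}$ as a direct sum of a cohomologically trivial complex $C^{\bt}$ with a finite-dimensional complex with zero differential.

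First I would identify $\oR\Gamma(Y,\sE\hten V)$ with the global Dolbeault complex $A^{0,\bt}(Y,\sE\hten V)$. Local trivialisations of $\sE$ together with Lemma \ref{BaninjtenlemmaCx} (applied via the real analogue Lemma \ref{BaninjtenlemmaR} on the underlying smooth structure) identify $\sA^{0,q}(\sE\hten V)$ with $\sA^{0,q}(\sE)\hten V$, and the Dolbeault--Grothendieck lemma with $V$-valued coefficients (Lemma \ref{dolbeaultlemma1}) ensures that $\sA^{0,\bt}(\sE\hten V)$ is a resolution of $\sE\hten V$; softness of sheaves of $\sA^0$-modules on the paracompact space $Y$ makes this resolution $\Gamma$-acyclic. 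Applying $-\hten V$ to the Hodge decomposition, we obtain
\[
A^{0,\bt}(Y,\sE)\hten V \cong (\cH^{0,\bt}(Y,\sE)\hten V) \oplus (C^{\bt}\hten V),
\]
and the contracting homotopy $\bar{\pd}^*G$ extends to a continuous contracting homotopy $(\bar{\pd}^*G)\hten \id_V$ on $C^{\bt}\hten V$. Since $\cH^{0,\bt}(Y,\sE)$ is finite-dimensional, $\cH^{0,\bt}(Y,\sE)\hten V \cong \cH^{0,\bt}(Y,\sE)\ten_{\Cx}UV$, so the cohomology of $A^{0,\bt}(Y,\sE\hten V)$ is precisely $\H^{\bt}(Y,\sE)\ten_{\Cx}UV$, as required.

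The main obstacle I expect is justifying that $\hten V$ is well-behaved enough when $V$ is only a complete LCTVS (not necessarily Fréchet, hence outside the scope of Lemma \ref{nucexactlemma1} directly). However, since $A^{0,q}(Y,\sE)$ is nuclear Fréchet, the completed tensor product $A^{0,q}(Y,\sE)\hten V = A^{0,q}(Y,\sE)\hten_{\eps}V$ is unambiguously defined for arbitrary complete LCTVS $V$ and is functorial in continuous linear maps, which suffices for the contracting homotopy argument above. If further reduction is needed, one may express $V$ as a filtered inverse limit of Banach spaces $V_{\alpha}$, reduce the statement to the Banach case, and then use that $\cH^{0,\bt}$ is finite-dimensional (so commutes with the limit) together with  the continuity of the contracting homotopy to pass back to $V$.
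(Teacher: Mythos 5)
Your proof is correct and follows essentially the same route as the paper's: identify $\oR\Gamma(Y,\sE\hten V)$ with the global Dolbeault complex via Lemma \ref{dolbeaultlemma1} and fineness of sheaves of $\sA^0_Y$-modules, invoke the Hodge decomposition of $A^{0,q}(Y,\sE)$ as a direct sum of closed (Fréchet) subspaces, apply $\hten_{\eps}V$ to that decomposition, and conclude via finite-dimensionality of the harmonic spaces. The one cosmetic difference is that you package the acyclicity of the non-harmonic part via the explicit contracting homotopy $\bar{\pd}^*G$, whereas the paper instead observes directly that $\bar{\pd}\co \bar{\pd}^* A^{0,q} \to \bar{\pd} A^{0,q-1}$ is an isomorphism of Fréchet spaces surviving $\hten_{\eps}V$; these are equivalent ways of expressing the same fact, and both sidestep the need for any exactness result beyond the compatibility of $\hten_{\eps}$ with (split) direct sums.
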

\begin{proof}
The homotopies from  Lemma \ref{dolbeaultlemma1} 
ensure that
 the morphism $\sE\hten V \to (\sA_Y^{0,\bt}\ten_{\sO_Y}\sE)\hten V$ is a quasi-isomorphism.
 The sheaves on the right are all modules over the sheaf $\sA_Y^{0}(\R)$, which has  a partition of $1$ subordinate to any cover, so they are all fine sheaves. Thus we have
 \[
  \oR\Gamma(Y, \sE\hten V) \simeq \Gamma(Y, \sE\ten_{\sO_Y}\sA^{0,\bt}_{Y}(V)). 
 \]

The complex $ \Gamma(Y, \sE\ten_{\sO_Y}\sA^{0,\bt}_{Y}(V))$ is given by applying $\hten V$ to the complex $A^{0,\bt}(Y,E):= \Gamma(Y, \sE\ten_{\sO_Y}\sA^{0,\bt}_{Y}(\Cx)$, so we can rewrite our map as
\[
 A^{0,\bt}(Y,E)\ten_{\Cx}UV \to A^{0,\bt}(Y,E)\hten V.
\]

As in \cite[pp.150--153]{GriffithsHarris}, existence of the metric means that we have direct sum decompositions
\[
 A^{0,q}(Y,E) \cong \cH^{0,q}(Y,E) \oplus \bar{\pd} A^{0,q-1}(Y,E) \oplus \bar{\pd}^* A^{0,q+1}(Y,E)
\]
of Fr\'echet spaces, with $\pd$ vanishing on $\cH$ and the map $\bar{\pd}\co \bar{\pd}^* A^{0,q}(Y,E) \to  \bar{\pd} A^{0,q-1}(X,E) $ being an isomorphism of Fr\'echet spaces. On taking the completed injective tensor product with $V$, the direct sum decompositions survive, so we have  isomorphisms
\begin{align*}
 \cH^{0,q}(Y,E)\hten_{\eps}V &\to \H^q(A^{0,\bt}(Y,E)\hten V),\\
 \cH^{0,q}(Y,E)\ten_{\Cx}UV &\to  \H^qA^{0,\bt}(Y,E)\ten_{\Cx}UV.
\end{align*}

Since $\cH^{0,q}(Y,E) $ is finite dimensional,  we have  $\cH^{0,q}(Y,E)\hten_{\eps}V \cong \cH^{0,q}(Y,E)\ten_{\Cx}UV$, giving the required quasi-isomorphism.
 \end{proof}

\begin{proposition}\label{GAGAprop} 
 For any $B \in dg_+\hat{\Tc}\Alg_{\Cx}$ and any perfect complexes $\sE,\sF$ of $\sO_{X}\ten_{\Cx}B$-modules on a complex projective scheme $X$, the natural maps
\[
 \oR\Gamma(X,  \oR\sHom_{\sO_{X}\ten_{\Cx}B}(\sE,\sF)) \to \oR\Gamma(X_{\an}, \oR\sHom_{\sO_{X}^{\hol}(B)}(  \sO_{X}^{\hol}(E), \sO_{X}^{\hol}(F))
\]
are quasi-isomorphisms, where we write $ \sO_{X}^{\hol}(F):=(h^{-1}\sF)\ten_{h^{-1}(\sO_{X}\ten_{\Cx}B)}\sO_{X}^{\hol}(B)$.
\end{proposition}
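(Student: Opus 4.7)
My strategy is to reduce, via duality for perfect complexes and thick-subcategory dévissage, to a comparison for a single line-bundle twist, which then falls to Lemma~\ref{HodgeDollemma} together with Serre's classical GAGA.

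First, since $\sE$ is perfect we have $\oR\sHom_{\sO_X\ten B}(\sE,\sF)\simeq \sE^{\vee}\ten^{\oL}_{\sO_X\ten B}\sF$, and analytification is symmetric monoidal and preserves duals of perfect objects, so the analytification of $\oR\sHom(\sE,\sF)$ agrees with $\oR\sHom$ of the analytifications. The problem reduces to showing that for any perfect complex $\sG$ of $\sO_X\ten B$-modules, the natural map
\[
 \oR\Gamma(X,\sG)\to\oR\Gamma(X_{\an},\sG^{\hol})
\]
is a quasi-isomorphism. Both sides are exact in $\sG$, so the class of such $\sG$ is a thick triangulated subcategory of perfect complexes. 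Exact closed-immersion pushforward commutes with analytification and with $(-)\ten B$, reducing us to $X=\bP^N$. There, Beilinson's $\Cx$-linear resolution of the diagonal on $\bP^N\by\bP^N$ remains a resolution after $(-)\ten_{\Cx}B$, and the induced Fourier--Mukai transform exhibits any perfect $\sO_{\bP^N}\ten B$-complex as a finite iterated cone of objects of the form $\sO_{\bP^N}(k)\ten_{\Cx}V$ for $V$ a perfect $B$-complex. A further dévissage in $V$ then reduces us to $\sG=\sO_{\bP^N}(k)\ten_{\Cx}B$ for $k\in\Z$.

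For such $\sG$, algebraically $\oR\Gamma(\bP^N,\sO_{\bP^N}(k)\ten_{\Cx}B)\simeq \oR\Gamma(\bP^N,\sO_{\bP^N}(k))\ten_{\Cx}B$, since the left-hand cohomology is a bounded complex of finite-dimensional $\Cx$-spaces, so tensoring with $B$ commutes with $\oR\Gamma$ in each degree. On the analytic side, $\sO_{\bP^N}^{\hol}(k)$ is a holomorphic line bundle on the compact Kähler manifold $\bP^N(\Cx)$ carrying a Fubini--Study hermitian metric, so Lemma~\ref{HodgeDollemma} yields
\[
 \oR\Gamma(\bP^N_{\an},\sO_{\bP^N}^{\hol}(k)\hten B)\simeq \oR\Gamma(\bP^N_{\an},\sO_{\bP^N}^{\hol}(k))\ten_{\Cx}B,
\]
the finite-dimensionality of the target rendering $\hten$ and $\ten$ equivalent. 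Classical Serre GAGA identifies $\oR\Gamma(\bP^N,\sO_{\bP^N}(k))$ with $\oR\Gamma(\bP^N_{\an},\sO_{\bP^N}^{\hol}(k))$, and this identification persists after tensoring with $B$.

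The main obstacle is the generation step: confirming that the Beilinson resolution, while $\Cx$-linear, induces a valid Fourier--Mukai decomposition of perfect complexes over the (possibly non-commutative) dg sheaf $\sO_X\ten B$. The construction is purely tensorial in the $\sO$-direction, but one must track the left/right $B$-module structures carefully through the convolutions and verify that the resulting thick triangulated subcategory of perfect $\sO_X\ten B$-complexes generated by $\{\sO_X(k)\ten_{\Cx}B\}_{k\in\Z}$ is all of $\per(\sO_X\ten B)$, by base-change from the classical $B=\Cx$ case.
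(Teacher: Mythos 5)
Your proposal is correct and takes essentially the same route as the paper: reduce by generation to line-bundle twists $\sO(k)\ten_{\Cx}B$ on projective space, then combine Lemma~\ref{HodgeDollemma} with classical GAGA. The only differences are tactical --- you collapse $\oR\sHom(\sE,\sF)$ to a single perfect complex via duality and propose the Beilinson resolution to justify generation, whereas the paper reduces the two arguments $\sE,\sF$ one at a time and simply invokes generation of perfect complexes by the $\sO_X(m)$ --- so the generation step you flag as the remaining obstacle is precisely the point the paper also leaves to standard theory.
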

\begin{proof} 
Since $X$ is projective with some closed embedding $i \co X \into \bP^n_{\Cx}$, perfect complexes on $X$ are generated as a triangulated category by the vector bundles $\sO_X(m)$ for $m \in \Z$, so we may reduce to the case where $\sE= \sO_X(r)\ten_{\Cx}B$. We thus wish to show that  $\oR\Gamma(X, \sF(-r)) \to \oR\Gamma(X_{\an},\sO_{X}^{\hol}(F)(-r))$ is a quasi-isomorphism.

Now, $i_*\sF(-r)$ is a perfect complex on $\bP^n_{\Cx}$, with 
 \[
 i_*^{\an}\sO_{X}^{\hol}(B)\cong (h^{-1}i_*\sO_X)\ten_{h^{-1}\sO_{\bP^n}}\sO_{\bP^n}^{\hol}(B), 
\] 
essentially by definition, so
 \[
 i_*^{\an}\sO_{X}^{\hol}(F)(-r)\cong (h^{-1}i_*\sF(-r))\ten_{h^{-1}(\sO_{\bP^n}\ten_{\Cx}B)}\sO_{\bP^n}^{\hol}(B).
\] 
 
Replacing $(X,\sF(-r))$ with $(\bP^n,i_*\sF(-r))$ we thus reduce to the case $X=\bP^n$, and since perfect complexes on $\bP^n$ are generated as a triangulated category by the vector bundles $\sO_{\bP^n}(m)$ for $m \in \Z$, we may further reduce to the case where $\sF= \sO_{\bP^n}(r)\ten_{\Cx}B$.

The line bundles $\sO^{\hol}(m)$ all admit hermitian metrics, so by Lemma \ref{HodgeDollemma} the morphisms
 \[
  \oR\Gamma(\bP^n_{\Cx,\an}, \sO_{\bP^n}^{\hol}(m))\ten_{\Cx} B \to \oR\Gamma(\bP^n_{\Cx,\an},\sO_{\bP^n}^{\hol}(B)(m))
 \]
are quasi-isomorphisms.

Now, by \cite{GAGA}, the maps
\[
  \oR\Gamma(\bP^n_{\Cx}, \sO_{\bP^n}(m)) \to 
  \oR\Gamma(\bP^n_{\Cx,\an}, \sO_{\bP^n}^{\hol}(m))
\]
are quasi-isomorphisms, and the desired result follows by applying $\ten_{\Cx}B$.
\end{proof}

\begin{remark}\label{esssurjrmk}


 Note that the dg functor $\per_{dg}(\sO_{X}\ten_{\Cx} B) \to \per_{dg}(\sO_{X}^{\hol}(B))$ from Proposition \ref{GAGAprop}, sending $\sF$ to  $(h^{-1}\sF)\ten_{h^{-1}(\sO_{X}\ten_{\Cx}B)}\sO_{X}^{\hol}(B)$, will  not  be essentially surjective in general,
 By \cite{GAGA}, we do have essential surjectivity when $B$ is finite-dimensional over $\Cx$, and the proof of \cite[Proposition 5.15]{HolsteinPortaDerivedAnMapping} implies essential surjectivity when $B$ is a ring of overconvergent functions from a compact Stein space to a  finite-dimensional algebra over $\Cx$.

To see that essential surjectivity does not hold in general,  take $B=\Cx^{\Z}$ with the product topology. Then  $\prod_{m \in \Z} \sO_{\bP^n}^{\hol}(m)$ is a locally  perfect complex over $\prod_{m \in \Z}\sO_{\bP^n}^{\hol}=\sO_{\bP^n}^{\hol}(\Cx^{\Z})$, but does not come from any locally  perfect complex over $ \sO_{\bP^n}\ten_{\Cx}\Cx^{\Z}$ --- in particular, $\prod_{m \in \Z} \sO_{\bP^n}(m)$ is not perfect over that ring. 

The argument from \cite{GAGA} relies on Noetherianity and strong nuclearity properties which are satisfied by dagger algebras but rare for finitely generated non-commutative LMC algebras.
  \end{remark}

 \begin{corollary}\label{RHcoralgff}
  For any smooth projective complex variety $Y$, the Riemann--Hilbert map
  \[
   (\per_{dg}^{Y,\dR,\alg})^{\FEFC} \to (\per_{dg}^{Y(\Cx)_{\an},\oB})^{\FEFC}
  \]
from Corollary \ref{RHcoralg} is fully faithful as a dg functor. 
   \end{corollary}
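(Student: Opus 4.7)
The plan is to reduce the statement to the GAGA result of Proposition \ref{GAGAprop} via the Hodge filtration on hyperconnections.

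First, I would unpack the Riemann--Hilbert map from the proof of Corollary \ref{RHcoralg}: it is the composite of the comparison dg functor $\per_{dg}^{Y,\dR,\alg}(B) \to \per_{dg}^{Y,\dR,\an}(B)$ with the inverse of the Poincar\'e quasi-equivalence $\per_{dg}^{Y(\Cx)_{\an},\oB}(B) \xra{\sim} \per_{dg}^{Y,\dR,\an}(B)$ from Corollary \ref{poincarecor}. Since the second map is already a quasi-equivalence, full faithfulness of the composite is equivalent to full faithfulness of the algebraic-to-analytic de Rham dg functor. Using Lemma \ref{hyperconnlemma} to identify objects in $D_*\per_{dg}$ with perfect complexes equipped with flat hyperconnections, the task reduces to showing that for perfect $\sO_Y \ten_{\Cx} B$-complexes $\sE, \sE'$ equipped with flat algebraic hyperconnections $\nabla,\nabla'$, the natural map of $\Hom$-complexes
\[
\prod_i \oR\Gamma(Y, \oR\sHom_{\sO_Y \ten B}(\sE, \sE' \ten_{\sO_Y} \Omega^{i,\alg}_Y \ten B))_{[i]} \to \prod_i \oR\Gamma(Y_{\an}, \oR\sHom_{\sO_Y^{\hol}(B)}(\sE^{\an}, \sE'^{\an} \ten_{\sO_Y^{\hol}(B)} \Omega^{i,\hol}_Y(B)))_{[i]}
\]
(with twisted differentials involving $\nabla,\nabla'$) is a quasi-isomorphism.

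Next, I would introduce the Hodge filtration $F^p$ given by brutal cotruncation in the cochain ($i$) direction. This filtration is compatible with the Riemann--Hilbert comparison map, and, crucially, it is \emph{finite}, since $\Omega^{i}_Y = 0$ for $i > \dim Y$. The associated graded piece $\gr^p$ carries only the internal $\delta$-differential (all hyperconnection terms drop out), so on each graded piece the map becomes
\[
\oR\Gamma(Y, \oR\sHom_{\sO_Y \ten B}(\sE, \sE' \ten_{\sO_Y} \Omega^{p,\alg}_Y \ten B)) \to \oR\Gamma(Y_{\an}, \oR\sHom_{\sO_Y^{\hol}(B)}(\sE^{\an}, \sE'^{\an} \ten_{\sO_Y^{\hol}(B)} \Omega^{p,\hol}_Y(B))).
\]
Here $\sE' \ten_{\sO_Y} \Omega^{p,\alg}_Y$ is again a perfect complex of $\sO_Y \ten B$-modules (since $\Omega^p_Y$ is a vector bundle on smooth $Y$), and its analytification is exactly $\sE'^{\an} \ten_{\sO_Y^{\hol}(B)} \Omega^{p,\hol}_Y(B)$. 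Thus Proposition \ref{GAGAprop} applies directly to give a quasi-isomorphism on each graded piece.

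Finally, since the Hodge filtration is finite, both sides are finite iterated extensions of their graded pieces, so quasi-isomorphism on $\gr^p$ forces quasi-isomorphism on the filtered objects. Post-composition with the Poincar\'e equivalence then gives the desired full faithfulness of the Riemann--Hilbert map on FEFC-algebra inputs via Corollary \ref{keyBanFEFCcor}. The main obstacle is really bookkeeping: ensuring that the algebraic and analytic $\Hom$-complexes carry compatible Hodge filtrations (which they do by construction), and that the analytification of $\oR\sHom$ of perfect complexes on $Y$ is computed pointwise by the same $\Omega^p_Y$ twist, so that Proposition \ref{GAGAprop} can be invoked directly on each graded piece without further analysis. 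No convergence issue arises because $Y$ is finite-dimensional and the product total complex is in fact a finite direct sum in each cochain degree after filtering.
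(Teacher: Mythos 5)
Your proposal is correct and follows essentially the same route as the paper's proof: reduce to the algebraic-to-analytic de Rham comparison via the Poincar\'e equivalence, filter the $\Hom$-complex by the Hodge filtration induced by brutal cotruncation of $\Omega^{\bt}$, and apply Proposition \ref{GAGAprop} to each associated graded piece $(\sE,\sE'\ten_{\sO_Y}\Omega^p_Y)$. Your explicit remarks that the filtration is finite (so convergence is automatic) and that $\Omega^p_Y$ being a vector bundle preserves perfection are useful clarifications, but they do not change the underlying argument.
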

\begin{proof}
 It suffices to show that the natural  dg functor  $\per_{dg}^{Y,\dR,\alg}(B) \to \per_{dg}^{Y,\dR,\an}(B)$ is fully faithful for all $B \in dg_+\hat{\Tc}\Alg_{\Cx}$. Given perfect complexes $\sF,\sF'$ of $\sO_{Y}\ten_{\Cx}B$-modules, with flat hyperconnections $\nabla, \nabla'$, there is an induced hyperconnection $\nabla^{\ad}$ on $\oR\sHom_{\sO_{Y}\ten_{\Cx}B}(\sF,\sF')$, and this amounts to considering derived global sections of the morphism
 \begin{align*}
  &h^{-1}(\oR\sHom_{\sO_{Y}\ten_{\Cx}B}(\sF,\sF')\ten_{\sO_Y} \Omega^{\bt}_Y, \delta \pm \nabla^{\ad})\\
  &\to (\oR\sHom_{\sO_{X}^{\hol}(B)}(  \sO_{Y}^{\hol}(F), \sO_{Y}^{\hol}(F')) \ten_{\sO_Y^{\hol}} \Omega^{\bt,\hol}_{Y^{\an}}, \delta \pm \nabla^{\ad}).  
 \end{align*}

Brutal truncation of  $\Omega^{\bt}$ induces compatible  Hodge filtrations on both sides, and it suffices to show that we have quasi-isomorphisms on the associated graded pieces, where the map is 
\begin{align*}
  &\oR\Gamma(Y,  \oR\sHom_{\sO_{Y}\ten_{\Cx}B}(\sF,\sF'\ten_{\sO_Y}\Omega^i_Y))\\ 
  &\to \oR\Gamma(Y_{\an}, \oR\sHom_{\sO_{Y}^{\hol}(B)}(  \sO_{X}^{\hol}(F), \sO_{X}^{\hol}(F')\ten_{\sO_X^{\hol}}\Omega^{i,\hol}_{Y^{\an}})).
\end{align*}
The result then follows by applying Proposition \ref{GAGAprop} to the pair $(\sF,\sF\ten_{\sO_Y}\Omega^i_Y)$.
\end{proof}

\begin{corollary}\label{RHcoralget}
 The Riemann--Hilbert map from Corollary \ref{RHcoralg} is formally \'etale in the sense that for any  surjection $A \to B$  in $dg_+\FEFC(\Cx)$ with square-zero kernel $I$, the map
 \[
   \per_{dg}^{Y,\dR,\alg}(A)  \to \per_{dg}^{Y(\Cx)_{\an},\oB}(A)\by^h_{\per_{dg}^{Y(\Cx)_{\an},\oB}(B)} \per_{dg}^{Y,\dR,\alg}(B)
  \]
  is a quasi-equivalence.
\end{corollary}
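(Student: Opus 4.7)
My plan is to reduce the assertion to a comparison of tangent complexes at each point, and then invoke the GAGA-type results already in the paper. First, by Corollary \ref{poincarecor} the Riemann--Hilbert map factors as
\[
 (\per_{dg}^{Y,\dR,\alg})^{\FEFC} \lra (\per_{dg}^{Y,\dR,\an})^{\FEFC} \xla{\sim} (\per_{dg}^{Y(\Cx)_{\an},\oB})^{\FEFC},
\]
so it suffices to prove that the first arrow is formally \'etale. Both functors are analytifications of homogeneous functors on abstract DGAs (the algebraic de Rham one being homogeneous because it comes from $D_*\per_{dg}$ applied to an algebraically defined sheaf of DGAs, while the analytic one is quasi-equivalent to Betti, which is manifestly homogeneous), so composition with the forgetful functor $dg_+\FEFC(\Cx)\to dg_+\Alg(\Cx)$ preserves homogeneity. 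For a square-zero extension $A\to B$ in $dg_+\FEFC(\Cx)$ with kernel $I$, the underlying map in $dg_+\Alg(\Cx)$ is again a square-zero extension with the same kernel $I$, and Lemma \ref{obslemma} (applied separately to each functor) reduces formal \'etaleness to showing that for every $B$-point $x$ corresponding to a perfect $\sO_Y\ten B$-module $\sE$ with flat hyperconnection $\nabla$, the induced map on tangent complexes, viewed as a map of $A$-bimodule valued functors on Beck modules, is a quasi-isomorphism.

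Unwinding the hyperconnection description of Lemma \ref{hyperconnlemma} in each case, I would identify the two tangent complexes at $x$, evaluated on an $A$-bimodule $M$, as
\[
 \oR\Gamma\!\left(Y,\,\bigl(\oR\sHom_{\sO_Y\ten B}(\sE,\sE)\ten_{\sO_Y}\Omega^{\bt}_Y\ten^{\oL}_{B^e}M,\ \delta\pm\nabla^{\ad}\bigr)\right)[1]
\]
on the algebraic side, and the analogous expression with $\Omega^{\bt,\hol}_{Y^{\an}}$ and $\sO_Y^{\hol}(-)$ on the analytic side, the map between them being the comparison morphism induced by $h \co Y_{\an}\to Y_{\Zar}$.

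Filtering both complexes by the brutal Hodge filtration $\sigma^{\ge p}\Omega^{\bt}$, exactly as in the proof of Corollary \ref{RHcoralgff}, the problem localises on each graded piece to the assertion that the natural map
\[
 \oR\Gamma(Y,\oR\sHom_{\sO_Y\ten B}(\sE,\sE\ten_{\sO_Y}\Omega^p_Y)) \lra \oR\Gamma(Y_{\an},\oR\sHom_{\sO_Y^{\hol}(B)}(\sO_Y^{\hol}(E),\sO_Y^{\hol}(E)\ten_{\sO_Y^{\hol}}\Omega^{p,\hol}_{Y^{\an}}))
\]
is a quasi-isomorphism; this is exactly Proposition \ref{GAGAprop} applied to the pair of perfect complexes $(\sE,\sE\ten_{\sO_Y}\Omega^p_Y)$. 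Since $Y$ is projective, $\Omega^{\bt}$ and $\Omega^{\bt,\hol}$ are both concentrated in finitely many cochain degrees, so the induced spectral sequence converges and gives a quasi-isomorphism before further tensoring with $M$; applying $-\ten^{\oL}_{B^e}M$ then yields the desired equivalence of tangent functors.

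The main technical obstacle will be bookkeeping around the distinction between FEFC Beck modules (bimodules for $A^{e,\FEFC}$, as classified in Lemma \ref{bimodlemma}) and ordinary $A$-bimodules, so as to justify that the homogeneity/obstruction computation produced by Lemma \ref{obslemma} on the FEFC side really is governed by the same algebraic tangent complex. Because $I$ is of square zero and the FEFC-algebra coproduct appearing in $A^{e,\FEFC}$ reduces to the algebraic tensor product $A\ten A^{\op}$ modulo any Fr\'echet completion when tested against abstract bimodules, both functors see the same infinitesimal information and the comparison above suffices.
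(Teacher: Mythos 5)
Your proof is correct and follows essentially the same strategy as the paper: reduce formal \'etaleness to an equivalence of tangent complexes via homogeneity and the obstruction/delooping argument of Lemma \ref{obslemma}, then establish that equivalence by the Hodge filtration plus Proposition \ref{GAGAprop}. The only difference is that the paper simply cites Corollary \ref{RHcoralgff} (full faithfulness, whose proof is exactly your Hodge-filtration-plus-GAGA computation) to control the tangent spaces, whereas you re-derive that computation inline at the level of tangent complexes.
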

\begin{proof}
Since both functors $F$ are homogeneous and homotopy-preserving, there is a natural equivalence $F(A) \simeq F(B)\by^h_{u,F(B \oplus I[1]),0}F(B)$ for $u \co B \to B\oplus I[1]$ the obstruction map  in $\Ho(dg_+\FEFC(\Cx))$ associated to the extension. Replacing $A$ with $A \oplus I[1]$,  we can therefore reduce to the case where $A$ is a trivial square-zero extension. Homotopy-homogeneity also implies that the homotopy fibre  $T_x(F,M[1])$  of  $F(A \oplus M[1])$ over $x \in F(A)$ deloops $T_x(F,M)$.  Corollary \ref{RHcoralgff} implies that the Riemann--Hilbert map gives  isomorphisms  on all higher homotopy groups of tangent spaces $T_x$, so by delooping gives weak equivalences on $T_x$, as required.
\end{proof}

 \begin{corollary}\label{Dolcoralg}
  For any smooth complex variety $Y$, we have a natural map
  \[
   (\per_{dg}^{Y,\Dol,\alg})^{\FEFC} \to \per_{dg}^{Y(\Cx)_{\an},\Dol}
  \]
of functors  on  $dg_+\hat{\Tc}\Alg_{\Cx}$ taking values in the $\infty$-category of dg categories. It is fully faithful and its restriction to $dg_+\cF r\Alg_{\Cx}$ is formally \'etale.
   \end{corollary}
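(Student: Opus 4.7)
The plan is to mirror the three-part template established by Corollaries \ref{RHcoralg}, \ref{RHcoralgff} and \ref{RHcoralget}, with simplifications afforded by the absence of the cochain differential $\pd$ in the Dolbeault setting. For the natural map itself, I would begin, exactly as in Corollary \ref{RHcoralg}, with the canonical morphism $h^{-1}(\Omega^{\#,\alg}_Y\ten_{\Cx}B) \to \Omega^{\#,\hol}_{Y^{\an}}(B)$ of graded sheaves on $Y_{\an}$ induced by the analytification morphism $h \co Y_{\an} \to Y_{\Zar}$. Applying $D_*\per_{dg}$ levelwise and then derived global sections yields a dg functor $\per_{dg}^{Y,\Dol,\alg}(B) \to \per_{dg}^{Y(\Cx)_{\an},\Dol,\an}(B)$, natural in $B \in dg_+\hat{\Tc}\Alg_{\Cx}$. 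Because the source is of algebraic origin, composition with the forgetful functor gives the desired natural transformation with source $(\per_{dg}^{Y,\Dol,\alg})^{\FEFC}$. Unlike in the de Rham case, no Poincar\'e-lemma inversion is required since we are not comparing to a Betti functor.

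Full faithfulness would then be established by following the argument of Corollary \ref{RHcoralgff}. Using Lemma \ref{hyperconnlemma} to realize morphism complexes of both sides explicitly in terms of $\HHom$-complexes twisted by $\Omega^{\#}$, the brutal Hodge filtration $F^p = \Omega^{\ge p}$ induces compatible filtrations. The associated graded piece in degree $i$ becomes a map
\begin{align*}
&\oR\Gamma(Y,\oR\sHom_{\sO_Y\ten_{\Cx}B}(\sF,\sF'\ten_{\sO_Y}\Omega^i_Y))\\
&\qquad \to \oR\Gamma(Y_{\an},\oR\sHom_{\sO_Y^{\hol}(B)}(\sO_Y^{\hol}(F),\sO_Y^{\hol}(F')\ten_{\sO_Y^{\hol}}\Omega^{i,\hol}_{Y^{\an}})),
\end{align*}
which is a quasi-isomorphism by applying Proposition \ref{GAGAprop} to the pair $(\sF, \sF'\ten_{\sO_Y}\Omega^i_Y)$. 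A standard spectral sequence (or inductive) argument in the filtration then promotes graded quasi-isomorphisms to a quasi-isomorphism on total morphism complexes.

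For the formally \'etale claim, I would mimic the obstruction-theoretic proof of Corollary \ref{RHcoralget}. Since $\per_{dg}^{Y,\Dol,\alg}$ is algebraic and homogeneous, its $\FEFC$-variant inherits w.e.-preservation and homogeneity on $dg_+\cF r\Alg_{\Cx}$; the analytic side enjoys the same properties by Proposition \ref{Dolhgsprop}. Given a square-zero extension $A \to B$ with kernel $I$ in $dg_+\cF r\Alg_{\Cx}$, homogeneity and homotopy invariance of both functors reduce the question to the case of the trivial square-zero extension $B \oplus I[1]$. Homogeneity identifies the homotopy fibre of $F(B \oplus M[1]) \to F(B)$ as the loop object of the homotopy fibre of $F(B \oplus M) \to F(B)$, so the already-established full faithfulness yields the required equivalence on tangent complexes via delooping. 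The main obstacle I anticipate is the implicit properness assumption hidden in the full-faithfulness step: Proposition \ref{GAGAprop} is proved via Serre's GAGA and therefore requires $Y$ to be projective, so the "any smooth complex variety" hypothesis in the statement should presumably be strengthened (or full faithfulness be interpreted in a more restricted sense) for the argument to close. Any weakening of projectivity would demand a genuinely different comparison result for $\oR\Gamma$ of coherent-with-Fr\'echet-coefficient sheaves between the algebraic and analytic sites.
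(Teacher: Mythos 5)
Your proof is correct and takes essentially the same approach as the paper: the paper's own proof is precisely the terse ``existence follows as in Corollary \ref{RHcoralg}, turning off the de Rham differential; full faithfulness and formal \'etaleness follow as in Corollaries \ref{RHcoralgff} and \ref{RHcoralget}, using Proposition \ref{Dolhgsprop} for homogeneity of the target.'' Your concern about projectivity is well-founded and perceptive: Corollaries \ref{RHcoralgff} and \ref{RHcoralget} are indeed stated (explicitly, resp.\ implicitly via their dependency) only for smooth \emph{projective} varieties, since Proposition \ref{GAGAprop} relies on Serre's GAGA, so the ``any smooth complex variety'' in the statement of Corollary \ref{Dolcoralg} appears to be an omission --- the natural map exists in general, but the full-faithfulness and formal-\'etaleness clauses should carry the projectivity hypothesis.
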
   
  \begin{proof}
   Existence follows exactly as in the proof of Corollary \ref{RHcoralg}, turning off the de Rham differential. Full faithfulness and formal \'etaleness then follow as in Corollaries \ref{RHcoralgff} and \ref{RHcoralget} in the same way, noting that restriction to Fr\'echet algebras ensures that the target is homogeneous, by Proposition \ref{Dolhgsprop}.
  \end{proof}

In particular, this implies that   $\Perf^{Y(\Cx)_{\an},\Dol}$ has perfect cotangent complexes at all points of algebraic origin.
   
 \subsection{Hodge and twistor functors}
 
 We now set about constructing derived NC enhancements of the Hodge moduli space $M_{\Hod}$ and the Deligne--Hitchin--Simpson twistor space $M_{\mathrm{DH}}$ of  \cite[\S 3]{simpsonwgt2}.

 \subsubsection{The analytic Hodge and \tps{$\C^{\infty}$}{C-infinity} pre-twistor functors}
 
 As a preliminary to the following definitions, observe that because $ \sO(\bA^1_{\Cx})^{\hol}$ is the free LMC $\Cx$-algebra on one generator,  we can identify $dg_+\hat{\Tc}\Alg_{\sO(\bA^1_{\Cx})^{\hol}}$ with the category of pairs $(B, \lambda)$ for $B \in dg_+\hat{\Tc}\Alg_{\Cx}$ and $\lambda \in Z(B_0)$ an element of the centre of $B$, corresponding to the image of the   standard co-ordinate in  $ \sO(\bA^1_{\Cx})^{\hol}$.
 
\begin{definition}\label{Hodgedefan}
 For $(B,\lambda)  \in dg_+\hat{\Tc}\Alg_{\sO(\bA^1_{\Cx})^{\hol}}$ and   any complex manifold $Y$, the $B$-linear analytic Hodge dg category of $Y$  is given by 
\[
 \per_{dg}^{Y,\Hod,\an}(B, \lambda):=\oR\Gamma(Y,  D_*\per_{dg}(\Omega^{\#,\hol}_Y(B), \lambda\pd, \delta_B)).
\]
 
 This also has a form of analytic $\bG_m$-action: for any $\mu \in Z(B_0)^{\by}$, we have an isomorphism
\[
 \per_{dg}^{Y,\Hod,\an}(B, \lambda) \to    \per_{dg}^{Y,\Hod,\an}(B, \lambda\mu)
\]
induced by the morphism $(\Omega^{\#,\hol}_{Y}(B), \lambda \pd, \delta_B)\to (\Omega^{\#,\hol}_{Y}(B), \lambda\mu \pd , \delta_B)$ given by multiplying $\Omega^p$ by $\mu^p$.
 \end{definition}
 Note that via Lemma \ref{hyperconnlemma}, the analytic Hodge dg category can be identified with the dg category of perfect $\sO_Y^{\hol}(B)$-modules with flat  $\lambda \pd$-hyperconnections. Also note that $\per_{dg}^{Y,\Hod,\an}(B, 1)= \per_{dg}^{Y,\dR,\an}(B) $ and $\per_{dg}^{Y,\Hod,\an}(B, 0)= \per_{dg}^{Y,\Dol,\an}(B) $. The $\bG_m$-action thus gives isomorphisms $\per_{dg}^{Y,\Hod,\an}(B, \lambda)\cong \per_{dg}^{Y,\dR,\an}(B) $ whenever $\lambda$ is invertible.
 
 \begin{remark}\label{LBrmkHod}
 We can generalise the construction of Definition \ref{Hodgedefan} to take inputs $(B,L, \lambda)$, for $L$ a $Z(B_0)$-module of rank $1$ and $\lambda \in L$, by letting $\per_{dg}^{Y,\Hod,\an}(B, \lambda)$ be $\oR\Gamma(Y,  D_*\per_{dg}(L^{\ten_{Z(B_0)} \#}\ten_{Z(B_0)}\Omega^{\#,\hol}_Y(B), \lambda\ten\pd, \delta))$. This construction corresponds to working over $[\bA^1/\bG_m]$ instead of $\bA^1$, encoding and sheafifying the analytic $\bG_m$-action.
  \end{remark}

 We also have a $\C^{\infty}$ version. The real commutative Fr\'echet algebra $\sO(\bA^2_{\R})^{\hol}:= (\sO(\bA^2_{\Cx})^{\hol})^{\Gal(\Cx/\R)}$ consists of holomorphic functions $f \co \Cx^2 \to \Cx$ satisfying $\overline{f(w)}=\bar{w}$, and is the free commutative LMC $\R$-algebra on two variables $u,v$ (corresponding to the two standard co-ordinates). We can thus identify $dg_+\hat{\Tc}\Alg_{\sO(\bA^2_{\R})^{\hol}}$ with the category of triples $(B, u,v)$ for $B \in dg_+\hat{\Tc}\Alg_{\R}$ and $u,v \in Z(B_0)$.
 
 \begin{definition}\label{twistordefCinftyR}
 For $(B,u,v) \in dg_+\hat{\Tc}\Alg_{\sO(\bA^2_{\R})^{\hol}}$ and   any complex manifold $Y$, the $B$-linear  $\C^{\infty}$ pre-twistor dg category of $Y$  is given by 
\[
 \per_{dg}^{Y,\mathrm{pTw},\C^{\infty}}(B, u,v):=\oR\Gamma(Y,  D_*\per_{dg}(A^{\#}_Y(B),  ud+v\dc, \delta_B)),
\]
where $\dc = i\pd - i\bar{\pd}$, the conjugate of the de Rham differential $d$ under the almost complex structure.
 
 This has a natural analytic action of the Deligne torus $S$: for any  $ (a+ib) \in Z(B_0\ten_{\R}\Cx)^{\by}$ (equivalently, $a,b \in Z(B_0)$ with  $a^2+b^2 \in Z(B_0)^{\by}$),  we have an equivalence
\[
 \per_{dg}^{Y,\mathrm{pTw},\C^{\infty}}(B,u,v) \to \per_{Dg}^{Y,\mathrm{pTw},\C^{\infty}}(B,au -bv,av+bu)
\]
induced by the morphism $(\sA^{\#}_{Y}(B), ud+v\dc , \delta_B)\to (\sA^{\#}_{Y}(B), (au -bv)d + (av+bu)\dc , \delta_B)$ given by multiplying $\sA^{p,q}$ by $(a+ib)^p(a-ib)^q$.
  \end{definition}
  
  Note that via Lemma \ref{hyperconnlemma}, the $\C^{\infty}$ pre-twistor dg category can be identified with the dg category of perfect $\C^{\infty}(Y,B)$-modules with flat $ud+v\dc$-hyperconnections. Also note that when $B$ is a $\Cx$-algebra, we have
\[  
  \per_{dg}^{Y,\mathrm{pTw},\C^{\infty}}(B, 1,0)= \per_{dg}^{Y,\dR,\C^{\infty}}(B) \quad\text{and}\quad \per_{dg}^{Y,\mathrm{pTw},\C^{\infty}}(B, \half, \frac{-i}{2})= \per_{dg}^{Y,\Dol,\C^{\infty}}(B).
  \]
  The $S$-action thus gives isomorphisms $\per_{dg}^{Y,\mathrm{pTw},\C^{\infty}}(B, u,v)\cong \per_{dg}^{Y,\dR,\C^{\infty}}(B) $ whenever $u^2+v^2$ is invertible, and $\per_{dg}^{Y,\mathrm{pTw},\C^{\infty}}(B, iv,v)\cong \per_{dg}^{Y,\Dol,\C^{\infty}}(B) $ if $v$ is invertible and $B$ is a $\Cx$-algebra.

 \begin{remark}\label{LBrmktwistor}
 We can generalise the construction of Definition \ref{twistordefCinftyR} to take inputs $(B,L, w)$, for $L$ a $Z(B_0)\ten_{\R}\Cx$-module of rank $1$ and $w \in L$. Explicitly, let $\per_{dg}^{Y,\mathrm{pTw},\an}(B,L,w)$ be given by applying $\oR\Gamma(Y,  D_*\per_{dg}-)$ to the stacky DGAA 
$(\sA_{\Tw,Y}^{\bt}(B,L,w):= (\bigoplus_{p,q} L^{\ten p}\ten \bar{L}^{\ten q} \ten \sA^{p,q}_Y(B\ten_{\R}\Cx)^{[-p-q]})^{\Gal(\Cx/\R)},w\pd+\bar{w}\bar{\pd}, \delta)$, where tensor products are taken over $Z(B_0)\ten_{\R}\Cx$, and $\bar{L}$ is the $Z(B_0)\ten_{\R}\Cx$-module given by the conjugate action on $L$, with $\bar{w} \in L$ the element corresponding to $w$.

When $L$ is the trivial module $Z_0(B)\ten_{\R}\Cx$, this recovers Definition \ref{twistordefCinftyR} with $w=u+iv$, since $w\pd + \bar{w}\bar{\pd}= ud+v\dc$.

This construction corresponds to working over $[\bA^2/S]$ instead of $\bA^2$, encoding and sheafifying the analytic $S$-action.
  \end{remark}
 
 The pre-twistor functor only tends to have finite cotangent complexes away from the origin $\{0\} \in \bA^2_{\R}$, so the twistor functor later in this section will be based on the restriction to $\bA^2_{\R}\setminus \{0\}$.
 
%
%
%
%

 Since the spaces $\Omega^p_Y$ and $A^p_Y$ are all  nuclear Fr\'echet, we have the following immediate consequences of Lemma \ref{CXlemma2}:
\begin{proposition}\label{Hodhgsprop} 
 For any complex manifold $Y$, the functor $\per_{dg}^{Y,\Hod,\an}$ restricted to $dg_+\cF r\Alg(\sO(\bA^1_{\Cx})^{\hol})$ is w.e.-preserving and homogeneous.

 Likewise, the functor $\per_{dg}^{Y,\mathrm{pTw},\C^{\infty}}$ restricted to $dg_+\cF r\Alg(\sO(\bA^2_{\R})^{\hol})$ is w.e.-preserving and homogeneous.
 \end{proposition}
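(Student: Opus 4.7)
The plan is to realize both functors as instances of the homotopy-limit construction $\ho\Lim_{\bI^{\op}} D_*F(\sC^{\bt}\hten -)$ of Example \ref{CXex2}, applied to $F = \per_{dg}$, and then to invoke Lemma \ref{CXlemma2} after a minor extension allowing a commutative nuclear Fr\'echet base algebra in place of a valued field.

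First I would fix, for each $Y$, a countable basis $\bI$ of polydiscs (in the holomorphic case) resp.\ of open coordinate cubes (in the $\C^{\infty}$ case). For the Hodge functor, define the presheaf $\sC^{\bt}_{\mathrm{Hod}}$ on $\bI$ of dg nuclear Fr\'echet $\sO(\bA^1_{\Cx})^{\hol}$-algebras by
\[
 \sC^p_{\mathrm{Hod}}(U) := \Omega^{p,\hol}(U)\hten_{\Cx} \sO(\bA^1_{\Cx})^{\hol},
\]
with cochain differential $\lambda\pd$, where $\lambda$ is the tautological element of $\sO(\bA^1_{\Cx})^{\hol}$, and analogously set $\sC^{\bt}_{\mathrm{Tw}}(U) := \sA^{\bt}(U)\hten_{\R} \sO(\bA^2_{\R})^{\hol}$ with cochain differential $u\,d + v\,\dc$ for the pre-twistor case. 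For $(B,\lambda) \in dg_+\cF r\Alg(\sO(\bA^1_{\Cx})^{\hol})$, the relative completed tensor product absorbs the second factor of $\sC^p_{\mathrm{Hod}}$, so that $\sC^{\bt}_{\mathrm{Hod}}(U)\hten_{\sO(\bA^1_{\Cx})^{\hol}} B \cong \Omega^{\bt,\hol}(U)\hten_{\Cx} B$ with $\lambda$ now acting through its image in the centre of $B_0$; taking $\ho\Lim_{U\in \bI^{\op}} D_*\per_{dg}$ of this construction computes derived global sections over $Y$ and recovers $\per_{dg}^{Y,\Hod,\an}(B,\lambda)$, with the pre-twistor case being entirely similar.

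Next I would extend Lemma \ref{CXlemma2} from a valued base field $\bK$ to a commutative nuclear Fr\'echet base algebra $R$. The proof of Lemma \ref{CXlemma1} carries over verbatim in our situation, because when $\sC^p$ has the form $V^p \hten_{\Cx} R$ for nuclear Fr\'echet $V^p$, the relative completed tensor product satisfies $\sC^p \hten_R B \cong V^p \hten_{\Cx} B$ in each degree; hence Lemma \ref{nucexactlemma} ensures that $\sC^{\bt} \hten_R -$ preserves abstract quasi-isomorphisms and carries square-zero extensions of objects in $dg_+\cF r\Alg(R)$ to square-zero extensions. Since the only role of the $R$-algebra structure on $B$ is to prescribe the scalar parameters entering the cochain differential ($\lambda\pd$ respectively $u\,d + v\,\dc$), and such parameters transport along $R$-algebra morphisms, the full bidifferential graded structure on the output is preserved.

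Finally, since $\per_{dg}$ is w.e.-preserving and homogeneous as a functor on dg associative algebras, the levelwise composition $D_*\per_{dg}(\sC^{\bt} \hten_R -)$ inherits these properties on each $U \in \bI$, and both properties pass through the small-category homotopy limit $\ho\Lim_{\bI^{\op}}$, yielding the desired conclusions for the Hodge functor and analogously for the pre-twistor functor. The main bookkeeping task is the extension of Lemma \ref{CXlemma2} to a Fr\'echet algebra base; all other steps are routine.
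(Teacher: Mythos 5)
Your proposal is correct and follows essentially the same route as the paper: the paper states the proposition as an immediate consequence of Lemma \ref{CXlemma2} (i.e.\ of Example \ref{CXex2} applied to $F=\per_{dg}$), noting only that the sheaves $\Omega^p_Y$ and $\sA^p_Y$ are nuclear Fr\'echet. Your extra care about the base algebra $\sO(\bA^1_{\Cx})^{\hol}$ resp.\ $\sO(\bA^2_{\R})^{\hol}$ is resolved exactly as you say — the relative completed tensor product collapses to the absolute one over $\Cx$ resp.\ $\R$, with the base algebra only supplying the parameters in the cochain differential — which is precisely why the paper treats the reduction as immediate.
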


 In particular, this gives rise to obstruction theory and tangent complexes as in Lemma \ref{obslemma} and Definition \ref{Tdef}. Proposition \ref{RGammacotprop} then gives the following.
 \begin{corollary}\label{twistorcotperfcor}
   For  $Y$ a smooth proper complex manifold,  the simplicial set-valued functor $\Perf^{Y,\Hod,\an}$ on $dg_+\cF r\Alg(\sO(\bA^1_{\Cx})^{\hol})$ given by the nerve of the core of  $\per_{dg}^{Y,\Hod,\an}$  has perfect cotangent complex at any $B$-valued point $[(\sE,\nabla)]$ where the complex 
   \[
    \oR\Gamma(Y,  (\oR\sHom_{\sO_Y(B)} (\sE, \sE\hten_{\sO_Y}\Omega^{\#}_Y(B), \delta \pm \nabla^{\ad}))
   \]
is perfect as a $B^{\hat{e}}$-module.

Likewise, the simplicial set-valued functor $\Perf^{Y,\mathrm{pTw},\C^{\infty}}$ on $dg_+\cF r\Alg(\sO(\bA^2_{\R})^{\hol})$
given by the nerve of the core of  $\per_{dg}^{Y,\mathrm{pTw},\C^{\infty}}$  has perfect cotangent complex at any $B$-valued point $[(\sE,\nabla)]$ where the complex 
   \[
    \oR\Gamma(Y,  (\oR\sHom_{\C^{\infty}_Y(B)} (\sE, \sE\hten_{\sA^0_Y}\sA^{\#}_Y(B), \delta \pm \nabla^{\ad}))
   \]
is perfect as a $B^{\hat{e}}$-module.
\end{corollary}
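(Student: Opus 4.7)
My proposal is to treat this as a direct application of Proposition \ref{RGammacotprop} (together with Proposition \ref{Hodhgsprop} for the homogeneity hypotheses), in the same spirit as Corollaries \ref{dRcotperfcor} and \ref{Dolperfectcotcor}, but now keeping track of the extra parameters $\lambda$ or $(u,v)$.

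\medskip
\noindent\textbf{Hodge case.} First I would fix a countable basis $\bI$ of open polydiscs of $Y$ and present the functor as
\[
\Perf^{Y,\Hod,\an}(B,\lambda) \simeq \ho\Lim_{U \in \bI^{\op}} D_*\Perf(\Omega^{\#,\hol}_U(B),\lambda\pd,\delta).
\]
On each polydisc $U$, the presheaf of LMC algebras $\sC(U):= \Omega^{\#,\hol}_U$ is nuclear Fr\'echet in each cochain degree. For the underlying algebraic functor $\Perf$ on $dg_+\Alg(\Cx)$, standard dg-category deformation theory identifies the tangent complex at a perfect module $E$ over a DGAA $C$ with $\oR\HHom_C(E,E)[1]$; applied locally to the stacky denormalisation and interpreted through Lemma \ref{hyperconnlemma}, this gives that the local cotangent complex at $[(\sE,\nabla)]|_U$ is the $B^e$-linear dual of
\[
\oR\sHom_{\sO_U(B)}(\sE|_U,\sE|_U\hten_{\sO_U}\Omega^{\#,\hol}_U(B))[1],
\]
with the adjoint differential $\delta\pm\nabla^{\ad}$. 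Locally this is a finite construction so is perfect, and local-to-global homogeneity from Proposition \ref{Hodhgsprop} (combined with Lemma \ref{CXlemma2}) ensures the hypotheses of Proposition \ref{RGammacotprop} are met. That proposition then yields a perfect $B^{\hat{e}}$-linear cotangent complex as soon as the globalisation
\[
\oR\Gamma(Y,\oR\sHom_{\sO_Y(B)}(\sE,\sE\hten_{\sO_Y}\Omega^{\#}_Y(B),\delta\pm\nabla^{\ad}))[1]
\]
is perfect over $B^{\hat{e}}$, which is exactly the stated hypothesis (up to the irrelevant shift).

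\medskip
\noindent\textbf{$\C^\infty$ pre-twistor case.} The argument is formally identical, with $\bI$ now a countable cover of $Y$ by coordinate charts contractible enough for the Dolbeault--Grothendieck machinery of \S\ref{RHsn}. Here the presheaf of LMC Fr\'echet algebras is $\sA^{\#}_Y$, equipped with the differential $ud+v\dc$, and Lemma \ref{BaninjtenlemmaR} ensures $\sA^{\#}_Y\hten B$ really does compute $\sA^{\#}_Y(B)$. Nuclearity of $\sA^p_Y$ together with Proposition \ref{Hodhgsprop} again supplies the input for Proposition \ref{RGammacotprop}. The only arithmetic difference is the appearance of $ud+v\dc$ instead of $\lambda\pd$, which changes nothing in the deformation-theoretic computation.

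\medskip
\noindent\textbf{Main obstacle.} The routine computations (identifying the local tangent complex with the adjoint Hom-complex, and then globalising via Proposition \ref{RGammacotprop}) are already packaged by the earlier propositions, so the only thing requiring any thought is the bookkeeping of the extra central parameters $\lambda$ (resp.\ $(u,v)$): one must check that the adjoint differential built from $\nabla$ is compatible with the $B^{\hat{e}}$-action on the Hom complex, and that homogeneity and w.e.-preservation on $dg_+\cF r\Alg(\sO(\bA^1_\Cx)^{\hol})$ (resp.\ $dg_+\cF r\Alg(\sO(\bA^2_\R)^{\hol})$) really do imply the corresponding properties on each fibre over $\Spec\sO(\bA^1_\Cx)^{\hol}$ (resp.\ $\Spec\sO(\bA^2_\R)^{\hol}$). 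Both are immediate from the fact that these base rings are central in $B$, but they are the one place where the proof differs from Corollaries \ref{dRcotperfcor} and \ref{Dolperfectcotcor}.
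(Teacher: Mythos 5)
Your approach matches the paper's (implicit) proof exactly: combine the homogeneity and w.e.-preservation furnished by Proposition \ref{Hodhgsprop} with Proposition \ref{RGammacotprop}, computing the local tangent complex via the description of Hom-complexes in Lemma \ref{hyperconnlemma}, just as in Corollary \ref{Dolperfectcotcor}. The "main obstacle" you flag about the central parameters $\lambda$ (resp.\ $(u,v)$) is already dissolved by the paper's own reduction preceding Definition \ref{Hodgedefan}, which identifies $dg_+\hat{\Tc}\Alg_{\sO(\bA^1_{\Cx})^{\hol}}$ with pairs $(B,\lambda)$ for $B \in dg_+\hat{\Tc}\Alg_{\Cx}$ and $\lambda \in Z(B_0)$, so Proposition \ref{RGammacotprop} applies directly over the base field with no extra bookkeeping required.
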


\subsubsection{The twistor Dolbeault--Grothendieck Lemma} 
 
 The $\bar{\pd}$-Poincar\'e lemma now adapts as follows:
\begin{lemma}\label{twistordolbeaultlemma}
 If $B$ is a unital associative  differential graded complete multiplicatively convex $O(\Cx)^{\hol}$-algebra and $s<r$, then for $\lambda \in O(\Cx)^{\hol}$ the canonical co-ordinate,
the image in $\mc(\Tot(A^{\#}(\bD^n_s,B)), \lambda\pd +\bar{\pd}\pm \delta ) $  of any
Maurer-Cartan element $\omega \in \mc(\Tot(A^{\#}(\bD^n_r,B)), \lambda\pd +\bar{\pd}\pm \delta )$ is gauge equivalent to an element of the subset $\mc(\Tot(\Omega^{\#}(\bD^n_s,B)), \lambda \pd \pm \delta)$,  
  via an invertible element of $\Tot A^{<2n}(\bD^n_s,B)^0$). 
  \end{lemma}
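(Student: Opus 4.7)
My plan is to mirror the structure of the proof of Lemma~\ref{dolbeaultlemma} almost verbatim, since the only difference between the two settings is the additional $\lambda\pd$ term in the differential, which preserves the Hodge filtration and so interacts compatibly with the gauge-theoretic manipulations used there. I would treat the case $n=1$ first, by decomposing $\omega = \phi + \eta$ with $\phi \in A^{0}(\bD^1, B^1)\oplus A^{1,0}(\bD^1, B^0)$ and $\eta \in A^{0,1}(\bD^1, B^0) \oplus A^{2}(\bD^1, B^{-1})$, exactly as in Lemma~\ref{dolbeaultlemma}. The Maurer--Cartan equation $(\lambda \pd + \bar{\pd} + \delta)\omega + \omega \wedge \omega = 0$ then controls how these components interact.

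The first gauge step produces an invertible $g \in \Tot A^{<2}(\bD^1_s,B)^0$ satisfying $(\lambda\pd + \bar\pd)g = -\eta\, g$, obtained by applying the iterated-integral construction of Lemma~\ref{poincarelemma} to the pro-dg-Banach algebra $\EEnd_{\Cx}(V)\hten B$ but with the differential $\lambda \pd + \bar{\pd}$ in place of $d$; convergence in the submultiplicative seminorms of $B$ on $\bD^1_s$ is established in the same way since the $\lambda$-factor only rescales the $\pd$-direction integrals. Because $\eta$ has no $A^{1,0}$ component, comparing Hodge types in the identity $\lambda \pd g = -\eta\wedge g - \bar\pd g$ forces the residual $(1,0)$-part to vanish, and the gauged element takes the form $\phi' + \eta'$ with $\phi' \in A^0(\bD^1_s, B^1)\oplus A^{1,0}(\bD^1_s, B^0)$ and $\eta' \in A^{1,1}(\bD^1_s, B^{-1})$. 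Applying the second gauge $1+h(\eta')$ from Lemma~\ref{dolbeaultlemma1} then kills $\eta'$ as in Lemma~\ref{dolbeaultlemma} (using $h(\eta')\wedge h(\eta')=0$ for form-degree reasons), producing a Maurer--Cartan element in $\mc(\Tot(\Omega^{\#}(\bD^1_s, B)),\lambda\pd + \delta)$.

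For general $n$, I would iterate the one-variable statement along the inclusions
\[
\Tot\Omega^{\#}(\bD^{m+1}_{r_{m+1}}, \Tot A^{\#,\bt}(\bD^{n-m-1}_{r_{m+1}}, B)) \hookrightarrow \Tot\Omega^{\#}(\bD^{m}_{r_m}, \Tot A^{\#,\bt}(\bD^{n-m}_{r_m}, B))
\]
for a strictly increasing sequence $s = r_1 < r_2 < \dots < r_n$ of radii below the domain radius, treating the remaining coordinates and their $\lambda\pd$ contribution as parameters in a larger pro-dg-Banach algebra, exactly as in the last paragraph of Lemma~\ref{dolbeaultlemma}. Tracking the form-degree of the successive gauges shows that the composite lies in $\Tot A^{<2n}(\bD^n_s,B)^0$ and is invertible.

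The main obstacle is confirming that the first-step gauge $g$ can still be built and bounded when the Poincaré homotopy is taken with respect to $\lambda\pd + \bar{\pd}$ rather than $d$. The relevant iterated integrals now mix area-type antiholomorphic integration with holomorphic integration scaled by $\lambda$, so the norm bound $\|g(t)\|\le \exp(\|\eta\|\cdot|t|)$ from Lemma~\ref{poincarelemma} must be replaced by a two-variable version combining the sup-norm estimate $\|h(\alpha)\|_r \le \tfrac{r^2}{2}\|Df\|_r + r\|f\|_r$ of Lemma~\ref{dolbeaultlemma1} with the corresponding $\pd$-primitive bounds multiplied by $\|\lambda\|$ on $\bD^n_s$. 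These should fit together cleanly because $\lambda\pd$ strictly increases the holomorphic form-degree, so the series defining $g$ terminates at form-degree $2n$ and remains within the Fr\'echet topology of $\Tot A^{<2n}(\bD^n_s, B)^0$.
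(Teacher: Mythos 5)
Your approach diverges from the paper's in a critical way and has a genuine gap.

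The paper constructs the first gauge $g$ so that $dg = -\eta\wedge g$, where $d = \pd + \bar\pd$ is the \emph{ordinary} de Rham differential — exactly as in Lemma~\ref{dolbeaultlemma}, with no modification. The residual term then arises algebraically: since the ambient differential is $\lambda\pd + \bar\pd$ rather than $d$, one computes
\[
g^{-1}\star(\phi+\eta) = g^{-1}\star_\delta\phi + g^{-1}(\lambda-1)\pd g,
\]
so the residual $\eta' = g^{-1}(\lambda-1)\pd g$ lands in $A^{1,1}$, and the second gauge $1+h(\eta')$ removes it. You instead propose to construct $g$ satisfying $(\lambda\pd+\bar\pd)g = -\eta g$ by ``applying the iterated-integral construction of Lemma~\ref{poincarelemma} but with $\lambda\pd+\bar\pd$ in place of $d$.'' This cannot work: the iterated-integral trick in Lemma~\ref{poincarelemma} relies on the Poincar\'e contracting homotopy for $d$, and there is no analogous contracting homotopy for $\lambda\pd+\bar\pd$. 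Indeed $\lambda$ is the coordinate on $\bA^1$, so at $\lambda=0$ the operator degenerates to $\bar\pd$, whose cohomology is the holomorphic functions — analysing this is the whole content of the lemma, so using a putative $(\lambda\pd+\bar\pd)$-Poincar\'e homotopy to prove it is circular. No simple rescaling or two-variable bound rescues this: the obstruction is cohomological, not metric.

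There is also an internal inconsistency in your argument. If you \emph{could} solve $(\lambda\pd+\bar\pd)g = -\eta g$, then
\[
g^{-1}\star(\phi+\eta) = g^{-1}(\phi+\eta)g + g^{-1}(\delta+\lambda\pd+\bar\pd)g = g^{-1}\star_\delta\phi + g^{-1}\eta g - g^{-1}\eta g = g^{-1}\star_\delta\phi,
\]
i.e.\ the $\eta$-part is killed entirely by the first gauge and there is nothing left for $1+h(\eta')$ to remove. You cannot simultaneously have $g$ solve the $(\lambda\pd+\bar\pd)$-equation and retain a nonzero residual $\eta'\in A^{1,1}$; the residual exists precisely \emph{because} $g$ satisfies the $d$-equation while the differential in play is $\lambda\pd+\bar\pd$.

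The inductive passage to general $n$, the form-degree bookkeeping giving $g\in\Tot A^{<2n}(\bD^n_s,B)^0$, and the use of $h$ from Lemma~\ref{dolbeaultlemma1} are otherwise on track, but they all rest on the first gauge being built with $d$ rather than $\lambda\pd+\bar\pd$; once that is corrected your argument becomes the paper's.
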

\begin{proof}
The proof of Lemma \ref{dolbeaultlemma} adapts almost verbatim, so we will just summarise the main steps. 

Inductively, we can reduce to the case $n=1$. Write $\omega = \phi+\eta$, for $\phi \in  A^0(\bD^1_r,B^1)\oplus A^{1,0}(\bD^1_r,B^0)$ and $\eta \in  A^{0,1}(\bD^1_r,B^0) \oplus A^2(\bD^1_r,B^{-1})$, with the proof of  Lemma \ref{poincarelemma} then giving us an invertible element $g \in \Tot A^{<2}(\bD^1_r,B)^0$ with $dg = - \eta \wedge g$.  

Writing $g^{-1}\star_{\delta} \phi:= g^{-1}\phi g +g^{-1}\delta g$, we   thus have
 \begin{align*}
 g^{-1}\star (\phi + \eta) &= g^{-1}(\phi + \eta)g + g^{-1}(\delta+\lambda \pd+ \bar{\pd})g\\
 &= g^{-1}\star_{\delta} \phi +g^{-1}(\lambda-1)\pd g,
\end{align*}
so we have replaced $(\phi,\eta)$ with a gauge-equivalent pair $(\phi',\eta')$ with $\eta' \in A^{1,1}(\bD^1_r,B^{-1})$ and $\phi' \in  A^0(\bD^1_r,B^1)\oplus A^{1,0}(\bD^1_r,B^0)$. 

For our element $\eta' \in A^{1,1}(\bD^1_r,B)$, the operator $h$ from  Lemma \ref{dolbeaultlemma1} then gives  an invertible element $1+h(\eta') \in 1+  A^{1,0}(\bD^1_s,B^{-1})$
which satisfies 
\[
 (1+h(\eta'))\eta'(1-h(\eta'))+   (1+h(\eta'))\bar{\pd}(1-h(\eta')) = \eta' -\eta' =0, 
\]
since $\bar{\pd}h(\eta')=\eta'$ and  
$\eta'\wedge h(\eta') = h(\eta') \wedge \eta'=0$.

Since $\pd h(\eta') \in  A^{2,0}(\bD^1_s,B^{-1})=0$,  applying the gauge transformation $1+h(\eta')$ thus sends $(\phi',\eta')$ to a Maurer--Cartan element $\phi'':=(1+h(\eta'))\star_{\delta} \phi' $ lying in $A^0(\bD^1_s,B^1)\oplus A^{1,0}(\bD^1_s,B^0) $. For $\phi''$, the Maurer--Cartan equations split to give
\[
 \delta \phi'' + \lambda \pd \phi''+ (\phi'')^2=0, \quad \bar{\pd}\phi'' =0,
\]
so $\phi'' \in \mc(\Tot(\Omega^{\#}(\bD^1_s,B)), \lambda \pd \pm \delta)$, as required.
\end{proof}

\begin{proposition}\label{twistordolbeaultprop}
For $B \in dg_+\hat{\Tc}\Alg_{O(\Cx)^{\hol} }$,
 the canonical dg functor
 \begin{align*}
\LLim_{r>0}D_*\per_{dg}(\Omega^{\#}(\bD^n_r,B), \lambda \pd, \delta) &\to \LLim_{r>0}D_*\per_{dg}(A^{\#}(\bD^n_r,B),\lambda \pd + \bar{\pd}, \delta )
\end{align*}
is a quasi-equivalence. 
   \end{proposition}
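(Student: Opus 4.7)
The plan is to mirror the proof of Proposition \ref{dolbeaultprop}, substituting Lemma \ref{twistordolbeaultlemma} for Lemma \ref{dolbeaultlemma} at the essential surjectivity step, and checking that the Hodge filtration argument for full faithfulness still kills the extra $\lambda\pd$-term.

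First I would invoke Lemma \ref{hyperconnlemma} on both sides to replace $D_*\per_{dg}$ with the dg category of flat hyperconnections: on the left, perfect $\sO_{\bD^n_r}^{\hol}(B)$-modules $M$ with a chain map $\theta \co M \to \Tot^{\Pi}(\Omega^{>0}(\bD^n_r,B)\ten_{\sO} M)[1]$ satisfying $(\lambda\pd + \theta)^2 = 0$ (a flat $\lambda\pd$-hyperconnection), and on the right, perfect $\sA^0_{\bD^n_r}(B)$-modules with flat $(\lambda\pd+\bar\pd)$-hyperconnections.

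For full faithfulness, I would write the map on morphism complexes between two such pairs $(M,\theta)$ and $(N,\phi)$ with $M,N$ cofibrant as the natural inclusion
\[
 \LLim_{0<s<r}\Tot^{\Pi}\bigl(\HHom_{\sO(B)}(M,N)\ten_{\sO}\Omega^{\#}(\bD^n_s,B), \delta\pm\lambda\pd\pm\theta_*\mp\phi^*\bigr)
\]
into its analytic counterpart with $A^{\#,\bt}$ and an additional $\bar\pd$ in the differential. I would then filter both sides by the Hodge filtration $F^p := \Omega^{\ge p}$ (resp.\ $\sA^{\ge p,\bt}$). Since $\lambda\pd$, $\theta_*$ and $\phi^*$ all strictly increase the Hodge degree, they become zero on the associated graded, while $\bar\pd$ and $\delta$ preserve it. Consequently the map on $\gr^p$ is exactly
\[
 \HHom_{\sO(B)}\bigl(M,\LLim_{0<s<r}\Omega^{p}(\bD^n_s,B)\ten_{\sO}N\bigr)\to \HHom_{\sO(B)}\bigl(M,\LLim_{0<s<r}\sA^{p,\bt}(\bD^n_s,B)\ten_{\sO}N\bigr),
\]
which after reducing to $M=N=\sO(B)$ by shifts and extensions is a quasi-isomorphism by Lemma \ref{dolbeaultlemma1}, giving the associated graded comparison and hence the filtered one.

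For essential surjectivity, I would follow the argument of Propositions \ref{poincareprop} and \ref{dolbeaultprop}: given $(E,\nabla)$ on the right, the final part of Lemma \ref{hyperconnlemma} lets us replace $E$ with a cofibrant $\sA^0(B)$-module $E'$ freely generated as a graded module by a levelwise finite graded set $S=:V$, carrying a flat $(\lambda\pd+\bar\pd)$-hyperconnection $\nabla'$ homotopy-equivalent to $(E,\nabla)$. Then $(E',\nabla')$ corresponds to a Maurer--Cartan element of $\mc\bigl(\EEnd_{\bK}(V)\hten \Tot A^{\#,\bt}(\bD^n_r,B),\ \delta_B+\lambda\pd+\bar\pd\bigr)$, and Lemma \ref{twistordolbeaultlemma} applied to the pro-dg Banach algebra $\EEnd_{\bK}(V)\hten B$ (which is complete LMC) produces a gauge equivalence, via an invertible element of $\Tot \sA^{<2n}(\bD^n_s,B)^0$, to an element of $\mc(\Tot\Omega^{\#}(\bD^n_s,B))$. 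This provides, for small enough $s$, an isomorphism in the right-hand dg category between $(E',\nabla')$ and the image of a flat $\lambda\pd$-hyperconnection.

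The main obstacle is really concentrated in Lemma \ref{twistordolbeaultlemma} itself, which does all the analytic work (and we may simply quote it); the only subtlety here is verifying that the Hodge filtration is compatible with the twisted differential $\lambda\pd+\bar\pd$ — which it is, because $\lambda\pd$ is strictly Hodge-increasing and so disappears on the associated graded, reducing the problem cleanly to the $\lambda=0$ case already handled.
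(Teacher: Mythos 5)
Your proposal is correct and follows essentially the same route as the paper, which proves the result in one line by citing the proof of Proposition \ref{dolbeaultprop} with Lemma \ref{twistordolbeaultlemma} substituted in at the essential surjectivity step. Your additional observation that $\lambda\pd$ is strictly Hodge-increasing and therefore vanishes on the associated graded pieces of the Hodge filtration is exactly the point that makes the full-faithfulness argument carry over unchanged.
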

\begin{proof}
This follows exactly as in  the proof of Proposition \ref{dolbeaultprop}, substituting Lemma \ref{twistordolbeaultlemma} for Lemma \ref{poincarelemma}.
\end{proof}

\begin{corollary}\label{twistorpoincarecor}
For  $Y$ a complex  manifold, there are quasi-equivalences
\[
\per_{dg}^{Y,\Hod,\an}(B,\lambda) \to \per_{dg}^{Y,\mathrm{pTw},\C^{\infty}}(B,\frac{\lambda+1}{2}, \frac{\lambda -1}{2i}) 
\]
of simplicial sets, natural in $(B,\lambda) \in dg_+\hat{\Tc}\Alg_{O(\Cx)^{\hol}}$, from
the analytic Hodge moduli functor  to the  $\C^{\infty}$ pre-twistor moduli functor.

This equivalence exhibits a form of $\bG_m$-equivariance, with the action of $\mu \in Z(B_0)^{\by}$ on the left corresponding to the action of $ (\frac{\mu+1}{2}\ten 1) +(\frac{\mu -1}{2i}\ten i) \in Z(B_0\ten\Cx)^{\by}$ on the right.
\end{corollary}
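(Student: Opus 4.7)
The strategy is to realize the statement as a consequence of Proposition \ref{twistordolbeaultprop} applied locally on $Y$, combined with an explicit identification of the twisted differentials on each side.

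First I would verify the compatibility of the differentials: writing $d=\pd+\bar{\pd}$ and $\dc = i\pd - i\bar{\pd}$, one has $ud+v\dc = (u+iv)\pd + (u-iv)\bar{\pd}$, and the choice $u=\tfrac{\lambda+1}{2}$, $v=\tfrac{\lambda-1}{2i}$ yields $u+iv=\lambda$ and $u-iv=1$, so that
\[
  ud + v\dc = \lambda\pd + \bar{\pd}.
\]
Consequently, the natural inclusion of sheaves $\Omega^{\#,\hol}_Y \hookrightarrow \sA^{\#}_Y$ of Fr\'echet algebras, combined with nuclearity of their completed tensor products with $B$, promotes to a morphism of stacky DGAAs of sheaves on $Y$:
\[
(\Omega^{\#,\hol}_Y(B),\lambda\pd,\delta_B) \longrightarrow (\sA^{\#}_Y(B),\lambda\pd+\bar{\pd},\delta_B).
\]
Applying $D_*\per_{dg}$ objectwise gives a natural morphism of presheaves of dg categories on $Y$, and taking $\oR\Gamma(Y,-)$ produces the desired functor
$
\per_{dg}^{Y,\Hod,\an}(B,\lambda) \to \per_{dg}^{Y,\mathrm{pTw},\C^{\infty}}(B,\tfrac{\lambda+1}{2},\tfrac{\lambda-1}{2i}).
$

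Next I would argue that this is a quasi-equivalence. Choose an open cover of $Y$ by charts biholomorphic to polydiscs, and then by Cartesian refinements an open hypercover $\{U_{\alpha}\}$ in which each nonempty $U_{\alpha_0}\cap\dots\cap U_{\alpha_n}$ is a union of polydiscs. Both presheaves of dg categories satisfy descent, so their derived global sections may be computed as the homotopy limit of their values on the hypercover; hence it suffices to check that the morphism is a quasi-equivalence on each polydisc. Passing to germs at an arbitrary point of $Y$ (using $\LLim_{r>0}$ over shrinking polydiscs) reduces this to Proposition \ref{twistordolbeaultprop}. The main technical point here, and the place I would take the most care, is checking that both sides really do satisfy hyperdescent in the dg-category sense for this hypercover, via Lemma \ref{hyperconnlemma} reformulating objects as perfect complexes equipped with flat hyperconnections; this lets one compare derived global sections with spaces of hyperconnected modules directly, bypassing subtleties of sheafification of dg categories.

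Finally, for the equivariance claim I would make the algebraic computation explicit: the Hodge action of $\mu \in Z(B_0)^{\by}$ multiplies $\Omega^p$ by $\mu^p$, and under $u\ten 1+iv\ten i \mapsto (u+iv,u-iv)$ the element $a+ib = \tfrac{\mu+1}{2}\ten 1+\tfrac{\mu-1}{2i}\ten i \in Z(B_0\ten\Cx)^{\by}$ satisfies $(a+ib,a-ib)=(\mu,1)$; therefore the induced pre-twistor action multiplies $\sA^{p,q}$ by $\mu^p\cdot 1^q=\mu^p$, matching $\mu^p$ on $\Omega^p\hookrightarrow \sA^{p,0}$. A direct calculation then confirms that this action sends $(u,v)=(\tfrac{\lambda+1}{2},\tfrac{\lambda-1}{2i})$ to $(\tfrac{\lambda\mu+1}{2},\tfrac{\lambda\mu-1}{2i})$, which is precisely the image of $\lambda\mu$ under the correspondence. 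The hard part is really just the hyperdescent verification in the middle step; everything else is explicit symbol manipulation or an appeal to Proposition \ref{twistordolbeaultprop}.
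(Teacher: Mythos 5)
Your overall strategy matches the paper's proof: the same reduction $ud + v\dc = (u+iv)\pd + (u-iv)\bar{\pd} = \lambda\pd + \bar{\pd}$, the same invocation of Proposition~\ref{twistordolbeaultprop}, and the same explicit equivariance computation. The algebra is all correct.

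The middle step is where your route diverges, and where it runs into a subtle problem. You reduce $\oR\Gamma(Y,-)$ to a homotopy limit over a hypercover of polydiscs (invoking descent for the presheaves), and then say that checking the morphism on each polydisc ``reduces this to Proposition~\ref{twistordolbeaultprop} by passing to germs.'' But these two reductions do not compose: Proposition~\ref{twistordolbeaultprop} is a statement about the colimit $\LLim_{r>0}$ over shrinking polydiscs (i.e.\ stalks), not about a fixed polydisc $\bD^n_r$, so once you have reduced to a fixed polydisc in the hypercover you cannot further ``pass to germs'' to weaken the target. Moreover, the descent claim you flag as the delicate step is not actually known for the presheaf $D_*\per_{dg}(\Omega^{\#,\hol}_Y(B),\ldots)$ in the form you need, and is in any case unnecessary. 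The paper avoids both issues by observing that the morphism of presheaves is a \emph{stalkwise} equivalence (precisely because of the basis of polydisc neighbourhoods and Proposition~\ref{twistordolbeaultprop}), hence becomes an equivalence after hypersheafification, and $\oR\Gamma$ only sees the hypersheafification. If you replace your two-stage reduction with this single stalkwise-to-hypersheaf argument, your proof is essentially the paper's, and the need to verify hyperdescent for the presheaves disappears entirely.

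Your equivariance check is correct and matches the paper's computation that $(\frac{\mu+1}{2}\ten 1)+(\frac{\mu-1}{2i}\ten i)$ acts on $\sA^{p,q}$ as multiplication by $\mu^p$.
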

\begin{proof}
The dg category
$ \per_{dg}^{Y,\mathrm{pTw},\C^{\infty}}(B,u,v) $
is given by taking derived global sections of  the hypersheaf 
\[
D_*\per_{dg}(\sA^{\#}_{Y}(B),   ud+v\dc, \delta )
\]
of dg categories on $Y  $, and on setting $u= \frac{\lambda+1}{2}$, $v=\frac{\lambda -1}{2i}$, this becomes
\[
D_*\per_{dg}(\sA^{\#}_{Y}(B),   \lambda \pd +\bar{\pd}, \delta )
\]
 
Since $Y$ has a  basis of open neighbourhoods biholomorphic to $\bD^n$, it follows from Proposition \ref{twistordolbeaultprop} that the dg functor
\[
D_* \per_{dg}(\Omega^{\#}_Y(B), \lambda \pd, \delta) \to D_*\per_{dg}(\sA^{\#}_{Y}( B),   \lambda \pd + \bar{\pd},  \delta )))
\]
induces a quasi-equivalence on hypersheafification, from which the required equivalence on derived sections follows. Equivariance of the $Z(B_0)^{\by}$-action follows immediately from the observation that $ (\frac{\mu+1}{2}\ten 1) +(\frac{\mu -1}{2i}\ten i)$ acts on $\sA^{p,q}$ as multiplication by $ (\frac{\mu+1}{2} + i\frac{\mu -1}{2i})^p \cdot  (\frac{\mu+1}{2} - i\frac{\mu -1}{2i})^q =\mu^p1^q=\mu^p$.
 \end{proof}

 \subsubsection{The base of the twistor functor}
 
 The space $M_{\mathrm{DH}}$ from \cite[\S 3]{simpsonwgt2} is a complex analytic space over $\bP^1(\Cx)$, equipped with an analytic $\Cx^*$ action (equivariant with respect to the obvious action on $\bP^1(\Cx)$), and an antilinear involution covering the involution $\lambda \mapsto \bar{\lambda}^{-1}$ of $\bP^1(\Cx)$.
 
 The most succinct way to phrase this is that we have a $\Gal(\Cx/\R)$-equivariant morphism  $[M_{DH}/\Cx^*] \to [\bP^1(\Cx)/\Cx^*]$ of analytic stacks, where we think of the target as the analytification of the real Artin stack  $[\bP^1_{\R}/\SO_2]$; in particular, $\Cx^*$ is here regarded as the group of complex points of the real group  scheme $\SO_2= \Spec \R[a,b]/(a^2+b^2=1)$ rather than of $\bG_{m,\R}$, and the co-ordinate $t \in \bP^1_{\R}$ corresponds to the co-ordinate $\frac{t+i}{t-i} \in \bP^1_{\Cx}$.

 On the big \'etale site $[\bP^1_{\R}/\SO_2]^{\hol}_{\Et}$ 
 of real Stein spaces 
 $U$ 
 over 
 $[\bP^1_{\R}/\SO_2]^{\hol}$, i.e. anti-holomorphically $\Gal(\Cx/\R)$-equivariant Stein 
 spaces
 $U(\Cx)$ 
 over the analytic stack $[\bP^1(\Cx)/\Cx^*]$, we have a sheaf $\sO$ of commutative nuclear Fr\'echet algebras given by $\sO(U):= \sO^{\hol}(U(\Cx))^{\Gal(\Cx/\R)}$, i.e. holomorphic  functions $f \co U \to \Cx$ with $\overline{f(u)}=f(\bar{u})$. Our twistor functors will send hypersheaves $\sB$ of LDMC dg $\sO$-algebras over $[\bP^1_{\R}/\SO_2]^{\hol}_{\Et}$ to hypersheaves of dg categories.

\begin{remark}\label{cfmhsrmk}
 As  in \cite[\S 2]{mhs2}, the action of the Deligne torus $S=\Pi_{\Cx/\R}\bG_m$ on affine space $\bA^2_{\R}$ comes from the identification of $\bA^2_{\R}$ with the Weil restriction of scalars $\Pi_{\Cx/\R}\bA^1$. In other words, for a commutative $\Cx$-algebra $B$, the action of 
$S(B)$ on $\bA^2(B)$ is just given by multiplication in $B\ten \Cx$, since $(a+ib)(u+iv)= (au-bv) +i(av+bu)$.
 
In the notation of \cite[\S 2]{mhs2}, the real affine scheme  $\Pi_{\Cx/\R}\bA^1\cong \bA^2_{\R}$ is denoted by $C$, and Hodge filtrations on complexifications of real objects correspond to families over the quotient stack $[C^*/S]$, where $C^*$ is the quasi-affine scheme $C\setminus \{0\}$, while Hodge filtrations on complex objects correspond to families over $[\bA^1_{\Cx}/\bG_{m,\Cx}]$. In the analytic setting, there is a similar interpretation, but the filtrations will be dense rather than exhaustive.

On commutative $\Cx$-algebras, the map $\lambda \mapsto (\frac{\lambda+1}{2}, \frac{\lambda -1}{2i})$  from Corollary \ref{twistorpoincarecor}  is a morphism $\bA^1_{\Cx} \to C^*$, and equivariance extends this to  a finite \'etale cover $[\bA^1_{\Cx}/\bG_m] \to [C^*/S]$. In the notation of \cite[\S 2]{mhs2}, this is the $S$-equivariant quotient of the   finite \'etale cover $\widetilde{C^*} \to C^*$. Also note that, pulling out the action of $\bG_m \subset S$, we have an isomorphism $[C^*/S] \cong [\bP^1_{\R}/\SO_2]$, where $\SO_2$ is the circle group $S/\bG_m$, given by
\[
\SO_2(B)= \{(a +i b) \in (B\ten \Cx)^{\by} ~:~ (a+ib)^{-1} = a-ib\}.
\]
Families over $\bP^1_{\R}$ correspond to twistor structures, with $\SO_2$-equivariance upgrading these to Hodge structures.
\end{remark}

 \begin{lemma}\label{twistorcoordlemma}
 For a real Stein space $U$, giving a morphism $f \co U \to [\bP^1_{\R}/\SO_2]^{\hol}$ is equivalent to giving the following data:
 \begin{enumerate}
  \item a rank $1$ module $L$ over $\sO(U)\ten_{\R}\Cx$,
  \item an element $w \in L$ for which there exists some $z$ in the dual module  $L^*:=\Hom_{\sO(U)\ten_{\R}\Cx}(L,\sO(U)\ten_{\R}\Cx)$ (not considered part of the data) with $wz \in 1 + i\sO(U) \subset \sO(U)\ten_{\R}\Cx$.
 \end{enumerate}
\end{lemma}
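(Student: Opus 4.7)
\emph{Proof proposal.} The plan is to exploit the isomorphism $[\bP^1_{\R}/\SO_2] \cong [C^*/S]$ of real Artin stacks from Remark \ref{cfmhsrmk}, where $C = \Pi_{\Cx/\R}\bA^1$ and $S = \Pi_{\Cx/\R}\bG_m$. Unwinding the quotient, a holomorphic map $f \co U \to [C^*/S]^{\hol}$ consists of an $S^{\hol}$-torsor $P$ on $U$ together with an $S^{\hol}$-equivariant holomorphic map $P \to C^*$, and I would translate each of these pieces into the module-theoretic data of the statement.

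By Weil restriction, $S^{\hol}$-torsors on the real Stein space $U$ correspond to $\bG_m^{\hol}$-torsors on $U(\Cx)$, i.e.\ holomorphic line bundles on $U(\Cx)$. Applying Cartan's Theorems A and B on the Stein space $U(\Cx)$ identifies such line bundles with finitely generated rank-$1$ projective modules over $\sO^{\hol}(U(\Cx)) = \sO(U)\ten_{\R}\Cx$, producing the module $L$ of the statement. Since $C$ is the affine space associated to the defining character of $S$, an $S^{\hol}$-equivariant map $P \to C$ is simply a global section of the associated line bundle, i.e.\ an element $w \in L$.

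It remains to characterise when the map lands in the open substack $C^* \subset C$, which amounts to requiring that $w$ and its Galois conjugate $\bar w \in \bar L$ do not simultaneously vanish at any point of $U(\Cx)$. Pairing with $(w,\bar w)$ gives a coherent sheaf morphism $L^* \oplus \bar L^* \to \sO^{\hol}_{U(\Cx)}$ whose stalkwise surjectivity is equivalent to this non-vanishing condition; by Cartan's Theorem A this is equivalent to surjectivity on global sections. Given $\alpha \in L^*$ and $\beta \in \bar L^*$ with $w\alpha + \bar w \beta = 1$, applying Galois conjugation and averaging yields $z := \alpha + \bar\beta \in L^*$ satisfying $wz + \overline{wz} = 2$, equivalently $wz \in 1 + i\sO(U)$. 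Conversely, given any such $z$, evaluation at a point $u \in U(\Cx)$ gives $w(u)z(u) + \overline{w(u)z(u)} = 2 \neq 0$, forcing at least one of $w(u),\bar w(u)$ to be nonzero.

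The main technical care will lie in rigorously justifying the second paragraph's dictionary: namely, that $S^{\hol}$-torsors on the real analytic site of $U$ correspond to rank-$1$ projective $\sO(U)\ten_{\R}\Cx$-modules, by confirming that Weil restriction and Cartan's Theorems interact compatibly with the anti-holomorphic $\Gal(\Cx/\R)$-structure on $U(\Cx)$, and that the evaluation pairing $L \ten L^* \to \sO(U)\ten_{\R}\Cx$ is Galois-equivariant in the sense used in the averaging step. Once this dictionary is in place, the remaining argument is routine linear algebra over the ring $\sO(U) \ten_{\R}\Cx$.
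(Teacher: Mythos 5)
Your proposal follows essentially the same route as the paper: pass to the $[C^*/S]$ presentation of Remark \ref{cfmhsrmk}, unwind a map to the quotient as an $S$-torsor with equivariant map to $C$ (giving the rank-$1$ module $L$ and $w\in L$), then encode landing in the open substack $C^*$ as surjectivity of the pairing with $(w,\bar w)$ after complexification, producing $z$ by a Galois-averaging trick. The only cosmetic differences are that you phrase the non-degeneracy check pointwise via Cartan's theorem (really Theorem B rather than A for surjectivity on global sections), whereas the paper works directly with the ideal $(w,\bar w)$ in $\bigoplus_{p,q}L^{\ten p}\ten\bar L^{\ten q}$; and your formula $z=\alpha+\bar\beta$ is the correct normalisation yielding $wz+\overline{wz}=2$ (the paper's ``$z:=(a+\bar b)/2$'' appears to carry a spurious factor of $\tfrac12$).
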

\begin{proof}

For the Weil restriction of scalars $[C/S]\cong \Pi_{\Cx/\R}[\bA^1/\bG_m]$,  a morphism $f' \co U \to [C/S]$ corresponds to giving an algebraic $\bG_m$-torsor $P$ over $\sO(U)\ten \Cx$ together with a $\bG_m$-equivariant morphism $P \to \bA^1$. Under the correspondence between $\bG_m$-torsors and line bundles, this amounts to giving a  rank $1$ module $L$ over $\sO(U)\ten_{\R}\Cx$ and an element $w \in L$.

Under the isomorphism $[\bP^1_{\R}/\SO_2] \cong [C^*/S]$ of Remark \ref{cfmhsrmk}, the morphism $f$ thus  corresponds to giving such data $(w,L)$ together with a non-degeneracy condition. The latter is most easily understood by considering the isomorphism $[C^*/S]\by_{\Spec \R} \Spec \Cx \cong [\bA^2_{\Cx}/\bG_{m,\Cx}^2]$, with $(L,w)$ giving rise to the $\sO(U)\ten_{\R}\Cx$-valued point $(L,\bar{L},w, \bar{w})$ of $[\bA^2_{\Cx}/\bG_{m,\Cx}^2]$. The condition for this to lie in $[(\bA^2_{\Cx}\setminus \{0\})/\bG_{m,\Cx}^2]$ is that $1$ lies in the ideal $(w,\bar{w})$ in the ring $\bigoplus_{p,q \in \Z} L^{\ten p}\ten \bar{L}^{\ten q}$ of functions of the $\bG_m^2$-torsor corresponding to $(L,\bar{L})$, where all tensor products are taken over $\sO(U)\ten_{\R}\Cx$. This amounts to saying that there exist $a \in L^*$ and $b \in \bar{L}^*$ with $aw+b\bar{w}=1\in \sO(U)\ten_{\R}\Cx$. Setting $z:= (a+\bar{b})/2$ gives data of the form above, and conversely given $z$ we can set $a=z/2$, $b=\bar{z}/2$. 
\end{proof}
 
 The following lemma allows us to interpret objects over $[\bP^1_{\R}/\SO_2]$ as real structures equipped with filtrations on their complexifications.
 \begin{lemma}\label{twistorpushoutlemma}
  The commutative diagram
  \[
   \begin{CD}
    [S_{\Cx}/S]^{\hol}_{\Et} @>>> [\widetilde{C^*}/S]^{\hol}_{\Et} @. \quad\quad \text{i.e.}\quad\quad @. (\Spec \Cx)^{\hol}_{\Et} @>>> [\bA^1_{\Cx}/\bG_m]^{\hol}_{\Et} \\
    @VVV @VVV @. @VVV @VVV \\
    [S/S]^{\hol}_{\Et} @>>> [C^*/S]^{\hol}_{\Et}, @. @.                                (\Spec \R)^{\hol}_{\Et} @>>> [\bP^1_{\R}/\SO_2]^{\hol}_{\Et}
   \end{CD}
  \]
is a pushout of \'etale sites, in the sense that it induces homotopy pullbacks on $\infty$-categories of hypersheaves.
   \end{lemma}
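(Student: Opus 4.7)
The strategy is to recognise the square as a recollement-style étale cover of $[C^*/S] \cong [\bP^1_{\R}/\SO_2]$ by the complementary open stratum and an étale neighbourhood of the closed stratum, and then to invoke effective $\infty$-descent.

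\medskip
\noindent\textbf{Step 1 (Geometric identification).} First I would analyse the $\SO_2$-action on $\bP^1_{\R}$: after base change to $\Cx$ this becomes the standard $\bG_m$-action on $\bP^1_{\Cx}$ with fixed points $0$ and $\infty$, swapped by the Galois involution $\lambda \mapsto \bar{\lambda}^{-1}$. Thus $[\bP^1_{\R}/\SO_2]$ admits a stratification $\Spec\R \sqcup B(\SO_2)^{\hol}$, the open stratum $\Spec\R$ arising because $\SO_2$ acts freely and transitively on the complement of its fixed locus (which over $\R$ is itself a copy of $\SO_2$). Under the isomorphisms of Remark \ref{cfmhsrmk}, the map $[S/S]\to [C^*/S]$ becomes the open immersion $\Spec\R \hookrightarrow [\bP^1_{\R}/\SO_2]$, while $[\widetilde{C^*}/S]\to [C^*/S]$ becomes the composition
\[
[\bA^1_{\Cx}/\bG_m] \hookrightarrow [\bP^1_{\Cx}/\bG_m] \to [\bP^1_{\R}/\SO_2],
\]
i.e.\ an open immersion followed by the Galois double cover.

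\medskip
\noindent\textbf{Step 2 (The square is $2$-Cartesian).} Since $\Spec\R$ is the complement of the closed stratum $B\SO_2^{\hol}$, the fibre product $\Spec\R \times_{[\bP^1_{\R}/\SO_2]} [\bA^1_{\Cx}/\bG_m]$ is the preimage inside $[\bA^1_{\Cx}/\bG_m]$ of that open stratum, i.e.\ the complement of the origin-stratum $B\bG_m^{\hol}$. That complement is $[\bG_m/\bG_m] = \Spec\Cx$, matching $[S_\Cx/S]$.

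\medskip
\noindent\textbf{Step 3 (Effective étale cover).} Both maps are étale (one an open immersion; the other a composition of an open immersion with a finite étale Galois cover), and together they are surjective: $\Spec\R$ covers the open stratum, and $[\bA^1_{\Cx}/\bG_m]$ contains $B\bG_m^{\hol}$ as an étale double cover of the closed stratum $B\SO_2^{\hol}$. Hence $U := \Spec\R \sqcup [\bA^1_{\Cx}/\bG_m] \to X := [\bP^1_{\R}/\SO_2]$ is a surjective étale morphism, so by standard $\infty$-descent for hypersheaves on Artin stacks, pullback induces an equivalence $\Shv^{\wedge}(X) \simeq \mathrm{Tot}\,\Shv^{\wedge}(U^{\bullet})$ for the Čech nerve $U^{\bullet}$.

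\medskip
\noindent\textbf{Step 4 (Collapse of the Čech nerve).} The remaining task, and the main obstacle, is to show that this totalisation is computed by the $2$-truncated diagram given by our square. For this I would decompose $U \times_X U$ into its four bidegree components:
\begin{itemize}
\item $\Spec\R \times_X \Spec\R = \Spec\R$, since $\Spec\R \hookrightarrow X$ is a monomorphism (open immersion);
\item $\Spec\R \times_X [\bA^1_{\Cx}/\bG_m] = \Spec\Cx$ (and its transpose) by Step 2;
\item $[\bA^1_{\Cx}/\bG_m] \times_X [\bA^1_{\Cx}/\bG_m]$ which, away from the closed stratum, is $\Spec\Cx \times_{\Spec\R} \Spec\Cx \simeq \Spec\Cx \sqcup \Spec\Cx$ (Galois), and over $B\bG_m^{\hol}$ is $B\bG_m^{\hol}\sqcup B\bG_m^{\hol}$; overall it decomposes as $[\bA^1_{\Cx}/\bG_m]\sqcup [\bA^1_{\Cx}/\bG_m]$, the diagonal summand and its Galois translate.
\end{itemize}
Thus all the descent data on $U \times_X U$ and on the higher iterated fibre products is determined by the $\Gal(\Cx/\R)$-action, which is already encoded in the intersection $\Spec\Cx = [S_\Cx/S]$ and the maps out of it. Concretely, a hypersheaf descent datum on $U \to X$ amounts to giving hypersheaves on $\Spec\R$ and on $[\bA^1_{\Cx}/\bG_m]$ together with an isomorphism of their restrictions to $\Spec\Cx$ compatible with $\Gal(\Cx/\R)$-equivariance on the $[\bA^1_{\Cx}/\bG_m]$ side; the higher simplices of $U^{\bullet}$ impose only cocycle conditions that are automatic from the Galois group having order two and from $\Spec\R \hookrightarrow X$ being a monomorphism. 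This gives the desired homotopy pullback description of $\Shv^{\wedge}(X)$, completing the proof.
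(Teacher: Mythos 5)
The paper's proof takes a more economical route that you should be aware of: it \emph{first} passes to complex points, forgetting the $\Gal(\Cx/\R)$-action, and observes that the complexified diagram is just the Mayer--Vietoris square for the cover of $[(\Cx^2\setminus\{0\})/\Cx^{*2}]$ by the two opens $\Cx^*\by \Cx$ and $\Cx\by\Cx^*$, with intersection $\Cx^{*2}$. Here $[\widetilde{C^*}/S]$ becomes the \emph{disjoint union} $[((\Cx^*\by\Cx)\sqcup(\Cx\by\Cx^*))/\Cx^{*2}]$ and the map to $[C^*/S]_{\Cx}$ is a cover by two open substacks, so the pushout statement is classical open descent. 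The equivariant statement then follows because taking homotopy $\Gal(\Cx/\R)$-fixed points commutes with homotopy limits.

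Your approach via \v{C}ech descent for the non-monomorphic map $[\bA^1_{\Cx}/\bG_m]\to[\bP^1_{\R}/\SO_2]$ is plausible in spirit but has two issues. First, a computational error in Step 4: $[\bA^1_{\Cx}/\bG_m]\by_X[\bA^1_{\Cx}/\bG_m]$ is \emph{not} $[\bA^1_{\Cx}/\bG_m]\sqcup[\bA^1_{\Cx}/\bG_m]$. Over $\Cx$ the self-product is $((\Cx^*\by\Cx)\sqcup(\Cx\by\Cx^*))\by_{\Cx^2\setminus\{0\}}((\Cx^*\by\Cx)\sqcup(\Cx\by\Cx^*))$, whose four summands are $\Cx^*\by\Cx$, $\Cx\by\Cx^*$, and two copies of $\Cx^{*2}$; taking the Galois quotient gives $[\bA^1_{\Cx}/\bG_m]\sqcup[S_{\Cx}/S]$, i.e.\ $[\bA^1_{\Cx}/\bG_m]\sqcup\Spec\Cx$, with the off-diagonal piece being $\Spec\Cx$ rather than another copy of the cover. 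Second, and more substantively, the claim that the \v{C}ech totalization ``collapses'' to the $2$-term diagram of the square---your assertion that ``the higher simplices impose only cocycle conditions that are automatic''---is precisely the content of the lemma and cannot be waved away; you would need to argue it by comparing against the (2-term) open-cover descent diagram, which after base change to $\Cx$ is exactly what the paper does. If you repair Step 4 along these lines the argument converges with the paper's, but the paper's strategy of reducing to the complexification and invoking Mayer--Vietoris for two opens avoids the \v{C}ech bookkeeping altogether.
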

\begin{proof}
 It suffices to show that we have a pushout diagram on passing to the associated complex analytic stacks, ignoring the $\Gal(\Cx/\R)$-action. That diagram is isomorphic to
 \[
  \begin{CD}
 [(\Cx^{*2} \sqcup \Cx^{*2})/\Cx^{*2}]_{\Et} @>>>  [((\Cx^*\by \Cx) \sqcup (\Cx \by \Cx^*))/\Cx^{*2}]_{\Et}\\
   @VVV @VVV \\
    [\Cx^{*2}/\Cx^{*2}]_{\Et} @>>> [(\Cx^2 \setminus \{0\})/\Cx^{*2}]_{\Et},
    \end{CD}
%
 \]
and the result follows immediately from the observation that $\Cx^*\by \Cx$ and  $\Cx \by \Cx^*$ form an open  cover of $\Cx^2 \setminus \{0\}$ with intersection $\Cx^{*2}$. 
\end{proof}

\begin{remark}
 The structures we construct here generalise Hodge filtrations and twistor analogues to non-linear objects, but beware that these alone are insufficient to give mixed Hodge or mixed twistor structures. This is not an oversight, but occurs because weight filtrations essentially capture nilpotent information, measuring how far local systems are from being semisimple, so only exist in infinitesimal neighbourhoods of most points on moduli spaces.
 
 In more detail, \cite{mhs2} constructed a canonical mixed twistor structure on the pro-algebraic homotopy types of  compact K\"ahler manifolds and of smooth complex quasi-projective varieties. In the compact case, the weight filtration just corresponds to good truncation on cohomology of semisimple local systems. In keeping with the philosophy of \cite{Poids}, this weight filtration agrees with the filtration by eigenvalues of Frobenius for \'etale pro-algebraic homotopy types of varieties in finite characteristic, as studied in \cite{weiln}. 
 
 If we did have some sort of weight filtration on our analytic moduli functor $\sM$, we would expect it to be a family over the analytification of $[\bA^1/\bG_m]$ with generic fibre $\sM$. The special fibre would then be a family over $B\bG_m^{\an}$, so take the form $[Z/\bG_m^{\an}]$ for some $\bG_m$-equivariant $Z$. The results of \cite{mhs2} would then determine the infinitesimal structure of $Z$, but they imply that the $\bG_m$-invariant locus of $Z$ contains all the semisimple local systems, yet has trivial tangent space. Since the semisimple local systems are dense in all local systems, this indicates that $Z$ cannot exist as a reasonably behaved object.
 
 It is worth noting that the moduli functor of semisimple local systems is continuous rather than analytic in nature, as in \cite{sim2}. Instead of functors on dg Banach algebras, this entailed looking at functors on $C^*$-algebras in \cite{mhsDS}, involving continuous functional calculus rather than entire functional calculus. 
\end{remark}
 
 \subsubsection{The twistor functors}
 
 We now construct our twistor functors, which will come in three flavours: a $\C^{\infty}$-functor defined directly, an equivalent analytic functor defined by Deligne gluing, and a full subfunctor of algebraic origin, also  defined by Deligne gluing.
  
 We first set up the categories on which our constructions will be defined. 
 \begin{definition}
For an anti-holomorphically $\Gal(\Cx/\R)$-equivariant complex analytic Artin stack $\fY= (\fY(\Cx), \sigma)$ and $\C$ any of the categories 
$dg_+\Alg$,
$dg_+\FEFC$, 
$dg_+\hat{\Tc}\Alg$,  
$dg_+\cF r\Alg$, 
or
$dg_+\cN\cF r\Alg$, 
define the category $\Shf_{\Et}\C(\sO_{\fY})$ of $\C$-valued \'etale hypersheaves as follows.

Objects of $\Shf_{\Et}\C(\sO_{\fY})$ are \'etale hypersheaves $\sB$ in real chain complexes on the big \'etale site of real Stein spaces $U=(U(\Cx),\sigma)$ over $(\fY(\Cx), \sigma)$, with the following additional structure, where we write $\sO(U):= \{f \in \sO^{\hol}(U(\Cx))~:~ \overline{f(u)}= f(\sigma(u))\}$.
\begin{enumerate}
 \item For each $U$, we have $\sB(U) \in \C(\sO(U))$.
 \item For each morphism $f \co V \to  U$, the corresponding structure morphism  $f^{\sharp} \co \sB(U) \to \sB(V)$ lies in $\C(\sO(U))$, via the natural forgetful functor $f_* \co \C(\sO(V)) \to \C(\sO(U))$
induced by the  morphism $f^{\sharp} \co \sO(U) \to  \sO(U)$.
 \end{enumerate}
Note that the categories $\C(\sO(U))$ make sense in all cases, because  $\sO(U)$ is a commutative nuclear Fr\'echet LMC $\R$-algebra.
  
  \end{definition}

\begin{definition}\label{TwCinftydef}
For any complex manifold $Y$, define the $\C^{\infty}$ twistor functor from $\Shf_{\Et} dg_+\hat{\Tc}\Alg(\sO_{[\bP^1_{\R}/\SO_2]^{\hol}})$ to the $\infty$-category of hypersheaves of dg categories on $[\bP^1_{\R}/\SO_2]^{\hol}_{\Et}$ as follows. 

By Lemma \ref{twistorcoordlemma}, objects $U \in [\bP^1_{\R}/\SO_2]^{\hol}_{\Et}$ correspond to triples $(\sO(U),L,w)$ with $\sO(U)$ a real Stein algebra, $L$ an $\sO(U)\ten_{\R}\Cx$-module of rank $1$ and $w \in L$ satisfying a weak non-degeneracy condition. For any $\sB \in \Shf_{\Et}dg_+\hat{\Tc}\Alg (\sO_{[\bP^1_{\R}/\SO_2]^{\hol}})$, we can then form the sheaf  
\[
 \sA_{\Tw,Y}^{\bt}(\sB(U),L,w):= ((\bigoplus_{p,q} L^{\ten p}\ten \bar{L}^{\ten q} \ten \sA^{p,q}_Y(\sB(U)\ten_{\R}\Cx)^{[-p-q]})^{\Gal(\Cx/\R)},w\pd+\bar{w}\bar{\pd}, \delta)
\]
of
stacky DGAAs on $Y$ as in Remark \ref{LBrmktwistor}, where all tensor products are taken over $\sO(U)\ten_{\R}\Cx$.  

We then let $\per_{dg}^{Y,\Tw,\C^{\infty}}(\sB)$ be the hypersheafification of the dg category-valued presheaf
\[
 (U,L,w) \mapsto \oR\Gamma(Y,  D_*\per_{dg}(\sA_{\Tw,Y}^{\bt}(\sB(U),L,w)))
\]
on 
$[\bP^1_{\R}/\SO_2]^{\hol}_{\Et}$.
 \end{definition}
 Thus Lemma \ref{hyperconnlemma}  implies that an object of $\per_{dg}^{Y,\Tw,\C^{\infty}}(\sB)(U,L,w)$ is locally (on 
$[\bP^1_{\R}/\SO_2]^{\hol}_{\Et}$) given by a perfect $\C^{\infty}_Y(\sB(U))$-complex  with flat  $(w\pd +\bar{w}\bar{\pd})$-hyperconnection.

 Before introducing the analytic analogue, we introduce some notation to handle Deligne gluing over a real base.
\begin{definition}
 Given a real Stein space $U$ over $[\bP^1_{\R}/\SO_2]^{\hol}$, define an \'etale cover  of $U$ by $\tilde{U}:= [\bA^1_{\Cx}/\bG_m]^{\hol}\by_{[\bP^1_{\R}/\SO_2]^{\hol}}U$, i.e. $\tilde{U}=U\by_{[C^*/S]^{\hol}}[\widetilde{C^*}/S]^{\hol}$, 
in the notation of Remark \ref{cfmhsrmk}. 

Define the open subspace $U^o$ of $U$ to be the pre-image of the generic point $\{1\} \in [\bP^1_{\R}/\SO_2]^{\hol}$, or equivalently $U^o=U\by_{[C^*/S]^{\hol}}[S/S]^{\hol}$. Let $U^o_{\Cx}$ be its complex form.  
\end{definition}
Note that Lemma \ref{twistorpushoutlemma} implies that $U$ is the \'etale pushout of the diagram $\tilde{U} \la U^o_{\Cx} \to U^o$.

Under the characterisation of Lemma \ref{twistorcoordlemma}, the morphism $U \to [\bP^1_{\R}/\SO_2]^{\hol}$ corresponds to the data of a line bundle $L$ over $\sO(U)_{\Cx}:= \sO(U)\ten_{\R}\Cx$ with a section $w$ satisfying a constraint, and then $\sO(\tilde{U})$ is the LMC completion of the $\sO(U)$-algebra 
$(\bigoplus_{q \in \Z} \bar{L}^{\ten q})/(\bar{w}-1)$, 
where tensor products are taken over $\sO(U)_{\Cx}$.
Similarly,  $\sO(U^o_{\Cx})$ is the LMC completion of the $\sO(U)$-algebra 
$(\bigoplus_{p,q \in \Z} {L}^{\ten p}\ten \bar{L}^{\ten q})/(w-1,\bar{w}-1)$, 
and $\sO(U^o)= \sO(U^o_{\Cx})^{\Gal(\Cx/\R)}$.
 
\begin{definition}\label{Twandef}
 For any complex manifold $Y$, define the analytic twistor functor from $\Shf_{\Et} dg_+\hat{\Tc}\Alg(\sO_{[\bP^1_{\R}/\SO_2]^{\hol}})$ to the $\infty$-category of hypersheaves of dg categories on $[\bP^1_{\R}/\SO_2]^{\hol}_{\Et}$ as follows. 

 For any $\sB \in \Shf_{\Et}dg_+\hat{\Tc}\Alg (\sO_{[\bP^1_{\R}/\SO_2]^{\hol}})$, we can  form the sheaf  
\begin{align*}
 \Omega_{\Hod,Y}^{\bt}(\sB(\tilde{U}),L,w):= (\bigoplus_{p} L^{\ten p} \ten \Omega^{p}_Y(\sB(\tilde{U}))^{[-p]},w\pd, \delta)
\end{align*}
of
stacky DGAAs on $Y$, where all tensor products are taken over $\sO(U)\ten_{\R}\Cx$, then set 
\[
\per_{dg}^{Y,\Hod,\an}(\sB(\tilde{U}),L,w):= \oR\Gamma(Y,  D_*\per_{dg}(\Omega_{\Hod,Y}^{\bt}(\sB(\tilde{U}),L,w)))
\]
as in Remark \ref{LBrmkHod}.

We then let $\per_{dg}^{Y,\Tw,\an}(\sB)$ be the hypersheafification of the dg category-valued presheaf on 
$[\bP^1_{\R}/\SO_2]^{\hol}_{\Et}$ sending $(U,L,w)$ to the homotopy fibre product of the diagram
\[
\per_{dg}^{Y,\Hod,\an}(\sB(\tilde{U}),L,w) \to \per_{dg}^{Y,\dR,\an}(\sB(U^o_{\Cx})) \la  \per_{dg}^{Y,\oB}(\sB(U^o)),
\]
where the first map is induced by the natural  morphism $\Omega_{\Hod,Y}^{\bt}(\sB(\tilde{U}),L,w) \to \Omega^{\bt}_Y(\sB(U^o_{\Cx}))$, since $w=1$ on $U^o_{\Cx}$, and the second map is induced by the natural composite $ \sB(U^o) \to \sB(U^o_{\Cx}) \to \Omega^{\bt}_Y(\sB(U^o_{\Cx}))$.
\end{definition}
Thus Lemma \ref{hyperconnlemma} and Corollary \ref{poincarecor} imply that an object of $\per_{dg}^{Y,\Tw,\an}(\sB)(U,L,w)$ is locally (on 
$[\bP^1_{\R}/\SO_2]^{\hol}_{\Et}$) given by a perfect $\sO_Y(\sB(\tilde{U}))$-complex $\sE$ with flat $w\pd$-hyperconnection $\nabla$, a local system $\vv$ of perfect $\sB(U^o)$-modules on $Y$, and a quasi-isomorphism between the de Rham complex of $\sE\hten_{\sB(\tilde{U})}\sB(U^o_{\Cx})$ and  $\vv\ten_{ \sB(U^o)}\sB(U^o_{\Cx})$.

We also have an analogue for data of algebraic origin, defined on FEFC-DGAs rather than LDMC topological DGAAs.
\begin{definition}\label{alganTwdef}
 For any smooth complex algebraic variety $X$, define the algebro-analytic twistor functor from $\Shf_{\Et} dg_+\FEFC\Alg(\sO_{[\bP^1_{\R}/\SO_2]^{\hol}})$ to the $\infty$-category of hypersheaves of dg categories on $[\bP^1_{\R}/\SO_2]^{\hol}_{\Et}$ as follows. 

 For any $\sB \in \Shf_{\Et}dg_+\Alg(\sO_{[\bP^1_{\R}/\SO_2]^{\hol}})$, we can  form the sheaf  
\begin{align*}
 \Omega_{\Hod,X}^{\bt,\alg}(\sB(\tilde{U}),L,w):= (\bigoplus_{p} L^{\ten p} \ten \Omega^{p,\alg}_X\ten_{\Cx}\sB(\tilde{U})^{[-p]},w\pd, \delta)
\end{align*}
of
stacky DGAAs on $X$, where all unspecified tensor products are taken over $\sO(U)\ten_{\R}\Cx$, then set 
\[
\per_{dg}^{X,\Hod,\alg}(\sB(\tilde{U}),L,w):= \oR\Gamma(X,  D_*\per_{dg}(\Omega_{\Hod,X}^{\bt,\alg}(\sB(\tilde{U}),L,w)))
\]
similarly to Remark \ref{LBrmkHod} (adapted to algebraic forms).

For $\sB \in \Shf_{\Et}dg_+\FEFC(\sO_{[\bP^1_{\R}/\SO_2]^{\hol}})$,
we then let $\per_{dg}^{X,\Tw,\alg-\an}(\sB)$ be the hypersheafification of the dg category-valued presheaf on 
$[\bP^1_{\R}/\SO_2]^{\hol}_{\Et}$ sending $(U,L,w)$ to the homotopy fibre product of the diagram
\[
\per_{dg}^{X,\Hod,\alg}(\sB(\tilde{U}),L,w) \to \per_{dg}^{X(\Cx)^{\an},\oB}(\sB(U^o_{\Cx}))  \la  \per_{dg}^{X(\Cx)^{\an},\oB}(\sB(U^o)),
\]
where the first map is induced by combining the Riemann--Hilbert map of Corollary \ref{RHcoralg} with the natural  morphism $\Omega_{\Hod,X}^{\bt,\alg}(\sB(\tilde{U}),L,w) \to \Omega^{\bt,\an}_X\ten_{\Cx}\sB(U^o_{\Cx})$ given by multiplying $\Omega^p$ by $w^{-p}$.
\end{definition}
Thus Lemma \ref{hyperconnlemma}  implies that an object of $\per_{dg}^{X,\Tw,\alg-\an}(\sB)(U,L,w)$ is locally (on 
$[\bP^1_{\R}/\SO_2]^{\hol}_{\Et}$) given by a perfect $\sO_X\ten_{\Cx}\sB(\tilde{U})$-complex $\sE$ with flat $w\pd$-hyperconnection $\nabla$, a local system $\vv$ of perfect $\sB(U^o)$-modules on $X(\Cx)_{\an}$, and a quasi-isomorphism between the analytic de Rham complex of $\sE^{\hol}\hten_{\sB(\tilde{U})}\sB(U^o_{\Cx})$ and  $\vv\ten_{ \sB(U^o)}\sB(U^o_{\Cx})$. 

\begin{theorem}\label{twistorthm}\
\begin{enumerate}[itemsep=5pt, parsep=0pt]
 \item\label{twistorthm1} For any complex manifold $Y$, the $\C^{\infty}$ and analytic twistor functors 
 $\per_{dg}^{Y,\Tw,\C^{\infty}}$ and $\per_{dg}^{Y,\Tw,\an}$ are quasi-equivalent.

\item\label{twistorthm2} On restriction to  $\Shf_{\Et} dg_+\cF r\Alg(\sO_{[\bP^1_{\R}/\SO_2]^{\hol}})$, the  $\C^{\infty}$ and analytic twistor functors become w.e.-preserving and homogeneous.

The simplicial set-valued functor $\Perf^{Y,\Tw,\C^{\infty}}$ given by the nerve of the core of  $\per_{dg}^{Y,\Tw,\C^{\infty}}$ then has perfect cotangent complex at a $\sB(U)$-valued point $[(\sE,\nabla)]$ if and only if the complex 
   \[
    \oR\Gamma(Y,  (\oR\sHom_{\C^{\infty}_Y(\sB(U))} (\sE, \sE\hten_{\C^{\infty}_Y}\sA^{\#}_Y(\sB(U)), \delta \pm \nabla^{\ad}))
   \]
is perfect as a $\sB(U)^{\hat{e}}$-module.

On restriction to $\Shf_{\Et} dg_+\cN\cF r\Alg(\sO_{[\bP^1_{\R}/\SO_2]^{\hol}})$, the inclusion map from the  full dg subcategory of points with perfect cotangent complexes is formally \'etale, giving a homogeneous subfunctor with the same cotangent complexes.

\item\label{twistorthm3} For any smooth complex algebraic variety $X$, the algebro-analytic twistor functor $\per_{dg}^{X,\Tw,\alg-\an}$ 
(on hypersheaves of FEFC-DGAs)
is w.e.-preserving and homogeneous. When $X$ is projective, the 
simplicial set-valued functor $\Perf^{X,\Tw, \alg-\an}$ 
given by the nerve of its core has perfect cotangent complexes at all points.

\item\label{twistorthm4} When $X$ is projective, the natural transformation $\per_{dg}^{X,\Tw,\alg-\an} \to \per_{dg}^{X(\Cx)_{\an},\Tw,\an}$ of functors on $\Shf_{\Et} dg_+\hat{\Tc}\Alg(\sO_{[\bP^1_{\R}/\SO_2]^{\hol}})$ is fully faithful. Its restriction to $\Shf_{\Et} dg_+\cF r\Alg(\sO_{[\bP^1_{\R}/\SO_2]^{\hol}})$ is formally \'etale.
 \end{enumerate}
\end{theorem}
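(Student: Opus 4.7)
The plan is to establish the four parts in turn, leveraging the comparison and representability results already developed. Throughout, I will work locally on the site $[\bP^1_{\R}/\SO_2]^{\hol}_{\Et}$, where Lemma \ref{twistorcoordlemma} identifies objects with triples $(\sO(U),L,w)$, and exploit the pushout description of Lemma \ref{twistorpushoutlemma} that realises $U$ as the \'etale pushout of $\tilde{U} \la U^o_{\Cx} \to U^o$.

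For part (\ref{twistorthm1}), I will compare the two functors presheaf-locally on $[\bP^1_{\R}/\SO_2]^{\hol}_{\Et}$ before sheafifying. Over $\tilde{U}$, we can trivialise $L$ using the section $w$, identifying $\Omega_{\Hod,Y}^{\bt}(\sB(\tilde{U}),L,w)$ with the Hodge complex parametrised by (a trivialisation of) $w$. The $\bG_m$-equivariant form of Corollary \ref{twistorpoincarecor} then identifies $\per_{dg}^{Y,\Hod,\an}(\sB(\tilde{U}),L,w)$ with the restriction of $\per_{dg}^{Y,\Tw,\C^{\infty}}(\sB)$ to $\tilde{U}$, via the substitution sending $\lambda$ to the $S$-orbit generator pulled back from $[C^*/S]$. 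Over $U^o$, the section $w$ is invertible, making $\Omega_{\Hod,Y}^{\bt}$ equivalent to the honest de Rham complex, while $\sA_{\Tw,Y}^{\bt}$ collapses to the $\C^{\infty}$ de Rham complex, so Corollary \ref{poincarecor} identifies the Betti and $\C^{\infty}$ descriptions on the overlap. Checking that these identifications agree on $U^o_{\Cx}$ yields a natural quasi-equivalence after hypersheafification.

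For part (\ref{twistorthm2}), w.e.-preservation and homogeneity of the $\C^{\infty}$ twistor functor (hence of the analytic one by part (\ref{twistorthm1})) follow by applying Lemma \ref{CXlemma2} to the nuclear Fr\'echet sheaves $\sA^{p,q}_Y$, together with the fact that homotopy limits over the site preserve both properties. The cotangent complex description is then obtained as in Corollary \ref{dRcotperfcor}, using Proposition \ref{RGammacotprop} with the countable cover by polydiscs: the local perfectness is immediate from the hyperconnection description given by Lemma \ref{hyperconnlemma}, and the finiteness of the global complex is exactly the stated condition. The formally \'etale statement on $\Shf_{\Et}dg_+\cN\cF r\Alg$ uses the argument of Lemma \ref{opencotlemma}: a square-zero extension $A \to B$ with kernel $I$ induces a filtration with graded pieces governed by $B^{\hat{e}}$-modules, and perfectness of the cotangent at the image controls perfectness at the deformation by Moore-type spectral sequence convergence.

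For part (\ref{twistorthm3}), w.e.-preservation and homogeneity are automatic: the algebraic Hodge functor $\per_{dg}^{X,\Hod,\alg}$ is of algebraic origin and homogeneous, the Betti functor $\per_{dg}^{X(\Cx)^{\an},\oB}$ is a topologically insensitive algebraic construction, and homotopy fibre products preserve both properties. When $X$ is projective, perfectness of the cotangent complex at every point reduces, via the homotopy fibre square, to perfectness on each vertex; the Betti side has perfect cotangent by Corollary \ref{dRcotperfcor} (via Riemann--Hilbert) applied to $X(\Cx)_{\an}$, while for the Hodge side one combines the analogue of Proposition \ref{GAGAprop} for line-bundle-twisted $\Omega^p$ coefficients on the projective variety $X$ with the standard perfectness of $\oR\Gamma(X, \oR\sHom(\sE, \sE\ten \Omega^p))$ as an $A^e$-module for perfect $\sE$ and Noetherian $A$; the completed version then follows by base change. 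Part (\ref{twistorthm4}) follows similarly: fully faithfulness reduces locally to the equivalences provided by Corollary \ref{RHcoralgff} on the Betti/de Rham part and by the Hodge analogue of Proposition \ref{GAGAprop} on the Hodge part, while the formally \'etale property follows by delooping the cotangent comparison exactly as in Corollary \ref{RHcoralget}. The main obstacle I anticipate is the bookkeeping in part (\ref{twistorthm1}) to verify that the local equivalences assemble $\Gal(\Cx/\R)$- and $S$-equivariantly into a natural transformation of hypersheaves before sheafification, since the parametrisations by $(L,w)$ interact non-trivially with the conjugate module $\bar{L}$ appearing in $\sA_{\Tw,Y}^{\bt}$.
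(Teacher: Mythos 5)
Your overall architecture matches the paper's proof closely for all four parts: decompose via the pushout of Lemma \ref{twistorpushoutlemma}, identify the Hodge and pre-twistor functors over $\tilde U$ using the $\bG_m$-equivariant Corollary \ref{twistorpoincarecor}, identify Betti with de Rham over $U^o$ via Corollary \ref{poincarecor}, derive w.e.-preservation and homogeneity from Lemma \ref{CXlemma2}, get the cotangent complex criterion from Proposition \ref{RGammacotprop}, and use Lemma \ref{opencotlemma} for formal \'etaleness.

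However there is a genuine error in part (\ref{twistorthm1}). You claim that over $\tilde U$ one can ``trivialise $L$ using the section $w$.'' This fails: $w$ is a section of $L$ that does vanish, precisely at the Dolbeault point (the zero fibre of $[\bA^1_\Cx/\bG_m]$), so it cannot provide a trivialisation. Over $\tilde U$ it is the conjugate module $\bar L$ that becomes trivial (via $\bar w$), while $L$ remains a nontrivial line bundle equipped with the section $w$, encoding the map $\tilde U \to [\bA^1_\Cx/\bG_m]^{\hol}$. The paper circumvents this by either incorporating $L$-twists directly into the proof of Corollary \ref{twistorpoincarecor}, or by pulling back along the $\Cx^*$-torsor determined by $L$ (which does trivialise $L$) and descending again; since the statement is local on the \'etale site, this suffices. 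Your intended reduction needs to be corrected along these lines.

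Smaller points: in part (\ref{twistorthm3}) your appeal to a Hodge analogue of Proposition \ref{GAGAprop} and to base change for the completed version is unnecessary --- the paper simply observes that the constituent functors (algebraic Hodge over $\tilde U$, Betti over $U^o$, $U^o_\Cx$) already have perfect cotangent complexes and that the gluing maps are formally \'etale by Corollary \ref{RHcoralget}, so the fibre product inherits perfectness directly. In part (\ref{twistorthm4}) the paper reduces the whole comparison to the single Hodge vertex $\per_{dg}^{X,\Hod,\alg}(\sB(\tilde U),L,w) \to \per_{dg}^{X(\Cx)_{\an},\Hod,\an}(\sB(\tilde U),L,w)$ and invokes the argument of Corollary \ref{Dolcoralg} with Proposition \ref{Hodhgsprop} in place of Proposition \ref{Dolhgsprop}, which is cleaner than splitting into a Betti part (where the two functors literally agree) and a Hodge part. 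In part (\ref{twistorthm2}) the relevant precedent is Corollary \ref{twistorcotperfcor} rather than Corollary \ref{dRcotperfcor}, though the argument is parallel.
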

 \begin{proof}
 For (\ref{twistorthm1}),
since $\per_{dg}^{Y,\Tw,\C^{\infty}}(\sB)$ is a hypersheaf by definition, Lemma \ref{twistorpushoutlemma} gives a quasi-equivalence
  from 
$  \per_{dg}^{Y,\Tw,\C^{\infty}}(\sB)(U,L,w)$ to the homotopy pullback of 
  \[
     \per_{dg}^{Y,\Tw,\C^{\infty}}(\sB)(\tilde{U},L,w)\to \per_{dg}^{Y,\Tw,\C^{\infty}}(\sB)(U^o_{\Cx},L,w) \la \per_{dg}^{Y,\Tw,\C^{\infty}}(\sB)(U^o,L,w).
  \]
 Inclusion of analytic forms and of constants in $\C^{\infty}$ forms then give us a map to this from the homotopy fibre product of
 \[
\per_{dg}^{Y,\Hod,\an}(\sB(\tilde{U}),L,w) \to \per_{dg}^{Y,\dR,\an}(\sB(U^o_{\Cx})) \la  \per_{dg}^{Y,\oB}(\sB(U^o)),
  \]
  once we note that multiplying $\sA^{p,q}$ by $w^p\bar{w}^q$ gives an isomorphism $\sA^{\bt}_Y(\sB(U^o)) \cong \sA_{\Tw,Y}^{\bt}(\sB(U^o),L,w)$. We thus have a morphism $\per_{dg}^{Y,\Tw,\an}(\sB) \to \per_{dg}^{Y,\Tw,\C^{\infty}}(\sB)$ in the  $\infty$-category of hypersheaves of dg categories.
  
  To show that this is a quasi-equivalence, it suffices to address the terms in the fibre product separately. 
  The dg functors $ \per_{dg}^{Y,\dR,\an}(\sB(U^o_{\Cx})) \to \per_{dg}^{Y,\dR\C^{\infty}}(\sB(U^o_{\Cx}))$ and $ \per_{dg}^{Y,\oB}(\sB(U^o)) \to \per_{dg}^{Y,\dR,\C^{\infty}}(\sB(U^o))$  are quasi-equivalences by Corollary \ref{poincarecor}. That the dg functor $\per_{dg}^{Y,\Hod,\an}(\sB(\tilde{U}),L,w) \to \per_{dg}^{Y,\mathrm{pTw},\C^{\infty}}(\sB(\tilde{U}),L,w)$ is a quasi-equivalence follows by the argument of Corollary \ref{twistorpoincarecor}; we can either adapt the proof by incorporating twists by powers of $L$, or invoke the corollary directly, noting that   since we only need the statement locally, it suffices to replace $\tilde{U}$ with its $\Cx^*$-torsor determined by $L$, where the line bundle becomes trivial.
  
 For (\ref{twistorthm2}), that these functors are w.e.-preserving and homogeneous on Fr\'echet input follows from Lemma \ref{CXlemma1} as in Examples \ref{CXex2}. The condition for the cotangent complex to be perfect follows from Proposition \ref{RGammacotprop},  exactly as in Corollary \ref{twistorcotperfcor}. That inclusion of points with perfect cotangent complexes is a formally \'etale map then follows from Lemma \ref{opencotlemma}.
  
For (\ref{twistorthm3}), observe that the algebro-analytic twistor functor is defined as a homotopy limit of w.e.-preserving homogeneous functors, so inherits those properties. When $X$ is projective, the nerves of the cores of those constituent functors have perfect cotangent complexes, so  $\Perf^{X,\Tw, \alg-\an}$ inherits the same property since the Riemann--Hilbert map is formally \'etale by Corollary \ref{RHcoralget}.
  
For (\ref{twistorthm4}), the analytic Riemann--Hilbert correspondence  of Corollary \ref{poincarecor} gives us a local quasi-equivalence between $\per_{dg}^{X,\Tw,\alg-\an}(\sB(U),L,w)$ and the homotopy fibre product of the diagram
\[
 \per_{dg}^{X(\Cx)_{\an},\Tw,\an}(\sB(U))\to \per_{dg}^{X(\Cx)_{\an},\Hod,\an}(\sB(\tilde{U}),L,w) \la \per_{dg}^{X,\Hod,\alg}(\sB(\tilde{U}),L,w),
\]
so it suffices to proves the corresponding results for the dg functor $\per_{dg}^{X,\Hod,\alg}(\sB(\tilde{U}),L,w) \to \per_{dg}^{X(\Cx)_{\an},\Hod,\an}(\sB(\tilde{U}),L,w)$. These follow exactly as in Corollary \ref{Dolcoralg}, replacing Proposition \ref{Dolhgsprop} with Proposition \ref{Hodhgsprop}.
 \end{proof}

\section{Shifted bisymplectic and double Poisson structures}\label{poissonsn}

All of constructions and results from \cite{NCpoisson} have analytic analogues, in the same way that  \cite{DQDG} extends the results of \cite{poisson} to commutative analytic settings.

\subsection{Analytic shifted bisymplectic structures}


The only subtleties in adapting the definitions and results of \cite[\S \ref{NCpoisson-spsn}]{NCpoisson} to the analytic setting now come from the tensor algebra constructions forming $\Omega^{\bt}_A$, and in taking cyclic and commutative quotients. 

\begin{definition}\label{anDRdef}
Given a finitely presented cofibrant  dg FEFC $R$-algebra (resp. stacky dg FEFC $R$-algebra), we define $\Omega^p_A$ to be the complex (resp. double complex) given by regarding $A$ as a (bi)graded nuclear Fr\'echet algebra by Proposition \ref{keyBanFEFCprop} and Lemma \ref{nucfrechet}, then taking the completed tensor product $\hat{\Omega}^p_A$ given by $\underbrace{\hat{\Omega}^1_A\hten_{A} \ldots \hat{\Omega}^1_A\hten_{A} \ldots \hat{\Omega}^1_A  }_p$ and forgetting the topology. Here $\hat{\Omega}^1_A=\ker(A\hten A \to A)$ as in Definition \ref{hatedef}, with underlying complex corresponding to  $\Omega^1_A$ from Definition \ref{anOmegadef} as in Lemma \ref{Omeganuclemma}.

We then extend the  definition to arbitrary cofibrant (stacky)  dg FEFC $R$-algebras by passing to filtered colimits via Lemma \ref{stackyindFPdg}.

The de Rham differential $d$ then gives us a cochain complex $\Omega^{\bt}_A$  in chain complexes (resp. double complexes), given by  $   A \to  \Omega^1_A \to \Omega^{2}_A \to \ldots $. We then define $\DR(A)$ to be the product total complex of this double (resp. triple) complex, with a  Hodge filtration $F$ on  $\DR(A)$ given  by setting    $F^p\DR(A) \subset \DR(A)$  to consist of terms $\Omega^i_{A}$ with $i \ge p$.
\end{definition}

\begin{remark}\label{anDRequiv}
There are other equivalent characterisations:
\begin{enumerate}
\item If the underlying graded (resp. bigraded) FEFC algebra of $A$ is free on generators $S$, then as in  the construction of \cite[Definition \ref{dgFEFCdef}]{NCstacks} we can form  
a free bigraded (resp. trigraded) FEFC-algebra  $\cF_{S \sqcup dS}(R)$, and this will be $\Omega^{\bt}_A$ once we restore the differentials. This construction then extends to all  graded (resp. bigraded) FEFC algebras by Kan extension, giving a purely algebraic characterisation. 

\item From this description, it follows that $\Omega^{\bt}_A$ has a bidifferential (resp. tridifferential) bigraded (resp. trigraded) FEFC-algebra structure, with the property that a map from $\Omega^{\bt}_A$ to another such algebra $B$ is equivalent to a map from $A$ to the  differential (resp. bidifferential) graded (resp. bigraded) FEFC-algebra $B^0$.

\item To relate  $\Omega^{\bt}_A$ to the tensor algebra construction of \cite[Definition \ref{NCpoisson-DRdef}]{NCpoisson} in the algebraic setting,   we introduce an analogue of the construction $[A^{\ten (p+1)}]$ from \cite[Proposition 4.1.2]{vdBerghDoublePoisson}
by writing    $[A^{\hten_{\pi} (p+1)}]$   for the $(A^{e,\FEFC})^{\ten p}$-module given by a $p$-fold completed projective tensor product $A^{\hten_{R,\pi} (p+1)}$, with the $i$th copy of $A \ten 1$ acting on the $i$th copy of $A$ on the right  and the $i$th copy of $1 \ten A$ acting on the $(i+1)$th copy of $A$ on the left. Then we have
\[
 \Omega^p_A\cong (\Omega^1_A)^{\ten p}\ten_{(A^{e,\FEFC})^{\ten p} }[A^{\hten_{\pi} (p+1)}].
\]
Note that when $A$ is finitely presented cofibrant, it is nuclear, so we can write $[A^{\hten (p+1)}]$ instead of $[A^{\hten_{\pi} (p+1)}]$; the general construction for cofibrant $A$ is then given by passing to filtered colimits.
\end{enumerate}
\end{remark}

We now need to construct the cyclic de Rham complex, which will entail taking a quotient by the closure of the commutator. Since the construction of  commutative truncations requires similar considerations, we deal with both constructions simultaneously.

\begin{definition}\label{ancommdef}
Given a retract of a free multigraded FEFC $R$-algebra $A$, we define the multigraded $R$-module $A_{\cyc}$ and the commutative multigraded EFC algebra $A^{\comm}$ as follows. 

If $A$ is finitely presented,  we use the multigraded analogue of  Lemma \ref{Omeganuclemma} to regard $A$ as a multigraded nuclear Fr\'echet algebra, and then we let $A_{\cyc}$ be the quotient of $A$ by the closure $\overline{[A,A]}$ of the  subspace of commutators, while $A^{\comm}$ is the quotient by the closure of the $2$-sided ideal generated by $[A,A]$.  We then extend these definitions to the general case by passing to filtered colimits.
 \end{definition}

\begin{remark}\label{ancycequiv}
 In the case where the FEFC algebra is $\cF_S(R)$,  freely   generated by a set $S$ concentrated in degree $0$,  consider the quotients $ \N_0^{S}= \coprod_{n\ge 0} \{\alpha \in W(S)~:~ |\alpha|=n\}/S_n $ and $ W(S)_{\cyc} :=\coprod_{n\ge 0} \{\alpha \in W(S)~:~ |\alpha|=n\}/C_n $ of the set $W(S)$ of words in $S$. Then  $ \cF_S(R)^{\comm}$ and $\cF_S(R)_{\cyc}$ are just given by replacing $W(S)$ with $\N_0^{S} $ and $W(S)_{\cyc}  $. There are similar expressions for graded sets, but with Koszul signs appearing in the actions of the symmetric and cyclic groups potentially identifying some elements of $W(S)$ with elements of $-W(S)$. 

These  descriptions can then be used to extend the definitions of $A_{\cyc}$ and $A^{\comm}$ to arbitrary multigraded FEFC $R$-algebras $A$ by  Kan extension. This gives an alternative characterisation of $A_{\cyc}$ (resp. $A^{\comm}$)  as the quotient of $A$ by the subspace generated by elements in the image of all maps  $\Phi_f$ for all finite multigraded sets $S$ and all $f \in \ker(\cF_S(R) \to \cF_S(R)_{\cyc})$ (resp.  $f \in \ker(\cF_S(R) \to \cF_S(R)^{\comm})$).
\end{remark}

\begin{definition}\label{anDRcycdef}
Given a cofibrant (stacky)  dg FEFC $R$-algebra $A$, define $\DR_{\cyc}(A)$ to be the product total complex of the double (or triple, when $A$ is stacky) complex $(\Omega^{\bt}_A)_{\cyc}$, with the  Hodge filtration $F$ on  $\DR_{\cyc}(A)$ given  by setting    $F^p\DR_{cyc}(A) \subset \DR_{cyc}(A)$  to be the image  of terms $\Omega^i_{A}$ with $i \ge p$.
\end{definition}

\begin{remark}\label{anDRcycequiv}
Note that for $p>0$,  $(\Omega^p_A)_{\cyc}$ is the complex underlying  $(\hat{\Omega}^p_A\hten_{(A^{\hat{e}})}A)/C_p$ when $A$ is finitely presented. 

Consider the analogue 
of the construction $\{A^{\ten p}\}$ 
 from \cite[Proposition 4.1.2]{vdBerghDoublePoisson}
by writing $\{A^{\hten_{\pi} p}\}$ for $A^{\hten_{\pi}p}$ given the $(A^{\hat{e}})^{\hten p}$-bimodule structure for which the $i$th copy of $A\ten 1$ acts on the right on the $i$th copy of $A$, and  the $i$th copy of $1\ten A$ acts on the left on the $(i+1 \mod p)$th copy of $A$. Here, $\hten_{\pi}$ is just $\hten$ when $A$ is finitely presented cofibrant, and the expression for arbitrary cofibrant $A$ is given by passing to filtered colimits.

Following Remark \ref{anDRequiv}, we then have
\[
 (\hat{\Omega}^p_A)_{\cyc}\cong (\hat{\Omega}^1_A)^{\hten p}\hten_{(A^{\hat{e}})^{\hten p} }\{A^{\hten p}\}/C_p
\]
and hence
\[
 (\Omega^p_A)_{\cyc}\cong (\Omega^1_A)^{\ten p}\ten_{(A^{\hat{e}})^{\ten p} }\{A^{\hten p}\}/C_p.
\]
\end{remark}

The definitions and properties of $n$-shifted bisymplectic structures from \cite[\S \ref{NCpoisson-spsn}]{NCpoisson} now all carry over unchanged.

The following adapts \cite[Definitions \ref{NCpoisson-presymplecticdef} and \ref{NCpoisson-PreSpdef}]{NCpoisson}.
\begin{definition}\label{presymplecticdef}
Define an $n$-shifted pre-bisymplectic structure $\omega$ on a cofibrant (stacky) FEFC $R$-DGA to be a cocycle
\[
 \omega \in \z^{n+2}F^2\DR_{\cyc}(A).
\]

Then define the space $\PreBiSp(A,n)= \Lim_{p\ge 2}\PreBiSp(A,n)/F^p $ of $n$-shifted pre-bisymplectic structures on $A/R$ to be the simplicial set given in degree $k$ by setting
 \[
(\PreBiSp(A,n)/F^p)_k:= \z^{n+2}((F^2\DR_{\cyc}(A/R)/F^p)\ten_{\Q} \Omega^{\bt}(\Delta^k)),
\]
where 
\[
\Omega^{\bt}(\Delta^k)=\Q[t_0, t_1, \ldots, t_k,\delta t_0, \delta t_1, \ldots, \delta t_k ]/(\sum t_i -1, \sum \delta t_i)
\]
is the commutative dg algebra of de Rham polynomial forms on the $k$-simplex, with the $t_i$ of degree $0$.
\end{definition}
Note that  $\PreBiSp(A,n)/F^p$  is canonically weakly  equivalent to the Dold--Kan denormalisation of the complex $\tau^{\le 0}(F^2\DR_{\cyc}(A)[n+2]/F^p)$, where $\tau$ denotes good truncation, and similarly for the limit  $ \PreBiSp(A,n)$. However,  the definition in terms of de Rham polynomial forms  simplifies the comparison with double Poisson structures.

The following adapts \cite[Definitions \ref{NCpoisson-bisymplecticdef} and \ref{NCpoisson-bisymplecticdefstacky}]{NCpoisson}.
\begin{definition}\label{bisymplecticdef}
When $A$ is a cofibrant FEFC-DGA, say that $\omega$ is bisymplectic if $\Omega^1_A$ is perfect as an $A^{e,\FEFC}$-module and the component $\omega_2 \in \z^n\Omega^2_{A,\cyc}$ induces a quasi-isomorphism
\[
 \omega_2^{\sharp} \co \HHom_{A^{e,\FEFC}}(\Omega^1_A,A^{e,\FEFC}) \to (\Omega^1_{A})_{[-n]}
\]
by contraction $\phi \mapsto i_{\phi}(\omega_2)$, for $i_{\phi} \co \Omega^*_A \to \Omega^*_A\hten_{\pi} \Omega^*_A$ 
the double derivation determined by the properties that $i_{\phi}(a)=0$ and $i_{\phi}(da)= \phi(a)$, for $a \in A$.

When $A$ is a stacky FEFC-DGA, say that $\omega$ is bisymplectic if $\Tot^{\Pi} (\Omega^1_{A}\ten_{A^{e,\FEFC}}(A^0)^{e})$ is perfect as an $(A^0)^{e}$-module and $\omega_2$
induces a quasi-isomorphism
\[
 \omega_2^{\sharp} \co \hatHHom_{A^{e,\FEFC}}(\Omega^1_A,(A^0)^{e,\FEFC}) \to \Tot^{\Pi}(\Omega^1_{A}\ten_{A^{e}}(A^0)^{e})_{[-n]},
\]
where  $\hatHHom= \widehat{\Tot}\cHHom$, for $\cHom$ the internal $\Hom$ in double complexes and $\widehat{\Tot}$ the sum-product total complex (sometimes known as the Tate realisation).

Let $\BiSp(A,n) \subset \PreBiSp(A,n)$ consist of the bisymplectic structures --- this is a union of path-components.
\end{definition}

There are also similar definitions for analytic bi-Lagrangian structures defined in exactly the same way, which we will not spell out.

\begin{remark}\label{bispcommrmk}
 As in \cite[Lemma \ref{NCpoisson-commSplemma}]{NCpoisson}, it follows quickly from the definitions that  the assignment $A \mapsto A^{\comm}$ gives a map from the space of $n$-shifted analytic bisymplectic structures on $A$ to the space of $n$-shifted analytic symplectic structures   on the EFC-DGA $A^{\comm}$, in the sense of \cite{DQDG,DStein}.
\end{remark}

\begin{lemma}\label{bispwepreservinglemma}
 The construction $\PreBiSp(-,n)$ defines a w.e.-preserving functor on the category 
 of cofibrant stacky FEFC-DGAs, and
$\BiSp(-,n)$ defines a w.e.-preserving functor on its subcategory of  homotopy  \'etale morphisms (Definition \ref{hetdef}). 

Moreover, if a homotopy \'etale morphism $A \to B$ admits an $A^{e,\FEFC}$-linear retraction, then  $\PreBiSp(A,n) \to \PreBiSp(B,n)$ and $\BiSp(A,n) \to \BiSp(B,n)$ are weak equivalences.
\end{lemma}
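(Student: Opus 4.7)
The plan is to handle the lemma in three parts, exploiting the explicit description of $(\Omega^p_A)_{\cyc}$ as a balanced completed tensor product, together with the nuclear exactness results of \S\ref{nucfrechetsn} and the homotopy \'etale hypothesis.

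First, for w.e.-preservation of $\PreBiSp(-,n)$ on cofibrant stacky FEFC-DGAs, I would reduce to showing that a levelwise quasi-isomorphism $f\co A \to B$ induces a quasi-isomorphism on $F^2\DR_{\cyc}$. Since Definition \ref{presymplecticdef} realises $\PreBiSp(-,n)$ via a tower whose layers are (Dold--Kan denormalisations of) good truncations of $F^2\DR_{\cyc}(-)[n+2]/F^p$ tensored with the flat $\Q$-dga $\Omega^{\bt}(\Delta^k)$, and since the product total complex preserves levelwise quasi-isomorphisms of bounded below multicomplexes, it suffices to show each $(\Omega^p_A)_{\cyc} \to (\Omega^p_B)_{\cyc}$ is a quasi-isomorphism. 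Using the identification $(\Omega^p_A)_{\cyc}\cong (\Omega^1_A)^{\hten p}\hten_{(A^{e,\FEFC})^{\hten p}}\{A^{\hten p}\}/C_p$ from Remark \ref{anDRcycequiv}, cofibrancy of $A$ means $\Omega^1_A$ is a retract of a levelwise free $A^{e,\FEFC}$-module, and the stacky analogue of Lemma \ref{nucfrechet} gives nuclear Fr\'echet models of all factors. Iterated application of Lemma \ref{nucexactlemmarel} in each of the relative tensor slots then yields the required quasi-isomorphism, and passage to the $C_p$-coinvariants is harmless as we work over $\Q$.

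Second, for w.e.-preservation of $\BiSp(-,n)$ on the subcategory of homotopy \'etale morphisms, it remains only to show that non-degeneracy of $\omega_2^{\sharp}$ is preserved. Given a homotopy \'etale $f\co A \to B$, the characterising quasi-isomorphisms of Definition \ref{hetdef} give a zigzag of quasi-isomorphisms identifying the base change $\hatHHom_{A^{e,\FEFC}}(\Omega^1_A,(A^0)^{e,\FEFC}) \hten_{(A^0)^e}^{\oL}(B^0)^e$ with $\hatHHom_{B^{e,\FEFC}}(\Omega^1_B,(B^0)^{e,\FEFC})$, and analogously for $\Tot^{\Pi}(\Omega^1\ten_{(-)^e}(-^0)^e)$. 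Perfectness of the $(A^0)^e$-linear complex ensures the base-change map is compatible with the contraction $\omega_2^{\sharp}$, so the image of $\omega$ inherits the non-degeneracy property.

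Third, for the retraction statement, the splitting $B\cong A \oplus K$ as $A^{e,\FEFC}$-modules induces corresponding $A^{e,\FEFC}$-module splittings on each $(\Omega^1_B)^{\hten p}$, $\{B^{\hten p}\}$, and hence on $(\Omega^p_B)_{\cyc}$, via compatibility of the completed tensor product with direct sums (Lemma \ref{nucexactlemma1}). This gives a retraction of $\PreBiSp(A,n) \to \PreBiSp(B,n)$ at the cochain level; to promote this to a weak equivalence, I would show the complementary summand is acyclic using homotopy \'etaleness of $f$, which forces the contribution of $K$ to each $F^2\DR_{\cyc}$-layer to be null-homotopic after the appropriate base changes. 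The corresponding statement for $\BiSp$ follows since the non-degeneracy condition is detected on the $A$-summand. The main obstacle here is bookkeeping: the $A^{e,\FEFC}$-linear retraction is strictly weaker than an FEFC-DGA retraction, so one cannot simply split $\DR_{\cyc}$ as a complex of cyclic algebras. The careful part is showing that the multiplicative and cyclic structures on $(\Omega^p_B)_{\cyc}$ interact with the $A^{e,\FEFC}$-linear splitting sufficiently well that, modulo the acyclic $K$-summand controlled by homotopy \'etaleness, one recovers $(\Omega^p_A)_{\cyc}$ up to quasi-isomorphism.
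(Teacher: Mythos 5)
Parts one and two of your proposal match the paper's own argument: you reduce w.e.-preservation to showing each $(\Omega^p_A)_{\cyc}$ respects quasi-isomorphisms, use the balanced tensor description of Remark \ref{anDRcycequiv} together with Lemma \ref{nucexactlemmarel} and exactness of $C_p$-coinvariants over $\Q$, and observe that homotopy \'etale morphisms preserve non-degeneracy because the defining base changes to $(A^0)^{e}$ are unchanged. That is all fine.

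The gap is in part three, and you have half-noticed it yourself. Your proposed splitting $B\cong A\oplus K$ is only $A^{e,\FEFC}$-linear, while the tensor factors $(\Omega^1_B)^{\hten p}$ and $\{B^{\hten p}\}$ in $(\Omega^p_B)_{\cyc}$ are formed \emph{over} $(B^{\hat e})^{\ten p}$; an $A^{e,\FEFC}$-linear decomposition of $B$ does not descend to a direct-sum decomposition of the balanced tensor product, so there is no complementary $K$-summand of $(\Omega^p_B)_{\cyc}$ to declare acyclic. More seriously, homotopy \'etaleness (Definition \ref{hetdef}) is a statement about $\Omega^1$, not about $K=\coker(A\to B)$: you cannot invoke it to "force the contribution of $K$ to be null-homotopic" without first relating $K$ to $\Omega^1_{B/A}$. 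That relation is precisely the idea you are missing. The retraction $r\co B\to A$ of $A^{e,\FEFC}$-modules lets one exhibit $B/A$ as an $A^{e,\FEFC}$-linear \emph{direct summand} of $\Omega^1_{B/A}:=\coker(\Omega^1_A\ten_{A^{e,\FEFC}}B^{e,\FEFC}\to\Omega^1_B)$, so that the vanishing that \'etaleness guarantees for the relative cotangent module propagates to control the "extra" coefficient term $B/A$ appearing in $\DR_{\cyc}(B)$. With that input one can run the filtration argument of \cite[Lemma \ref{NCpoisson-calcTOmegaetlemma}]{NCpoisson} rather than attempt a direct sum decomposition. Without it, the step you flagged as "bookkeeping" is actually the mathematical content, and your proposal does not supply it.
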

\begin{proof}
 Functoriality of pre-bisymplectic structures is immediate, and functoriality of bisymplectic structures with respect to homotopy \'etale morphisms follows because they do not affect the non-degeneracy condition. 
 
To show that these functors are w.e.-preserving, it suffices to prove that $A \mapsto \Omega^p_{A,\cyc}$ is so. Using the final description  
 of Remark \ref{anDRcycequiv}, this follows from homotopy invariance of the cofibrant $A^{e,\FEFC}$-module $\Omega^1_A$ and of the $(A^{\hat{e}})^{\ten p}$-module  $\{A^{\hten p}\}$, since taking $C_p$-coinvariants is exact in characteristic $0$.

The final statement, which has no commutative analogue, follows by the method of \cite[Lemma \ref{NCpoisson-calcTOmegaetlemma}]{NCpoisson}, since the retraction allows us to write $B/A$ as an $A^{e,\FEFC}$-linear direct summand of $\Omega^1_{B/A}:=\coker(\Omega^1_A\ten_{A^{e,\FEFC}}B^{e,\FEFC} \to \Omega^1_B)$.
 \end{proof}

\subsection{Analytic shifted double Poisson structures}

We now  introduce modifications of \cite[Definitions \ref{NCpoisson-bipoldef} and \ref{NCpoisson-bipolcycdef}]{NCpoisson} for application to this analytic context. 

\begin{definition}\label{anbipoldef}
Define the complex of $n$-shifted non-commutative multiderivations (or polyvectors) on a cofibrant stacky dg FEFC $R$-algebra $A \in DG^+dg_+\FEFC(R)$ by
\[
F^i \widehat{\Pol}^{nc}(A,n):= \prod_{p \ge i} \hatHHom_{(A^{e,\FEFC})^{\ten p}}(((\Omega^1_{A})_{[-n-1]})^{\ten p},[A^{\hten_{\pi} (p+1)}]), 
\]
for $i \ge 0$ and  for the $(A^{e,\FEFC})^{\ten_R p}$-module $[A^{\hten_{\pi} (p+1)}]$ introduced in Remark \ref{anDRequiv}.

This has a filtration-preserving associative multiplication given by the same formulae as in \cite[Definition \ref{NCpoisson-poldef}]{NCpoisson}. 
\end{definition}

\begin{definition}\label{anbipolcycdef}
We  define the filtered cochain complex of $n$-shifted cyclic multiderivations  on   a cofibrant stacky dg FEFC $R$-algebra  $A \in DG^+dg_+\FEFC(R)$ by
\[
 F^i\widehat{\Pol}^{nc}_{\cyc}(A,n):= \prod_{p \ge i} (\hatHHom_{(A^{e,\FEFC})^{\ten p}}(((\Omega_A^1)_{[-n-1]})^{\ten p}, \{A^{\hten_{\pi} p}\})^{C_p},
\]
for $i \ge 1$, for the $(A^{e,\FEFC})^{\ten p}$-module $\{A^{\hten_{\pi} p}\}$ introduced in Remark \ref{anDRcycequiv}.

We then set $F^0\widehat{\Pol}^{nc}_{\cyc}(A,n):= A_{\cyc} \oplus F^1\widehat{\Pol}^{nc}_{\cyc}(A,n)$.
\end{definition}

 Adapting \cite[Proposition 4.1.1]{vdBerghDoublePoisson},   the trace isomorphism between cyclic invariants and coinvariants gives a natural filtered map
\[
\tr \co \widehat{\Pol}^{nc}(A,n)/[\widehat{\Pol}^{nc}(A,n),\widehat{\Pol}^{nc}(A,n)]\to  \widehat{\Pol}^{nc}_{\cyc}(A,n) 
\]
from the quotient by the commutator of the associative multiplication. Unlike the algebraic case, this will not be a filtered quasi-isomorphism when $\Omega^1_A$ is perfect and cofibrant as an $A$-bimodule, essentially because the commutator subspace does not take the analytic structure into account; when $A$ finitely generated, this can be rectified by replacing the commutator with its closure.

The definitions of \cite[\S\S \ref{NCpoisson-poisssn} and \ref{NCpoisson-ArtinPoisssn}]{NCpoisson} now adapt as follows.

\begin{proposition}\label{bracketprop}
There is a natural  bracket $\{-,-\}$ making  $\widehat{\Pol}^{nc}_{\cyc}(A,n)^{[n+1]}$ into a differential graded Lie algebra (DGLA) over $R$, satisfying $\{F^i,F^j\} \subset F^{i+j-1}$. 

There  is also an
$R$-bilinear map 
\[
 \{-,-\}^{\smile} \co \widehat{\Pol}^{nc}_{\cyc}(A,n) \by \widehat{\Pol}^{nc}(A,n) \to \widehat{\Pol}^{nc}(A,n)^{[-n-1]} 
\]
which lifts $\{-,-\}$ in the sense that $\tr(\{\pi, \alpha\}^{\smile})=\{\pi, \tr\alpha\}$. This also satisfies   $\{F^i,F^j\}^{\smile} \subset F^{i+j-1}$, and is a derivation in  its second argument. Given   a  $p$-bracket $\pi$ and an element $a \in A$, the $(p-1)$-derivation $\{\pi, a\}^{\smile}$ is given by   $\pm\pi(-,-, \ldots, -,a)$.
\end{proposition}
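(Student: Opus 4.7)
The plan is to adapt the Van den Bergh-style double bracket construction from \cite[\S \ref{NCpoisson-poisssn}]{NCpoisson} to the analytic FEFC setting. The key point is that although our definitions use completed tensor products, the structural formulae for the bracket are entirely $R$-linear and functorial in $(\Omega^1_A, \{A^{\hten p}\}, [A^{\hten (p+1)}])$; once the relevant multilinear operations extend continuously to completions, the verification of the axioms carries over verbatim.

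First I would define $\{-,-\}^{\smile}$ on pairs of multiderivations. Given $\pi \in F^1\widehat{\Pol}^{nc}_{\cyc}(A,n)$ of arity $p$ and $\alpha \in \widehat{\Pol}^{nc}(A,n)$ of arity $q$, the bracket $\{\pi,\alpha\}^{\smile}$ is given formally by the same expression as in \cite[\S \ref{NCpoisson-poisssn}]{NCpoisson}: one sums over the ways of contracting a slot of $\pi$ against an $A$-factor of the output $[A^{\hten (q+1)}]$ of $\alpha$, and vice versa, producing an element of arity $p+q-1$ with Koszul signs and cyclic rearrangements dictated by where the contraction occurs. Well-definedness on the analytic side follows because (i) $\Omega^1_A$ is a cofibrant $A^{e,\FEFC}$-module, so $\hatHHom_{(A^{e,\FEFC})^{\ten r}}$ computes the relevant derived Hom, and (ii) the contraction is just evaluation of an $A^{e,\FEFC}$-module map on a tensor factor, which is continuous by construction. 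The bound $\{F^i,F^j\}^{\smile}\subset F^{i+j-1}$ is immediate from the arity count, and the explicit formula $\{\pi,a\}^{\smile}=\pm\pi(-,\dots,-,a)$ for $a\in A$ just reads off the only surviving contraction.

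Next I would check that $\{\pi,-\}^{\smile}$ is a derivation of the associative product on $\widehat{\Pol}^{nc}(A,n)$; this is a standard combinatorial identity on $[A^{\hten (p+1)}]$-valued multiderivations and reduces, after unwinding the definitions on generators $a, da \in \Omega^{\bt}_A$, to the Leibniz rule for contractions. Passing to the trace quotient, I would verify that the bracket descends to $\widehat{\Pol}^{nc}_{\cyc}(A,n)^{[n+1]}$ and that there it is graded skew-symmetric (the two types of contractions are exchanged under $(\pi,\rho) \mapsto (\rho,\pi)$ up to a Koszul sign, modulo cyclic permutations absorbed by passing from $[A^{\hten p}]$ to $\{A^{\hten p}\}^{C_p}$) and satisfies the graded Jacobi identity (from a standard associator cancellation among triple contractions, again mirroring the algebraic proof). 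The differential on $\widehat{\Pol}^{nc}_{\cyc}$ is a derivation of the bracket by the same formal argument, completing the DGLA structure.

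The main obstacle will be bookkeeping: tracking Koszul signs, cyclic $C_p$ actions, and the distinction between outer and inner bimodule structures on $\{A^{\hten p}\}$ versus $[A^{\hten (p+1)}]$ under completed tensor products and under $\widehat{\Tot}$. No new analytic input is needed beyond knowing that $\hten_{\pi}$ is exact on the cofibrant nuclear Fr\'echet modules at hand (a consequence of Lemma \ref{nucexactlemma1} via Proposition \ref{resnprop}), so that the contraction operations are computed by the naive formulae; everything else is a translation of the algebraic construction of \cite{NCpoisson} into the FEFC language.
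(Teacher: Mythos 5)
Your proposal is correct and takes essentially the same approach as the paper, which proves the proposition by carrying over the double bracket construction from \cite[Proposition \ref{NCpoisson-bracketprop}]{NCpoisson} verbatim, the sole modification being that $\{-,-\}^{\smile}$ is now induced by a double bracket taking values in the completed projective tensor product $\widehat{\Pol}^{nc}(A,n)\hten_{\pi}\widehat{\Pol}^{nc}(A,n)^{[-n-1]}$ rather than an algebraic tensor product. One small imprecision: the nuclearity you need is supplied directly by Lemma \ref{nucfrechet} rather than via Proposition \ref{resnprop}, and exactness of $\hten_{\pi}$ is not what makes the contractions well-defined here --- continuity of the multilinear operations suffices; exactness enters later, in Lemma \ref{DPwepreservinglemma}.
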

\begin{proof}
 This follows almost exactly as in the proof of \cite[Proposition \ref{NCpoisson-bracketprop}]{NCpoisson}, except that $\{-,-\}^{\smile}$
 is induced by
a double bracket 
\[
 \widehat{\Pol}^{nc}_{\cyc}(A,n) \by \widehat{\Pol}^{nc}(A,n)\to \widehat{\Pol}^{nc}(A,n)\hten_{\pi} \widehat{\Pol}^{nc}(A,n)^{[-n-1]}
 \]
 taking values in a completed topological (rather than algebraic) tensor product.
 \end{proof}

 \begin{definition}\label{mcPLdef}
 Given a   DGLA $(L, \{-,-\})$, define the the Maurer--Cartan set by 
\[
\mc(L):= \{\omega \in  L^{1}\ \,|\, \delta\omega + \half\{\omega,\omega\}=0 \in  \bigoplus_n L^{2}\}.
\]

Following \cite{hinstack}, define the Maurer--Cartan space $\mmc(L)$ (a simplicial set) of a nilpotent  DGLA $L$ by
\[
 \mmc(L)_k:= \mc(L\ten_{\Q} \Omega^{\bt}(\Delta^k)),
\]
for the differential graded commutative algebras $ \Omega^{\bt}(\Delta^n)$ of de Rham polynomial forms on the $k$-simplex, 
as in Definition \ref{presymplecticdef}.
\end{definition}

\begin{definition}\label{poissdef}
Define an $R$-linear analytic $n$-shifted double Poisson structure on a cofibrant stacky dg FEFC $R$-algebra  $A$ to be an element of
\[
 \mc(F^2 \widehat{\Pol}^{nc}_{\cyc}(A,n)[n+1]), 
\]
 and the space $\cD\cP(A,n)$ of analytic $R$-linear  $n$-shifted double Poisson structures on $A$ to be given by the simplicial 
set
\[
 \cD\cP(A,n):= \Lim_i \mmc(F^2 \widehat{\Pol}^{nc}_{\cyc}(A,n)[n+1]/F^{i+2}).
\]
\end{definition}

\begin{definition} \label{binondegdef}
If 
$\Tot^{\Pi} (\Omega^1_{A/R}\ten_{A^{e,\FEFC}}(A^0)^{e,\FEFC}) $ is perfect as an $(A^0)^{e,\FEFC}$-module, and
there exists $N$ for which the chain complexes $(\Omega^1_{A/R}\ten_{A^{e,\FEFC}}(A^0)^{e,\FEFC})^i $ are acyclic for all $i >N$,
we say that an  $n$-shifted double Poisson structure $\pi = \sum_{i \ge 2}\pi_i $ on $A$ is non-degenerate if
contraction with  $\pi_{2}$  induces a quasi-isomorphism
\[
\pi_{2}^{\flat}\co \Tot^{\Pi} (\Omega^1_{A/R}\ten_{A^{e,\FEFC}}(A^0)^{e,\FEFC})_{[-n]} \to \hatHHom_{A^{e,\FEFC}}(\Omega^1_A, (A^0)^{e,\FEFC}).
\]

We then define $\cD\cP(A,n)^{\nondeg}\subset \cD\cP(A,n)$ to consist of the non-degenerate elements --- this is a union of path-components.
\end{definition}

There are also similar definitions for analytic double co-isotropic structures defined in exactly the same way, which we will not spell out.

\begin{remark}\label{poissoncommrmk}
 Observe that for the abelianisation functor $A \mapsto A^{\comm}$ from DGAAs to CDGAs of Definition \ref{ancommdef}, we have a natural map  $F^i\widehat{\Pol}^{nc}_{\cyc}(A,n) \to F^i\widehat{\Pol}(A^{\comm},n)$, for the complex $\widehat{\Pol}$ of polyvectors from \cite{DQDG}. This map is compatible with the Lie bracket, so gives a natural map $\cD\cP(A,n) \to  \cP(A^{\comm},n)$ from the space of $n$-shifted double Poisson structures on $A$ to the space of $n$-shifted  Poisson structures on $A^{\comm}$.
\end{remark}

\begin{lemma}\label{DPwepreservinglemma}
 The construction $\cD\cP(-,n)$ and its subfunctor $\cD\cP(-,n)^{\nondeg}$ extend to  $\infty$-functors on the $\infty$-category 
 of  homotopy  \'etale morphisms (Definition \ref{hetdef})
 of cofibrant stacky FEFC-DGAs Localised at levelwise quasi-isomorphisms.
 
 Moreover, if a homotopy \'etale morphism $A \to B$ admits an $A^{e,\FEFC}$-linear retraction, then  $\cD\cP(A,n) \to \cD\cP(B,n)$ and $\cD\cP(A,n)^{\nondeg} \to \cD\cP(B,n)^{\nondeg}$ are weak equivalences.
\end{lemma}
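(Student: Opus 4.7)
The plan is to mirror the argument of Lemma \ref{bispwepreservinglemma}, but now working at the level of filtered DGLAs and their Maurer--Cartan spaces. Since $\cD\cP(A,n) = \Lim_i \mmc(F^2\widehat{\Pol}^{nc}_{\cyc}(A,n)[n+1]/F^{i+2})$ is defined as an inverse limit over nilpotent quotients, it suffices by homotopy invariance of $\mmc$ for filtered DGLAs to show that $A \mapsto \widehat{\Pol}^{nc}_{\cyc}(A,n)$ is a $\infty$-functor on the $\infty$-category of homotopy \'etale morphisms to filtered DGLAs, inverting filtered quasi-isomorphisms, and that it sends a quasi-isomorphism of homotopy \'etale morphisms to a filtered quasi-isomorphism.

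First I would analyse the associated graded pieces
\[
\gr_F^p \widehat{\Pol}^{nc}_{\cyc}(A,n) \;\cong\; \bigl(\hatHHom_{(A^{e,\FEFC})^{\ten p}}(((\Omega_A^1)_{[-n-1]})^{\ten p}, \{A^{\hten_{\pi} p}\})\bigr)^{C_p}
\]
and check that each is homotopy invariant under quasi-isomorphisms $A\to A'$ of cofibrant stacky FEFC-DGAs. The input $(\Omega^1_A)^{\ten p}$ is cofibrant as an $(A^{e,\FEFC})^{\ten p}$-module (being an iterated tensor product of cofibrant bimodules), and the analogue of Lemma \ref{Omeganuclemma} (extended to the multigraded, completed setting as in Remarks \ref{anDRequiv} and \ref{anDRcycequiv}) shows that $\{A^{\hten_{\pi} p}\}$ preserves levelwise quasi-isomorphisms. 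Since $C_p$-invariants are exact in characteristic zero, passing to invariants then gives filtered quasi-isomorphism invariance, once one verifies that the product $\widehat{\Tot}$ and $\prod_{p\ge i}$ do not spoil this. The homotopy \'etale hypothesis is exactly what is needed here: by Definition \ref{hetdef}, $\Omega^1_{B/A}$ is eventually acyclic both in the cochain direction and up to brutal truncation, which ensures that the natural maps $\gr_F^p\widehat{\Pol}^{nc}_{\cyc}(A,n) \to \gr_F^p\widehat{\Pol}^{nc}_{\cyc}(B,n)$ exist and are compatible with the comparison.

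For functoriality under homotopy \'etale morphisms themselves (not just quasi-isos), one constructs the pullback of polyvectors along $A \to B$ essentially as in \cite[\S \ref{NCpoisson-poisssn}]{NCpoisson}, using that homotopy \'etaleness makes $\Omega^1_B$ quasi-isomorphic to $\Omega^1_A \ten_{A^{e,\FEFC}}^{\oL}B^{e,\FEFC}$ in the relevant range, so $\hatHHom$-complexes pull back naturally. This also shows the non-degeneracy condition of Definition \ref{binondegdef} is preserved, since contraction with $\pi_2$ is intertwined with these comparison maps and the condition on $\pi_2^\flat$ is a quasi-isomorphism condition, hence invariant.

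For the final statement, when $A \to B$ admits an $A^{e,\FEFC}$-linear retraction, I would follow the strategy of \cite[Lemma \ref{NCpoisson-calcTOmegaetlemma}]{NCpoisson} used in the proof of Lemma \ref{bispwepreservinglemma}: the retraction exhibits $A^{e,\FEFC}$ (and hence $\{A^{\hten_{\pi}p}\}$) as a direct summand of $B^{e,\FEFC}$ (resp.\ $\{B^{\hten_{\pi}p}\}$) in a way compatible with the bracket, so the filtered map $\widehat{\Pol}^{nc}_{\cyc}(A,n) \to \widehat{\Pol}^{nc}_{\cyc}(B,n)$ becomes a filtered quasi-isomorphism, yielding the desired weak equivalences on $\cD\cP$ and, since non-degeneracy is detected on $\pi_2$, on $\cD\cP^{\nondeg}$. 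The main obstacle is the first step: establishing that the graded pieces really are homotopy invariant despite involving both a completed projective tensor product $\hten_\pi$ in the target $\{A^{\hten_\pi p}\}$ and a Tate-completed $\hatHHom$, requiring careful use of the nuclearity afforded by cofibrant FEFC-DGAs (Lemma \ref{nucfrechet}) together with the bounded-acyclicity built into Definition \ref{hetdef} to commute $\hatHHom$ past the relevant quasi-isomorphisms.
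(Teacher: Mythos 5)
Your proposal is correct and follows essentially the same strategy the paper intends. The paper's own proof is a one-line reference to \cite[\S Artindiagramsn]{NCpoisson}, instructing the reader to replace $A^{e}$ with $A^{e,\FEFC}$ and $\{A^{\ten p}\}$ with $\{A^{\hten_{\pi}p}\}$, and your reconstruction fills in exactly the details that reference covers: filtered quasi-isomorphism invariance of $\widehat{\Pol}^{nc}_{\cyc}$ via the graded pieces (using nuclearity for $\{A^{\hten_\pi p}\}$, exactness of $C_p$-(co)invariants, and the eventual-acyclicity built into Definition \ref{hetdef} to control $\hatHHom$ and $\widehat{\Tot}$), covariant functoriality along homotopy \'etale maps from the base-change equivalence on $\Omega^1$, and the retraction argument giving a direct-summand splitting, mirroring the method used for Lemma \ref{bispwepreservinglemma}.
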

\begin{proof}
 This follows from the same arguments as \cite[\S \ref{NCpoisson-Artindiagramsn}]{NCpoisson}, replacing $A^e$ with $A^{e,\FEFC}$ and $\{A^{\ten p}\}$ with $\{A^{\hten_{\pi} p}\}$ throughout.
\end{proof}

\begin{proposition}\label{equivpropaff}
  There is a canonical equivalence
 \[
  \cD\cP(A,n)^{\nondeg} \simeq \BiSp(A,n)
 \]
between the spaces of analytic non-degenerate $n$-shifted double Poisson structures and of $n$-shifted bisymplectic structures 
on a cofibrant stacky FEFC-DGA $A$, compatible with homotopy \'etale functoriality.
 \end{proposition}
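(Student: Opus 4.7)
The plan is to adapt the algebraic equivalence between non-degenerate double Poisson and bisymplectic structures from \cite{NCpoisson} to the analytic setting, leveraging the machinery of stacky FEFC-DGAs developed in the previous sections to manage the completed tensor products. As is standard for such Legendre-type correspondences in shifted Poisson geometry, I would avoid producing an equivalence directly and instead introduce an intermediate space of compatible pairs.

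First, I would use the homotopy invariance of both sides (Lemmas \ref{bispwepreservinglemma} and \ref{DPwepreservinglemma}) to reduce to the case where $A$ is quasi-free on generators satisfying the finiteness hypotheses of Proposition \ref{resnprop}, so that $\Omega^1_A$ is a levelwise finitely generated free $A^{e,\FEFC}$-module (and in particular nuclear Fr\'echet when completed). Over such an $A$, both the non-degeneracy condition on $\pi_2$ and the bisymplectic condition on $\omega_2$ become strict isomorphism conditions on cofibrant models, and the internal $\hatHHom$ appearing in $\widehat{\Pol}^{nc}_{\cyc}$ and $\widehat{\Pol}^{nc}$ commutes with the bar-type resolutions used to express $\{A^{\hten_{\pi} p}\}$ and $[A^{\hten_{\pi} p+1}]$.

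The core construction is a filtered compatibility space $\Comp(A,n)$, defined as the Maurer--Cartan space of a filtered DGLA amalgamating $F^{\ge 2}\widehat{\Pol}^{nc}_{\cyc}(A,n)[n+1]$ and $F^{\ge 2}\DR_{\cyc}(A)[n+1]$ via a coupling term built from the contraction action $\{-,-\}^{\smile}$ of Proposition \ref{bracketprop}. Informally, a point of $\Comp(A,n)$ is a triple $(\omega, \pi, h)$ where $\omega$ is pre-bisymplectic, $\pi$ is double Poisson, and $h$ is a homotopy witnessing that $\pi_2^\flat$ and $\omega_2^\sharp$ are mutually inverse. The two forgetful morphisms $\BiSp(A,n) \la \Comp(A,n)^{\nondeg} \to \cD\cP(A,n)^{\nondeg}$ are then established to be weak equivalences by an obstruction-theoretic argument along the Hodge filtration: at each stage $F^p/F^{p+1}$ the homotopy fibre is governed by a chain complex whose contractibility follows formally from the non-degeneracy of the quadratic terms, with exactness of the completed tensor products preserved by Lemma \ref{nucexactlemma1} thanks to the nuclear Fr\'echet cofibrancy achieved in the reduction.

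The main obstacle is the phenomenon flagged in the paragraph following Definition \ref{anbipolcycdef}: the trace map from $\widehat{\Pol}^{nc}/[\widehat{\Pol}^{nc},\widehat{\Pol}^{nc}]$ to $\widehat{\Pol}^{nc}_{\cyc}$ is no longer a quasi-isomorphism in the analytic setting, because the commutator subspace need not be topologically closed. This blocks the direct transport of the algebraic argument, which would identify both sides via the non-cyclic complex. The remedy is to work intrinsically on $\widehat{\Pol}^{nc}_{\cyc}$ throughout; the explicit symmetry of the second-order bivector and the cyclic invariance of $\{A^{\hten_{\pi} p}\}$ allow the Legendre transform to be performed directly at the level of cyclic coinvariants. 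The compatibility with homotopy \'etale functoriality, and hence the extension to arbitrary cofibrant $A$, is then automatic from the functoriality of $\Comp(A,n)$ and the pre-existing homotopy invariance of both endpoints.
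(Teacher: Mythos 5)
Your outline of the compatibility-space strategy, with the span $\BiSp(A,n) \la \Comp(A,n)^{\nondeg} \to \cD\cP(A,n)^{\nondeg}$ and an obstruction-theoretic argument along the Hodge filtration, is the right architecture and matches the algebraic proof in \cite[Corollary \ref{NCpoisson-compatcor2}]{NCpoisson}. However, two significant pieces of the argument are off.

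First, your opening reduction is not legitimate. Lemmas \ref{bispwepreservinglemma} and \ref{DPwepreservinglemma} give invariance under homotopy \'etale morphisms (or levelwise quasi-isomorphisms), but there is no claim that every cofibrant stacky FEFC-DGA receives a homotopy \'etale morphism from, or is quasi-isomorphic to, one with levelwise finitely generated free $\Omega^1_A$. The proposition is stated for arbitrary cofibrant $A$, and the paper proves it by working directly with the general completed-tensor-product expressions; no finiteness reduction is made.

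Second, and more importantly, you misidentify the genuine technical obstacle. The failure of the trace map $\widehat{\Pol}^{nc}/[\widehat{\Pol}^{nc},\widehat{\Pol}^{nc}]\to \widehat{\Pol}^{nc}_{\cyc}$ to be a quasi-isomorphism (noted after Definition \ref{anbipolcycdef}) is a contrast with the algebraic case, but the algebraic proof never relied on it; the cyclic polyvector complex is used intrinsically throughout, so ``working intrinsically on $\widehat{\Pol}^{nc}_{\cyc}$'' is not a remedy but simply the existing definition. The real subtlety, which your proposal never confronts, is the construction of the compatibility map $\mu$ itself: in the analytic setting $\Omega^{\bt}_A$ is no longer generated as an associative algebra by $A$ and $\Omega^1_A$ (each $\Omega^p_A$ involves completed tensor products), so the defining formulae for $\mu$, which in \cite{NCpoisson} are specified on algebra generators, do not automatically extend. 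The paper's proof resolves this by using the explicit identification $\Omega^p_A \cong (\Omega^1_A)^{\ten p}\ten_{(A^{e,\FEFC})^{\ten p}}[A^{\hten_{\pi}(p+1)}]$ from Remark \ref{anDRequiv} and the module structures of $[A^{\hten_{\pi}(p+1)}]$ over $[A^{\hten_{\pi}(n+1)}]$ for $n\ge p$ to show $\mu(-,\pi)$ extends to each $\Omega^p_A$. Your proposed coupling term built from $\{-,-\}^{\smile}$ is also not the right object: that bracket pairs cyclic polyvectors against non-cyclic polyvectors, whereas $\mu$ pairs forms against polyvectors (it is a contraction of $\Omega^{\bt}_A$ with $\pi$), so it cannot substitute for $\mu$ in building the amalgamated DGLA.
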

\begin{proof}
 This follows by a construction almost identical to that used in proving the analogous algebraic statement \cite[Corollary \ref{NCpoisson-compatcor2}]{NCpoisson}. The only subtlety lies  in the construction of a compatibility map $\mu$. Since $\Omega^{\bt}_A$ is no longer generated as an associative algebra by $A$ and $\Omega^1_A$, we have to check that the formulae of \cite[Definition \ref{NCpoisson-mudef}]{NCpoisson} extend uniquely to give maps on the infinite sums featuring the spaces $\Omega^p_A$ in the analytic context.  To do this, we can use the expression
\[
  \Omega^p_A\cong (\Omega^1_A)^{\ten p}\ten_{(A^{e,\FEFC})^{\ten p} }[A^{\hten_{\pi} (p+1)}]
 \]
from Remark \ref{anDRequiv}, and extend the construction of $\mu(-,\pi)$ 
to $\Omega^p_A$ by making use of the various natural $[A^{\hten_{\pi} (p+1)}] $-module structures on $[A^{\hten_{\pi} (n+1)}] $ for $n \ge p$.
 \end{proof}


\subsection{Structures on prestacks}

We can now define shifted bisymplectic or double Poisson structures on prestacks as mapping spaces from the prestack to the relevant construction. Since these structures are only functorial with respect to homotopy \'etale morphisms, while pre-bisymplectic structures are fully functorial, for consistency we will restrict attention to prestacks for which the conclusion of Corollary \ref{etsitecor} holds. 

\begin{definition}\label{PreBiSphgsdef}
Given a w.e.-preserving functor $F \co DG^+dg_+\hat{\Tc}\Alg_{R}^{\mathrm{dfp,cof}} \to s\Set$ satisfying $F \simeq \theta_!F_{\rig}$ for $\theta \co DG^+dg_+\hat{\Tc}\Alg_{R}^{\mathrm{dfp,cof},\et} \to DG^+dg_+\hat{\Tc}\Alg_{R}^{\mathrm{dfp,cof}}$,
 we define 
 the spaces $\PreBiSp(F,n)$ and $\BiSp(F,n)$ of analytic $n$-shifted pre-bisymplectic and bisymplectic structures on $F$ by
\begin{align*}
 \PreBiSp(F,n)&:= \map_{ [DG^+dg_+\hat{\Tc}\Alg_{R}^{\mathrm{dfp,cof}},s\Set]}(F, \PreBiSp(-,n)),\\
 \BiSp(F,n)&:= \map_{ [DG^+dg_+\hat{\Tc}\Alg_{R}^{\mathrm{dfp,cof,\et}},s\Set] }(F_{\rig}, \BiSp(-,n)),
 \end{align*}
 and the spaces   $\cD\cP(F,n)$ and $\cD\cP(F,n)^{\nondeg}$ of analytic  $n$-shifted double Poisson and non-degenerate double Poisson structures on $F$ by
\begin{align*}
 \cD\cP(F,n)&:= \map_{ [DG^+dg_+\hat{\Tc}\Alg_{R}^{\mathrm{dfp,cof,\et}},s\Set] }(F_{\rig}, \cD\cP(-,n))\\
 \cD\cP(F,n)^{\nondeg}&:= \map_{ [DG^+dg_+\hat{\Tc}\Alg_{R}^{\mathrm{dfp,cof,\et}},s\Set] }(F_{\rig}, \cD\cP(-,n)^{\an}).
 \end{align*}
 \end{definition}

 As an immediate consequence of Proposition \ref{equivpropaff}, we then have:
 \begin{proposition}\label{equivpropprestack}
  There is a canonical equivalence
 \[
  \cD\cP(F,n)^{\nondeg} \simeq \BiSp(F,n).
 \]
 \end{proposition}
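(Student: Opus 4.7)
The proof is essentially a formal consequence of Proposition \ref{equivpropaff}, combined with Definition \ref{PreBiSphgsdef}. The plan is as follows.

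First, I would note that Proposition \ref{equivpropaff} produces an equivalence $\cD\cP(A,n)^{\nondeg} \simeq \BiSp(A,n)$ which is natural in $A \in DG^+dg_+\hat{\Tc}\Alg_{R}^{\mathrm{dfp,cof},\et}$, the category of cofibrant stacky FEFC-DGAs with homotopy \'etale morphisms; equivalently, this provides a natural equivalence of simplicial presheaves on $\cD^{\et} := DG^+dg_+\hat{\Tc}\Alg_{R}^{\mathrm{dfp,cof},\et}$. By Lemmas \ref{bispwepreservinglemma} and \ref{DPwepreservinglemma}, both sides are w.e.-preserving, so this descends to an equivalence of $\infty$-functors on the simplicial localisation of $\cD^{\et}$ at levelwise quasi-isomorphisms.

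Next, I would apply the derived mapping space functor $\oR\map(F_{\rig},-)$ in the $\infty$-category $[\cD^{\et},s\Set]$ to this equivalence. Since $\oR\map$ preserves equivalences in the target variable, this yields
\[
 \cD\cP(F,n)^{\nondeg} = \oR\map_{[\cD^{\et},s\Set]}(F_{\rig},\cD\cP(-,n)^{\nondeg}) \simeq \oR\map_{[\cD^{\et},s\Set]}(F_{\rig},\BiSp(-,n)) = \BiSp(F,n),
\]
which is the desired equivalence, naturally in $F$.

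The main obstacle, though it is really a verification task already built into the statement of Proposition \ref{equivpropaff}, is confirming that the objectwise equivalence there is sufficiently natural, i.e.\ assembles into a zigzag of natural weak equivalences compatible with all homotopy \'etale morphisms rather than merely an objectwise weak equivalence. This is precisely why the restriction to $\cD^{\et}$ is essential: the Legendre-type correspondence underlying Proposition \ref{equivpropaff} (built from contraction with $\omega_2$ and $\pi_2$ via the formulae of \cite[Definition \ref{NCpoisson-mudef}]{NCpoisson}) is only functorial with respect to homotopy \'etale maps, since outside that subcategory neither $\BiSp$ nor $\cD\cP^{\nondeg}$ is functorial. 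Given the stated compatibility with homotopy \'etale functoriality in that proposition, the remainder of the argument is formal.
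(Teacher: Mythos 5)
Your proof is correct and matches the paper's own (which simply states the proposition is "an immediate consequence" of Proposition \ref{equivpropaff}): Definition \ref{PreBiSphgsdef} defines both sides as $\oR\map(F_{\rig},-)$ applied to the respective objectwise functors on $DG^+dg_+\hat{\Tc}\Alg_{R}^{\mathrm{dfp,cof},\et}$, and Proposition \ref{equivpropaff} supplies the canonical equivalence between those targets compatible with homotopy \'etale functoriality. Your elaboration of why the restriction to \'etale morphisms is needed, and why $\oR\map$ in the target variable preserves the equivalence, correctly unpacks what the paper leaves implicit.
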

 
 \begin{remarks}
The condition $F \simeq \theta_!F_{\rig}$ ensures that  $\PreBiSp(F,n)\simeq \map_{ [DG^+dg_+\hat{\Tc}\Alg_{R}^{\mathrm{dfp,cof,\et}},s\Set]}(F_{\rig}, \PreBiSp(-,n))$, consistently with the others.
 
 \smallskip
The proof of Corollary \ref{etsitecor} shows that for any w.e.-preserving homogeneous functor $F\co dg_+\cN\cF r\Alg(R)\to s\Set$   with perfect cotangent complexes at all points in $F(A)$ for all $A\in dg_+\hat{\Tc}\Alg_{R}^{\mathrm{fp,cof}}$, the functor $D_*F| \co  DG^+dg_+\hat{\Tc}\Alg_{R}^{\mathrm{dfp,cof}} \to s\Set$ satisfies the condition $D_*F| \simeq \theta_!(D_*F|)_{\rig}$. Most of our examples of interest will arise in this form.
 
 \smallskip
 The setting of Definition \ref{PreBiSphgsdef} is fairly minimal in that we have restricted attention to functors defined on degreewise finitely presented cofibrant objects. We could equivalently enlarge the category to include all levelwise quasi-isomorphic objects, with  Proposition \ref{resnprop} (or rather its stacky analogue) and subsequent examples providing a rich class of such objects. Also, functors of interest defined on larger categories tend to be locally of finite presentation, so Proposition \ref{stackyDlfpprop} establishes consistency with other potential definitions in those cases.
 
 \end{remarks}

\begin{example}[Integration and (pre-)CY structures]\label{IntCYex}
As a consequence of Lemmas \ref{bispwepreservinglemma} and \ref{DPwepreservinglemma}, homotopy \'etale morphisms with retractions induce equivalences on the spaces of shifted analytic pre-bisymplectic, bisymplectic and  double Poisson structures. In particular, if the functor $F$ can be resolved by a simplicial derived analytic NC affine $X_{\bt}$ satisfying the FEFC analogues of the derived NC Artin hypergroupoid conditions from  \cite[Definition \ref{NCstacks-npreldef2}]{NCstacks}, then there is an associated stacky FEFC-DGA $D^*O(X)$ and we have $\BiSp(F,n) \simeq \BiSp(D^*O(X),n)$ and  $\cD\cP(F,n) \simeq \BiSp(D^*O(X),n)$, reasoning as in \cite[Corollaries \ref{NCpoisson-integratecorBiSp} and \ref{NCpoisson-integratecorDP}]{NCpoisson}.

For example, for a prestack $Y$ represented by an FEFC-DGA $A$, we can consider the quotient prestack $[Y/\bG_m]$  universal under the functor $B \mapsto [\Hom(A,B)/B_0^{\by}]$, where the group $B_0^{\by}$ of multiplicative units in $B_0$ acts by conjugation on the set of FEFC homomorphisms. The \v Cech nerve $X_{\bt}$ of $Y \to [Y/\bG_m]$ gives a suitable resolution, and then $D^*O(X)$ is the stacky FEFC-DGA freely generated over $A$ by a variable $s \in D^*O(X)^1_0$ with $\pd s =s^2$ and $\pd a = [s,a]$ for all $a \in A$. 

Comparison with \cite[\S\S \ref{NCpoisson-CYsn},\ref{NCpoisson-preCYsn}]{NCpoisson} characterises shifted analytic bisymplectic and double Poisson structures on $[Y/\bG_m]$ as analytic analogues of Calabi--Yau and pre-Calabi--Yau structures on $A$, respectively.
\end{example}

\begin{remark}\label{bispcommrmk2}
Following on from Remark \ref{bispcommrmk}, if $F^{\comm}$ denotes the restriction of $F$ to EFC-DGAs or to commutative topological DGAs, then   we have a natural map  $\BiSp(F,n) \to \Sp(F^{\comm, \sharp},n)$ to the space of shifted  symplectic structures, where $(-)^{\sharp}$ denotes \'etale hypersheafification. 

We can however say much more. The proofs of \cite[Propositions \ref{NCpoisson-weilArtinprop} and \ref{NCpoisson-weilhgsprop}]{NCpoisson} give maps $\BiSp(F,n) \to \BiSp(\Pi_{S/R}F,n)$ for any finite flat Frobenius $R$-algebra $S$, where $\Pi_{S/R}$ is the Weil restriction of scalars functor $(\Pi_{S/R}F)(B):=F(B\ten_RS)$. In particular, for matrix rings this gives natural maps $\BiSp(F,n) \to \Sp( (\Pi_{\Mat_r}F)^{\comm ,\sharp},n)$, so an analytic  shifted bisymplectic structure gives analytic shifted symplectic structures on all of the representation spaces. 

More is true: for any flat Lagrangian subalgebra $T$ of $S$, the same proofs give a map $\BiSp(F,n) \to \BiLag(\Pi_{S/R}F,\Pi_{T/R}F; n)$ to the space of analytic shifted bi-Lagrangian structures. Applied to the ring $T(m,n)$ of block upper triangular matrices, regarded as Lagrangian in the ring $\Mat_{m+n}\by \Mat_m \by \Mat_n$ with its trace $(\tr, -\tr,-\tr)$, an $n$-shifted analytic bisymplectic structure on $F$ thus gives rise to an $n$-shifted analytic Lagrangian correspondence 
\[
(\Pi_{\Mat_{m+n}}F)^{\comm ,\sharp} \la (\Pi_{T(m,n)} F)^{\comm ,\sharp} \to (\Pi_{\Mat_m} F)^{\comm ,\sharp} \by (\Pi_{\Mat_n} F)^{\comm ,\sharp}
\]
on the derived analytic stacks featuring in the construction of Hall algebras (cf. \cite[\S \ref{NCstacks-flagHallsn}]{NCstacks}).
\end{remark}

\subsection{Examples}\label{bispexsn}

Throughout this section, we will write $\Perf^?$ for the simplicial set given by the nerve of the core of the dg category $\per_{dg}^?$. this has the properties that its points are the objects of the dg category, with the fundamental group at $a$ consisting of invertible elements in $\H_0\EEnd(a)$ and the $i$th homotopy group being $\H_{i-1}\EEnd(a)$ for $i>1$.  

\subsubsection{Analytification}\label{analytificnsn3}

As in \ref{analytificationsn2}, we can use
the obvious forgetful functors $dg_+\FEFC(R) \to dg_+\Alg(R)$ and $DG^+dg_+\FEFC(R)\to DG^+dg_+\Alg(R)$,  which both have left adjoints $(-)^{\FEFC}$, to associate to an analytic derived NC prestack $F^{\an} \co dg_+\FEFC(R) \to s\Set$ to any derived NC prestack $F$. Since cotangent complexes transform as $ \bL_{F^{\an},x} \simeq \bL_{F,x}\ten^{\oL}_{B^e}B^{e,\an}$, with \'etaleness and submersiveness being preserved by analytification, we automatically get natural maps
\begin{align*}
 \PreBiSp(F,n) &\to \PreBiSp(D_*F^{\FEFC},n)\\
 \BiSp(F,n) &\to \BiSp(D_*F^{\FEFC},n)\\
 \cD\cP(F,n) &\to \cD\cP(D_*F^{\FEFC},n)\\
  \cD\cP(F,n)^{\nondeg} &\to \cD\cP(D_*F^{\FEFC},n)^{\nondeg},
\end{align*}
where the structures on the left are defined in \cite{NCpoisson}.

In particular, this can be applied to the Betti moduli functor  $\Perf^{M,\oB}$ (after Definition \ref{Bettidef}) when $M$ is a compact oriented $d$-dimensional real manifold, with the $(2-d)$-shifted bisymplectic structure of \cite[Remark \ref{NCpoisson-locsysrmk}]{NCpoisson} coming from Poincar\'e duality giving rise to an analytic $(2-d)$-shifted bisymplectic structure.

Similarly, if $Y$ is a complex projective variety of dimension $d$, the algebraic de Rham functor $\Perf^{X,\dR,\alg}$ (after Definition \ref{dRalgperdef}), the algebraic Dolbeault functor $\Perf^{X,\Dol,\alg}$ and the algebraic Hodge functor  $\Perf^{X,\Hod,\alg}$ (after Definition \ref{alganTwdef}) carry $(2-2d)$-shifted bisymplectic structures coming from Grothendieck--Verdier duality, so their analytifications carry analytic bisymplectic structures.

\subsubsection{Constructions from dg categories}

For dg categories equipped with suitable classes in either cyclic homology or cohomology, \cite[Corollaries \ref{NCpoisson-Perfpoissoncor} and \ref{NCpoisson-Morpoissoncor}]{NCpoisson} establishes shifted double Poisson structures on moduli of perfect complexes and of derived Morita morphisms (a common generalisation is also possible fixing two dg categories $\cA, \C$ and parametrising Morita morphisms $\cA \to \C\ten B$ for varying $B$).  For categories enriched in complexes of nuclear Fr\'echet spaces, entirely analogous constructions of analytic shifted double Poisson structures are possible, using homology or cohomology theories defined using completed topological tensor products.

\subsubsection{Transgression structures on moduli functors}

We now consider moduli functors of the form introduced in \S \ref{moduliexsn}, giving rise to a large class of shifted bisymplectic structures which are not of algebraic origin, by an analytic  transgression process. This is certainly not the most general form of transgression possible, since there are also analytic analogues of the mapping stack results of \cite[Remark \ref {NCpoisson-weilhgsprop2}]{NCpoisson}, but it covers our motivating examples.

Specifically, as in that section we will take a countable category ${\bI}$ and 
 a presheaf $\sC$ of commutative Fr\'echet $\bK$-algebras  on ${\bI}$ for $\bK$ a complete valued field. We will moreover assume that $\oR\Lim_{\bI^{\op}}\sC$ is a perfect complex over $\bK$, equipped with a $\bK$-linear map $\tau$ to $\bK[-d]$.  In particular, note that for any $M \in dg_+\cN\cF r_{\bK}$, Lemma \ref{RGammalemma} then gives a quasi-isomorphism $ (\oR\Lim_{\bI^{\op}}\sC)\ten M \to \oR\Lim_{\bI^{\op}}(\sC\hten M)$, and hence a map $\tau_M \co \oR\Lim_{\bI^{\op}}(\sC\hten_{\bK}M) \to M[-d]$ in the $\bK$-linear derived category.
 
\begin{proposition}\label{Cbispprop}
In the setting above, take a derived NC Artin prestack $F \co  dg_+\Alg(\bK) \to s\Set$ equipped with an $n$-shifted bisymplectic structure $\omega$.
 
 Then 
 the pair $(\omega,\tau)$ give rise to an analytic  $(n-d)$-shifted pre-bisymplectic structure $\omega_{\tau}$ on $D_*F_{\sC}$, for the functor
 \[
F_{\sC}:=\ho\Lim_{\bI^{\op}} F(\sC\hten-) \co  DG^+dg_+\hat{\Tc}\Alg_{R}^{\mathrm{dfp,cof}} \to s\Set.
 \]

The space of points $\phi \in F_{\sC}(A)$ at which the pairing
\[
 (\ho\Lim_{\bI^{\op}} \bT_{\phi}F(\sC\hten A^{\hat{e}}))^{\ten_{\bK} 2} \xra{\lrcorner \omega_{2,\sC\hten A}} \ho\Lim_{\bI^{\op}} (\sC\hten A^{\hat{e}})[n] \xra{\tau_{A^{\hat{e}}}} A^{\hat{e}}[n-d]
\]
is perfect forms a formally \'etale subfunctor $F_{\sC}^{nd} \into F_{\sC}$, and  the restriction of $\omega_{\tau}$ to $D_*F_{\sC}^{nd}$ is bisymplectic.
\end{proposition}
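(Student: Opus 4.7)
The plan is to construct $\omega_\tau$ by transporting $\omega$ degreewise through the analytic tensor structure and then contracting with $\tau$, and to verify non-degeneracy by combining the perfect pairing hypothesis with Proposition \ref{RGammacotprop}. First I would apply the bisymplectic structure $\omega$ at each level: using the (stacky) FEFC analogue of Corollary \ref{etsitecor}, the bisymplectic structure $\omega$ is represented by a compatible collection of elements in $\z^{n+2}F^2 \DR_{\cyc}(R_\alpha)$, where $\{R_\alpha\}$ is an étale cover of $F$. Applied to the algebras $\sC(i)\hten A$ (treating these as stacky DGAs via the functor $F$ after forgetting their topology), this yields compatible elements $\omega_{\sC(i)\hten A} \in \z^{n+2}F^2\DR_{\cyc}(\sC(i)\hten A)$, natural in $i \in \bI^{\op}$.

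Next I would identify the algebraic cyclic de Rham complex of $\sC(i)\hten A$ with the topological one. Since $\sC(i)$ is a commutative nuclear Fréchet $\bK$-algebra, the expressions $\Omega^p \cong (\Omega^1)^{\ten p}\ten_{(A^{e,\FEFC})^{\ten p}}\{A^{\hten p}\}/C_p$ from Remark \ref{anDRcycequiv}, together with Lemma \ref{nucexactlemma}, induce natural levelwise quasi-isomorphisms $\DR_{\cyc}(\sC(i)\hten A) \simeq \sC(i)\hten \DR_{\cyc}(A)$ after truncation along the Hodge filtration. Taking $\ho\Lim_{\bI^{\op}}$ and applying the map $\tau_{\DR_{\cyc}(A)}$ supplied by Lemma \ref{RGammalemma} (valid because $\DR_{\cyc}(A)$ is a complex of nuclear Fréchet $\bK$-modules in each Hodge degree when $A$ is a suitable cofibrant model) yields an element in $\z^{n+2-d}F^2\DR_{\cyc}(A)$, which I would take to be $\omega_\tau$. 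Naturality across $\bI$ and the functorial description of $F$ give the compatibility needed for $\omega_\tau$ to descend to a pre-bisymplectic structure on $D_*F_\sC$.

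For the formally étale subfunctor $F_\sC^{nd} \hookrightarrow F_\sC$, I would run the spectral sequence argument of Lemma \ref{opencotlemma}: given a square-zero extension $A \to B$ in $DG^+dg_+\hat{\Tc}\Alg_R^{\mathrm{dfp,cof}}$ with kernel $I$ and a lift $\phi$ of $\bar\phi \in F_\sC(B)$ at which the pairing is perfect, the $I$-adic filtration on $A^{\hat e}$ has associated graded pieces $B^{\hat e}$-modules, so the perfectness at $\bar\phi$ propagates through the filtration to give perfectness at $\phi$. This immediately forces $F_\sC^{nd} \hookrightarrow F_\sC$ to be formally étale. Finally, for the bisymplectic claim at a point $\phi \in F_\sC^{nd}(A)$, Proposition \ref{RGammacotprop} provides a perfect cotangent complex, and unravelling the definition of the contraction $\omega_{\tau,2}^{\sharp}$ shows it coincides (up to the standard identifications) with the pairing
\[
(\ho\Lim_{\bI^{\op}}\bT_\phi F(\sC\hten A^{\hat e}))^{\ten 2} \to A^{\hat e}[n-d]
\]
appearing in the hypothesis, so perfectness of the latter gives non-degeneracy of the former.

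The hard part will be the construction of $\omega_\tau$ in the first two paragraphs: one must verify that the \emph{algebraic} cyclic de Rham form produced from $\omega$ at $\sC(i)\hten A$ is compatible with the \emph{topological} completed tensor structure, and that contraction by $\tau$ preserves the Maurer--Cartan condition after taking the homotopy limit. This is essentially a strictification issue, and I would resolve it by replacing $A$ by a quasi-free cofibrant model satisfying the conditions of Proposition \ref{resnprop}, on which $\Omega^1$ is finitely generated, so that the natural maps $\Omega^p_{\sC(i)\hten A} \to \sC(i)\hten \Omega^p_A$ become quasi-isomorphisms by iterating Lemma \ref{nucexactlemma}, allowing the trace construction to commute with the Maurer--Cartan equations.
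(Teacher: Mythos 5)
The overall architecture matches the paper's proof: both appeal to Lemma \ref{CXlemma2}, Corollary \ref{etsitecor}, Lemma \ref{opencotlemma} and Proposition \ref{RGammacotprop}, and both construct $\omega_\tau$ by pushing $\omega$ through the evaluation maps at $\sC(i)\hten A$, applying Lemma \ref{RGammalemma}, and contracting with $\tau$. The non-degeneracy argument in the last paragraph is also essentially the paper's, though you should additionally observe (as the paper does) that the rigidity of a point $\phi \in (D_*F_\sC^{nd})_{\rig}(B)$ is what lets you identify $\widehat{\Tot}\cHHom_{B^{e,\FEFC}}(\Omega^1_B,(B^0)^{e,\FEFC})$ with the tangent complex $\bT_{\phi^0}(F_{\sC}, (B^0)^{e,\FEFC})$, so that the pairing hypothesis literally becomes the non-degeneracy condition of Definition \ref{bisymplecticdef}.

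However, there is a genuine gap in your second paragraph. You assert ``natural levelwise quasi-isomorphisms $\DR_{\cyc}(\sC(i)\hten A) \simeq \sC(i)\hten \DR_{\cyc}(A)$ after truncation along the Hodge filtration,'' and in your final paragraph you propose to establish this by replacing $A$ with a finitely generated quasi-free model, so that $\Omega^p_{\sC(i)\hten A} \to \sC(i)\hten\Omega^p_A$ become quasi-isomorphisms. This does not work, and more importantly it is not needed. The bisymplectic structure $\omega$ lives in $\oR\Gamma(F,\oL F^2\DR_{\cyc}^{\alg}(\sO))$ because $F$ is a functor on $dg_+\Alg(\bK)$, so evaluation at $\phi$ produces an element of the \emph{algebraic} cyclic de Rham complex $\oL F^2\DR_{\cyc}^{\alg}(\sC(i)\hten A)$, built from algebraic tensor products and algebraic $\Omega^{1,\alg}$. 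The completion $\Omega^{1,\alg}_{\sC(i)\hten A} \to \sC(i)\hten\hat\Omega^1_A$ is a dense inclusion of Fr\'echet $A^{\hat e}$-modules, not a quasi-isomorphism, and this does not improve when $\hat\Omega^1_A$ is finitely generated free; the algebraic $\Omega^{1,\alg}_{\sC(i)\hten A}$ is strictly larger than the completed version whenever $\sC(i)$ or $A$ is infinite-dimensional. The correct observation, which the paper makes, is that only a natural \emph{one-directional map} $\oL F^2\DR_{\cyc}^{\alg}(\sC\hten A) \to \sC\hten F^2\DR_{\cyc}(A)$ is required, and that map exists tautologically from the universal property of algebraic versus continuous derivations. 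The actual quasi-isomorphism of Lemma \ref{RGammalemma} is applied at the \emph{next} stage, to identify $\ho\Lim_{\bI^{\op}}(\sC\hten F^2\DR_{\cyc}(A))$ with $(\ho\Lim_{\bI^{\op}}\sC)\ten F^2\DR_{\cyc}(A)$, before applying $\tau\ten\id$. So the ``hard part'' you flag is a false lead: the strictification issue you anticipate dissolves once you realise the comparison between algebraic and topological de Rham complexes is asymmetric.
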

\begin{proof}
First, note that Lemma \ref{CXlemma2} ensures that $F_{\sC}$ is w.e.-preserving and homogeneous, so $D_*F_{\sC}$ satisfies the conditions of Definition \ref{PreBiSphgsdef} by Corollary \ref{etsitecor}.

Elements $\phi \in D_*F_{\sC}(A)$ correspond to elements of $\ho\Lim_{\bI^{\op}}D_*F(\sC\hten A) $, for $D_*$ as in  \cite[Definition \ref{NCstacks-Dlowerdef}]{NCstacks}, so $\phi$ gives rise to a map
\[
\oR\Gamma(F, \oL F^2\DR_{\cyc}^{\alg}(\sO))\to  \ho\Lim_{\bI^{\op}} \oL F^2\DR_{\cyc}^{\alg}(\sC\hten A).
\]
Composing this with 
 the map
\begin{align*}
 \ho\Lim_{\bI^{\op}} \oL F^2\DR_{\cyc}^{\alg}(\sC\hten A) \to  &\ho\Lim_{\bI^{\op}}(\sC\hten F^2\DR_{\cyc}(A)) \\
 &\xla{\sim}  (\ho\Lim_{\bI^{\op}}\sC)\ten F^2\DR_{\cyc}(A)) \xra{\tau\ten \id} F^2\DR_{\cyc}(A))[-d]
 \end{align*}
gives our map from the space of $n$-shifted  bisymplectic structures on $F$ to the space of  analytic  $(n-d)$-shifted pre-bisymplectic structures on $D_*(F_{\sC})$.

 It thus follows immediately from  homogeneity of $F_{\sC}$ that $F_{\sC}^{nd} \subset F_{\sC}$ is a functor, and Lemma \ref{opencotlemma} implies that the morphism is formally \'etale. Moreover, Proposition \ref{RGammacotprop} implies that the cotangent complex of $F_{\sC}$ at $\phi \in F_{\sC}^{nd}(A)$ is the  $A^{\hat{e}}$-linear dual of  $(\ho\Lim_{\bI^{\op}} \bT_{\phi}F(\sC\hten A^{\hat{e}}))$.  
 
 Since $F_{\sC}^{nd} \to F_{\sC}$ is formally \'etale, we also have $(D_*F_{\sC}^{nd})(B)\simeq D_*F_{\sC}(B)\by^h_{F_{\sC}(B^0)}F_{\sC}^{nd}(B^0)$ for all $B \in DG^+dg_+\hat{\Tc}\Alg_{R}^{\mathrm{dfp,cof}}$. For $\phi \in (D_*F_{\sC}^{nd})_{\rig}(B)$ with induced point $\phi^0 \in F_{\sC}^{nd}(B^0)$, rigidity gives quasi-isomorphisms
 \begin{align*}
   \Tot^{\Pi}(\Omega^1_{B}\ten_{B^{e}}(B^0)^{e}) &\simeq \oL^{F_{\sC},\phi^0}\\
   \widehat{\Tot}\cHHom_{B^{e,\FEFC}}(\Omega^1_B,(B^0)^{e,\FEFC}) &\simeq  \bT_{\phi^0}(F_{\sC}, (B^0)^{e,\FEFC}),
 \end{align*}
so the non-degeneracy condition of Definition \ref{bisymplecticdef} is satisfied, making $\omega_{\tau}$ is a  bisymplectic structure on $D_*F_{\sC}^{nd}$.
\end{proof}

\subsubsection{Moduli of pro-\'etale local systems}

We can apply Proposition \ref{Cbispprop} to Examples \ref{CXex} to give purely analytic structures analogous to those on moduli of local systems from \S \ref{analytificnsn3}.

Letting $\bK$ be the non-Archimedean field  $\Ql$, we can take  ${\bI}$ to be the  affine pro-\'etale site of  a smooth proper scheme $X$ of dimension $m$ over a separably closed field $k$ prime to $\ell$, with $\sC$ the sheaf $\uline{\Ql}_X$ of \cite[Example 4.2.10]{BhattScholzeProEtale}.  Fixing an isomorphism $\Zl(m) \cong \Zl$ for the Tate twist over $k$, we then have a trace map
\[
 \oR\Gamma (X_{\pro\et},\uline{\Ql}_X)\to \Ql
\]
as in \cite[Examples \ref{PTLag-traceex}]{PTLag} forming part of a six functors formalism for constructible complexes, with $\uline{\Ql}_X\hten V \cong \uline{V}_X$ for any nuclear Fr\'echet space $V$.

Then Proposition \ref{Cbispprop} equips the moduli prestack $D_*\Perf_{\uline{\Ql}_X}$ of perfect $\Ql$-complexes on the pro-\'etale site of $X$ with an analytic  $(2-2m)$-shifted analytic pre-bisymplectic structure. 

The bisymplectic subfunctor $\Perf_{\uline{\Ql}_X}^{nd}(A) \subset\Perf_{\uline{\Ql}_X}(A) $ consists of  $\uline{A}_X$-complexes $\sE$ at which the $A\hten A$-bilinear pairing on $\oR\Gamma(X_{\pro\et}, \sHom_{\uline{A}_X}(\sE, \sE\hten \uline{A}_X))$  is perfect. In particular, this includes all constructible complexes.

\subsubsection{Moduli of hyperconnections}

Proposition \ref{Cbispprop} also applies to Examples \ref{CXex2}, giving structures on the various analytic de Rham, Dolbeault, Hodge and twistor moduli functors.  Comparison of  traces also ensures that the Riemann--Hilbert map is bisymplectic (i.e. preserves the bisymplectic structures), which in particular gives a bisymplectic structure on the algebro-analytic twistor moduli functor. Specifically:

\medskip
If $M$ is a compact oriented $d$-dimensional real manifold, then the trace $\int_M \co A^{\bt}(M) \to \R[-d]$ gives a $(2-d)$-shifted analytic pre-bisymplectic structure on the $\C^{\infty}$ de Rham prestack $D_*\Perf^{M,\dR,\C^{\infty}}$, and compatibility of traces ensures that the $\C^{\infty}$ Riemann--Hilbert  equivalence from the analytification of the Betti prestack $\Perf^{M,\oB}$ to $\Perf^{M,\dR,\C^{\infty}} $  (induced by Corollary \ref{poincarecor}) is bisymplectic, 
so this structure is in fact bisymplectic rather than just pre-bisymplectic.

\medskip
Similarly, if $Y$ is a compact complex manifold of dimension $m$,  the $(2-2m)$-shifted analytic bisymplectic structure  on the analytic de Rham prestack  $D_*\Perf^{Y,\dR,\an}$ induced by the equivalences 
\[
 (\Perf^{Y,\oB})^{\FEFC} \to \Perf^{Y,\dR,\an} \to \Perf^{Y,\dR,\C^{\infty}} 
\]
of Corollary \ref{poincarecor} comes from the trace $\int_Y \co \oR\Gamma(Y, \Omega^{\bt}_Y) \to \Cx[-2m]$. 

\medskip
Likewise, $\int_Y$ gives traces  $\oR\Gamma(Y, \Omega^{\#}_Y) \to \Cx[-2m]$ and   $\oR\Gamma(Y,(\Omega^{\#}_Y\hten \sO(\bA^1_{\Cx})^{\hol}, \lambda \pd)) \to \sO(\bA^1_{\Cx})^{\hol} $, inducing $(2-2m)$-shifted analytic pre-bisymplectic structures  on the Dolbeault and Hodge prestacks $D_*\Perf^{Y,\Dol,\an}$ and $D_*\Perf^{Y,\Hod,\an}$. 

\medskip
For a smooth projective $m$-dimensional variety $X$,  compatibility of traces implies that the formally \'etale  map from the analytification of the algebraic de Rham prestack $\Perf^{X,\dR,\alg}$ to  $\Perf^{X(\Cx),\dR,\an}$ is bisymplectic.
That in turn implies that the Riemann--Hilbert map $(\Perf^{X,\dR,\alg})^{\FEFC} \to (\Perf^{X(\Cx),\oB})^{\FEFC} $ of Corollary \ref{RHcoralg} is also bisymplectic. Similarly, the formally \'etale maps $(\Perf^{X,\Dol,\alg})^{\FEFC} \to \Perf^{X(\Cx),\Dol,\an}$ and $(\Perf^{X,\Hod,\alg})^{\FEFC} \to \Perf^{X(\Cx),\Hod,\an}$, from the analytifications of the algebraic Dolbeault and Hodge prestacks to their analytic counterparts, are bisymplectic. 

\medskip
The trace $\int_Y$ also gives a chain map $(A^{\#}(Y, \sO(\bA^2_{\R})^{\hol}),ud+v\dc)  \to \sO(\bA^2_{\R})^{\hol}[-2m]$ and hence a $(2-2m)$-shifted analytic pre-bisymplectic structure on the $\C^{\infty}$ pre-twistor prestack $D_*\Perf^{Y,\mathrm{pTw},\C^{\infty}}$ (see Definition \ref{twistordefCinftyR}), which is far from being bisymplectic at $(u,v)=0$. Since this pre-bisymplectic structure is not invariant under the action of the Deligne torus, it  does not quite give rise to a $(2-2m)$-shifted analytic pre-bisymplectic structure on the $\C^{\infty}$ twistor prestack  $D_*\Perf^{Y,\Tw,\C^{\infty}}$ over $[\bP^1_{\R}/\SO_2]^{\hol}_{\Et}$  (see Definition \ref{TwCinftydef}). 
The trace map takes the form $A_{\Tw,Y}^{\bt}(\sB(U),L,w) \to L^{\ten m}\ten \bar{L}^{\ten m} $, so in the definitions we have to replace $F^2\DR$ with $(L\ten\bar{L}) ^{\ten m}\ten F^2\DR$ (an $m$-fold Tate twist), or equivalently $\sO_{\bP^1}(-m)\ten_{\sO_{\bP^1}} F^2\DR$, giving an $\sO_{\bP^1}(-m)[2-2m]$-shifted analytic pre-bisymplectic structure, defined by analogy with the symplectic structures shifted by line bundles from \cite{BouazizGrojnowski}.

\medskip
A similar phenomenon arises for the twistor functors constructed by Deligne gluing. The isomorphism $(\Omega^{\#}_Y\hten \sO(\bG_{m,\Cx})^{\hol}, w \pd) \to \Omega^{\bt}_Y\hten \sO(\bG_{m,\Cx})^{\hol} $ from Definition \ref{Twandef}
is given in degree $p$ by multiplication by $w^{-p}$, so the natural equivalence $\Perf^{Y,\Hod,\an}|_{\Cx^*} \to \Perf^{Y,\dR,\an} \by \Cx^*$ is not quite bisymplectic. Instead, we have $\omega_{\tau,\dR} \simeq w^{-m}\omega_{\tau,\Hod}$, which corresponds to the gluing data for generators of $\sO_{\bP^1}(m)$ under the equivalences of Remark  \ref{cfmhsrmk}, so under the Deligne gluing construction of  Definition \ref{Twandef} it
also induces an $\sO_{\bP^1}(-m)[2-2m]$-shifted analytic pre-bisymplectic structure on the analytic twistor prestack  $D_*\Perf^{Y,\Tw,\an}$ over $[\bP^1_{\R}/\SO_2]^{\hol}_{\Et}$. A similar check shows that this agrees with the $\C^{\infty}$ construction under the equivalence of Theorem \ref{twistorthm}. 

\medskip
Likewise, Deligne gluing gives an $\sO_{\bP^1}(-m)[2-2m]$-shifted analytic bisymplectic structure on the algebro-analytic twistor prestack  $D_*\Perf^{Y,\Tw,\alg-\an}$ over $[\bP^1_{\R}/\SO_2]^{\hol}_{\Et}$ (an open subfunctor of the analytic twistor prestack --- see Definition \ref{alganTwdef}). In this case, we can see that it is bisymplectic (i.e. non-degenerate) because its component parts are so.

\bibliographystyle{alphanum}
\bibliography{references.bib}

\end{document}